\numberwithin{equation}{section}
\newtheorem{theorem}{Theorem}[section]
\newtheorem*{theorem*}{Theorem}
\newtheorem{corollary}[theorem]{Corollary}
\newtheorem{observation}[theorem]{Observation}
\newtheorem{lemma}[theorem]{Lemma}
\newtheorem{proposition}[theorem]{Proposition}
\theoremstyle{definition}
\theoremstyle{definition}
\newtheorem{remark}[theorem]{Remark}
\newtheorem{definition}[theorem]{Definition}
\newtheorem{definition lemma}[theorem]{Definition/Lemma}
\newcommand{\C}{\mathcal{C}}
\newcommand{\R}{\mathbb{R}}
\newcommand{\Z}{\mathbb{Z}}
\newcommand{\Q}{\mathbb{Q}}
\newcommand{\N}{\mathbb{N}}
\renewcommand{\P}{\mathbb{P}}
\newcommand{\size}{\mathrm{Size\, }}
\newcommand{\energy}{\mathrm{Energy \,}}
\newcommand{\ds}{\displaystyle}
\newcommand{\one}{\mathbf{1}}
\newcommand{\ci}{\tilde{\chi}}
\newcommand{\dist}{\text{dist}}
\newcommand{\osc}{\text{osc}}
\newcommand{\ii}{\mathscr}
\newcommand{\mb}{\mathbf}
\renewcommand{\S}{\mathbb{S}}
\newcommand{\BHT}{{{\mathbb P}_{\textrm{BHT}}}}
\newcommand\sub[2]{\genfrac{}{}{0pt}{}{#1}{#2}}
\def\beq{\begin{equation}}
\def\eeq{\end{equation}}
\def\beq{\begin{equation}}
\def\eeq{\end{equation}}
\def\l{\lambda}
\def\L{\mathcal{L}}
\def\t{\tau}
\def\g{\gamma}
\def\a{\alpha}
\def\d{\delta}
\def\b{\beta}
\def\p{\Phi}
\def\ep{\epsilon}
\def\l{\Lambda}
\def\o{\omega}
\def\R{\mathbb{R}}
\def\Q{\mathcal{Q}}
\def\C{\mathbb{C}}
\def\Z{\mathbb{Z}}
\def\K{\mathcal{K}}
\def\P{\mathbb{P}}
\def\p{\mathcal{P}}
\def\rc{\mathcal{R}}
\def\F{\mathcal{F}}
\def\J{\mathcal{J}}
\def\I{\mathcal{I}}
\def\TT{\mathbb{T}}
\def\N{\mathbb{N}}
\def\n{\mathcal{N}}
\def\f{\mathcal{F}}
\def\Z{\mathbb{Z}}
\def\beq{\begin{equation}}
\def\eeq{\end{equation}}
\def\beq{\begin{equation}}
\def\eeq{\end{equation}}
\def\t{\tau}
\def\g{\gamma}
\def\a{\alpha}
\def\d{\delta}
\def\b{\beta}
\def\p{\Phi}
\def\ep{\epsilon}
\def\l{\lambda}
\def\o{\omega}
\def\n{\nu}
\def\R{\mathbb{R}}
\def\C{\mathbb{C}}
\def\Z{\mathbb{Z}}
\def\K{\mathcal{K}}
\def\P{\mathbb{P}}
\def\PI{\mathcal{I}_{\bar{\P}(I_0)}}
\def\p{\mathcal{P}}
\def\L{\mathcal{L}}
\def\J{\mathcal{J}}
\def\TT{\mathbb{T}}
\def\N{\mathbb{N}}
\def\n{\mathcal{N}}
\def\f{\mathcal{F}}
\def\Z{\mathbb{Z}}
\def\I{\mathcal{I}}
\def\beq{\begin{equation}}
\def\eeq{\end{equation}}
\def\beq{\begin{equation}}
\def\eeq{\end{equation}}
\newcommand{\rr}{\mathbb}
\newcommand{\ic}{\mathcal}
\def\Xint#1{\mathchoice
   {\XXint\displaystyle\textstyle{#1}}%
   {\XXint\textstyle\scriptstyle{#1}}%
   {\XXint\scriptstyle\scriptscriptstyle{#1}}%
   {\XXint\scriptscriptstyle\scriptscriptstyle{#1}}%
   \!\int}
\def\XXint#1#2#3{{\setbox0=\hbox{$#1{#2#3}{\int}$}
     \vcenter{\hbox{$#2#3$}}\kern-.5\wd0}}
\def\aver#1{\Xint-_{#1}}
\begin{document}
\title[The non-resonant bilinear Hilbert--Carleson operator]{\Large{The non-resonant bilinear Hilbert--Carleson operator}}

\author[C. Benea]{Cristina Benea}
\address[C. Benea]{Universit\'e de Nantes, Laboratoire de Math\'ematiques Jean Leray, Nantes 44322, France}
\email{cristina.benea@univ-nantes.fr}
\author[F. Bernicot]{Fr\'ed\'eric Bernicot}
\address[F. Bernicot]{CNRS - Universit\'e de Nantes, Laboratoire de Math\'ematiques Jean Leray, Nantes 44322, France}
\email{frederic.bernicot@univ-nantes.fr}
\author[V. Lie]{Victor Lie}
\address[V. Lie]{Department of Mathematics, Purdue University, IN 47907 USA \& Institute of Mathematics of the Romanian Academy, Bucharest, RO 70700, P.O. Box 1-764, Romania.}
\email{vlie@purdue.edu}
\author[M. Vitturi]{Marco Vitturi}
\address[M. Vitturi]{School of Mathematical Sciences, University College Cork, WesternGateway Building, Western Road, Cork, Ireland}
\email{marco.vitturi@ucc.ie}

\keywords{Bilinear Hilbert transform, Carleson operator, wave-packet analysis, time-frequency analysis, zero/non-zero curvature, almost orthogonality, Gabor frame decomposition, phase linearization, time-frequency correlations.}

\date{\today}

\begin{abstract}
In this paper we introduce the class of bilinear Hilbert--Carleson operators $\{BC^a\}_{a>0}$ defined by
$$BC^{a}(f,g)(x):= \sup_{\lambda\in \R}  \Big|\int f(x-t)\, g(x+t)\, e^{i\lambda t^a} \, \frac{dt}{t} \Big|$$ and show that in the non-resonant case $a\in (0,\infty)\setminus\{1,2\}$ the operator $BC^a$ extends continuously from $L^p(\R)\times L^q(\R)$ into $L^r(\R)$ whenever $\frac{1}{p}+\frac{1}{q}=\frac{1}{r}$ with $1<p,\,q\leq\infty$ and $\frac{2}{3}<r<\infty$.

A key novel feature of these operators is that -- in the non-resonant case -- $BC^{a}$ has a \emph{hybrid} nature enjoying  both
\begin{enumerate}[label=(\Roman*)]
\item \emph{zero curvature} features inherited from the modulation invariance property of the classical bilinear Hilbert transform (BHT), and
\item \emph{non-zero curvature} features arising from the Carleson-type operator with nonlinear phase $\l t^a$.
\end{enumerate}
In order to simultaneously control these two competing facets of our operator we develop a \emph{two-resolution approach}:
\begin{itemize}
\item A \emph{low resolution, multi-scale analysis} addressing (I) and relying on the time-frequency discretization of $BC^a$ into suitable versions of ``dilated'' phase-space BHT-like portraits. The resulting decomposition will produce rank-one families of tri-tiles $\{\P_m\}_m$ such that the components of any such tri-tile will no longer have area one Heisenberg localization. The control over these families will be obtained via a refinement of the time-frequency methods introduced in \cite{lt1} and \cite{lt2}.

 \item A \emph{high resolution, single scale analysis} addressing (II) and relying on a further discretization of each of the tri-tiles $P\in\P_m$ into a four-parameter family of tri-tiles $\S(P)$ with each of the resulting tri-tiles $s\in\S(P)$ now obeying the area one Heisenberg localization. The design of these latter families as well as the extraction of the cancellation encoded in the non-zero curvature of the multiplier's phase within each given $P$ relies on the LGC-methodology introduced in \cite{lvUnif}.
\end{itemize}
A further interesting aspect of our work is that the high resolution analysis itself involves two types of decompositions capturing the local (single scale) behavior of our operator:
\begin{itemize}
\item A \emph{continuous phase-linearized spatial model} that serves as the vehicle for extracting the cancellation from the  multiplier's phase. The latter is achieved via $TT^{*}$ arguments, number-theoretic tools (Weyl sums) and phase level set analysis exploiting time-frequency correlations.

\item A \emph{discrete phase-linearized wave-packet model} that takes the just-captured phase cancellation and feeds it into the low resolution analysis in order to achieve the global control over $BC^a$.
\end{itemize}

As a consequence of the above, our proof offers a unifying perspective on the distinct methods for treating the zero/non-zero curvature paradigms.
\end{abstract}

\maketitle

\setcounter{tocdepth}{1}
\tableofcontents

\section{Introduction}\label{Intr}

The present problem belongs to an extensive research program\footnote{For an overview of some of the directions that are part of this extensive program please see the historical presentation in Section \ref{Hist}.} that aims to advance our understanding of the subtle interplay between modulation invariance and curvature in harmonic analysis.

Concretely, in this paper we study the boundedness behavior of a (sub)bilinear operator $T: S(\R) \times S(\R) \to  S'(\R)$ that has the following key properties\footnote{In what follows, we let $\tau_b f(x):=f(x-b)$ the translation symmetry, $D_{\a}f(x):=f(\a x)$ the dilation symmetry, and $M_c f(x):=e^{i c x} f(x)$ the modulation symmetry where $x, \a, b, c \in \R$.}:
\begin{enumerate}[label=(P\arabic*)]
\item \label{key:td} $T$ commutes with translations and dilations, \textit{i.e.}
$$T(\tau_b f, \tau_b g)=\tau_b T(f,g)\:\:\:\textrm{and}\:\:\: T(D_{\a} f, D_{\a} g)=D_{\a} T(f,g);$$

\item \label{key:m} $T$ is invariant under the action of modulation, \textit{i.e.}
$$|T(M_c f, M_c g)|= |T(f,g)|;$$

\item \label{key:max} $T$ is a  maximal singular integral operator whose kernel representation involves a maximal (generalized) modulation in the presence of curvature.\footnote{The concept of curvature is of course multifaceted and covers many aspects within and beyond the area of harmonic analysis. For this reason, and also to allow for some flexibility in covering a diverse array of problems that will be discussed in the Introduction of our paper, we will not attempt to provide a precise definition of it. We will thus only observe that the study of an operator in a non-zero curvature regime will canonically rely on an application of the principle of (non)stationary phase, usually targeting the multiplier side and having as an effect the presence of a decaying-type factor quantifying the non-zero curvature. This latter property strongly correlates with the fact that the operator under discussion lacks a modulation invariance structure.}
\end{enumerate}

The above features, expectedly, will play a determinative r\^{o}le in our analysis and will be responsible for the methodology used in approaching our problem and its novelties.

Before entering into a more detailed discussion of the above considerations, we present the main result of our paper.

\subsection{Main result}\label{MRe}  Let $a>0$ be a fixed parameter and $f, g\in S(\R)$ two smooth functions. Define the \emph{bilinear Hilbert--Carleson operator} with monomial phase of exponent $a$ as\footnote{\label{footnote:2pi} Throughout this paper, for notational simplicity we will drop the principal value symbol from singular integral formulas. Also, by convention, when $a>0$ is not an integer, the expression $t^a$ stands here for either $|t|^a$ or for $\textrm{sgn} {t} |t|^a$. Finally, for simplicity we will appeal to a notational abuse and write $e^{i A}$ as a shortcut for the expression $e^{2\pi i A}$, where here $A$ is always assumed to be real.}
\beq\label{Top}
BC^{a}(f,g)(x):= \sup_{\lambda\in \R}  \Big|\int f(x-t) g(x+t) e^{i\lambda t^a}  \frac{dt}{t} \Big|.
\eeq

\begin{observation}\label{nres}
When $a\notin\{1, 2\}$, $BC^a$ has no modulation symmetries beyond the expected ones in \ref{key:m} inherited from the bilinear Hilbert transform. In such a situation we say that $BC^a$ belongs to the family of \emph{non-resonant} bilinear Hilbert--Carleson operators. For more on this subject one is invited to consult Sections \ref{znzpar} and \ref{openquest}.
\end{observation}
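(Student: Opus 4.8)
The force of the Observation lies in its first half --- that for $a\notin\{1,2\}$ the modulation symmetries of $BC^{a}$ reduce to the diagonal linear ones in \ref{key:m}; the closing sentence is pure terminology. I take ``modulation symmetry'' in the usual time-frequency sense: a pair of (generalized) modulations $f\mapsto e^{iP(\cdot)}f$, $g\mapsto e^{iQ(\cdot)}g$ with $P,Q$ real polynomials such that $|BC^{a}(e^{iP}f,e^{iQ}g)|\equiv|BC^{a}(f,g)|$ on all of $S(\R)$. The plan is to classify such pairs and to check that, for $a\notin\{1,2\}$, they are exactly $P(x)=cx+c_{1}$, $Q(x)=cx+c_{2}$, i.e. \ref{key:m} up to the harmless unimodular constants $e^{ic_{1}},e^{ic_{2}}$.

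Feeding $e^{iP}f,e^{iQ}g$ into \eqref{Top}, the kernel acquires the extra phase $P(x-t)+Q(x+t)$; its $x$-only part factors out of the supremum and disappears under $|\cdot|$, leaving
\[ BC^{a}(e^{iP}f,e^{iQ}g)(x)=\sup_{\lambda\in\R}\Big|\int f(x-t)\,g(x+t)\,e^{i(\lambda t^{a}+S_{x}(t))}\,\frac{dt}{t}\Big|, \]
where $S_{x}(t):=P(x-t)+Q(x+t)-P(x)-Q(x)$ is a polynomial in $t$ vanishing at $t=0$, with coefficients $c_{k}(x):=\tfrac{1}{k!}\big(Q^{(k)}(x)+(-1)^{k}P^{(k)}(x)\big)$, $k\geq 1$. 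Since the only flexibility built into $BC^{a}$ is the monomial family $\{\lambda t^{a}\}_{\lambda}$, the pair $(P,Q)$ is a symmetry \emph{by reabsorption} exactly when $S_{x}(t)=\mu(x)t^{a}$ for some $\mu$ --- then the shift $\lambda\mapsto\lambda-\mu(x)$, legitimate inside the pointwise supremum at each fixed $x$, cancels $S_{x}$ --- equivalently, $c_{k}\equiv 0$ for every $k\neq a$. When $a\notin\{1,2\}$ this forces in particular $c_{1}\equiv c_{2}\equiv 0$, i.e. $Q'=P'$ and $Q''=-P''$, hence $P''=0$, hence $P$ affine and $Q=P+\textrm{const}$; so the reabsorption mechanism yields only \ref{key:m}. (For the record, the excluded values are genuine resonances: $a=1$ also admits the quadratic modulations $P(x)=\alpha x^{2}+\beta x+c_{1}$, $Q(x)=-\alpha x^{2}+\gamma x+c_{2}$, and $a=2$ the diagonal cubic modulations $Q=P+\textrm{const}$ with $\deg P\leq 3$.)

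It remains to exclude polynomial modulations that could be symmetries \emph{without} $S_{x}$ being a multiple of $t^{a}$. Suppose $(P,Q)$ is a symmetry but not affine-diagonal; since $a\notin\{1,2\}$, the computation above gives $c_{k_{0}}\not\equiv 0$ for some $k_{0}\in\{1,2\}$ (in particular $k_{0}\neq a$), so for $x$ off a finite set $S_{x}(t)$ contains the nonzero monomial $c_{k_{0}}(x)t^{k_{0}}$ --- and any genuine $t^{a}$-component of $S_{x}$, if present, merely shifts $\lambda$ and may be discarded. Fixing such an $x$, I would test the symmetry against $f\equiv 1$ on a small window and $g(s)=\eta\big((s-x-t_{0})/\delta\big)$ a single bump at scale $\delta\to 0^{+}$, centred at $y_{0}=x+t_{0}$ with $t_{0}\neq 0$: the $t$-integral then concentrates at scale $\delta$ about $t_{0}$, and the symmetry forces the normalized suprema $\sup_{\lambda}\big|\int\eta(u)\,e^{i\Phi_{\lambda}(u)}\,du\big|$ and $\sup_{\lambda}\big|\int\eta(u)\,e^{i\Phi_{\lambda}^{0}(u)}\,du\big|$ to coincide, where $\Phi_{\lambda},\Phi_{\lambda}^{0}$ are the Taylor expansions in $\delta u$ of $\lambda(t_{0}+\delta u)^{a}+S_{x}(t_{0}+\delta u)$ and of $\lambda(t_{0}+\delta u)^{a}$. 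Here a genuine stationary-phase computation is required: the constant-in-$u$ term factors out; the first-order coefficient $a\lambda t_{0}^{a-1}+S_{x}'(t_{0})$ runs over all of $\R$ with $\lambda$ and is annihilated by the supremum; and, decisively, as $\lambda$ varies the first- and second-order $\delta u$-coefficients of the phase trace an affine line of slope $\tfrac{a-1}{t_{0}}$ whose intercept is $0$ for $\Phi^{0}$ but $\kappa(t_{0})=S_{x}''(t_{0})-\tfrac{a-1}{t_{0}}S_{x}'(t_{0})$ for $\Phi$. A one-line computation with the ODE $v'=\tfrac{a-1}{t}v$ (whose only solutions are $v=Ct^{a-1}$) shows that $\kappa\equiv 0$ would force $S_{x}$ to be a multiple of $t^{a}$, which is excluded; picking $t_{0}$ with $\kappa(t_{0})\neq 0$, the unperturbed supremum equals $\int\eta(u)\,du$ (attained at $\lambda=0$) whereas the perturbed one is strictly smaller for all small $\delta$, contradicting the symmetry. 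This stationary-phase comparison --- which must separate the curvature of $t\mapsto t^{a}$ from that of the polynomial perturbation $S_{x}$ --- is the one genuinely delicate point and the main obstacle; fittingly, it is a miniature of the non-zero-curvature mechanism driving the high-resolution half of the paper. With that, the closing sentence of the Observation is just a name for the resulting dichotomy; see Sections \ref{znzpar} and \ref{openquest} for context.
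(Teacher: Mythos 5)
Your classification is correct, but be aware that the paper itself offers no proof of Observation \ref{nres}: it is stated as a remark, justified only implicitly by the ``reabsorption'' algebra you carry out in your first half (the phase $P(x-t)+Q(x+t)$ contributes $S_x(t)=\sum_k c_k(x)t^k$, and only a pure $t^a$-component can be absorbed into the linearized modulation $\lambda t^a$), together with the explicit list of the resonant symmetries \eqref{quadm}, \eqref{quadm0}, \eqref{linm0} for $a\in\{1,2\}$ in Section \ref{openquest}. So your first half reproduces, in rigorous form, what the paper takes for granted, and your second half --- the rigidity statement that no symmetry can hold unless $S_x$ is reabsorbed --- is genuinely additional content that the paper never addresses. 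That second half is sound in outline, but the stationary-phase comparison of the $(A_1,A_2)$-lines as $\delta\to0^+$ is more machinery than is needed and leaves the higher-order Taylor terms and the large-$\lambda$ regime to be checked by hand. A cleaner route to the same strict inequality: fix $x$ with $S_x\not\propto t^a$, take nonnegative bumps $f,g$ forcing the weight $w(t)=f(x-t)g(x+t)/t\geq 0$ to be supported in an interval around some $t_0>0$; then $BC^a(f,g)(x)=\int w$ (attained at $\lambda=0$), while for the modulated inputs each fixed $\lambda$ gives $\big|\int w\,e^{i(\lambda t^a+S_x(t))}dt\big|<\int w$ --- equality would force $\lambda t^a+S_x(t)$ to be constant on an interval, hence $S_x\propto t^a$ by analyticity --- and since the oscillatory integral tends to $0$ as $|\lambda|\to\infty$ by non-stationary phase, the supremum is a maximum over a compact set and is therefore strictly below $\int w$, contradicting the assumed symmetry with no $\delta\to0$ limit at all. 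Your parenthetical identification of the resonant families for $a=1,2$ matches (indeed slightly extends) the symmetries recorded in \eqref{quadm}, \eqref{quadm0}, \eqref{linm0}.
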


With the above notations, we have

\begin{theorem} \label{main} Let\footnote{For a discussion on the necessity of imposing the restriction $a\notin\{1,2\}$ please see the items \ref{OQ:3} and \ref{OQ:4} in the open question Section \ref{openquest} below.} $a\in(0, \infty)\setminus\{1, 2\}$ and assume $p, q, r$ are H\"older indices, \textit{i.e.} $\frac{1}{p}+\frac{1}{q}=\frac{1}{r}$, with $1<p, q \leq \infty$ and $\frac{2}{3}<r<\infty$. Then the non-resonant bilinear Hilbert--Carleson operator extends continuously from $L^p(\R)\times L^q(\R)$ into $L^r(\R)$ with
\beq\label{mainr}
\|BC^{a}(f,g)\|_{L^r}\lesssim_{a,p,q} \|f\|_{L^p} \|g\|_{L^q}.
\eeq
\end{theorem}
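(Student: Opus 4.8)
The plan is to prove \eqref{mainr} by a bootstrapping scheme: reduce to a single-scale phase-linearized bilinear model, extract the cancellation hidden in the monomial phase on that scale, and then reassemble the scales via BHT-type almost orthogonality.

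\emph{Step 1: linearization and kernel scale decomposition.} I would first remove the supremum by freezing $\lambda=\lambda(x)$ to be an arbitrary measurable function and seek bounds for the resulting linearized operator that are independent of $\lambda(\cdot)$. Writing $\frac1t=\sum_{k\in\Z}2^{-k}\rho(2^{-k}t)$ for a suitable odd bump $\rho$ adapted to $|t|\sim1$, one splits $BC^{a}(f,g)\le\sum_{k\in\Z}|BC^{a}_{k}(f,g)|$ with $BC^a_k$ supported in $|t|\sim2^{k}$. On this scale the phase $t\mapsto\lambda t^{a}$ has derivative of size $\sim|\lambda|2^{k(a-1)}$, so to leading order $BC^a_k$ behaves like a modulated (dilated) BHT-type bilinear operator with an extra modulation of frequency $N_{k}(x):=a\lambda(x)2^{k(a-1)}$ attached; the quadratic and higher Taylor remainders of $\lambda t^{a}$ on $|t|\sim2^{k}$, which do \emph{not} vanish after dilation, encode the ``non-zero curvature'' to be exploited below. (In the degenerate limit in which one of the inputs tends to a constant, the single-scale piece degenerates into a local model for a Carleson operator with polynomial phase, which is why the high-resolution step will be number-theoretic in flavor.)

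\emph{Step 2: low-resolution multi-scale analysis.} Next I would run a Lacey--Thiele-type time-frequency discretization adapted to the family $\{BC^a_k\}_k$. Testing the associated trilinear form against a third function and expanding each factor into wave packets, the form is written as a sum over rank-one families of tri-tiles $\P=\bigcup_m\P_m$, organized by the usual adjacency constraint on the three frequency components of a tri-tile, but now carrying the additional datum $N_k(x)$. The structural novelty compared with the classical BHT is that, because the curved phase forces the scale $2^{k}$ and the frequency $N_{k}$ to be kept as independent parameters, the components of a tri-tile $P\in\P_{m}$ are no longer area-one Heisenberg boxes. I would nonetheless control these families by refining the size/energy machinery of \cite{lt1,lt2}: organize $\P$ into trees, prove a Bessel-type energy estimate and an $L^{p}$-type size estimate for the tree counting function, sum over trees, and interpolate. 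The upshot is that the global bound \eqref{mainr} follows once one has a single-scale estimate that is uniform in the tri-tile $P$ and comes with a quantitatively summable gain.

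\emph{Step 3: high-resolution single-scale analysis and curvature cancellation.} Fix a tri-tile $P\in\P_m$. Following the LGC-methodology of \cite{lvUnif} I would perform a secondary Gabor-frame decomposition of $P$ into a four-parameter family $\mathcal{S}(P)$ of tri-tiles $s$, each of which \emph{does} obey the area-one Heisenberg localization, and work with two companion objects: a \emph{continuous phase-linearized spatial model}, on which $\lambda t^{a}$ is replaced by its relevant polynomial Taylor jet and the cancellation is extracted, and a \emph{discrete phase-linearized wave-packet model} that transfers this gain back to Step 2. The core estimate is a $TT^{*}$ bound: the kernel of $\mathcal{S}(P)\mathcal{S}(P)^{*}$ involves exponential sums of the type $\sum_{n}e^{i\lambda\big((n+u)^{a}-(n+v)^{a}\big)}$ (and their $f,g$-weighted variants), for which one must prove square-root-type cancellation uniformly in the parameters. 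This is precisely where the hypothesis $a\notin\{1,2\}$ enters decisively: for such exponents $t^{a}$ is ``non-resonant'' with the bilinear and modulation structure, so a Weyl-sum / van der Corput analysis, combined with a phase level-set decomposition exploiting the time-frequency correlations among the $s\in\mathcal{S}(P)$, yields a genuine power gain --- of the form $2^{-\delta|k-k(P)|}$, or an analogue in the relevant secondary parameter. Summing this gain over $\mathcal{S}(P)$ and feeding it into the tree estimates of Step 2 closes the argument for one admissible triple $(p,q,r)$, and a standard multilinear interpolation (covering also the quasi-Banach range $r<1$) then gives the full range $1<p,q\le\infty$, $\frac23<r<\infty$.

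\emph{Main obstacle.} The genuine difficulty is the simultaneous presence of the two competing symmetries. The BHT component in Step 2 demands a time-frequency analysis that respects modulation invariance and therefore forbids any naive integration by parts on the multiplier, whereas the Carleson component in Step 3 can only be treated by exploiting the curvature, which breaks that very invariance; reconciling the two is what forces the ``fat'', non-area-one tri-tiles at low resolution and the delicate matching between the continuous and discrete phase-linearized models at high resolution. Concretely, I expect the main technical burden to be proving the Weyl-sum cancellation underlying the $TT^{*}$ estimate with constants that are uniform in all four secondary parameters \emph{and} in the frozen linearizing function $\lambda(\cdot)$, and then verifying that the resulting gain survives reinsertion into the multi-scale size/energy summation --- i.e.\ that the curvature gain is quantitatively strong enough to overcome the loss incurred by abandoning area-one localization in Step 2.
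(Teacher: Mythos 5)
Your outline reproduces the paper's overall two-resolution architecture (LGC phase linearization, Gabor frames, a $TT^{*}$/Weyl-sum single-scale gain, BHT-type expanded tri-tiles with size/energy summation), but it is missing the decisive organizing step, and this is a genuine gap rather than a detail. You only decompose in the spatial scale $k$; you never decompose according to the \emph{height of the oscillatory phase}. In the paper one first writes $T=T_{0}+T_{\mathrm{osc}}$, where $T_{0}$ collects the region $|\lambda(x)t^{a}|\lesssim 1$, and then $T_{\mathrm{osc}}=\sum_{m\geq 1}T_{m}$ with $|\lambda(x)t^{a}|\sim 2^{am}$ on $T_{m}$; all the curvature gain is an exponential decay in $m$, i.e.\ it is measured against $|\lambda(x)|2^{ak}$ and not against a spatial offset $|k-k(P)|$. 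The latter is not even a well-defined quantity here: $BC^{a}$ commutes with dilations and $\lambda(\cdot)$ is an arbitrary measurable function, so for any fixed $k$ the phase can be made as large or as small as one wishes, and a gain indexed by the spatial scale alone cannot hold uniformly in $\lambda$. More seriously, on the scales where $|\lambda(x)|2^{ak}\lesssim 1$ there is no oscillation and no curvature gain is possible; that low-oscillatory piece must be resummed and dominated pointwise by the maximally truncated bilinear Hilbert transform plus the bilinear maximal function, so its boundedness rests on Lacey's theorem for the bilinear maximal operator --- an essential external ingredient that appears nowhere in your plan. Without the $T_{0}/T_{\mathrm{osc}}$ splitting and the level-set decomposition in $m$, Step 3 has no decay parameter to gain in and Step 1's ``modulated BHT plus curvature remainder'' picture cannot be exploited.

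Second, the sentence ``summing this gain over $\mathcal{S}(P)$ and feeding it into the tree estimates of Step 2 closes the argument'' conceals two obstructions the paper must overcome. The discrete phase-linearized wave-packet model is \emph{not} absolutely summable with $m$-decay (the paper exhibits a counterexample in its Appendix A), so the $TT^{*}$/exponential-sum cancellation has to be extracted on the continuous spatial model and only then transferred back; your remark about ``delicate matching'' gestures at this but does not supply the mechanism. Moreover, the single-scale estimate so obtained carries no time-frequency localization in the dualizing function $h$, because the stopping time $\lambda$ enters through an oscillatory weight attached to the $h$-side; it therefore cannot be inserted into the size/energy machinery as is. The paper needs a transitional step that splits the secondary parameters according to the mass of this weight and the size distribution of $f,g$ (light/uniform/clustered components), builds weighted $L^{2}$-based sizes for $h$, and in this way reaches only the local-$L^{2}$ range directly; the full claimed range $2/3<r<\infty$ with $p$ or $q$ possibly infinite is then recovered by interpolating against the bilinear-maximal bound and by a localized (helicoidal) size interpolation for the $L^{\infty}$ endpoints --- not by ``standard multilinear interpolation'' alone. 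As written, your proposal would stall both at the absence of a decay parameter and at the reinsertion of the single-scale gain into the multi-scale argument.
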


\begin{remark}\label{cor} The above theorem extends to more general types of phases that obey suitable conditions meant to preserve the hybrid character of the newly defined operator. A natural such extension that involves extra-technicalities but requires no fundamental modification relative to the proof strategy for Theorem \ref{main} is the following\footnote{Due to space limitation concerns we make no attempt to provide a proof of this result in our present paper.}:
$\newline$
$\newline$
\emph{Let $d\in\N$, $d\geq 3$ and $P \in \mathbb{R}[t]$ be a real polynomial of degree $d$ with $P'(0)=P''(0)=0$. Then the operator
\beq\label{BP}
BC^{P}(f,g)(x):= \sup_{\lambda\in {\mathbb R}} \Big|\int f(x-t)g(x+t) e^{i\lambda P(t)}\frac{dt}{t}\Big|
\eeq
satisfies the bound
\beq\label{BP1}
 \|BC^{P}(f,g)\|_{L^r} \lesssim_{d,p,q} \|f\|_{L^p} \|g\|_{L^q}
\eeq
for the same range of $p, q$ and $r$ as that for $BC^{a}$ above.}
\end{remark}

\begin{observation}\label{coment} As the name suggests, $BC^{a}$ is a ``concatenation'' of two classical objects in harmonic analysis:
\begin{itemize}
\item the bilinear Hilbert transform defined by
\beq\label{bhtd}
B(f,g)(x):= \int f(x-t) g(x+t)  \frac{dt}{t} ;
\eeq
\item the Carleson-type\footnote{Such operators are explicitly defined in \cite{lvUnif} under the name $\gamma$-\emph{Carleson operators} with the intention of creating a continuum family of operators that covers both the classical Carleson operator and the purely curved (no linear term) polynomial Carleson operator considered by Stein \cite{Stein} and Stein and Wainger \cite{SteinWainger}. In this more general setting $\g$ stands for a suitably parametrized planar curve.} operator given by
\beq\label{sw}
C^a(f)(x):= \sup_{\lambda\in \R}  \Big|\int f(x-t) e^{i\lambda t^a}  \frac{dt}{t}  \Big|.
\eeq
\end{itemize}
As a consequence of the above and in direct correspondence with the properties \ref{key:td}-\ref{key:max}, our approach to the \emph{non-resonant} case relies on and unifies
\begin{itemize}
\item the time-frequency methods developed in \cite{lt1} and \cite{lt2} for the study of \eqref{bhtd}, and

\item the non-zero curvature methodology employed in \cite{lvUnif} for treating \eqref{sw}.
\end{itemize}
\end{observation}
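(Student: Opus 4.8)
The plan is to treat $BC^a$ as a genuinely hybrid object and run two interlocking layers of time--frequency analysis, one multi-scale and one single-scale, feeding the output of the latter into the former. First I would freeze the supremum: by a standard measurable selection argument it is enough to bound, uniformly in the measurable function $\lambda:\R\to\R$, the trilinear form $(f,g,h)\mapsto \iint f(x-t)\,g(x+t)\,e^{i\lambda(x)t^a}\,h(x)\,\tfrac{dt}{t}\,dx$. Next, decompose the kernel dyadically, $\tfrac1t=\sum_{k\in\Z}2^{-k}\psi(2^{-k}t)$ with $\psi$ an odd bump on $|t|\sim1$. On the $k$-th piece the $t$-derivative of the phase is $\sim a\lambda(x)2^{k(a-1)}$, which acts as a \emph{frequency}; this is the exact mechanism by which the linear Taylor part of $\lambda t^a$ reproduces the modulation-invariance property \ref{key:m} inherited from the BHT. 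The residual $t$-oscillation on each fixed scale is governed by the second derivative $\sim a(a-1)\lambda(x)2^{k(a-2)}$, and it is this curvature that we must eventually exploit; the non-resonance hypothesis $a\notin\{1,2\}$ is precisely what guarantees that neither of these two quantities degenerates ($a\ne1$) nor that the curvature resonates with the bilinear modulation ($a\ne2$).

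Summing over $k$ and discretizing in frequency, the trilinear form is reorganized into a superposition of ``dilated'' discrete model sums indexed by rank-one families of tri-tiles $\{\mathbb{P}_m\}_m$: each $P\in\mathbb{P}_m$ carries a spatial interval $I_P$ with $|I_P|\sim 2^k$ and three frequency components determined by the scale-dependent frequency above, but --- because the phase is not exactly linear on $I_P$ --- these components fail to be Heisenberg-localized to area one. On this low-resolution side I would then run a refinement of the Lacey--Thiele machinery of \cite{lt1,lt2}: organize the tri-tiles into trees, prove the relevant \emph{size} and \emph{energy} estimates adapted to the dilated geometry, and reduce matters to a single-tree estimate. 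At this level the BHT machinery by itself does not suffice --- the single-scale model operators are not summable across scales (a logarithmic divergence of Carleson type), so one needs an extra decaying factor of the form $2^{-\varepsilon(a)k}$ (equivalently, a gain in the number of relevant scales) coming from the curvature of $\lambda t^a$. Producing that factor is the purpose of the single-scale analysis.

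Fix therefore a tri-tile $P$ with $|I_P|\sim 2^k$ and enter the high-resolution regime, where I would import the LGC-methodology of \cite{lvUnif}. Partition $I_P$ at an intermediate scale and expand $e^{i\lambda(x)t^a}$ --- after subtracting the linear part already absorbed into the tile's frequency --- in a Gabor frame, thereby splitting $P$ into a four-parameter family $\mathbb{S}(P)$ of honest area-one tri-tiles $s$. One first passes through a \emph{continuous phase-linearized spatial model}: a $TT^{*}$ argument reduces the desired $L^2$ estimate to controlling correlations between wave packets attached to distinct parameters of $\mathbb{S}(P)$, and these correlations are governed by oscillatory (Weyl-type) exponential sums whose stationary-phase and level-set structure is analyzed by exploiting the time--frequency correlations of the phase; here $a\ne2$ makes the quadratic term genuinely non-degenerate and transverse to the modulation directions, which is what yields the quantitative gain $2^{-\varepsilon(a)k}$. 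A companion \emph{discrete phase-linearized wave-packet model} then transports this gain back onto the tri-tiles $P$ in a form that can be inserted into the size and energy estimates of the previous step.

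With the curvature gain in hand, the sums over $m$ and over the tree parameters on the low-resolution side become summable, and a restricted-type interpolation --- as in the standard treatment of the BHT, which already covers $\tfrac23<r<\infty$ --- upgrades the resulting bounds to the full range $1<p,q\le\infty$, $\tfrac1p+\tfrac1q=\tfrac1r$, giving \eqref{mainr}. The step I expect to be the main obstacle is the \emph{handshake} between the two resolutions: the Lacey--Thiele tile combinatorics are built around modulation invariance and area-one localization --- precisely the features that the curvature destroys --- whereas the curvature extraction is intrinsically a single-scale, non-modulation-invariant phenomenon. Making the single-scale gain uniform over all tri-tiles $P$ and genuinely compatible with the multi-scale tree selection, while simultaneously handling the quasi-Banach range $r<1$, is where the bulk of the technical work --- and the genuinely new ideas of the proof --- will have to go.
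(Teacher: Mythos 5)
Your proposal follows essentially the same route as the paper: the hybrid two-resolution strategy, with a low-resolution Lacey--Thiele-type tile/tree analysis on rank-one families of expanded tri-tiles for the BHT-inherited modulation invariance, and a high-resolution LGC single-scale analysis (phase linearization, adapted Gabor frames, a bilinear $TT^{*}$ argument with Weyl-type exponential sums and phase level-set estimates) run on a continuous phase-linearized spatial model and then transferred to the discrete wave-packet model, followed by interpolation to reach the full range. One correction: the curvature gain the paper extracts is exponential in the oscillation-height parameter $m$ (from the level-set decomposition $|\lambda(x) t^a|\sim 2^{am}$), not in the spatial scale $k$ --- decay in $k$ is impossible by the dilation and modulation invariances of each $T_m$, and the summation over scales is instead handled by size--energy almost-orthogonality of trees --- so your factor $2^{-\varepsilon(a)k}$ should be $2^{-\delta a m}$, consistent with your later remark that the sum over $m$ becomes summable.
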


For an overview of the key ideas and novelties in our approach the interested reader is invited to consult Section \ref{Genov}. In the reminder of this introductory section we will focus on the motivation/relevance for considering the class of bilinear Hilbert--Carleson operators $\{BC^a\}_{a>0}$ as well as on a discussion of the historical background that reviews several related topics.

\subsection{Motivation}\label{Motiv}  Since in this paper we introduce a new class of operators, \textit{i.e.} $\{BC^a\}_a$, it seems natural to dedicate an entire section to thoroughly presenting the multiple reasons for our interest in its study. Indeed, the introduction of the class of bilinear Hilbert--Carleson operators has a multifaceted motivation:
\begin{itemize}
\item The first and most relevant for us is the novel complexity hierarchy that unfolds within the zero/non-zero curvature paradigm.
\item A second motivation is provided by the general, unifying character of this class of operators, which brings under the same umbrella (and extends) a few classical objects of interest in harmonic analysis.
\item Thirdly, our class naturally relates to some interesting current directions of research.
\end{itemize}

In what follows we discuss in more detail each of the above items.

\subsubsection{A novel manifestation within the zero/non-zero curvature paradigm}\label{znzpar}

As mentioned at the beginning of our introduction, the formulation of our present problem was mainly motivated by the desire to shed more light into the ambitious program of developing a unified theory that simultaneously treats zero and non-zero curvature versions of several important themes within harmonic analysis.

Heuristically speaking, it is well known that the non-zero curvature problems are expected to have a lesser degree of complexity than their zero curvature counterparts, partly because of the (much) richer classes of symmetries obeyed by operators in the latter category.  Thus, in the quest to obtain an integrated theory for treating these two regimes, it becomes natural to focus on the nature of changes in one's approach as one transitions from non-zero to zero curvature problems. In particular, it is of high interest to design relevant and historically motivated hybrid models that contain both ends of the spectrum.  In order to provide better context to our discussion, we continue with the following philosophical discussion:
$\newline$

\noindent \underline{\textsf{Generic proof strategy outline for approaching the zero/non-zero curvature paradigm.}}
$\newline$

In the context of the literature to date -- see also the situations discussed in Section \ref{Hist} below -- one deals with an operator, referred to generically as $T$, that lands in precisely one of the following two scenarios:

\begin{enumerate}[label=(\Roman*), leftmargin=16pt]
\item \label{I:zero:curvature} \underline{the \emph{zero curvature} case}: This is the situation in which $T$ has a (generalized) modulation invariance symmetry, thus requiring an approach that puts equal weight on all frequency locations (\textit{i.e.}, the structure of the operator is invariant under frequency translations). This type of problem requires a time-frequency localization of the input functions on which $T$ acts, thus naturally bringing into the picture wave-packet analysis and, in particular, a time-frequency tile discretization of the phase-space portrait of $T$. As expected, the fundamental structures in this context are represented by geometric configurations of tiles that share the same time-frequency location, called trees. The crux of the proof in these problems is to properly quantify 1) the almost-orthogonality relations among various trees, and 2) the ``amount of information'' relative to the input functions that each of these trees carries. The former aspect is achieved via a careful combinatorial argument, while the latter relies on a greedy algorithm involving concepts such as \emph{mass}, in the spirit of Fefferman's proof of the $L^2$-boundedness of the Carleson operator, and/or \emph{size}, reminiscent of the Lacey-Thiele approach to the boundedness of the bilinear Hilbert transform.
\medskip

\item \label{II:non:zero:curvature} \underline{the \emph{non-zero curvature} case}: in this situation $T$ has no modulation symmetry, and thus the zero frequency is  expected to play a favoured r\^{o}le in its analysis. As a consequence, one applies the decomposition
\beq\label{stdecc}
T=:T_0+T_{\osc},
\eeq
such that
\begin{itemize}
\item $T_0$ is a (maximal truncated) singular operator whose Fourier multiplier has essentially no phase oscillation and whose behavior is well understood.

\item $T_{\osc}$ is a (maximal) oscillatory integral operator whose Fourier multiplier has a highly oscillatory phase. In this situation one performs a further discretization
\beq\label{stdecce}
T_{\osc}=\sum_{m\in\N} T_{m},
\eeq
with each $T_m$ representing the piece of $T$ whose multiplier phase has the height $\approx 2^{\mu m}$ for some properly chosen $\mu>0$ depending on the specific nature of $T$. Via (non)stationary phase analysis, $TT^{*}$-method, and interpolation techniques, the goal is to show that
\beq\label{stdeccee}
\|T_m\|_{L^p\to L^p}\lesssim_p 2^{-\d(\mu,p)m}
\eeq
for some $\d(\mu,p)>0$ and thus conclude that the $L^p$ norm of $T_{\osc}$ is under control via a telescoping sum argument.
\end{itemize}
\end{enumerate}

It is in this realm that the thought of developing a unified theory that covers the entire non-zero/zero curvature spectrum naturally comes to life. However, with the notable exception of the polynomial Carleson theme discussed below in Section \ref{pco},  there is to date no other fundamental class of operators for which such an integrated theory exists. Within this context, our paper is intended as a step forward in providing a better understanding of the subtle interplay between non-zero and zero curvature. Indeed, the present paper provides a first instance in which the analysis of the operator $T$ \emph{simultaneously}
\begin{itemize}
\item has both zero and non-zero curvature features,

\item incorporates both approaches \ref{I:zero:curvature} and \ref{II:non:zero:curvature} above, and

\item requires modulation invariance techniques for both components in decomposition \eqref{stdecc}.
\end{itemize}

These features reflect the \emph{hybrid} nature of our operator $T:=BC^a$ which is designed as a mixture between the bilinear Hilbert transform and a Carleson-type operator:
\begin{enumerate}[label=(\roman*)]
\item regarded from the input side via \ref{key:m}, $BC^{a}$ has zero-curvature features;

\item regarded from the complex exponential kernel side, in the case $a\in (0,\infty)\setminus\{1\}$, our operator $BC^{a}$ has non-zero curvature features.
\end{enumerate}

\begin{observation}\label{reson}
In this newly created hybrid category, it is important to say that any discussion of the complexity hierarchy centered around the concepts of zero/non-zero curvature must take into account the \emph{joint} behavior of the symmetries determined by the interaction between the structure of the input function(s) \emph{and} that of the operator's kernel. Indeed, taking as an example the operator $BC^a$ defined in \eqref{Top}, it is this interaction effect that truly determines the difficulty of the problem, since this joint interference may produce ``\emph{resonances}'' not captured by either of the single items (i) and (ii) above if treated independently -- see the discussion for the $a=2$ case in the third item of Section \ref{openquest} with special emphasis on \eqref{quadm}. For another illuminating example one is invited to read the ``open problems'' part of Section \ref{msrt} that discusses the behavior of suitable maximally modulated singular Radon transforms.
\end{observation}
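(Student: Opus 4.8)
We indicate how to turn the heuristic into a precise statement and establish it. For fixed $a>0$ introduce the group of \emph{generalized–modulation symmetries}
$$
\mathcal{G}_a:=\big\{(\phi,\psi)\in\R[x]\times\R[x]\ :\ \big|BC^a\big(e^{i\phi}f,\,e^{i\psi}g\big)\big|=\big|BC^a(f,g)\big|\ \text{ for all }f,g\in S(\R)\big\},
$$
where $e^{i\phi}f$ denotes the function $x\mapsto e^{i\phi(x)}f(x)$. Alongside it consider its two ``shadows'': the group $\mathcal{G}^{\mathrm{in}}$ of pairs $(\phi,\psi)$ that preserve $\sup_{\lambda\in\R}\big|\int f(x-t)g(x+t)K_\lambda(t)\,dt\big|$ for \emph{every} kernel family $\{K_\lambda\}$ — in particular for a single $\lambda$-independent truncated Hilbert kernel $\rho(t)/t$ — which records exactly what the translation/dilation/modulation structure \ref{key:td},\,\ref{key:m} of the bilinear Hilbert transform \eqref{bhtd} forces \emph{with no use of the monomial} $t^a$; and the generalized–modulation symmetry group $\mathcal{G}^{\mathrm{ker}}_a$ of the monomial Carleson operator $C^a$ of \eqref{sw}. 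The observation is then the assertion that $\mathcal{G}_2$ (and, analogously, $\mathcal{G}_1$) is strictly larger than $\mathcal{G}^{\mathrm{in}}\cup\mathcal{G}^{\mathrm{ker}}_2$.

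The decisive step is an elementary computation that I would carry out first. The phase in the modulated expression is $\phi(x-t)+\psi(x+t)+\lambda t^a$; since $e^{i\lambda t^a}$ sits under $\sup_{\lambda\in\R}$, for each fixed $x$ one may translate $\lambda$ freely, so $(\phi,\psi)\in\mathcal{G}_a$ precisely when, for every $x$, one has $\phi(x-t)+\psi(x+t)=C(x)+\mu(x)\,t^a$ for some real-valued $C,\mu$. By Taylor's formula the coefficient of $t^j$ in $\phi(x-t)+\psi(x+t)$ is $\tfrac1{j!}\big((-1)^j\phi^{(j)}(x)+\psi^{(j)}(x)\big)$, so the condition is that this vanish identically in $x$ for every integer $j\ge1$ with $j\neq a$. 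A short case analysis yields:
\begin{itemize}
\item if $a\notin\{1,2\}$ — when $a$ is not an integer, $t^a$ is linearly independent from all integer powers of $t$; when $a$ is an even integer $\ge4$, the forced vanishing of the $t^2$-coefficient already propagates upward — one is forced to $\deg\phi,\deg\psi\le1$ with $\phi-\psi$ constant, so $\mathcal{G}_a$ consists \emph{only} of the diagonal affine modulations already recorded in \ref{key:m};
\item for $a=1$ the residual linear-in-$t$ term is itself of the form $\mu(x)\,t^a$ and is absorbed by $\sup_\lambda$, so $\mathcal{G}_1$ contains the full two-parameter affine modulation group, strictly larger than the one-parameter diagonal group of \eqref{bhtd};
\item for $a=2$ completing the square gives $\phi(x-t)+\psi(x+t)=C(x)+\mu(x)\,t^2$ for every diagonal pair $\phi=\psi$ of degree $\le3$, so $\mathcal{G}_2$ contains in particular the \emph{quadratic modulations} $(e^{icx^2}f,\,e^{icx^2}g)$, $c\in\R$ — this is precisely the resonance relation emphasised in \eqref{quadm}.
\end{itemize}
In particular $\mathcal{G}_2\supsetneq\mathcal{G}_a$ for all $a\notin\{1,2\}$.

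The same supremum-translation argument applied to $C^a$ shows that $\mathcal{G}^{\mathrm{ker}}_a$ is the full affine modulation group when $a=1$ (the classical Carleson modulation invariance) but trivial — only constant phases — when $a\ne1$; and one checks directly that $\mathcal{G}^{\mathrm{in}}$ equals the diagonal affine modulations for \emph{every} $a$, since already a single $\lambda$-independent truncated Hilbert kernel detects any nonconstant $t$-dependence of $\phi(x-t)+\psi(x+t)$, forcing $\phi-\psi$ constant and $\phi$ affine. This proves the observation: the quadratic-modulation invariance of $BC^2$ lies in $\mathcal{G}_2$ but in neither $\mathcal{G}^{\mathrm{in}}$ nor $\mathcal{G}^{\mathrm{ker}}_2$, so it is a genuine feature of the \emph{joint} action of the curved kernel phase and of the two inputs — not predicted by item (i), the input-side BHT symmetry, which sees only the diagonal affine modulations uniformly in $a$, nor by item (ii), the kernel-side curvature, which for $a\ne1$ merely predicts decay and no modulation invariance, exactly as for $C^a$. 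One would then invoke this emergent symmetry to account for the exclusion $a\notin\{1,2\}$ in Theorem~\ref{main}: a time-frequency model compatible with $\mathcal{G}_2$ must be modulation-invariant along a parabolic (chirp) family, which lies outside the scope of both the multi-scale tile analysis \ref{I:zero:curvature} and the non-stationary-phase scheme \ref{II:non:zero:curvature}.

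The main obstacle is not the symmetry computation, which is a finite-dimensional linear-algebra exercise with binomial coefficients, but making rigorous the clause ``not captured by either of the single items if treated independently''. The delicate point is the definition of the input-side shadow $\mathcal{G}^{\mathrm{in}}$: it must be quantified over a class of kernels broad enough that the resulting common symmetry group reflects only the translation/dilation/diagonal-modulation structure of the bilinear Hilbert transform and not an accident of the particular monomial $t^a$, yet narrow enough to be nonempty and explicitly computable; one then has to verify that the quadratic modulation of $BC^2$ (and the extra affine modulation of $BC^1$) truly escapes it, and to reconcile the resulting picture with the analogous ``resonance'' phenomenon for maximally modulated singular Radon transforms discussed in Section~\ref{msrt}. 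Once this formalization is fixed, everything else is bookkeeping with Taylor expansions and the translation-invariance of the supremum over $\lambda$.
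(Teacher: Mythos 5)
The statement you are addressing is an expository Observation that the paper never proves formally: its substantiation consists of (i) the explicit symmetry identities recorded in Section \ref{openquest} --- the quadratic modulation invariance \eqref{quadm} of $BC^{2}$ and the relations \eqref{quadm0}, \eqref{linm0} for $BC^{1}$, each verified by exactly the one-line phase computation you perform (expand $\phi(x-t)+\psi(x+t)$ and absorb the resulting $\mu(x)\,t^{a}$ term into the supremum over $\lambda$), together with Observation \ref{nres} asserting the absence of extra symmetries for $a\notin\{1,2\}$ --- and (ii) an analytic consequence, namely the footnote to item \ref{OQ:3} exhibiting a quadratic chirp for which the key decay estimate \eqref{mainrm} fails at $a=2$ (see also Observation \ref{nonresonantresctriction} on the failure of \eqref{keytr2}). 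Your proposal is a correct and somewhat more systematic packaging of (i): the Taylor-coefficient conditions $(-1)^{j}\phi^{(j)}+\psi^{(j)}=0$ for $j\neq a$ are right; they force only the diagonal affine modulations of \ref{key:m} when $a\notin\{1,2\}$ (note that the constraints at $j=1,2$ alone already do this for every such $a$, so your separate ``propagates upward'' subcase for even $a\geq 4$ is unnecessary); they yield the extra diagonal modulations of degree $\leq 3$ at $a=2$ (slightly more than the paper records, and consistent with \eqref{quadm}); and they yield the full affine group at $a=1$, where you omit --- although it lies in your $\mathcal{G}_{1}$ --- the antidiagonal quadratic pair \eqref{quadm0} that the paper emphasizes as the source of the analogy with the trilinear Hilbert transform.

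Two caveats. First, your ``precisely when'' characterization of $\mathcal{G}_{a}$ is established only in the sufficiency direction; the necessity (that an identity $|BC^{a}(e^{i\phi}f,e^{i\psi}g)|=|BC^{a}(f,g)|$ valid for all $f,g$ forces the pointwise phase decomposition $\phi(x-t)+\psi(x+t)=C(x)+\mu(x)\,t^{a}$) is asserted rather than proved, and it is exactly what is needed to justify the nonresonant half of the claim (Observation \ref{nres}); a sketch, e.g.\ by testing on localized wave packets, should be supplied. Second, the clause ``this interaction truly determines the difficulty'' is backed in the paper not by the symmetry count alone but by the quantitative breakdown of \eqref{mainrm} under the quadratic modulation; your proposal gestures at this (``a model compatible with $\mathcal{G}_{2}$ must be modulation-invariant along a chirp family'') but does not supply the counterexample, which is the part of the Observation that actually links the emergent symmetry to the exclusion $a\notin\{1,2\}$ in Theorem \ref{main}.
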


\subsubsection{Unifying and extending some classical themes}\label{unif2th}

As noted above, our (non-resonant) bilinear Hilbert--Carleson operator $BC^a$ \emph{combines} and -- as we will see immediately below -- \emph{controls} the behavior of the bilinear Hilbert transform $B$ and that of the Carleson-type operator $C^a$. Indeed, assume in what follows that the conclusion of our Theorem \ref{main}  holds. Then we deduce the following:

\begin{itemize}

\item \emph{Implications about the behavior of the  bilinear maximal/maximally truncated/bilinear Hilbert transform}:
We first notice that after linearizing the supremum in \eqref{Top}, if we take $\l(x)=0$ then the resulting object is precisely the classical bilinear Hilbert transform (BHT), and hence any bounds for $BC^a$ trivially imply the same bounds for the BHT. Applying now the strategy described in \eqref{stdecc}-\eqref{stdeccee} for $T=BC^a$ and following the proof\footnote{Note that the proof of Theorem \ref{main_theorem_local_L2} involves only BHT-type behavior and does not rely on any \emph{a priori} understanding of the bilinear maximal Hilbert transform; accordingly, deducing the local-$L^2$ boundedness of the maximally truncated bilinear Hilbert transform in this way does not involve circular reasonings.} of our key underlying result stated as Theorem \ref{main_theorem_local_L2} we deduce that the highly oscillatory component $T_{\osc}$ is bounded within the local-$L^2$ H\"older range. Once at this point, one observes that the low oscillatory component $T_0=BC^a-T_{\osc}$ is equivalent to a maximally truncated bilinear Hilbert transform (up to well behaved error terms) and thus the boundedness of the latter operator -- at least in the  local-$L^2$  H\"older range -- is implied by the corresponding bounds on $BC^a$.

\item \emph{Implications about the behavior of the Carleson-type operator $C^a$}: For this theme one can simply take in \eqref{mainr} the endpoint bound $1<p=r<\infty$ and $q=\infty$ and set $g\equiv 1$ to deduce immediately the corresponding $L^p$ bounds for the Carleson-type operator $C^a$.\footnote{There is an alternate, indirect approach for showing that $L^p$ bounds on $BC^a$ imply corresponding $L^p$ bounds for $C^a$ for $1<p\leq 2$ without relying on the endpoint bounds $(p,\infty,p)$ in our Theorem \ref{main} . Indeed, by first restricting the support of $f$ to $[-1,1]$ and then choosing $g=\one_{[-1,1]}$ one can deduce from the open boundedness range for $BC^a$ that $C^{a}_{\TT}$ -- the restriction of $C^{a}$ to the torus -- maps $L^q(\TT)$ into $L^r(\TT)$ continuously for any $1<r<q<\infty$. Using Stein's maximal principle \cite{s1} and real interpolation one concludes that $C^{a}_{\TT}$ is of strong type $(p,p)$ for any $1<p<2$ and of weak type $(2,2)$. The corresponding conclusions on the real line, \textit{i.e.} the $L^p \to L^p$ boundedness of $C^a$ for $1<p<2$, can now be obtained via a transference principle.}
\end{itemize}

\subsubsection{Connections with other research directions}\label{Otherdirections}
$\newline$

\noindent\textsf{I. Power decay property for multilinear oscillatory integrals and polynomially modulated bilinear Hilbert transforms.}
$\newline$

We start our discussion here with the following:
$\newline$

\noindent\textsf{Problem A. \cite{cltt}} \emph{Fix $m,n\in\N$, $m\geq 2$, $\l$ a real parameter and let $P:\R^m\to \R$ be a suitably smooth function. Define the multilinear functional
\beq\label{mF}
\Lambda_{\l}(f_1,\ldots,f_n):=\int_{\R^m} e^{i \l P(x)}\prod_{j=1}^n f_j(\pi_j(x)) \eta(x)dx,
\eeq
where here $\eta\in C_0^1(\R^m)$, $\pi_j:\R^m \to V_j\subseteq \R^m$ are orthogonal projections with $V_j$ linear subspaces of dimension $s<m$, and $f_j:V_j \to \C$ are locally integrable functions (relative to the induced Lebesgue measure on $V_j$).
$\newline$
Characterize those data $(P, \{V_j\}_j)$ for which the following power decay property holds: there exist $\ep>0$ and $C=C(\|\eta\|_{C^1})>0$ such that
\beq\label{PD}
|\Lambda_{\l}(f_1,\ldots,f_n)|\leq C(1+|\l|)^{-\ep} \prod_{j=1}^n\|f_j\|_{L^{\infty}(V_j)},
\eeq
for all $f_j\in L^{\infty}(V_j)$ and $\l\in\R$.}
$\newline$

This theme of research may be seen as a natural development of some old and fundamental questions investigating the asymptotic behavior of expressions involving highly oscillatory phases. A prime and basic example of the latter is offered by the classical Van der Corput lemma, which states that
\beq\label{vC}
\int_{a}^b e^{i \l \varphi(t)} dt = O((1+|\l|)^{-\ep}),
\eeq
where here $\l>0$, $a<b$, $\varphi$ is a real smooth function with some derivative $\varphi^{(k)}$ bounded away from zero\footnote{If $k=1$ one also assumes $\varphi'$ monotone.} and $\ep=\ep(k)>0$. Another representative example,\footnote{This may serve as a linear prototype for the formulation of Problem A above.} proved in \cite{PS94}, is the relation
\beq\label{PS}
\Big|\int_{\R^2} e^{i \l P(x,y)}f(x) g(y)\eta(x,y) dxdy\Big|\lesssim (1+|\l|)^{-\ep}\|f\|_{L^2}\|g\|_{L^2},
\eeq
whenever $P$ is a real-valued polynomial that obeys suitable nondegeneracy conditions.\footnote{The presence of the $L^2$ norm in the right-hand side of \eqref{PS} as opposed to the $L^{\infty}$ norm appearing in \eqref{PD} is not of key relevance since, via standard interpolation techniques, one can always establish a correspondence between these two situations.}

Returning to the initial Problem A, with the exception of the cases $n=2$ or $s=1$ and $m=n$ which were extensively studied and are thus better understood, few results have been established for more general situations, with these almost exclusively limited to the case of $P$ real analytic or merely polynomial -- for more on this entire subject please see \cite{cltt} and more recently \cite{C20} and the bibliography therein.

Focusing now on the case $P$ polynomial -- in analogy with the restrictions imposed for validating \eqref{PS} -- it turns out that some suitable nondegeneracy condition is in fact necessary: Indeed, one is naturally\footnote{It is immediate that if $P$ is degenerate in the sense of \eqref{Poldec}, then the terms in this decomposition can be distributed to the input functions, thus removing any possibility of decay in \eqref{PD}.} led to the requirement that $P$ does not admit a decomposition of the form
\beq\label{Poldec}
 P=\sum_{j=1}^n p_j \circ \pi_j\qquad \textrm{with}\qquad p_j:V_j \to {\mathbb R}\qquad \textrm{polynomial}.
\eeq
Thus, a natural next step is to consider
$\newline$

\noindent\textsf{Problem B.} \emph{Is the power decay property in \eqref{PD}  equivalent to nondegeneracy, \emph{i.e.}, nonexistence of a decomposition of the form \eqref{Poldec}?} $\newline$
Now as revealed in \cite{cltt}, Problem B (and A in this generality) was motivated and inspired by the desire to answer the following:
$\newline$

\noindent\textsf{Problem C. (Bilinear Hilbert transform with polynomial phase)}  \emph{Let $P=P(x,t)$ be a real polynomial of degree at most $d\in\N$ and, for $f,g\in C_0^1(\R)$, define
\beq\label{BiP}
B^P(f,g)(x):=\int_{\R} e^{iP(x,t)} f(x-t) g(x+t)\frac{dt}{t}.
\eeq
Then, show that for any $p, q, r$ H\"older indices, \textit{i.e.} $\frac{1}{p}+\frac{1}{q}=\frac{1}{r}$, with $1<p, q\leq \infty$ and $\frac{2}{3}<r<\infty$, there exists $C=C(d)>0$ such that
\beq\label{BiPbound}
\|B^P(f,g)\|_{L^r}\leq C\|f\|_{L^p}\|g\|_{L^q},
\eeq
uniformly for all real valued polynomials of degree at most $d$.}
$\newline$

A simple inspection of \eqref{BiP} shows that if $d\leq 2$ then \eqref{BiPbound} can be reduced -- via some simple generalized modulation transformations -- to the classical known bounds for the bilinear Hilbert transform (\cite{lt1,lt2}). Thus the only interesting case is $d\geq 3$, for which the proof of \eqref{BiPbound} can be reduced to the following key statement -- which indeed clarifies the connections between Problems A, B and C: Let $P$ be a real nondegenerate  polynomial of degree $d$ and $\eta\in C_0^1(\R)$ supported away from zero. Then there exists $\ep>0$ such that for any $u\in\N$
\beq\label{BiPK}
\Big\|\int_{\R} e^{i P(2^u x,2^u t)} f(x-t) g(x+t) \eta(t) dt\Big\|_{L^r_x}\leq C 2^{- \ep u} \|f\|_{L^p}\|g\|_{L^q}.
\eeq
Now Problem C together with \eqref{BiPK} and also some particular instances of Problem B were addressed in \cite{cltt} using an approach relying on the concept of  $\sigma-$uniformity introduced by Gowers in \cite{gowers}.

In this context, the connection between the research theme of our present paper and Problem C together with its ``relatives'' Problems A and B above becomes transparent:
\begin{itemize}[leftmargin=24pt]
\item  The bilinear Hilbert--Carleson operator $BC^{a}$ -- together with its polynomial bilinear Hilbert--Carleson generalization
$BC_{d}$ in \eqref{BPHd} -- extends the behavior of $B^P$ in \eqref{BiPbound} to the much wilder classes of phases $P(x,t)=\sum_{j=1}^d a_j(x)t^j$ with $d\in\N$ and $\{a_j\}_j$ arbitrary real measurable functions.

\item In the non-resonant case, Theorem \ref{main}  together with Remark \ref{cor} immediately imply the bounds in \eqref{BiPbound} for operators $B^P$ in \eqref{BiP} having phase functions of the form $P(x,t):=Q(x) R(t)$ with $R$ any real polynomial obeying $R'(0)=R''(0)=0$ and $Q$ any real \emph{measurable} function.

\item Moreover, a fundamental component of the proof of Theorem \ref{main} is the so-called \emph{single-scale estimate} -- stated as Proposition \ref{sgscale} -- whose proof provides a novel (and seemingly first such) treatment of a particular instance of Problem A (and thus \eqref{BiPK}) in the setting $n=3$, $m=2$ and $s=1$ without assuming any $x$-smoothness for the phase $P$ (specifically, we treat phases $P$ obeying the same properties as in the previous item). We note further that the proof of our Proposition \ref{sgscale} relies on the methodology developed in \cite{lvUnif} paired with a number-theoretic approach and thus circumvents the $\sigma-$uniformity technique employed in \cite{cltt} to approach Problems A, B and C.\footnote{It is worth noticing here that -- under the assumption that $P$ is non-degenerate (recall \eqref{Poldec}) and that is properly normalized -- the proof of Problem C reduces to a single scale estimate (in the sense described by Proposition \ref{sgscale}) and does not require any multi-scale/time-frequency analysis.}
\end{itemize}

$\newline$
\noindent\textsf{II. Convergence of (bilinear) Fourier Series}\label{BiFS}
$\newline$

\noindent\textsf{II.1. Historical context: The linear, one dimensional case.} As is well known, the celebrated problem of the pointwise convergence of Fourier Series is the central pillar of the structure on which the edifice of harmonic analysis area was built.\footnote{For more on this one is invited to consult Section \ref{pco} and also the significantly more detailed historical perspective offered in the Introduction of \cite{lv3}.} Indeed, emerging from the foundational study of Fourier on heat propagation, \cite{Fou}, the fundamental question was to convey a proper meaning to the equality sign in
\beq\label{FS}
f(x)=\sum_{n\in \Z} \hat{f}(n) e^{2\pi i n x},
\eeq
where here $f:\R\to \C$ is a suitable $1$-periodic function and $\hat{f}(n)$ is the Fourier coefficient of order $n$ associated to $f$.

The initial inquiry into the subtle pointwise convergence properties of \eqref{FS} motivated throughout the $19\textsuperscript{th}$ century the rigorous reshaping of mathematical analysis through the contributions of personalities such as Dirichlet, Cauchy, Weierstrass, Riemann and many others. Along the way, the pointwise equality in \eqref{FS} was positively settled for differentiable functions and refuted for the class of continuous functions. This latter surprising finding -- manifested in the last decades of the $19\textsuperscript{th}$ century -- shifted the interest towards understanding the ``pathologies'' around the failure of the pointwise convergence behavior in \eqref{FS}. This was the context that generated two of deeply consequential mathematical developments: the set theory of Cantor and the modern theory of integration due to Lebesgue. With these, the theoretical framework around \eqref{FS} was finally properly established.

Having reached at this point, we enumerate -- moving fast-forward in time -- the most relevant contributions to the topic discussed here:
\begin{itemize}
\item The resolution of \eqref{FS} in the $L^p$ sense, $1<p<\infty$, was achieved via complex analysis by Riesz, \cite{Rie},  thus establishing a deep connection with the theory of harmonic functions and their conjugates and consequently with what was soon to become the theory of Hardy spaces. Moreover, Riesz's result, which can be stated equivalently as the $L^p$ boundedness of the Hilbert transform, served as a main prototype for the Calder\'on-Zygmund theory of singular integral operators developed few decades later in \cite{CZ1}, \cite{CZ2}.

\item In the realm of (almost everywhere) pointwise convergence, the first major contribution belongs to Kolmogorov, \cite{Kol1,Kol2}, who proved the almost-everywhere divergence of the Fourier Series within the class of $L^1$ functions.

\item In the positive direction, Carleson, \cite{c1}, settled affirmatively Lusin's conjecture on the almost everywhere convergence of Fourier series for square integrable functions. This was later extended to the $L^p$ case, $1<p<\infty$, by Hunt, \cite{hu}. The proof of both results relied on providing $L^p$-weak bounds for the associated maximal operator
\beq\label{CrT}
    \sup_{N\in\N} \Big|\sum_{|n|\leq N} \hat{f}(n) e^{2\pi i n x}\Big|,
\eeq
    which is equivalent with the so-called (classical) Carleson operator, which in the real line framework may be expressed as
\beq\label{Cr}
Cf(x):=\sup_{N\in\R} \Big|\int_{\xi<N}\hat{f}(\xi) e^{i x \xi} d\xi \Big|.
\eeq
  We add that the approach developed by Carleson in \cite{c1} together with the different perspective offered by Fefferman in \cite{f} constitutes the keystone of today's time-frequency analysis.
\end{itemize}
We end the brief historical overview on \eqref{FS} by mentioning another fascinating aspect, namely the question regarding the optimal spaces for the almost-everywhere convergence of the Fourier series near $L^1$. Far from being just an unanswered curiosity, this question is deeply connected with special Cantor-set-type pathologies and additive-combinatoric time-frequency structures -- for more on these one is invited to consult \cite{lv9} and \cite{lv19}, respectively.
$\newline$

\noindent\textsf{II.2. Historical context: The linear, two dimensional case.} Once the question of norm and pointwise convergence for \eqref{FS} was settled in the $L^p$, $1<p<\infty$, context, the next natural step was the investigation of its higher dimensional analogue. More precisely, one would like to understand whether or not one can associate a meaning to
\beq\label{FSH}
f(x)=\sum_{n\in \Z^2} \hat{f}(n) e^{2\pi i \langle n, x\rangle}=\sum_{n_1,n_2 \in \Z} \hat{f}(n_1,n_2) e^{2\pi i (n_1 x_1+n_2 x_2)},
\eeq
where here $f:\R^2 \to \C$ is a suitable $1-$periodic function in both coordinates $x=(x_1,x_2)$, $\hat{f}(n)$ is the Fourier coefficient of order $n$ associated to $f$ and $\langle\cdot,\cdot \rangle$ stands for the standard inner product in $\R^2$.

As in the case of its one dimensional analogue, this question is deeply interconnected with several fundamental mathematical phenomena among which a central role is played by the increased complexity of the geometric structure of higher dimensional sets. This brings to light new ``pathological'' structures -- specific to the higher dimensional setting -- called Besicovitch sets that turn out to have a profound impact in the study of \eqref{FSH}.\footnote{The concept of Besicovitch set plays a key role in harmonic analysis: a number of fundamental open questions are intimately related to it, among which we mention the Zygmund conjecture on differentiation along vector fields (see Section \ref{htc}), the optimal behavior of Bochner-Riesz means with the related topic of summability of the Fourier Series in two dimensions, and of course the well-known Kakeya and Restriction conjectures.} To better understand this, we discuss antithetically both the discrete setting of \eqref{FSH} and
its continuous Fourier integral analogue:

In the continuous case, we have:
\begin{itemize}
\item Consider first the two-dimensional version of \eqref{Cr}, which on the physical-space side corresponds to
\beq\label{Crn}
Sf(x):=\sup_{N\in\R^2} \Big|\int_{\R^2} f(t) K(x-t) e^{ i \langle N, t \rangle} dt \Big|,
\eeq
where here $K$ stands for a standard Calder\'on-Zygmund kernel. Then, as in the one-dimensional case, $S$ is $L^p$-bounded for $1<p<\infty$ -- see \cite{sj2}.

\item Next, consider the rough convolution operator defined on the Fourier side by
\beq\label{Ball}
T_{B}f(x):=\int_{\R^2} \hat{f}(\xi) \one_{B}(\xi) e^{ i \langle x, \xi \rangle} d\xi,
\eeq
where here $\one_{B}$ stands for the characteristic function of the unit disc (ball) in $\R^2$. Then the celebrated result of
Fefferman \cite{Fef71} states that
\beq\label{Ball1}
T_{B}:L^p(\R^2) \to L^p(\R^2)\:\:\textrm{boundedly}\:\:\Longleftrightarrow\:\:p=2.
\eeq
The proof is based on constructing a certain counterexample based precisely on the existence of Besicovitch sets in any $\R^n$, $n\geq 2$. It follows that the maximal operator $Tf:=\sup_{n>0} |T_{nB} f|$ is unbounded on any $L^p(\R^2)$ except, possibly, for $p=2$.
\end{itemize}

In the discrete case,\footnote{In this context $f:\R^2 \to \C$ is a $1$-periodic function in both variables.} we have the following correspondences:
\begin{itemize}
\item Let $\p$ be an open polygonal region in $\R^2$ containing the origin. Then the relation
\beq\label{Polyg}
f(x_1,x_2)=\lim_{N\to\infty} \sum_{n\in N\p} \hat{f}(n) e^{2\pi i \langle n, x \rangle}
\eeq
holds both almost everywhere and in norm for any $f\in L^p(\TT^2)$, $1<p<\infty$. This result was proved in \cite{Fefferman-ConvergencePolygonalFourierSeries} and is based on the observation that \eqref{Polyg} can be reduced to the one-dimensional Carleson-Hunt result.

\item In the opposite direction, if one considers the problem of summability along rectangles of arbitrary eccentricity, then there are continuous functions $f\in L^2(\TT^2)$ for which the relation
\beq\label{rectangle}
f(x_1,x_2)=\lim_{N_1,N_2\to\infty} \sum_{|n_1|\leq N_1, |n_2|\leq N_2} \hat{f}(n_1,n_2) e^{2\pi i (n_1 x_1+n_2 x_2)},
\eeq
holds nowhere. The proof of this result is provided in \cite{Fefferman-DivergenceMultipleFourierSeries} and is based on constructing the function $f$ as a suitable superposition of smoothened variants of imaginary polynomial exponentials of the form $f_{\l}(x_1,x_2)=e^{i \l x_1 x_2}$.

\item Finally, if one considers the problem of summability along discs, that is
\beq\label{circle}
f(x_1,x_2)=\lim_{N\to\infty} \sum_{n_1^2+n_2^2\leq N^2} \hat{f}(n_1,n_2) e^{2\pi i (n_1 x_1+n_2 x_2)},
\eeq
then due to \eqref{Ball1}, we know that \eqref{circle} is ill posed both in norm and pointwise for $f\in L^p(\TT^2)$ and $p\not=2$, while the pointwise behavior for the $p=2$ case is a well known open problem in harmonic analysis.
\end{itemize}

\bigskip

\noindent\textsf{II.3. The bilinear, one dimensional case.} With the above historical context settled, and following the natural
path of relating linear multidimensional behavior to multilinear one-dimensional behavior,\footnote{For another instance of such a connection one is invited to read the saga of the Hilbert transform along curves (Section \ref{htc}) and that of the bilinear Hilbert transform along curves (Section \ref{bht}).} one is invited to consider the bilinear one dimensional analogue of \eqref{FSH}, that is, the question of what meaning one can assign to the equality
\beq\label{bilinfseries}
f(x) g(x)=\sum_{n_1, n_2 \in \Z} \hat{f}(n_1) \hat{g}(n_2) e^{i n_1 x} e^{i n_2 x} ,
\eeq
where now $f,g:\R \to \C$ are both suitable $1$-periodic functions with $(f,g)\in L^p(\TT)\times L^{q}(\TT)$ for some $1<p,q\leq \infty$.

With the perspective gained from II.2, we now expect that the answer to \eqref{bilinfseries} will fundamentally depend on the geometry of the sets along which we perform the summation. However, in contrast with II.2, the level of difficulty assigned to summability along the earlier considered shapes -- polygons, rectangles and circles -- is partly reversed. Indeed, in order to maintain a clear parallelism with II.2, we now discuss the analogues of \eqref{Polyg}-\eqref{circle} in a reversed order:

\begin{itemize}
\item The problem of almost-everywhere pointwise summability along discs
\beq\label{circleb}
f(x)g(x)=\lim_{N\to\infty} \sum_{n_1^2+n_2^2\leq N^2} \hat{f}(n_1) \hat{g}(n_2) e^{2\pi i (n_1 x+n_2 x)}
\eeq
seems to be open, and we are not aware of any nontrivial\footnote{Several cases are treated in \cite{DiestelGrafakos} but they are more or less straightforward adaptations of the ideas in \cite{Fef71} to the bilinear setting.} positive or negative results regardless of the value of $p$. We leave this as an enticing open problem and mention that convergence in the $L^r$-norm sense for $f\in L^p$ and $g\in L^q$ has been proved in \cite{GrafHonzik:MaxTransference, GrafakosLi} for the local-$L^2$ case ($\frac{1}{r}=\frac{1}{p}+\frac{1}{q}$ with $2\leq p,q,r'<\infty$).

\item Moving to the problem of summability along rectangles of arbitrary eccentricity, one notices that
\beq\label{rectangleb}
f(x)g(x)=\lim_{N_1,N_2\to\infty} \sum_{|n_1|\leq N_1,|n_2|\leq N_2} \hat{f}(n_1) \hat{g}(n_2) e^{2\pi i (n_1 x+n_2 x)}
\eeq
holds both pointwise and in norm. Indeed, these statements are immediate consequences of the classical Carleson-Hunt result discussed at II.1.

\item Finally, we are turning our attention towards summability along polygonal regions $\p$, by studying the behavior of the expression\footnote{For the clarity of our exposition below -- favoring the simplicity of the geometric structure of $\p$ -- we drop the specific value of the limit in \eqref{Polygb}. It is worth mentioning, though, that this value is precisely the expected pointwise product $f(x) g(x)$ when we require that the origin belongs to the interior of $\p$. However, if the latter does not hold, then the value of the limit might be different.}
\beq\label{Polygb}
\lim_{N\to\infty} \sum_{n\in N \p} \hat{f}(n_1) \hat{g}(n_2) e^{2\pi i (n_1 x+n_2 x)} .
\eeq
This is the setting that realizes the natural connection with the main topic of our present study; in our discussion we will consider two fundamental cases:
\end{itemize}

%
%
\begin{figure}[ht]
\centering
\begin{tikzpicture}[line cap=round,line join=round,>=Stealth,x=1cm,y=1cm]
\clip(-1.5,-4.5) rectangle (5,5);
\fill[line width=0.5pt,color=black,fill=black,fill opacity=0.1] (0,0) -- (3,0) -- (0,-3) -- cycle;
\fill[line width=0.5pt,color=black,fill=black,fill opacity=0.1] (0,0) -- (3,3) -- (0,3) -- cycle;
\draw [line width=0.5pt,color=black] (0,0)-- (3,0);
\draw [line width=0.5pt,color=black] (3,0)-- (0,-3);
\draw [line width=0.5pt,color=black] (0,-3)-- (0,0);
\draw [line width=0.5pt,color=black] (0,0)-- (3,3);
\draw [line width=0.5pt,color=black] (3,3)-- (0,3);
\draw [line width=0.5pt,color=black] (0,3)-- (0,0);
\draw [->,line width=0.5pt] (0,-4.5) -- (0,4.7);
\draw [->,line width=0.5pt] (-1.5,0) -- (4.7,0);

\draw [fill=black] (3,3) circle (2pt);
\draw (3,3) node[anchor=south west] {\small $(1,1)$};

\draw [fill=black] (0,3) circle (2pt);
\draw (0,3) node[anchor=south east] {\small $(0,1)$};

\draw [fill=black] (0,0) circle (2pt);
\draw (0,0) node[anchor=south east] {\small $(0,0)$};

\draw [fill=black] (3,0) circle (2pt);
\draw (3,0) node[anchor=south west] {\small $(1,0)$};

\draw [fill=black] (0,-3) circle (2pt);
\draw (0,-3) node[anchor=north east] {\small $(0,-1)$};

\draw (1,-1.5) node[anchor=south east] {\small $\mathcal{P}_2$};

\draw (1,2) node[anchor=north east] {\small $\mathcal{P}_1$};
\end{tikzpicture}
\caption{\footnotesize Frequency representation of triangles $\p_1$ and $\p_2$. Summation over dilates of $\p_1$ are known to correspond to bounded operators, but nothing is known about summation over dilates of $\p_2$.} \label{figure:shapes}
\end{figure}
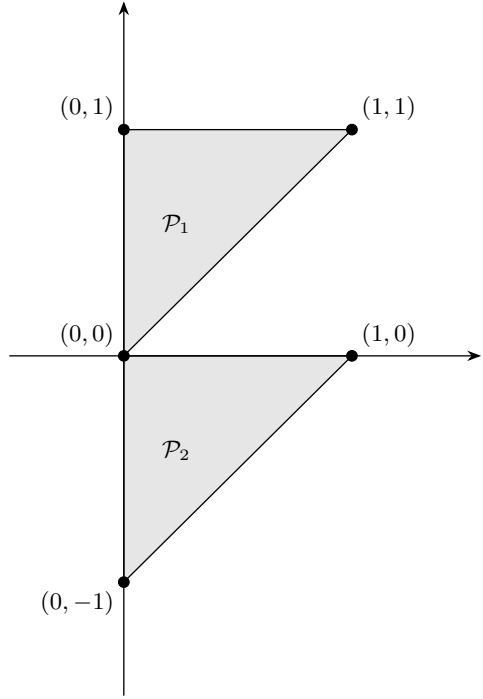

%

\begin{enumerate}[label=(\alph*), leftmargin=10pt]
\item \label{polygonP1} The triangle $\p=\p_1:= \big\{ (\xi, \eta) \,:\, 0 \leq \xi \leq \eta \leq 1 \big\}.$
\medskip

We start by noticing that at the formal level
$$\lim_{N\to\infty} \sum_{n\in N \p_1} \hat{f}(n_1) \hat{g}(n_2) e^{2\pi i (n_1 x+n_2 x)} = \sum_{0\leq n_1\leq n_2} \hat{f}(n_1) \hat{g}(n_2) e^{2\pi i (n_1 x+n_2 x)} .$$

Now, as in the previous cases II.1 and II.2, one is invited to study the associated (discrete) maximal operator, which, thanks to the transference principle in \cite{GrafHonzik:MaxTransference}, may be reduced to the study of its continuous version defined by
\beq\label{Polygp1}
P_1(f,g)(x):=\sup_{N>0} \Big| \iint_{0 < \xi < \eta < N} \hat{f}(\xi) \hat{g}(\eta) e^{ix(\xi+ \eta)} d \xi d \eta\Big|.
\eeq
As it turns out, the behavior of $P_1$ is equivalent with that of the Bi-Carleson operator
\begin{equation}
\label{def:BiCarleson:series}
Bi\text{-}C(f,g)(x):= \sup_{N} \Big|\iint_{\xi< \eta< N} \hat{f}(\xi) \hat{g}(\eta) e^{ix(\xi+ \eta)} d \xi d \eta \Big|.
\end{equation}
Indeed, this follows from the simple observation that
\begin{align*}
 P_1(f,g) & = Bi\text{-}C(\pi_+f,\pi_+g),
\end{align*}
where here $\pi_+$ stands for the projection on the positive frequencies.

We thus conclude that the boundedness of $P_1$ (and hence the almost everywhere convergence of the expression in \eqref{Polygb}) follows from the work in \cite{bi-Carleson} asserting that $Bi\text{-}C: L^p(\R)\times L^q(\R) \to L^r(\R)$ continuously, whenever $\frac{1}{p}+\frac{1}{q} =\frac{1}{r}<\frac{3}{2}$ and $p, q>1$.

$\newline$
\item \label{polygon:P2} The triangle $\p=\p_2:= \big\{ (\xi, \eta) \, : \, 0 \leq \xi  \leq \eta +1\leq 1 \big\}.$
$\newline$

Following similar steps to those in (a), we have that the almost everywhere convergence in \eqref{Polygb} for $\p=\p_2$ may be formally phrased as
$$
\lim_{N\to\infty} \sum_{n\in N \p_2} \hat{f}(n_1) \hat{g}(n_2) e^{2\pi i (n_1 x+n_2 x)} = \sum_{n_2\leq 0 \leq n_1} \hat{f}(n_1) \hat{g}(n_2) e^{2\pi i (n_1 x+n_2 x)}
$$
and hence we are naturally led to the study of the boundedness properties of
\begin{equation*}
P_2(f,g)(x):=\sup_{N >0} \Big|\iint_{N\p_2} \hat{f}(\xi) \hat{g}(\eta) e^{ix(\xi+ \eta)} d \xi d \eta \Big|=\sup_{N >0} \Big|\iint_{\substack{\eta \leq 0 \leq \xi \\ \xi-\eta \leq N}} \hat{f}(\xi) \hat{g}(\eta) e^{ix(\xi+ \eta)} d \xi d \eta \Big|.
\end{equation*}
Observe from the above that\footnote{As expected, here $\pi_-:=Id-\pi_+$.}
\begin{equation*}
P_2(f,g)(x)= \widetilde{BC}^1(\pi_+f,\pi_-g)(x) ,
\end{equation*}
where here
\begin{equation} \label{eq:ser:simBC^1}
\widetilde{BC}^1(f,g)(x):= \sup_{N >0} \Big| \iint_{\xi-\eta \leq N} \hat{f}(\xi) \hat{g}(\eta) e^{ix(\xi+ \eta)} d \xi d \eta \Big|.
\end{equation}
It remains now to notice that $\widetilde{BC}^1(f,g)$ is equivalent\footnote{Indeed, $\widetilde{BC}^1(f,g)$ may be expressed as a linear combination between $BC^1(f,g)$ and the pointwise product $f \cdot g$.} -- up to smooth error terms -- to $BC^1$.

We thus deduce that the problem of the almost everywhere convergence in \eqref{Polygb} for $\p=\p_2$ is essentially equivalent with the study of the boundedness properties for the resonant bilinear Hilbert--Carleson operator $BC^1$ defined in \eqref{Top}.
\end{enumerate}

Moreover, contrasting the cases \ref{polygonP1} and \ref{polygon:P2} discussed above, it is worth noticing how a simple translation from $\p=\p_1$ to $\p=\p_2$ makes for a startling difference in the nature and complexity of the problem on the almost everywhere convergence of \eqref{Polygb}. Indeed, given that the symmetry relations satisfied by $BC^1$ are closer in spirit to those satisfied by the trilinear Hilbert transform -- see \eqref{quadm0} and \eqref{linm0} in Section \ref{openquest} below, we expect the final resolution of $BC^1$ to be significantly more difficult than that already provided for $Bi\text{-}C$ (or for other currently understood time-frequency operators).

As a consequence of this last remark, except for a very few and specific situations\footnote{As in the case of $\p=\p_1$ that can be dealt with Bi-Carleson and linear Carleson type operators.} the study of \eqref{Polygb} for generic polygons $\p$ requires at the very minimum the understanding of $BC^1$-type operators. We conclude, thus, that the generic formulation of the almost everywhere convergence of bilinear Fourier series presents itself as a deep, intriguing open problem that further motivates our introduction of the class of bilinear Hilbert--Carleson operators $\{BC^a\}_{a>0}$.

\bigskip

\noindent\textsf{II.4. Maximally modulated operators via symbol translation.} As is transparent from the preceding discussion, see \emph{e.g.} \eqref{Cr}, \eqref{def:BiCarleson:series} and \eqref{eq:ser:simBC^1}, Carleson-type behavior may be rephrased on the Fourier side in terms of maximally shifted multipliers. In view of this, one can naturally consider -- in the bilinear setting -- the generic problem of understanding the boundedness properties of the operator
\begin{equation}
\label{def:Carleson:multilin:freq}
\ic C_m(f, g)(x)= \sup_{N= (N_1, N_2) \in \R^2}  \Big|\iint_{\R^2} \hat f(\xi)  \hat g(\eta)e^{ix (\xi +\eta) } m(\xi-N_1, \eta - N_2) d\xi d \eta \Big|,
\end{equation}
where here $m$ is some suitable singular symbol.

Notice now that indeed, as mentioned in the introduction, taking the supremum over all translated copies of the symbol $m$ by  vectors $N \in \R^2$ makes the operator $\ic C_m$ invariant under the action of any modulation of the input functions, that is
\begin{equation}
\label{modinvm}
\ic C_m( M_b f, M_c g)(x)=\ic C_m( f,g)(x) , \qquad \text{for any   } b, c \in \R.
\end{equation}

In the case where $m(\xi, \eta)$ is a Mikhlin multiplier, and thus in particular smooth away from the origin, the corresponding $\ic C_m$ operator was studied in \cite{LiMuscalu}. Due to \eqref{modinvm}, there is no favored frequency location for the usual Whitney-type analysis, and thus a significant effort in \cite{LiMuscalu} is put into producing a ``translation-invariant'' Littlewood-Paley decomposition of the symbol $m$.

Another example that fits the general description \eqref{def:Carleson:multilin:freq} is provided by the Bi-Carleson operator $Bi\text{-}C$ defined earlier in \eqref{def:BiCarleson:series} and studied in \cite{bi-Carleson}. Notice that in this situation the original BHT-type symbol $\ds m(\xi, \eta)= \one_{\{\eta< \xi<0 \}}$ is translated along the line $\{\xi=\eta\}$. Thus, the resulting object may be thought of as a composite operator that integrates the behaviors of both the BHT and the Carleson operator. After a careful analysis of the symbol of $Bi\text{-}C$  that relies on a special case of the result in \cite{LiMuscalu} (that of Mikhlin multipliers translated along the line $\{\xi=\eta\}$), the maximal modulation is seen to act only on the $\eta$ variable side -- and hence involving only the function $g$ -- thus allowing a decoupling of the two behaviors corresponding to BHT and the classical Carleson operator.

Coming now to the main topic of our present paper, we note that the operator $BC^1$ in \eqref{Top} (or equivalently \eqref{eq:ser:simBC^1}), which represents the ``purely zero-curvature'' form of our bilinear Hilbert--Carleson class $\{BC^a\}_{a>0}$, also fits into the description provided by \eqref{def:Carleson:multilin:freq}. Indeed, $BC^1$ can be realized either by translating the two-variable symbol $\one_{\{\eta<\xi \}}$ by any vector $(N_1, N_2) \in \R^2$, or by translating the one-variable (here $\xi-\eta$) symbol $\one_{\{\eta -\xi<0 \}}$ by any real number $N \in \R$.

Finally, at the heuristic level, we remark that $BC^1$ may be seen -- loosely speaking -- as the ``orthogonal frequency shift counterpart'' to $Bi\text{-}C$, since the symbol of the former is invariant under shifts in the direction orthogonal to the line $\{\xi=\eta\}$ while the symbol of the latter is invariant under shifts along the line $\{\xi=\eta\}$. However, as simple as this geometric parallel heuristic is, these two operators turn out to belong to different classes of difficulty.

$\newline$
\noindent\textsf{III. Variational estimates and the bilinear Rubio de Francia operator}
$\newline$

We start this section by quickly revisiting the main theme of Section II.3 centered around the pointwise convergence of (multiple) Fourier Series. We have learned by now that the standard classical approach to this theme proceeds by providing bounds for suitable associated maximal operators.

Indeed, as an example for our discussion, one can see that the one dimensional case exposed in \eqref{FS} was decided via the boundedness properties of the Carleson operator as revealed by \eqref{CrT} and \eqref{Cr}.

However, there exists another approach, pioneered by Bourgain in the context of ergodic averages \cite{Bou87-erg}, which relies on the study of the properties of so-called variational means. Although this route presents more technical challenges -- since such an $s$-variational operator controls the behavior of the corresponding maximal operator -- it has the advantage that it does not appeal to density results\footnote{This is especially useful in the ergodic theory framework where in most of the interesting situations one does not have available a natural dense subclass of functions within which almost everywhere convergence results can be directly established.} and also that it supplies information about the rate of convergence of the quantities under study. Thus, in relation to \eqref{Cr}, one can define for some $s>2$ the so-called variational Carleson operator $\ic C^{s-var}$ given by
\begin{equation}
\label{def:Carleson:var:lin:freq}
\ic C^{s-var}f(x)= \sup_{\substack{N \geq 1 \\ a_1< \ldots< a_N } }\Big ( \sum_{k=1}^{N-1} \Big|\int_{\R} \hat f(\xi) e^{ix \xi } \one_{\{  a_k < \xi <a_{k+1}  \}} d\xi \Big|^s \Big)^\frac{1}{s}.
\end{equation}
In \cite{variational_Carleson} it was shown that $\ic C^{s-var}$ is bounded on $L^p(\R)$ for any $s'<p<\infty$, thus, in particular, providing an alternative means of establishing the pointwise convergence of (one-dimensional) Fourier Series for functions in $L^p$.

Moving now towards the pointwise convergence of the bilinear Fourier series in \eqref{bilinfseries}, we focus our attention on the two central cases of convergence along polygonal regions $\p$ discussed in II.3, that is the convergence along the polygonal regions $\p$. Maintaining the parallelism with that earlier discussion, we have the following:
\begin{itemize}
\item If $\p=\p_1$, then \eqref{Polygb} may be approached by studying the variational analogue of \eqref{def:BiCarleson:series} defined for $s>1$ by
\begin{equation}
\label{def:var:BiCarleson:trans}
T_s(f,g)(x):=  \sup_{\substack{N\geq 1 \\ a_1<\cdots <a_N}} \Big( \sum_{k=1}^{N-1} \Big|\iint_{a_k < \xi <\eta < a_{k+1}} \hat{f}(\xi) \hat{g}(\eta)  e^{ix(\xi+\eta)} d\xi d\eta \Big|^s \Big)^{1/s}.
\end{equation}
This operator was studied in \cite{DoMuscaluThiele:VarIteratedFourier}, although the optimality of the range of $L^p$ spaces for which convergence holds remains open. In a related context, we also mention \cite{var_paraproducts}, which studied variational estimates for paraproducts.

\item If $\p=\p_2$, then \eqref{Polygb} may be approached by studying the variational analogue of \eqref{eq:ser:simBC^1}, which for $s>1$ is defined by
\begin{equation}
\begin{aligned}
&BC^{1,s-var}(f, g)(x) \\
&:=  \sup_{\substack{N\geq 1 \\ a_1<\cdots <a_N}} \Big( \sum_{k=1}^{N-1} \Big|\iint_{a_k < \xi-\eta < a_{k+1}} \hat{f}(\xi) \hat{g}(\eta)  e^{ix(\xi+\eta)} d\xi d\eta \Big|^s \Big)^{1/s}.
\end{aligned}\label{def:BCvar,s,ser}
\end{equation}
Of course the problem of providing bounds for \eqref{def:BCvar,s,ser}, which is more difficult than the corresponding problem for $BC^{1}$, is wide open.
\end{itemize}

At this point, it is worth noticing that if we drop the supremum over the parameters $N, a_1, \ldots, a_N$ in \eqref{def:BCvar,s,ser} above and we consider the sequence $\{ a_k \}_{1 \leq k \leq N}$ to be fixed, we recover a bilinear version of the Rubio de Francia operator associated to arbitrary strips:
\begin{equation}
\label{def:BRF,s:ser}
BRF^{s}(f, g)(x)= \Big( \sum_{k=1}^{N-1} \Big|\iint_{a_k < \xi-\eta < a_{k+1}} \hat{f}(\xi) \hat{g}(\eta)  e^{ix(\xi+\eta)} d\xi d\eta \Big|^s \Big)^{1/s}.
\end{equation}

The boundedness of $BRF^s$ remains a largely open problem, except for the situation of equidistant points $a_1, \ldots, a_N$. This latter situation was covered in \cite{Bernicot} in the local-$L^2$ range for $s=2$.\footnote{Hence the boundedness in the local-$L^2$ range remains valid for any $s \geq 2$.} The proof of \cite{Bernicot} relies heavily on the regular distribution of the strips and combines the BHT time-frequency analysis with concepts such as vectorial trees, vectorial sizes, and energies. In hindsight, the result of \cite{Bernicot} can also be obtained using more recent vector-valued extensions for BHT \cite{vv_BHT}; namely, $B: L^p(\ell^2(\ell^\infty)) \times L^q(\ell^\infty(\ell^2)) \to L^r(\ell^2(\ell^2))$ estimates are combined with the classical result of Rubio de Francia \cite{RF} to obtain boundedness in the local-$L^2$ range. The case of equidistant points $a_1, \ldots, a_N$ makes the $\ell^\infty$ spaces redundant, and also indicates the limitation of the method in treating the general case.

\subsection{Historical context}\label{Hist}

In what follows, we will approach the discussion on the historical background from the perspective provided by the zero/non-zero curvature paradigm, which represents the keystone for our entire analysis in this paper.
\medskip

Indeed, we will review the history of the mathematical literature most closely related to our paper through the lenses offered by the generic strategy discussed at the beginning of Section \ref{znzpar}. Our story will focus, in chronological order, on the two special classes of operators that contribute to the design of our operator, namely the (polynomial) Carleson operator and the bilinear Hilbert transform. Aside from these two, we will briefly discuss a few other related themes.

\subsubsection{The Carleson operator}\label{pco}

In this setting of maximally modulated singular integral operators, the generic formulation of the main question may be phrased as follows:
$\newline$

\noindent \underline{\textsf{The Carleson operator along variable curves.}} \textit{Let $\mathcal{F}$ be an (infinite) family of plane curves $\Gamma:=(t, -\g(t))$ with $\g(t)$ a suitable real piecewise smooth function on $\R$. Study the boundedness properties of the $\F$-Carleson operator, defined by
\beq\label{carlg}
C_{\F}(f)(x):= \sup_{\Gamma\in \F}\left|C_{\Gamma} f (x) \right|:=\sup_{\Gamma\in \F}\Big|\int_{\R} f(x-t) e^{i \g(t)} \frac{dt}{t}\Big|.
\eeq}
$\newline$
\noindent\textsf{The (purely) zero-curvature case}: $\F:=\F_1[1]:=\{(t, at) | a \in \R \}$.
$\newline$

This subject is a central part of the two-century-old saga -- mentioned in Section \ref{Otherdirections}, II.1 -- investigating the behavior of Fourier Series and originating in the work of Fourier on heat propagation (\cite{Fou}). In 1910's, Luzin (\cite{Luz}) conjectured that the Fourier series of a square integrable function converges almost everywhere. Kolmogorov (\cite{Kol1}, \cite{Kol2}) showed that the above fails in the $L^1$ setting. After decades of misapprehension following Kolmogorov's result, Carleson (\cite{c1}) delivered the positive answer to Luzin's conjecture by providing $L^2$-weak bounds for the so-called (classical) Carleson operator $C:=C_{\F}$ with $\F:=\{(t, at) | a \in \R \}$ in \eqref{carlg}.\footnote{Notice that the operator $C$ is the same as $C^{a}$, $a=1$, defined earlier in \eqref{sw}.}
The key underlying difficulty -- the first of its kind in the literature -- resides in the special modulation symmetry relation
\beq\label{carlsym}
C M_{a}f= Cf\quad\quad\forall\:a\in\R \:,
\eeq
which was overcome through a wave-packet discretization of the operator. In this way the boundedness of $C$  was reduced to understanding simultaneously the space and frequency interactions between the (properly grouped) wave-packets.

Thus Carleson's approach, together with Fefferman's alternative proof in \cite{f}, set the foundation for what today we call time-frequency analysis. A third proof combining elements of the first two approaches was provided by Lacey and Thiele in \cite{lt3}.

We end this brief description by mentioning two of the earlier extensions of Carleson's theorem: the $L^p$, $1<p<\infty$, analogue of Hunt \cite{hu}, and the higher dimensional version proved by Sj\"olin in \cite{sj2}.

$\newline$
\noindent\textsf{The non-zero-curvature case}: $\F:=\F_d[\vec{\a}]:=\{(t, \sum_{j=1}^{d} a_j t^{\a_j}) | \{a_j\}_{j=1}^d\subset\R\}$ where here $\vec{\a}:=(\a_1,\ldots, \a_d)\in \prod_{1}^{d}((0,\infty)\setminus\{1\})$ and $d\in\N$ fixed.
$\newline$

This subject originated from two distinct directions: 1) the study of the Hilbert transform along curves -- see a brief description of this in Section \ref{htc} below, and 2) the study of singular integrals on the Heisenberg group in connection with the behavior of  subelliptic partial differential operators.

As a point of convergence for these two directions of inquiry, Stein introduced and initiated the study of the so-called polynomial Carleson operator, defined via \eqref{carlg} by the relation
\beq\label{polyncarl}
C_d:=C_{\F_d},
\eeq
where here, for fixed $d\in\N$,  $\F_d:=\{(t, \sum_{j=1}^{d} a_j t^{j}) | \{a_j\}_{j=1}^d\subset\R\}$.

The first step toward a better understanding of $C_d$ was made, naturally, by analyzing its curved analogue $C^{curv}_{d}:=C_{\F_{d-1}[(2,\ldots,d)]}$ where here $d\geq 2$. As we have already learned by now, a key simplification afforded by considering $C^{curv}_d$ is the non-zero curvature of the phase, which manifests in the fact that, unlike $C_d$, the curved model $C^{curv}_d$ has no (generalized) modulation invariance symmetry.

The $L^p$-boundedness of $C^{curv}_d$, $1<p<\infty$, was first shown for $d=2$ by Stein in \cite{Stein} and then for general $d\geq 2$ (and also extended to arbitrary dimension\footnote{In arbitrary dimensions the (polynomial) Carleson operator is the analogue of \eqref{carlg} obtained by replacing the kernel $\frac{1}{t}$ with a suitable Calder\'on-Zygmund kernel and $\g(t)$ by a suitable multi-variable real polynomial with the obvious modification for the class analogous to $\F_d$.}) by Stein and Wainger in \cite{SteinWainger}. Both approaches follow the philosophy exposed at \ref{II:non:zero:curvature} above and rely on Van der Corput estimates and the $TT^{*}$-method.

In \cite{lvUnif}, the third author extended the one-dimensional\footnote{Though involving non-trivial technicalities, it seems likely that the methods developed in \cite{lvUnif} can be adapted to the higher dimensional case.} version of \cite{SteinWainger} by providing $L^p$ bounds, $1<p<\infty$, for a large class of curved $\F$-Carleson operators that in particular includes $C_{\F_d[\vec{\a}]}$ for any $\vec{\a}:=(\a_1,\ldots, \a_d)\in ((0,\infty)\setminus\{1\})^d$ and $d\in\N$. The methodology developed in \cite{lvUnif} provides  a new approach to Stein-Wainger's polynomial Carleson operator $C^{curv}_d$ and has the advantage -- confirmed also via the present paper -- of being compatible with time-frequency analysis techniques.

$\newline$
\noindent\textsf{Unifying the zero/non-zero-curvature cases}: $\F:=\F_d[\vec{\a}]$ with $\vec{\a}=(\a_1,\ldots, \a_d)\in \prod_{1}^{d}((0,\infty))$, $1\in\{\a_j\}_{j=1}^d$ and $d\in\N$, $d\geq 2$ fixed.
$\newline$

Returning to the original inquiry of Stein regarding the boundedness of the polynomial Carleson operator defined in \eqref{polyncarl}, the first result unifying the zero-curvature case of the classical Carleson operator $C$ with the non-zero curvature case of $C^{curv}_2$ treated in \cite{Stein} by Stein was proved in \cite{lv1}, which established the $L^2$-weak boundedness of $C_2$. The full one-dimensional version of the conjecture of Stein asserting the boundedness of the polynomial Carleson operator for any $L^p$, $1<p<\infty$, was proved affirmatively in \cite{lv3}, thus unifying the results in
\cite{c1} and the one-dimensional version of \cite{SteinWainger}. In analogy with the purely zero-curvature case discussed earlier, the main difficulty is represented by the higher-order\footnote{Here $M_{j, a} f(x):=e^{i a x^j} f(x)$ represents a generalized modulation of order $j\in\N$ and parameter $a\in\R$.} modulation invariance relation
\beq\label{genmodcarl}
C_{d}M_{j,a}f=C_{d}f\quad\quad\forall\:a\in\R\:\:\textrm{and}\:\:\forall\:1\leq j\leq d .
\eeq

Accordingly, the proof of the $L^p$-boundedness of $C_d$ is based on some key new ideas: 1) developing a higher-order wave-packet analysis consistent with \eqref{genmodcarl} that is compatible with the time-frequency approach to the classical Carleson operator $C$, and 2) introducing a local analysis adapted to the concepts of mass and counting function.\footnote{This latter aspect provides a novel tile discretization of the time-frequency plane with the consequence of eliminating certain exceptional sets from the analysis of $C_d$. (These exceptional sets were present in all of the previously known approaches to the boundedness of the classical Carleson operator $C=C_1$ -- see \cite{c1}, \cite{f} and \cite{lt3}). As a consequence, the approach in \cite{lv3} provides the full $L^p$ boundedness range, $1<p<\infty$, with no appeal to interpolation techniques.}

The $n$-dimensional version of Stein's conjecture on the polynomial Carleson operator was proved more recently -- see \cite{zk1} and \cite{lv3n} -- by essentially combining the methodology in \cite{lv3} with the $n$-dimensional Van der Corput estimates proved in \cite{SteinWainger}.


\begin{observation}\label{Carlvarcurve} For future reference, it might be worth noticing that the preceding unified problem can be equivalently rephrased -- via a standard linearization procedure -- in the following terms:
\medskip

\textit{Study the boundedness of the $\g$-Carleson operator defined by
\beq\label{carlgg}
C_{\g}(f)(x)= \int_{\R} f(x-t)  e^{i \g(x,t)} \frac{dt}{t},
\eeq
where here $\g: \R^2 \to \R$ is (fiberwise) measurable and $\g(x,\cdot)$ is a piecewise smooth function a.e. $x\in\R$.}
\end{observation}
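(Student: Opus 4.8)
The plan is to read the observation as the following quantitative equivalence: for each fixed $1<p<\infty$, the operator $C_{\F_d[\vec{\a}]}$ is bounded on $L^p(\R)$ with constant $A$ \emph{if and only if} there is $A'<\infty$ such that $\|C_{\g}f\|_{L^p}\le A'\|f\|_{L^p}$ for every fiberwise measurable $\g$ with $\g(x,t)=\sum_{j=1}^{d}a_j(x)\,t^{\a_j}$, the $a_j\colon\R\to\R$ measurable; moreover $A$ and $A'$ may be taken arbitrarily close. Since any such $\g$ is in particular fiberwise measurable with $\g(x,\cdot)$ piecewise smooth, this yields the stated reformulation. For $\vec a=(a_1,\dots,a_d)\in\R^d$ write $C_{\vec a}$ for the member $C_{\Gamma}$ of the family attached to the curve $\Gamma=\big(t,\sum_j a_j t^{\a_j}\big)\in\F:=\F_d[\vec{\a}]$, so that $C_{\vec a}f(x)=\int_{\R}f(x-t)\,e^{i\sum_j a_j t^{\a_j}}\,\frac{dt}{t}$ (the sign of the phase being immaterial). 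One implication is immediate: for such a $\g$ and a.e.\ $x$ one has $|C_{\g}f(x)|=|C_{\vec a(x)}f(x)|\le\sup_{\Gamma\in\F}|C_{\Gamma}f(x)|=C_{\F}f(x)$, and hence $\|C_{\g}f\|_{L^p}\le\|C_{\F}f\|_{L^p}\le A\|f\|_{L^p}$, uniformly in $\g$.

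For the converse I would run the standard linearization/measurable-selection scheme. First one may take $f$ in the Schwartz class and, using that $\vec a\mapsto C_{\vec a}f(x)$ is continuous (each exponent $\a_j>0$ makes the principal-value integral depend continuously on the coefficients), replace the defining supremum by $C_{\F}f(x)=\sup_{\vec a\in\mathbb{Q}^d}|C_{\vec a}f(x)|$ and then by the increasing limit of $\sup_{\vec b\in E_n}|C_{\vec b}f(x)|$ over finite subsets $E_n\uparrow\mathbb{Q}^d$. For fixed $n$, the function $(x,\vec a)\mapsto|C_{\vec a}f(x)|$ on $\R\times E_n$ is Carath\'eodory (measurable in $x$, continuous in $\vec a$), so for each $\varepsilon>0$ the set $\{(x,\vec a)\in\R\times E_n:\,|C_{\vec a}f(x)|>(1-\varepsilon)\sup_{\vec b\in E_n}|C_{\vec b}f(x)|\}$ is measurable with nonempty $x$-sections, and the Kuratowski--Ryll-Nardzewski selection theorem provides a measurable $x\mapsto\vec a_n(x)\in E_n$ lying in it. Setting $\g_n(x,t):=\sum_j a_{n,j}(x)\,t^{\a_j}$, where $\vec a_n(x)=(a_{n,1}(x),\dots,a_{n,d}(x))$ — jointly measurable, and for each $x$ a finite power sum, hence piecewise smooth in $t$ — one gets $|C_{\g_n}f(x)|\ge(1-\varepsilon)\sup_{\vec b\in E_n}|C_{\vec b}f(x)|$ pointwise, so the assumed uniform bound yields $\big\|\sup_{\vec b\in E_n}|C_{\vec b}f|\big\|_{L^p}\le(1-\varepsilon)^{-1}A'\|f\|_{L^p}$. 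Letting $n\to\infty$ by monotone convergence, then $\varepsilon\downarrow0$, and finally passing from Schwartz $f$ to arbitrary $f\in L^p(\R)$ by density, one obtains $\|C_{\F}f\|_{L^p}\le A'\|f\|_{L^p}$.

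The only non-routine point — everything else (Carath\'eodory measurability, the selection theorem, continuity in the curve parameters for Schwartz data) being classical — is the ordering of the quantifiers: one must \emph{not} assume a priori that $C_{\F}f$ is finite a.e., since this is part of the conclusion, so the selection and the uniform estimate must be run on the finite truncations $E_n$, with a.e.\ finiteness of $C_{\F}f$ recovered only upon passing to the limit. It is also worth recording that the equivalence is genuine rather than merely a one-sided domination precisely because each selected $\g_n$ is fiberwise piecewise smooth, so it is legitimately an instance of the $\g$-Carleson operator \eqref{carlgg}; and it is in this linearized formulation that the non-zero curvature methodology of \cite{lvUnif} is most naturally deployed.
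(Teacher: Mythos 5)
Your proposal is correct and is precisely the ``standard linearization procedure'' the paper invokes without proof: the easy direction is the pointwise domination $|C_{\g}f|\le C_{\F}f$ for linearizing phases $\g(x,t)=\sum_j a_j(x)t^{\a_j}$, and the converse is the usual measurable-selection plus monotone-convergence argument on Schwartz data, exactly as you carry it out. The only cosmetic remark is that for the finite truncations $E_n$ the Kuratowski--Ryll-Nardzewski theorem is unnecessary -- choosing the first element of $E_n$ attaining within $(1-\e)$ of the finite supremum already gives a measurable selection -- but this does not affect correctness.
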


\subsubsection{The bilinear Hilbert transform}\label{bht}

The generic formulation\footnote{Our choice for this formulation (as well as for that in the previous section) is tailored to make more transparent the zero/non-zero curvature paradigm that is at the heart of both the introduction and the treatment of the family of bilinear Hilbert--Carleson operators. Of course, the zero and non-zero curvature sides of the problem can and have been treated independently, with the former being the catalyst for the modern approach to time-frequency analysis.} of the main question within this direction can be stated as follows:
$\newline$

\noindent \underline{\textsf{The bilinear Hilbert transform along curves.}} \textit{Given $\Gamma:=(t, -\g(t))$ a plane curve with $\g$ a suitable piecewise smooth real function, study the boundedness properties of the bilinear Hilbert transform along the curve $\Gamma$, defined as
\beq\label{nhilb}
B_{\Gamma}(f,g)(x):= \int_{\R} f(x-t) g(x+\g(t))\frac{dt}{t}.
\eeq}
$\newline$
\noindent\textsf{The zero-curvature case}: $\g(t)=a t$ with $a\in\R\setminus\{-1,0\}$.
$\newline$

While also presenting connections with ergodic theory via the study of the $L^p$-norm convergence of non-conventional bilinear averages, this theme arose in the realm of harmonic analysis through Calder\'on's program, \cite{Cal}, dedicated to the study of the Cauchy transform along Lipschitz curves (see also \cite{CMM}). His initial plan was to reduce this study -- via a Taylor series argument -- to the problem of providing uniform (in $a$) $L^p$-bounds for the bilinear Hilbert transform $B_{a}\equiv B_{\Gamma_a}$ with $\g(t)=a t$ and $a\in\R\setminus\{-1,0\}$.  While first Calder\'on, \cite{Cal}, (for small Lipschitz constants) and later Coifman, McIntosh and Meyer, \cite{CMM}, were eventually successful -- via a different route -- in providing the $L^2$ bounds for the Cauchy transform along Lipschitz curves, the original problem about the behavior of the  bilinear Hilbert transform established itself as a deep conjecture within classical harmonic analysis.

Two decades later, in \cite{la1}, a key perspective was added based on the observation that, beyond the standard dilation and translation symmetries, the bilinear Hilbert transform $B_a$ also obeys a key modulation symmetry given by
\beq\label{bihilbcsym}
B_{a}(M_{a}f, M g)=M_{1+a} B_{a}(f,g).
\eeq
This suggested that in order to approach this problem one must appeal to a wave-packet analysis in the spirit of that developed by Carleson \cite{c1} and later Fefferman \cite{f} for proving the almost everywhere convergence of Fourier Series for square-integrable functions. Indeed, a few years later, Lacey and Thiele \cite{lt1,lt2} made the main breakthrough in the affirmative resolution of Calder\'on's conjecture by proving that $B_a$ maps $L^p\times L^q$ into $L^r$ continuously for $\frac{1}{p}+\frac{1}{q}=\frac{1}{r}$, $1<p, q\leq \infty$ and $\frac{2}{3}<r<\infty$.\footnote{There is still an open problem regarding the maximal range for $p, q, r$ -- specifically the interval $\frac{1}{2}<r\leq \frac{2}{3}$ -- that guarantees the boundedness of $B_{a}$.} The Hardy-Littlewood maximal analogue of \eqref{nhilb} in the same zero-curvature setting was proved by Lacey in \cite{Lacey}. Interestingly enough, even though a positive operator, the strategy employed for analyzing this bilinear maximal operator was still inspired by the wave-packet analysis performed on the bilinear Hilbert transform, though the presence of the maximal truncation makes the orthogonality argument significantly more involved.\footnote{In this context it is worth recalling the discussion in Section \ref{unif2th} that reveals the key r\^{o}le played by the bilinear maximal operator in bounding the low oscillatory component $T_0$ of our bilinear Hilbert--Carleson operator $BC^a$.}

$\newline$
\noindent\textsf{The non-zero-curvature case}: $\g(t)=\sum_{j=2}^{d} a_j t^j$, with $d>1$.
$\newline$

This problem can be motivated from three different directions: a) from the ergodic theory side in connection with the $L^p$-norm convergence of non-conventional bilinear averages of the type $\frac{1}{N}\sum_{n=1}^N f_1(T^n) f_2(T^{n^2})$
for $T$ an invertible measure-preserving transformation of a finite measure space (see \textit{e.g.} \cite{Fu}, \cite{HKr}); b)
from the number theory side via the so-called Ergodic Roth type theorem(s) (see \textit{e.g.} \cite{Bo86}, \cite{Bo88}, \cite{DGR}, \cite{K19}); and c) from the harmonic analysis side as a natural curved analogue of the standard bilinear Hilbert transform considered above.

Within this latter realm, a first result was obtained in \cite{li}, in which Li proved that $B_{\Gamma}: L^{2}(\R)\times L^{2}(\R) \to L^{1}(\R)$  continuously for the case $\Gamma(t) = (t, t^d)$, $2 \leq d \in \mathbb N$. The proof of this relies on the concept of $\sigma$-uniformity introduced in \cite{cltt} and inspired by Gowers's work in \cite{gowers} and only partially reflects the generic strategy described earlier here at point \ref{II:non:zero:curvature}, lacking the natural scale-type decay in \eqref{stdeccee}.

The general curved case -- this time faithfully reflecting \ref{II:non:zero:curvature} by also including \eqref{stdeccee} -- was completed with different methods by the third author here in \cite{L1}, \cite{lv10}. Indeed, the main combined result states that $B_{\Gamma}$ satisfies $L^p\times L^q \to L^r$ bounds for $1/p+1/q=1/r$ with $1<p<\infty$, $1<q\leq\infty$  and $1\leq r<\infty$ for any $\Gamma=(t,\g(t))$ with $\g\in \n\f$. Here $\n\f$ is a suitable class of curves that includes in particular any Laurent polynomial with no linear term, and within this class the above boundedness range is sharp up to the end-points. The proof of this result encompasses Gabor frame analysis, quadratic phase discretization, orthogonality/$TT^{*}$ methods and shifted square function arguments and serves as a precursor for the methodology developed in \cite{lvUnif}.

Further work regarding the maximal curved analogue was performed in \cite{LX} and \cite{GL}.
\medskip

\begin{observation}\label{BHTvarcurve} One can create a parallelism between the topic treated in the previous section in the form
given by \eqref{carlgg} and the current topic by considering the study of the more general expression
\beq\label{nhilbg}
B_{\g}(f,g)(x):= \int_{\R} f(x-t) g(x+\g(x,t))\frac{dt}{t} ,
\eeq
where here $\g$ has the same properties as the one stated in Observation \ref{Carlvarcurve}. When $\g$ obeys suitable nondegeneracy and non-zero curvature conditions, one can provide $L^p$ bounds for \eqref{nhilbg} via the methodology developed in \cite{lvUnif}; this is also the subject of upcoming work by the third author.\footnote{This includes in particular the ``curved'' polynomial case $\g(x,t)=\sum_{j=2}^d a_j(x) t^j$ with $\{a_j\}_j$ measurable and $d\in\N$, $d\geq 2$.} If one considers the analogue of the polynomial Carleson question, that is, the case when $\g(x,t)=\sum_{j=1}^d a_j(x) t^j$ with $\{a_j\}_j$ measurable functions and $d\in\N$, then this remains a difficult open question that in particular includes the problem of providing uniform bounds for the classical bilinear Hilbert transform.
\end{observation}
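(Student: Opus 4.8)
The plan is to run, in a streamlined form, the non-zero-curvature scheme \eqref{stdecc}--\eqref{stdeccee}. The key structural simplification afforded by the hypotheses is that the nondegeneracy/non-zero-curvature assumption on $\gamma$ forces $\partial_t\gamma(x,0)=0$ (so, typically, $\gamma(x,t)=\sum_{j=2}^{d}a_j(x)t^j$ with the $a_j$ merely measurable, or more generally $\gamma(x,\cdot)$ in an $\mathcal{N}\mathcal{F}$-type class as in \cite{L1,lv10}), whence $B_\gamma$ in \eqref{nhilbg} carries \emph{no} modulation symmetry. In contrast to $BC^a$ from Theorem~\ref{main}, there is thus no competition between zero and non-zero curvature: $B_\gamma$ lies purely in regime~\ref{II:non:zero:curvature}, so no tile/time-frequency analysis of $B_\gamma$ itself is required, and time-frequency analysis enters only as a black box through the Lacey--Thiele bounds \cite{lt1,lt2}. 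Accordingly we write $B_\gamma=T_0+T_{\osc}$, where $T_0$ collects the part of the kernel with negligible phase oscillation; a frequency-localized reorganization, together with the vanishing $\partial_t\gamma(x,0)=0$, identifies $T_0$ -- modulo a non-singular bilinear error bounded by elementary means -- with a paraproduct of the form $g(x)\cdot H^{*}f(x)$, $H^{*}$ a maximally truncated Hilbert transform, whence $\|T_0(f,g)\|_{L^r}\lesssim\|f\|_{L^p}\|g\|_{L^q}$ in the full H\"older range by H\"older's inequality and Cotlar's inequality.

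The whole difficulty sits in $T_{\osc}$, for which the target is
\[
\|T_m(f,g)\|_{L^r}\;\lesssim\;2^{-\delta m}\,\|f\|_{L^p}\|g\|_{L^q}\qquad\text{for some }\delta=\delta(p,q)>0,
\]
so that summing the telescoping series over $m$ closes the estimate. To produce the pieces $T_m$, decompose $T_{\osc}$ dyadically in $t$, $T_{\osc,k}(f,g)(x)=\int_{|t|\sim 2^k}f(x-t)g(x+\gamma(x,t))\frac{dt}{t}$, and rescale $t\mapsto 2^k t$, $x\mapsto 2^k x$: this converts $T_{\osc,k}$ into a unit-scale operator along the rescaled curve $\gamma_k(x,t)$, whose coefficients -- of size $\approx 2^{k(j-1)}|a_j(2^k x)|$ in the polynomial case -- measure the local curvature. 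Because the $a_j$ are only measurable one cannot separate scales globally; instead, exactly as in \cite{lvUnif}, one inserts a resolution of the identity subordinate to the level sets of $\max_j 2^{k(j-1)}|a_j(\cdot)|$ and regroups the scales by the dyadic height $2^m$ of this quantity, obtaining $T_{\osc}=\sum_{m\ge 0}T_m$ with $T_m$ a single-scale bilinear oscillatory operator of curvature height $\approx 2^m$ (the regime where this height is $\lesssim 1$ being absorbed into $T_0$).

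It then remains to establish the displayed single-scale estimate, which is the bilinear analogue -- with fully measurable $x$-dependent coefficients -- of Proposition~\ref{sgscale} of the present paper, and which one proves via the LGC-methodology of \cite{lvUnif} in three steps. (i) \emph{Phase linearization}: split $f$ and $g$ by frequency scales; on the Fourier side $T_m$ has the $x$-dependent multiplier $\int e^{-i\xi t + i\eta\gamma_k(x,t)}\,\eta_0(t)\frac{dt}{t}$, and a stationary/non-stationary phase analysis in $t$ decomposes it into a harmless stationary-point contribution (again BHT-type) plus pieces on which the $t$-phase is essentially linear. (ii) \emph{Gabor decomposition}: expand $f$ and $g$ into wave packets adapted to the natural spatial/frequency scales, reducing the bilinear form to a sum over triples of wave packets weighted by correlation coefficients which are exponential (Weyl) sums in the curvature data $\gamma_k$. (iii) \emph{$TT^{*}$ and level-set analysis}: pair $T_m$ with its adjoint, expand, integrate out the free variable, and bound the resulting kernel by a van der Corput/Weyl-sum estimate; a phase level-set analysis exploiting the time-frequency correlations then extracts the gain $2^{-\delta m}$ on, say, $L^2\times L^2\to L^1$, after which interpolation with trivial bounds spreads it over the claimed H\"older range. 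The nondegeneracy/non-zero-curvature hypothesis is precisely what guarantees that these exponential sums are genuinely oscillatory (no resonances), so that the van der Corput/Weyl input is available.

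The \emph{main obstacle} is step (iii) under the assumption that the coefficients $a_j(x)$ are merely measurable: then $T_m$ is \emph{not} a translation-invariant multiplier operator -- in sharp contrast with the constant-coefficient curved bilinear Hilbert transform of \cite{L1,lv10}, which away from the diagonal is an honest Coifman--Meyer multiplier -- so the $TT^{*}$ expansion and the level-set bookkeeping must be carried out fiberwise in $x$ and then reassembled across the measurable strata, with control uniform over all strata. This is exactly the difficulty that the machinery of \cite{lvUnif} was engineered to handle in the linear $\gamma$-Carleson setting, and the content of Observation~\ref{BHTvarcurve} is that it extends to the bilinear operator \eqref{nhilbg}: the polynomial case $\gamma(x,t)=\sum_{j=2}^{d}a_j(x)t^j$ follows directly from the scheme above, and general $\mathcal{N}\mathcal{F}$-type curves follow by the approximation/decomposition arguments of \cite{L1,lv10,lvUnif}.
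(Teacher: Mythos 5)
There is nothing to check your argument against: Observation \ref{BHTvarcurve} carries no proof in the paper. It is an announcement that the LGC methodology of \cite{lvUnif} can be made to work for \eqref{nhilbg} under suitable nondegeneracy/non-zero-curvature hypotheses, with the actual proof deferred to forthcoming work of the third author. Measured against the route the paper points to, your outline is the intended one: the splitting \eqref{stdecc} into a low-oscillation part (controlled, in the absence of a linear term, by a maximally truncated Hilbert transform times a maximal function, with errors dominated by $M(f,g)$ -- no genuine time-frequency analysis needed) plus an oscillatory part organized by the dyadic height of the phase as in \eqref{stdeccee}, followed by a single-scale $m$-decaying estimate proved fiberwise in $x$ via phase linearization, Gabor decomposition, $TT^{*}$, exponential sums and level-set analysis, and finally interpolation against the pointwise bound by the bilinear maximal function. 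You also correctly identify that the measurability of the coefficients, not the curvature per se, is what makes the single-scale step the heart of the matter.

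The one genuine gap is the sentence in your step (iii) asserting that nondegeneracy ``guarantees that these exponential sums are genuinely oscillatory (no resonances).'' That is exactly the point that cannot be asserted and must be proved, and the present paper's own single-scale analysis shows it can be delicate: in Section 5 the decay rests on level-set bounds for fewnomial phases whose coefficients involve the measurable stopping data (Proposition \ref{prop:uniform} via Lemmas \ref{phasecont1} and \ref{VdC}), and already for the single exponent $a=3$ the generic level-set bound degenerates when the measurable function is close to linear, forcing the additional tube decomposition of its graph into near-linear and non-flat components. For a general variable curve $\gamma(x,t)=\sum_{j\ge 2}a_j(x)t^j$ the analogous joint resonances between the curve, the bilinear structure $x\pm t$, and the measurable coefficients have to be located and handled case by case, uniformly over the strata; your sketch names this difficulty but does not address it, which is precisely why the claim is deferred to a separate paper. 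A minor presentational slip: in the LGC scheme the stationary-phase region is the \emph{main} contribution (the linearization is performed inside it), so describing the stationary-point part as a ``harmless'' term and locating the gain in separate ``linear-phase pieces'' inverts the actual structure of the argument, though this does not affect the overall plan.
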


\subsubsection{The Hilbert transform along curves}\label{htc}

Mirroring the formulation of the previous questions as displayed in Observations \ref{Carlvarcurve} and \ref{BHTvarcurve}, the loose generic problem addressing the current topic may be stated as follows:
$\newline$

\noindent \underline{\textsf{The Hilbert Transform along variable curves.}} \textit{Let $(x,y)\in\R^2$ and assume that $\Gamma:=(t, -\g(x,y,t))$ is a planar parametrized curve with $\g$ a suitable real function that is piecewise smooth in $t$ and measurable in $x, y$. Investigate the boundedness properties of the Hilbert transform along the curve $\Gamma$, defined as
\beq\label{hilbc}
H_{\Gamma}(f)(x,y):= \int_{\R} f(x-t, y+\g(x,y,t))\frac{dt}{t} .
\eeq}
$\newline$
\noindent\textsf{The zero-curvature case}: $\g(x,y,t)=a(x,y) t$ with $a : \R^2 \to \R$ a suitable function.
$\newline$

We start by noticing that the answer to the simplest possible case $a(x,y)=a$ is essentially equivalent to the celebrated result of Riesz, \cite{Rie}, on the $L^p$-boundedness for $1<p<\infty$ of the classical Hilbert transform, whose history goes back at least to the study of the (conjugate) harmonic functions in the $19^\textsuperscript{th}$ century.

Moving forward, the study of \eqref{hilbc} for more general functions $a$ is intimately connected with the problem of differentiability along vector fields, which revolves around one of the deep open problems in harmonic analysis:
$\newline$

\noindent\textsf{Conjecture (Zygmund):} \emph{Let $\g(x,y,t)=a(x,y) t$ with $a: \R^2 \to \R$ a Lipschitz vector field. Then, taking $\ep_0$ small enough depending on $\|a\|_{Lip}$ and defining the maximal analogue of  \eqref{hilbc} as
\beq\label{maxZ}
M_{\Gamma} f(x,y)=M_{a,\ep_0} f(x,y):=\sup_{0<\ep<\ep_0} \frac{1}{2\ep} \int_{-\ep}^{\ep}|f(x-t,y+a(x,y) t)| dt ,
\eeq
we have that $M_{\Gamma}$ is bounded on $L^p(\R^2)$ for any $1<p<\infty$.}
\medskip

The singular integral analogue is precisely the key motivation for the generic question mentioned at the beginning of this section:

\medskip

\noindent\textsf{Conjecture (Stein):} \emph{With $\g$, $u$ and $\ep_0$ as before, define the Hilbert transform along $\Gamma$ as
\beq\label{HilZ}
H_{\Gamma} f(x,y)=H_{a,\ep_0} f(x,y):=\int_{-\ep_0}^{\ep_0} f(x-t,y+a(x,y) t) \frac{dt}{t}.
\eeq
Then $H_{\Gamma}$ is bounded on $L^p(\R^2)$ for any $1<p<\infty$.}
\medskip

It is worth mentioning that the Lipschitz condition imposed on the vector field $a$ is necessary (see e.g. \cite{Lali}). This is a consequence of the existence of Besicovitch-Kakeya sets in $\R^2$, which prevents any $L^p$ bound on either $M_{\Gamma}$ or $H_{\Gamma}$ even if $a$ is H\"older continuous of class $C^{\a}(\R^2)$ for $\a<1$.

The first major contribution was due to Bourgain in \cite{Bolip}, where he proved $L^2$-bounds for $M_{\Gamma}$ under the assumption that $a$ is analytic. The analogous result for $H_{\Gamma}$ was proved by Stein and Street in \cite{SS1}. In between, some particular cases of vector fields $a$ were treated in \cite{CSWW} and \cite{CNSW}.

The next deep insight into Stein's question was made by Lacey and Li in \cite{LL1}, \cite{Lali} based on the key observation that $H_{\Gamma}$ enjoys a modulation invariance symmetry and hence, as already referred to several times by now, time-frequency techniques in the spirit of Carleson's theorem form the natural framework for its study. The result in \cite{Lali} was of conditional nature: if a suitable Kakeya-type maximal operator is assumed to be $L^2$-bounded, then, under the supplementary assumption that $a$ is of class $C^{1+\ep}$, one has that $H_{\Gamma}$ is $L^2$-bounded. This constitutes the last advancement recorded within the realm of genuine two variable dependence of $a$.

In the past decade, the effort has focused on understanding some model problems related to \eqref{HilZ}, all revolving around a one-variable type behavior of $a$. In such a situation, in contrast with the previous Lipschitz requirement, the mere measurability of $a$ is enough for proving non-trivial bounds. Indeed, Bateman -- in the single annulus case,\cite{Bat}, and then Bateman and Thiele -- for the general, non-localized frequency case, \cite{BT}, proved that $H_{\Gamma}$ is $L^p$ bounded for $p>\frac{3}{2}$. A few years later, in \cite{Gu1}, \cite{Gu2}, the same boundedness range was established by Guo for the situation when $a$ is constant along suitable families of Lipschitz curves.

$\newline$
\noindent\textsf{The non-zero curvature case}: a.e. $(x,y)\in\R^2$ the function $\g(x,y,t)$ ``does not resemble'' a line near $t=0$ and $\pm\infty$.
$\newline$

This direction of research originates in the work of Jones \cite{Jon}, Fabes \cite{Fab}, and Fabes and Rivi\`{e}re \cite{fr}, on  constant-coefficient parabolic differential operators. As a consequence of these, one obtains the $L^2$-boundedness of $H_{\Gamma}$ for $\g(x,y,t)=t^{\a}, \a \in (0,\infty)\setminus\{1\}$. From this point on the study of \eqref{HilZ} in the curved case has been adopted as a theme of independent interest within harmonic analysis area. The first such results were focused on the situation $\g(x,y,t)=\g(t)$ and included works such as \cite{sw70}, \cite{NRS74} and \cite{NRS76}.

The next step was the study  of \eqref{HilZ} for smooth dependence of $\g$ on $x,y$. Thus in \cite{CWW93} and \cite{B02} the authors prove the expected $L^p$ range for the situation $\g(x,y,t)=P(x) \tilde{\g}(t),$ where $P$ is a polynomial and $\tilde{\g}$ is smooth and obeys some suitable non-vanishing curvature condition. In a different direction, this time analyzing the behavior of $H_{\Gamma}$ under the assumption that $\g(x,y,t)$ obeys $(x,y,t)$-smoothness and non-zero curvature in $t$  hypotheses, we have: in the nilpotent setting the work in \cite{Chhilb}, and in the context of singular Radon transforms (and their maximal analogues) i) along differentiable submanifolds the work in \cite{CNSW}, or  ii) along variable curves in a diffeomorphism invariant setting, the work in \cite{SeWa}.

Taking one more step, one considers now the situation $\g(x,y,t)=a(x,y) t^{\a}$ with $\a\in(0,\infty)\setminus\{1\}$ and minimal smoothness in $(x,y)$. In this setting, it was shown in \cite{MRi} that with the mere assumption that $a$ is measurable one can prove $L^p$ boundedness of $M_{\Gamma}$ only for $p>2$. In \cite{GHLR}, requiring the minimal Lipschitz smoothness in $(x,y)$, the authors proved the full $L^p$ range for $M_{\Gamma}$ as well as the single annulus $L^p$-estimate for the singular counterpart $H_{\Gamma}$. Based on the latter and on a further square function estimate, the full treatment of $H_{\Gamma}$ was provided in \cite{DGTZ}.

Finally, focusing on the situation $\g(x,y,t)=\g(x,t)$ assuming only measurability in $x$, the first such study was performed in \cite{GHLR} for $\g(x,t)=a(x) t^{\a}$ with $\a\in(0,\infty)\setminus\{1\}$ and resulted in the proof of the $L^p$ boundedness of both $H_{\Gamma}$ and $M_{\Gamma}$ within the full expected range. Some related results appeared more recently -- see for example \cite{LY2} and \cite{GRSY}. The most recent result, \cite{lvUnif}, establishes via the LGC-methodology the full $L^p$ range for both $H_{\Gamma}$ and $M_{\Gamma}$ for a general class of curves $\g(x,t)$ that includes in particular the case $\g(x,t)=\sum_{j=1}^d a_j(x) t^{\a_j}$ with $\{a_j\}_{j=1}^{d}$ arbitrary real measurable functions, $\{\a_j\}_{j=1}^d\subset (0,\infty)\setminus\{1\}$ and $d\in\N$.

\subsubsection{Maximally modulated singular Radon transforms}\label{msrt}

This theme may be seen as a partial symbiosis between two of the topics discussed earlier: the polynomial Carleson operator (Section \ref{pco}) and the Hilbert transform along curves (Section \ref{htc}). The generic formulation of the problem may be phrased as follows:
$\newline$

\noindent \underline{\textsf{Maximally modulated singular Radon transforms.}} \textit{Fix $d, m \in \N$, $\vec{\a}\in\N^d$ and $\vec{\b}\in \N^m$. Then, given $(t,-P(t))\in \F_m[\vec{\b}]$, we define the $\F_d[\vec{\a}]$--Radon transform along $P$ as
\beq\label{mmsr}
\rc_{\F_d[\vec{\a}]}^{P}(f)(x,y):= \sup_{\Gamma=(\cdot,-\g(\cdot))\in \F_d[\vec{\a}]}\Big|\int_{\R} f(x-t, y+P(t)) e^{i \g(t)} \frac{dt}{t}\Big|.
\eeq
Study the $L^p$ boundedness properties of $\rc_{\F_d[\vec{\a}]}^{P}$.}
$\newline$

As hinted at in the beginning, this subject has its roots in the work of Stein and his collaborators on the Hilbert transform along curves and on the polynomial Carleson operator. Given the synthesizing character of this subject and consequently its more complex nature, far fewer things are known about it. Indeed, very little is understood about the behavior of $\rc_{\F_d[\vec{\a}]}^{P}$ in either zero curvature or, easier, non-zero curvature cases.

The first explicit reference to \eqref{mmsr} appears in the work of Pierce and Yung, \cite{PP}, which addresses a higher dimensional version given by
\beq\label{mmsrhdim}
\Q(f)(x,y):= \sup_{Q\in \tilde{\p}_d(\R^n)}\Big|\int_{\R^n} f(x-t, y-|t|^2) e^{i Q(t)} K(t) dt\Big| ,
\eeq
where here $d, n\geq 2$, $K$ is a standard (translation invariant) Calder\'on-Zygmund kernel on $\R^n$ and $\tilde{\p}_d(\R^n)$ is a suitable class of real polynomials on $\R^n$ of degree at most $d$ that omits linear terms and certain types of quadratic terms. The specific restrictions imposed on \eqref{mmsrhdim} are fundamental for the success of the approach in \cite{PP}: 1) the requirement $n\geq 2$ makes the expression in \eqref{mmsrhdim} less singular since the point singularities $\{0,\infty\}$ of the kernel $K$ are now embedded in a larger ambient space,\footnote{Loosely speaking, this creates more room for the cancellation offered by the phase and thus the ``bad set'' where $T$ is ``large'' has a smaller relative size.} and, 2) the restriction over the class $\tilde{\p}_d(\R^n)$ is essentially equivalent to imposing that $\Q$ has no (generalized) modulation invariance symmetries, thereby placing this analysis in a non-zero curvature context.

These aspects allow the authors in \cite{{PP}} to adapt and refine -- via involved technicalities -- the methods employed by Stein and Wainger for the treatment of the curved polynomial Carleson operator, \cite{SteinWainger}, and prove the fully expected $L^p$ boundedness range for $\Q$.

In a related direction, this time focusing on a zero curvature situation but assuming a higher degree of smoothness on the multiplier side, there is the work in \cite{JR}, providing $L^2$-weak bounds for a suitable anisotropic higher-dimensional version of the Carleson operator.

Returning now to our original problem in \eqref{mmsr}, very little is known. The only result specifically addressing \eqref{mmsr} that we are aware of regards the following model operator considered in \cite{GPRY}:
\begin{equation}
\begin{aligned}
\rc[m,d](f)(x,y)&:=\sup_{a\in\R}\Big|\int_{\R} f(x-t, y-t^m) e^{i a t^d} \frac{dt}{t}\Big| \\
&\approx \Big|\int_{\R} f(x-t, y-t^m) e^{i a(x,y) t^d} \frac{dt}{t}\Big|,
\end{aligned}\label{mmsre}
\end{equation}
Here one works under the assumptions that the linearizing function $a(x,y)$ depends only on one variable and that, essentially,\footnote{Up to, possibly, ``subtracting'' a Carleson type operator from the low oscillatory component.} $m\not=d\geq 2$, thus discarding from the discussion the challenging resonant cases.\footnote{For the latter, see the discussion of the open problem proposed at the end of this section.} This work reveals some of the difficulties that one encounters even when attempting to treat just the model case of single-variable dependence of $a$ in a resonant context. Such an example is provided by taking in \eqref{mmsr} the case $P(t)=t^2$ and assuming that after the linearization of the supremum the phase of the complex exponential has the form $a_1(x)  t + a_2(x) t^2$. It is now immediate to see that this example covers the behavior of the Quadratic Carleson operator $C_2$ treated in \cite{lv1}.

Given the very limited understanding of the main topic discussed in this section, we extend the coverage of our open questions beyond subjects directly connected to the behavior of the bilinear Hilbert-Carleson operator\footnote{For this latter subject, see Section \ref{openquest}.} and end the current section by formulating the following:
$\newline$

\noindent\textsf{Open problem.} Investigate the $L^p$ boundedness of the operator $\rc[m,d]$ defined by \eqref{mmsre} in the following specific situations (in increasing order of difficulty):
\begin{itemize}
\item $m=2$, $d=3$: this corresponds to the non-zero curvature situation, and thus relies on the fact that $\rc[2,3]$ has no modulation invariance symmetry.

\item $m=2$, $d=2$: in this situation we have a linear resonance in the second variable, and thus we are in the hybrid zero/non-zero curvature case with $\rc[2,2]$ obeying the modulation invariance relation
\beq\label{modrel}
\rc[2,2](f)= \rc[2,2](M_{1,c}^2 f)\quad\quad\forall\:c\in\R ,
\eeq
 where here $M_{1,c}^j f(x_1,x_2):=e^{i c x_j} f(x_1,x_2)$ with $j\in\{1,2\}$. As a model case one may want first to investigate the (much) simpler situation of $a(x,y)=a(x)$ measurable.

\item $m=2$, $d=1$: this represents a zero curvature situation within the context represented by \eqref{mmsr} with $\rc[2,1]$ obeying the modulation invariance relations
\beq\label{modrel1}
\rc[2,1](f)= \rc[2,1](M_{1,c}^1 f)\quad\quad\forall \: c \in \R ,
\eeq
and
\beq\label{modrel2}
\rc[2,1](f)= \rc[2,1](M_{2, b}^1 M_{1, b}^2 f)\quad\quad\forall\:b, c \in \R .
\eeq
Again, as a toy model, one may want to consider first the situation $a(x,y)$ measurable depending on only one variable.
\end{itemize}
\medskip

\subsubsection{The triangular Hilbert transform}\label{trht}   This last topic is of more recent extraction and can be viewed as a multivariable analogue of the subject treated in Section \ref{bht}. Mirroring the bilinear Hilbert transform setting, the generic formulation of the present problem may be shaped as follows:
$\newline$

\noindent \underline{\textsf{The triangular Hilbert transform along curves.}} \textit{Given $\Gamma:=(t, -\g(t))$ a plane curve with $\g$ a suitable (piecewise) smooth real function, study the boundedness properties of the triangular Hilbert transform along the curve $\Gamma$ defined by
\beq\label{thilb}
T_{\Gamma}(f,g)(x,y):= \int_{\R} f(x-t,y) g(x,y+\g(t)) \frac{dt}{t} .
\eeq}
$\newline$
\noindent\textsf{The zero curvature case}: $\g(t)=a t$ with $a\in\R\setminus\{0\}$.
$\newline$

This case of \eqref{thilb} was explicitly formulated in \cite{KTZ} and can be traced back to the study of the two dimensional bilinear Hilbert transform introduced in \cite{DT}. The motivation for considering the triangular Hilbert transform $T_{\Gamma}$ with $\Gamma$ a non-degenerate line is multifaceted: 1) its study explores the largely virgin territory of highly singular higher dimensional wave-packet analysis, thus revealing some subtle phenomena not present in the one-dimensional universe; 2) it is a highly singular object that covers both the behavior of the classical Carleson operator and that of the classical bilinear Hilbert transform (and its corresponding behavior with respect to uniform estimates); and 3) can be connected with relevant topics in ergodic theory such as the pointwise convergence of bilinear averages arising from $\Z^2$ actions, \cite{DT}, and the problem of convergence for double ergodic averages with respect to two commuting transformations, \cite{DKST}.

While the problem of providing $L^p$ bounds for the triangular Hilbert transform remains wide open, the investigation of this subject has brought to light some new tools designed to better understand higher dimensional problems with (hidden) modulation invariant symmetries or presenting significant degeneracies. Among these, most notable is the surprising approach of $\textrm{Kova}\check{c}$, \cite{KV}, for establishing the boundedness of the so-called twisted paraproduct, which resembles energy methods in PDE and relies on subtle integration by parts and iterative applications of Cauchy-Schwarz inequalities. Also of relevance here is the work in \cite{Be12}, a suitably degenerate situation appearing in the earlier study of the two-dimensional Hilbert transform \cite{DT}. The methodology developed in \cite{KV} remains at the core of the most recent advances in the study of the triangular Hilbert transform, with \cite{KTZ} discussing a particular model case and \cite{DKST} investigating some interesting connections with ergodic theory.

$\newline$
\noindent\textsf{The non-zero curvature case}: $\g(t)=t^2$.
$\newline$

The motivation for this problem -- considered recently in \cite{CDR} -- is threefold: firstly, it serves as a toy model for the difficult zero curvature case represented by the triangular Hilbert transform discussed above; secondly, it encapsulates the behavior of the particular operators $C_{\F}$ for $\F=\F_1[2]$ and $B_{\Gamma}$ for $\Gamma=(t,-t^2)$ as in the general discussions of Sections \ref{pco} and \ref{bht}, respectively; and thirdly, it provides an application in the number theory realm (specifically to the study of patterns in the Euclidean plane) in the form of a quantitative nonlinear Roth-type theorem.

\subsection{Some related open questions}\label{openquest}
The present study on the boundedness properties of
\beq\label{Topa}
BC^{a}(f,g)(x)= \sup_{\lambda\in \R}  \Big|\int f(x-t) g(x+t)  e^{i\lambda t^a} \frac{dt}{t} \Big|
\eeq
opens up an entire avenue of related open problems that are meant to advance our above described program. In what follows, we list them in increasing level of complexity, as reflected by the richness of their symmetry groups:

$\newline$
\emph{Study the boundedness properties of the following operators:}
\medskip
\begin{enumerate}[leftmargin=8pt, label=\arabic*)]




\item \label{OQ:3} \emph{$BC^{a}$ for $a=2$: quadratic resonance}
\medskip

This question requires key new ideas. Indeed, in addition to the properties displayed in \ref{key:td}-\ref{key:max} (for $T:=BC^{2}$) one has the \emph{quadratic} modulation invariance
\beq\label{quadm}
|BC^{2}(M_{2,c} f, M_{2,c} g)|= |BC^{2}(f,g)|\quad\quad\forall\: c \in \R\,.
\eeq
As an immediate effect of this extra symmetry, the $m$-decaying estimate \eqref{mainrm} in Section \ref{Heur} no longer holds, and thus the strategy exposed in this paper for treating the oscillatory term must be fundamentally changed.\footnote{Indeed, if we assume by contradiction that the strategy in our present paper remains true for $a=2$, we see that by choosing the linearizing function $\l(x)=2^{2m+1}$ for some fixed large $m\in\N$, relation \eqref{mainrm} would imply in particular that $\big|\int_{|t|\gtrsim 1} f(x-t) g(x+t)e^{i 2^{2m+1} t^2 } \frac{dt}{t} \big|\lesssim 2^{-m \d} \|f\|_{L^2} \|g\|_{L^2}$ for some absolute constant $\d>0$ and any $f, g\in L^2(\R)$. However, this latter inequality is trivially false, as can be seen by taking $f(x)=M_{2, -2^m} f_1(x)$ and $g(x)=M_{2, -2^m} g_1(x)$ with $f_1, g_1$ arbitrary functions in $L^2(\R)$.}
$\newline$

\item \label{OQ:4} \emph{$BC^{a}$ for $a=1$: The (linear resonant) bilinear Hilbert--Carleson operator}
\medskip

This situation is even more complex than the previous one due to the extra degree of freedom within the linear modulation symmetry group\footnote{Note the distinction between \eqref{quadm} and \eqref{quadm0}. Similarly, notice the extra degree of freedom in \eqref{linm0} as compared to \ref{key:m}.}:
\begin{itemize}
\item the quadratic modulation symmetry
\beq\label{quadm0}
|BC^{1}(M_{2,c} f, M_{2,-c} g)|= |BC^{1}(f,g)|\quad\quad\forall\:c\in\R ,
\eeq
\item the linear modulation symmetry
\beq\label{linm0}
|BC^{1}(M_{b} f, M_{c} g)|= |BC^{1}(f,g)|\quad\quad\forall\:b, c \in \R .
\eeq
\end{itemize}
As with the previous item, this problem requires key new ideas. In particular, it is worth noticing that $BC^{1}$ encapsulates both the behavior of the bilinear Hilbert transform and that of the classical Carleson operator.
$\newline$

\item \label{OQ:5} \emph{$BC_{d}, d\geq 1$: The bilinear Hilbert -- polynomial Carleson operator}
$\newline$

Define the \emph{bilinear Hilbert -- polynomial Carleson operator} by
\beq\label{BPHd}
BC_{d}(f,g)(x):= \sup_{P\in\p_d(\R)} \Big|\int f(x-t)g(x+t) e^{i P(t)}\frac{dt}{t}\Big|,
\eeq
where here $\p_d(\R)$ is the class of all real polynomials of degree $d\geq 1$.
$\newline$

As expected, because the operator $BC_{d}$ has the richest group of symmetries, this is the most difficult problem considered here. In particular, we have
\begin{equation}\label{genmod}
|BC_{d}(M_{j,b} f, M_{k,c} g)|= |BC_{d}(f,g)|\qquad \forall\: b, c\in\R \text{ and } \forall\: 1\leq j,k\leq d.
\end{equation}

The operator $BC_{d}$ generalizes both the bilinear Hilbert transform and the polynomial Carleson operator.
\end{enumerate}

\subsection{Structure of the paper}\label{struct}

In what follows we briefly outline the organization of our paper:
\begin{itemize}

\item In Section \ref{Genov} we present an overview of most of the main ideas in our proof.

\item In Section \ref{sec:initial:reductions} we perform some standard reductions: we make explicit the decomposition of $T=BC^a$ into its low oscillatory component $T_0$ and its high oscillatory component $T_{\osc}$, provide the treatment of $T_0$ and isolate the main part of each $T_m$ within the decomposition $T_{\osc}=\sum_{m\in\N} T_m$.

\item Section \ref{sec:discretization} presents the first key ingredient: the discretization of the operator. Here we explain the concept of the two-resolution analysis by introducing 1) the high resolution model with its two variants: the discrete phase-linearized wave-packet model (Section \ref{HR1}) and the continuous phase-linearized spatial model (Section \ref{HR2}), and 2) the low resolution model (Section \ref{LR}).

\item Within the high resolution analysis, in Section \ref{sec:onescale} we provide the single scale decay estimate for each\footnote{Recall here that we expressed $T_m=\sum_{k\in\Z} T_{m,k}$.} $T_{m,k}$ with $m\in\N$ and $k\in\Z$ fixed. This is the point where we make use in a key fashion of the phase curvature characterizing the modulated kernel of our operator. Our single scale estimate is achieved via the $TT^{*}$ method and a discrete Van der Corput argument involving exponential sums -- see Section \ref{ttstar}.

\item In Section \ref{sec:HR2} we refine the previously obtained single scale decay estimate by upgrading it to a decay estimate for the associated tri-linear form that achieves a desired time-frequency localization in all of the three input functions. This can be regarded as a single-tile estimate and prepares the ground for the multi-scale analysis performed in the next section.

\item In Section \ref{sec:bilinear:analysis}, via suitable concepts of size and mass, we adapt the bilinear Hilbert transform methods to the context of our problem in order to obtain a global control over $T_m$ and thus conclude our analysis of $T$. In addition, the helicoidal method will be used to obtain the full range of boundedness, including the case where $p$ or $q$ is infinite.

\item Finally, in Appendix A we discuss the lack of $m$-decaying absolute summability for our discrete (linearized) wave-packet model, while in Appendix B we provide the standard stationary phase analysis associated to the multiplier of our operator $T_m$.

\end{itemize}

\subsection*{Acknowledgments}
The authors were all partially supported by ERC project FAnFArE no. 637510. The third author was also supported by NSF grants DMS-1500958 and DMS-1900801 and by a Simons fellowship. The fourth author was also supported by IRC grant GOIPD/2019/434. The authors would like to thank Zubin Gautam for his help in improving the English presentation in the introductory section of this paper.

\section{General overview of our proof: main ideas and picture from above}\label{Genov}

The novel hybrid character of our operator within the zero/non-zero curvature framework as revealed by the specific structure imposed by \ref{key:td}-\ref{key:max} in Section \ref{Intr} will place us in some unprecedented situations whose treatment will require several new ideas that are discussed briefly below.

\subsection{Some heuristics}\label{Heur}

We gradually immerse into our story by noticing that an inspection of the key properties in \ref{key:td}-\ref{key:max} reveals the following:
\begin{itemize}
\item The dilation commutation relation expressed by \ref{key:td} is a statement about the \emph{multiscale} nature of our approach while the translation commutation in the same \ref{key:td} makes the point that our analysis places equal weight on any spatial location.

\item Next, the modulation invariance \ref{key:m} implies that our methodology must rely on \emph{wave-packet decompositions}/\emph{time-frequency analysis} originating in the celebrated work of Carleson (\cite{c1}) on the pointwise convergence of Fourier Series and further developed by the influential contributions of Fefferman (\cite{f}) -- in his reproof of Carleson's Theorem -- and of Lacey and Thiele (\cite{lt1}, \cite{lt2}) -- in their study of the boundedness of the bilinear Hilbert transform.

\item Finally, the form of the integral kernel that includes a maximally modulated kernel in the presence of curvature requires an approach that is sensitive to the level set decomposition of the highly oscillatory phase of the multiplier -- the central element used to extract the cancellation provided by the curvature. Moreover, given the structure of our operator $BC^a$ and the discussion from Sections \ref{znzpar} and \ref{unif2th}, our approach  needs to subsume the Stein-Wainger treatment (\cite{SteinWainger}) of the polynomial Carleson operator with no linear phase. On top of the above, the techniques involved for treating the curvature must be compatible with the wave-packet analysis employed in the second item above. All these aspects make the ``LGC-methodology''\footnote{For an overview of this methodology please see Section \ref{sec:discretization}.} developed in \cite{lvUnif} an essential ingredient in our approach.
\end{itemize}

In order to detail the above description, we follow the (part II) strategy outlined in Section \ref{znzpar} and perform the linearization of the supremum in \eqref{Top} followed by the decomposition of our operator $T$ as
\beq\label{stdec}
T=:T_0+ T_{\osc},
\eeq
where (for $\chi(\cdot,\cdot)$ a smooth function such that $\chi(\lambda,t) \neq 0$ iff $|\lambda t^a| \lesssim 1$) we set
\begin{itemize}
\item the low-oscillatory component $T_0$ as
\beq\label{tlo}
T_0(f,g)(x):=\int f(x-t)g(x+t)e^{i \lambda(x) t^a} \chi(\lambda(x), t) \frac{dt}{t};
\eeq

\item the high-oscillatory component $T_{\osc}$ as
\beq\label{tosc}
T_{\osc}(f,g)(x):=\int f(x-t)g(x+t)e^{i \lambda(x) t^a} (1-\chi(\lambda(x),t)) \frac{dt}{t}.
\eeq
\end{itemize}

The low-oscillatory component $T_0$ behaves much like the maximally truncated bilinear Hilbert transform (see \eqref{Lmax}), and hence, based on Lacey's result \cite{Lacey}, is under control in the same range specified by Theorem \ref{main}.

For the second term $T_{\osc}$, immersing the philosophy from the third item at the beginning of this subsection into the scheme summarized by \eqref{stdecc}-\eqref{stdeccee}, we discretize the high-oscillatory component according to the level sets of the phase function as follows:
\begin{equation}
\label{thi}
\begin{aligned}
T_{\osc}(f,g)(x) & =:\sum_{m\in\N} T_{m}(f,g)(x) \\
 & :=\sum_{m\in\N}\int f(x-t)g(x+t)e^{i \lambda(x) t^a} \chi_m(\lambda(x),t) \frac{dt}{t},
\end{aligned}
\end{equation}
where here $\chi_{m}(\cdot,\cdot)$ is a smooth function such that $\chi_m(\lambda,t) \neq 0$ iff $|\lambda t^a| \sim 2^{am}$.

Thus, our goal -- formulated below as Theorem \ref{main_theorem_pxq_to_r} -- will be to show that each such piece $T_m$ obeys an exponentially decaying bound of the form
\beq\label{mainrm}
\|T_m(f,g)\|_{L^r}\lesssim_{a,p,q} 2^{-\d a m}\|f\|_{L^p}\|g\|_{L^q},
\eeq
where here $\d=\d(p,q)>0$ and $p,q,r$ lie within the range specified in the statement of our main result, Theorem \ref{main}.

Returning now to the more philosophical tone of our discussion from the beginning of this section, we notice that the strategy set forth above is common\footnote{For a more detailed discussion on this, please see Section \ref{Hist}.} to harmonic analysis problems involving a purely non-zero curvature, which canonically rely on two distinct treatments:
\begin{itemize}
\item the \emph{low oscillatory component} encapsulates the purely singular behavior of the kernel (in the absence of the oscillation) and is always controlled by a variant of a maximal/singular operator that is well understood;\footnote{This is the situation where the existence of a mean zero condition -- as for our case $\int_{\R}\frac{dt}{t}=0$ -- plays a determinative role.}

\item the \emph{high oscillatory component} is where the phase cancellation becomes relevant and is captured via the (non)stationary phase principle and orthogonality techniques relying on the $TT^{*}$ method.
\end{itemize}
Consequently, in the non-zero curvature case, the treatments of the low and high oscillatory components are expected to be mutually exclusive, with the latter involving no genuine modulation invariance methods.

Going back to our specific problem, a thorough inspection of the elements displayed in \eqref{tosc}-\eqref{mainrm}
reveal the key philosophical difficulty that we are about to face: on the one hand the exponential decay in $m$ in \eqref{mainrm} is a manifestation of the cancellation encoded in the non-zero curvature behavior of the Carleson-type operator $C^a$; on the other hand, the very symbol of the operator $T_m$ -- see \eqref{multm} below -- is translation invariant under the action $(\xi,\eta) \mapsto (\xi+N, \eta+N)$ for arbitrary $N\in\R$.

This reveals a second novelty of this paper in accordance with the truly hybrid nature of our problem within the non-zero/zero paradigm: the approach of both the low and the high oscillatory components rely in a key fashion on modulation invariance and hence time-frequency analysis methods.

\subsection{Discretization strategy--preamble}
With this context provided to our story, we resume our analysis of \eqref{thi} by presenting an overview of the key steps in proving \eqref{mainrm}.

We start by decomposing each $T_{m}$ as
\begin{align*}
T_{m}(f,g)(x) & =\sum_{k \in \mathbb{Z}}T_{m,k}(f,g)(x) \\
 & =:\sum_{k\in\Z}\rho_a(2^{-a(m-k)}\lambda(x)) \int f(x-t)g(x+t) e^{i\lambda(x) t^a} \rho(2^{-k}t) \frac{dt}{t},
\end{align*}
where here we have conveniently decoupled the variables $\l$ and $t$ by writing $\chi_m(\lambda,t) =: \sum_{k \in \mathbb{Z}} \rho_a(2^{-a (m-k)}\lambda)\rho(2^{-k}t)$ with $\rho,\rho_a\in C_0^{\infty}(\R)$ suitable smooth functions supported away from the origin.

The first manifestation of the non-zero curvature of the phase $\lambda(x)t^a$ -- observe that for each $T_{m,k}$ one has $|\lambda(x)t^a| \sim 2^{am}$ and $|t| \sim 2^{k}$ -- is captured through a stationary phase analysis that facilitates the approximation
\begin{equation}
\begin{aligned}\label{multm}
& T_{m,k}(f,g)(x) \approx 2^{-\frac{a m}{2}}\rho_{a m- a k}(\l(x))  \\
& \qquad \cdot \iint_{\R^2} \hat{f}(\xi)\hat{g}(\eta) e^{ i c (\xi-\eta)^{\frac{a}{a-1}}\l(x)^{-\frac{1}{a-1}}}e^{ix(\xi+\eta)}\psi\big(\frac{\xi-\eta}{2^{a m-k}}\big)d\xi\,d\eta,
\end{aligned}
\end{equation}
where here $\psi\in C_0^{\infty}(\R)$ with $\textrm{supp}\,\psi\subset\{s : 1/2 < |s| < 2\}$.

Analyzing \eqref{multm} on the frequency side, we notice that the combined information for $f$ and $g$ is concentrated within the strip\footnote{For simplicity, we focus in what follows only on the case $\xi>\eta$.} $\ds 2^{am-k} \leq  \xi- \eta \leq 2^{am-k+1}$. Guided by this geometry, we first split the strip into ``big'' squares of side-length $2^{am-k}$, obtaining a collection$\big \{   \big[ 2^{am-k} (\ell +1),   2^{am-k} (\ell +2)  \big]  \times \big[ 2^{am-k} (\ell -1),   2^{am-k} \ell  \big]\big \}_{\ell \in \Z}$ as in Figure \ref{figure:BigCubes}.

%
%
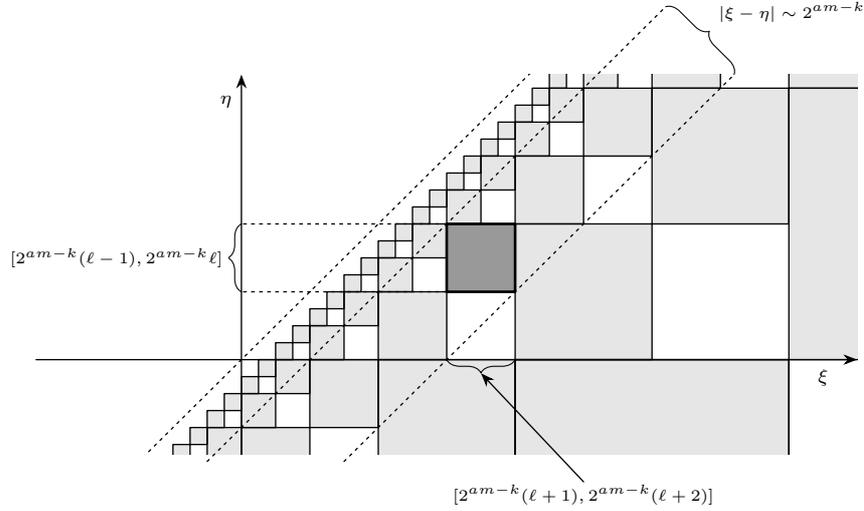
\begin{figure}[ht]
\centering
\begin{tikzpicture}[line cap=round,line join=round,>=Stealth,x=1cm,y=1cm,decoration={brace,amplitude=5pt},scale=0.9]
\clip(-4.5,-2.3) rectangle (10,5.5);

\begin{scope}
\clip(-3,-1.4) rectangle (9,4.2);
\filldraw[line width=0.5pt,color=black,fill=black,fill opacity=0.1] (4,-4) -- (8,-4) -- (8,0) -- (4,0) -- cycle;
\filldraw[line width=0.5pt,color=black,fill=black,fill opacity=0.1] (8,0) -- (12,0) -- (12,4) -- (8,4) -- cycle;
\filldraw[line width=0.5pt,color=black,fill=black,fill opacity=0.1] (0,-4) -- (2,-4) -- (2,-2) -- (0,-2) -- cycle;
\filldraw[line width=0.5pt,color=black,fill=black,fill opacity=0.1] (2,-2) -- (4,-2) -- (4,0) -- (2,0) -- cycle;
\filldraw[line width=0.5pt,color=black,fill=black,fill opacity=0.1] (4,0) -- (6,0) -- (6,2) -- (4,2) -- cycle;
\filldraw[line width=0.5pt,color=black,fill=black,fill opacity=0.1] (6,2) -- (8,2) -- (8,4) -- (6,4) -- cycle;
\filldraw[line width=0.5pt,color=black,fill=black,fill opacity=0.1] (8,4) -- (10,4) -- (10,6) -- (8,6) -- cycle;
\filldraw[line width=0.5pt,color=black,fill=black,fill opacity=0.1] (-1,-3) -- (0,-3) -- (0,-2) -- (-1,-2) -- cycle;
\filldraw[line width=0.5pt,color=black,fill=black,fill opacity=0.1] (0,-2) -- (1,-2) -- (1,-1) -- (0,-1) -- cycle;
\filldraw[line width=0.5pt,color=black,fill=black,fill opacity=0.1] (1,-1) -- (2,-1) -- (2,0) -- (1,0) -- cycle;
\filldraw[line width=0.5pt,color=black,fill=black,fill opacity=0.1] (2,0) -- (3,0) -- (3,1) -- (2,1) -- cycle;
%
%
\filldraw[line width=1pt,color=black,fill=black,fill opacity=0.4] (3,1) -- (4,1) -- (4,2) -- (3,2) -- cycle;
\filldraw[line width=0.5pt,color=black,fill=black,fill opacity=0.1] (4,2) -- (5,2) -- (5,3) -- (4,3) -- cycle;
\filldraw[line width=0.5pt,color=black,fill=black,fill opacity=0.1] (5,3) -- (6,3) -- (6,4) -- (5,4) -- cycle;
\filldraw[line width=0.5pt,color=black,fill=black,fill opacity=0.1] (6,4) -- (7,4) -- (7,5) -- (6,5) -- cycle;
\filldraw[line width=0.5pt,color=black,fill=black,fill opacity=0.1] (-1.5,-2.5) -- (-1,-2.5) -- (-1,-2) -- (-1.5,-2) -- cycle;
\filldraw[line width=0.5pt,color=black,fill=black,fill opacity=0.1] (-1,-2) -- (-0.5,-2) -- (-0.5,-1.5) -- (-1,-1.5) -- cycle;
\filldraw[line width=0.5pt,color=black,fill=black,fill opacity=0.1] (-0.5,-1.5) -- (0,-1.5) -- (0,-1) -- (-0.5,-1) -- cycle;
\filldraw[line width=0.5pt,color=black,fill=black,fill opacity=0.1] (0,-1) -- (0.5,-1) -- (0.5,-0.5) -- (0,-0.5) -- cycle;
\filldraw[line width=0.5pt,color=black,fill=black,fill opacity=0.1] (0.5,-0.5) -- (1,-0.5) -- (1,0) -- (0.5,0) -- cycle;
\filldraw[line width=0.5pt,color=black,fill=black,fill opacity=0.1] (1,0) -- (1.5,0) -- (1.5,0.5) -- (1,0.5) -- cycle;
\filldraw[line width=0.5pt,color=black,fill=black,fill opacity=0.1] (1.5,0.5) -- (2,0.5) -- (2,1) -- (1.5,1) -- cycle;
\filldraw[line width=0.5pt,color=black,fill=black,fill opacity=0.1] (2,1) -- (2.5,1) -- (2.5,1.5) -- (2,1.5) -- cycle;
\filldraw[line width=0.5pt,color=black,fill=black,fill opacity=0.1] (2.5,1.5) -- (3,1.5) -- (3,2) -- (2.5,2) -- cycle;
\filldraw[line width=0.5pt,color=black,fill=black,fill opacity=0.1] (3,2) -- (3.5,2) -- (3.5,2.5) -- (3,2.5) -- cycle;
\filldraw[line width=0.5pt,color=black,fill=black,fill opacity=0.1] (3.5,2.5) -- (4,2.5) -- (4,3) -- (3.5,3) -- cycle;
\filldraw[line width=0.5pt,color=black,fill=black,fill opacity=0.1] (4,3) -- (4.5,3) -- (4.5,3.5) -- (4,3.5) -- cycle;
\filldraw[line width=0.5pt,color=black,fill=black,fill opacity=0.1] (4.5,3.5) -- (5,3.5) -- (5,4) -- (4.5,4) -- cycle;
\filldraw[line width=0.5pt,color=black,fill=black,fill opacity=0.1] (5,4) -- (5.5,4) -- (5.5,4.5) -- (5,4.5) -- cycle;
\filldraw[line width=0.5pt,color=black,fill=black,fill opacity=0.1] (-1.75,-2.25) -- (-1.5,-2.25) -- (-1.5,-2) -- (-1.75,-2) -- cycle;
\filldraw[line width=0.5pt,color=black,fill=black,fill opacity=0.1] (-1.5,-2) -- (-1.25,-2) -- (-1.25,-1.75) -- (-1.5,-1.75) -- cycle;
\filldraw[line width=0.5pt,color=black,fill=black,fill opacity=0.1] (-1.25,-1.75) -- (-1,-1.75) -- (-1,-1.5) -- (-1.25,-1.5) -- cycle;
\filldraw[line width=0.5pt,color=black,fill=black,fill opacity=0.1] (-1,-1.5) -- (-0.75,-1.5) -- (-0.75,-1.25) -- (-1,-1.25) -- cycle;
\filldraw[line width=0.5pt,color=black,fill=black,fill opacity=0.1] (-0.75,-1.25) -- (-0.5,-1.25) -- (-0.5,-1) -- (-0.75,-1) -- cycle;
\filldraw[line width=0.5pt,color=black,fill=black,fill opacity=0.1] (-0.5,-1) -- (-0.25,-1) -- (-0.25,-0.75) -- (-0.5,-0.75) -- cycle;
\filldraw[line width=0.5pt,color=black,fill=black,fill opacity=0.1] (-0.25,-0.75) -- (0,-0.75) -- (0,-0.5) -- (-0.25,-0.5) -- cycle;
\filldraw[line width=0.5pt,color=black,fill=black,fill opacity=0.1] (0,-0.5) -- (0.25,-0.5) -- (0.25,-0.25) -- (0,-0.25) -- cycle;
\filldraw[line width=0.5pt,color=black,fill=black,fill opacity=0.1] (0.25,-0.25) -- (0.5,-0.25) -- (0.5,0) -- (0.25,0) -- cycle;
\filldraw[line width=0.5pt,color=black,fill=black,fill opacity=0.1] (0.5,0) -- (0.75,0) -- (0.75,0.25) -- (0.5,0.25) -- cycle;
\filldraw[line width=0.5pt,color=black,fill=black,fill opacity=0.1] (0.75,0.25) -- (1,0.25) -- (1,0.5) -- (0.75,0.5) -- cycle;
\filldraw[line width=0.5pt,color=black,fill=black,fill opacity=0.1] (1,0.5) -- (1.25,0.5) -- (1.25,0.75) -- (1,0.75) -- cycle;
\filldraw[line width=0.5pt,color=black,fill=black,fill opacity=0.1] (1.25,0.75) -- (1.5,0.75) -- (1.5,1) -- (1.25,1) -- cycle;
\filldraw[line width=0.5pt,color=black,fill=black,fill opacity=0.1] (1.5,1) -- (1.75,1) -- (1.75,1.25) -- (1.5,1.25) -- cycle;
\filldraw[line width=0.5pt,color=black,fill=black,fill opacity=0.1] (1.75,1.25) -- (2,1.25) -- (2,1.5) -- (1.75,1.5) -- cycle;
\filldraw[line width=0.5pt,color=black,fill=black,fill opacity=0.1] (2,1.5) -- (2.25,1.5) -- (2.25,1.75) -- (2,1.75) -- cycle;
\filldraw[line width=0.5pt,color=black,fill=black,fill opacity=0.1] (2.25,1.75) -- (2.5,1.75) -- (2.5,2) -- (2.25,2) -- cycle;
\filldraw[line width=0.5pt,color=black,fill=black,fill opacity=0.1] (2.5,2) -- (2.75,2) -- (2.75,2.25) -- (2.5,2.25) -- cycle;
\filldraw[line width=0.5pt,color=black,fill=black,fill opacity=0.1] (2.75,2.25) -- (3,2.25) -- (3,2.5) -- (2.75,2.5) -- cycle;
\filldraw[line width=0.5pt,color=black,fill=black,fill opacity=0.1] (3,2.5) -- (3.25,2.5) -- (3.25,2.75) -- (3,2.75) -- cycle;
\filldraw[line width=0.5pt,color=black,fill=black,fill opacity=0.1] (3.25,2.75) -- (3.5,2.75) -- (3.5,3) -- (3.25,3) -- cycle;
\filldraw[line width=0.5pt,color=black,fill=black,fill opacity=0.1] (3.5,3) -- (3.75,3) -- (3.75,3.25) -- (3.5,3.25) -- cycle;
\filldraw[line width=0.5pt,color=black,fill=black,fill opacity=0.1] (3.75,3.25) -- (4,3.25) -- (4,3.5) -- (3.75,3.5) -- cycle;
\filldraw[line width=0.5pt,color=black,fill=black,fill opacity=0.1] (4,3.5) -- (4.25,3.5) -- (4.25,3.75) -- (4,3.75) -- cycle;
\filldraw[line width=0.5pt,color=black,fill=black,fill opacity=0.1] (4.25,3.75) -- (4.5,3.75) -- (4.5,4) -- (4.25,4) -- cycle;
\filldraw[line width=0.5pt,color=black,fill=black,fill opacity=0.1] (4.5,4) -- (4.75,4) -- (4.75,4.25) -- (4.5,4.25) -- cycle;
\filldraw[line width=0.5pt,color=black,fill=black,fill opacity=0.1] (4.75,4.25) -- (5,4.25) -- (5,4.5) -- (4.75,4.5) -- cycle;
\draw [line width=0.5pt,color=black] (4,-4)-- (8,-4);
\draw [line width=0.5pt,color=black] (8,-4)-- (8,0);
\draw [line width=0.5pt,color=black] (8,0)-- (4,0);
\draw [line width=0.5pt,color=black] (4,0)-- (4,-4);
\draw [line width=0.5pt,dash pattern=on 1pt off 2pt,domain=-5:12] plot(\x,\x);
\draw [line width=0.5pt,dash pattern=on 1pt off 2pt] (3,1) -- (0,1);
\draw [line width=0.5pt,dash pattern=on 1pt off 2pt] (3,2) -- (0,2);
\draw [->,line width=0.5pt] (-3,0) -- (9,0);
\draw [->,line width=0.5pt] (0,-1.4) -- (0,4.2);
\draw [anchor=north] (8.5,0) node {\scriptsize $\xi$};
\draw [anchor= east] (0,3.8) node {\scriptsize $\eta$};
\end{scope}

\draw [line width=0.5pt,dash pattern=on 1pt off 2pt,domain=-0.5:6.2] plot(\x,\x-1);
\draw [line width=0.5pt,dash pattern=on 1pt off 2pt,domain=1.5:7.2] plot(\x,\x-3);
\draw [decorate, color=black] (0,1) -- (0,2)
	node [midway, anchor=east, fill=white, inner sep=2pt, outer sep=5pt]{\tiny $[ 2^{am-k} (\ell -1),   2^{am-k} \ell]$};
\draw [decorate, color=black] (6.2,5.2) -- (7.2,4.2)
	node [midway, anchor=south west, fill=white, inner sep=2pt, outer sep=5pt]{\tiny $|\xi - \eta| \sim 2^{am-k}$};
\draw [decorate, color=black] (4,0) -- (3,0);
\draw [->, line width=0.5pt] (5,-1.8) -- (3.5,-0.2);
\draw [anchor=north] (5,-1.7) node {\tiny $[ 2^{am-k} (\ell +1),   2^{am-k} (\ell+2)]$};
	
\end{tikzpicture}
\caption{\footnotesize Geometry of the frequency plane associated to $T_m$: Whitney ``big'' square decomposition with respect to the singularity  $\xi=\eta$ which may be seen as a $2^{am}$-dilation of the bilinear Hilbert transform's frequency portrait. Here $k$ stands for the scale of the spatial kernel variable $t$ while $m$ relates to the height of the oscillatory phase.} \label{figure:BigCubes}
\end{figure}
%
%

Once at this point we remark that Figure \ref{figure:BigCubes} captures the very essence of the hybrid character of our operator:

\begin{enumerate}[label=(\roman*)]
\item the dilation commutation and modulation invariance are encoded in the Littlewood-Paley strip decomposition of the frequency plane and the translation invariance of the family of big squares that contribute to the discretization of each of the strips, respectively;
\item the nonzero curvature of the phase manifests (in a first instance) through the $2^{am}$ thickening factor relative to the standard bilinear Hilbert transform's frequency portrait.
\end{enumerate}

Regarded from the angle provided by the above interpretation it becomes natural to split our analysis on two levels:
\begin{itemize}
\item a \emph{low resolution analysis} corresponding to (i) and thus addressing the properties \ref{key:td} and \ref{key:m}: the focus here is to exploit the almost orthogonality \emph{among} (families of) big squares;

\item a \emph{high resolution analysis}  corresponding to (ii) and thus treating \ref{key:max}: this focuses on capturing the cancellation \emph{within each} given big square.
\end{itemize}

In this way we introduce the third novelty of our paper: in treating our  high-oscillatory component $T_{\osc}=\sum_{m\in\N} T_{m}$ we design a ``two-resolution analysis''.

\subsection{Two-resolution analysis: an introduction}\label{TRAI}

In this section we very briefly present some key characteristics of the two-resolution analysis implemented in the present paper.

\begin{enumerate}[label=(\Roman*), leftmargin=16pt]
\item \label{step:high}\emph{the high resolution, single-scale analysis}: as already alluded, this is the regime in which we exploit the non-zero curvature of the phase of the multiplier.

For this, we fix the scale $k\in\Z$, the frequency parameter $\ell\in\Z$ and its dual space parameter $r\in \Z$ and focus on the wave-packet interaction inside a fixed (given) big frequency square
\begin{equation}
\label{freqsq}
\ds \big[ 2^{am-k} (\ell +1),   2^{am-k} (\ell +2)  \big]  \times \big[ 2^{am-k} (\ell -1),   2^{am-k} \ell  \big]\:.
\end{equation}
 Once at this point we employ the LGC-methodology from \cite{lvUnif} as follows:
\begin{itemize}[leftmargin=*]
\item Stage one is represented by a discretization of the multiplier that achieves the \emph{phase linearization}:

The effect of this process will be the division of every ``big'' square $\ds \big[ 2^{am-k} (\ell +1),   2^{am-k} (\ell +2)  \big]  \times \big[ 2^{am-k} (\ell -1),   2^{am-k} \ell  \big]$  into $2^{am}$ smaller squares, as illustrated in Figure \ref{figure:SmallCubes}.\footnote{This amounts to partitioning every side into $2^{am \over 2}$ intervals of equal length.}

%
%
\begin{figure}[ht]
\centering
\input{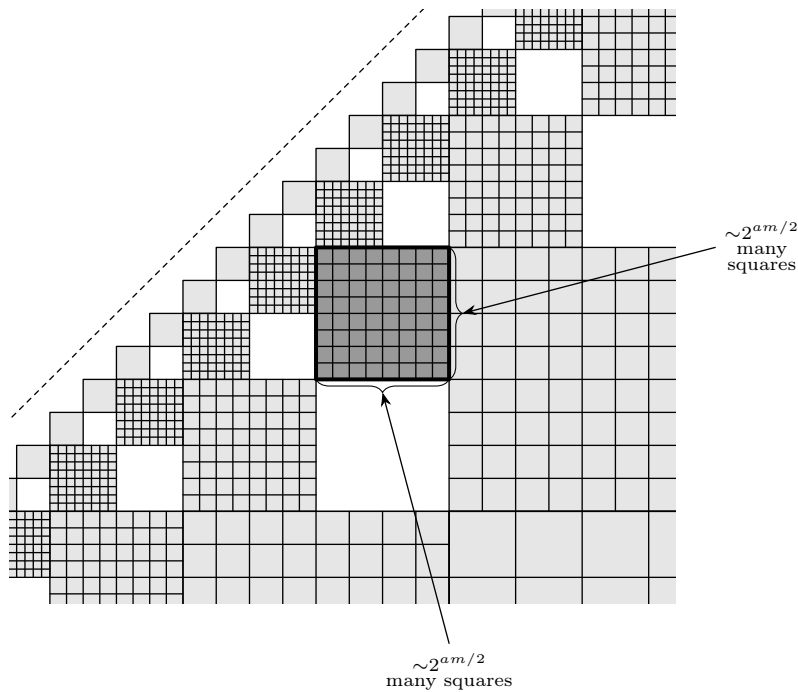}
\caption{\footnotesize The multiplier's phase behavior is essentially linear within each small square obtained by splitting every ``big'' square into $2^{am}$ identical pieces.} \label{figure:SmallCubes}
\end{figure}
%
%

\item Stage two consists of an adapted \emph{Gabor frame} discretization of the input functions.

\item Stage three employs almost orthogonality methods relying on the \emph{$TT^{*}$-method} and time-frequency \emph{correlations}.
\end{itemize}

An additional, novel ingredient within this analysis is the simultaneous two types of models, that, though equivalent, organize differently the information of the small squares living within the given big square. For an overview of this approach please see Section \ref{HR10}. The main conclusion of this analysis is provided in Proposition \ref{sgscale}, Section \ref{sec:onescale}.

\item \label{step:trans}  \emph{A transitional step -- acquiring compatibility between \ref{step:high} and \ref{step:low}}: this step is required in order to harmonize the non-zero curvature methods from \ref{step:high} with the wave-packet modulation invariant methods from \ref{step:low} and at the heuristic level should be regarded as a decoupling of the action of the two components blended in $BC^a$, namely the bilinear Hilbert transform $B$ and the Carleson-type operator $C^a$.

More concretely, at this step we refine the work performed in \ref{step:high} by obtaining a time-frequency localized version of the single scale estimate in which both the spatial and the frequency localization of all the three inputs $f,g$ and $h$ as well as the information carried by the linearized function $\lambda$ manifest. This refined analysis is required in order to properly implement the multi-scale methodology referred to in the next item. Our treatment of this second stage will involve the mass distribution of an weight encoding the behavior of $\lambda$ (see \eqref{def:eq:weight-oscill0}) as well as the size distribution of the functions $f$ and $g$. At the end of this process we will have obtained a symmetric trilinear single scale estimate that properly quantifies the time-frequency information of each of the input functions as well as the mass carried by $\l$, and all these in a manner that is compatible with the size-energy approach for the bilinear Hilbert transform. The desired localized single-scale estimate is briefly outlined in Section \ref{HR20} and proved in Section \ref{sec:HR2} -- see Proposition \ref{refinedsgscale} therein.

\item \label{step:low} \emph{The low resolution, multi-scale analysis}: this is the regime that addresses the modulation-invariance symmetry \ref{key:m}.

In this situation we combine all the scales $k$ together, allowing full liberty to $\ell, r$ parameters in order to understand the time-frequency interactions among the big frequency squares $$\left\{\ds \big[ 2^{am-k} (\ell +1),   2^{am-k} (\ell +2)  \big]  \times \big[ 2^{am-k} (\ell -1),   2^{am-k} \ell  \big]\right\}_{k,\ell\in\Z}$$ considered together with their dual space squares. A key insight here is the observation that the time-frequency representation of the entire collection of these squares corresponds to a suitable anisotropic dilation of the time-frequency portrait of the bilinear Hilbert transform. The above considerations trigger the appeal to the methods developed by Lacey and Thiele in \cite{lt1}, \cite{lt2}, and refined in the form offered by \cite{MTTBiest2}, based on which -- together with the localized version of the single-scale estimate discussed in \ref{step:trans} -- we will be able to conclude that the dualized version of \eqref{mainrm} holds. The procedure described here is previewed in Section \ref{LRMS} and covered in detail in Section \ref{sec:bilinear:analysis}.
\end{enumerate}

\subsection{Two-resolution analysis: time-frequency portraits}\label{tfp}

In this section we rephrase the above discussion in a geometric language.\footnote{Throughout the remainder of the present section the parameter $m\in\N$ is fixed.} According to the two regimes -- for expository reasons displayed here in reverse order -- we will consider two types of families of (tri-)tiles, illustrated below in Figures \ref{figure:tiles_a} and \ref{figure:tiles_b}:

$\newline$
\noindent A) \emph{The low resolution, multi-scale analysis}. In this regime, guided by the previously mentioned analogy with the bilinear Hilbert transform, we are invited to consider the family $\BHT={{{\mathbb P}^m_{\textrm{BHT}}}}$ of all tri-tiles $P=(P_1,P_2,P_3)$ obeying

\begin{equation}
\label{eq:tile:1}
P_1:= I_P \times  \pmb{\omega}_{P_1} =[r 2^k, (r+1) 2^k] \times  \big[ 2^{am-k} (\ell +1),   2^{am-k}( \ell+2)  \big],
\end{equation}
\begin{equation}
\label{eq:tile:2}
P_2:=I_P \times  \pmb{\omega}_{P_2} = [r 2^k, (r+1) 2^k] \times  \big[ 2^{am-k} (\ell -1),   2^{am-k} \ell  \big],
\end{equation}
and
\begin{equation}
\label{eq:tile:3}
P_3:=I_P \times  \pmb{\omega}_{P_3} =[r 2^k, (r+1) 2^k] \times  \big[ 2^{am-k} (2\ell ),   2^{am-k}(2 \ell+1)  \big],
\end{equation}
with $k,r, \ell$ varying in $\Z$. Each of the tiles above has area $2^{am}$ as opposed to the area one canonically encountered in the time-frequency analysis literature. This is a direct manifestation of the presence of the maximal modulation, which via the curvature of the phase spreads out the localization of the wave-packet. With $P$ as above, recalling \eqref{multm} and defining the dual form $\underline{\L}_{m,k}(f, g, h):=\int T_{m, k}(f, g)(x) h(x) dx$ (with the obvious analogue for $\underline{\L}_{m}$) one is able to decompose\footnote{For a precise definition please see \eqref{lmk}, \eqref{eq:modeltilee} and \eqref{Ptile}.}
\beq\label{lmP}
\underline{\L}_{m}=\sum_{k\in\Z}\underline{\L}_{m,k}=\sum_{P\in \BHT} \underline{\L}_{m,P}\:.
\eeq
As claimed earlier, it now becomes apparent that $\underline{\L}_{m}$ has a similar (but dilated) time-frequency portrait to that of BHT, with the family of tri-tiles $\BHT$ preserving the key rank-one property.\footnote{\textit{I.e.}, any tri-tile $P=(P_1,P_2,P_3)$ is completely determined by the spatial and frequency position of any one of $P_1, P_2$ or $P_3$.}

$\newline$
\noindent B) \emph{The high resolution, single-scale analysis.}\label{highr} In this setting, as already explained, we are dealing with a single-scale analysis that will take place within each (fixed) tri-tile $P\in\BHT$. Though involving only one scale, the time-frequency representation of $\underline{\L}_{m,P}$ comes with an increased level of complexity in part due to the fact that this representation\footnote{See \eqref{tilemodel}.}  involves a summation over a family of tri-tiles $\S(P)$ having four degrees of freedom. Indeed, given $P\in\BHT$ the family $\S(P)$ consists of all the tri-tiles $s=(s_1,s_2,s_3)$ with $s_j=I_{s_j}\times\omega_{s_j}$, $j\in\{1, 2, 3\}$, such that the following holds:

\begin{itemize}
\item $I_{s_j}$, $\omega_{s_j}$ intervals with $|I_{s_j}|\,|\omega_{s_j}|=1$;

\item $I_{s_3}=\frac{I_{s_1}+I_{s_2}}{2}$ and $\omega_{s_3}=\omega_{s_1}+\omega_{s_2}$;

\item $\hat{s}=P$, where here $\hat{s}$ is the $2^{am \over 2}$-enlargement of $s$ in both space and frequency.
\end{itemize}
%
%
%
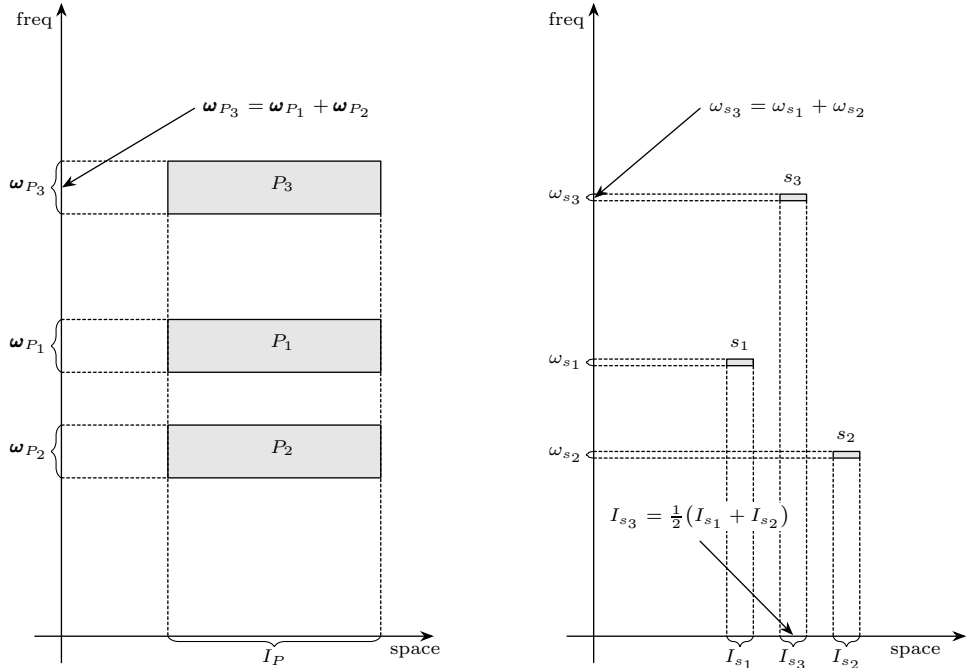
\begin{figure}[ht]
\centering
\begin{tikzpicture}[line cap=round,line join=round,>=Stealth,x=1cm,y=1cm,decoration={brace,amplitude=4pt}, scale=0.7]

\clip(-1,-0.6) rectangle (17,12);
\filldraw[line width=0.5pt,color=black,fill=black,fill opacity=0.1] (2,3) -- (6,3) -- (6,4) -- (2,4) -- cycle;
\filldraw[line width=0.5pt,color=black,fill=black,fill opacity=0.1] (2,5) -- (6,5) -- (6,6) -- (2,6) -- cycle;
\filldraw[line width=0.5pt,color=black,fill=black,fill opacity=0.1] (2,8) -- (6,8) -- (6,9) -- (2,9) -- cycle;
\filldraw[line width=0.5pt,color=black,fill=black,fill opacity=0.1] (14.5,3.375) -- (15,3.375) -- (15,3.5) -- (14.5,3.5) -- cycle;
\filldraw[line width=0.5pt,color=black,fill=black,fill opacity=0.1] (12.5,5.125) -- (13,5.125) -- (13,5.25) -- (12.5,5.25) -- cycle;
\filldraw[line width=0.5pt,color=black,fill=black,fill opacity=0.1] (13.5,8.25) -- (14,8.25) -- (14,8.375) -- (13.5,8.375) -- cycle;
\draw [->,line width=0.5pt] (-0.5,0) -- (7,0);
\draw [->,line width=0.5pt] (0,-0.5) -- (0,12);

\draw [->,line width=0.5pt] (9.5,0) -- (17,0);
\draw [->,line width=0.5pt] (10,-0.5) -- (10,12);

\draw [line width=0.5pt,dash pattern=on 1pt off 1pt] (2,8)-- (2,0);
\draw [line width=0.5pt,dash pattern=on 1pt off 1pt] (6,8)-- (6,0);
\draw [line width=0.5pt,dash pattern=on 1pt off 1pt] (14.5,3.375)-- (14.5,0);
\draw [line width=0.5pt,dash pattern=on 1pt off 1pt] (15,3.375)-- (15,0);
\draw [line width=0.5pt,dash pattern=on 1pt off 1pt] (12.5,5.125)-- (12.5,0);
\draw [line width=0.5pt,dash pattern=on 1pt off 1pt] (13,5.215)-- (13,0);

\draw [line width=0.5pt,dash pattern=on 1pt off 1pt] (13.5,8.25)-- (13.5,0);
\draw [line width=0.5pt,dash pattern=on 1pt off 1pt] (14,8.25)-- (14,0);

\draw [line width=0.5pt,dash pattern=on 1pt off 1pt] (0,3) -- (2,3);
\draw [line width=0.5pt,dash pattern=on 1pt off 1pt] (0,4) -- (2,4);
\draw [line width=0.5pt,dash pattern=on 1pt off 1pt] (0,5) -- (2,5);
\draw [line width=0.5pt,dash pattern=on 1pt off 1pt] (0,6) -- (2,6);
\draw [line width=0.5pt,dash pattern=on 1pt off 1pt] (0,8) -- (2,8);
\draw [line width=0.5pt,dash pattern=on 1pt off 1pt] (0,9) -- (2,9);
\draw [line width=0.5pt,dash pattern=on 1pt off 1pt] (10,3.375) -- (14.5,3.375);
\draw [line width=0.5pt,dash pattern=on 1pt off 1pt] (10,3.5) -- (14.5,3.5);
\draw [line width=0.5pt,dash pattern=on 1pt off 1pt] (10,5.125) -- (12.5,5.125);
\draw [line width=0.5pt,dash pattern=on 1pt off 1pt] (10,5.25) -- (12.5,5.25);
\draw [line width=0.5pt,dash pattern=on 1pt off 1pt] (10,8.25) -- (13.5,8.25);
\draw [line width=0.5pt,dash pattern=on 1pt off 1pt] (10,8.375) -- (13.5,8.375);
\draw (3.75,8.25) node[anchor=south west] {\scriptsize $P_3$};
\draw (3.75,5.25) node[anchor=south west] {\scriptsize $P_1$};
\draw (3.75,3.25) node[anchor=south west] {\scriptsize $P_2$};
\draw (12.75,5.25) node[anchor=south] {\scriptsize $s_1$};
\draw (13.75,8.375) node[anchor=south] {\scriptsize $s_3$};
\draw (14.75,3.5) node[anchor=south] {\scriptsize $s_2$};

\draw [decorate, color=black] (0,5) -- (0,6)
	node [midway, anchor=east, fill=white, inner sep=1pt, outer sep=4pt]{\scriptsize $\pmb{\omega}_{P_1}$};
\draw [decorate, color=black] (0,3) -- (0,4)
	node [midway, anchor=east, fill=white, inner sep=1pt, outer sep=4pt]{\scriptsize $\pmb{\omega}_{P_2}$};
\draw [decorate, color=black] (0,8) -- (0,9)
	node [midway, anchor=east, fill=white, inner sep=1pt, outer sep=4pt]{\scriptsize $\pmb{\omega}_{P_3}$};
\draw (2.5,10) node [anchor=west]{\scriptsize $\pmb{\omega}_{P_3} = \pmb{\omega}_{P_1} + \pmb{\omega}_{P_2}$};
\draw [->,line width=0.5pt] (2.5,10) -- (0.01,8.5);
	
\draw [decorate, color=black] (10,3.375) -- (10,3.5)
	node [midway, anchor=east, fill=white, inner sep=1pt, outer sep=3pt]{\scriptsize $\omega_{s_2}$};
\draw [decorate, color=black] (10,5.125) -- (10,5.25)
	node [midway, anchor=east, fill=white, inner sep=1pt, outer sep=3pt]{\scriptsize $\omega_{s_1}$};
\draw [decorate, color=black]  (10,8.25) -- (10,8.375)
	node [midway, anchor=east, fill=white, inner sep=1pt, outer sep=3pt]{\scriptsize $\omega_{s_3}$};
\draw (12,10) node[anchor=west] {\scriptsize $\omega_{s_3}  = \omega_{s_1} + \omega_{s_2}$};
\draw [->,line width=0.5pt] (12,10) -- (10.01,8.3);

\draw [decorate, color=black] (6,0) -- (2,0)
	node[midway, anchor=north, fill=white, inner sep=1pt, outer sep=4pt]{\scriptsize $I_{P}$};
 \draw [decorate, color=black] (13,0) -- (12.5,0)
	node[midway, anchor=north, fill=white, inner sep=1pt, outer sep=4pt]{\scriptsize $I_{s_1}$};
\draw [decorate, color=black] (14,0) -- (13.5,0)
	node[midway, anchor=north, fill=white, inner sep=1pt, outer sep=4pt] {\scriptsize $I_{s_3}$};
\draw [decorate, color=black] (15,0) -- (14.5,0)
	node[midway, anchor=north, fill=white, inner sep=1pt, outer sep=4pt]{\scriptsize $I_{s_2}$};
	
\draw [->, line width=0.5pt] (12,1.8) -- (13.75,0.02);
\draw (12,1.8) node[anchor=south, fill=white, inner sep=1pt, outer sep=4pt] {\scriptsize $I_{s_3} = \frac{1}{2}\big(I_{s_1} + I_{s_2}\big)$};
 
\draw (6,0) node[anchor=north west] {\scriptsize $\text{space}$};
\draw (15.4,0) node[anchor=north west] {\scriptsize $\text{space}$};
\draw (0,12) node[anchor=north east] {\scriptsize $\text{freq}$};
\draw (10,12) node[anchor=north east] {\scriptsize $\text{freq}$};
\end{tikzpicture}
\caption{\footnotesize Time-frequency portrait of tiles $P$ and $s$ such that $\hat{s}=P$.} \label{figure:tiles_a}
\end{figure}
%
%
%
%
%
\begin{figure}[ht]
\centering
\input{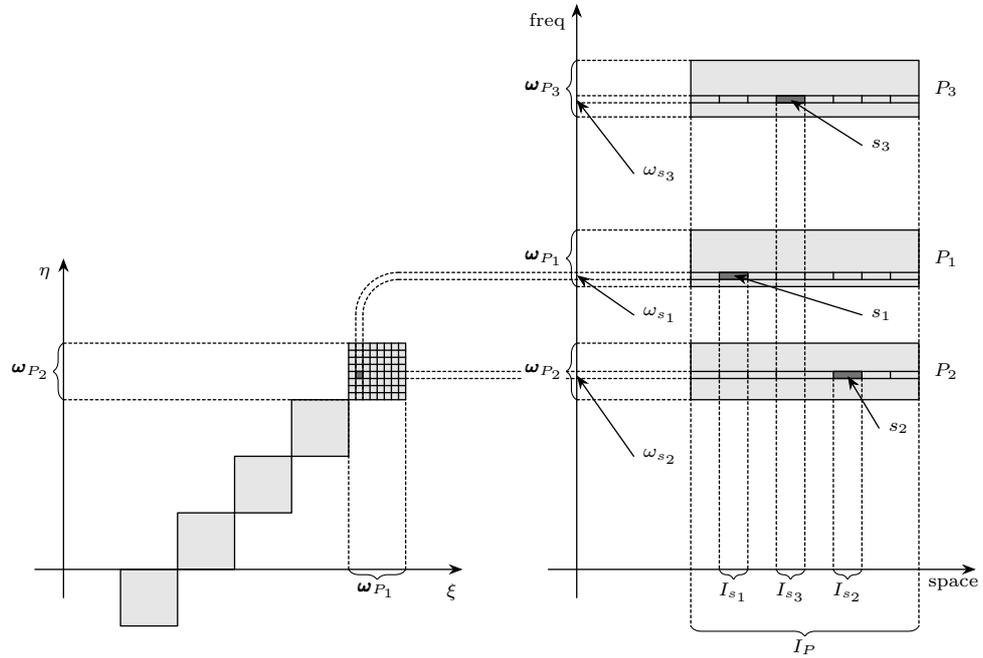}
\caption{\footnotesize Relationship between tiles $s$ and a tile $P$ with $\hat{s}=P$.} \label{figure:tiles_b}
\end{figure}
%
%

A comparison of $\S(P)$ and $\BHT$ reveals that
\begin{itemize}
\item on the one hand $\S(P)$ is less rich than $\BHT$ since all the tri-tiles $s$ have the same scale;

\item on the other hand $\S(P)$ has higher complexity than $\BHT$ since $\S(P)$ is a family with four degrees of freedom, whereas the rank-one family $\BHT$ has only three; and

\item if $P=(P_1,P_2,P_3)\in \BHT$ and $s=(s_1,s_2,s_3)\in \S(P)$ then any of the tiles $s_j$ has the Heisenberg uncertainty area one as opposed to area $2^{am}$ for any of the $P_j$'s.
\end{itemize}

For every fixed tri-tile $P\in\BHT$, the first two stages of the methodology described in \ref{step:high} will have as an output the decomposition
\begin{equation}\label{tilemodel}
\underline{\L}_{m,P}(f,g,h)=  \frac{1}{2^{\frac{am}{4}}\,|I_P|^{1 \over 2}} \sum_{s\in \S(P)} \langle f, \phi_{s_1} \rangle  \langle g, \phi_{s_2}  \rangle \langle h, \underline{\phi}_{s_3} \rangle\,,
\end{equation}
where here $\phi_{s_1}$, $\phi_{s_2}$ are wave-packets adapted to $s_1$ and $s_2$, respectively, while $\underline{\phi}_{s_3}$ stands for a suitable weighted wave-packet accounting for the joint information carried by the linearizing function $\l$ and the tile $s_3$.\footnote{For the exact formulations one is invited to consult \eqref{tmk}-\eqref{eq:modeltilee}, \eqref{s12} and \eqref{s3}.}

\subsection{(I) The high resolution, single-scale analysis -- taking advantage of the curvature}\label{HR10}

We design this approach in order to exploit the cancellation within each of the dilated ``big square'' (see Figure \ref{figure:BigCubes}). Thus, we fix in what follows the frequency square \eqref{freqsq}.

\subsubsection{The LGC-methodology: initiation}

As briefly mentioned in Section \ref{TRAI}, the starting point for the high-resolution analysis is triggered by the methodology in \cite{lvUnif}. Detailing the discussion in \ref{step:high} we proceed as follows:

In the first stage, as a result of the multiplier's phase linearization process we obtain the partitioning of the big square into $2^{am}$ same-size squares -- see Figure \ref{figure:SmallCubes}.

The second stage consists from an adapted \emph{Gabor frame} discretization of the input functions:

Let $f_{\ell+1}$ be the Fourier projection of $f$ onto the interval $\big[ 2^{am-k} (\ell+1), 2^{am-k} (\ell +2)  \big]$ (and similarly $g_{\ell-1})$. Choose a suitably smooth compactly supported function $\psi_1$ and define the $L^2$-normalized wave-packet
\beq\label{defv}
\varphi^{u,p}_{\frac{a m}{2}-k, \ell}(\xi):=\frac{1}{2^{\frac{1}{2} (\frac{a m}{2}-k )}}\psi_1\big(\frac{\xi}{2^{\frac{a m}{2}-k}}-u- 2^{{am \over 2}}\left( \ell -1 \right) \big) e^{-i \frac{p \xi}{2^{\frac{a m}{2}-k}}}.
 \eeq
Next, for $m, k, \ell$ fixed, we expand $f_{\ell+1}$ as
\beq\label{Gfd}
f_{\ell+1}=\sum_{\substack{2\cdot 2^{am\over 2}< u \leq 3 \cdot 2^{am \over 2} \\ \bar p\in \Z}}\langle f, \check{\varphi}_{{am \over 2}- k, \ell}^{u, \bar p}  \rangle  \check{\varphi}_{{am \over 2}- k, \ell}^{u, \bar p} ,
\eeq
with the obvious analogue for $g_{\ell-1}$ given by \footnote{Observe that the shift between the frequency supports of $f_{\ell+1}$ and $g_{\ell-1}$ is encoded in the range of the frequency parameters $u,v$.}
\beq\label{Ggd}
g_{\ell-1}=\sum_{\substack{0< v \leq 2^{am \over 2} \\ \bar p\in \Z}}\langle g, \check{\varphi}_{{am \over 2}- k, \ell}^{v, \bar p}  \rangle  \check{\varphi}_{{am \over 2}- k, \ell}^{v, \bar p} .
\eeq

With the above procedure completed, $T_{m, k}$ becomes\footnote{\label{footnote54} For notational simplicity, throughout the paper, given $v$ a real parameter and $A>0$ we write $v\sim A$ in order to designate that there exists an absolute constant $c>1$ (allowed to change from line to line) such that $ |v|\in [\frac{A}{c}, A c]$. If $v$ is discrete (hence $v\in\Z$) the interpretation of the above notation is simply $ |v|\in [\frac{A}{c}, A c]\cap \N$. Also we write $v\sim_a A$ if the constant $c$ above depends on $a$ and it is in our intention to stress this dependence.}
\begin{equation}
\begin{aligned}\label{tmk}
& T_{m, k}(f, g)(x) = \sum_{\ell \in \Z} \sum_{n, \bar p \in \Z} \sum_{u, v \sim 2^{ am \over 2}} \langle f, \check{\varphi}_{{am \over 2}- k, \ell}^{u-v, \bar p-n}  \rangle   \langle g, \check{\varphi}_{{am \over 2}- k, \ell}^{u+v, \bar p+n}  \rangle   \check{\varphi}_{{am \over 2}- k, 2\ell-1}^{2u, \bar p}(x) \\
& \qquad \cdot\frac{\rho_{am-ak}( \lambda(x))}{2^{am +2 k \over 4}} w_{k, n, v}^e(\l)(x) dx,
\end{aligned}
\end{equation}
where
\begin{equation}
\label{def:eq:weight-oscill0}
w_{k, n, v}^e (\lambda)(x):=\int_{\R} \psi( \xi + 2v ) e^{i n \xi} e^{i c_a 2^{ ({am  \over 2}- k) \frac{a}{a-1}} \xi^{\frac{a}{a-1}} \lambda(x)^{- {1 \over {a-1}}}} \psi\Big(\frac{\xi}{2^{\frac{am}{2}}}\Big) d \xi
\end{equation}
is regarded as a suitable weight whose behavior will play an important role in our subsequent analysis.

\subsubsection{The phase-linearized wave-packet model}

The previous reasonings provide us naturally with a wave-packet discretized model of our operator. However, in order to further simplify it, we have to consider the \emph{time-frequency correlations} arising among the parameters $u, v, n, \bar{p}$ and $\lambda(x)$ appearing in \eqref{tmk}.  A first expression of this phenomenon is revealed by the estimate\footnote{Here $\bar c_a\in\R$ is a suitable constant depending only on $a$.}
\begin{equation}
\label{eq:decay-weight-intro}
\big|  \rho_{am-ak}( \lambda(x)) w_{k, n, v}^e (\lambda)(x) \big| \lesssim w_{k, n, v} (\lambda)(x):=
  \frac{\rho_{am-ak}( \lambda(x))}{ 1+  \Big|\bar c_a \frac{n^{a-1}  \l(x)}{2^{\frac{a(am-2k)}{2}}} + 2v \Big|^2},
\end{equation}
which is the result of the non-stationary phase principle applied to \eqref{def:eq:weight-oscill0}. Thus, as a consequence of  \eqref{eq:decay-weight-intro} we can reduce the range of $n$ to $n \sim_a 2^{am \over 2}$. Further on, applying another time-frequency correlation reasoning this time relying on almost orthogonality arguments, we deduce that the range of $\bar p$ should also be restricted to intervals of length $\sim_a 2^{am \over 2}$. Thus, replacing $\bar p$ by the parameter $p_r$ with
\[
p_r= 2^{am \over 2} (r-1)+p, \quad \text{with} \quad r \in \Z \quad     \text{and} \quad 2^{am \over 2} \leq p < 2 \cdot 2^{am \over 2},
\]
and dualizing our expression in \eqref{tmk}, we arrive at the final form of our wave-packet discretized model:
\beq\label{lmk0}
\underline{\L}_{m}(f, g, h):=\sum_{k\in\Z} \underline{\L}_{m,k}(f, g, h),
\eeq
where
\beq\label{lmk}
\underline{\L}_{m,k}(f, g, h):=\sum_{r,\ell\in\Z} \underline{\L}_{m,k,\ell,r}(f, g, h) ,
\eeq
and
\begin{equation}
\begin{aligned}\label{eq:modeltilee}
\underline{\L}_{m,k,\ell,r}(f, g, h) &:= \sum_{n, p \sim 2^{am \over 2}} \sum_{u, v \sim 2^{ am \over 2}} \frac{1}{2^{am +2 k \over 4}} \langle f, \check{\varphi}_{{am \over 2}- k, \ell}^{u-v, p_r-n}  \rangle   \langle g, \check{\varphi}_{{am \over 2}- k, \ell}^{u+v, p_r+n}  \rangle  \\
& \hspace{1em} \cdot \int_{\R}  \check{\varphi}_{{am \over 2}- k, 2\ell-1}^{2u, p_r}(x)  h(x) \rho_{am-ak}( \lambda(x)) w_{k, n, v}^e(\l)(x) dx.
\end{aligned}
\end{equation}

Equivalently, one can rewrite \eqref{lmk0} (and hence \eqref{eq:modeltilee}) in a tile format using the dictionary developed in Section \ref{tfp}. Indeed, given a tri-tile $P=(P_1,P_2,P_3)=P(k,\ell,r)\in\BHT$ verifying \eqref{eq:tile:1}-\eqref{eq:tile:3} we can now set
\begin{equation}\label{Ptile}
\underline{\L}_{m,P}:=\underline{\L}_{m,k,\ell,r}\;.
\end{equation}
Furthermore for every such tri-tile $P$ there exists a unique $1$-to-$1$ map between $\S(P)$ and the collection of parameters $(u,v,n,p)$ along which the sum in \eqref{eq:modeltilee} is computed.\footnote{See \eqref{eq:def:time:frequency:regions} for a precise description of it.} Reparameterizing the expression in \eqref{eq:modeltilee} according to this dictionary one recovers \eqref{tilemodel} with
\begin{equation}\label{s12}
\phi_{s_1}:=\check{\varphi}_{{am \over 2}- k, \ell}^{u-v, p_r-n} ,\:\:\:\:\:\:\:\:\phi_{s_2}:=\check{\varphi}_{{am \over 2}- k, \ell}^{u+v, p_r+n}
\end{equation}
and
\begin{equation}\label{s3}
\phi_{s_3}:=\check{\varphi}_{{am \over 2}- k, 2\ell-1}^{2u, p_r}\:\:\:\textrm{and}\:\:\:\underline{\phi}_{s_3}:=\phi_{s_3}  \rho_{am-ak}( \lambda)  w_{k, n, v}^e(\l)
\end{equation}
thus obtaining the tile formulation of \eqref{lmk0} in the form
\begin{equation}
\begin{aligned}\label{lmk0tileform}
\underline{\L}_{m}(f, g, h)&=\sum_{P\in \BHT} \underline{\L}_{P}(f, g, h) \\
&=\sum_{P\in \BHT} \frac{1}{2^{\frac{am}{4}}\,|I_P|^{1 \over 2}} \sum_{s\in \S(P)} \langle f, \phi_{s_1} \rangle  \langle g, \phi_{s_2}  \rangle \langle h, \underline{\phi}_{s_3} \rangle .
\end{aligned}
\end{equation}

Once at this point we recall that our main objective is to prove \eqref{mainrm}. As a starting key step in achieving it one is required to obtain the single tile analogue of \eqref{mainrm} hence an $m-$decaying control over $\underline{\L}_{m,P}$.

The first natural attempt in fulfilling this task is to investigate the absolute summable behavior of \eqref{tilemodel} via a more refined exploitation of \eqref{eq:decay-weight-intro} which implies that for fixed $n \sim 2^{am \over 2}$ and $x$ so that $\lambda(x) \sim 2^{am-ak}$ the main contribution comes from those $v \sim 2^{am \over 2}$ satisfying
\begin{equation}\label{tfc}
2v \approx \bar c_a \frac{n^{a-1}  \l(x)}{2^{\frac{a(am-2k)}{2}}}\,.
\end{equation}
In particular, \eqref{tfc} reduces the degrees of freedom of the family $\S(P)$ by one. Using the latter, standard almost-orthogonality arguments imply the estimate
\begin{equation}
\label{tilemodelabs}
\begin{aligned}
|\underline{\L}_{m,P}(f,g,h)| & \lesssim \frac{1}{2^{\frac{am}{2}}} \sum_{s\in \S(P)} |I_s|^{-\frac{1}{2}} |\langle f, \phi_{s_1} \rangle|  |\langle g, \phi_{s_2}  \rangle| |\langle h, {\underline{\phi}}_{s_3} \rangle| \\
& \lesssim \|f\|_{L^p} \|g\|_{L^q} \|h\|_{L^{r'}}.
\end{aligned}
\end{equation}

However, as previously noted, our goal is to prove such an estimate with an additional $2^{-\d a m}$ decay -- without which the entire strategy outlined in Section \ref{Heur}, in particular the statement in \eqref{mainrm}, fails. As it turns out, see Appendix \ref{sec:counterexample}, one can in fact show that the second inequality in \eqref{tilemodelabs} is \emph{sharp}.\footnote{If \eqref{tfc} were independent of or depend smoothly on the spatial variable $x$, one would have been able to obtain the desired $m$-decaying bound in \eqref{tilemodelabs}.}

This reveals that the decomposition in  \eqref{lmk0tileform} does not represent an \emph{$m$-decaying absolute summable model}, but -- as we will see -- only an \emph{$m$-decaying conditional summable model}.\footnote{One can envision here a parallelism with the well known counterexample regarding the lack of absolute summability for $r<\frac{2}{3}$ of the bilinear Hilbert transform model employed in \cite{lt2}, though of course this latter phenomenon is significantly more subtle. In particular, not only that the BHT model lacks absolute summability, but moreover, as a consequence of a randomization process, it diverges in the $\ell^2$-sense (see \cite{Lacey}).}

This brings us to another novelty of our paper -- that of operating with and extracting cancellation from a discretization that is only \emph{conditionally} well behaved. The latter requires the introduction of a second model operator that is regrouping the information carried by the first model above.

\subsubsection{The continuous phase-linearized spatial model}

The lack of the $m$-decaying absolute summability of \eqref{lmk0tileform} reveals that we can not afford to ignore the signum of the Gabor/local Fourier coefficients associated to the input functions. As a consequence, we need to find a way to reshape the information in the above model such that the newly formed terms capture enough cancellation within themselves in order for the resulting model to become an $m$-decaying absolute summable model.\footnote{For a revelatory discussion on this issue the reader is invited to consult Section \ref{motint}.}

The first step in accomplishing the above is to rewrite  $\underline{\L}_{m,P}=\underline{\L}_{m,k, \ell,r}$ as follows\footnote{
Notice that it is at this point in our argument where we break the symmetry between the inputs $(f,g)$ and $h$, recasting the trilinear behavior of the form $\underline{\L}_{m,k,\ell,r}(f, g, h)$ into an examination of the corresponding bilinear behavior of the operator $S_{k,\ell,r}^{n, v, p}(f, g)$. Later however, see Section \ref{HR20} below, we will need to make up for this asymmetry by a more involved analysis that will end up reinstating the role played by $h$.}:
\begin{equation}
\begin{aligned}
\label{eq:trilinear:form:withS}
& \underline{\L}_{m,k, \ell,r}(f, g, h) \\
& \hspace{1em} = \sum_{n, p \sim 2^{am \over 2}} \sum_{ v \sim 2^{ am \over 2}} \int_{\R} S_{k,\ell,r}^{n, v, p}(f, g)(x) h(x) \rho_{am-ak}( \lambda(x)) w_{k, n, v}^e(\l)(x) dx,
\end{aligned}
\end{equation}
with
\begin{equation}
\begin{aligned}
\label{eq:def:S_n,v,p-intro}
& S_{k,\ell,r}^{n, v, p}(f, g)(x)\\
& \hspace{1em} := \sum_{u \sim 2^{ am \over 2}} \frac{1}{2^{am +2 k \over 4}} \langle f_{\ell +1} , \check{\varphi}_{{am \over 2}- k, \ell}^{u-v, p_r-n}  \rangle   \langle g_{\ell -1} , \check{\varphi}_{{am \over 2}- k, \ell}^{u+v, p_r+n}  \rangle \check{\varphi}_{{am \over 2}- k, 2\ell-1}^{2u, p_r}(x) .
\end{aligned}
\end{equation}

The key insight now is provided by the fact that, at the heuristic level, one has for every $x \in I_k^{p_r}$
\begin{equation}
\begin{aligned}\label{heurist}
S_{k,\ell,r}^{n, v, p}(f, g)(x)& \approx \tilde{S}_{k}^{n, v}(f_{\ell+1}, g_{\ell-1})(x) \\
& \hspace{1em} :=\frac{e^{2inv}}{2^k} \int_{I_k^n} f_{\ell +1}(x-t) g_{\ell -1}(x+t)  e^{-i 2^{{am \over 2}-k} 2v t} \,dt ,
\end{aligned}
\end{equation}
where here we used that $I_k^n:= \Big[  \frac{n}{2^{{am \over 2}-k}},    \frac{n+1}{2^{{am \over 2}-k}}  \Big]$ and the convention made for the complex exponential in Footnote \ref{footnote:2pi}.

As a result, combining \eqref{eq:def:S_n,v,p-intro} and \eqref{heurist}, we obtain a new model (equivalent to \eqref{lmk0tileform}) that is based   on the spatial discretization of the input functions in a shape that resembles more the original form of our operator $BC^a$:

\begin{equation}\label{contimod}
\begin{aligned}
&\underline{\L}_{m,k}(f, g, h)=  \sum_{r,\ell\in\Z} \underline{\L}_{m,k,\ell,r}(f, g, h)\\
&\approx \sum_{\ell,r \in \Z} \sum_{n,p \sim 2^\frac{am}{2}} \sum_{v \sim 2^\frac{am}{2}} \int_{\R} \tilde{S}_{k}^{n, v}(f_{\ell+1}, g_{\ell-1})(x)\, \one_{I_k^{p_r}} (x)\, \rho_{am-ak}(\l(x)) \\
& \hspace{21em} \cdot w_{k, n,v}^e(\l)(x)\,h(x)\, dx.
\end{aligned}
\end{equation}

As it turns out, the above represents an $m$-decaying absolute summable model -- an essential feature that will be exploited in the next section.

\subsubsection{The LGC-methodology: finalization}

This component of our proof represents the last, key moment where we will still exploit the phase's curvature: the actual extraction of the cancellation will be operated on the continuous phase-linearized spatial model \eqref{contimod} while once this is fulfilled it will be transferred to (a slight variant of) the phase-linearized wave-packet model \eqref{lmk0tileform} that will be needed when implementing the low-resolution multi-scale analysis -- see Sections \ref{impas} and \ref{LRMS}.

Turning now our focus on \eqref{contimod}, we remark that the presence of the weight function \eqref{def:eq:weight-oscill0} in the expression of $\underline{\L}_{m,k,\ell,r}(f, g, h)$  breaks the symmetry between the treatment of, on the one hand, $f, g$ and, on the other hand, $h$. This explains why in this first stage of the single scale estimate the time-frequency analysis of $h$ plays no significant role and also why we need to appeal to a transitional step -- see Sections \ref{impas} and \ref{HR20} -- in which we refine and symmetrize the single scale estimate in order to prepare for the low-resolution multi-scale analysis from Section \ref{LRMS}.

Combining now \eqref{eq:trilinear:form:withS}-\eqref{heurist}, one deduces
\begin{equation*}
|\underline{\L}_{m,k, \ell,r}(f, g, h)|\lesssim \Lambda_{m,k, \ell,r}(f,g) \|h\|_{L^2} ,
\end{equation*}
with
\begin{align*}
& \Lambda_{m,k, \ell,r}(f,g)
\\
& \qquad :=\Big(\sum_{p\sim 2^{\frac{a m}{2}}}\int_{I_k^{p_r}} \Big(\sum_{n\sim 2^{\frac{a m}{2}}}
\big|\sum_{ v \sim 2^{ am \over 2}} \tilde{S}_{k}^{n, v}(f_{\ell+1}, g_{\ell-1})(x)  w_{k, n, v}^e(\l)(x)\big|\Big)^2 dx\Big)^{\frac{1}{2}}
\end{align*}
being controlled by an expression that is essentially of the form
$$\Big(\sum_{p\sim 2^{\frac{a m}{2}}}\int_{I_k^{p_r}} \Big(\sum_{n\sim 2^{\frac{a m}{2}}}
\big|2^{-k} \int_{I_k^n} f_{\ell+1}(x-t) g_{\ell-1}(x+t) e^{i \frac{\bar c_a n^{a-1}}{2^{\frac{a(a-1) m}{2}}} \l(x) t} dt\big|\Big)^2 dx\Big)^{\frac{1}{2}}\:.$$

Obtaining an $m$-decaying upper bound for the above expression constitutes the most subtle argument in our paper: after a discussion on the size distribution of the input functions $f_{\ell +1}$ and $g_{\ell -1}$, we invoke a double $TT^{*}$-argument that reduces our task to a problem involving both elements of analytic number theory -- via Weyl sums/discrete Van der Corput, and elements of harmonic analysis via phase level set analysis. This approach discussed in Section  \ref{sec:onescale} -- see Proposition \ref{sgscalelambda} therein --
will allow us to extract the cancellation offered by the phase curvature and achieve the $m$-exponential decay single scale estimate:
\beq\label{mainrmm}
|\Lambda_{m,k, \ell,r}(f,g)|\lesssim_{a} 2^{-\delta_0 a m}  2^{\frac{k}{2}}  \|f_{\ell+1} \cdot \ci_{[r 2^k, (r+1) 2^k]}\|_{L^2} \|g_{\ell-1} \cdot \ci_{[r 2^k, (r+1) 2^k]}\|_{L^2}\:,
\eeq
for some $\delta_0>0$.

\subsection{A symmetry conflict and the necessity of separating the behaviors of $B$ and $C^a$ within $BC^{a}$}\label{impas}

As already mentioned, a recurrent challenge throughout our proof is the ``antagonistic behavior of the coexistent non-zero and zero curvature features of our operator''. While the design of a two-resolution approach provides us with a key insight in addressing this issue, it requires us to put together
\begin{itemize}
\item the high-resolution, non-zero curvature methods induced by the Carleson-type component $C^a$, and

\item the low resolution, time-frequency methods induced by the bilinear Hilbert component $B$.
 \end{itemize}

Thus, the initial conflict is now transferred into:
\begin{itemize}
\item On the one hand, the structure of our operator $T_{m}$ has a special type of asymmetry induced by the presence of the maximal modulation -- inherited from $C^a$ -- that in particular gives rise to the linearized function $\l(x)$. This also explains the statement and the proof of the single-scale estimate \eqref{mainrmm} which in particular does not provide a well-localized time-frequency information on the dualizing function $h$.\footnote{Later on it will become transparent that this asymmetry may be explained by the fact that the desired $m$-decay in the single scale estimate can only be obtained by exploiting the behavior of $\rho_{am-ak}(\lambda(x)) w_{k, n,v}^e(\lambda)$ -- see \eqref{def:eq:weight-oscill0} and \eqref{eq:decay-weight-intro}.}

\item On the other hand, the proof of the boundedness of the bilinear Hilbert transform exploits -- via its dualized form -- a symmetric role of the  time-frequency information carried by the input functions $f, g$ and $h$.\footnote{This is transparent in the ``size-energy'' formulation stated below in Proposition \ref{prop:size/energy0}, Section \ref{LRMS}. }
\end{itemize}

The appeasement of this conflict is obtained through a ``half-way'' compromise:

\begin{itemize}

\item  The first action, from the $T_m$ side, is to substantially refine \eqref{mainrmm}, in particular increasing its symmetry by also producing the time-frequency localization for $h \cdot w_{k, n, v}^e (\lambda)$.

\item The second action comes from the direction of the BHT analysis and relies on modifying/adapting the original concepts of size and energy to the presence of the weight $w_{k, n, v}^e (\lambda)$ and, consequently, to the distinct roles played by $(f,g)$ and $h$  in our problem.
\end{itemize}

Now the first action is part of the transitional step to be discussed in the next section, while the second action is part of the multi-scale time-frequency approach that will be the subject of Section \ref{LRMS}.

We end this section by providing another angle on the above mentioned conflict that follows from inspecting the structure of $BC^a$: on the multiplier side -- see \emph{e.g.} \eqref{multm} -- there is a merging between the bilinear Hilbert transform behavior focusing on the Whitney decomposition relative to the $\xi-\eta=0$ ``singularity'' and the Carleson-type behavior encapsulated in the expression $ e^{ i c  (\xi-\eta)^{\frac{a}{a-1}}\l(x)^{-\frac{1}{a-1}}}$. These two operators end up on the multiplier side by acting on the same variable $\xi-\eta$. Equivalently, on the physical space side, there is an intertwinement between the structure of the input functions -- $f(x-t) g(x+t)$ -- and that corresponding to the kernel $\frac{e^{i \l(x) t^a}}{t}$, respectively. This phenomenon makes more difficult the process of identifying a way to decouple/untwist the behaviors of $B$ and $C^a$ within $BC^a$, which plays a key role in our approach.\footnote{In contrast with this -- see the discussion within Section \ref{Otherdirections}, II.4 -- the decoupling process within the analysis of the Bi-Carleson operator \eqref{def:BiCarleson:series} is easier since in this latter situation the BHT singularity and the Carleson-type singularity do not act on the same variable.}

\subsection{(II) A transitional step: refinement of the single-scale estimate}\label{HR20}

The main task here is to obtain a trilinear dualized version of \eqref{mainrmm} with a sharp time-frequency localization of the input functions $f,g$ and $h$ that simultaneously quantifies the presence of the linearizing function $\l$ and separates the information carried by the input functions $f,g$ from the information carried by $h$ and $ w_{k, n, v}^e(\l)$.

The first key insight here is to notice that at a heuristic level one can pretend that
\beq\label{constancy}
\underline{S}_{k,\ell,r}^{n, v, p'}(f, g)(x):=S_{k,\ell,r}^{n, v, p'}(f, g)(x) e^{-i 2\ell 2^{am \over 2}(2^{{am\over 2} -k} x -p_r)}\:\:\textrm{is constant on}\:\underline{I}_{k,r}^{p'}\:,
\eeq
where here we used the notation $\ds \underline{I}_{k,r}^{p'}:=\Big[  2^k(r-1) + \frac{p'}{2^{am-k}}, 2^k(r-1) + \frac{p'+1}{2^{am -k}}   \Big]$.

As a consequence, for any positive $w_{k,n,v}\in L^1_{loc}(\R)$, one may think that
\begin{equation}
\begin{aligned}
\label{eq:time-freq-localization1}
\int_{\R} \underline{S}_{k,\ell,r}^{n, v, p'}(f_{\ell +1}, & g_{\ell -1})(x) e^{i 2\ell 2^{am \over 2}(2^{{am\over 2} -k} x -p_r)}   h(x) \rho_{am-ak}( \lambda(x)) w_{k, n, v}^e(\l)(x) dx \\
&\approx 2^{-\frac{am-k}{2}} \big\langle h   \rho_{am-ak}(\lambda) w_{k, n, v}^e(\l), \check{\varphi}_{am-k,2\ell-1}^{0,2^{am}(r-1)+p'}  \big\rangle \\
& \hspace{5em} \cdot  \frac{\int_{\underline{I}_{k,r}^{p'}} \underline{S}_{k,\ell,r}^{n, v, p'}(f_{\ell +1}, g_{\ell -1})(x)  w_{k,n,v}(x)dx}{ \int_{\underline{I}_{k,r}^{p'}}  w_{k,n,v}(x)dx} .
\end{aligned}
\end{equation}
After a Cauchy-Schwarz argument, one then concludes that
\begin{equation}
\begin{aligned}
\label{eq:est:w:aver1}
&\big |\underline{\L}_{m,k,\ell,r}(f, g, h) \big| \\
& \hspace{1em}\lesssim  \sum_{n, v \sim 2^{am \over 2}} \sum_{p' \sim 2^{am}}   \frac{ \big| \langle h   \rho_{am-ak}(\lambda(\cdot)) w_{k, n, v}^e(\l), \check{\varphi}_{am-k,2\ell-1}^{0,2^{am}(r-1)+p'}  \rangle \big|}{\big( \aver{\underline{I}_{k, r}^{p'}}  w_{k,n,v}(x)  dx \big)^\frac{1}{2} } \\
&\hspace{2em} \cdot \left( \int_{\underline{I}_{k, r}^{p'}} \big|\underline{S}_{k,\ell,r}^{n, v, p'}(f_{\ell +1}, g_{\ell -1})(x)\big|^2  w_{k,n,v}(x)dx \right)^\frac{1}{2}.
\end{aligned}
\end{equation}

More precisely, \eqref{eq:time-freq-localization1} and \eqref{eq:est:w:aver1} have to be understood on the right hand side as a superposition of similar terms with fast decaying amplitudes -- that will be made more rigorous in Section \ref{sec:HR2}.

As a quick remark, the liberty of choosing any positive $w_{k,n,v}\in L^1_{loc}(\R)$ in \eqref{eq:est:w:aver1} will offer us great flexibility later when analyzing the behavior of each of the terms involved in decomposition \eqref{decsgsc0} below.

Once at this point we will need to consider three key parameters\footnote{Recall relation \eqref{eq:decay-weight-intro}.}:
\begin{itemize}
\item the ``mass'' of the weight $w_{k, n, v}(\l)$, roughly defined as $\aver{\underline{I}_{k, r}^{p'}} w_{k, n, v}(\lambda)(x)   dx$ and encoding the behavior of the linearizing function $\lambda$;
\item the size distribution of the input function $f$; and
\item the size distribution of the input function $g$.
\end{itemize}

The behavior of these parameters will determine the partitioning of the set of indices
$$\I:=\Big\{ (p', n, v) |  p' \sim 2^{am}, n, v \sim 2^{am \over 2} \Big\}$$ into three components
$$\I=\mathfrak{l}\cup \mathfrak{u}\cup\mathfrak{c}$$ with $\mathfrak{l}$ the light mass set, $\mathfrak{u}$ the heavy mass-uniform input distribution set and $\mathfrak{c}$ the heavy mass-clustered input distribution set. This will reflect into the corresponding splitting
\begin{align}
\underline{\L}_{m,k,\ell,r}(f,g,h) & = \underline{\L}_{m,P}(f,g,h) \nonumber \\
& =  \underline{\L}_{m,P}^{\mathfrak{l}}(f,g,h) + \underline{\L}_{m,P}^{\mathfrak{u}}(f,g,h) + \underline{\L}_{m,P}^{\mathfrak{c}}(f,g,h)\:, \label{decsgsc0}
\end{align}
with each of the above forms analyzed accordingly:
\begin{itemize}
\item the light mass component $\underline{\L}_{m,P}^{\mathfrak{l}}(f,g,h)$ is composed of the terms that correspond to the indices where the associated mass of $w_{k, n, v}(\lambda)$ is small;
\item the heavy mass-uniform input component $ \underline{\L}_{m,P}^{\mathfrak{u}}(f,g,h)$ addresses precisely the situation that captures the cancellation offered by the non-zero curvature and relies in a key fashion on the estimate \eqref{mainrmm}; and

\item the heavy mass-clustered input $\underline{\L}_{m,P}^{\mathfrak{c}}(f,g,h)$ captures the situation when the size of the input functions $f, g$ is larger than the expected average value.
\end{itemize}

The conclusion of this analysis will be provided by Proposition \ref{refinedsgscale}, Section \ref{sec:bilinear:analysis}, which states that
\begin{equation}
\label{eq:esti:L^P:light:21}
\underline{\L}_{m,P}^{\mathfrak{l}}(f, g, h) \lesssim |I_P|^{-\frac{1}{2}} f(P_1) g(P_2) h_{\mathfrak{l}}(P_3),
\end{equation}
\begin{equation}
\label{eq:esti:L^P:uniform:21}
\underline{\L}_{m,P}^{\mathfrak{u}}(f, g, h) \lesssim 2^{- \frac{\delta_1 a m}{2}} |I_P|^{-\frac{1}{2}} f(P_1) g(P_2) h_{\mathfrak{u}}(P_3),
\end{equation}
for some $\delta_1>0$, and
\begin{equation}
\label{eq:esti:L^P:cluster:21}
\underline{\L}_{m,P}^{\mathfrak{c}}(f, g, h) \lesssim |I_P|^{-\frac{1}{2}} f(P_1) g(P_2) h_{\mathfrak{c}}(P_3) ,
\end{equation}
where here $f(P_1)$ and $g(P_2)$ represent local $\ell^2$-sums involving the Fourier coefficients of the corresponding function adapted to the family $\{s\}_{s\in\S(\P)}$ while $h_{*}(P_3)$ quantifies a suitable weighted version of similar local $\ell^2$-sums associated to $h$ and restricted to $*\in\{\mathfrak{l},\mathfrak{u},\mathfrak{c}\}$ and involving both the function $h$ itself and the weight $w_{\cdot,\cdot,\cdot}(\lambda)$. The decay obtained in \eqref{eq:esti:L^P:uniform:21} is a direct manifestation of the non-zero curvature analysis performed at stage \ref{step:high}, while for \eqref{eq:esti:L^P:light:21} and \eqref{eq:esti:L^P:cluster:21} the $m$-exponential decay is hidden in the structure of $h_{*}(P_3)$.

This puts an end to the role played by the non-zero curvature in our problem and concurrently to the first part of our proof.

\subsection{(III) The low resolution, multi-scale analysis}\label{LRMS}

In this final part of the proof's exposition we address the underlying modulation invariance structure of our problem encoded in \ref{key:m}. As already explained in Section \ref{TRAI}, our plan is to appeal to the time-frequency techniques originating in the work of Lacey and Thiele as modeled by the following general abstract result\footnote{For the definitions of mass and energy (and other related concepts) please see Section \ref{sec:bilinear:analysis}.}\footnote{This general abstract result stated above was necessary when treating objects having a structure more complex than the standard BHT as revealed by the following examples: 1) the Bi-est operator regarded as BHT composed with another BHT, or, 2) the Bi-Carleson operator $Bi\text{-}C$ already mentioned in Section \ref{BiFS} (see \eqref{def:BiCarleson:series}) that can be seen as a composition between BHT and the Carleson operator.}:

\begin{proposition}[Proposition 6.5 of \cite{MTTBiest2}] \label{prop:size/energy0}
Let $\P \subset \BHT$  be a finite collection of tri-tiles and $(a_{P_j})_{P \in \P}$ a sequence of complex numbers. Then for any $\theta_1,\theta_2,\theta_3\in[0,1)$ with $\theta_1+\theta_2+\theta_3=1$
\begin{align*}
\big|\sum_{P \in \P} \frac{1}{|I_P|^{\frac{1}{2}}} a_{P_1} a_{P_2} a_{P_3}  \big| \lesssim  \prod_{j=1}^3 \big(   \size^{\P}_j((a_{P_j})_{P \in \P}) \big)^{\theta_j} \big(   \energy^{\P}_j((a_{P_j})_{P \in \P}) \big)^{1-\theta_j}.
\end{align*}
\end{proposition}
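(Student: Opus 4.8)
\textbf{Proof plan for Proposition \ref{prop:size/energy0}.}

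The plan is to reduce the trilinear sum to the classical tree-organized machinery of Lacey--Thiele as refined in \cite{MTTBiest2}, and then to run the standard size/energy iteration. First I would recall the structure: each tri-tile $P=(P_1,P_2,P_3)\in\BHT$ has three components sharing the same spatial interval $I_P$, the frequency intervals $\pmb\omega_{P_j}$ having comparable lengths and satisfying a separation property (the three families $\{\pmb\omega_{P_j}\}$ are ``lacunary'' with respect to each other in the sense that only one of the three indices can be ``overlapping'' at a given frequency scale). The rank-one property -- that $P$ is determined by any single $P_j$ -- is what allows a coherent tree decomposition. Given $\theta_1+\theta_2+\theta_3=1$ with each $\theta_j\in[0,1)$, at least one $\theta_j$ could be zero but no two can force a degenerate split; the three exponents being a convex combination is exactly the freedom used to interpolate the good $L^2$ bound in the ``flat'' direction against the BMO/Carleson-type bound in the lacunary directions.

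The key steps, in order: (i) \emph{Decomposition into trees and selection algorithm.} For each $j$ and each dyadic value $2^{-n}$, use a greedy algorithm to extract from $\P$ a maximal collection of trees $\mathcal{T}$ (with top data $(I_T,\xi_T)$) such that the restricted $j$-size over $\bigcup\mathcal{T}$ is $\sim 2^{-n}$ while the number of trees is controlled: $\sum_{T}|I_T|\lesssim 2^{2n}\,\big(\energy^{\P}_j((a_{P_j}))\big)^2$. This is the standard ``size lemma'' / Bessel-type argument; the separation of the frequency intervals in the lacunary directions gives the required almost-orthogonality of tree tops. (ii) \emph{Single-tree estimate.} For a fixed tree $T$ one bounds
\[
\Big|\sum_{P\in T}\frac{1}{|I_P|^{1/2}}a_{P_1}a_{P_2}a_{P_3}\Big|\lesssim |I_T|\prod_{j=1}^3 \size^{T}_j((a_{P_j})_{P\in T}),
\]
by splitting $T$ according to which component is the ``overlapping'' one along the tree and using a telescoping/Cauchy--Schwarz argument in the lacunary components together with an $L^2$ (or maximal-function) bound in the overlapping one. (iii) \emph{Summation over scales.} Fix, for each $j$, the largest dyadic scale $2^{-n_j}$ at which the $j$-size has not yet been exhausted -- equivalently, decompose $\P=\bigcup_{n_1,n_2,n_3}\P_{n_1,n_2,n_3}$ where on $\P_{\vec n}$ the $j$-size is $\sim 2^{-n_j}$ -- apply (i) and (ii), and sum the geometric series. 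Each block contributes $\lesssim \prod_j 2^{-n_j}\cdot \sum_{T}|I_T|\lesssim \prod_j 2^{-n_j}\cdot \min_j 2^{2n_j}\big(\energy^{\P}_j\big)^2$; optimizing the choice of which $j$ supplies the tree-counting bound and balancing against $\prod_j (\size^{\P}_j)^{\theta_j}$ via the constraint $\sum\theta_j=1$ yields the claimed product bound. The freedom in distributing the $\theta_j$'s is precisely matched by the freedom in (i) of choosing which direction's energy bounds the tree count.

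\emph{The main obstacle} I expect is step (ii) -- the single-tree estimate -- together with making the selection algorithm in (i) genuinely yield the energy bound on $\sum_T|I_T|$ rather than merely the size. The delicate point is that in a tree the three components play asymmetric roles: along the tree one component's frequency interval is ``frozen'' (contains the top frequency) while the other two are lacunary, and one must argue that the frozen component behaves like a nice square function / maximal operator while the lacunary ones telescope. In the abstract setting of \cite{MTTBiest2} this is handled once and for all, so in practice I would simply \emph{cite} Proposition 6.5 of \cite{MTTBiest2} verbatim -- this is exactly what the statement does -- and the ``proof'' here consists only in verifying that our family $\BHT$ and our coefficient sequences fall within the hypotheses of that proposition: namely that $\BHT$ is a rank-one family of tri-tiles (which we established in Section \ref{tfp}, with the only deviation from the classical picture being that the tiles have area $2^{am}$ rather than area one, a harmless rescaling that does not affect any of the combinatorial geometry), and that the abstract size and energy functionals defined in Section \ref{sec:bilinear:analysis} are the ones appearing in the proposition. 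Thus no new argument is needed beyond this compatibility check, and the bulk of the real work is deferred to the already-cited literature.
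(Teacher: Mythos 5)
Your overall route is the paper's route: the paper does not reprove this proposition either, but quotes it from \cite{MTTBiest2} and uses it as a black box, so sketching the standard tree-selection/single-tree/summation machinery and then deferring to the cited literature is exactly in the spirit of what is done here, and your observation that the area-$2^{am}$ tiles are only an anisotropic rescaling of the classical rank-one picture is correct and harmless.

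There is, however, one point where your ``compatibility check'' is not the no-op you claim it to be, and it is precisely the point the paper takes care to address. The size and energy in this statement are the ones of Section \ref{sec:bilinear:analysis}, and the energy there is defined over families of trees that are strongly disjoint in the more rigid sense \eqref{def:strong:disj} (separation of $I_{P'_j}$ from the \emph{tripled} top interval $3I_{\mb T}$), not in the standard sense \eqref{def:strong:disj:standard} of \cite{MTTBiest2}. This makes the supremum in the energy run over a smaller class of families, hence the energy is a smaller quantity and the inequality you are asked to prove is formally \emph{stronger} than the verbatim literature statement; citing Proposition 6.5 of \cite{MTTBiest2} as is does not close this. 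The paper resolves it in Proposition \ref{prop:size/energy} together with Remark \ref{remark:strong:disjointness}: in the greedy selection algorithm, after choosing a $j$-lacunary tree $\mb T$ one discards not only the $j$-overlapping tree with the same top data but also the $j$-overlapping trees with tops over the neighbouring intervals $I_{\mb T}\pm|I_{\mb T}|$, which guarantees that the lacunary trees selected later satisfy \eqref{def:strong:disj} while changing the counting bound only by a constant. (This stronger disjointness is not a pedantic refinement: it is what later feeds the separation \eqref{eq:capit:disj} in the proof of the energy estimate \eqref{eq:est:energy-f-g}.) Your step (i) could absorb this one-line modification without difficulty, but your explicit claims that the only deviation from the classical setting is the rescaling of the tile area and that ``no new argument is needed beyond this compatibility check'' overlook it, and as written the check would pass over the very mismatch it is meant to detect.
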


Defining now $\underline{\L}^{*}_{m}(f,g,h):=\sum_{P\in\BHT} \underline{\L}_{m,P}^{*}(f, g, h)$ we insert
 \eqref{eq:esti:L^P:light:21}-\eqref{eq:esti:L^P:cluster:21} into the above proposition -- with the latter used for the most part as a black-box -- where for $*\in\{\mathfrak{l},\mathfrak{u},\mathfrak{c}\}$ and $P\in\BHT$ we set $a_{P_1}:= f(P_1)$, $a_{P_2}:=g(P_2)$ and $a_{P_3}:= h_{*}(P_3)$.

In order to obtain though \eqref{mainrm}, we have to further address the symmetry conflict exposed in Section \ref{impas} and, in particular, the different scaling manifested in the ``two-resolution analysis'': the above proposition involves the $2^{am}$-area tiles part of the definition of tri-tiles $P$  while the structure of the coefficients $f(P_1)$, $g(P_2)$ and $h_{*}(P_3)$ depends on the interaction among area one tiles part of the tri-tile family $\S(P)$.

In answering the above we will have to re-work the classical energy and size estimates and adapt them to our context. In particular, due to the high-resolution discretization -- see Sections \ref{highr} and \ref{HR20} -- the sizes for $f$ and $g$ will be local $L^2$ averages, with the associated energy still remaining an $L^2$-based quantity; for $h$, we will need to consider different sizes that also take into account the information carried by $w_{k, n, v}(\lambda)$, each developing from the light, uniform, or clustered components.

The key step for obtaining our desired $m$-decay is revealed in Proposition \ref{prop:aim} of Section \ref{sec:bilinear:analysis} whose proof goes through estimating size and energy quantities and can be summarized as follows: For any subcollection $\P\subset \BHT$ we have
\begin{itemize}
\item $L^2$ control over the size of the uniform component $\{h_\mathfrak{u}(P_3)\}_{P\in\P}$; \textit{i.e.},
\begin{equation}
\label{eq:est:size-h+0}
\size^{\P}_3(\{h_\mathfrak{u}(P_3)\}_{P\in\P}) \lesssim  \sup_{P \in \P} \frac{1}{|I_P|^{\frac{1}{2}}} \|h \cdot \ci_{I_P}\|_{L^2} .
\end{equation}
The simple but relevant observation here is that due to the presence of $\l$, the orthogonality among the involved terms is a consequence of the disjointness of the scales.

\item $m$-decay within the $L^2$ control over the size of the light component $\{h_\mathfrak{l}(P_3)\}_{P\in\P}$; \textit{i.e.},
\begin{equation}
\label{eq:est:size-h-0}
\size^{\P}_3(\{h_\mathfrak{l}(P_3)\}_{P\in\P}) \lesssim 2^{- \frac{\delta_1   a m}{4}} \sup_{P \in \P} \frac{1}{|I_P|^{\frac{1}{2}}} \|h \cdot \ci_{I_P}\|_{L^2} .
\end{equation}
This is a direct consequence of the ``lightness'' of the weight $w_{\cdot,\cdot,\cdot}$.

\item $m$-decay within the $L^2$ control over the size of the clustered component $\{h_\mathfrak{c}(P_3)\}_{P\in\P}$, \textit{i.e.} for some $\mu>0$
\begin{equation}
\label{eq:est:size-h:cluster0}
\size^{\P}_3(\{h_\mathfrak{c}(P_3)\}_{P\in\P}) \lesssim 2^{- \frac{\mu   a m}{4}} \sup_{P \in \P} \frac{1}{|I_P|^{\frac{1}{2}}} \|h \cdot \ci_{I_P}\|_{L^2} .
\end{equation}
Here one relies on the fact that there are at most $O(2^{(1-\mu) \frac{am}{2}})$ values of $n$ for which there exists $v \sim 2^{am \over 2}$ such that $(p', n, v) \in \mathfrak{c}$, where $\mu$ is a numerical constant chosen small enough.
\end{itemize}

Putting \eqref{eq:esti:L^P:light:21}-\eqref{eq:est:size-h:cluster0} all together, we obtain -- via the proposition above -- the desired estimate \eqref{mainrm} within the local-$L^2$ range. A standard interpolation argument, having as end-points our earlier obtained local-$L^2$ estimate and the result in \cite{Lacey} proves the result claimed by our Theorem \ref{main} in the interior of the full range -- see Section \ref{sec:bilinear:analysis}. We point out that in order to get the estimates corresponding to one of the exponents $p$ or $q$ in Theorem \ref{main} being $\infty$, we need to go through an extra interpolation step at the level of the spatially localized ``sizes'', see Subsection \ref{sec:size:interpolation}. 

%
%
%
%
\section{Initial reductions}
\label{sec:initial:reductions}
In this section we will operate via various decompositions several simplifications on our bilinear Hilbert-Carleson operator $T:=BC^{a}$. Indeed, departing from the strategy outlined in Section \ref{Heur} -- recall here the key relations \eqref{stdec}--\eqref{thi} -- we will
address the following items:
\begin{itemize}[leftmargin=5.5mm]
\item the rigorous decomposition of $T$ into the sum of two components: the low oscillatory component $T_0$ and the high oscillatory component $T_{\mathrm{osc}}$;

\item the complete treatment of the low oscillatory component $T_0$;

\item the rigorous decomposition of the high oscillatory component into a sum of operators according to the level sets of the phase of the multiplier thus clarifying the decomposition $T_{\mathrm{osc}}=\sum_{m\in \N} T_{m}$ evoked in \eqref{thi};

\item the reduction to the local-$L^2$ case for the boundedness range of $T_m$;

\item the scale decomposition $T_{m}=\sum_{k\in\Z} T_{m,k}$ and the identification of the main component of the multiplier associated with each $T_{m,k}$.
\end{itemize}

\subsection{The decomposition $T=T_{0} + T_{\mathrm{osc}}$}

By a standard argument, we start by constructing a smooth non-negative even bump function $\rho$ supported in $[-2,-1/2]\cup [1/2,2]$ such that for every $t \neq 0$
\begin{equation}
\sum_{k \in \mathbb{Z}}\rho(2^{-k} t) = 1. \label{eq:smooth_dyadic_decomposition}
\end{equation}
Similarly, we can identify a smooth non-negative even bump function $\tilde{\rho} = \tilde{\rho}_a$ supported in $[-2^a,-2^{-a}]\cup [2^{-a},2^a]$ such that for every $\lambda\neq 0$
\[ \sum_{j \in \mathbb{Z}}\tilde{\rho}(2^{-aj} \lambda) = 1.   \]
Next we set
\[ \chi(\lambda,t) := \sum_{\substack{j,k \in \mathbb{Z} \\ j+k \leq 0}} \tilde{\rho}(2^{-aj} \lambda)\rho(2^{-k} t) \]
and notice that this function has the following properties: 1) $\chi(\lambda,t) \lesssim 1$ for all $\lambda, t \neq 0$, and 2) if $\chi(\lambda,t) \neq 0$ then $|\lambda t^a| \lesssim 1$.\\
Selecting now in the r\^{o}le of $\l$ the function $\l(x)$ arising from the linearization of the supremum in the definition of $BC^{a}$, we have that
\begin{equation*}
 BC^a(f,g)(x) =:T(f,g)(x)=\int f(x-t)g(x+t)e^{i \lambda(x) t^a}  \frac{dt}{t}
\end{equation*}
which decomposes as 
\begin{equation}
\begin{aligned}
& \int f(x-t)g(x+t)e^{i \lambda(x) t^a} \chi(\lambda(x), t) \frac{dt}{t} \\
& \hspace{1em} + \int f(x-t)g(x+t)e^{i \lambda(x) t^a} (1-\chi(\lambda(x),t)) \frac{dt}{t} \\
& =: T_0(f,g)(x) + T_{\mathrm{osc}}(f,g)(x),
\end{aligned} \label{oosc}
\end{equation}
thus completing the decomposition of our operator $T$ into a sum of low and high oscillatory components as stated in the introduction.

\subsection{The boundedness of $T_0$}

In this section we show that $T_{0}$ is bounded in the same range as the one corresponding to the bilinear Hilbert transform. This will be achieved by showing that
\beq\label{THM}
|T_{0}(f,g)| \lesssim H_{\ast}(f,g) + M(f,g) ,
\eeq
where here
\begin{itemize}
\item $H_{\ast}(f,g)$ stands for the maximal (smooth) truncation of the bilinear Hilbert transform
\begin{equation} H_{\ast}(f,g)(x):= \sup_{r > 0} \Big|\int f(x-t)g(x+t) \theta(t / r) \frac{dt}{t}\Big|, \label{Lmax}
\end{equation}
with $\theta$ any smooth bump function supported near the origin, and

\item $M(f,g)$ stands for the bilinear maximal function
\begin{equation}
M(f,g)(x):= \sup_{r > 0} \frac{1}{2r} \int_{-r}^{r} |f(x-t)||g(x+t)| dt .  \label{eq:bilinearmaxfunction}
\end{equation}
\end{itemize}
Notice that, indeed, if \eqref{THM} holds, then the claim about the boundedness range for $T_0$ follows from Lacey's work in \cite{Lacey} (see also \cite{DemeterTaoThiele} for further generalisations).

Turning now towards the proof of \eqref{THM}, we split $T_0$ as
\begin{align*}
T_0(f,g)(x) &= \int f(x-t)g(x+t) \chi(\lambda(x), t) \frac{dt}{t} \\
& \hspace{1em} + \int f(x-t)g(x+t)(e^{i \lambda(x) t^a}-1) \chi(\lambda(x), t) \frac{dt}{t}\\
&=:I + II .
\end{align*}
We now see that the first term on the right-hand side is bounded by $H_{\ast}(f,g)(x)$:
\begin{align*}
|I|&=\Big| \int f(x-t)g(x+t) \chi(\lambda(x), t) \frac{dt}{t}\Big| \\
&  = \Big|\sum_{j \in \mathbb{Z}} \tilde{\rho}(2^{-aj} \lambda(x))\; \int f(x-t)g(x+t) \sum_{k \leq -j} \rho(2^{-k}t)   \frac{dt}{t}\Big| \\
& = \Big| \sum_{j \in \mathbb{Z}} \tilde{\rho}(2^{-aj} \lambda(x))\; \int f(x-t)g(x+t) \theta(2^{j}t)   \frac{dt}{t} \Big| \\
& \leq \Big( \sum_{j \in \mathbb{Z}} \tilde{\rho}(2^{-aj} \lambda(x))\Big)   H_{\ast}(f,g)(x) = H_{\ast}(f,g)(x),
\end{align*}
where here we have let $\theta(t):= \sum_{k \leq 0} \rho(2^{-k}t)$ (a smooth bump function supported near the origin).

For the second term, we use the relations $|e^{is}-1|\leq |s|$ and $\chi(\lambda,t) \lesssim \mathbf{1}_{[-C,C]}(\lambda t^a)$ in order to deduce that
\begin{align*}
|II|&\lesssim \int_{\{t : |\lambda(x) t^a| \lesssim 1\}}|f(x-t)||g(x+t)| |\lambda(x) t^a| \frac{dt}{|t|}\\
&\lesssim \sum_{\ell \in \mathbb{N}} 2^{-\ell} \int_{\{t : |\lambda(x) t^a| \sim 2^{-\ell} \}} |f(x-t)||g(x+t)| \frac{dt}{|t|}\\
&\lesssim \sum_{\ell \in \mathbb{N}} 2^{-\ell} \frac{1}{2^{-\ell/a} |\lambda(x)|^{-1/a}}\int_{\{t : |t| \lesssim 2^{-\ell/a}|\lambda(x)|^{-1/a}\}} |f(x-t)||g(x+t)|  dt \\
&\lesssim M(f,g)(x).
\end{align*}
This ends the proof of \eqref{THM}.

\subsection{Level set partition of the multiplier's phase: the decomposition $T_{\mathrm{osc}}=\sum_{m\in \N} T_{m}$}

We start by splitting the operator into components in which both the spatial scale and the parameter\footnote{Since any bound we will obtain below will be independent of the measurable function $\lambda(\cdot)$, from now on we consider it fixed once and for all and omit it when convenient from the notation.} $\lambda$ are localized. Then we will reassemble the pieces according to the magnitude of the oscillatory phase -- it is in this latter parameter that we will obtain decay in our norm bounds, allowing us to sum all the contributions together.\par

Observe that by construction we have\footnote{Notice that $1 - \chi(\lambda,t)$ nicely factors into terms in which $\lambda$ is localized and terms in which $t$ is localized, this being the very reason for which we defined $\chi(\lambda,t)$ the way we did, instead of something simpler such as $\theta(\lambda t^a)$ with $\theta$ a bump function supported near the origin (which would have sufficed for the purposes of the previous subsection).}
\[ 1  -\chi(\lambda,t) = \sum_{\substack{j,k \in \mathbb{Z} \\ j+k > 0}} \tilde{\rho}(2^{-aj}\lambda)\rho(2^{-k}t), \]
and therefore we can decompose $T_{\mathrm{osc}}$ accordingly as
\[ T_{\mathrm{osc}}(f,g)(x) = \sum_{\substack{j,k \in \mathbb{Z} \\ j+k > 0}} \tilde{\rho}(2^{-aj}\lambda(x)) \int f(x-t)g(x+t) e^{i\lambda(x) t^a} \rho(2^{-k}t) \frac{dt}{t}. \]

Letting $m = k +j$ we see that we can collect the terms according to $m$, obtaining
\begin{align}
T_{\mathrm{osc}}(f,g)(x) &= \sum_{m \in \mathbb{N}} \sum_{k \in \mathbb{Z}} \tilde{\rho}(2^{-a(m-k)}\lambda(x)) \int f(x-t)g(x+t) e^{i\lambda(x) t^a} \rho(2^{-k}t) \frac{dt}{t} \nonumber \\
&=: \sum_{m \in \mathbb{N}} \sum_{k \in \mathbb{Z}} T_{m,k}(f,g)(x)=: \sum_{m \in \mathbb{N}} T_{m}(f,g)(x).  \label{eq:Tmk}
\end{align}
Observe that in each of the integrals above for a given $m$ we have $|\lambda(x)t^a| \sim 2^{am}$; in other words, we have localized the magnitude of the phase thus achieving the decomposition we desired.

\subsection{The boundedness range of $T_m$ -- reduction to the local-$L^2$ case}
\label{sec:reduction:local:L2}

 As explained in the introduction, the rest of the paper will be devoted to the study of
\[ T_{m}(f,g) := \sum_{k \in \mathbb{Z}} T_{m,k}(f,g) . \]
Given $m\in \N$, our final aim is to show that $T_m$ obeys $L^p \times L^q \to L^r$ bounds that decay exponentially in the parameter $m$.

More precisely, our main result will be the following:

\begin{theorem}\label{main_theorem_pxq_to_r}
For any exponents $p,q,r$ such that $1<p,q \leq \infty$, $2/3<r<\infty$ and $1/p + 1/q = 1/r$ there exists $\delta = \delta(p,q)>0$ such that for all $f \in L^p(\mathbb{R}), g \in L^q(\mathbb{R})$ and for all $m \in \mathbb{N}$
\begin{equation}\label{fullrange}
\|T_m (f,g)\|_{L^r} \lesssim_{a,p,q} 2^{-\delta a m} \|f\|_{L^p} \|g\|_{L^q}.
\end{equation}
Moreover, the implicit constant is independent of the fixed measurable function $\lambda$.
\end{theorem}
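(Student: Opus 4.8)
The plan is to reduce Theorem \ref{main_theorem_pxq_to_r} first to the local-$L^2$ range by interpolation against Lacey's result, and then to attack the local-$L^2$ estimate through the two-resolution scheme sketched in Section \ref{Genov}. More precisely, since $T_0$ is already controlled by Lacey's bounds in the full range, it suffices to establish \eqref{fullrange} for $T_m$. I would first prove the key estimate in the local-$L^2$ regime, i.e. for Hölder triples $(p,q,r)$ with $2\leq p,q,r'<\infty$, and then interpolate: one endpoint being the local-$L^2$ estimate with $m$-decay just obtained, the other being the (non-decaying but uniform) bound $\|T_m(f,g)\|_{L^r}\lesssim \|f\|_{L^p}\|g\|_{L^q}$ valid on a larger range — which itself follows by dominating each $T_{m,k}$ crudely and summing, or more efficiently by the arguments of Section \ref{sec:bilinear:analysis} without extracting the curvature gain. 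Multilinear complex interpolation between these then yields $2^{-\delta am}$ decay throughout the open range $\frac23<r<\infty$, $1<p,q\leq\infty$; the cases $p=\infty$ or $q=\infty$ require the extra interpolation at the level of spatially localized sizes indicated in Subsection \ref{sec:size:interpolation}.

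For the local-$L^2$ estimate itself, I would dualize, writing $\|T_m(f,g)\|_{L^r}=\sup_{\|h\|_{L^{r'}}\leq 1}|\underline{\L}_m(f,g,h)|$ with $\underline{\L}_m(f,g,h)=\int T_m(f,g)h$, and then carry out the discretization: stationary phase on each $T_{m,k}$ (Appendix B) produces the multiplier model \eqref{multm}; the Whitney "big square" decomposition produces the rank-one tri-tile family $\BHT$ of \eqref{eq:tile:1}--\eqref{eq:tile:3} with $\underline{\L}_m=\sum_{P\in\BHT}\underline{\L}_{m,P}$; and the LGC high-resolution discretization of each $\underline{\L}_{m,P}$ produces the two equivalent models — the wave-packet model \eqref{lmk0tileform} over the four-parameter families $\S(P)$, and the continuous spatial model \eqref{contimod}. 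The single-scale estimate (Proposition \ref{sgscale}, with the refinement Proposition \ref{sgscalelambda} giving \eqref{mainrmm}) is proved on the continuous model via the double $TT^*$ argument, Weyl sum / discrete Van der Corput bounds, and phase level-set analysis. Then the transitional refinement (Proposition \ref{refinedsgscale}) upgrades \eqref{mainrmm} to the symmetric trilinear single-tile bounds \eqref{eq:esti:L^P:light:21}--\eqref{eq:esti:L^P:cluster:21}, splitting $\underline{\L}_{m,P}=\underline{\L}_{m,P}^{\mathfrak l}+\underline{\L}_{m,P}^{\mathfrak u}+\underline{\L}_{m,P}^{\mathfrak c}$ according to mass of $w_{k,n,v}(\lambda)$ and size of $f,g$. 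Finally, inserting these into the abstract size/energy Proposition \ref{prop:size/energy0} — after establishing the adapted size bounds \eqref{eq:est:size-h+0}--\eqref{eq:est:size-h:cluster0} and the corresponding energy bounds (where the orthogonality for $h$-type coefficients comes from disjointness of scales due to the presence of $\lambda$) — yields $|\underline{\L}_m(f,g,h)|\lesssim 2^{-\delta am}\|f\|_{L^p}\|g\|_{L^q}\|h\|_{L^{r'}}$ in the local-$L^2$ range.

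The main obstacle, I expect, is the single-scale estimate \eqref{mainrmm} together with its refinement, for two intertwined reasons. First, as emphasized in Section \ref{impas} and Appendix \ref{sec:counterexample}, the natural wave-packet model \eqref{lmk0tileform} is only \emph{conditionally} $m$-summable — the second inequality in \eqref{tilemodelabs} is sharp — so one genuinely cannot extract the decay tile-by-tile from the Gabor coefficients; the decay must be harvested from the oscillation of $\rho_{am-ak}(\lambda(x))w_{k,n,v}^e(\lambda)(x)$ across the spatial variable, which forces the passage to the continuous spatial model \eqref{contimod} and the delicate $TT^*$/Weyl-sum analysis. Second, the linearizing function $\lambda(x)$ is merely measurable, so all estimates must be uniform in $\lambda$ and cannot exploit any regularity of the stationary point relation \eqref{tfc} in $x$; this is precisely what makes the phase level-set analysis and the mass-based splitting $\I=\mathfrak l\cup\mathfrak u\cup\mathfrak c$ necessary rather than cosmetic. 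A secondary difficulty is reconciling the two scales present in the two-resolution analysis — the area-$2^{am}$ tiles of $\BHT$ versus the area-one tiles of $\S(P)$ — when feeding the single-tile bounds into the Lacey–Thiele machinery, which requires re-deriving the size and energy estimates in the weighted, rescaled setting so that the $m$-decay from \eqref{eq:esti:L^P:uniform:21} (and the hidden decay inside $h_{\mathfrak l}(P_3)$, $h_{\mathfrak c}(P_3)$) survives summation over all scales, positions and frequencies.
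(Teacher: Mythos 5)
Your proposal is correct and follows essentially the same route as the paper: reduction to the strict local-$L^2$ range by interpolating the $m$-decaying estimate against the uniform bound inherited from the bilinear maximal function, the two-resolution (LGC plus BHT-type size/energy) analysis for the local-$L^2$ decay, and the extra interpolation at the level of spatially localized sizes to reach the endpoints $p=\infty$ or $q=\infty$. The only cosmetic difference is that the paper uses restricted-type/multilinear interpolation with tracked constants rather than complex interpolation, which does not affect the argument.
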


To begin with, we notice that each $T_m$ is controlled pointwise by the bilinear maximal function $M$ defined in \eqref{eq:bilinearmaxfunction}:
\begin{equation}
\begin{aligned}
|T_{m}(f,g)(x)| &= \Big|\sum_{k \in \mathbb{Z}} \tilde{\rho}(2^{-a(m-k)}\lambda(x)) \int f(x-t)g(x+t) e^{i\lambda(x) t^a} \rho(2^{-k}t) \frac{dt}{t}\Big| \\
&\leq \sum_{k \in \mathbb{Z}} \tilde{\rho}(2^{-a(m-k)}\lambda(x)) \int |f(x-t)||g(x+t)||\rho(2^{-k}t)| \frac{dt}{|t|} \\
&\lesssim \sum_{k \in \mathbb{Z}} \tilde{\rho}(2^{-a(m-k)}\lambda(x)) M(f,g)(x) \lesssim M(f,g)(x).
\end{aligned}\label{eq:T:m:cont:M}
\end{equation}
Hence the boundedness of $T_m$ follows from that of $M$; in particular, from \cite{Lacey}, $T_m$ is $L^p \times L^q \to L^r$ bounded for any $1< p, q \leq \infty$, $2/3 <r <\infty$ with $1/p+1/q=1/r$. Therefore we also have uniformly in $m$
\[ \|T_m(f,g)\|_{L^r} \lesssim_{a,p,q} \|f\|_{L^p} \|g\|_{L^q}. \]

Because of this and by means of interpolation,\footnote{See for example \cite{multilinear-Marcink-constant} for a precise statement about multilinear interpolation which keeps track of constants} identity \eqref{fullrange} for a certain tuple $(p_0, q_0, r_0)$ -- with $1<p_0, q_0<\infty, 2/3 < r_0< \infty$ -- easily extends across the full \emph{open} range of Theorem \ref{main_theorem_pxq_to_r}. This leaves out the boundary tuples $(p, \infty, p)$ and $(\infty, q, q)$, but these cases will also follow via (linear) interpolation, once \eqref{fullrange} is known to hold for some tuples $(p_0, \infty, p_0)$ or $(\infty, q_0, q_0)$.

In order to obtain the full range in Theorem \ref{main_theorem_pxq_to_r}, we proceed in the following manner:
\begin{itemize}[leftmargin=5.5mm]
\item we show that \eqref{fullrange} holds for all tuples $(p, q, r)$ belonging to the \emph{strict} local-$L^2$ range, i.e. such that $2<p, q, r'< \infty$;
\item interpolating this with the estimates we have for $M$, we obtain \eqref{fullrange} for all tuples $(p, q, r)$, except for endpoints corresponding to $p=\infty$ or $q=\infty$;
\item using a different type of interpolation -- this time at the level of localized maximal averages (see Section \ref{sec:size:interpolation}) -- we  prove that \eqref{fullrange} extends to all tuples $(p, q, r)$ with $2<p, q \leq \infty, 1<r<\infty$, $1/p+1/q=1/r$;
\item finally, we use interpolation on the boundary\footnote{We interpolate between $\|T_m\|_{L^{p_1} \times L^\infty \to L^{p_1}} \lesssim_{a, p_1} 2^{-\delta a m}$ for $p_1>2$ and $\|T_m\|_{L^{p_2} \times L^\infty \to L^{p_2}} \lesssim 1$ for $1<p_2<2$. This reduces to linear interpolation for the operator $f \mapsto T_m(f, g)$, where $g \in L^\infty$ is fixed.} of the range to deduce the full result of Theorem \ref{main_theorem_pxq_to_r}.
\end{itemize}

As a consequence of the above discussion, Theorem \ref{main_theorem_pxq_to_r} follows from a similar result for input functions in the local-$L^2$ range:

\begin{theorem}[\textsf{local-$L^2$ input functions}]\label{main_theorem_local_L2}
For any exponents $p,q,r$ such that $2<p,q \leq \infty, 1<r<\infty$ and $1/p + 1/q = 1/r$, there exists $\delta = \delta(p,q)>0$ such that for all $f \in L^p(\mathbb{R}), g \in L^q(\mathbb{R})$ and for all $m \in \mathbb{N}$
\begin{equation}\label{loc2}
\|T_m (f,g)\|_{L^r} \lesssim_{a,p,q} 2^{-\delta   a m} \|f\|_{L^p} \|g\|_{L^q},
\end{equation}
where the constant is independent of the fixed measurable function $\lambda$.
\end{theorem}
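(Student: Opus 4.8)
\textbf{Proof proposal for Theorem \ref{main_theorem_local_L2}.}
The plan is to realize the three-phase programme announced in Sections \ref{TRAI}--\ref{LRMS}. First, we dualize: it suffices to produce a $\delta>0$ such that for all $f\in L^p$, $g\in L^q$, $h\in L^{r'}$ with $2<p,q\le\infty$, $1<r<\infty$, one has $|\underline{\L}_m(f,g,h)|\lesssim_{a,p,q} 2^{-\delta a m}\|f\|_{L^p}\|g\|_{L^q}\|h\|_{L^{r'}}$ in the strict local-$L^2$ range $2<p,q,r'<\infty$, after which the boundary tuples $(p,\infty,p)$ and $(\infty,q,q)$ are obtained by the localized-size interpolation scheme of Section \ref{sec:size:interpolation} combined with the trivial bound $|T_m|\lesssim M(f,g)$ and Lacey's theorem. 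Since the initial reductions in Section \ref{sec:initial:reductions} already give $T_{\osc}=\sum_m T_m$ with each $T_m$ carrying the level-set localization $|\lambda(x)t^a|\sim 2^{am}$, and since the decomposition $T_m=\sum_k T_{m,k}$ is in place, the entire task is to analyze the dual form $\underline{\L}_m(f,g,h)=\sum_{P\in\BHT}\underline{\L}_{m,P}(f,g,h)$ from \eqref{lmk0tileform}.

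The heart of the argument is the \emph{high-resolution, single-scale} analysis. Fix a tri-tile $P=P(k,\ell,r)\in\BHT$. Using the stationary-phase approximation \eqref{multm} (justified in Appendix B) and the Gabor-frame expansions \eqref{Gfd}--\eqref{Ggd}, I would pass to the wave-packet model \eqref{eq:modeltilee}, then exploit the time-frequency correlations: the non-stationary phase estimate \eqref{eq:decay-weight-intro} restricts $n\sim_a 2^{am/2}$, and an almost-orthogonality argument restricts $\bar p$ to blocks of length $\sim_a 2^{am/2}$, yielding the four-parameter sum over $(u,v,n,p)$ indexed by $\S(P)$. Crucially, as Appendix A shows, this model is only \emph{conditionally} summable, so I would next pass to the continuous phase-linearized spatial model \eqref{contimod} via the heuristic constancy \eqref{heurist}, which \emph{is} $m$-decaying absolutely summable. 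On this model I would run the double $TT^{*}$ argument: after splitting according to the size distribution of $f_{\ell+1}$ and $g_{\ell-1}$, two applications of $TT^{*}$ reduce matters to bounding an exponential sum of the shape $\sum_{n}e^{i(\bar c_a n^{a-1}/2^{a(a-1)m/2})\lambda(x)t}$; here the non-integrality $a\notin\{1,2\}$ makes the phase genuinely nonlinear in $n$, and a discrete Van der Corput / Weyl-sum estimate paired with a phase level-set analysis exploiting the spatial variation of $\lambda$ produces the gain \eqref{mainrmm}, i.e. $|\Lambda_{m,k,\ell,r}(f,g)|\lesssim_a 2^{-\delta_0 am}2^{k/2}\|f_{\ell+1}\ci_{I_P}\|_{L^2}\|g_{\ell-1}\ci_{I_P}\|_{L^2}$. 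I expect this exponential-sum step, and in particular making the phase level-set analysis interact correctly with the merely-measurable $\lambda$, to be the main obstacle of the whole proof.

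Next comes the \emph{transitional step} of Section \ref{HR20}: I would upgrade \eqref{mainrmm} to the symmetric trilinear single-tile estimate by inserting the near-constancy \eqref{constancy} and the averaged Cauchy--Schwarz bound \eqref{eq:est:w:aver1}, then partition the index set $\I=\mathfrak{l}\cup\mathfrak{u}\cup\mathfrak{c}$ according to the mass of $w_{k,n,v}(\lambda)$ and the size distributions of $f,g$, obtaining the split \eqref{decsgsc0} with the three single-tile bounds \eqref{eq:esti:L^P:light:21}--\eqref{eq:esti:L^P:cluster:21}. For the uniform component the $2^{-\delta_1 am/2}$ gain comes directly from \eqref{mainrmm}; for the light and clustered components the $m$-decay is instead carried inside the weighted coefficients $h_{\mathfrak{l}}(P_3)$, $h_{\mathfrak{c}}(P_3)$.

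Finally, the \emph{low-resolution, multi-scale} step: summing \eqref{eq:esti:L^P:light:21}--\eqref{eq:esti:L^P:cluster:21} over $P\in\BHT$, I would feed the coefficients $a_{P_1}=f(P_1)$, $a_{P_2}=g(P_2)$, $a_{P_3}=h_*(P_3)$ into Proposition \ref{prop:size/energy0}, using it as a black box. This requires re-proving the size and energy estimates adapted to the $2^{am}$-area tiles of $\BHT$ versus the area-one tiles of $\S(P)$: the sizes of $f,g$ become local $L^2$ averages with $L^2$-based energies, while for $h$ one needs the three different weighted sizes, for which Proposition \ref{prop:aim} gives the key bounds \eqref{eq:est:size-h+0}--\eqref{eq:est:size-h:cluster0}. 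The uniform size has no loss but is multiplied by the $2^{-\delta_1 am/2}$ from \eqref{eq:esti:L^P:uniform:21}; the light size gains $2^{-\delta_1 am/4}$ from the lightness of the weight; the clustered size gains $2^{-\mu am/4}$ because only $O(2^{(1-\mu)am/2})$ values of $n$ can be clustered. Choosing $\theta_1,\theta_2,\theta_3\in[0,1)$ summing to $1$ appropriately in Proposition \ref{prop:size/energy0} and combining with the $L^2$ energy bounds yields \eqref{loc2} in the strict local-$L^2$ range; the localized-size interpolation of Section \ref{sec:size:interpolation} and a final interpolation against the $M$-bound on the boundary then give the full range, completing the proof.
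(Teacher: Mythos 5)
Your proposal follows essentially the same route as the paper: reduction to the dualized form and to the main symbol via stationary phase, the two-resolution scheme with the LGC single-scale estimate obtained by the double $TT^{*}$/Weyl-sum/level-set argument, the transitional mass--size splitting into light, uniform and clustered components, and the multi-scale size/energy machinery (Proposition 6.5 of \cite{MTTBiest2} as a black box) followed by the localized-size (helicoidal) interpolation to reach $p$ or $q=\infty$. This matches the paper's proof of Theorem \ref{main_theorem_local_L2} in both structure and the key estimates invoked, so no further comparison is needed.
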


The central point of our paper will be the proof of the $m$-decaying estimates within the local-$L^2$ range.

\subsection{The main component of $T_{m,k}$: stationary phase analysis.}
\label{subsection_stationary_phase_analysis}

In this section we will focus on the analysis of the multiplier symbol associated with $T_{m,k}$ as represented by the formula
\[ T_{m,k}(f,g)(x) = \tilde{\rho}\Big(\frac{\lambda(x)}{2^{a(m-k)}}\Big) \iint \widehat{f}(\xi)\widehat{g}(\eta) e^{i (\xi + \eta) x} \mathfrak{M}_{m,k}(\xi - \eta;\lambda(x))  d\xi  d\eta , \]
where
\begin{equation} \mathfrak{M}_{m,k}(\zeta;\lambda) := \int e^{i \lambda t^a} e^{-i \zeta t}  \frac{\rho(2^{-k}t)}{t} dt. \label{bigM}
\end{equation}

Since $\mathfrak{M}_{m,k}$ is an oscillatory integral, we will make use of the stationary phase principle in order to decompose our operator $T_{m,k}$ into a principal component that will become the main object of interest for the remainder of the paper and an error term that will be controlled by the bilinear maximal function defined in \eqref{eq:bilinearmaxfunction}. In the present section we will only provide a heuristic argument for identifying the dominant term of $T_{m,k}$ (respectively $\mathfrak{M}_{m,k}$), while the detailed but largely standard analysis involving more laborious computations and reductions can be find in Appendix \ref{appendix:multiplier}.

These being said, we turn out attention towards \eqref{bigM} and notice that the phase of $\mathfrak{M}_{m,k}$, that is $\mu(t) := \lambda t^a - \zeta t$, has -- up to a sign -- a single critical real point at $|t_c| := \left|\zeta / a\lambda\right|^{1/(a-1)}$. A heuristic application of the stationary phase principle (see for example Ch. VIII of \cite{BigStein}) suggests that the main contribution to the oscillatory integral $\mathfrak{M}_{m,k}$ is given, up to an absolute constant, by
\[ e^{i \mu(t_c)} \frac{\rho(2^{-k} t_c)}{t_c (\mu''(t_c))^{1/2}}, \]
provided that $\zeta$ is such that $|t_c| \sim 2^k$. Expanding the definitions of $t_c$ and $\mu$ in the above formula and taking into account that $|t_c| \sim 2^k, \lambda \sim 2^{a(m-k)}$ and $|\zeta| \sim 2^{am-k}$, we necessarily have that the second factor above is of magnitude $\sim 2^{-am/2}$, and thus the main contribution to $\tilde{\rho}(\lambda 2^{-a(m-k)}) \mathfrak{M}_{m,k}(\zeta;\lambda)$ is morally of the form\footnote{Here one can compute exactly the value of $c_a$ as given by the formula $c_a := a^{-a/(a-1)}-a^{-1/(a-1)}$.}
\begin{equation}
\mathfrak{m}_{m,k}(\zeta;\lambda) := \frac{1}{2^{am/2}}   e^{i c_a \zeta^{a/(a-1)} \lambda^{-1/(a-1)}}  \rho\Big(\frac{\lambda}{2^{a(m-k)}}\Big)  \psi\Big(\frac{\zeta}{2^{am-k}}\Big) \label{smallM}
\end{equation}
for some bump functions $\rho(\lambda),\psi(\zeta)$ supported in $|\lambda| \sim 1$, $|\zeta| \sim 1$ respectively.\footnote{Notice that we are abusing notation once again: the latter $\rho$ is not necessarily the same one as that introduced in \eqref{eq:smooth_dyadic_decomposition}.} For later convenience, we will denote from now on $\rho_{am-ak}(\lambda) := \rho(\lambda 2^{-a(m-k)})$. \\

Based on the above heuristic (once again made precise in Appendix \ref{appendix:multiplier}) we have that Theorem \ref{main_theorem_local_L2} will follow from the following:
\begin{theorem}\label{mdec}
With $\mathfrak{m}_{m,k}$ defined by \eqref{smallM} we let (with a further abuse of notation)
\begin{equation}\label{decmk}
 T_{m,k}(f,g)(x):= \iint \widehat{f}(\xi)\widehat{g}(\eta) e^{i (\xi + \eta) x} \mathfrak{m}_{m,k}(\xi - \eta;\lambda(x))  d\xi  d\eta
 \eeq
and $T_{m}(f,g) := \sum_{k \in \mathbb{Z}} T_{m,k}(f,g)$.\\
For any $p,q,r$ such that $2<p,q \leq \infty, 1<r<\infty$ and $1/p + 1/q = 1/r$, there exists $\delta = \delta(p,q)>0$ such that for all $m \in \mathbb{N}$\footnote{Here the implicit constant depends at most polynomially on the $C^1$-norms of $\rho, \psi$ appearing in \eqref{smallM}.}
\begin{equation}\label{decm1}
\Big\|T_{m}(f,g)\Big\|_{L^r}\lesssim_{a,p,q} 2^{-\d a m} \|f\|_{L^p} \|g\|_{L^q}.
\end{equation}
\end{theorem}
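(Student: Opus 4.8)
The plan is to establish Theorem \ref{mdec} by implementing the two-resolution scheme outlined in Section \ref{Genov}, reducing everything to the local-$L^2$ bound \eqref{decm1} (the open-range extensions then follow by the interpolation arguments already described in Section \ref{sec:reduction:local:L2}, including the boundary-localization step of Section \ref{sec:size:interpolation}). First I would fix $m\in\N$ and split each $T_{m,k}$ according to the Whitney ``big square'' geometry of Figure \ref{figure:BigCubes}: the multiplier $\mathfrak{m}_{m,k}$ is supported where $|\xi-\eta|\sim 2^{am-k}$, so after a Littlewood--Paley partition of that annular region into translates of squares of side $2^{am-k}$ one passes to the dualized form $\underline{\L}_m(f,g,h)=\sum_{P\in\BHT}\underline{\L}_{m,P}(f,g,h)$ over the rank-one tri-tile family $\BHT$ of \eqref{eq:tile:1}--\eqref{eq:tile:3}, with each tile of area $2^{am}$.

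The high-resolution (curvature) input is the single-scale estimate. Within each fixed $P\in\BHT$ I would apply the LGC-methodology of \cite{lvUnif}: linearize the phase $c_a\zeta^{a/(a-1)}\lambda^{-1/(a-1)}$ over the $2^{am}$ sub-squares (Figure \ref{figure:SmallCubes}), expand the frequency-localized inputs $f_{\ell+1},g_{\ell-1}$ in the adapted Gabor frame \eqref{defv}--\eqref{Ggd}, and arrive at the wave-packet model \eqref{tmk}--\eqref{eq:modeltilee} with the oscillatory weight $w^e_{k,n,v}(\lambda)$ of \eqref{def:eq:weight-oscill0}. Non-stationary phase gives the decay \eqref{eq:decay-weight-intro}, which restricts $n\sim_a 2^{am/2}$ and, together with an almost-orthogonality argument, restricts $\bar p$ to intervals of length $\sim 2^{am/2}$. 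Crucially, as Appendix \ref{sec:counterexample} shows, this wave-packet model is only \emph{conditionally} $m$-summable, so I would pass to the equivalent continuous spatial model \eqref{contimod} via the heuristic \eqref{heurist}, which \emph{is} $m$-decaying absolutely summable, and extract the cancellation there. The decay \eqref{mainrmm} for $\Lambda_{m,k,\ell,r}(f,g)$ is obtained — as in Proposition \ref{sgscalelambda} — through a double $TT^*$ argument that reduces matters to a Weyl-sum/discrete Van der Corput estimate (Section \ref{ttstar}) combined with a phase level-set analysis exploiting the time-frequency correlation \eqref{tfc}. This is the step I expect to be the main obstacle: the correlation $2v\approx \bar c_a n^{a-1}\lambda(x)\,2^{-a(am-2k)/2}$ depends roughly on the spatial variable $x$ through the merely measurable function $\lambda$, so one cannot linearize it smoothly, and the number-theoretic cancellation in the $n$-sum must be married with a delicate decomposition by the mass of the weight and the sizes of $f,g$.

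Next comes the transitional step of Section \ref{HR20}: refine \eqref{mainrmm} into the symmetric trilinear single-tile bounds \eqref{eq:esti:L^P:light:21}--\eqref{eq:esti:L^P:cluster:21}. Here I would use the near-constancy \eqref{constancy} of $\underline S_{k,\ell,r}^{n,v,p'}$ on the fine intervals $\underline I_{k,r}^{p'}$, apply Cauchy--Schwarz as in \eqref{eq:est:w:aver1}, and partition the index set $\I=\mathfrak{l}\cup\mathfrak{u}\cup\mathfrak{c}$ according to the mass of $w_{k,n,v}(\lambda)$ and the size distributions of $f$ and $g$; the $2^{-\delta_1 am/2}$ gain on the heavy-mass-uniform piece $\underline{\L}^{\mathfrak{u}}_{m,P}$ comes directly from \eqref{mainrmm}, while on the light and clustered pieces the decay is absorbed into the weighted local-$\ell^2$ quantity $h_*(P_3)$.

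Finally, the low-resolution multi-scale analysis (Section \ref{LRMS}): for $*\in\{\mathfrak{l},\mathfrak{u},\mathfrak{c}\}$ set $a_{P_1}:=f(P_1)$, $a_{P_2}:=g(P_2)$, $a_{P_3}:=h_*(P_3)$ and feed \eqref{eq:esti:L^P:light:21}--\eqref{eq:esti:L^P:cluster:21} into Proposition \ref{prop:size/energy0}. The remaining work is to re-derive the size and energy estimates adapted to the $2^{am}$-area tiles of $\BHT$ versus the area-one tiles of $\S(P)$: one shows the size bounds \eqref{eq:est:size-h+0}--\eqref{eq:est:size-h:cluster0}, with the $m$-decay on the light component coming from the lightness of the weight, on the clustered component from the fact that at most $O(2^{(1-\mu)am/2})$ values of $n$ are clustered, and on the uniform component carried along from \eqref{mainrmm}; the sizes of $f,g$ become local $L^2$-averages while the energies stay $L^2$-based. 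Combining these via Proposition \ref{prop:size/energy0} with an appropriate choice of $\theta_1,\theta_2,\theta_3$ yields \eqref{decm1} in the strict local-$L^2$ range, and the interpolation steps of Section \ref{sec:reduction:local:L2} (together with Lacey's bound \cite{Lacey} and the localized-size interpolation of Section \ref{sec:size:interpolation}) complete Theorem \ref{mdec}, hence Theorems \ref{main_theorem_local_L2}, \ref{main_theorem_pxq_to_r} and ultimately Theorem \ref{main}.
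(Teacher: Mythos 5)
Your proposal is correct and follows essentially the same route as the paper: the Whitney/tri-tile discretization into $\underline{\L}_{m,P}$ over $\BHT$, the LGC single-scale estimate obtained via the continuous phase-linearized spatial model and the $TT^*$/Weyl-sum/level-set argument, the transitional light/uniform/clustered splitting feeding into \eqref{eq:esti:L^P:light:21}--\eqref{eq:esti:L^P:cluster:21}, the size/energy machinery of Proposition \ref{prop:size/energy0} for the local-$L^2$ range, and the localized-size interpolation of Section \ref{sec:size:interpolation} to reach $2<p,q\leq\infty$, $1<r<\infty$. This is exactly the chain (Proposition \ref{prop:constraint:n}, Corollary \ref{cor:conclusion:trilinear:form:L}, and Section \ref{sec:size:interpolation}) by which the paper derives Theorem \ref{mdec}, so no further comparison is needed.
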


This result will be derived as a consequence of Proposition \ref{prop:constraint:n}, Corollary \ref{cor:conclusion:trilinear:form:L}, and the extension of the range of boundedness performed in Section \ref{sec:size:interpolation}.

\begin{remark} We end this section by mentioning that in the non-resonant regime, our operator $BC^{a}$ may be thought of as a bilinear modulation invariant\footnote{In the sense of \ref{key:m}.} version  of Stein-Wainger's (polynomial) Carleson operator discussed in \cite{SteinWainger}. This parallelism is preserved when performing the stationary phase analysis of the multiplier as one can  notice by comparing the discussion in our Appendix \ref{appendix:multiplier} with the corresponding analysis in \cite{SteinWainger} or \cite{Stein}.
\end{remark}

\section{Two-resolution analysis: Discretization}
\label{sec:discretization}

\subsection{Preliminaries}\label{Prel}  In this section we will introduce the model operators that are critical for the study of $BC^a$. Generically speaking, the role of such a model is to express the original operator, say $T$, as a superposition of objects of lower complexity (thus easier to be analyzed) while properly capturing the key features of $T$ (\textit{e.g.} dilation, translation, modulation invariance with all these regarded as part of the larger interaction between the structure of the input functions and that of the kernel). Of course, the geometry of the time-frequency regions associated with these lower complexity objects plays the determinative role in the partition, organization of the terms and finally in regrouping the information carried by them in order to obtain the global control over $T$.

Loosely speaking, through our discretization procedure(s) we aim to handle the two competing features of $BC^a$: the non-zero curvature (phase) and the modulation invariance (input).

Indeed, the current section will be divided into two major components:
\begin{itemize}
\item \emph{the high resolution modelling}: designed with the purpose of extracting the cancellation encoded in the curvature of the phase of our kernel/multiplier within every single scale, its analysis is further split into three stages:
\begin{itemize}
\item construction of the \emph{discrete phase-linearized wave-packet model}: this is performed in Section \ref{HR1} and follows a methodology -- detailed in Section \ref{HR1lgc} -- that involves the phase linearization of the multiplier and the Gabor discretization of the input functions. While this model will be essential in the later stages of our proof when we need to perform a multiscale analysis of BHT-flavor it comes with a major constraint: it is \emph{not} an $m$-decaying absolutely summable model thus making difficult the task of capturing the cancellation at the single scale level. This is the reason for which we need to appeal to a second, auxiliary model, that will be described immediately below.
\item construction of the \emph{continuous phase-linearized spatial model}: discussed in  Section \ref{HR2} -- see also the helpful heuristic in Section \ref{BEV} -- this relies on a linearization process that is applied directly on the kernel (spatial) side of our operator but \emph{without} performing any Fourier analysis (wave-packet discretization) on the input functions. It is this coarser model that will be employed in Section \ref{sec:onescale} in order to obtain the desired single scale estimate with exponential decay in the parameter $m$.

\item transition between the two models above: this is explained in Section \ref{passage} and will become an important ingredient in Section \ref{sec:HR2} when refining the single scale estimate in order to capture the time-frequency localization properties of the dualizing function $h$ and thus prepare the implementation of the low resolution multi-scale analysis.
\end{itemize}

\noindent Finally, a very concise antithetical view on these two high resolution models that summarizes the need for both of them is discussed in Section \ref{HR3}.
\medskip

\item \emph{the low resolution modelling}: presented in Section \ref{LR}, this model will be quintessential in approaching the zero-curvature feature of our operator $BC^a$. Its design relies on properly regrouping the information contained in the discrete phase-linearized wave-packet model in order to be able to extract the almost orthogonality  across multiple scales thus involving the multi-scale nature of our problem. It is through this model that we will be able to exploit the bilinear Hilbert type features of our operator in order to propagate the exponential decay in $m$ from the local level (single scale) to the global level (multi-scale model).
\end{itemize}

The success of our approach depends on the compatibility between the high and low resolution analysis as it is this joint contribution that will deliver the desired result.

\subsection{The high resolution model (I): a phase-linearized wave-packet discretization}\label{HR1}

In the present section we construct a first model operator relying on a wave-packet discretization of the input functions.

\subsubsection{Preparatives}\label{prep2R}

Recalling the decompositions \eqref{oosc} and \eqref{eq:Tmk} and relation \eqref{decmk} in Section \ref{sec:initial:reductions}, throughout this section we fix $m\in\N$, $k\in\Z$ and  focus our attention on the operator $T_{m, k}$ given by
\begin{equation}
\begin{aligned}
& T_{m,k}(f,g)(x) := \\
&\iint \widehat{f}(\xi)\widehat{g}(\eta) e^{i (\xi + \eta) x} \frac{\rho_{am-ak}(\lambda(x))}{2^{am\over 2}}  e^{i c_a (\xi-\eta)^{a'} \lambda(x)^{-\frac{1}{a-1}}} \psi \Big(\frac{\xi- \eta}{2^{am-k}}\Big)   d\xi  d\eta,
\end{aligned}\label{def:disc:Tm,k}
\end{equation}
where from now on, for notational simplicity, we set $a' = a/(a-1)$.

Notice that in the formula above the frequency information is essentially supported in the strip $\{ \xi-\eta \sim 2^{am-k} \}$. As a first step we partition this strip in square regions, thus decoupling the non-oscillatory information in $\xi$ and $\eta$. For this, we consider a suitable partition of unity
\[
1=\sum_{\ell \in \Z} \phi\Big(\zeta - \ell\Big),
\]
where $\phi$ is a smooth function which is compactly supported say in $[\frac{1}{2},2]$. Inserting this into the formula of our operator, in the first instance, we get
\begin{equation*}
\begin{aligned}
T_{m,k}(f,g)(x)=\sum_{\ell, \ell' \in \Z} \iint \widehat{f}(\xi)\widehat{g}(\eta) \phi\Big({\xi \over 2^{am-k}} - \ell\Big)
\phi\Big({\eta \over 2^{am-k}} - \ell'\Big)\\
e^{i (\xi + \eta) x} \frac{\rho_{am-ak}(\lambda(x))}{2^{am\over 2}}  e^{i c_a (\xi-\eta)^{a'} \lambda(x)^{-\frac{1}{a-1}}} \psi \Big(\frac{\xi- \eta}{2^{am-k}}\Big)  d\xi  d\eta.
\end{aligned}
\end{equation*}

Since the function $\psi$ introduced in the stationary phase analysis of the multiplier in \eqref{bigM} is supported around $ \zeta \sim 1$, simple considerations\footnote{By writing each of the functions $\psi$ and $\phi$ as a superposition of, say, $20$ functions, we can further shrink the support of our functions and assume without loss of generality that $\psi$ is supported in $[2-\frac{1}{10}, 2+\frac{1}{10}]$ and $\phi$ in $[\frac{3}{4},\frac{5}{4}]$.} imply that without loss of generality we may assume that
\begin{equation}
\begin{aligned}
T_{m,k}(f,g)(x)=\sum_{\ell\in \Z} \iint \widehat{f}(\xi)\widehat{g}(\eta) \phi\Big({\xi \over 2^{am-k}} - (\ell+1)\Big)
\phi\Big({\eta \over 2^{am-k}} - (\ell-1)\Big)\\
e^{i (\xi + \eta) x} \frac{\rho_{am-ak}(\lambda(x))}{2^{am\over 2}}  e^{i c_a (\xi-\eta)^{a'} \lambda(x)^{-\frac{1}{a-1}}} \psi \Big(\frac{\xi- \eta}{2^{am-k}}\Big)  d\xi  d\eta.
\end{aligned}
\label{eq:Tm,k:first}
\end{equation}
Having reached this point, we turn our attention towards the crucial part of the multiplier -- its oscillatory component -- the one that will dictate the further discretization of our operator. Before entering into details, we make a short detour in order to explain the general methodology that we will follow.

\subsubsection{The LGC-method outlined}\label{HR1lgc}

In this section we briefly discuss the key characteristics of the LGC-methodology employed in \cite{lvUnif}; these are as follows: phase linearization, Gabor (local Fourier) analysis and almost orthogonality via time-frequency correlation\footnote{Throughout the paper the notion of time-frequency correlation designates the existence of a conditionality/relationship among the time and frequency parameters appearing in the Gabor discretizations of the input functions. It should be noted that in general the ``correlation'' aspect of the LGC method can also incorporate the standard frequency correlation of a given function relative to a Fourier mode; however, this latter notion of correlation is not used in the present paper.} and phase level set analysis. Before elaborating more on the above elements we emphasise that each of them has been commonly used in the math literature: for example phase linearization is extensively exploited in PDE, Gabor decompositions are standard tools in signal processing and more recently in time-frequency analysis while the study of the frequency correlations proves its utility in several areas with a special highlight on additive combinatorics. Thus, the novelty and power of the method\footnote{This was introduced in order to provide a unifying perspective on several themes belonging to the non-zero curvature realm. For more on this, the reader is invited to consult Section \ref{Hist} in the Introduction.} does not rely on the nature of each of its constitutive elements but rather on their combined effect in analyzing problems that belong to the general class of singular and maximal integral operators.

With these being said, we now present a very concise overview of these key characteristics:
\begin{enumerate}
\item[(L)] \label{item:L} \emph{linearization}: this represents the design of the spatial/frequency discretization procedure through which the phase of the operator's kernel (if regarded from the spatial side) or the phase of its associated multiplier (if regarded from the frequency side) is confined to oscillate at a linear level (when restricted to the regions arising from this discretization);

\item[(G)] \label{item:G}\emph{adapted Gabor frame analysis}: within each of the regions obtained at the first item, one performs an adapted Gabor frame decomposition of the functions on which the operator acts; this produces a new expression which now involves a sum of local Fourier (Gabor) coefficients multiplied by variable weights resulted from the kernel/multiplier discretization discussed at the previous step;

\item[(C)] \label{item:C} \emph{cancellation via $TT^{*}$-method and (time-frequency) correlation}: the resulting discrete operator is now analyzed at the purely $L^2$ level via a $TT^{*}$ argument. This analysis relies on the cancellation hidden in the oscillation of the discretized kernel/multiplier, and has as an output an expression involving $\ell^2$ spatial/frequency coefficients associated to the input functions, multiplied by elementary expressions that depend on the time and frequency parameters involved in the discretization. Eventually, one exploits the created time-frequency correlations by employing various elements such as\footnote{This list is not complete, and depending on the nature of the problem, not all these elements need to appear in the proof.} the analysis of the size distribution of the (input dependent) coefficients, phase level set estimates, elements of combinatorics and number theory (see \emph{e.g.} discrete Van der Corput or more general bounds for exponential sums) thus achieving the desired generic decay estimate shaped by \eqref{stdeccee}.
\end{enumerate}

With these explained, we are now ready to follow the above steps in order to achieve our first model operator.

\subsubsection{Phase linearization}\label{PL}

In this section, for a fixed $m\in\N$ and $k,\ell\in\Z$ we are going to subdivide each of the frequency squares $[2^{am-k}(\ell+1),  2^{am-k}(\ell+2)]\times[2^{am-k}(\ell-1),  2^{am-k}\ell]$ obtained in Section \ref{prep2R} into a collection of smaller dyadic squares of a prescribed length $2^{\frac{am}{2} - k}$, designed such that the phase of the multiplier oscillates at a linear level within each such smaller square.

In what follows we assume that $(x,\xi,\eta)$ are such that $\mathfrak{m}_{m,k}(\xi-\eta;\lambda(x))\not=0$, see \eqref{smallM}. Then, if we let $\mu(\xi,\eta)$ stand for the phase of the multiplier $\mathfrak{m}_{m,k}(\xi-\eta;\lambda(x))$ as given by \eqref{smallM}, we notice that
\[ \mu(\xi,\eta)=\mu(\xi-\eta)= c_a (\xi-\eta)^{a/(a-1)} \lambda(x)^{-1/(a-1)}, \]
with  $|\lambda(x)| \sim 2^{a(m-k)}$ and $|\xi-\eta| \sim 2^{am-k}$.

From the above we deduce that
\begin{equation} \label{secderiv}
\begin{aligned}|\partial^2_{\xi}\mu(\xi,\eta)|= |\partial^2_{\eta}\mu(\xi,\eta)|=|\partial^2_{\xi\eta}\mu(\xi,\eta)| & =
|c_a|  |\xi-\eta|^{\frac{2-a}{a-1}} |\lambda(x)|^{-\frac{1}{a-1}} \\
& \sim 2^{2k-am} = (2^{\frac{am}{2} - k})^{-2} .
\end{aligned}
\end{equation}

Consequently, if we want to impose that the second order term in the Taylor expansion of $\mu(\xi,\eta)$ around a given point $(\xi_0,\eta_0)$ is at most $O(1)$, we need to impose the condition $|\xi-\xi_0|, |\eta-\eta_0|\lesssim 2^{\frac{am}{2} - k}$. Now, within such a smaller square region we have that the phase can be decomposed as a linear component plus an -- essentially non-oscillatory -- nonlinear component.

In light of the above it becomes natural to subdivide the frequency support of  $\mathfrak{m}_{m,k}(\xi-\eta;\lambda(x))$ into (essentially) squares of size-length $2^{\frac{am}{2}-k}$. This process will be made precise in the next section.

\subsubsection{Gabor frame discretization}

Following the above description we will perform a Gabor frame (windowed Fourier series) decomposition of the input functions that visualize the frequency universe in ``unit'' lengths of size $2^{\frac{am}{2}-k}$. Concretely, we first introduce the $L^2$-normalized Gabor frame (wave-packet)
\begin{equation}
\label{def:gdec3}
\varphi^{u,n}_{\frac{a m}{2}-k,\ell}(\xi):=\frac{1}{2^{\frac{am-2k}{4}}} \varphi\Big(\frac{\xi}{2^{\frac{a m}{2}-k}}-u-2^{\frac{am}{2}} (\ell-1)\Big) e^{-i n \xi    2^{-(\frac{a m}{2}-k)}}
\end{equation}
with $\varphi$ a suitable smooth function compactly supported and then decompose
\begin{equation}\label{Gf}
\begin{aligned}
&\qquad\qquad\qquad\qquad\widehat{f}_{\ell+1}(\xi):=\widehat{f}(\xi) \phi({\xi \over 2^{am-k}} -(\ell+1))\\
=&\sum_{u,n \in \mathbb{Z}}  \langle \widehat{f}_{\ell+1}, \varphi^{u,n}_{\frac{am}{2}-k,\ell}\rangle  \varphi^{u,n}_{\frac{am}{2}-k,\ell}(\xi)
=\sum_{2 \cdot 2^{\frac{am}{2}}< u \leq  3 \cdot 2^{\frac{am}{2}}}\sum_{n \in \mathbb{Z}}  \langle \widehat{f}_{\ell+1}, \varphi^{u,n}_{\frac{am}{2}-k,\ell}\rangle \varphi^{u,n}_{\frac{am}{2}-k,\ell}(\xi) ,
\end{aligned}
\end{equation}
and
\begin{equation}\label{Gg}
\begin{aligned}
&\qquad\qquad\quad\quad\widehat{g}_{\ell-1}(\eta):=\widehat{g}(\eta) \phi({\eta \over 2^{am-k}} -(\ell-1))\\
=&\sum_{v,p \in \mathbb{Z}} \langle \widehat{g}_{\ell-1}, \varphi^{v,p}_{\frac{am}{2}-k,\ell}\rangle  \varphi^{v,p}_{\frac{am}{2}-k,\ell}(\eta)=\sum_{1 \leq v \leq 2^{\frac{am}{2}}}    \sum_{p \in \mathbb{Z}} \langle \widehat{g}_{\ell-1}, \varphi^{v,p}_{\frac{am}{2}-k,\ell}\rangle \varphi^{v,p}_{\frac{am}{2}-k,\ell}(\eta) ,
\end{aligned}
\end{equation}
where in the above we made use of the support properties for $\widehat{f}_{\ell+1}$, $\widehat{g}_{\ell-1}$ and $\varphi$.

If we now insert \eqref{Gf} and \eqref{Gg} back to \eqref{eq:Tm,k:first} we deduce that
\begin{align}\label{Trepr}
T_{m,k}(f,g)(x)=\sum_{\ell \in \Z} \sum_{n, p \in \Z} & \sum_{u, v \in \Z} \langle \widehat{f}_{\ell+1}, \varphi^{u,n}_{\frac{am}{2}-k,\ell}\rangle \langle \widehat{g}_{\ell-1}, \varphi^{v,p}_{\frac{am}{2}-k,\ell}\rangle   \K^{u,v,n,p}_{m,k,\ell}(x) ,
\end{align}
with
\begin{equation}
\begin{aligned}
\K^{u,v,n,p}_{m,k,\ell}(x) &:= \frac{1}{2^{\frac{a m}{2}}} \rho_{a m-a k}(\l(x))   \iint  e^{i c_a (\xi-\eta)^{a'} \lambda(x)^{-\frac{1}{a-1}}} e^{i \xi x} e^{i \eta x} \\
& \hspace{7em}\cdot\varphi^{u,n}_{\frac{a m}{2}-k,\ell}(\xi)  \varphi^{v,p}_{\frac{a m}{2}-k,\ell}(\eta) \psi \Big(\frac{\xi- \eta}{2^{am-k}}\Big) d\xi d\eta .
\end{aligned}\label{disc:gdec6}
\end{equation}

This ends the linearized wave-packet decomposition of our operator, of course, up to a better understanding of the kernel $\K^{u,v,n,p}_{m,k,\ell}$.

\subsubsection{Analyzing the kernel}

In this section we focus on obtaining an explicit form for the kernel $\K^{u,v,n,p}_{m,k,\ell}$ by exploiting the time-frequency localization of the wave-packets in relation to the oscillatory factor $\ds  e^{i c_a (\xi-\eta)^{a'} \lambda(x)^{-\frac{1}{a-1}}}$ in \eqref{disc:gdec6}.

Notice that the change of variables $\xi \rightarrow 2^{\frac{a m}{2}-k} \xi$ and $\eta \rightarrow 2^{\frac{a m}{2}-k} \eta$ in \eqref{disc:gdec6} together with the definition of the wave-packets \eqref{def:gdec3} allows us to rewrite the above as
\begin{align*}
\K^{u,v,n,p}_{m,k,\ell}(x) = &\frac{\rho_{am-ak}(\l(x)) }{2^{k}} \iint  e^{ i  c_a  \frac{\big(2^{\frac{a m}{2}-k} (\xi-\eta)\big)^{a'}}{\l(x)^{1/(a-1)}}} \cdot e^{i(2^{\frac{a m}{2}-k} \xi x + 2^{\frac{a m}{2}-k} \eta x - \xi n - \eta p)}  \\
& \cdot \varphi(\xi-u-2^{\frac{am}{2}} (\ell-1) ) \varphi(\eta-v-2^{\frac{am}{2}}(\ell-1)) \psi \Big(\frac{\xi- \eta}{2^{\frac{am}{2}}}\Big) d\xi d\eta.
\end{align*}
Applying now a second change of variables $\xi-\eta=\xi_1$ and $\xi+\eta=\eta_1$ we rewrite the kernel $\K^{u,v,n,p}_{m,k,\ell}(x)$ as
\begin{align}\label{ker1}
&\frac{\rho_{am-ak}(\l(x)) }{2^{k}}  \iint  e^{i c_a  (2^{\frac{a m}{2}-k}  \xi_1)^{a'} \l(x)^{-\frac{1}{a-1}}}  e^{i 2^{\frac{a m}{2}-k} \eta_1 x}   e^{-i \frac{\xi_1+\eta_1}{2}  n} e^{-i  \frac{\eta_1-\xi_1}{2}  p} \\
& \cdot \varphi\Big(\frac{\xi_1+\eta_1}{2}-u-2^{\frac{am}{2}} (\ell-1) \Big) \varphi\Big(\frac{\eta_1-\xi_1}{2}-v-2^{\frac{am}{2}}(\ell-1)\Big) \psi \Big(\frac{\xi_1}{2^{\frac{am}{2}}}\Big) d\xi_1 d\eta_1.\nonumber
\end{align}
We claim  that the expression under the integral sign behaves essentially as a tensor product of the form $a(\xi_1) b(\eta_1)$ for some suitable functions $a$ and $b$. Indeed, given the support localization for the function $\varphi$ our kernel can be safely approximated as
\begin{equation}\label{kernel}
\K^{u,v,n,p}_{m,k,\ell}(x)\approx\frac{\rho_{am-ak}(\l(x)) }{2^{\frac{am+2k}{4}}}   w_{k,\frac{p-n}{2}, \frac{v-u}{2}}^e (\lambda)(x)   \check{\varphi}_{\frac{am}{2}- k, 2\ell-1}^{u+v, \frac{p+n}{2}}(x) ,
\end{equation}
where here\footnote{For simplicity of the exposition, we abuse the notation and in the definitions of $w_{k,\frac{p-n}{2}, \frac{v-u}{2}}^e (\lambda)(x)$ and $\frac{1}{2^{\frac{am-2k}{4}}} \check{\varphi}_{\frac{am}{2}- k, 2\ell-1}^{u+v, \frac{p+n}{2}}(x)$ we are using the same symbol $\varphi$ as the one introduced earlier though of course, strictly speaking, the functions involved in these definitions while having similar properties do not need to agree with $\varphi$.}
\begin{equation}\label{weight}
\begin{aligned}
w_{k,\frac{p-n}{2}, \frac{v-u}{2}}^e (\lambda)(x) :=  \int \varphi ( \xi_1 - (u-v) )  e^{i   \frac{p-n}{2} \xi_1} e^{i c_a 2^{ ({am  \over 2}- k) a'} \xi_1^{a'} \lambda(x)^{- {1 \over {a-1}}}} \psi \Big(\frac{\xi_1}{2^{\frac{am}{2}}}\Big)  d \xi_1,
\end{aligned}
\end{equation}
and the wave-packet  $\frac{1}{2^{\frac{am-2k}{4}}} \check{\varphi}_{\frac{am}{2}- k, 2\ell-1}^{u+v, \frac{p+n}{2}}(x)$ stands for the expression
\begin{equation} \label{eq:modified_wave_packet}
\int e^{i 2^{\frac{a m}{2}-k} \eta_1 x}  e^{-i\frac{p+n}{2} \eta_1} \varphi(\eta_1- (u+v)- 2^\frac{am}{2} (2 \ell -2)) d\eta_1 .
\end{equation}

The rigorous, formal approach to the informal statement \eqref{kernel} follows some standard reasonings in Fourier analysis and thus, in what follows, we will only provide an outline of them:

Since $\varphi$ is smooth and compactly supported -- for the sake of the argument one may assume without loss of generality that
$\textrm{supp} \varphi\subseteq [0,1]$ -- we can write pointwise within $[0,1]$ the equality between $\varphi$ and its Fourier series, that is
\[
\varphi(\zeta)= \sum_{s \in \Z} c_s e^{i  s   \zeta},\:\:\:\:\forall\:\zeta\in [0,1] ,
\]
where the coefficients $(c_s)_s$ are fast-decaying (in a way that agrees with the regularity of $\varphi$).

Deduce from the above that on the support of the function\footnote{Here we abuse the notation as we allow the function $\varphi$ to slightly change from line to line.} $(\xi_1, \eta_1) \mapsto \varphi(\xi_1-(u-v)) \varphi(\eta_1-(u+v)-2^{am \over 2}(2\ell-2))$ one has that
$$ \varphi(\frac{\xi_1+\eta_1}{2}-u-2^{\frac{am}{2}} (\ell-1) ) \varphi(\frac{\eta_1-\xi_1}{2}-v-2^{\frac{am}{2}}(\ell-1))$$ equals
\begin{align*}
&\sum_{s, t \in \Z} c_s c_t e^{i s (\frac{\xi_1+\eta_1}{2}-u-2^{\frac{am}{2}} (\ell-1) )} e^{i t (\frac{\eta_1-\xi_1}{2}-v-2^{\frac{am}{2}}(\ell-1))}.
\end{align*}

Thus, \eqref{kernel} becomes
\begin{equation*}
\begin{aligned}
 \K^{u,v,n,p}_{m,k,\ell}(x) &= \sum_{s,t\in \Z} c_s c_t
\frac{\rho_{am-ak}(\l(x)) }{2^{\frac{am+2k}{4}}}
 w_{k,\frac{s-t}{2}+\frac{p-n}{2}, \frac{v-u}{2}}^e (\lambda)(x)   \check{\varphi}_{\frac{am}{2}- k, 2\ell-1}^{u+v, \frac{p+n}{2}-\frac{s+t}{2}}(x)\\
& \qquad \qquad  \quad e^{-i s (u+2^{\frac{am}{2}} (\ell-1))} e^{-i t  (v+2^{\frac{am}{2}}(\ell-1))}
\\
&=:\sum_{s,t\in\Z} c_s c_t \tilde{\K}^{u,v,n+t,p+s}_{m,k,\ell}(x) .
\end{aligned}
\end{equation*}
Due to the fast decay of the coefficients $\{c_s\}_s$ we can focus on the case $s=t=0$, justifying our claim \eqref{kernel}.

We conclude this section with the following remark: by a simple inspection of \eqref{kernel} we notice that the behavior of the rough linearizing function is now separated and isolated within the term $w_{k,\frac{p-n}{2}, \frac{v-u}{2}}^e (\lambda)$ which can be regarded as a weight attached to the standard wave-packet $\check{\varphi}_{\frac{am}{2}- k, 2\ell-1}^{u+v, \frac{p+n}{2}}$.

\subsubsection{Achieving the final form of our first model}
\label{subsec:final:form:disc}
Based on the previous section's reasonings, we were able to reduce the initial operator $T_{m,k}(f, g)$ defined in \eqref{def:disc:Tm,k} to the study of an expression realized as a superposition of simpler operators that reads as\footnote{In this precise context, for notational simplicity, we are abusing our convention on the notation $v\sim 2^{am \over 2}$ (which as defined originally is equivalent with $|v|\sim 2^{am \over 2}$). Indeed, in formula \eqref{Tmkred} the domain of the summation $\{u, v \sim 2^\frac{am}{2}\}$ is in fact restricted to $2^{\frac{am}{2}}<u\leq 2 \cdot 2^\frac{am}{2}$ and $-\frac{3}{2} 2^{am\over 2}< v \leq -\frac{1}{2} 2^\frac{am}{2}$.}

\begin{equation}\label{Tmkred}
\begin{aligned}
\sum_{\ell \in \Z} \  \sum_{n, p \in \Z} \ \sum_{u, v \in \Z}  & \langle \hat{f}_{\ell+1}, \varphi^{u,n}_{\frac{am}{2}-k,\ell}\rangle \langle \hat{g}_{\ell-1}, \varphi^{v,p}_{\frac{am}{2}-k,\ell}\rangle \\
& \cdot \frac{\rho_{am-ak}(\l(x)) }{2^{\frac{am+2k}{4}}}   \check{\varphi}_{\frac{am}{2}- k, 2\ell-1}^{u+v, \frac{p+n}{2}}(x)  w_{k,\frac{p-n}{2}, \frac{v-u}{2}}^e (\lambda)(x)\\
=\sum_{\ell \in \Z} \ \sum_{n, p \in \Z} \ \sum_{u, v \sim 2^\frac{am}{2}} &  \langle \hat{f}_{\ell+1}, \varphi^{u,n}_{\frac{am}{2}-k,\ell}\rangle \langle \hat{g}_{\ell-1}, \varphi^{v,p}_{\frac{am}{2}-k,\ell}\rangle \\
& \cdot \frac{\rho_{am-ak}(\l(x)) }{2^{\frac{am+2k}{4}}}   \check{\varphi}_{\frac{am}{2}- k, 2\ell-1}^{u+v, \frac{p+n}{2}}(x)  w_{k,\frac{p-n}{2}, \frac{v-u}{2}}^e (\lambda)(x).
\end{aligned}
\end{equation}
By applying Parseval's identity and the change of variables $(u,v,n,p) \to (u-v,u+v,p-n,p+n)$ we have that the expression in \eqref{Tmkred} may be rewritten as

\begin{equation}\label{Tmkred1}
\begin{aligned}
\sum_{\ell \in \Z} \sum_{n,p \in \Z} \sum_{u, v\in \Z} &\langle f_{\ell+1}, \check{\varphi}_{\frac{am}{2}- k, \ell}^{u-v,p-n}  \rangle   \langle g_{\ell-1}, \check{\varphi}_{\frac{am}{2}- k, \ell}^{u+v,p+n}  \rangle \\
 &\cdot\frac{\rho_{am-ak}(\l(x)) }{2^{\frac{am+2k}{4}}}    \check{\varphi}_{\frac{am}{2}- k, 2\ell-1}^{2u, p}(x) w_{k, n,v}^e(\l)(x).
\end{aligned}
\end{equation}

The first task for this section is to show that the study of \eqref{Tmkred1} can be further reduced to a simpler expression -- which, for notational simplicity, will be redefined as $T_{m,k}$ -- and which involves the summation only over $n \sim_a 2^\frac{am}{2}$.\footnote{We remind here that the definition of $\sim_a$ was provided in footnote \ref{footnote54}.}\footnote{This will be an immediate consequence of the structure of the decaying term in the right-hand side of \eqref{eq:decay-weight}.} Indeed, our claim is that it is enough to consider\footnote{From now on, for notational simplicity, we will drop the $a$ dependence in $\sim_a$. However, it should be noted that, strictly speaking, whenever we read $n,p \sim 2^\frac{am}{2}$ we have in fact that $n,p \sim_a 2^\frac{am}{2}$.}

\begin{equation}
\begin{aligned}
T_{m,k}(f, g)(x)= \sum_{\ell \in \Z} \sum_{\substack{n \sim 2^\frac{am}{2} \\ p \in \Z}}  \sum_{u, v \in\Z} \langle f_{\ell+1}, \check{\varphi}_{\frac{am}{2}- k, \ell}^{u-v,p-n}  \rangle   \langle g_{\ell-1}, \check{\varphi}_{\frac{am}{2}- k, \ell}^{u+v,p+n}  \rangle &  \\
 \cdot \frac{\rho_{am-ak}(\l(x)) }{2^{\frac{am+2k}{4}}}    \check{\varphi}_{\frac{am}{2}- k, 2\ell-1}^{2u, p}(x) w_{k, n,v}^e(\l)(x) & .
\end{aligned}\label{eq:model}
\end{equation}

If is to believe for a moment \eqref{eq:model}, then the restriction $n \sim 2^\frac{am}{2}$ naturally\footnote{Notice the structure $p-n$ and $p+n$ in the Gabor coefficients $\langle f_{\ell+1}, \check{\varphi}_{\frac{am}{2}- k, \ell}^{u-v,p-n}  \rangle $ and  $\langle g_{\ell-1}, \check{\varphi}_{\frac{am}{2}- k, \ell}^{u+v,p+n}  \rangle$, respectively. The deeper, geometric motivation for the regrouping of the sum in $p$ will become transparent in Section \ref{LR}.} invites to a regrouping of the sum in $p$ in subcomponents spanning precisely the reference length $2^\frac{am}{2}$. More precisely, we will write the set of integers as a disjoint union of intervals containing $2^\frac{am}{2}$ consecutive integers, and using the notation
\begin{equation}
\label{eq:not:p_r}
p_r:= 2^\frac{am}{2} (r-1) + p, \quad \text{ with } \quad r\in \Z \quad \textrm{and} \quad 2^{am \over 2}\leq p < 2\cdot 2^{am \over 2}
\end{equation}
we will rewrite \eqref{eq:model} as
\begin{equation}
\begin{aligned}
T_{m,k}(f, g)(x)= \sum_{\ell,r \in \Z} \sum_{n,p \sim 2^\frac{am}{2}}  \sum_{u, v\in\Z} \langle f_{\ell+1}, \check{\varphi}_{\frac{am}{2}- k, \ell}^{u-v,p_r-n}  \rangle   \langle g_{\ell-1}, \check{\varphi}_{\frac{am}{2}- k, \ell}^{u+v,p_r+n}  \rangle & \\
 \cdot \frac{\rho_{am-ak}(\l(x)) }{2^{\frac{am+2k}{4}}}    \check{\varphi}_{\frac{am}{2}- k, 2\ell-1}^{2u, p_r}(x) w_{k, n,v}^e(\l)(x) &.
\end{aligned}\label{eq:model0}
\end{equation}

Returning now to \eqref{eq:model}, the key for proving our claim resides in the behavior of the oscillatory term
\begin{equation}
\begin{aligned}
&\rho_{am-ak}(\l(x)) w_{k, n,v}^e(\l)(x) \\
&= \rho_{am-ak}(\l(x)) \int \varphi ( \xi + 2v )  e^{i n\xi} e^{i c_a 2^{ ({am  \over 2}- k) a'} \xi^{a'} \lambda(x)^{- {1 \over {a-1}}}} \psi \Big(\frac{\xi}{2^{\frac{am}{2}}}\Big)   d \xi .
\end{aligned} \label{eq:w_k}
\end{equation}
Denote now the phase in \eqref{eq:w_k} by
\begin{equation}
 \nu(\xi):= n\xi + c_a 2^{ ({am  \over 2}- k) a'} \xi^{a'} \lambda(x)^{- {1 \over {a-1}}} , \label{eq:phase:disc}
\end{equation}
and integrate by parts twice in the right-hand side of \eqref{eq:w_k} in order to obtain under the integral sign a decaying factor of the form $\frac{1}{1+|\nu'(\xi)|^2}$. Once here, denoting by $\xi_c$ the\footnote{Up to a sign, this real number is uniquely determined.} critical point of $\nu$ (\emph{i.e.}, $\nu'(\xi_c)=0$)  we have two situations:
\begin{itemize}
\item either $|\xi_c|\sim 2^{\frac{am}{2}}$ case in which, using the phase linearization performed in Section \ref{PL} we have by construction that $|\nu''(\xi)| \sim 1$ for $|\xi|\sim 2^{\frac{am}{2}}$ and hence, via a Taylor series argument, we deduce that $ |\nu'(\xi)| \sim |\xi-\xi_c|\sim |\xi_c+2v|$;

\item or else we must have $ |\nu'(\xi)|\gtrsim 2^{\frac{am}{2}}$.
\end{itemize}

In either of the above situations, observing that
\begin{equation}
\label{eq:phase:c:point}
\xi_c= \bar c_a \frac{n^{a-1}  \l(x)}{2^{a(\frac{am}{2}-k)}},
\end{equation}
we obtain a first time-frequency correlation condition as given by
\begin{equation}
\begin{aligned}
\big|  \rho_{am-ak}( \lambda(x)) w_{k, n, v}^e (\lambda)(x) \big| &\lesssim
\rho_{am-ak}( \lambda(x))  \frac{1}{ 1+  \Big|\bar c_a \frac{n^{a-1}  \l(x)}{2^{a(\frac{am}{2}-k)}} +2v \Big|^2} \\
& \hspace{1em} =: w_{k, n, v}(\lambda)(x)
\end{aligned}\label{eq:decay-weight}
\end{equation}

Now the decay in \eqref{eq:decay-weight} offers us the intuition that the main contribution in \eqref{Tmkred1} is expected to come from parameters $n$ and $v$ satisfying
\[
-2v \approx \bar c_a \frac{n^{a-1}  \l(x)}{2^{a(\frac{am}{2}-k)}}.
\]

However, since $v \sim 2^\frac{am}{2}$, that effectively restricts $n$ to $|n| \sim 2^\frac{am}{2}$ which by symmetry may be reduced to $n \sim 2^\frac{am}{2}$ thus providing an informal argument supporting our claim in \eqref{eq:model}.

The formal, rigorous argument is presented now. First, recall from Section \ref{sec:reduction:local:L2} that we only need to prove operator bounds for $\sum_{k} T_{m,k}$ in the local-$L^2$ range (\emph{i.e.} $2<p,q\leq \infty$) with a $2^{-\delta a m}$ decay. Thus, our claim is a consequence of the following dualized statement:

\begin{proposition}  \label{prop:constraint:n}
Let $\mathfrak{N} = \mathfrak{N}_m$ be the subset of indices\footnote{Here we use $A \ll_a B$ to denote the inequality $A \leq c_a B$ for a constant $c_a>0$ depending on $a$ and sufficiently small that any subsequent arguments appealing to this inequality are enabled (with the obvious modifications for the situation $A \gg_a B$).}
\[ \mathfrak{N}:=\{n\in \mathbb{Z} : |n|\ll_a 2^\frac{am}{2} \textrm{ or } |n| \gg_a 2^\frac{am}{2}\} . \] For each $k\in\Z$ and $m\in\N$  define now
 the trilinear form\footnote{This is obtained from \eqref{Tmkred1} by restricting $n$ to $\mathfrak{N}$ and then dualizing against the function $h$.}
\begin{align*}
\underline{\L}_{m, k}^{\mathfrak{N}} (f, g, h) :=\sum_{ \ell \in \Z} & \sum_{ \substack{n \in \mathfrak{N} \\ p \in \Z}} \sum_{u,v \sim 2^{ am \over 2}} \frac{1}{2^{\frac{am+2k}{4}}} \langle f_{\ell+1}, \check{\varphi}_{\frac{am}{2}- k, \ell}^{u-v, p-n}  \rangle   \langle g_{\ell-1}, \check{\varphi}_{\frac{am}{2}- k, \ell}^{u+v, p+n}  \rangle   \\
  & \cdot \int  \check{\varphi}_{\frac{am}{2}- k, 2\ell-1}^{2u, p}(x)   h(x)  \rho_{am-ak}( \lambda(x)) w_{k, n, v}^e(\l)(x) dx .
\end{align*}
Then for every $N >0$ and any exponents $p,q,r$ such that $2<p,q\leq \infty$ and $1/p + 1/q = 1/r$ we have
\[ \big\| \sum_{k \in \mathbb{Z}} \underline{\L}_{m, k}^{\mathfrak{N}} \big\|_{L^p \times L^q \times L^{r'} \to {\mathbb C}} \lesssim_{a,p,q,N} 2^{-N a m} \]
uniformly in $m \in \mathbb{N}$.
\end{proposition}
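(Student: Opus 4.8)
\textbf{Proof strategy for Proposition \ref{prop:constraint:n}.} The plan is to exploit the pointwise bound \eqref{eq:decay-weight} on the weight $\rho_{am-ak}(\lambda(x))\,w^e_{k,n,v}(\lambda)(x)$, which I would first upgrade to an arbitrarily large polynomial decay in the ``defect'' $\big|\bar c_a\,n^{a-1}\lambda(x)2^{-a(am/2-k)}+2v\big|$ by integrating by parts $O(N)$ times in the definition \eqref{def:eq:weight-oscill0} rather than just twice. When $n\in\mathfrak{N}$ and $v\sim 2^{am/2}$, this defect is necessarily $\gtrsim_a 2^{am/2}$ (since $|n|^{a-1}$ is either $\ll_a 2^{a(a-1)m/2}$ or $\gg_a 2^{a(a-1)m/2}$ while $\rho_{am-ak}(\lambda(x))\neq 0$ forces $\lambda(x)\sim 2^{a(m-k)}$, which balances exactly against the middle term only when $|n|\sim 2^{am/2}$). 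Hence for indices in $\mathfrak{N}$ we gain
\[
\big|\rho_{am-ak}(\lambda(x))\,w^e_{k,n,v}(\lambda)(x)\big|\lesssim_{a,N} 2^{-Nam}\,\widetilde w_{k,n,v}(\lambda)(x),
\]
where $\widetilde w_{k,n,v}$ is still an $L^1$-in-$\xi$-sized, rapidly-decaying weight (with $O(1)$ total mass after the extra factor is pulled out). This is the step where the non-resonance of the indices is converted into the desired $m$-decay.

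Next I would feed this gain into a crude, non-oscillatory estimate for the dualized trilinear form. Concretely, replace all the wave-packets by their absolute values: bound $|\langle f_{\ell+1},\check\varphi^{u-v,p-n}_{am/2-k,\ell}\rangle|$, $|\langle g_{\ell-1},\check\varphi^{u+v,p+n}_{am/2-k,\ell}\rangle|$ and $|\check\varphi^{2u,p}_{am/2-k,2\ell-1}(x)|$ by their standard Schwartz-tail envelopes, and carry out the sums over $u$, $p$ (and then $v$, $n$, $\ell$, and finally $k$) using only $\ell^2\to\ell^2$ almost-orthogonality of Gabor frames together with the fact that a rank-one wave-packet family at a single scale has the Bessel property. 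Specifically, for fixed $k$ the sum over $(u,v,n,p,\ell)$ is dominated — via Cauchy--Schwarz in the Gabor coefficients and the trivial $\ell^\infty$ bound on the third pairing — by something like $2^{Cam}\,\|f\|_{L^2}\|g\|_{L^2}\|h\|_{L^2}$ restricted to a fixed spatial window, where the power $2^{Cam}$ comes from the cardinalities of the $u,v,n$ ranges (each $\sim 2^{am/2}$) and the normalizing factors $2^{-(am+2k)/4}$; this polynomial loss is harmless because we have the factor $2^{-Nam}$ in hand and may choose $N$ as large as we like. Summing over $k\in\mathbb Z$ is then handled by the usual orthogonality of distinct dyadic scales (the frequency supports of $\check\varphi^{\cdot}_{am/2-k,\cdot}$ live in disjoint annuli $\sim 2^{am-k}$), which contributes at most a geometric series in $k$; alternatively, one can first pass to the local-$L^2$ exponents via H\"older and the pointwise bound $|T_{m,k}(f,g)|\lesssim M(f,g)$ from \eqref{eq:T:m:cont:M} to control the tails in $k$. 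Interpolating the resulting $L^2\times L^2\times L^2$ bound with the trivial $L^p\times L^q\times L^{r'}$ bounds coming from the bilinear maximal function (\cite{Lacey}) yields the claimed estimate for all $2<p,q\le\infty$, with the $2^{-Nam}$ surviving.

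The main obstacle I anticipate is \emph{bookkeeping the polynomial losses carefully enough that they are genuinely dominated by $2^{-Nam}$ for a fixed choice of $N$ independent of $p,q$}, while simultaneously keeping the estimate uniform in the measurable function $\lambda$. The danger is that the naive absolute-value estimate loses powers of $2^{am}$ from several independent sources — the $\sim 2^{am/2}$-sized index ranges, the rough passage $|\langle h,\cdot\rangle|\le \|h\|\,\|\cdot\|$, and possibly the overlap of the spatial envelopes of $\check\varphi^{2u,p}_{am/2-k,2\ell-1}$ for different $(u,v)$ with the same $u+v$ — and one must verify these are all polynomial (not exponential) in $2^{am}$ before committing to a value of $N$. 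The cleanest way to ensure this is to first absorb the extra $2^{-Nam}$ with, say, $N$ replaced by $N+C$ for a universal $C$ large enough to beat every such loss; since the integration-by-parts step produces decay of \emph{any} prescribed order, this costs nothing. A secondary technical point is that $\widetilde w_{k,n,v}(\lambda)(x)$ must be shown to have $x$-integral $\lesssim 2^{-(am/2-k)}$ uniformly (reflecting its effective support of length $2^{-(am/2-k)}$ in the relevant variable), which follows from the stationary/non-stationary phase analysis in Appendix \ref{appendix:multiplier} applied to \eqref{def:eq:weight-oscill0}; this is where the ``width'' of the weight enters and it is entirely standard.
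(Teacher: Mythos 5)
There is a genuine gap, and it lies precisely in the quantity in which you extract decay. Your plan gains $2^{-Nam}$ from arbitrary-order decay in the ``defect'' $\big|\bar c_a\,n^{a-1}\lambda(x)2^{-a(\frac{am}{2}-k)}+2v\big|$, i.e.\ an upgraded version of \eqref{eq:decay-weight}, and this gain is \emph{uniform in $n$}. But $\mathfrak{N}$ is an infinite set: it contains every $n$ with $|n|\gg_a 2^{am/2}$, so after taking absolute values you must still sum over unboundedly many $n$, and a bound that is merely uniform in $n$ cannot close the estimate. The defect does not supply the missing $n$-decay: for $0<a<1$ one has $n^{a-1}\to 0$ as $|n|\to\infty$, so the defect saturates at $\sim_a 2^{am/2}$ and gives no decay in $n$ whatsoever; and for $a>1$ and $|n|\gg 2^{am/2}$ the critical point $\xi_c$ lies far outside the support $|\xi|\sim 2^{am/2}$ of the integrand in \eqref{def:eq:weight-oscill0}, where repeated integration by parts controls the integral only by powers of $\min|\nu'|\sim|n|$, which is much \emph{smaller} than the defect $\sim|\xi_c|\sim n^{a-1}2^{am(2-a)/2}$ there — so the ``upgraded'' defect decay you invoke is not what non-stationary phase actually yields in that regime (the two-case derivation of \eqref{eq:decay-weight} in the paper is only invoked after the reduction to $n\sim 2^{am/2}$). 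The correct mechanism, which is what the paper's proof uses, is to go back to the phase $\nu$ of \eqref{eq:phase:disc} and observe that for $n\in\mathfrak{N}$ there is no critical point on the support and $|\nu'(\xi)|\sim 2^{am/2}+|n|$; iterated integration by parts then gives $\rho_{am-ak}(\lambda(x))|w^e_{k,n,v}(\lambda)(x)|\lesssim_N(2^{am/2}+|n|)^{-N}$, which simultaneously produces the factor $2^{-Nam}$ \emph{and} summability of the tail in $n$ (as well as absorbing the polynomial factor $\big(|n|+\mathrm{dist}(x,I_k^p)/2^{am/2-k}\big)$ that appears when you convert the shifted $L^2$ averages of $f,g$ over $I_k^{p\mp n}$ into $M_2f(x)\,M_2g(x)$). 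Without decay in $|n|$ itself your absolute-value scheme diverges before one ever gets to choose $N$ large.

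A secondary, fixable issue is the closing step. You propose to interpolate an $L^2\times L^2\times L^2$ bound for the restricted form against the bilinear-maximal-function bounds of \cite{Lacey}, but the pointwise domination \eqref{eq:T:m:cont:M} is for the full $T_{m,k}$, not for the $\mathfrak{N}$-restricted, absolute-value form, so the two endpoints are not estimates for the same object and interpolation as stated does not apply. The paper avoids this entirely: after the $(2^{am/2}+|n|)^{-N}$ gain and Cauchy--Schwarz in $u,v,\ell$, it bounds $\sum_k|\underline{\L}^{\mathfrak{N}}_{m,k}(f,g,h)|\lesssim 2^{-Nam}\int M_2f\cdot M_2g\cdot|h|$ (the $k$-sum collapsing because $\rho_{am-ak}(\lambda(x))\neq 0$ for only $O(1)$ scales at each $x$ — not by disjointness of frequency annuli), and then a single application of H\"older gives the whole range $2<p,q\le\infty$ with no interpolation. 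If you repair the first paragraph by extracting decay in $2^{am/2}+|n|$, this direct maximal-function route is the natural way to finish.
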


\begin{proof}
Recalling the definition of $\nu(\xi)$ in \eqref{eq:phase:disc} we have
\[ \nu'(\xi) = n + c_a a' 2^{ ({am  \over 2}- k) a'} \Big(\frac{\xi}{\lambda(x)}\Big)^{{1 \over {a-1}}} \]
with $|\xi| \sim 2^\frac{am}{2}$ (because of the support of $\psi$); for $n \in \mathfrak{N}$ the phase function has no critical point, since
$|\nu'(\xi)| \sim 2^\frac{am}{2}$ if $|n|\ll_a 2^\frac{am}{2}$ and
$|\nu'(\xi)| \sim |n|$ if $|n|\gg_a 2^\frac{am}{2}$. We can thus summarize this as
\[ |\nu'(\xi)| \sim (2^\frac{am}{2} +|n|) . \]
Therefore, repeated integration by parts in \eqref{eq:w_k} yields for any\footnote{The value of $N$ is allowed to vary from line to line.} $N>0$
\[ \rho_{am-ak}( \lambda(x)) |w_{k, n, v}^e (\lambda)(x)| \lesssim_N (2^\frac{am}{2} +|n|)^{-N} \rho_{am-ak}( \lambda(x)) . \]
For any interval $I$, we introduce now the $L^\infty$-normalised weight adapted to $I$ as
\begin{equation}
\label{eq:def:bump}
 \ci_{I}(x) := \Big(1 + \frac{\mathrm{dist}(x,I)}{|I|}\Big)^{-M}
 \end{equation}
for some large $M>0$ -- also allowed to change from line to line. Recalling the definition of our wave-packet in \eqref{def:gdec3} we have
\[ |\check{\varphi}^{2u,p}_{\frac{am}{2}-k, 2\ell -1}(x)| \lesssim 2^{\frac{am-2k}{4}}  \ci_{I^{p}_{k}}(x), \]
where $I^{p}_{k}$ denotes the interval
\begin{equation} I^{p}_{k} := \{ x \in \mathbb{R} : 2^{\frac{am}{2}-k} x - p \in [0,1] \} = \Big[ \frac{p}{2^{\frac{am}{2}-k}}, \frac{p+1}{2^{\frac{am}{2}-k}} \Big]. \label{eq:Ip} \end{equation}
Taking absolute values and applying the pointwise bounds above we have then for the trilinear form
\begin{align*}
|\underline{\L}_{m, k}^{\mathfrak{N}} (f, g, h)| \lesssim  2^{-k} \sum_{\ell \in \mathbb{Z}} & \sum_{ \substack{n \in \mathfrak{N} \\ p \in \Z}} \sum_{u, v \sim 2^{ am \over 2}} |\langle f_{\ell+1}, \check{\varphi}_{\frac{am}{2}- k, \ell}^{u-v, p-n}  \rangle|   |\langle g_{\ell-1}, \check{\varphi}_{\frac{am}{2}- k, \ell}^{u+v, p+n}  \rangle| \\
& \cdot (2^\frac{am}{2} +|n|)^{-N} \int  \ci_{I^{p}_{k}}(x) | h(x) | \rho_{am-ak}( \lambda(x)) dx,
\end{align*}
where the last factor does not depend anymore on $u,v,\ell$. Using Cauchy-Schwarz in $u,v$ and $\ell$ yields without difficulty
\begin{align*}
|\underline{\L}_{m,k}^{\mathfrak{N}} (f, g, h)| & \lesssim  2^{\frac{am}{2}-k}   \sum_{ \substack{n \in \mathfrak{N} \\ p \in \Z}} \Big(\int |f(y)|^2 \ci_{I_k^{p-n}}(y)   dy\Big)^{1/2}  \Big(\int |g(z)|^2 \ci_{I_k^{p+n}}(z)   dz\Big)^{1/2} \\
& \hspace{4em} \cdot (2^\frac{am}{2} +|n|)^{-N} \int  \ci_{I_k^{p}}(x) | h(x) | \rho_{am-ak}( \lambda(x)) dx.
\end{align*}

Now for every $x\in \R$, standard reasonings imply that
\begin{align*}
\Big(\int |f(y)|^2 \ci_{I_k^{p-n}}(y)   dy\Big)^{1/2} \leq 2^{\frac{1}{2}(k-{am \over 2})}\Big(|n|+ \frac{\dist(x,I_k^p)}{2^{{am\over 2}-k}}\Big)^{1\over 2} M_2(f)(x),
\end{align*}
with a similar inequality for the average on $g$ (here $M_2 (f) = M(|f|^2)^{1/2}$).
Deduce from here that
\begin{align*}
|\underline{\L}_{m,k}^{\mathfrak{N}} (f, g, h)| & \lesssim \sum_{p \in \mathbb{Z}}\sum_{n \in \mathfrak{N}}
(2^\frac{am}{2} +|n|)^{-N} \int \Big(|n|+ \frac{\dist(x,I_k^p)}{2^{{am\over 2}-k}}\Big) M_2(f)(x) \\
& \qquad \qquad \cdot M_2(g)(x) \ci_{I_k^{p}}(x) | h(x) | \rho_{am-ak}( \lambda(x)) dx
 \\
& \lesssim 2^{-N a m} \sum_{p \in \mathbb{Z}}
 \int M_2(f)(x) M_2(g)(x) \ci_{I_k^{p}}(x) | h(x) | \rho_{am-ak}( \lambda(x)) dx.
\end{align*}
Given the definition of $\ci_{I_k^p}$ the sum in $p$ is $\lesssim 1$. The factor of $\rho_{am-ak}( \lambda(x))$ enables us to further sum over $k$ and thus
\begin{align*}
\sum_{k} |\underline{\L}_{m, k}^{\mathfrak{N}} (f, g, h)|& \lesssim 2^{-N a m} \sum_{k \in \Z}
\int  M_2(f)(x) M_2(g)(x) | h(x) | \rho_{am-ak}( \lambda(x)) dx \\
& \lesssim 2^{-N a m}
\int  M_2(f)(x) M_2(g)(x) | h(x) |  dx.
\end{align*}
Via H\"{o}lder, the latter expression is now trivially bounded in the stated range $2 < p, q \leq \infty$ and $1/p + 1/q = 1/r$ thus ending our proof.
\end{proof}

We now record the dualized version of what we achieved via our reductions:

\begin{equation}
\label{def:trilinear:form0}
\begin{aligned}
&\quad\underline{\mathcal{L}}(f,g,h):= \sum_{m\in \mathbb{N}} \underline{\mathcal{L}}_{m}(f,g,h):= \sum_{m\in \mathbb{N}}\sum_{k\in \mathbb{Z}} \underline{\mathcal{L}}_{m,k}(f,g,h):= \sum_{m\in \mathbb{N}}\sum_{k\in \mathbb{Z}}\langle T_{m,k}(f,g), h \rangle=\\
&\sum_{{m\in \mathbb{N}}\atop{k,\ell,r \in \Z}} \sum_{n,p,u,v\sim 2^\frac{am}{2}}
\langle f_{\ell+1}, \check{\varphi}_{\frac{am}{2}- k, \ell}^{u-v,p_r-n}  \rangle   \langle g_{\ell-1}, \check{\varphi}_{\frac{am}{2}- k, \ell}^{u+v,p_r+n}  \rangle \langle h,\frac{\rho_{am-ak}(\l) }{2^{\frac{am+2k}{4}}}    \check{\varphi}_{\frac{am}{2}- k, 2\ell-1}^{2u, p_r} w_{k, n,v}^e(\l)\rangle .
\end{aligned}
\end{equation}

As already mentioned in Section \ref{Prel}, the major problem that we are confronted with when dealing with the above model is the
failure of the $m$-decay absolute summability property -- see the Appendix A for a proof. As a consequence, we have to design an alternative route that will help us in Section \ref{sec:onescale} to extract the cancellation encoded in the curvature of the phase in our operator's kernel. This route is the subject of our next section.

\subsection{The high resolution model (II): a continuous phase-linearized spatial model}\label{HR2}
\subsubsection{Motivation and intuition}\label{motint}

The aim of this subsection is to highlight a simple though deep manifestation of the fact the trigonometric system is not an unconditional basis for $L^p(\TT)$ whenever $1\leq p\not=2\leq \infty$. The example that we will provide below reveals the subtle cancellation hidden in the Fourier coefficients of a given function and serves as a memento that many of otherwise simple objects when expressed as sums involving Fourier coefficients give rise to only conditionally summable models. Then the challenge in operating with such a model is how to properly regroup the terms (see \emph{e.g.} the case of Littlewood-Paley theory) such that enough cancellation can be captured within the newly formed terms.

However, as we will soon see, sometimes reshaping the entire problem from the frequency to the physical side completely removes this difficult issue of conditional summability. It is this last remark that we would particularly like to emphasise via the example below. Its key underlying philosophy was inspirational for our problem, and, in our search for proving the single scale estimate in Section \ref{sec:onescale} motivated the idea of substituting the discrete phase-linearized wave-packet model by the continuous phase-linearized spatial model.\footnote{For this substitution/transition between the two models one is also invited to consult Section \ref{passage}.}

$\newline$

Let $F, G\in L^2(\TT)$ with their Fourier representation given by
$$F(t)\simeq \sum_{n\in\Z} a_n e^{i n t}\:\:\:\textrm{and}\:\:\:G(t)\simeq \sum_{k\in\Z} b_k e^{i k t} .$$
Then, a simple application of H\"older implies
\beq\label{conv}
F G\in L^1(\TT) ,
\eeq
while from the basic properties of the Fourier series we have
\beq\label{Pars}
(F G)(t)\simeq \sum_{n\in\Z} (\sum_{k\in\Z} a_{n-k} b_k) e^{i n t} .
\eeq
Taking now $H\in L^{\infty}(\TT)$ with $H(t)\simeq \sum_{n\in\Z} c_n e^{i n t}$ we let
\begin{equation}
\label{Example1}
\Lambda(F,G,H):=((F G)*H)(0)=\int_{\R} F(t) G(t) H(-t) dt .
\end{equation}
Now, we conclude that
\begin{itemize}
\item on the one hand, from \eqref{conv}, \eqref{Pars} and \eqref{Example1}, we have
\begin{equation*}
\begin{aligned}
&\Lambda(F,G,H)\simeq \sum_{n,k\in\Z}a_{n-k} b_k c_n ,\\
&|\Lambda(F,G,H)|\leq \|F\|_{L^2(\TT)} \|G\|_{L^2(\TT)} \|H\|_{L^{\infty}(\TT)} .
\end{aligned}
\end{equation*}
\item on the other hand, for generic $\|F\|_{L^2(\TT)}=\|G\|_{L^2(\TT)}=\|H\|_{L^{\infty}(\TT)}=1$
\begin{equation*}
\sum_{n,k\in\Z}|a_{n-k}| |b_k| |c_n|=\infty.
\end{equation*}
\end{itemize}

As already hinted earlier, the valuable lesson that we are learning from the above example is that the sums involving (product) of Fourier coefficients (of some functions) are encoding hidden cancellations that can be often exploited by reframing the problem from the Fourier (frequency) setting into a spatial setting.

With this in mind, we return to the  high resolution wave-packet discretized model as reflected in \eqref{eq:model0} and notice that we are in a very similar situation: our model may be visualised as a superposition of \emph{product of local Fourier coefficients} attached to $f$ and $g$ (and with a modified weight for $h$ if regarded in the dualized version).

Thus, judging by analogy with the expression in \eqref{Pars}, we are invited to consider\footnote{In this motivational argument we are fixing all the parameters but $u$.} the following elementary block within \eqref{eq:model0}:
\begin{equation}
\begin{aligned}
 S_{k, \ell,r}^{n, v, p}(f, g)(x):= \sum_{u\in\Z} \frac{1}{2^{\frac{am+2k}{4}}}  \langle f_{\ell+1} , \check{\varphi}_{\frac{am}{2}- k, \ell}^{u-v, p_r-n}  \rangle    \langle g_{\ell-1}, \check{\varphi}_{\frac{am}{2}- k, \ell}^{u+v, p_r+n}  \rangle  \check{\varphi}_{\frac{am}{2}- k, 2\ell-1}^{2u, p_r}(x) ,
\end{aligned} \label{eq:def:S_n,v,p}
\end{equation}
whose joint frequency support regarded from the input side, \textit{i.e.} the support of  $(\hat{f}, \hat{g})$, is depicted in Figure \ref{figure:support_S^nvp}.\\

Recalling now \eqref{def:gdec3} and \eqref{eq:modified_wave_packet} we further deduce that
\begin{equation}\label{Fcoef}
\begin{aligned}
 & S_{k, \ell,r}^{n, v, p}(f, g)(x) \\
 &= \frac{1}{2^k} \Big(  \sum_{u\in\Z} \langle f_{\ell+1} , \check{\varphi}_{\frac{am}{2}- k, \ell}^{u-v, p_r-n}  \rangle  \langle g_{\ell-1}, \check{\varphi}_{\frac{am}{2}- k, \ell}^{u+v, p_r+n}  \rangle  e^{i (2^{\frac{am}{2}-k} x-p_r)2u }  \Big) \\
& \hspace{2em}\cdot \check {\varphi}(2^{\frac{a m}{2}-k} x - p_r)   e^{i 2^{\frac{am}{2}}(2^{\frac{am}{2}-k} x-p_r) (2\ell -2)}.
\end{aligned}
\end{equation}

%
%
\begin{figure}[ht]
\centering
\input{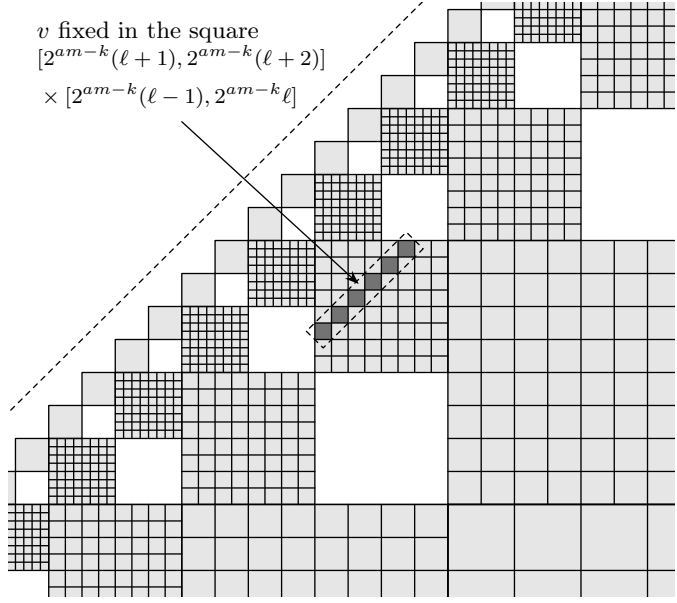}
\caption{\footnotesize Frequency portrait of the information associated with $S_{k,\ell,r}^{n,v,p}$. It is at this level that we manage to exploit the curvature of the phase (see Section \ref{sec:onescale}) and also obtain the frequency localization of $h \cdot w_{k, n, v}^e$ (see Section \ref{sec:HR2}).} \label{figure:support_S^nvp}
\end{figure}
%

Analyzing the right-hand side of \eqref{Fcoef} we deduce the following time-frequency characteristics:
\begin{itemize}
\item in space,  thanks to the presence of $\check{\varphi}$, the information is concentrated within $ I_k^{p_r}$;
\item in frequency -- see Figure \ref{figure:support_S^nvp} -- the joint support in $(\xi,\eta)$ encoded within the wave-packets contributing to the sum stays within the strip $$\xi-\eta = -2^{\frac{am}{2}-k}   2v + O(2^{\frac{am}{2}-k}) .$$
\end{itemize}
Thus, inspecting \eqref{Fcoef} in parallel with \eqref{Pars}, intuitively we expect
\begin{equation}\label{spacetransl}
S_{k, \ell,r}^{n, v, p}(f,g)(x)\approx\frac{1}{2^k} e^{2inv} \Big(\int_{I_k^n} f_{\ell +1}(x-t) g_{\ell -1}(x+t)  e^{-i 2^{\frac{am}{2}-k} 2v t} dt \Big) \one_{I_k^{p_r}} (x) .
\end{equation}
The rigorous realization of \eqref{spacetransl} is the subject of our next section.

\subsubsection{Transition from the discrete phase-linearized wave-packet model to the continuous phase-linearized spatial model}\label{passage}

In this section we focus on making \eqref{spacetransl} precise. For reasons that will become transparent momentarily, we will reshape the information in \eqref{eq:def:S_n,v,p} by isolating the Gabor coefficient of $f$, which we write as:
\[ \langle f_{\ell+1} , \check{\varphi}_{\frac{am}{2}- k, \ell}^{u-v, p_r-n}  \rangle
=\int_{\R} f_{\ell+1}(s)  \overline{\check{\varphi}_{\frac{am}{2}- k, \ell}^{0, p_r-n}}(s) e^{- i (2^{\frac{am}{2}- k} s-(p_r-n)) (u-v)} \,ds. \]
After the change of variable $2^{\frac{am}{2}- k} s-(p_r-n)=y$ and a periodization argument (similar with the one used in order to prove the Poisson summation formula) this becomes
\[ \frac{1}{2^{\frac{am-2k}{4}}}  \int_{\TT} \Big(\sum_{j_1\in\Z} f_{\ell+1}\big(\frac{y+j_1+(p_r-n)}{2^{\frac{am}{2}- k}}\big)  \overline{\check{\varphi}_{0, \ell}^{0, 0}}(y+j_1)\Big) e^{- i y (u-v)} dy  , \]
where here $\check{\varphi}_{0, \ell}^{0, 0}(y)=\check{\varphi}(y)   e^{i y  2^{\frac{am}{2}}( \ell -1)}$.

Applying a similar reasoning for the coefficient $\langle g_{\ell-1}, \check{\varphi}_{\frac{am}{2}- k, \ell}^{u+v, p_r+n}  \rangle$ we can now rewrite \eqref{eq:def:S_n,v,p} as
\begin{equation}\label{eq:def:S_n,v,p3}
\begin{aligned}
&S_{k, \ell,r}^{n, v, p}(f, g)(x)= \frac{1}{2^{\frac{am+2k}{4}}}  \frac{1}{2^{\frac{am}{2}- k}}  \check{\varphi}_{\frac{am}{2}- k, 2\ell-1}^{0, p_r}(x) \\
\qquad &\cdot \sum_{u\in\Z} \Big(\int_{\TT} \Big(\sum_{j_1\in\Z} f_{\ell+1}\big(\frac{y+j_1+(p_r-n)}{2^{\frac{am}{2}- k}}\big)  \overline{\check{\varphi}_{0, \ell}^{0, 0}}(y+j_1)\Big) e^{- i y (u-v)} dy \Big)\\
\qquad &\cdot  \Big(\int_{\TT}\Big(\sum_{j_2\in\Z} g_{\ell-1}\big(\frac{y+j_2+(p_r+n)}{2^{\frac{am}{2}- k}}\big)  \overline{\check{\varphi}_{0, \ell}^{0, 0}}(y+j_2)\Big) e^{- i y (u+v)} dy \Big)  e^{i 2u (2^{\frac{a m}{2}-k} x - p_r)} .
\end{aligned}
\end{equation}
Once at this point, observe that we have the following identity: if $a, b \in L^2(\TT)$, then
\begin{equation}\label{keyblinFouriercoef}
\begin{aligned}
&\sum_{u\in\Z} \Big(\int_{\TT} a(y) e^{- i y (u-v)} dy \Big)\cdot\Big(\int_{\TT} b(y) e^{- i y (u+v)} dy \Big)  e^{i 2u x}\\
&=\int_{\TT} a(x-t) b(x+t) e^{-2 i v t} dt .
\end{aligned}
\end{equation}
Applying now \eqref{keyblinFouriercoef} for $a(y):=\sum_{j_1\in\Z} f_{\ell+1}\big(\frac{y+j_1+(p_r-n)}{2^{\frac{am}{2}- k}}\big)  \overline{\check{\varphi}_{0, \ell}^{0, 0}}(y+j_1)$ and $b(y):=\sum_{j_2\in\Z} g_{\ell-1}\big(\frac{y+j_2+(p_r+n)}{2^{\frac{am}{2}- k}}\big)  \overline{\check{\varphi}_{0, \ell}^{0, 0}}(y+j_2)$ and inserting the result in \eqref{eq:def:S_n,v,p3} we deduce
\begin{equation}\label{eq:def:S_n,v,p4}
\begin{aligned}
&S_{k, \ell,r}^{n, v, p}(f, g)(x)= \sum_{j_1, j_2\in\Z} \frac{1}{2^{\frac{am+2k}{4}}}  \frac{1}{2^{\frac{am}{2}- k}}  \check{\varphi}_{\frac{am}{2}- k, 2\ell-1}^{0, p_r}(x) \\
&\cdot \int_{\TT} \Big( f_{\ell+1}\big(\frac{2^{\frac{a m}{2}-k} x - p_r-t+j_1+(p_r-n)}{2^{\frac{am}{2}- k}}\big)  \overline{\check{\varphi}_{0, \ell}^{0, 0}}(2^{\frac{a m}{2}-k} x - p_r-t+j_1)\Big)\\
&\cdot  \Big( g_{\ell-1}\big(\frac{2^{\frac{a m}{2}-k} x - p_r+t+j_2+(p_r+n)}{2^{\frac{am}{2}- k}}\big)  \overline{\check{\varphi}_{0, \ell}^{0, 0}}(2^{\frac{a m}{2}-k} x - p_r+t+j_2)\Big) e^{- 2 i v t} dt .
\end{aligned}
\end{equation}

Finally, after applying the change of variable $t \mapsto 2^{\frac{a m}{2}-k} t-n$, we conclude
\begin{equation}\label{eq:def:S_n,v,pfinal}
\begin{aligned}
 &S_{k, \ell,r}^{n, v, p}(f, g)(x)= \sum_{j_1,j_2\in\Z} \frac{1}{2^{k}} \check{\varphi}(2^{\frac{am}{2}-k} x - p_r)   e^{i (2^{\frac{a m}{2}-k} x - p_r) 2^{\frac{am}{2}}( 2\ell -2)} \\
 &\cdot \int_{I^n_k}  f_{\ell+1}\big(x-t+\frac{j_1}{2^{\frac{am}{2}- k}}\big)  \overline{\check{\varphi}_{0, \ell}^{0, 0}}(2^{\frac{a m}{2}-k} (x-t) - p_r+n+j_1) \\
&\cdot g_{\ell-1}\big(x+t+\frac{j_2}{2^{\frac{am}{2}- k}}\big)
\overline{\check{\varphi}_{0, \ell}^{0, 0}}(2^{\frac{a m}{2}-k}(x+t) - p_r-n+j_2) e^{- 2 i v (2^{\frac{a m}{2}-k} t-n)} \,dt\\
&=:\sum_{j_1,j_2\in\Z} \tilde{S}_{k,p_r}^{n, v}[j_1,j_2](f_{\ell+1}, g_{\ell-1})(x) .
\end{aligned}
\end{equation}

\begin{observation}\label{convred}
Formulation \eqref{eq:def:S_n,v,pfinal} ends the rigorous transition from the discrete wave-packet model to the continuous spatial model. As already mentioned, we will make use of it in Section \ref{sec:onescale} when proving the single estimate \eqref{sgsc} and then in Section \ref{sec:HR2} when working at a refined version of \eqref{sgsc}. In all these instances one could work directly with the formulation \eqref{eq:def:S_n,v,pfinal} proved above. However, for notational simplicity and fluency of the exposition we will reduce \eqref{eq:def:S_n,v,pfinal} to a simpler, approximative form -- represented by \eqref{eq:def:S_n,v,pFinal} below -- and this last formulation will be embraced from now on
in our paper. While we will present an outline of why indeed it is enough to only work with \eqref{eq:def:S_n,v,pFinal} we will leave the technical details to the reader.
\end{observation}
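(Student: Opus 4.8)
Let us outline why, when \eqref{eq:def:S_n,v,pfinal} is fed into the trilinear form \eqref{def:trilinear:form0}, one may retain only the summand $\tilde S_{k,p_r}^{n,v}[0,0]$ and replace it by the simplified model \eqref{eq:def:S_n,v,pFinal}, the remaining contribution being a rapidly convergent correction that affects no later estimate. The plan is to use the Schwartz decay of $\check\varphi$ twice: first to see that the summands with $(j_1,j_2)\neq(0,0)$ are structurally identical copies of the diagonal one carrying a gain $(1+|j_1|+|j_2|)^{-M}$, and then, inside the diagonal term, to strip the three localizing bumps down to the sharp cut-off $\one_{I_k^{p_r}}$ together with two harmlessly modified inputs.

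First I would dispose of the off-diagonal terms. Fixing $m,k,\ell,r$ and $n,v,p\sim 2^{am/2}$, the factor $\check\varphi(2^{am/2-k}x-p_r)$ in the integrand of $\tilde S_{k,p_r}^{n,v}[j_1,j_2]$ confines $x$ to a fattened copy of $I_k^{p_r}$ in the sense of \eqref{eq:def:bump}, while $t$ ranges over $I_k^n$ from \eqref{eq:Ip}; hence on the effective support $2^{am/2-k}(x\mp t)-p_r\pm n=O(1)$, and the Schwartz decay of $\check\varphi$ yields
\[
| \overline{\check\varphi_{0,\ell}^{0,0}}( 2^{am/2-k}(x-t)-p_r+n+j_1 ) | \lesssim_M (1+|j_1|)^{-M},
\]
together with the analogous bound for the $g$-factor in terms of $(1+|j_2|)^{-M}$, for every $M>0$. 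Thus $\tilde S_{k,p_r}^{n,v}[j_1,j_2]$ equals -- up to the translations $f_{\ell+1}\mapsto f_{\ell+1}(\cdot+j_1 2^{k-am/2})$, $g_{\ell-1}\mapsto g_{\ell-1}(\cdot+j_2 2^{k-am/2})$ of its inputs -- an expression of exactly the same shape as the diagonal term, times the gain $(1+|j_1|)^{-M}(1+|j_2|)^{-M}$. Since the single-scale estimate of Section~\ref{sec:onescale}, its refinement in Section~\ref{sec:HR2}, and the multi-scale analysis of Section~\ref{sec:bilinear:analysis} use only the size, the frequency localization, and the $L^p/L^2$ data of $f_{\ell+1},g_{\ell-1}$ -- all invariant, up to admissible fattening, under such translations -- one runs those arguments once, uniformly in $(j_1,j_2)$, and then sums the geometric series in $(j_1,j_2)$. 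This reduces matters to the diagonal term $\tilde S_{k,p_r}^{n,v}[0,0]$.

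Next I would simplify the diagonal term. Writing $\overline{\check\varphi_{0,\ell}^{0,0}}(z)=\overline{\check\varphi(z)}\,e^{-iz\,2^{am/2}(\ell-1)}$ and using that $(2^{am/2-k}(x-t)-p_r+n)+(2^{am/2-k}(x+t)-p_r-n)=2(2^{am/2-k}x-p_r)$ together with $2\ell-2=2(\ell-1)$, one sees that the product of the two modulation factors cancels \emph{exactly} the explicit exponential $e^{i(2^{am/2-k}x-p_r)2^{am/2}(2\ell-2)}$ already present in \eqref{eq:def:S_n,v,pfinal}, while the integral retains $e^{-2iv(2^{am/2-k}t-n)}=e^{2inv}e^{-i\,2^{am/2-k}2vt}$. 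What survives is
\[
\tilde S_{k,p_r}^{n,v}[0,0](f_{\ell+1},g_{\ell-1})(x)=\frac{e^{2inv}}{2^k}\,\check\varphi(2^{am/2-k}x-p_r)\int_{I_k^n}\tilde f_{\ell+1}(x-t)\,\tilde g_{\ell-1}(x+t)\,e^{-i\,2^{am/2-k}2vt}\,dt,
\]
where $\tilde f_{\ell+1}(y):=f_{\ell+1}(y)\,\overline{\check\varphi}(2^{am/2-k}y-(p_r-n))$ and $\tilde g_{\ell-1}(y):=g_{\ell-1}(y)\,\overline{\check\varphi}(2^{am/2-k}y-(p_r+n))$. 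Since $\overline{\check\varphi}$ is smooth, rapidly decaying and $L^\infty$-normalized, $|\tilde f_{\ell+1}|\lesssim|f_{\ell+1}|$ and $|\tilde g_{\ell-1}|\lesssim|g_{\ell-1}|$ pointwise, and the Fourier supports of $\tilde f_{\ell+1},\tilde g_{\ell-1}$ are those of $f_{\ell+1},g_{\ell-1}$ enlarged only by $O(2^{am/2-k})$ -- the unit of the phase linearization of Section~\ref{PL} -- so that every structural feature needed later, including the weight estimates \eqref{eq:w_k}--\eqref{eq:decay-weight}, is preserved. It then remains to write $\check\varphi(2^{am/2-k}x-p_r)=\one_{I_k^{p_r}}(x)+(\check\varphi(2^{am/2-k}x-p_r)-\one_{I_k^{p_r}}(x))$, where the difference is a Schwartz tail away from $I_k^{p_r}$ that is absorbed, exactly as above, into the error. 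This yields precisely the form \eqref{eq:def:S_n,v,pFinal}, matching the heuristic \eqref{spacetransl} and the definition of $\tilde S_k^{n,v}$ in \eqref{heurist}.

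The hard part is not any individual estimate but the bookkeeping that makes the two summations legitimate: one must verify that every implicit constant appearing in the reductions above -- in the passage $f_{\ell+1}\mapsto\tilde f_{\ell+1}$, in the off-diagonal decay, and in the $\one_{I_k^{p_r}}$ tail -- is independent of $(j_1,j_2,n,v,p,\ell,r,k)$ and of the measurable function $\lambda$, so that the present sum over $(j_1,j_2)$ and the later sums over scales and tri-tiles (run through the size/energy machinery of Proposition~\ref{prop:size/energy0}) converge with the claimed $2^{-\delta a m}$ gain intact. Once this uniformity is in place, \eqref{eq:def:S_n,v,pfinal} and \eqref{eq:def:S_n,v,pFinal} may be used interchangeably from here on, exactly as asserted.
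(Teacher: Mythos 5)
Your verification of the exact cancellation of the modulation factors at $j_1=j_2=0$ is correct and coincides with the paper's computation, and the idea that the Schwartz decay of $\check\varphi$ (given $x\in I_k^{p_r}$, $t\in I_k^n$) produces summability in $(j_1,j_2)$ is also the paper's. The gap is in the diagonal-term simplification, i.e.\ precisely the step that the observation is about. You absorb the interior bumps into modified inputs $\tilde f_{\ell+1}=f_{\ell+1}\cdot\overline{\check\varphi}(2^{\frac{am}{2}-k}\cdot-(p_r-n))$, $\tilde g_{\ell-1}=g_{\ell-1}\cdot\overline{\check\varphi}(2^{\frac{am}{2}-k}\cdot-(p_r+n))$ and assert that "every structural feature needed later is preserved", concluding that this "yields precisely" \eqref{eq:def:S_n,v,pFinal}. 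It does not: your modified inputs depend on $(n,p)$, whereas the whole content of \eqref{eq:def:S_n,v,pFinal}--\eqref{eq:def:S_n,v,pFinalprep} is that the inputs $f_{\ell+1,r},g_{\ell-1,r}$ form a \emph{single} pair independent of $(n,v,p)$ -- this is what allows Proposition \ref{sgscalelambda}, Proposition \ref{prop:uniform} and Corollary \ref{cor:uniform} (where the sums over $n,v,p$ are run against one fixed pair $F,G$, with the absolute value sitting \emph{outside} the $t$-integral) to be applied. Pointwise domination $|\tilde f_{\ell+1}|\lesssim|f_{\ell+1}|$ cannot remove a bump from inside the oscillatory $t$-integral, so passing from your $\tilde f_{\ell+1},\tilde g_{\ell-1}$ back to $(n,p)$-free inputs is exactly the point that requires proof. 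The paper's mechanism, absent from your outline, is the Taylor-series tensorization exploiting the compact frequency support of $\check\varphi$: writing $\check\varphi(z)=\int e^{i\xi z}\varphi(\xi)\,d\xi$ and Taylor-expanding separates the $x$- and $t$-dependence of the bump arguments into bounded monomials $(2^{\frac{am}{2}-k}x-p_r+1)^{k_1}$, $(2^{\frac{am}{2}-k}t-n+1)^{k_2}$ with factorially decaying coefficients and Schwartz-in-$(j_1,j_2)$ amplitudes (this is \eqref{exactform}); the $t$-monomials are harmless weights carried along later (compare the treatment of such factors in the proof of \eqref{triangineq}), and this single expansion delivers both the $(j_1,j_2)$-summability and inputs independent of $(n,p)$.

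Two further points. First, your replacement $\check\varphi(2^{\frac{am}{2}-k}x-p_r)=\one_{I_k^{p_r}}(x)+\text{(Schwartz tail away from }I_k^{p_r})$ is false as stated: on $I_k^{p_r}$ itself the difference is generically of size $O(1)$, since $\check\varphi$ is merely bounded ("of size $\approx 1$") there and not close to the sharp cutoff; the paper simply carries this factor as a bounded amplitude within the "superposition of similar terms" convention. Second, the same gap contaminates your off-diagonal step: the "gain $(1+|j_1|)^{-M}(1+|j_2|)^{-M}$" is a pointwise bound on a function of $x\mp t$, not a constant factor, so extracting it while keeping "an expression of exactly the same shape" again presupposes the separation of the bumps from the oscillation; and one cannot instead estimate the small-$|j|$ terms crudely (absolute values inside the $t$-integral), since that forfeits the $2^{-\delta am}$ gain that the oscillatory structure must supply for every $(j_1,j_2)$. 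Finally, you never address the passage from $f_{\ell+1},g_{\ell-1}$ to the $r$-localized $f_{\ell+1,r},g_{\ell-1,r}$ appearing in \eqref{eq:def:S_n,v,pFinal}, which in the paper is handled by a superposition over $r'$ with fast-decaying coefficients.
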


In light of the above observation, we now provide a brief argumentation for the following claim:

If $x\in I^{p_r}_k$ then without loss of generality one can assume from now on that
\begin{equation}\label{eq:def:S_n,v,pFinal}
 S_{k, \ell,r}^{n, v, p}(f, g)(x)= \tilde{S}_{k}^{n, v}(f_{\ell+1,r}, g_{\ell-1,r})(x)  ,
\end{equation}
with
\begin{equation} \tilde{S}_{k}^{n, v}(F,G)(x):=\frac{1}{2^{k}}  \int_{I^n_k} F\big(x-t\big)  G\big(x+t\big) e^{- 2 i v (2^{\frac{a m}{2}-k} t-n)} dt. \label{eq:tildeS} \end{equation}
and
\begin{equation}\label{eq:def:S_n,v,pFinalprep}
\begin{aligned}
f_{\ell+1,r}:= \sum_{u\in\Z} \sum_{p \sim 2^{am/2}} \langle f_{\ell+1}, \check \varphi_{\frac{am}{2}-k, \ell}^{u, p_r}   \rangle  \check \varphi_{\frac{am}{2}-k, \ell}^{u, p_r}= \sum_{u,p \sim 2^{am/2}} \langle f_{\ell+1}, \check \varphi_{\frac{am}{2}-k, \ell}^{u, p_r}   \rangle  \check \varphi_{\frac{am}{2}-k, \ell}^{u, p_r} ,\\
g_{\ell-1,r}:= \sum_{u\in\Z} \sum_{p \sim 2^{am/2}} \langle g_{\ell-1}, \check \varphi_{\frac{am}{2}-k, \ell}^{u, p_r}   \rangle  \check \varphi_{\frac{am}{2}-k, \ell}^{u, p_r}= \sum_{u,p \sim 2^{am/2}} \langle g_{\ell-1}, \check \varphi_{\frac{am}{2}-k, \ell}^{u, p_r}   \rangle  \check \varphi_{\frac{am}{2}-k, \ell}^{u, p_r} .
\end{aligned}
\end{equation}

Indeed, we first start by noticing that since $\check{\varphi}_{0, 1}^{0, 0}(y)=\check{\varphi}(y)$, we have
\begin{equation*}
\begin{aligned}
&\overline{\check{\varphi}_{0, \ell}^{0, 0}}(2^{\frac{a m}{2}-k} (x-t) - p_r+n+j_1) \overline{\check{\varphi}_{0, \ell}^{0, 0}}(2^{\frac{a m}{2}-k}(x+t) - p_r-n+j_2)\\
= &\overline{\check{\varphi}}(2^{\frac{a m}{2}-k} (x-t) - p_r+n+j_1)  \overline{\check{\varphi}}(2^{\frac{a m}{2}-k}(x+t) - p_r-n+j_2) \\
& \qquad \qquad \qquad  \qquad \qquad \qquad  \cdot  e^{-i (2(2^{\frac{a m}{2}-k} x-p_r)+j_1+j_2)  2^{\frac{am}{2}}( \ell -1)} .
\end{aligned}
\end{equation*}

Isolating now the first term in the last line above and ignoring the conjugation we use the information that $\check{\varphi}$ is compactly supported in frequency for applying a standard Taylor series argument -- here $x\in I^{p_r}_k$ is assumed to be fixed -- that has as an effect the tensorization (separation of variables in $x$ and $t$):
\begin{equation*}
\begin{aligned}
\check{\varphi}(2^{\frac{a m}{2}-k} (x-t) - p_r+n+j_1)=\int_{\R} e^{i  \xi (2^{\frac{a m}{2}-k} (x-t) - p_r+n+j_1)} \varphi(\xi) d\xi\\
=\sum_{k_1,k_2\in\Z} \int_{\R} \frac{\left(i \xi (2^{\frac{a m}{2}-k} x - p_r+1)\right)^{k_1}}{k_1!}
\frac{\left(-i \xi (2^{\frac{a m}{2}-k} t - n +1)\right)^{k_2}}{k_2!} e^{i \xi j_1} \varphi(\xi) d\xi\\
=\sum_{k_1,k_2\in\Z} \frac{\left(2^{\frac{a m}{2}-k} x - p_r+1 \right)^{k_1}}{k_1!}
\frac{\left(2^{\frac{a m}{2}-k} t - n+1\right)^{k_2}}{k_2!} \check{\varphi}_{k_1,k_2}(j_1) ,
\end{aligned}
\end{equation*}
where in the last line $\check{\varphi}_{k_1,k_2}$ is a Schwartz function depending on $k_1, k_2$ such that $|\check{\varphi}_{k_1,k_2}(j_1)|\lesssim \frac{C^{k_1+k_2}}{|j_1|^{10}+1}$ with $C>0$ a suitable absolute constant.

Deduce thus from here that for any $j_1, j_2\in\Z$ and $x\in I^{p_r}_k$ we have
\begin{equation}\label{exactform}
\begin{aligned}
 & \qquad\qquad\qquad\qquad\qquad\tilde{S}_{k,p_r}^{n, v}[j_1,j_2](f_{\ell+1}, g_{\ell-1})(x)=\\
 &\frac{1}{2^{k}}   e^{-i (j_1+j_2) 2^{\frac{am}{2}}(\ell -1)} \sum_{{k_1,l_1\in\N}\atop{k_2,l_2\in\N}} \frac{\left(2^{\frac{a m}{2}-k} x - p_r +1 \right)^{k_1+l_1}}{k_1! l_1!} \check{\varphi}(2^{\frac{am}{2}-k} x - p_r) \check{\varphi}_{k_1,k_2}(j_1) \check{\varphi}_{l_1,l_2}(j_2) \\
 &\cdot \int_{I^n_k}  f_{\ell+1}\big(x-t+\frac{j_1}{2^{\frac{am}{2}- k}}\big)
 g_{\ell-1}\big(x+t+\frac{j_2}{2^{\frac{am}{2}- k}}\big)
e^{- 2 i v (2^{\frac{a m}{2}-k} t-n)} \frac{\left(2^{\frac{a m}{2}-k} t - n +1\right)^{k_2+l_2}}{k_2! l_2!} dt
\end{aligned}
\end{equation}

A simple inspection of \eqref{exactform} reveals the following:
\begin{itemize}
\item the presence of the term $\check{\varphi}_{k_1,k_2}(j_1) \check{\varphi}_{l_1,l_2}(j_2)$ provides a polynomial decay in terms of the size of the parameters $j_1, j_2$ thus allowing us to sum in $j_1$ and $j_2$ in \eqref{eq:def:S_n,v,pfinal};

\item for $x\in I^{p_r}_k$ the expressions $2^{\frac{a m}{2}-k} x - p_r +1$ and $\check{\varphi}(2^{\frac{am}{2}-k} x - p_r)$ are of size $\approx 1$;

\item for $t\in I^n_k$ the expression $2^{\frac{a m}{2}-k} t - n +1$ is of size $\approx 1$.\footnote{It might be instructive to verify -- for example -- the proof of \eqref{triangineq} in Section \ref{sec:onescale} to see why factors such as $\frac{\left(2^{\frac{a m}{2}-k} t - n +1\right)^{k_2+l_2}}{k_2! l_2!}$ and $\frac{\left(2^{\frac{a m}{2}-k} x - p_r +1 \right)^{k_1+l_1}}{k_1! l_1!}$ are indeed harmless.}
\end{itemize}
The above explains why, for $x\in I^{p_r}_k$, the right-hand side in \eqref{eq:def:S_n,v,pfinal} can be reduced to the (dominant) term $\tilde{S}_{k,p_r}^{n, v}[0,0](f_{\ell+1}, g_{\ell-1})$ and moreover, why the latter can be further reduced to the expression
$\tilde{S}_{k}^{n, v}(f_{\ell+1}, g_{\ell-1})$ which, given the imposed range for $x$, is well approximated\footnote{Strictly speaking $\one_{[r 2^k, (r+1) 2^k)}(x) \tilde{S}_{k}^{n, v}(f_{\ell+1}, g_{\ell-1})(x)$ is a superposition of terms of the form  $c_{r-r'} \tilde{S}_{k}^{n, v}(f_{\ell+1,r'}, g_{\ell-1,r'})(x)$, $r'\in\Z$ with $\{c_{r'}\}_{r'\in\Z}\in \ell^{\infty}(\Z)$ a fast-decaying sequence obeying $c_{r'}=O(|r'|^{-2})$ as $|r'|\rightarrow \infty$.} by $\tilde{S}_{k}^{n, v}(f_{\ell+1,r}, g_{\ell-1,r})(x)$, thus certifying \eqref{eq:def:S_n,v,pFinal}.

\subsubsection{The continuous phase-linearized spatial model revisited -- bird's eye view. The heuristic equivalence of the two high resolution models}\label{BEV}

This section is meant solely for presenting an alternate, more direct way of visualizing our continuous model that in particular reveals the important role played by the linearization process. Indeed, returning to the original form of our operator $T_{m,k}$ we show the very natural way in which one can achieve a decomposition of $T_{m,k}$ in elementary blocks resembling the expression in the right-hand side of \eqref{spacetransl}. Since the previous section was covered in great detail, the description provided here will be expository, and hence stripped of technicalities.

These being said, let us recall the original formulation
\begin{equation}\label{tmkorig}
\begin{aligned}
T_{m,k}(f,g)(x)&=\rho_{a m- a k}(\l(x)) \int f(x-t)g(x+t) e^{i\lambda(x) t^a} \rho(2^{-k}t) \frac{dt}{t}\\
&\approx \frac{1}{2^k} \rho_{a m- a k}(\l(x)) \int_{[2^k, 2^{k+1}]}  f(x-t)  g(x+t)  e^{i\lambda(x) t^a} dt .
\end{aligned}
\end{equation}
Following the same phase linearization approach as the one in Section \ref{PL} with the only distinction that we are now working in space as opposed to frequency there, we first isolate the phase of our kernel
$$\tau_x(t):=\lambda(x) t^a ,$$
and notice that in the non-trivial regime deduced from \eqref{tmkorig} we must have $|t|\approx 2^k$ and $|\l(x)|\approx  2^{a(m-k)}$. As a consequence, we further have that
$$\tau_x''(t)=a (a-1) \lambda(x) t^{a-2}\approx_a 2^{am-2k} ,$$
and hence, in order to focus only on the linear growth around a given point $t_0\in [2^k, 2^{k+1}]$ one must have the size of the spatial region restricted to $|t-t_0|\lesssim 2^{k} 2^{-\frac{am}{2}}$.

Thus, recalling that $I^{n}_{k}$ stands for $\Big[ \frac{n}{2^{\frac{am}{2}-k}},  \frac{n+1}{2^{\frac{am}{2}-k}} \Big]$, we have that
\begin{equation}\label{tmkorig1}
T_{m,k}(f,g)(x)\approx \frac{1}{2^k} \rho_{a m- a k}(\l(x)) \sum_{n\sim 2^{\frac{am}{2}}}\int_{I^n_k} f(x-t)  g(x+t)  e^{i\lambda(x) t^a} dt .
\end{equation}
Given the very construction of $I^{n}_{k}$, taking  $t^n_k:=n 2^{k-\frac{am}{2}}$ and applying a Taylor series argument around the chosen point $t^n_k$, we have that
\beq\label{t04}
\tau_x(t)=\lambda(x) t^a= \lambda(x) (t^n_k)^a + (t-t^n_k) a \l(x) (t^n_k)^{a-1} +  O(1) ,
\eeq
as long as $|\l(x)|\approx 2^{a(m-k)}$ and $t\in  I^{n}_{k}$.

Thus, ignoring\footnote{This can be done rigorously via a Taylor series argument.} the $O(1)$ term appearing in the phase, we have reached
the final form of our (continuous) spatial linearized model which is given by
\begin{equation}\label{tmkspacelinearmodel}
\begin{aligned}
T_{m,k}(f,g)(x)\approx
\frac{1}{2^k} \rho_{a m- a k}(\l(x)) &\sum_{p\in\Z} \sum_{n\sim 2^{\frac{am}{2}}} e^{i (1-a) \lambda(x)  (t^n_k)^a} \chi_{I^p_k}(x)\\
&\cdot\Big(\int_{I^n_k} f(x-t)  g(x+t)  e^{i a \lambda(x) (t^n_k)^{a-1}  t} dt\Big) .
\end{aligned}
\end{equation}

We end this section with an informative discussion on how can one visualize the equivalence between the continuous model expressed above in \eqref{tmkspacelinearmodel} and the discretized wave-packet model expressed in \eqref{eq:model0}. Starting from the latter and employing definition \eqref{eq:def:S_n,v,p} we have
\begin{equation}
\begin{aligned}
T_{m,k}(f, g)(x)=& \sum_{\ell,r \in \Z} \sum_{n,p \sim 2^\frac{am}{2}}  \sum_{u, v \sim 2^\frac{am}{2}} \langle f_{\ell+1}, \check{\varphi}_{\frac{am}{2}- k, \ell}^{u-v,p_r-n}  \rangle   \langle g_{\ell-1}, \check{\varphi}_{\frac{am}{2}- k, \ell}^{u+v,p_r+n}  \rangle  \\
& \cdot \frac{\rho_{am-ak}(\l(x)) }{2^{\frac{am+2k}{4}}}    \check{\varphi}_{\frac{am}{2}- k, 2\ell-1}^{2u, p_r}(x) w_{k, n,v}^e(\l)(x)\\
=&  \sum_{\ell,r \in \Z} \sum_{n,p \sim 2^\frac{am}{2}} \sum_{v \sim 2^\frac{am}{2}} S_{k, \ell,r}^{n, v, p}(f, g)(x)  \rho_{am-ak}(\l(x))  w_{k, n,v}^e(\l)(x) .
\end{aligned}\label{eq:model01}
\end{equation}

Now based on the heuristic in \eqref{spacetransl} we further have
\begin{equation}\label{eq:model02}
\begin{aligned}
T_{m,k}(f, g)(x)\approx &\sum_{\ell,r \in \Z} \sum_{n,p \sim 2^\frac{am}{2}} \sum_{v \sim 2^\frac{am}{2}} \one_{I_k^{p_r}} (x) \rho_{am-ak}(\l(x))  w_{k, n,v}^e(\l)(x)\\
&\cdot \frac{1}{2^k} e^{2inv} \left(\int_{I_k^n} f_{\ell +1}(x-t) g_{\ell -1}(x+t)  e^{-i 2^{\frac{am}{2}-k} 2v t} dt \right) .
\end{aligned}
\end{equation}
Once at this point, relying on the time-frequency correlation as captured by \eqref{eq:decay-weight}, we notice that the main contribution is attained when
\begin{equation}\label{tfcor}
-2v\approx \bar c_a \frac{n^{a-1}  \l(x)}{2^{a(\frac{am}{2}-k)}} ,
\end{equation}
while from relations \eqref{eq:w_k}--\eqref{eq:decay-weight} via a stationary phase argument we deduce
\begin{equation}\label{stph}
\rho_{am-ak}(\l(x)) w_{k, n,v}^e(\l)(x)\approx \rho_{am-ak}(\l(x))  e^{i (1-\frac{1}{a'}) \bar  c_a \frac{n^{a}  \l(x)}{2^{a(\frac{am}{2}-k)}}}   \varphi \big( \xi + \bar c_a \frac{n^{a-1}  \l(x)}{2^{a(\frac{am}{2}-k)}} \big) .
 \end{equation}
Putting together \eqref{eq:model02}, \eqref{tfcor} and \eqref{stph}, at the informal level, we conclude

\begin{equation}\label{eq:model03}
\begin{aligned}
T_{m,k}(f, g)(x)&\approx \sum_{\ell,r \in \Z} \sum_{n,p \sim 2^\frac{am}{2}} \one_{I_k^{p_r}} (x) \rho_{am-ak}(\l(x)) \frac{1}{2^k} e^{-i \frac{1}{a'} \bar c_a \frac{n^{a}  \l(x)}{2^{a(\frac{am}{2}-k)}}}\\
& \hspace{3em} \cdot \Big(\int_{I_k^n} f_{\ell +1}(x-t) g_{\ell -1}(x+t)  e^{i 2^{\frac{am}{2}-k} \bar c_a \frac{n^{a-1}  \l(x)}{2^{a(\frac{am}{2}-k)}} t} dt \Big) ,
\end{aligned}
\end{equation}
thus certifying for $\bar c_a=a$ the equivalence with the formulation given in \eqref{tmkspacelinearmodel} as claimed.

\subsection{A brief contrasting conclusion regarding the two high resolution models} \label{HR3}

Our intention here is to summarize the need for utilizing both phase-linearized high-resolution models constructed earlier:
\begin{itemize}
\item On the one hand -- see Section \ref{sec:onescale}, the integral formulation in \eqref{tmkspacelinearmodel} captures enough cancellation to yield the desired single scale version of \eqref{mainrm}, that is
\beq\label{mainrmk}
\|T_{m,k}(f,g)\|_{L^r}\lesssim_{a,p,q} 2^{-\d a m}  \|f\|_{L^p} \|g\|_{L^q} ,
\eeq
even if one takes absolute values in the summand of \eqref{tmkspacelinearmodel}.
 
As already explained, \eqref{mainrmk} or its dual correspondent cannot be though attained if one ignores the signs of the Gabor coefficients in either \eqref{eq:model} or in its dual version \eqref{def:trilinear:form0}, respectively.

\item  On the other hand -- see Sections \ref{sec:HR2} and \ref{sec:bilinear:analysis}, for upgrading the single scale estimate \eqref{mainrmk} to the global estimate \eqref{mainrm} one has to involve the interactions among the wave-packet associated with the input functions $f,g$ and $h$, thus necessitating the use of the first model operator discussed in Section \ref{HR1} that will now need to be reshaped in order to exploit the BHT-like structure of our operator -- this latter key step is part of the design of the low resolution model introduced in the next section.
\end{itemize}

\subsection{The low resolution model: identifying the underlying BHT structures}\label{LR}

In this section we highlight the built-in BHT frame as suggested by the zero-curvature features \ref{key:td} and \ref{key:m} and set the scene for the multi-scale time-frequency analysis that will be performed later on in Section \ref{sec:bilinear:analysis}. Based on the previous reasonings, we are naturally led to consider two kinds of time-frequency tiling:
\begin{itemize}
\item the first one captures the local, single-scale behavior\footnote{As the name suggests, this discretization that is part of the high resolution analysis concerns the local properties of our operator when restricted to a single ``big frequency square'' -- see Figures \ref{figure:BigCubes} and \ref{figure:SmallCubes}. This process has as a result the subdivision of such a big frequency square into smaller, same-scale frequency squares. Of course, each such big frequency square comes with its own family of smaller, uniform-in-scale frequency squares and as a result at the global level we will deal with a multiscale problem whose aim is to quantify the interactions among these families of smaller frequency squares.} of our operator and involves tiles of area $1$; this model is simply the geometric, tile rephrasing of the high resolution wave-packet model discussed in Section \ref{HR1}.

\item the second one accounts for the global, multi-scale behavior of our operator and consists of tiles having area  $2^{am}$. This model, referred to as the low resolution wave-packet model, is the expression of the BHT features of $BC^a$, and amounts to regrouping the terms within the first model above in order to later exploit the almost orthogonality along multiple scales via tree structures.
\end{itemize}

\subsubsection{The bilinear Hilbert transform frequency portrait: identifying the structure of the time-frequency regions}
The initial decomposition ${1 \over t }= \sum_{k \in \Z} {\rho(2^{-k}t) \over t}$ of the singular kernel, followed by the level set decomposition of the phase of the multiplier for the high oscillatory component, and the stationary phase analysis led to the study of the operator $T_m=\sum_{k \in \Z} T_{m, k}$, with each individual term $T_{m, k}$ (as defined in \eqref{def:disc:Tm,k}) having frequency information supported in the region $\{ \xi-\eta \sim 2^{am-k}\}$. The subsequent step, designed for independently extracting the frequency information for $f$ and $g$, led to considering ``big square'' frequency regions
\begin{equation}
\label{eq:def:W:1}
\big[ 2^{am-k} (\ell+1), 2^{am-k} (\ell+2) \big] \times  \big[ 2^{am-k} (\ell-1), 2^{am-k} \ell\big].
\end{equation}
As $k$ and $\ell$ vary over the set of integers, this collection of squares identifies with (a part of) a Whitney decomposition of the region $\{ \xi > \eta \}$ that may be interpreted as a $2^{am}$-dilation of the classical frequency portrait of the bilinear Hilbert transform.\footnote{The multiplier associated to BHT is $sgn(\xi-\eta)$, and $\{ \xi=\eta\}$ represents its singularity set relative to which one performs the above mentioned Whitney decomposition.}  Thus, inspired by and in parallel with the standard BHT analysis, we expect the collection induced by the squares in \eqref{eq:def:W:1} to play an analogue role in the analysis of our operator $BC^a$.

However, unlike the BHT case, when analyzing the information contained in the above frequency regions \eqref{eq:def:W:1} we will not involve wave-packets adapted to the scale of these big squares. Instead, in order to be able to exploit the curvature component of our $BC^a$ which is responsible for the strategy presented in Section \ref{Heur} and particularly \eqref{mainrm}, our time-frequency analysis relies on wave-packets adapted to the smaller scale $2^{{am \over 2}-k}$ as indicated by the LGC methodology.  This produces
\begin{equation}
\begin{aligned}
T_{m,k,\ell}(f, g)(x)= \sum_{\substack{n \sim 2^\frac{am}{2} \\ \bar p \in \Z}}  \sum_{u, v \in\Z} \langle f_{\ell+1}, \check{\varphi}_{\frac{am}{2}- k, \ell}^{u-v,\bar p-n}  \rangle   \langle g_{\ell-1}, \check{\varphi}_{\frac{am}{2}- k, \ell}^{u+v,\bar p+n}  \rangle &  \\
 \cdot \frac{\rho_{am-ak}(\l(x)) }{2^{\frac{am+2k}{4}}}    \check{\varphi}_{\frac{am}{2}- k, 2\ell-1}^{2u, \bar p}(x) w_{k, n,v}^e(\l)(x)&,
\end{aligned}\label{eq:model:W:2}
\end{equation}
which has the frequency information supported in \eqref{eq:def:W:1}. In order to control the information contained in the Whitney squares, a certain spatial localization that indicates an (almost) orthogonality relation between objects having the same frequency support is necessary.

Notice that for every $\bar p \in \Z$, the spatial information for $f$ is adapted to $\Big[ \frac{\bar p-n}{2^{{am \over 2}-k}}, \frac{\bar p-n+1}{2^{{am \over 2}-k}}  \Big]$, and that of $g$ to $\Big[ \frac{\bar p+n}{2^{{am \over 2}-k}}, \frac{\bar p+n+1}{2^{{am \over 2}-k}} \Big]$. Since $n \sim 2^{am \over 2}$, a $2^{am \over 2}$-separation between the values of $\bar p$ warrants separation for not only the spatial information for $f$, but also for that of $g$ and of the output $T_{m, k, \ell}$. This leads to the regrouping mentioned in \eqref{eq:model0}, the definition \eqref{eq:not:p_r} and the representation of $T_m$ in the form of $T_m(f, g)=\sum_{k, \ell, r \in \Z} T_{m, k , \ell, r}(f, g)$, where
\begin{equation}
\begin{aligned}
T_{m,k,\ell,r}(f,g)(x) &:= \sum_{\substack{p \sim 2^{\frac{am}{2}} \\ n \sim 2^{\frac{am}{2}}}} \sum_{\substack{u \sim 2^{\frac{am}{2}} \\ v \sim 2^{\frac{am}{2}}}} \frac{1}{2^{\frac{am+2k}{4}}} \langle f_{\ell+1}, \check{\varphi}_{\frac{am}{2}- k, \ell}^{u-v, p_r-n}  \rangle   \langle g_{\ell-1}, \check{\varphi}_{\frac{am}{2}- k, \ell}^{u+v, p_r+n}  \rangle \\
& \hspace{7em} \cdot \check{\varphi}_{\frac{am}{2}- k, 2\ell-1}^{2u, p_r}(x) \rho_{am-ak}( \lambda(x)) w_{k, n, v}^e(\l)(x) .
\end{aligned}\label{eq:model:with:r}
\end{equation}

The spatial information for $f$ relevant to $T_{m,k,\ell,r}$ is thus concentrated on
\[
\bigcup_{p, n \sim 2^{am \over 2}} \Big[ \frac{p_r-n}{2^{{am \over 2}-k}}, \frac{p_r-n+1}{2^{{am \over 2}-k}}  \Big],
\]
which can be approximated\footnote{Up to considering $O(1)$ similar terms.} by $[r 2^k, (r+1) 2^k]$.
Similarly, the spatial information for $g$ is, for the most part, contained in
\[
\bigcup_{p, n \sim 2^{am \over 2}} \Big[ \frac{p_r+n}{2^{{am \over 2}-k}}, \frac{p_r+n+1}{2^{{am \over 2}-k}}  \Big],
\]
and this is also comparable to $[r 2^k, (r+1) 2^k]$.

In conclusion, for different values of $r$, the spatial information for $f$ encoded in $T_{m,k,\ell,r}(f,g)$ is located on disjoint intervals; the same holds for $g$ and for the output $T_{m,k,\ell,r}(f,g)$. On the other hand, for a fixed $r$, the spatial information for $f$, $g$ and the output $T_{m,k,\ell,r}(f,g)$ is adapted to the same interval $[r 2^k, (r+1) 2^k]$.

The regrouping in \eqref{eq:model0}, which consisted in gathering $2^{am \over 2}$ consecutive integer values of $p$, had two important consequences: (i) the time-frequency information for any of $f, g$ or $T_{m,k,\ell,r}(f,g)$ featured in $T_{m,k,\ell,r}$, uniquely determines the time-frequency information for the remaining input/output data; (ii) as $\ell$ and $r$ vary, the time-frequency information is associated to disjoint regions.

So the analogue of the tri-tiles appearing in the BHT analysis are precisely the time-frequency regions of area $2^{am}$
\begin{equation}
\begin{aligned}
P_1 & := I_P \times \pmb{\omega}_{P_1} := [r 2^k, (r+1) 2^k] \times \big[ 2^{am-k}(\ell+1), 2^{am-k}(\ell+2) \big]  \\
P_2 & := I_P \times \pmb{\omega}_{P_2} := [r 2^k, (r+1) 2^k] \times \big[ 2^{am-k} (\ell-1), 2^{am-k} \ell \big],\\
P_3 & := I_P \times \pmb{\omega}_{P_3} := [r 2^k, (r+1) 2^k] \times \big[ 2^{am-k} (2 \ell), 2^{am-k} (2\ell+1) \big],
\end{aligned}\label{eq:tri:tiles:def:S4}
\end{equation}
where $k, \ell, r$ vary in $\Z$. We denote $\BHT$ the collection of such tri-tiles $P:=(P_1, P_2, P_3)$, in clear analogy to the bilinear Hilbert transform operator:
\begin{equation}
 \BHT := \{ P=(P_1, P_2, P_3) | \text{ with } P_1, P_2, P_3 \text{  given by  } \eqref{eq:tri:tiles:def:S4}, \text{  for  } k,\ell,r \in \mathbb{Z}\}. \label{def:BHT}
\end{equation}
Since each of the $P_j$ has area $2^{am}$, we refer to them as ``expanded (tri-)tiles''.

In fact, $\BHT$ is characterized (such a collection is sometimes called of \emph{rank 1}) by the following properties:
\begin{itemize}
 \item[(i)] the collection of frequency squares $ (\pmb{\omega}_{P_1} \times \pmb{\omega}_{P_2})_{P\in \BHT}$ is a Whitney covering of the half-plane $\{\xi>\eta\}$, which means
\begin{equation}
\label{eq:Whitney}
\dist( \pmb{\omega}_{P_1} \times \pmb{\omega}_{P_2}, \Gamma)\sim |\pmb{\omega}_{P_1}|=|\pmb{\omega}_{P_2}|,
\end{equation}
where $\Gamma:=\{\xi=\eta\}$ is the boundary of the half-plane;
\item[(ii)] For $P\in \BHT$, the three tiles $P_1,P_2,P_3$ have the same spatial support $I_P$ and are all of area $2^{am}$;
\item[(iii)] For $P\in \BHT$, $\pmb{\omega}_{P_3} \sim \pmb{\omega}_{P_1} + \pmb{\omega}_{P_2}$, i.e. $\pmb{\omega}_{P_3}$ is an interval of the same length as $\pmb{\omega}_{P_1}$ and $\pmb{\omega}_{P_2}$, from the same dyadic lattice, and contained in $\pmb{\omega}_{P_1}+\pmb{\omega}_{P_2}$.
\end{itemize}

\subsubsection{Tile-rephrasing of the wave-packet information} In this section we reinterpret the discretization \eqref{eq:model:with:r} of our operator from a geometric point of view by phrasing the action of the wave-packets in terms of tiles.

We start by isolating the wave-packet $\check{\varphi}_{\frac{am}{2}- k, \ell}^{u-v, p_r-n}$ part of the term $\langle f_{\ell+1}, \check{\varphi}_{\frac{am}{2}- k, \ell}^{u-v, p_r-n}  \rangle$ in \eqref{eq:model:with:r}. We remark now that its time-frequency portrait is captured within the area one tile
\begin{align*}
s_1 &:= I^{p_r-n}_{k} \times \omega^{u_{\ell}-v}_{k} \\
&:= \Big[ (r-1) 2^k + \frac{p-n}{2^{\frac{am}{2}-k}},  (r-1) 2^k + \frac{p-n+1}{2^{\frac{am}{2}-k}} \Big] \\
& \hspace{3em} \times \Big[\frac{\ell}{ 2^{-(am -k)}}+ \frac{u-v}{2^{-(\frac{am}{2} -k)}}, \frac{\ell}{ 2^{-(am -k)}}+ \frac{u-v+1}{2^{-(\frac{am}{2} -k)}} \Big] .
\end{align*}
Notice now that for $r,k$ fixed, as $v,n,p$ range over their respective $\sim 2^{\frac{am}{2}}$ possible values, $s_1$ will ``span'' a time-frequency region which is roughly a $2^{\frac{am}{2}}$-dilate of any other chosen $s_1$ with similar characteristics, region that can be essentially identified with $P_1$ from \eqref{eq:tri:tiles:def:S4}.

Deduce that, up to a decomposition of the family of parameters into $O(1)$ subfamilies, we have\footnote{For a better geometric intuition, we refer the reader to Figure \ref{figure:tiles_b} in Section \ref{Genov}.}
\[ s_1 \subset P_1 \]
for all $n,v,p\sim 2^{\frac{am}{2}}$ .\\
Similar considerations hold for $g$ as well, and in particular we can introduce
\begin{align*}
s_2 &:= I^{p_r + n}_{k} \times \omega^{u_{\ell-1} + v}_{k}\\
&= \Big[ (r-1) 2^k + \frac{p+n}{2^{\frac{am}{2}-k}},  (r-1) 2^k + \frac{p+n+1}{2^{\frac{am}{2}-k}} \Big] \\
& \hspace{3em} \times \Big[\frac{\ell-1}{ 2^{-(am -k)}}+ \frac{u+v}{2^{-(\frac{am}{2} -k)}}, \frac{\ell-1}{ 2^{-(am -k)}}+ \frac{u+v+1}{2^{-(\frac{am}{2} -k)}} \Big]
\end{align*}
so that for $p,n,v \sim 2^{\frac{am}{2}}$ we always have $s_2 \subset P_2  $.

Turning towards the wave-packet $\check{\varphi}_{\frac{am}{2}- k, 2\ell-1}^{2u, p_r}$ we are also led to introduce
\begin{align*}
s_3 &:= I^{p_r}_{k} \times \omega^{2u_{2\ell-1}}_{k}\\
& = \Big[ (r-1) 2^k + \frac{p}{2^{\frac{am}{2}-k}},  (r-1) 2^k + \frac{p+1}{2^{\frac{am}{2}-k}} \Big] \\
& \hspace{3em} \times \Big[\frac{2\ell-1}{ 2^{-(am -k)}}+ \frac{2u}{2^{-(\frac{am}{2} -k)}}, \frac{2\ell-1}{ 2^{-(am -k)}}+ \frac{2 u +1}{2^{-(\frac{am}{2} -k)}} \Big]
\end{align*}
with (again for all $p,n,v \sim 2^{\frac{am}{2}}$) $s_3 \subset P_3 .$

We summarize the roles of the parameters appearing in the definition of the tritiles $s=(s_1, s_2, s_3)$, $P=(P_1, P_2, P_3)$, and in the model operator \eqref{eq:model:with:r}:
\begin{itemize}[leftmargin=5.5mm]
\item $k$ defines the main scale of the decomposition and determines the strip in the frequency plane;
\item the frequency parameter $\ell$ indicates the joint frequency localization of $f$ and $g$ at the scale $2^{am-k}$;
\item the spatial parameter $r$ indicates the spatial moral support of the input/output function(s) arising from the action of $T_{m,k,\ell,r}$;
\item the frequency parameters $u$ and $v$ provide a more accurate localization for $\hat f$ and $\hat g$ at the smaller scale $2^{\frac{am}{2}-k}$ (this arises from the linearization of the oscillatory phase of the multiplier symbol);
\item the spatial parameter $p$ indicates a more accurate spatial information of the output corresponding to the wave-packet $\check{\varphi}_{\frac{am}{2}- k, 2\ell-1}^{2u, p_r}$.
\item the spatial parameter $n$ accounts for the spatial localization in the continuous $t$ variable (at the same smaller scale $2^{\frac{am}{2}-k}$) exploiting the bilinear Hilbert structure of the input functions that reflects in the $\frac{2n}{2^{\frac{am}{2}-k}}$ difference\footnote{This is immediate from the spatial structure of the arguments of the input functions: $x+t$ for $f$ and $x-t$ for $g$.} between the moral spatial supports of $f$ and $g$, respectively.
\end{itemize}

We notice that $k,\ell,r\in\Z$ are associated with the ``expanded tiles'' $P$, while the regular $s$ tiles of area $1$ require also the parameters $p,n,u,v\sim 2^{am \over 2}$ in order to be located as we are zooming into a given $P$. This explains why we refer to a model involving the tri-tiles $s$ as a \emph{high resolution model} and to the model involving the ``expanded tiles'' $P$ as a \emph{low resolution model}.

The expanded tri-tile $P=(P_1,P_2,P_3)$ and their subordinated tri-tile(s) $s=(s_1, s_2, s_3)$ are represented in Figure \ref{figure:tiles_a} while their inter-relationship is illustrated in Figure \ref{figure:tiles_b}. For future reference, we collect all the above definitions :
\begin{equation}
\begin{aligned}
s_1 &:= I^{p_r-n}_{k} \times \omega^{u_{\ell}-v}_{k}, \\
s_2 &:= I^{p_r + n}_{k} \times \omega^{u_{\ell-1} + v}_{k}, \\
s_3 &:= I^{p_r}_{k} \times \omega^{2u_{2\ell-1}}_{k}, \\
P_1 &:= [r 2^k, (r+1) 2^k] \times \big[ 2^{am-k} (\ell+1), 2^{am-k} (\ell+2) \big], \\
P_2 &:= [r 2^k, (r+1) 2^k] \times \big[ 2^{am-k} (\ell-1), 2^{am-k} \ell\big], \\
P_3 &:= [r 2^k, (r+1) 2^k] \times \big[ 2^{am-k} (2 \ell), 2^{am-k} (2\ell+1) \big].
\end{aligned}
\label{eq:def:time:frequency:regions}
\end{equation}
Since later on we will prefer to drop the explicit form of both collections of tri-tiles, we also write for $1 \leq j \leq 3$, $s_j=I_{s_j} \times \omega_{s_j}$, where $I_{s_j}$ is one of the intervals obtained by dividing $I_P$ into $2^{am \over 2}$  pieces of equal length, and $\omega_{s_j}$ is one of the intervals obtained by dividing $\pmb{\omega}_{P_j}$ into $2^{am \over 2}$ identical pieces.
In such a situation we write
\begin{equation}\label{sfat}
\hat{s}_j= P_j, \quad \text{for   } 1 \leq j \leq 3, \quad \text{or simply       } \hat{s}:=P.
\end{equation}
More generally we use the notation
\begin{equation}
\label{sfat:2}
\hat J =I
\end{equation}
if $J$ is obtained by dividing $I$ into $2^{am \over 2}$ congruent subintervals.

Notice that each of the intervals $I_P$ is dyadic while each of its high-resolution associate $I_{s_j}$ obeys $\hat{I}_{s_j} =I_P$. Correspondingly, each of the intervals $\pmb{\omega}_{P_j}$ are part of a $2^{am}$ dilation of the dyadic grid with $\hat{\omega}_{s_j}=\pmb{\omega}_{P_j}$.

\begin{remark}\label{tilestory}
As mentioned previously, $\BHT$ is a rank-1 family: once $P_{i}$ determined, so are the other two tiles forming the expanded tri-tiles $P=(P_1, P_2, P_3)$ and thus only the parameters $k, r, \ell$ will be of relevance. In contrast, the collection of tri-tiles $s=(s_1, s_2, s_3)$ from \eqref{eq:def:time:frequency:regions} involves the extra parameters $n, p, u, v \sim 2^{am \over 2}$, and once $s_1$ is set, there are still $2^{am \over 2}$ choices for the spatial interval $I_{s_2}$ and $2^{am \over 2}$ choices for the frequency interval $\omega_{s_2}$ in order to fully determine $s_2$.
The rank-1 property makes $\BHT$ especially suitable for the later multi-scale time-frequency analysis, but, as we will see, we still need to rely on the tri-tiles of area 1 and on their associated wave-packets when estimating the almost orthogonality among the expanded tiles that is quintessential in obtaining the control over $T_m$ and in extenso over our original operator $BC^a$.
\end{remark}

Finally, for any $s = (s_1, s_2, s_3)$ as in \eqref{eq:def:time:frequency:regions}, we let
\begin{equation}
\begin{aligned}
\phi_{s_1} &:= \check{\varphi}_{\frac{am}{2}- k, \ell}^{u-v, p_r-n}, \quad &\rho_{I_P}(\lambda)&:= \rho_{am-ak}(\lambda(x)),\\
\phi_{s_2} &:= \check{\varphi}_{\frac{am}{2}- k, \ell}^{u+v, p_r+n},  \quad &w^e_s(\lambda)&:=w_{k, n, v}^e(\lambda),\\
\phi_{s_3} &:= \check{\varphi}_{\frac{am}{2}- k, 2\ell-1}^{2u, p_r}, \quad &|I_s|&:= 2^{-(\frac{am}{2}-k)} .
\end{aligned}\label{def:wave:p:f:g}
\end{equation}

\subsubsection{The low-resolution model} \label{subsubsec:lowmodel}

With the notations and conventions from above, we are now ready to formally introduce the low-resolution model. This will be delivered in two equivalent forms utilized depending on the context and our convenience.

\begin{itemize}[leftmargin=5.5mm]
\item \underline{\textit{the low resolution model - the parametric format}}
\medskip

In this setting, our low resolution model is given by
$$T(f,g)(x)=\sum_{m\in\N} T_m(f,g)(x)=\sum_{m\in\N}\sum_{k\in\Z}  T_{m,k}(f, g)(x) ,$$
with
\begin{equation}
\begin{aligned}
T_{m,k}(f, g)(x)& = \sum_{\ell,r \in \mathbb{Z}} T_{m,k,\ell,r}(f,g)(x), \qquad \text{ where} \\
T_{m,k,\ell,r}(f,g)(x) &:= \sum_{\substack{p \sim 2^{\frac{am}{2}}, \\ n \sim 2^{\frac{am}{2}}}} \sum_{\substack{u \sim 2^{\frac{am}{2}}, \\ v \sim 2^{\frac{am}{2}}}} \frac{1}{2^{\frac{am+2k}{4}}} \langle f_{\ell+1}, \check{\varphi}_{\frac{am}{2}- k, \ell}^{u-v, p_r-n}  \rangle   \langle g_{\ell-1}, \check{\varphi}_{\frac{am}{2}- k, \ell}^{u+v, p_r+n}  \rangle \\
& \hspace{7em} \cdot \check{\varphi}_{\frac{am}{2}- k, 2\ell-1}^{2u, p_r}(x) \rho_{am-ak}( \lambda(x)) w_{k, n, v}^e(\l)(x) .
\end{aligned}\label{eq:model:with:rr}
\end{equation}
Notice here, that for fixed $m\in\N$ and $k,\ell,r\in\Z$ the discretization of $T_{m,k,\ell,r}$ in \eqref{eq:model:with:rr} represents precisely the high-resolution linearized wave-packet discretization obtained in \eqref{eq:model0}.

Since we will work extensively with the dualization of these operators, we define the associated trilinear forms by
\begin{equation}
\label{def:trilinear:form}
\begin{aligned}
\underline{\mathcal{L}}_{m}(f,g,h) &:= \sum_{k\in\Z} \underline{\mathcal{L}}_{m,k}(f,g,h) ,\\
\underline{\mathcal{L}}_{m,k}(f,g,h) &:= \langle T_{m,k}(f,g), h \rangle= \sum_{\ell, r \in \mathbb{Z}} \underline{\mathcal{L}}_{m,k,\ell,r}(f,g,h) ,\\
\underline{\mathcal{L}}_{m,k,\ell,r}(f,g,h) &:= \langle T_{m,k,\ell,r}(f,g), h \rangle .\\
\end{aligned}
\end{equation}

\item \underline{\textit{the low resolution model - the tile format}}
\medskip

Alternatively, our operator can be written in terms of elementary building blocks associated with (expanded) tri-tiles. This format, anticipated in \eqref{tilemodel} of Section \ref{Genov}, will be used in Section \ref{sec:bilinear:analysis} where the time-frequency analysis for the low resolution model will be performed.

In this situation we have
\begin{equation}
\begin{aligned}
& T(f,g)(x)=\sum_{m\in\N} T_m(f,g)(x)=\sum_{m\in\N}\sum_{P \in \BHT} T_{m,P}(f,g)(x), \qquad \text{ where } \\
& T_{m,P}(f,g)(x):= 2^{-\frac{am}{2}} \sum_{s : \hat s =P} \frac{1}{|I_s|^{\frac{1}{2}}} \langle f, \phi_{s_1} \rangle \langle g, \phi_{s_2} \rangle \phi_{s_3}(x) \rho_{I_P}(\lambda(x)) w^e_s(\lambda)(x).
\end{aligned}   \label{eq:model:with:tiles}
\end{equation}
Here the operators $T_{m,P}$ correspond to the operators $T_{m,k,\ell,r}$ from the parametrized setting where we assumed that $P=(P_1, P_2, P_3)$ and $s=(s_1, s_2, s_3)$ are given by \eqref{eq:def:time:frequency:regions}.

Maintaining the parallelism with the parametric format we also define the associated trilinear forms
\begin{equation}
\label{def:trilinear:formtr}
\begin{aligned}
\underline{\mathcal{L}}_{m}(f,g,h) &:= \sum_{P\in\BHT} \underline{\mathcal{L}}_{m,P}(f,g,h) ,\\
\underline{\mathcal{L}}_{m,P}(f,g,h) &:= \langle T_{m,P}(f,g),h \rangle .\\
\end{aligned}
\end{equation}
\end{itemize}

We end this section with the following

\begin{remark}
\label{remark:grouping}
The low resolution analysis will be critical in Section \ref{sec:bilinear:analysis} where we will need to employ time-frequency methods similar in spirit with those used for approaching the bilinear Hilbert transform. While both \eqref{eq:model:with:rr} and \eqref{eq:model:with:tiles} offer a good intuition on the essence of the low resolution format in reality we will need a slight refinement of these formulations that is described in  Section \ref{sec:HR2}.

In what follows we will stick to the parametric format relying on the indices $k,\ell,r,p,n,u,v$ as it will be more convenient for performing a high-resolution study of the operator. Once we have completed the finer analysis, we will pass to the tile format. This latter, more geometric description, will be employed along the entire Section \ref{sec:bilinear:analysis}.
\end{remark}

\section{The high resolution, single-scale analysis (I): Extracting the cancellation encoded in the phase curvature}  \label{sec:onescale}
\subsection{Preparatives}\label{Inthr}

In this section we provide the first key ingredient for the strategy discussed in Section \ref{Genov} -- see in particular \eqref{mainrm} -- that is expressed in the following single scale $m$-exponential decay estimate:\footnote{While not needed later in a larger range, it is not difficult to see that this $L^2$-based estimate may be extended via interpolation arguments to the full Banach triangle, \textit{i.e.} $\|\underline{\L}^{a}_{m,k}\|_{L^p\times L^q\times L^{r'} \to \mathbb{C}}\lesssim_{a,p,q,r} 2^{-\d_0' am}$ with $\frac{1}{p}+\frac{1}{q}=\frac{1}{r}$ and $1<p,q<\infty$ and some exponent $\d_0'$ depending on $p,q$.}

\begin{proposition} \label{sgscale} Fix  $k,\ell,r\in\Z$ and $m\in\N$ and assume $a\in(0,\infty)\setminus\{1,2\}$ is a given parameter. With the notations from the previous section we let\footnote{While the same as the expression in Section \ref{subsubsec:lowmodel} we  restate for convenience the formula of our form here, and also -- for only this section -- insert the superscript $a$ in our formulations in order to stress the dependence of our form as well as of our estimates on this parameter.}
\begin{equation}
\begin{aligned}
&\underline{\L}^{a}_{m,k,\ell,r}(f,g,h):=\\
&\sum_{n,v,p\sim 2^{\frac{am}{2}}}\int_{\R}  S_{k, \ell,r}^{n, v, p}(f,g)(x)\, \rho_{am-ak}( \lambda(x))\, w_{k, n, v}^e(\l)(x)\,h(x)\,dx.
\end{aligned}\label{sgscdef}
\end{equation}
Then, there exists $\d_0\in(0,1)$ absolute constant such that
\begin{equation}
|\underline{\L}^a_{m,k,\ell,r}(f,g,h)|\lesssim_{a}2^{-\d_0 a m}\, 2^{-\frac{k}{2}} \|f\|_{L^2}\|g\|_{L^2}\|h\|_{L^2}.\label{sgsc}
\end{equation}
\end{proposition}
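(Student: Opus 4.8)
The plan is to reduce the single-scale estimate to a continuous-model estimate, then extract cancellation via a double $TT^{*}$ argument that splits into a Weyl-sum/discrete Van der Corput component and a phase level-set component. First I would use the transition established in Section \ref{passage} (formulas \eqref{eq:def:S_n,v,pFinal}--\eqref{eq:tildeS}) to replace $S_{k,\ell,r}^{n,v,p}(f,g)$ on $I_k^{p_r}$ by $\tilde{S}_k^{n,v}(f_{\ell+1,r},g_{\ell-1,r})$, and then apply the weight decay \eqref{eq:decay-weight} to pass from $\rho_{am-ak}(\lambda)w_{k,n,v}^e(\lambda)$ to the positive weight $w_{k,n,v}(\lambda)$, at the cost of harmless fast-decaying error terms. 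A Cauchy--Schwarz step in the dualizing function $h$ (using the spatial localization of $\check{\varphi}_{\frac{am}{2}-k,2\ell-1}^{2u,p_r}$ to $I_k^{p_r}$, with $\sum_p \ci_{I_k^{p}}\lesssim 1$) reduces the task to bounding
\begin{equation*}
\Lambda_{m,k,\ell,r}(f,g):=\Big(\sum_{p\sim 2^{\frac{am}{2}}}\int_{I_k^{p_r}}\Big(\sum_{n\sim 2^{\frac{am}{2}}}\big|2^{-k}\int_{I_k^n}f_{\ell+1}(x-t)g_{\ell-1}(x+t)e^{i\frac{\bar c_a n^{a-1}}{2^{\frac{a(a-1)m}{2}}}\lambda(x)t}\,dt\big|\Big)^2 dx\Big)^{1/2}
\end{equation*}
by $2^{-\delta_0 am}2^{k/2}\|f_{\ell+1}\cdot\ci_{[r2^k,(r+1)2^k]}\|_{L^2}\|g_{\ell-1}\cdot\ci_{[r2^k,(r+1)2^k]}\|_{L^2}$, which is exactly the content of \eqref{mainrmm}; the proposition then follows since $\|f_{\ell+1}\|_{L^2}\lesssim\|f\|_{L^2}$, $\|g_{\ell-1}\|_{L^2}\lesssim\|g\|_{L^2}$ after rescaling.

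To prove \eqref{mainrmm} I would proceed as in the LGC methodology of \cite{lvUnif}. After a preliminary discussion of the size distribution of $f_{\ell+1}$ and $g_{\ell-1}$ (splitting into the range where the local $L^2$ densities are uniform versus clustered — the clustered piece being small by a crude count of the number of exceptional intervals $I_k^n$), the uniform piece is handled by a first $TT^{*}$ in the $g$-variable (or equivalently, treating the inner $t$-integral as a bilinear form and dualizing) which exchanges the modulus-then-sum-in-$n$ structure for a sum over pairs $(n,n')$. One then performs a second $TT^{*}$/Cauchy--Schwarz in the $f$-variable, after which the remaining oscillatory sum is, schematically, a sum over $n$ of $e^{i\frac{\bar c_a}{2^{\frac{a(a-1)m}{2}}}(n^{a-1}-n'^{a-1})\lambda(x)\cdot(\text{shift})}$ — a Weyl sum in $n$ with phase $\sim 2^{-\frac{a(a-1)m}{2}}n^{a-1}$ times a spatial parameter of size $\sim 2^{am}$. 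Since $a\neq 1,2$, the exponent $a-1$ is neither $0$ nor an integer equal to $1$, so the discrete Van der Corput / stationary phase estimate for such sums yields genuine decay $2^{-\delta am}$ off a level set in the spatial variable $x$ where $\lambda(x)$ (mod arithmetic progressions) forces near-resonance.

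The phase level-set analysis then controls the measure of this bad set: because $\lambda$ is merely measurable, one cannot integrate by parts in $x$, but one can estimate $|\{x\in I_k^{p_r}: \lambda(x)\in E\}|$ in terms of $|E|/|I_k^{p_r}|^{-1}$ only trivially; instead the argument must exploit that the relevant resonance condition on $\lambda(x)$, for each fixed pair $(n,n')$, confines $x$ to a set whose total contribution, summed against the $L^2$ densities of $f$ and $g$ and using time-frequency correlations among the parameters, is still $2^{-\delta_0 am}$-small. The main obstacle, as the authors themselves flag in Section \ref{HR10}, is precisely this last point: the near-resonance relation \eqref{tfc} depends on $x$ in a rough (non-smooth) way, so one cannot simply run a stationary-phase argument in $x$, and one must instead carefully combine the Weyl-sum cancellation with the level-set estimate and the size/uniformity dichotomy. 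I expect the bookkeeping of which parameter ranges ($n,n',v,p$) and which portions of the $x$-domain contribute to the resonant versus non-resonant regimes — and verifying that the cross terms from the polynomial Taylor corrections in \eqref{exactform} and from replacing $S$ by $\tilde S$ are genuinely negligible — to be the most delicate part of the write-up, with the pure number-theoretic input (the bound for the exponential sum) being comparatively standard once the phase is correctly normalized.
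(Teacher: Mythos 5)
Your opening reductions do match the paper: the passage to the continuous model, the domination by the quantity in \eqref{mainrmm}, and the clustered/uniform dichotomy with the crude cardinality count for the clustered part are exactly Sections \ref{Tr}--\ref{SPL} and Case 1 of Proposition \ref{sgscalelambda}. The genuine gap is in the uniform case, where the structure you describe after the double $TT^{*}$ is not the one that actually arises, and the version you describe would not work. For fixed $x$ there is no free exponential sum in $n$ carrying a phase $n^{a-1}\lambda(x)$: after the Cauchy--Schwarz in $n$ each summand is a square over a single block $I_k^n$, inside which the phase has already been linearized, so $n$ is slaved to the $t$-variable and contributes no oscillation on its own. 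The cancellation in the paper is extracted differently: one translates all blocks to a common interval (see \eqref{uniff0}) so that the free summation variables become the translation parameters $u,v$ of $f$ and $g$, and the two Cauchy--Schwarz applications then produce the exponential sum \eqref{keytr}, whose phase $P_{p,q,r}(v)$ in \eqref{polyn0} involves the rough function $\lambda$ evaluated at the four correlated points $p,q,r,q+r-p$ tied to the summation variables -- i.e.\ $\lambda$ enters as a rough coefficient function of the discrete parameters, not as a single number $\lambda(x)$ multiplying $n^{a-1}-n'^{a-1}$.

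Because of this, the ``level set analysis'' is not a bound on the measure of a bad set of $x$ where $\lambda(x)$ is near-resonant -- which, as you yourself note, you can only estimate trivially, and which fails as a mechanism (take $\lambda$ locally constant: your resonance condition can then hold on all of $I_k^{p_r}$ for many pairs, yet the estimate is still true, for reasons your scheme does not capture). It is instead a sublevel-set/zero-counting estimate for the fewnomial $v\mapsto P_{p,q,r}(v)$ with rough coefficients (Lemmas \ref{zeroc}--\ref{phasecont1}), and it is precisely the non-degeneracy of this four-point structure that encodes $a\notin\{1,2\}$; your schematic phase $(n^{a-1}-n'^{a-1})\lambda(x)$ cannot even explain why the bound must fail at $a=2$. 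Moreover, your plan has no counterpart of the substantial extra work required at $a=3$: there the coefficient bound \eqref{L011} degenerates when $\lambda$ is close to affine, and the paper must decompose $\lambda$ into a ``linear'' part covered by few heavy tubes and a ``non-flat'' part via a greedy algorithm, treating the two with different auxiliary linearizing functions before the Weyl-sum/Van der Corput step. Without the correctly identified exponential sum, the fewnomial level-set lemmas, and the $a=3$ analysis, the uniform case -- which is the heart of Proposition \ref{sgscale} -- remains unproved.
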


We start by elaborating on several subtleties pertaining to the above statement and its proof.

We first notice that the conclusion of Proposition \ref{sgscale} may be reshaped up to tails\footnote{\emph{E.g.} a superposition of similar flavor terms with fast decaying coefficients.}
\begin{equation}
\begin{aligned}
 |\underline{\L}_{m,k,\ell,r}^{a}(f,g,h)| \lesssim_a 2^{-\d_0 a  m} |I_{k,r}| & \Big(\aver{I_{k,r}} |\pi_{\omega_{k,\ell+1}} f|^2\Big)^{1/2} \\
& \cdot \Big(\aver{I_{k,r}} |\pi_{\o_{k,\ell-1}}g|^2\Big)^{1/2} \Big(\aver{I_{k,r}} |h|^2\Big)^{1/2},
\end{aligned}\label{TTstargum}
\end{equation}
where here $I_{k,r}:=[r2^{k}, (r+1)2^{k}]$, $\o_{k,\ell+1}:=[2^{am-k}\ell,2^{am-k}(\ell+1)]$, $r,\ell\in\Z$, $\pi_{\omega}$ stands for the frequency projection operator on the interval $\o$ and $\aver{I} f:=\frac{1}{|I|}\int_I f$.

Recalling now that $\underline{\L}_{m,k,\ell,r}^{a}(f,g,h)$ may be rewritten as $\underline{\L}_{m,P}^{a}(f,g,h)$ with $P=(P_1,P_2,P_3)\in \BHT$ the tri-tile defined by \eqref{eq:def:time:frequency:regions}, we notice that \eqref{TTstargum}  would correspond to a \emph{single tile} estimate if the function $h$ would be well-localized in time-frequency.

A more thorough inspection reveals now two key features of \eqref{TTstargum}:
\begin{enumerate}[label=(\alph*), leftmargin=15pt]
\item \label{item:one:scale:asymm} it is \emph{asymmetric} in the time-frequency localization of the inputs $(f,g,h)$: indeed, the functions $f$ and $g$ are well adapted to the expected time-frequency regions represented by the expanded tiles $P_1$ and $P_2$ respectively; however, for the function $h$ our estimate only provides the spatial localization corresponding to $P_3$;

\item \label{item:one:Scale:L2} it is a \emph{purely $L^2$}-based estimate that focuses on the norm $\|\underline{\L}_{m,P}^{a}\|_{L^2\times L^2\times L^2 \to \mathbb{C}}$ (and thus, in particular, goes beyond the usual H\"{o}lder duality range).

\end{enumerate}

As expected, these two features strongly relate with both our proof methodology and the structure of our paper:
\begin{enumerate}[label=(\Roman*), leftmargin=15pt]
\item \label{item:1scale:bilinear} as per item \ref{item:one:scale:asymm} above, the shape of the single scale estimate in \eqref{TTstargum} indicates that its proof favors the \emph{bilinear} behavior $(f,g)$ within the structure of $T_{m,k,\ell,r}^{a}$ treating the third function $h$ as an auxiliary object. This is further on reinforced  by the item \ref{item:one:Scale:L2} which hints that \eqref{TTstargum} involves a $T T^{*}$ argument in disguise, since $\|\underline{\L}_{m,k,\ell,r}^{a}\|_{L^2\times L^2\times L^2 \to \mathbb {C}}=\|T_{m,k,\ell,r}^{a}\|_{L^2\times L^2 \to L^2}$.

\item \label{item:1scale:necessity} the lack of a good time-frequency localization for the function $h$ observed in \ref{item:one:scale:asymm} hints towards the fact that one needs a further refinement of the single scale estimate \eqref{TTstargum} in order to develop the multiscale analysis required for providing the full boundedness range for our original operator $BC^a$. Indeed, as mentioned in the introduction, the treatment of $BC^a$ has to include the approach for the classical BHT with the latter relying fundamentally on the geometry/combinatorics of the (expanded) tri-tiles $P=(P_1,P_2, P_3)\in\BHT$ and hence on the good localization of the dualizing function $h$. Also the observation in \ref{item:one:Scale:L2} will offer the right setting for constructing and adapting suitable notions of sizes for each of the functions $f,g,\text{ and }h$ consistent with the multilinear time-frequency analysis for BHT.
\end{enumerate}

Based on the information provided in Sections \ref{Prel}, \ref{HR1lgc} and \ref{HR3} in what follows we will briefly recap and provide more context for
\begin{enumerate}[label=(\roman*)]
\item the features discussed at \ref{item:one:scale:asymm} and \ref{item:one:Scale:L2} and juxtaposed to \ref{item:1scale:bilinear} and \ref{item:1scale:necessity},

\item the necessity of going back and forth between the discrete linearized wave-packet (no $m$-decay absolute summability) model and the continuous linearized spatial ($m$-decay absolute summability) model.\footnote{It might be worth noticing that this is a direct consequence of the rough behavior of the linearizing function  $\l(x)$; indeed, if $\l(x)$ is smooth (say e.g. polynomial in $x$) then one can work directly with the discrete phase-linearized wave-packet model which obeys the $m$-decay absolute summability property.}
\end{enumerate}

Indeed, as already revealed in Section \ref{sec:discretization}, our initial approach is guided by the frequency discretization of our operator consistent with the phase linearization of the multiplier (see Section \ref{PL}) followed by an adapted Gabor frame discretization of the input functions $f$ and $g$. This achieves the discretization of our dualized form $\underline{\L}^{a}_{m,k}(f,g,h)$ as given by \eqref{lmk} and hence completes the first two steps (L) and (G) in our LGC-methodology described in Section \ref{HR1lgc}. However, the attempt of completing our program with the final step (C) faces a key difficulty: the wave-packet coefficients appearing in the discretized form $\underline{\L}^{a}_m(f,g,h)=\sum_{k\in\Z}\underline{\L}^{a}_{m,k}(f,g,h)$ do not produce an $m$-decaying absolute summable model. As already explained, we thus have to make a detour and reverse the Gabor discretization reshaping the linearization at the first step onto  the spatial variable (kernel side) -- see Sections \ref{passage} and \ref{BEV} and compare this with Section \ref{Tr}. The spatial linearization offers a critical advantage in that it circumvents the problem of the conditional $m$-decaying summability for $\underline{\L}_{m,k,\ell,r}^{a}(f,g,h)$ (see Appendix \ref{sec:counterexample}) since now, via \eqref{sgscdef}, the signum of the local Fourier (Gabor) coefficients within \eqref{eq:def:S_n,v,p} are encrypted through Parseval, into a bilinear oscillatory integral of the form \eqref{eq:def:S_n,v,pFinal}--\eqref{eq:tildeS}. Once at this point we pass to the (C) item exposed in Section \ref{HR1lgc} and employ what one might refer to as a \emph{bilinear $TT^{*}$-argument}  that relies on exponential sums\footnote{The fact that our treatment of a continuous nature problem is based on discrete analysis elements such as exponential sums is quite surprising; in more standard situations one usually sees the reverse applied - with continuous realm serving as a model and offering assistance to discrete analysis. In our setting, the specific shape of these exponential sums is dictated by the spatial phase linearization discretization.} and on level set estimates exploiting the time-frequency correlation and the non-zero curvature of the phase. This provides the insight to our proof of \eqref{TTstargum}, which gives us the desired exponential decay in $m$ but, as mentioned at item (a), without also providing a good time-frequency localization for the function $h$. The latter aspect however is necessary when passing from the single scale (fixed $k\in\Z$) to the multiscale analysis (summation over $k\in\Z$). This is why we need a refinement of the single scale estimate that is performed in Section \ref{sec:HR2} and that -- via the transition explained in Section \ref{passage} -- combines the features of both the present approach via spatial linearization and that of the initial  linearized wave-packet discretized model ending up by providing a single scale estimate with both exponential decay in $m$ and well localized information for the triple $(f,g,h)$.

\medskip
\subsection{Transition from the discrete phase-linearized wave-packet model to the continuous phase-linearized spatial model}\label{Tr}

This section can be thought as an adaptation of the reasonings presented in Section \ref{passage} to the specific context provided by Proposition \ref{sgscale}.

We start our reduction with the following simple observation: since our estimate \eqref{sgsc} is invariant under
translation, modulation and dilation symmetries, it is enough to only treat the case $k=0$, $\ell=1$ and $r=1$ (and hence $p_r=p$). Next, for notational simplicity, throughout this section we will refer to the wave-packets $\check{\varphi}^{u,n}_{\frac{am}{2},1}$ as simply
$\check{\varphi}^{u,n}$.

Recalling now \eqref{sgscdef}, an immediate application of Cauchy-Schwarz gives
\beq\label{dom}
|\underline{\L}^a_{m,0,1,1}(f,g,h)|\lesssim \Lambda_{m}^{a}(f,g)\,\|h\|_{L^2},
\eeq
where here we used the notation\footnote{Throughout the paper $\bar{c}_a$ is an absolute nonzero constant that is allowed to change from line to line. Also, throughout this section we set $\langle x \rangle := (1+x^2)^{1/2}$.}
\begin{equation}
\begin{aligned}
& [\Lambda_{m}^{a}(f,g)]^2:= \\
& \int_{\R} \Bigg(\frac{\rho_{am}(\l(x))}{2^{\frac{am}{4}}} \sum_{\substack{n,v \\ \sim 2^{\frac{am}{2}} }}
\frac{\Big|\sum\limits_{u,p \sim 2^{\frac{am}{2}} } \langle f,\check{\varphi}^{u-v,p-n}\rangle \langle g,\check{\varphi}^{u+v,p+n} \rangle\check{\varphi}^{2u,p}(x)\Big|}{\big\langle \bar c_a \frac{n^{a-1} \lambda(x)}{2^{\frac{a^2m}{2}}}+2v\big\rangle^2}\Bigg)^2 \,dx.
\end{aligned}\label{bilinfr}
\end{equation}
From \eqref{dom}, \eqref{bilinfr} and the symmetry invariance discussed earlier, we deduce that \eqref{sgsc} is an immediate consequence of

\beq\label{sgsc-bis}
|\Lambda_{m}^{a}(f,g)|\lesssim_a 2^{-\d_0 a m} \|f\|_{L^2}\|g\|_{L^2}.
\eeq

Notice now that since \eqref{bilinfr} is a positive expression, from now on we can assume without loss of generality that
\beq\label{levl}
\rho_{am}(\l(x))=1 \qquad \textrm{for any $x \in \mathbb{R}$}.
\eeq

Once at this point, we remark -- via \eqref{spacetransl} (or the more precise \eqref{eq:def:S_n,v,pFinal}) -- that our desired estimate \eqref{sgsc-bis} follows if we are able to prove a similar bound for the continuous version of our bilinear form $\Lambda_{m}^a(f,g)$, as given by\footnote{Here, as before, $I^n=[n\,2^{-\frac{a m}{2}},\,(n+1)\,2^{-\frac{a m}{2}}]$.}:
\begin{equation}
\begin{aligned}
& [\underline{\Lambda}_{m}^a(f,g)]^2 := \\
& \sum_{p\sim 2^{\frac{a m}{2}}}\int_{I^p} \Bigg(\sum_{n,v\sim 2^{\frac{a m}{2}}}\,
\frac{\Big|\int_{I^n} f(x-t) g(x+t) e^{-i 2^{\frac{a m}{2}} \, 2vt}\,dt\Big|}
{\big\langle \bar c_a \frac{n^{a-1}}{2^{\frac{a^2 m}{2}}}\l(x)+ 2v\big\rangle^2}\Bigg)^2\,dx.
\end{aligned}\label{bilinfr1}
\end{equation}

\begin{observation} In formulation \eqref{bilinfr1} above, for clarity purposes, we used a standard  almost-orthogonality argument in $p$ in order to only focus on the main diagonal term and discard the error term produced by the ``tails'' arising from the wave-packet localization. We will not provide here an argumentation for this reduction since there are several similar arguments carried over in detail in Sections \ref{sec:discretization} and \ref{sec:HR2}.
\end{observation}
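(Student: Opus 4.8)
Having already reduced, by the translation, modulation and dilation invariance of \eqref{sgsc}, to $k=0$, $\ell=1$, $r=1$, and — using positivity of \eqref{bilinfr} — to the regime $\rho_{am}(\lambda(x))\equiv 1$, the goal is to establish, for every $N>0$, the comparison
\[
[\Lambda_{m}^{a}(f,g)]^{2}\;\les\;[\underline{\Lambda}_{m}^{a}(f_{2},g_{0})]^{2}\;+\;2^{-Nam}\,\|f\|_{L^2}^{2}\,\|g\|_{L^2}^{2},
\]
where $f_{2}:=f_{\ell+1}$ and $g_{0}:=g_{\ell-1}$ are the relevant frequency projections; since the continuous estimate proved in the remainder of Section \ref{sec:onescale} applies with $f_2,g_0$ in place of $f,g$ and $\|f_2\|_{L^2}\le\|f\|_{L^2}$, $\|g_0\|_{L^2}\le\|g\|_{L^2}$, this comparison will yield \eqref{sgsc-bis}. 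The engine is the transition identity of Section \ref{passage}: the inner $u$-sum in \eqref{bilinfr} equals $2^{am/4}\,S_{0,1,1}^{n,v,p}(f,g)(x)$, and \eqref{eq:def:S_n,v,pfinal}--\eqref{exactform} express $S_{0,1,1}^{n,v,p}(f,g)(x)$ as an absolutely convergent series $\sum_{j_1,j_2\in\Z}\tilde S_{0,p}^{n,v}[j_1,j_2](f_2,g_0)(x)$ whose coefficients decay like $(|j_1|^{10}+1)^{-1}(|j_2|^{10}+1)^{-1}$ (coming from the Schwartz functions $\check{\varphi}_{k_1,k_2}(j_1)$, $\check{\varphi}_{l_1,l_2}(j_2)$), whose principal term $(j_1,j_2)=(0,0)$ is, up to a unimodular factor, $\check\varphi(2^{am/2}x-p)\int_{I^{n}}f_{2}(x-t)g_{0}(x+t)e^{-i2^{am/2}2vt}\,dt$, and whose remaining terms have the same shape with the bilinear integral translated in space by $O((1+|j_i|)2^{-am/2})$.

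\textbf{Key steps.} First, applying the triangle inequality in $p$ inside the square of \eqref{bilinfr}, inserting the above expansion, taking absolute values (which removes every unimodular phase), and using $|\check\varphi(2^{am/2}x-p)|\les\ci_{I^{p}}(x)$ together with the $(j_1,j_2)$-decay, one arrives at a pointwise bound in which each $S_{0,1,1}^{n,v,p}(f,g)(x)$ is replaced by $\ci_{I^{p}}(x)\,\big|\int_{I^{n}}f_2(x-t)g_0(x+t)e^{-i2^{am/2}2vt}dt\big|$ plus spatially translated tails. Second — the diagonal extraction — since this bilinear $t$-integral does not depend on $p$, summing over $p\sim 2^{am/2}$ factors out $\sum_{p\sim 2^{am/2}}\ci_{I^{p}}(x)$; because the $I^{p}$ tile $[1,2]$ and $\ci_{I^{p}}$ has $M$-th order decay, $\sum_{p\sim 2^{am/2}}\ci_{I^{p}}(x)\les\big(1+2^{am/2}\,\mathrm{dist}(x,[1,2])\big)^{-M+1}$, comparable to $1$ on $[1,2]$ and rapidly decaying off it. Splitting $\int_{\R}dx=\sum_{p\sim 2^{am/2}}\int_{I^{p}}dx+\int_{\R\setminus[1,2]}dx$ and retaining the first piece reproduces exactly $[\underline\Lambda_m^a(f_2,g_0)]^2$ (the appearance of $f,g$ rather than $f_2,g_0$ in \eqref{bilinfr1} is immaterial since $\langle f,\check\varphi^{u-v,p-n}\rangle=\langle f_2,\check\varphi^{u-v,p-n}\rangle$ by frequency support, and $f_2,g_0$ have $L^2$ norms dominated by those of $f,g$). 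Third, one controls the remaining off-diagonal and $j$-tail pieces: here the linearization estimate \eqref{secderiv} shows that, on the support of $\rho_{am}(\lambda)$, the map $n\mapsto\bar c_a n^{a-1}\lambda(x)2^{-a^2m/2}$ is essentially unit-spaced on $\{n\sim 2^{am/2}\}$, so $\sum_{n\sim 2^{am/2}}\langle\bar c_a n^{a-1}\lambda(x)2^{-a^2m/2}+2v\rangle^{-2}\les 1$ uniformly, and symmetrically $\sum_{v\sim 2^{am/2}}\langle\,\cdot\,\rangle^{-2}\les 1$; a double Cauchy--Schwarz (first against these summable weights, then inside the $t$-integral) turns the far-field and tail contributions into weighted $L^2$-densities of $f_2,g_0$ multiplied either by $\big(1+2^{am/2}\,\mathrm{dist}(x,[1,2])\big)^{-2M+2}$ or by the polynomial $j$-decay, and their spatial integral is then $O(2^{-Nam})\|f\|_{L^2}^2\|g\|_{L^2}^2$ for every $N$.

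\textbf{Main obstacle.} The delicate point is precisely this diagonal extraction: one must certify that the discarded off-diagonal and far-field remainders carry an arbitrary power of $2^{-am}$ rather than just a bounded constant, which forces one to combine three independent sources of decay — the Schwartz tails of the wave-packet $\check\varphi$ and of the frequency truncations $f\mapsto f_{\ell+1}$ (in both space and frequency), the polynomial decay of the Taylor-type coefficients $\check{\varphi}_{k_1,k_2}(j_i)$ in \eqref{exactform}, and the $O(1)$-summability of the weights $\langle\,\cdot\,\rangle^{-2}$ produced by the linearization \eqref{secderiv} — and to check that their product defeats the $O(2^{am/2})$ cardinality losses of the index sets over which one sums. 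This is exactly the type of bookkeeping carried out in full in Sections \ref{sec:discretization} and \ref{sec:HR2}, which is why the reduction can be safely deferred to those arguments.
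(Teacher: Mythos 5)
Your overall route is the intended one: you reduce via the transition identity of Section \ref{passage}, writing the inner $u$-sum of \eqref{bilinfr} as $2^{am/4}S^{n,v,p}_{0,1,1}(f,g)(x)$, expanding by \eqref{eq:def:S_n,v,pfinal}--\eqref{exactform}, keeping the $(j_1,j_2)=(0,0)$ term (whose $p$-dependence collapses after $\sum_p\ci_{I^p}\lesssim 1$, giving exactly \eqref{bilinfr1}), and treating everything else as error; your handling of the $n$- and $v$-summability of the weights via the unit spacing of $n\mapsto \bar c_a n^{a-1}\lambda(x)2^{-a^2m/2}$ is correct, as is the genuinely far-field spatial piece (distance $\gtrsim 1$ from $[1,2]$), which does gain arbitrary powers of $2^{-am}$ from the $M$-th order tails.

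However, there is a genuine gap in the third step: the claim that the $(j_1,j_2)\neq(0,0)$ tail terms contribute $O(2^{-Nam})\|f\|_{L^2}^2\|g\|_{L^2}^2$ is false, and with it the headline comparison $[\Lambda_m^a(f,g)]^2\lesssim[\underline{\Lambda}_m^a(f_2,g_0)]^2+2^{-Nam}\|f\|_{L^2}^2\|g\|_{L^2}^2$. For, say, $(j_1,j_2)=(1,0)$ the term in \eqref{eq:def:S_n,v,pfinal} is, for $x\in I^{p}$, of exactly the same size and shape as the main term: the only gain is the $m$-independent factor coming from the Schwartz decay of $\check\varphi$ (or of the Taylor coefficients $\check\varphi_{k_1,k_2}(j_1)$ in \eqref{exactform}) evaluated near $j_1$, while the inputs are merely translated by $j_1 2^{-am/2}$; no power of $2^{-am}$ is produced, so these pieces cannot be absorbed into an exponentially small remainder, nor are they pointwise dominated by the single quantity $\underline{\Lambda}_m^a(f_2,g_0)$ (translation changes the function). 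The correct conclusion of the reduction — and what the Observation actually asserts — is weaker but sufficient: after a change of variables each shifted term is again a continuous-model expression of the form \eqref{bilinfr1} with translated/modulated inputs and a shifted $n$-window (compare the mechanism in \eqref{triangineq}), so it obeys the \emph{same} bound $2^{-\delta_0 am}\|f\|_{L^2}\|g\|_{L^2}$ from Proposition \ref{sgscalelambda}, whose proof is uniform in $\lambda$ and covariant under these symmetries; one then concludes \eqref{sgsc-bis} by the triangle inequality over $(j_1,j_2)$ using the rapidly decaying coefficients, rather than by showing the tails are negligible against $\|f\|_{L^2}\|g\|_{L^2}$. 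With that correction your argument closes and coincides with the bookkeeping the paper defers to Sections \ref{sec:discretization} and \ref{sec:HR2} (see also Remarks \ref{remark:shift} and \ref{remark:shifts} for the same phenomenon there).
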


We now claim that the behavior of $\underline{\Lambda}_{m}^a(f,g)$ may be reduced to that of\footnote{Here is where we use the time-frequency correlation provided by the expression appearing at the denominator in the right-hand side of \eqref{bilinfr1}.}

\begin{equation}
\begin{aligned}
& [\bar{\Lambda}_{m}^{a}(f,g)]^2:= \\
& \sum_{p\sim 2^{\frac{a m}{2}}}\int_{I^p} \bigg(\sum_{n\sim 2^{\frac{a m}{2}}}
\Big|\int_{I^n} f(x-t)g(x+t) e^{i\frac{\bar c_a n^{a-1}}{2^{\frac{a(a-1) m}{2}}} \l(x)\,t}\,dt\Big|\bigg)^2\,dx.
\end{aligned}\label{bilinfr2}
\end{equation}

Indeed, recalling the notation $M_{b}g(x):=e^{-i b x} g(x)$, $b\in\R$, we further claim that
\beq\label{triangineq}
\underline{\Lambda}_{m}^a(f,g)\lesssim \sum_{s \in \mathbb{Z}: |s|\leq 2^{\frac{am}{2}}} \frac{\bar{\Lambda}_{m}^{a}(f,M_{2^{\frac{am}{2}}s}g)}{s^2+1}.
\eeq

This last statement is based on a rather standard technical reasoning involving a Taylor series argument. However, for reader's convenience, we provide below a sketch of its proof: we first notice that

\begin{align*}
[\underline{\Lambda}_{m}^a(f,g)]^2 \approx \sum_{p\sim 2^{\frac{a m}{2}}}\int_{I^p} \bigg(\sum_{n,v\sim 2^{\frac{a m}{2}}}\,\sum_{|s|\leq 2^{\frac{a m}{2}}} & \one \Big(\Big|\frac{\bar{c}_a\,n^{a-1}}{ 2^{\frac{a^2 m}{2}} }\l(x) + 2v-s\Big|\lesssim 1\Big) \\
&\cdot \frac{\Big|\int_{I^n} f(x-t)g(x+t)e^{-i 2^{\frac{a m}{2}}\,2vt}\,dt\Big|}{s^2+1}\bigg)^2\,dx.
\end{align*}

Fix now $x$, and assume $\Big|\frac{\bar{c}_a n^{a-1}}{ 2^{\frac{a^2 m}{2}}  }\l(x)+2v-s\Big|\lesssim 1$. Isolating the $t$-integrand, we have
\begin{align*}
& \Big|\int_{I^n} f(x-t)g(x+t)e^{- i2^{\frac{am}{2}}\,2vt}\,dt\Big| \\
&=\Big|\int_{I^n} f(x-t)g(x+t) e^{-i2^{\frac{am}{2}}st}e^{i2^{\frac{am}{2}}\frac{\bar{c}_a n^{a-1}}{2^{\frac{a^2 m}{2}}} \l(x) t} \\
& \hspace{13em} \cdot e^{-i 2^{\frac{am}{2}}(t-(n-1)2^{-\frac{am}{2}})(\frac{\bar{c}_a n^{a-1}}{2^{\frac{a^2 m}{2}}} \l(x) + 2v -s)}\,dt\Big|  \\
& \lesssim \sum_{k=0}^{\infty} \frac{\Big(\frac{\bar{c}_a n^{a-1}}{2^{\frac{a^2 m}{2}}} \l(x) + 2v -s\Big)^k}{k!} \Big|\int_{I^n} f(x-t) (M_{2^{\frac{am}{2}}s}g)(x+t) \\
& \hspace{13em} \cdot e^{i2^{\frac{am}{2}}\frac{\bar{c}_a n^{a-1}}{2^{\frac{a^2 m}{2}}} \l(x)t}
(2^{\frac{am}{2}}(t-(n-1)2^{-\frac{am}{2}}))^k\,dt\Big|  \\
&\lesssim \sum_{k=0}^{\infty}
\Big|\int_{I^n} f(x-t)(M_{2^{\frac{am}{2}}s}g)(x+t) e^{i \frac{\bar{c}_a n^{a-1}}{2^{\frac{a(a-1)m}{2}}}\l(x) t} \\
& \hspace{13em} \cdot \frac{(2^{\frac{am}{2}+10}(t-(n-1) 2^{-\frac{am}{2}}))^k}{k!}\,dt\Big|.
\end{align*}

Now, since $2^{\frac{am}{2}+10}\,(t-(n-1)\,2^{-\frac{am}{2}})$ is a function  that is morally constant one for $t\in I^n$ we conclude that \eqref{triangineq} holds.

With these done, based on the above considerations, the proof of \eqref{sgsc} is an immediate  consequence of Proposition \ref{sgscalelambda} below.

\subsection{The continuous phase-linearized spatial model formulation of the single scale estimate}\label{SPL}

In this section we formulate the single scale estimate in the desired continuous phase-linearized spatial form and  perform some simple reductions involving the distribution of the spatial information carried by the input functions $f$ and $g$.

\begin{proposition} \label{sgscalelambda} Let $a\in(0,\infty)\setminus\{1,2\}$ be a fixed parameter and $\l: \R \to [2^{am-10},2^{am+10}]$ be an arbitrary Lebesgue measurable function.

Then there exists\footnote{At the end of this proof we provide an explicit value for $\d_0$ though this is far from optimal. One can indeed increase the value of the exponent $\d_0$ but we prefer to trade off this improvement over the clarity of the exposition since for our purposes any non-trivial decay (i.e $\d_0>0$) is good enough.} an absolute constant $\d_0>0$ such that for any $m\in\N$, one has
\begin{equation}
\begin{aligned}
& \bigg(\sum_{p\sim 2^{\frac{am}{2}}}\int_{I^p} \bigg(\sum_{n\sim 2^{\frac{am}{2}}} \Big|\int_{I^n} f(x-t)g(x+t)e^{i \frac{\bar{c}_a n^{a-1}}{2^{\frac{a(a-1)m}{2}}}\l(x)t}\,dt\Big|\bigg)^2\,dx\bigg)^{\frac{1}{2}} \\
& = \bar{\Lambda}_{m}^{a}(f,g) \lesssim_a 2^{-\d_0 a m}\|f\|_{L^2}\|g\|_{L^2}.
\end{aligned}\label{sgsce}
\end{equation}
\end{proposition}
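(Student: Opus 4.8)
The plan is to reduce the estimate \eqref{sgsce} to a purely local (single ``box'' $I^p$) problem and then to extract the decay via a double $TT^*$-argument that transfers the oscillatory integral behavior into an exponential sum estimate of Weyl type. First I would dispose of the trivial case where one of the two functions $f,g$ carries ``too much mass'' on a short scale: writing $f = \sum_\nu f_\nu$ and $g = \sum_\mu g_\mu$ with $f_\nu := f\cdot\mathbf 1_{I^\nu}$, $g_\mu := g\cdot \mathbf 1_{I^\mu}$ (recall $I^\nu = [\nu 2^{-am/2}, (\nu+1)2^{-am/2}]$), one observes that for fixed $x\in I^p$ the inner $t$-integral over $I^n$ only sees $O(1)$ of the $f_\nu$'s and $O(1)$ of the $g_\mu$'s (those with $\nu \approx p - n$, $\mu \approx p + n$). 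Hence $\bar\Lambda_m^a(f,g)$ splits -- up to rapidly decaying tails -- as a superposition of pieces $\bar\Lambda_m^a(f_\nu, g_\mu)$ over $(\nu,\mu,n)$ lying on a two-parameter lattice inside a $2^{am/2}$-window. A Cauchy--Schwarz step in $n$ (paying a factor $2^{am/4}$) together with the $\ell^2$-orthogonality of the $f_\nu$ and $g_\mu$ supports will then allow one to reduce matters to proving, for a \emph{single} pair of adjacent short intervals $I^{\nu_0}$, $I^{\mu_0}$ and for $x$ ranging over a fixed $I^p$, the estimate
\begin{equation}
\label{eq:localclaim}
\int_{I^p}\Big(\sum_{n\sim 2^{am/2}}\Big|\int_{I^n} f(x-t)g(x+t)e^{i\frac{\bar c_a n^{a-1}}{2^{a(a-1)m/2}}\lambda(x)t}\,dt\Big|\Big)^2 dx \lesssim_a 2^{-2\delta_0 am}\,2^{-am}\,\|f\|_{L^2}^2\|g\|_{L^2}^2,
\end{equation}
which after rescaling is a statement about a bilinear oscillatory average on the unit scale with phase $\frac{\bar c_a n^{a-1}}{2^{a(a-1)m/2}}\lambda(x)t$, where the ``frequency'' $\frac{\bar c_a n^{a-1}}{2^{a(a-1)m/2}}\lambda(x)$ runs over $\sim 2^{am/2}$ values as $n$ varies and $\lambda$ is \emph{arbitrary measurable}.

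The heart of the argument is the treatment of \eqref{eq:localclaim}. The plan here is to bound the $\ell^1_n$-sum by Cauchy--Schwarz ($2^{am/4}$ loss) by an $\ell^2_n$-expression, expand the square, and for each pair $(n_1,n_2)$ apply a $TT^*$-argument \emph{in the $x$-variable} inside $I^p$: writing $F(x,t) := f(x-t)g(x+t)$, the quantity to control becomes $\sum_{n_1,n_2}\int_{I^p}\int_{I^{n_1}}\int_{I^{n_2}} F(x,t_1)\overline{F(x,t_2)} e^{i(\theta_{n_1}-\theta_{n_2})\lambda(x)}\cdots$ where $\theta_n := \bar c_a n^{a-1}2^{-a(a-1)m/2}$. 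Since $\lambda(x)$ is only measurable, one cannot integrate by parts directly in $x$; instead the strategy -- following the phase level set analysis alluded to in Section \ref{Inthr} -- is to freeze $\lambda(x)$ on level sets $E_\ell := \{x\in I^p : \lambda(x)\in[2^{am}(1+\ell 2^{-am/2}), 2^{am}(1+(\ell+1)2^{-am/2})]\}$, $\ell\sim 2^{am/2}$. On each $E_\ell$ the phase $(\theta_{n_1}-\theta_{n_2})\lambda(x)$ is essentially constant in $x$ up to $O(1)$, so the $x$-integral over $E_\ell$ decouples, and one is left with a \emph{discrete} sum $\sum_{n_1,n_2} e^{i(\theta_{n_1}-\theta_{n_2})\lambda_\ell}(\cdots)$. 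This is precisely a Weyl sum in the $n$-variable: since $a\neq 1,2$ the map $n\mapsto n^{a-1}$ is genuinely ``curved'' (its second-order differences do not vanish and are of the right size $\sim 2^{-a(a-1)m/2}\cdot 2^{am/2 \cdot (a-3)} \cdots$, roughly of size $2^{-am}$ after the appropriate normalization), so a second $TT^*$ (i.e. a van der Corput / Weyl differencing step in $n$) on $\sum_n e^{i\theta_n\lambda_\ell}$ produces a genuine power gain $2^{-\delta am}$ over the trivial bound $2^{am/2}$. Combining the two $TT^*$ steps with the level-set decomposition (and absorbing the $2^{am/4}$ Cauchy--Schwarz losses, which are smaller than the Weyl gain for $\delta_0$ chosen small enough) yields \eqref{eq:localclaim}, and summing back over $p$ and the lattice of $(\nu,\mu)$ gives \eqref{sgsce}. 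The exponent $\delta_0$ produced this way is explicit but far from optimal, which matches the statement's footnote.

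The main obstacle -- and the step I expect to be genuinely delicate -- is the interplay between the \emph{roughness of $\lambda$} and the \emph{curvature in $n$}. One cannot simultaneously exploit oscillation in $x$ (blocked by measurability of $\lambda$) and oscillation in $n$ (where the curvature lives); the level-set decomposition is what converts the former into a harmless localization so that only the latter needs to be used, but this forces a careful bookkeeping: on a given level set $E_\ell$ the number of available $n$'s contributing ``in phase'' must be shown to be $\lesssim 2^{(1-\mu)am/2}$ for some $\mu>0$ (this is where the hypothesis $a\notin\{1,2\}$ enters decisively -- for $a=2$ the phase $n^{a-1}=n$ is linear and no such gain exists, while for $a=1$ there is no phase at all, consistent with the resonant cases being excluded). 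A second subtlety is that the short intervals $I^{n_1}, I^{n_2}$ and the base-point shifts in $t$ generate lower-order polynomial factors of the form $(2^{am/2}(t - (n-1)2^{-am/2}))^k/k!$ -- as already encountered around \eqref{triangineq}; these are ``morally constant'' on $I^n$ and must be controlled by the same Taylor-series-with-rapid-decay device, but one must check the Weyl sum estimate is stable under multiplication by such slowly varying amplitudes, which is routine but needs to be stated. Once these two points are handled, the rest is bilinear Cauchy--Schwarz and orthogonality of the Littlewood--Paley-type pieces, and the proof closes.
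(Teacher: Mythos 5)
Your opening reduction (splitting off the ``clustered'' case where $f$ or $g$ concentrates on a few short intervals, and reducing to a uniform-mass situation) does match the paper's first step. But the engine you propose for the uniform case has a structural gap that I do not see how to repair. The quantity $\bar{\Lambda}_m^a$ carries the modulus \emph{inside} the sum over $n$: for each $x$ one sums $|G_n(x)|$ with $G_n(x)=\int_{I^n}f(x-t)g(x+t)e^{i\theta_n\lambda(x)t}\,dt$. When you expand the square, the cross terms are $|G_{n_1}(x)|\,|G_{n_2}(x)|$ -- the relative phases $e^{i(\theta_{n_1}-\theta_{n_2})\lambda(x)\cdots}$ on which your whole plan rests (level sets of $\lambda$, then a Weyl sum in $n$ with frozen coefficient $\lambda_\ell$) are simply not present in the object to be bounded. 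Equivalently: writing $|G_n(x)|=\epsilon_n(x)G_n(x)$ with an adversarial unimodular $\epsilon_n(x)$ depending on both $n$ and $x$ shows that no decay can come from oscillation \emph{across} different $n$'s, since $\epsilon_n(x)$ can conjugate those phases away. Two further problems would arise even if the moduli were absent: (i) your level sets of width $2^{am/2}$ in $\lambda$ do not freeze the phase, since $\theta_n t\sim 1$ and $|\theta_{n_1}-\theta_{n_2}|\sim 1$ for typical pairs, so the phase still varies by $\sim 2^{am/2}$ on $E_\ell$ (one would need width $O(1)$, i.e.\ $\sim 2^{am}$ level sets); and (ii) after freezing, the ``Weyl sum'' has amplitudes $\int_{E_\ell}F(x,t_1)\overline{F(x,t_2)}\,dx$ which involve the unknown $f,g$ over a rough set and do not factor in $(n_1,n_2)$, so an unweighted exponential-sum bound cannot be applied, and stripping the weights by Cauchy--Schwarz destroys the oscillation you just isolated.

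The paper extracts the decay from a different source: the oscillation \emph{within} each block $I^n$ (a $TT^*$ in the $t$-variable, where the phase increment $\theta_n\lambda(x)s\sim 2^{am/2}$ for $|s|\le 2^{-am/2}$), transferred -- by a further double Cauchy--Schwarz over the spatial translation indices $u,v$ of $f$ and $g$, which also shift the argument of $\lambda$ -- into an exponential sum in four discrete parameters. Its phase $P_{p,q,r}(v)$ is a mixed second difference combining the curved weights $(2v-p)^{a-1},\dots$ with the values $\tilde\lambda(p),\tilde\lambda(q),\tilde\lambda(r),\tilde\lambda(q+r-p)$; the roughness of $\lambda$ is then harmless because the level-set lemmas for such ``fewnomials'' (Lemmas 5.5--5.8) are uniform in these coefficients, and the discrete van der Corput bound is applied in $v$ for fixed $(p,q,r)$. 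Notably, for $a=3$ this uniformity degenerates and the paper needs an extra decomposition of the graph of $\lambda$ into near-linear and non-flat parts (the tube/density greedy algorithm) -- a genuine difficulty tied to the interaction of the rough $\lambda$ with the curvature that your frozen-level-set scheme never confronts. So the proposal is not a variant of the paper's proof but a different route, and as formulated it cannot close because the phases it needs are killed by the absolute values in the definition of $\bar{\Lambda}_m^a$.
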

\begin{proof}

The proof of this statement will be split in two cases depending on the distribution of the spatial information carried by the functions $f$ and $g$:

Fix $\mu\in (0,1)$ a small parameter to be chosen later and define
\begin{itemize}
\item the \emph{$f$-clustered}  (heavy) component as
\[ C_{\mu}(f):=\Big\{n\sim 2^{\frac{am}{2}} : \int_{3 I^n}|f|^2 \geq 2^{(\mu-1)\frac{am}{2}} \|f\|_{L^2}^2\Big\}. \]
Notice here that
\begin{equation}\label{sgsce1}
\#C_{\mu}(f)\lesssim 2^{(1-\mu)\frac{am}{2}}.
\end{equation}
\item the \emph{$f$-uniform}  (light) component as
\begin{equation}\label{unifset}
U_{\mu}(f):= \Big\{n\sim 2^{\frac{am}{2}} : \int_{3 I^n}|f|^2< 2^{(\mu-1)\frac{am}{2}} \|f\|_{L^2}^2 \Big\}.
\end{equation}
\end{itemize}

A similar decomposition applies to $g$.

In direct correspondence with the above we analyze the behavior of our form $\bar{\Lambda}_{m}^{a}$ according to:
\begin{itemize}
\item the \emph{clustered} component defined as
\begin{equation}
\begin{aligned}
& \bar{\Lambda}_{m}^{a,C_{\mu}}(f,g):=\\
& \bigg(\sum_{p\sim 2^{\frac{am}{2}}}\int_{I^p} \bigg(\sum_{\substack{n\sim 2^{\frac{am}{2}}: \\ p-n\in C_{\mu}(f) \\ \text{ or } \\
p+n\in C_{\mu}(g)}}
\Big|\int_{I^n} f(x-t)g(x+t)e^{i\frac{\bar{c}_a n^{a-1}}{2^{\frac{a(a-1)m}{2}}}\l(x) t}\,dt\Big|\bigg)^2\,dx\bigg)^{\frac{1}{2}},
\end{aligned}\label{clust}
\end{equation}

\item the \emph{uniform} component defined as
\begin{equation}
\begin{aligned}
& \bar{\Lambda}_{m}^{a,U_{\mu}}(f,g):= \\
& \bigg(\sum_{p\sim 2^{\frac{am}{2}}}\int_{I^p} \bigg(\sum_{\substack{n\sim 2^{\frac{am}{2}} : \\ p-n\in U_{\mu}(f) \\ \text{ and } \\ p+n\in U_{\mu}(g)}}
\Big|\int_{I^n} f(x-t)g(x+t)e^{i\frac{\bar{c}_a n^{a-1}}{2^{\frac{a(a-1)m}{2}}}\l(x) t}\,dt\Big|\bigg)^2 \,dx\bigg)^{\frac{1}{2}},
\end{aligned}\label{defunif}
\end{equation}
\end{itemize}
and notice that
\beq\label{rel}
\bar{\Lambda}_{m}^{a}(f,g)\leq \bar{\Lambda}_{m}^{a,C_{\mu}}(f,g)\,+\,\bar{\Lambda}_{m}^{a,U_{\mu}}(f,g).
\eeq

With these done, we are now ready to perform our analysis.\\
\vspace{0.3cm}

\noindent\textbf{Case 1:} \underline{\textsf{The clustered component.}}
\vspace{0.3cm}

The clustered component has a straightforward treatment. Indeed, relying on the estimate \eqref{sgsce1} and applying (twice) a Cauchy-Schwarz argument we get
\begin{equation}
\begin{aligned}
& [\bar{\Lambda}_{m}^{a,C_{\mu}}(f,g)]^2 \\
& \lesssim 2^{(1-\mu)\frac{am}{2}} \sum_{p, n \sim 2^{\frac{am}{2}}}\int_{I^p}
\Big(\int_{I^n} |f(x-t)|^2\,dt\Big)\Big(\int_{I^n} |g(x+t)|^2\,dt\Big)\,dx \\
& \lesssim 2^{-\mu \frac{am}{2}}\sum_{p,n\sim 2^{\frac{am}{2}}} \int_{3 I^{p-n}}|f|^2 \int_{3I^{p+n}}|g|^2 \lesssim 2^{-\mu \frac{am}{2}}\|f\|^2_{L^2} \|g\|^2_{L^2}.
\end{aligned} \label{CS}
\end{equation}

$\newline$

\noindent\textbf{Case 2:} \underline{\textsf{The uniform component.}}
$\newline$

The treatment of the uniform component is the main step for our present single scale estimate and in particular for \eqref{sgsce}. It is within this chapter of our journey that the non-zero curvature plays a critical r\^{o}le. Because of its later relevance the proof of the key estimate for the uniform component will be delivered as part of an independent statement - see Proposition \ref{prop:uniform} below. In what follows we show that if the latter holds then relation \eqref{sgsce} also holds.

For this, it is enough to notice that a simple Cauchy--Schwarz argument gives us

\begin{equation}
\begin{aligned}
& [\bar{\Lambda}_{m}^{a,U_{\mu}}(f,g)]^2 \\
& \lesssim 2^{\frac{am}{2}} \sum_{p\sim 2^{\frac{am}{2}}}\sum_{\substack{n\sim 2^{\frac{am}{2}} \\ p-n\in U_{\mu}(f) \\ p+n\in U_{\mu}(g)}}\int_{I^p}
\Big|\int_{I^n} f(x-t) g(x+t) e^{i \frac{\bar{c}_a n^{a-1}}{2^{\frac{a(a-1)m}{2}}} \l(x) t}\,dt\Big|^2 \,dx.
\end{aligned}\label{unif-0}
\end{equation}

However, from  Proposition \ref{prop:uniform}, the right-hand side of the above inequality is bounded from above by $2^{-2 \delta_1 a m}$ for some absolute constant $\delta_1>0$ and hence
\beq\label{uniff}
[\bar{\Lambda}_{m}^{a,U_{\mu}}(f,g)]^2 \lesssim_a 2^{-2 \delta_1 a m} \|f\|^2_{L^2} \|g\|^2_{L^2} .
\eeq

Putting together \eqref{rel}, \eqref{CS} and \eqref{uniff} we conclude that
\beq\label{final2}
|\bar{\Lambda}_{m}^{a}(f,g)|\lesssim_a 2^{-\d_0 a m}\|f\|_{L^2}\|g\|_{L^2},
\eeq
where here\footnote{In effect -- see \eqref{finaunifconclude} and \eqref{d1case3} -- the proof of Proposition \ref{prop:uniform} will give us that, up to a polynomial term in $m$, \eqref{uniff} holds for  $\d_1=\frac{1}{68}\,(1-16 \mu)$ for $a\in(0,\infty)\setminus\{1,2,3\}$ and $\d_1=\frac{1}{6}\,(\frac{1}{114}-\frac{4}{3}\mu)$ for $a=3$. Thus, ignoring polynomial growth in $m$ factors, \eqref{final2} follows with $\d_0=\frac{1}{1292}$ (for $\mu=\frac{1}{323}$).} $\d_0:=\min\{\delta_1,\frac{\mu}{4}\}$.
\end{proof}

We are now left with proving Proposition \ref{prop:uniform}.

\begin{observation} As already mentioned in Section \ref{Inthr}, while Proposition \ref{sgscale} is to be regarded as providing the desired control over the single scale component of our operator, the key ingredient to be used later will be offered by the specialized version stated as Proposition \ref{prop:uniform} below (together with its consequence expressed as Corollary \ref{cor:uniform}).
\end{observation}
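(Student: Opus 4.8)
\textbf{Plan for the proof of Proposition \ref{prop:uniform}.} The target is an estimate of the form
\[
2^{\frac{am}{2}} \sum_{p\sim 2^{\frac{am}{2}}}\sum_{\substack{n\sim 2^{\frac{am}{2}} \\ p-n\in U_{\mu}(f) \\ p+n\in U_{\mu}(g)}}\int_{I^p}
\Big|\int_{I^n} f(x-t) g(x+t) e^{i \frac{\bar{c}_a n^{a-1}}{2^{\frac{a(a-1)m}{2}}} \l(x) t}\,dt\Big|^2 \,dx \lesssim_a 2^{-2\d_1 am}\|f\|_{L^2}^2\|g\|_{L^2}^2 .
\]
The plan is to exploit the non-zero curvature of the phase through a double $TT^{*}$ argument. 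First I would fix $p$ and view the inner integral, as a function of $x\in I^p$, as a bilinear operator $B_{n,x}(f,g)$ acting on the localized pieces $f\cdot\one_{3I^{p-n}}$, $g\cdot\one_{3I^{p+n}}$; squaring and expanding produces an integral in two auxiliary variables $t,t'\in I^n$ whose $x$-integration yields a factor $\int_{I^p} e^{i\beta_n(\l(x))(t-t')}\,dx$ with $\beta_n := \bar c_a n^{a-1}2^{-a(a-1)m/2}$. This is the first $TT^{*}$: it converts the oscillation in $x$ (through the rough function $\l$) into a gain once we perform a level-set decomposition of $\l$ over dyadic bands, using that on a band where $\l(x)\approx 2^{am}$ fixed the phase difference is genuinely oscillatory in $t-t'$ unless $|t-t'|\lesssim 2^{-am/2}\cdot 2^{(a-1)am/2}/n^{a-1}\cdot(\text{band width})^{-1}$, shrinking the effective diagonal. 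Then I would perform the second $TT^{*}$, now in the variable $n$: after the first reduction one is left with a sum over $n$ of bilinear objects whose frequency supports (governed by $\beta_n \l(x)$) are essentially disjoint for well-separated $n$, so a Cotlar–Stein / almost-orthogonality estimate in $n$ reduces matters to the diagonal $n=n'$ up to acceptable off-diagonal tails decaying polynomially in $|n-n'|$.

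Once the two $TT^{*}$ reductions are in place, the core quantity becomes a bound on an exponential sum / discrete Van der Corput estimate: one must count, for each dyadic level set $E_\ell := \{x : \l(x)\in[2^{\ell},2^{\ell+1}]\}\cap I^p$, the number of pairs $(n,t,t')$ for which $\beta_n(\l(x))(t-t')$ stays within $O(1)$, and this is exactly where the curvature exponent $a\notin\{1,2\}$ enters: the map $n\mapsto n^{a-1}$ is nonlinear, so its second difference $n^{a-1}-2(n+1)^{a-1}+(n+2)^{a-1}\sim n^{a-3}$ is nonzero, giving the Weyl-sum cancellation. The key lemma I would isolate is a Van der Corput bound of the shape $\big|\sum_{n\sim N} e^{i \gamma n^{a-1}}\big|\lesssim_a N^{1-\sigma(a)}$ for a suitable $\sigma(a)>0$ depending on how far $a-1$ is from $\{0,1\}$, combined with a trivial bound and interpolation between the two to optimize the exponent (this is the source of the explicit $\d_1$ values quoted in the footnote, e.g. $\frac{1}{68}(1-16\mu)$, with the special case $a=3$ handled separately because $n^{a-1}=n^2$ is a pure quadratic, forcing Gauss-sum rather than higher-degree Weyl-sum estimates).

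The uniformity hypothesis $p-n\in U_\mu(f)$, $p+n\in U_\mu(g)$ is what guarantees that no single pair $(p-n,p+n)$ carries more than a $2^{(\mu-1)am/2}$-fraction of $\|f\|_{L^2}^2\|g\|_{L^2}^2$, so when we sum the diagonal contributions $\sum_n \big(\int_{3I^{p-n}}|f|^2\big)\big(\int_{3I^{p+n}}|g|^2\big)$ the $\ell^\infty$ control on each factor lets us trade the $2^{am/2}$-many terms against the extracted curvature gain; without it the clustered case (already dispatched via plain Cauchy–Schwarz) would be the obstruction. Assembling: the first $TT^{*}$ gives a gain $\sim 2^{-c_1 am}$ from the $\l$-level-set oscillation, the second $TT^{*}$ plus the Weyl-sum estimate gives an additional $\sim 2^{-c_2 am}$ from the $n$-nonlinearity, and the uniformity of the input distribution converts these into a genuine $2^{-2\d_1 am}$ decay after summing in $p$ and in the dyadic bands of $\l$ (the number of which is $O(m)$, hence only polynomial loss).

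\textbf{Main obstacle.} The hardest part is the interlocking of the two $TT^{*}$ steps with the level-set analysis of the \emph{rough} linearizing function $\l$: because $\l$ is merely measurable, one cannot differentiate the phase in $x$, so the first $TT^{*}$ must produce its gain purely from measure-theoretic level-set counting (how large can $\{x\in I^p : \beta_n\l(x)\in \text{fixed }O(1)\text{-interval}\}$ be?), and this bound must be \emph{uniform} over the sum in $n$ so that it survives the second $TT^{*}$. Balancing these — making the $x$-side gain compatible with the $n$-side Weyl-sum gain while not losing more than polynomially in $m$ over the $O(m)$ dyadic bands — is the delicate optimization that dictates the precise value of $\d_1$, and it is here that the exclusion of the resonant exponents $a=1$ (no curvature) and $a=2$ (the $n$-sum degenerates, cf. the discussion around \eqref{quadm}) is essential.
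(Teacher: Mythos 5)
Your overall instinct—double $TT^{*}$, exponential sums, level-set analysis, uniformity to control the diagonal—points in the right general direction, but two of your load-bearing steps would fail as described. First, your ``first $TT^{*}$'' is supposed to extract a gain $2^{-c_1 am}$ from the $x$-integration of $e^{i\beta_n(\l(x))(t-t')}$ via a level-set decomposition of $\l$; this cannot work, because after the initial reductions $\l$ is an \emph{arbitrary measurable} function confined to a single band $\l(x)\sim 2^{a m}$ (cf.\ the hypotheses of Proposition \ref{sgscalelambda}), so the set $\{x\in I^p:\ \beta_n\l(x)\in \text{fixed }O(1)\text{-interval}\}$ may well be all of $I^p$ (take $\l$ constant) and the $x$-side produces no oscillation gain whatsoever. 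In the paper's argument the $x$ variable is ultimately \emph{frozen}: one proves the pointwise bound \eqref{mathf3-toshow} uniformly in $x$, and the decay is generated entirely after a double Cauchy--Schwarz in the translation parameters $u,v$ of $f$ and $g$ (not a Cotlar--Stein in $n$), which duplicates them into $u,u_1,v,v_1$ and produces the exponential sum $\J(s)$ of \eqref{keytr}--\eqref{j1}.

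Second, the key lemma you isolate—a classical Weyl/van der Corput bound for $\sum_{n\sim N}e^{i\gamma n^{a-1}}$ with gain coming from second differences $\sim n^{a-3}$—does not match the actual structure of that sum. The phase $P_{p,q,r}(v)$ in \eqref{polyn0} is a four-term ``fewnomial'' $\sum_{k=1}^4 a_k(x+\a_k)^{a-1}$ whose coefficients are the \emph{rough} values $\pm\tilde\l(p),\pm\tilde\l(q),\pm\tilde\l(r),\pm\tilde\l(q+r-p)$, so before any van der Corput step one needs sublevel-set estimates for such generalized polynomials (Lemmas \ref{zeroc}, \ref{levelset}, \ref{phasecont1}) together with the nondegeneracy lower bound \eqref{coef031a} on the coefficients—this is the time-frequency correlation where $a\notin\{1,2\}$ really enters, not second differences of $n\mapsto n^{a-1}$. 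Only on the complementary high-oscillatory set is the discrete van der Corput Lemma \ref{VdC} applied (in the $v$ variable, for the phase $2^{am/2}sP_{p,q,r}(v)$). Relatedly, your explanation of why $a=3$ is special (Gauss versus Weyl sums) is off the mark: the difficulty there is that the sublevel bound \eqref{L011} degenerates when the graph of $\tilde\l$ is close to a straight line, and the paper must decompose $\tilde\l$ into a ``flat'' part covered by few heavy tubes and a ``non-flat'' part, treating each with a different extension of the linearizing function. Without the fewnomial sublevel-set machinery and the coefficient nondegeneracy bound, your plan has no source of decay once the spurious $x$-side gain is removed.
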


\subsection{Cancellation via $TT^{*}$ method, exponential sums and phase level set analysis}\label{subsec:ttstar}

In this section we present the crux of our argument which relies on extracting the cancellation hidden in the non-zero curvature of the phase by applying a $TT^{*}$ argument relying on elements of number theory (Weyl sums estimates). Within this argument a key r\^{o}le -- see Section \ref{levset} -- is played by the good control over the level sets of suitable exponential phases where these phases are treated as ``fewnomials'' having as coefficients expressions involving the linearizing frequency function $\l(\cdot)$.

\begin{proposition} \label{prop:uniform} With the previous notations, let
\begin{equation}
\begin{aligned}
& \Lambda_{m}^{a,U_{\mu}}(f,g):= \\
& \bigg(2^{\frac{am}{2}} \sum_{p\sim 2^{\frac{am}{2}}}\sum_{\substack{n\sim 2^{\frac{am}{2}} \\ p-n\in U_{\mu}(f) \\ p+n\in U_{\mu}(g)}} \int_{I^p}
\Big|\int_{I^n} f(x-t)g(x+t)e^{i \frac{\bar{c}_a n^{a-1}}{2^{\frac{a(a-1)m}{2}}} \l(x) t}\,dt\Big|^2\,dx \bigg)^{\frac{1}{2}}.
\end{aligned}\label{unif}
\end{equation}
Then, for a small enough choice of $\mu>0$ there exists a constant $\delta_1=\delta_1(\mu)>0$ such that for any $a\in(0,\infty)\setminus\{1,2\}$  and any $f,g\in L^2$ the following holds
\beq\label{finaunif}
\Lambda_{m}^{a,U_{\mu}}(f,g)\lesssim_a 2^{-\delta_1 a m}\|f\|_{L^2}\|g\|_{L^2}.
\eeq
\end{proposition}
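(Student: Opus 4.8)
\textbf{Proof plan for Proposition \ref{prop:uniform}.}

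The plan is to reduce the estimate \eqref{finaunif} to a pointwise-in-$x$ (actually pointwise-in-$p$-cell) statement via a double $TT^{*}$ argument and then to control the resulting ``diagonal'' terms by a level-set analysis of an exponential-sum phase. First I would fix $p$ and $n$ with $p-n\in U_\mu(f)$, $p+n\in U_\mu(g)$, and observe that the inner integral $\int_{I^n} f(x-t)g(x+t)e^{i\frac{\bar c_a n^{a-1}}{2^{a(a-1)m/2}}\lambda(x)t}\,dt$ is, after the substitution $t\mapsto t\,2^{-am/2}$, a rescaled bilinear oscillatory integral whose phase is \emph{linear} in $t$ with slope depending on $x$ through $\lambda(x)$. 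Squaring the modulus (this is the first $TT^{*}$, in the $t$-variable) produces an integral over $I^n\times I^n$ of $f(x-t)\overline{f(x-t')}g(x+t)\overline{g(x+t')}e^{i\frac{\bar c_a n^{a-1}}{2^{a(a-1)m/2}}\lambda(x)(t-t')}$; changing variables to the difference $\tau=t-t'$ and integrating in $x$ over $I^p$, the $\lambda(x)$-dependence appears only through $e^{ic\,n^{a-1}2^{-a(a-1)m/2}\lambda(x)\tau}$. Then I would sum over $n\sim 2^{am/2}$ (this is effectively the second $TT^{*}$, now in the $n$-variable, or equivalently a Cauchy--Schwarz that opens the square in $n$): the relevant object becomes a sum over pairs $(n,n')$ of integrals involving $e^{ic\,2^{-a(a-1)m/2}\lambda(x)(n^{a-1}\tau-(n')^{a-1}\tau')}$. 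The uniformity hypothesis $p\mp n\in U_\mu$ is used precisely here: it guarantees that every local $L^2$-mass of $f$ and $g$ on the cells $3I^{p-n}$, $3I^{p+n}$ is at most $2^{(\mu-1)am/2}\|f\|_{L^2}^2$ (resp. for $g$), so that after applying Cauchy--Schwarz to separate the $f$'s and $g$'s from the oscillatory kernel, the ``amplitude'' factor carries a gain of $2^{(\mu-1)am/2}$ per function, leaving us to bound an exponential-sum/integral that we can afford to lose a fixed power of $2^{am}$ on.

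The core of the argument is then the estimate for the resulting exponential phase. After the reductions above one is left with something morally of the shape
\[
\sum_{n,n'\sim 2^{am/2}}\ \int_{I^p}\ \int\int_{|\tau|,|\tau'|\lesssim 2^{-am/2}} e^{i c\,2^{-a(a-1)m/2}\lambda(x)\,(n^{a-1}\tau-(n')^{a-1}\tau')}\,\Phi\,d\tau\,d\tau'\,dx,
\]
where $\Phi$ collects the (now separated and bounded) pieces of $f,g$. The key point is that, treating $\lambda(x)$ as a free real parameter ranging over $[2^{am-10},2^{am+10}]$, the phase $\psi(n)=n^{a-1}$ — or more precisely the finite Taylor expansion of $(\cdot)^{a-1}$ around $n\sim 2^{am/2}$, which is a genuine ``fewnomial'' (a polynomial-type expression with a bounded number of non-integer-exponent monomials) — has curvature: since $a\notin\{1,2\}$, the relevant second (or higher) difference of $n^{a-1}$ is nonvanishing and of predictable size. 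This is where the hypothesis $a\notin\{1,2\}$ enters in an essential way: for $a=1$ the phase degenerates (no curvature at all), and for $a=2$ the exponent $a-1=1$ is linear, so $n^{a-1}=n$ and the sum over $n$ produces no cancellation — exactly the resonant cases excluded. I would therefore invoke a Weyl-type / discrete Van der Corput estimate for the exponential sum $\sum_n e^{i\theta n^{a-1}}$ (after discretizing the $\tau$-integral), obtaining square-root-type cancellation $\lesssim (2^{am/2})^{1-c}$ for most values of the parameter $\theta$, and handle the exceptional ``minor-arc/major-arc'' set of $\theta$ — equivalently the exceptional set of $x$ where $\lambda(x)$ lands near a rational with small denominator — by a level-set estimate: the measure of $\{x\in I^p:\lambda(x)\in\text{bad set}\}$ is small, so on that set one uses the trivial bound, which is still acceptable because the bad set is small. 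Optimizing $\mu$ against the power gained from cancellation yields the explicit $\delta_1$ quoted in the footnote (with the $a=3$ case needing a slightly different bookkeeping because $a-1=2$ makes $n^{a-1}$ a pure quadratic, so one must push to a third-order/higher difference or use Gauss-sum estimates, hence the separate value of $\delta_1$).

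\textbf{Main obstacle.} The hard part is the last step: obtaining a quantitatively strong enough cancellation estimate for the exponential sum $\sum_{n\sim 2^{am/2}} e^{i\,2^{-a(a-1)m/2}\lambda(x)\,n^{a-1}}$ \emph{uniformly in the arbitrary measurable function} $\lambda$. One cannot use smoothness of $\lambda$, so the gain must come entirely from a two-fold mechanism: (i) a Weyl-differencing / Van der Corput bound that extracts square-root cancellation from the curvature of $t\mapsto t^{a-1}$ whenever the coefficient $2^{-a(a-1)m/2}\lambda(x)$ is ``generic'', and (ii) a phase level-set estimate controlling the measure of the $x$-set on which the coefficient is ``non-generic'' (close to a rational with small denominator, where the sum is large). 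Balancing these — in particular choosing the denominator threshold and the parameter $\mu$ so that both the cancellation gain and the measure bound beat the $2^{am/2}$-many choices of $n$ and the $2^{am/2}$-many cells $I^p$ — is the delicate computation, and it is exactly where the number-theoretic input (Weyl sums, rational approximation) meets the harmonic-analytic input (level sets of $\lambda$). I expect the cleanest route is to first prove a clean $L^2(d\lambda)$-averaged bound over $\lambda\in[2^{am-10},2^{am+10}]$ as a black box (this is where the discrete Van der Corput lives), then transfer it to the arbitrary-$\lambda$ setting via the level-set / Chebyshev argument on the cells $I^p$, and only at the end reinsert the uniformity gains from $U_\mu(f)$, $U_\mu(g)$.
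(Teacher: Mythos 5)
Your toolbox overlaps substantially with the paper's (the gain $2^{(\mu-1)am/2}$ from the uniformity sets, a double $TT^{*}$, a discrete Van der Corput bound, a level-set analysis), but the mechanism you propose for extracting cancellation is misdirected, and two of your key steps would fail. First, the structural point: in $\Lambda_m^{a,U_\mu}$ the sum over $n$ sits \emph{outside} the square, so it is a sum of nonnegative diagonal terms; there is no square over the $n$-sum to open, hence no cross terms $(n,n')$ can arise from a ``second $TT^{*}$ in the $n$-variable''. Moreover the amplitude depends on $n$ through the constraint $t\in I^n$, so once you Cauchy--Schwarz ``to separate the $f$'s and $g$'s from the oscillatory kernel'' all oscillation in $n$ is destroyed: the quantity you would be left to bound is \emph{not} the Weyl sum $\sum_n e^{i\theta n^{a-1}}$. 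What the paper actually does is change variables $u=p-n$, $v=p+n$, open the square in $t$ (producing the shift $s$), and then apply Cauchy--Schwarz twice so that the cross terms appear in the \emph{translation} parameters: one is led to $\mathcal{J}(s)=\sum_{v,v_1}\bigl|\sum_u e^{i[\cdots]s}\bigr|^2$, whose phase is the double difference $(v-u)^{a-1}\underline{\lambda}(u+v)-(v_1-u)^{a-1}\underline{\lambda}(u+v_1)-(v-u_1)^{a-1}\underline{\lambda}(u_1+v)+(v_1-u_1)^{a-1}\underline{\lambda}(u_1+v_1)$. After relabeling $p=u+v$, $q=u+v_1$, $r=u_1+v$, this is a fewnomial $P_{p,q,r}(v)$ in the single summation variable $v$ whose coefficients are four \emph{frozen} values of $\lambda$; the discrete Van der Corput is applied in $v$, not in $n$.

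Second, and more seriously, your treatment of the arbitrary measurable $\lambda$ cannot work. A major/minor-arc decomposition in the coefficient $\theta\propto\lambda(x)$ requires the measure of $\{x\in I^p:\lambda(x)\ \mathrm{bad}\}$ to be small, but $\lambda$ is an adversarially chosen measurable function and may take values exclusively in the exceptional set; for the same reason your fallback --- an $L^2(d\lambda)$-averaged bound transferred by Chebyshev --- controls only the measure of bad $\lambda$-\emph{values}, whose preimage under $\lambda$ can be all of $I^p$. This is exactly the obstruction that linearized maximal modulations create, and the paper's double-difference structure is precisely the device that removes it: $\lambda$ enters only through four frozen coefficients of $P_{p,q,r}(\cdot)$, the level-set lemmas (zero-counting for the fewnomials $F_4^{a-1}$) bound the measure of the $v$-set where the phase derivative is small \emph{uniformly in those coefficients}, and the lower bounds on the coefficients are themselves a consequence of the time-frequency correlation identities, not of any genericity of $\lambda$. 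Even then the case $a=3$ is not mere bookkeeping: there the coefficient combination $(r-q)\tilde{\lambda}(p)+(p-r)\tilde{\lambda}(q)+(q-p)\tilde{\lambda}(r)$ degenerates when $\lambda$ is close to affine, and the paper needs an extra mechanism --- a tube decomposition of the graph of $\lambda$ into a ``linear'' part (few heavy tubes) and a ``non-flat'' part, each with its own exponential-sum estimate --- which Gauss-sum estimates alone cannot replace, since the problem is vanishing coefficients rather than the quality of a quadratic Weyl sum.
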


Pairing Proposition \ref{prop:uniform} with the reasoning involved in proving \eqref{triangineq}, which reduces the study of $\underline{\Lambda}_{m}^a$ to that of $\bar{\Lambda}_{m}^{a}$, we deduce the following corollary:

\begin{corollary} \label{cor:uniform} With the exponent $\delta_1$ from Proposition \ref{prop:uniform}, we have that for all functions $f,g\in L^2$ and all $m\in \N$
\begin{align*}
& \bigg(2^{\frac{am}{2}} \sum_{v\sim 2^{\frac{am}{2}}}\sum_{\substack{n,p\sim 2^{\frac{am}{2}} \\ p-n\in U_{\mu}(f) \\ p+n\in U_{\mu}(g)}} \int_{I^p}
\frac{\big|\int_{I^n} f(x-t)g(x+t) e^{-i 2^{\frac{a m}{2}}\,2vt}\,dt\big|^2}
{\Big\langle \frac{\bar c_a n^{a-1}}{2^{\frac{a^2 m}{2}}} \l(x) + 2v\Big\rangle^2}\,dx\bigg)^{\frac{1}{2}} \\
& \hspace{20em} \lesssim_a 2^{-\delta_1 a m} \|f\|_{L^2} \|g\|_{L^2}.
\end{align*}
In particular, recalling the definitions of $w_{k,n,v}(\lambda)$ and $\tilde S_k^{n, v}$ -- see \eqref{eq:decay-weight} and \eqref{eq:tildeS}, respectively -- we deduce that
\begin{align*}
& \Big(2^{\frac{am}{2}} \sum_{v\sim 2^{\frac{am}{2}}}\sum_{\substack{n,p\sim 2^{\frac{am}{2}} \\ p-n\in U_{\mu}(f) \\ p+n\in U_{\mu}(g)}} \int_{I^p}
|\tilde S_0^{n, v}(f,g)(x)|^2 w_{0,n,v}(\lambda)(x) dx\Big)^{\frac{1}{2}} \\
& \hspace{20em} \lesssim_a 2^{-\delta_1 a m} \|f\|_{L^2} \|g\|_{L^2}.
\end{align*}
\end{corollary}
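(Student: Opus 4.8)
Since Corollary~\ref{cor:uniform} is, up to reinstating the auxiliary modulation parameter $v$, just a restatement of Proposition~\ref{prop:uniform}, the plan is to establish \eqref{finaunif} first and then read off the corollary. For the deduction, note that the weight $\langle \bar{c}_a \frac{n^{a-1}}{2^{a^2m/2}}\l(x)+2v\rangle^{-2}$ confines $2v$ to an $O(1)$ neighbourhood of $-\bar{c}_a n^{a-1}2^{-a^2m/2}\l(x)$; on that neighbourhood, using $2^{am/2}/2^{a^2m/2}=2^{-a(a-1)m/2}$ and Taylor-expanding in the residual $O(1)$ mismatch exactly as in the derivation of \eqref{triangineq}, the modulated integral $\int_{I^n}f(x-t)g(x+t)e^{-i2^{am/2}2vt}\,dt$ becomes a rapidly convergent superposition of phase-linearized integrals $\int_{I^n}f(x-t)(M_{2^{am/2}s}g)(x+t)e^{i\bar{c}_a n^{a-1}2^{-a(a-1)m/2}\l(x)t}\,dt$, $|s|\le 2^{am/2}$, which are exactly those appearing in \eqref{finaunif}. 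Summing the $\langle\cdot\rangle^{-2}$ tail over $v$ costs only $O(1)$, and since modulation preserves both $L^2$ norms and the set $U_\mu$, the first form of Corollary~\ref{cor:uniform} follows from \eqref{finaunif}. The second form is then bookkeeping: by \eqref{eq:tildeS} one has $|\tilde S_0^{n,v}(f,g)(x)|=|\int_{I^n}f(x-t)g(x+t)e^{-i2^{am/2}2vt}\,dt|$, and by \eqref{eq:decay-weight} (with $k=0$ and the trivial bound $\rho_{am}(\l(\cdot))\lesssim 1$) one has $w_{0,n,v}(\lambda)(x)\lesssim \langle \bar{c}_a\frac{n^{a-1}}{2^{a^2m/2}}\l(x)+2v\rangle^{-2}$, so the second form is dominated by the first.

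The substance is thus Proposition~\ref{prop:uniform}, an $m$-power-saving bound for the discretized bilinear oscillatory expression in \eqref{unif} under the uniformity constraints $p-n\in U_\mu(f)$, $p+n\in U_\mu(g)$. After rescaling the $x$-variable by $2^{am/2}$ so that the intervals $I^p$, $I^n$ become unit intervals, I would expand the square $\big|\int_{I^n}f(x-t)g(x+t)e^{i\theta_n(x)t}\,dt\big|^2$ with $\theta_n(x):=\bar{c}_a n^{a-1}2^{-a(a-1)m/2}\l(x)$, and run a \emph{double $TT^{*}$ argument}: a first $TT^{*}$ step peels off the pair $(f,g)$, reducing matters to an $L^2\!\to\!L^2$ bound for the linear operator carrying the discrete frequency parameter $n$; a second $TT^{*}$ step converts that operator into one whose kernel, after the phase linearization, is controlled by exponential sums $\sum_{n\sim 2^{am/2}}e^{i\beta\,n^{a-1}}$, with $\beta$ a coefficient built from $\l$ (once the lower-order Taylor corrections are retained, the summand phase is a short sum of monomials in $n$ --- a ``fewnomial'' whose coefficients involve $\l$). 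Since $a\notin\{1,2\}$ forces $a-1\notin\{0,1\}$, the map $n\mapsto n^{a-1}$ has nonvanishing curvature, so discrete Van der Corput / Weyl-sum estimates yield a power-saving majorant for these sums, uniform in $\beta$, \emph{away from} the configurations in which $\beta$ is abnormally well approximated by a rational of small denominator.

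The remaining --- and hardest --- ingredient is the phase level set analysis that disposes of those exceptional configurations. Because $\l$ is merely measurable, no stationary-phase or smoothness argument in $x$ is available, so the oscillation must first be extracted purely in the discrete variable $n$, and only then may one decompose the relevant domain into level sets according to the arithmetic size of the $\l$-dependent coefficients governing the Weyl sums, estimating by the power-saving bound on the ``generic'' pieces and by a direct bound combined with the controlled size of the exceptional pieces elsewhere. It is here that the uniform hypotheses enter in an essential way: the a priori spreading they guarantee for $f$ and $g$ --- each $I^n$ carries at most a $2^{(\mu-1)am/2}$-fraction of the $L^2$ mass (recall \eqref{unifset}) --- is what keeps the exceptional contribution below the target threshold. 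Balancing the Weyl-sum savings against the exceptional-set bounds and optimizing in $\mu$ then produces the exponent $\delta_1$. The main obstacle is precisely this balancing, compounded by the fact that the degree $a-1$ of the governing fewnomial varies with $a$: the borderline value $a=3$ (degree $2$, the lowest nontrivial order) has to be treated separately and yields a worse $\delta_1$ than the generic range $a\in(0,\infty)\setminus\{1,2,3\}$.
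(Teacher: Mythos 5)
Your deduction of the corollary is exactly the paper's: the weighted form is reduced to Proposition \ref{prop:uniform} by the Taylor/modulation argument of \eqref{triangineq} (with $M_{2^{am/2}s}g$, $(s^2+1)^{-1}$ decay, and the observation that modulation preserves $U_\mu(g)$), and the second form follows from the pointwise bound $w_{0,n,v}(\lambda)\lesssim\langle \bar c_a n^{a-1}2^{-a^2m/2}\l(\cdot)+2v\rangle^{-2}$ together with \eqref{eq:tildeS}. Your accompanying sketch of Proposition \ref{prop:uniform} (double $TT^{*}$, discrete Van der Corput/Weyl sums, fewnomial level-set analysis, separate treatment of $a=3$) also mirrors the paper's Section \ref{subsec:ttstar}, so the proposal is correct and follows essentially the same route.
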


All the remaining part of this section is devoted to the proof of Proposition \ref{prop:uniform}.

\begin{proof}[Proof of Proposition \ref{prop:uniform}]
Without loss of generality, in what follows we assume $f,\,g\geq0$ and $\|f\|_{L^2}=\|g\|_{L^2}=1$.

\subsubsection{The $TT^{*}$ argument and the reduction to an exponential sum estimate}\label{ttstar}

This novel approach within the realm of our topic treats a problem originally formulated in a continuous setting via elements of analytic number theory.\footnote{This route was hinted at by the simpler discrete Van der Corput argument used as an alternative for analyzing the $L^2$ bound of the bilinear Hilbert transform along curves - see Section 8.2. in the Appendix of \cite{L1}.}

Returning now to \eqref{unif}, we square the expression therein and, applying a double change of variable, we obtain

\begin{equation}
\begin{aligned}
& [\Lambda_{m}^{a,U_{\mu}}(f,g)]^2 \\
&=  2^{\frac{am}{2}}\sum_{p\sim 2^{\frac{am}{2}}}\sum_{\substack{n\sim 2^{\frac{am}{2}} \\ p-n\in U_{\mu}(f) \\ p+n\in U_{\mu}(g)}} \int_{I^0}
\Big|\int_{I^0} f(x-t+(p-n)2^{-\frac{am}{2}}) \\
& \hspace{7em} \cdot g(x+t+(p+n)2^{-\frac{am}{2}}) e^{i \frac{\bar{c}_a n^{a-1}}{2^{\frac{a(a-1)m}{2}}}\l(x+p\,2^{-\frac{am}{2}})t}\,dt\Big|^2 \,dx \\
&= 2^{\frac{am}{2}}\sum_{{u\in U_{\mu}(f)}\atop{v\in U_{\mu}(g)}}\int_{I^0}
\Big|\int_{I^0} f(x-t+u 2^{-\frac{am}{2}}) g(x+t+v 2^{-\frac{am}{2}}) \\
& \hspace{16em} \cdot e^{i\frac{\bar{c}_a (\frac{v-u}{2})^{a-1}}{2^{\frac{a(a-1)m}{2}}} \l(x+\frac{u+v}{2} 2^{-\frac{am}{2}}) t}\,dt\Big|^2 \,dx.
\end{aligned}\label{uniff0}
\end{equation}
Set now $f_u(t):=f(t+u 2^{-\frac{am}{2}})$ and
\beq\label{lambda_x}
\underline{\l}_x(v):=\l(x+v\,2^{-\frac{am}{2}-1}),
\eeq
and redenote for notational simplicity $\I:=[\Lambda_{m}^{a,U_{\mu}}(f,g)]^2$. Thus
\beq\label{mathf}
\I=\sum_{\substack{u\in U_{\mu}(f) \\ v\in U_{\mu}(g)}} \frac{1}{|I^0|}\,\int_{I^0}
\Big|\int_{I^0} f_u(x-t) g_v(x+t) e^{i \frac{\bar{c}_a (v-u)^{a-1}}{2^{\frac{a(a-1)m}{2}}} \underline{\l}_x(u+v) t}\,dt\Big|^2\,dx.
\eeq

Rewrite the expression above as
\begin{equation}
\begin{aligned}
\I& =\sum_{\substack{u\in U_{\mu}(f) \\ v\in U_{\mu}(g)}}
\frac{1}{|I^0|} \int_{I^0} \int\limits_{|s|\leq 2^{-\frac{am}{2}}}\int_{I^0} f_u(x-t) g_v(x+t) \\
& \hspace{2em} \cdot f_u(x-(t-s))g_v(x+t-s)\one_{I^0}(t-s)
e^{i \frac{\bar{c}_a (v-u)^{a-1}}{2^{\frac{a(a-1)m}{2}}} \underline{\l}_x(u+v) s}\,dt\,ds\,dx,
\end{aligned}\label{decsq}
\end{equation}
where $\one_{I^0}$ denotes the characteristic function of $I^0$.

Fix now $\nu\in(0,1)$ a small parameter that will be chosen later. We next decompose
\beq\label{decsq1}
\I= \I^{<}+\I^{>},
\eeq
where
\begin{equation}
\begin{aligned}
\I^{<} &:= \sum_{\substack{u\in U_{\mu}(f) \\ v\in U_{\mu}(g)}}
\frac{1}{|I^0|} \int_{I^0} \int\limits_{|s|\leq 2^{-(\frac{1}{2}+\nu) a m}}\int_{I^0}  f_u(x-t)g_v(x+t) \\
& \hspace{2em} \cdot f_u(x-(t-s)) g_v(x+t-s)\one_{I^0}(t-s) e^{i\frac{\bar{c}_a (v-u)^{a-1}}{2^{\frac{a(a-1)m}{2}}} \underline{\l}_x(u+v)s}\,dt \,ds\,dx,
\end{aligned} \label{decsq1bis}
\end{equation}
and
\begin{equation}
\begin{aligned}
\I^{>} &:=\sum_{\substack{u\in U_{\mu}(f) \\ v\in U_{\mu}(g)}}
\frac{1}{|I^0|} \int_{I^0} \int\limits_{2^{-(\frac{1}{2}+\nu) a m}<|s|\leq 2^{-\frac{am}{2}}} \int_{I^0}  f_u(x-t) g_v(x+t) \\
& \hspace{2em} \cdot f_u(x-(t-s)) g_v(x+t-s) \one_{I^0}(t-s) e^{i\frac{\bar{c}_a (v-u)^{a-1}}{2^{\frac{a(a-1)m}{2}}} \underline{\l}_x(u+v)s}\,dt\,ds\,dx.
\end{aligned}\label{decsq2}
\end{equation}
We start with term $\I^{<}$ which is much easier to treat.

Setting $f_{u,x}(-t):=f_{u}(x-t)$ and $g_{v,x}(t):=g_v(x+t)$ we apply a Cauchy-Schwarz inequality under the integral signs in order to deduce that
\begin{equation}
\begin{aligned}
\I^{<} & \lesssim 2^{-\nu a m} \sum_{\substack{u\in U_{\mu}(f) \\ v\in U_{\mu}(g)}} \int_{I^0} \int_{I^0}  f_{u,x}(-t)^2  g_{v,x}(t)^2 \,dt\,dx  \\
& \lesssim 2^{-\nu a m} \sum_{\substack{u\in U_{\mu}(f) \\ v\in U_{\mu}(g)}} \int_{3 I^0} \int_{3 I^0}  f(u 2^{-\frac{am}{2}} +y)^2 g^2(v 2^{-\frac{am}{2}}+z)\,dy\,dz  \\
& \lesssim 2^{-\nu a m}.
\end{aligned}\label{max}
\end{equation}

We pass now to the treatment of $\I^{>}$. Define the expression
\begin{equation}
\begin{aligned}
\I(x)&:=\sum_{\substack{u\in U_{\mu}(f) \\ v\in U_{\mu}(g)}} \int_{2^{-(\frac{1}{2}+\nu) a m}<|s|\leq 2^{-\frac{a}{2} m}} \int_{I^0}  f_u(x-t) g_v(x+t) \\
& \hspace{2em} \cdot f_u(x-(t-s))g_v(x+t-s) \one_{I_0}(t-s) e^{i\frac{\bar{c}_a (v-u)^{a-1}}{2^{\frac{a(a-1)m}{2}}} \underline{\l}_x(u+v) s}\,dt\,ds
\end{aligned}\label{mathf1}
\end{equation}
and deduce that
\beq\label{mathf2}
\I^{>}=\frac{1}{|I^0|}\,\int_{I^0} \I(x)\,dx\:.
\eeq
We will show that there exists an absolute constant $\bar{\ep}>0$ such that the following uniform pointwise estimate holds:
\beq\label{mathf3-toshow}
 \I(x)\lesssim_a 2^{-\bar{\ep} a m }, \qquad \forall x\in I^0.
\eeq

For notational simplicity we will take $x=0$ and redenote $f_u(-t)$ by $f_u(t)$ and $\underline{\l}_0(u+v)$ by $\underline{\l}(u+v)$. Also set $J^0:=\{s : 2^{-(\frac{1}{2}+\nu) a m}<|s|\leq 2^{-\frac{am}{2}}\}$. Thus
\begin{equation}
\begin{aligned}
& \I(0)= \\
& \sum_{\substack{u\in U_{\mu}(f) \\ v\in U_{\mu}(g)}}
\int_{J^0} \int_{I^0}  f_u(t) g_v(t) f_u(t-s) g_v(t-s) \one_{I^0}(t-s) e^{i \frac{\bar{c}_a (v-u)^{a-1}}{2^{\frac{a(a-1)m}{2}}} \underline{\l}(u+v) s}\,dt\,ds.
\end{aligned}\label{mathf4-I0}
\end{equation}
Using now \eqref{unifset} and applying repeatedly Cauchy-Schwarz, one has
\begingroup
\allowdisplaybreaks
\begin{align*}
\I(0)& \leq \Big(\sum_{u\in U_{\mu}(f)}
\iint\limits_{J^0\times I^0} |f_u(t) f_u(t-s)|^2 \one_{I^0}(t-s)\,dt\,ds\Big)^{\frac{1}{2}} \\
& \hspace{1em} \cdot \bigg(\iint\limits_{J^0\times I^0} \sum_{u\sim 2^{\frac{am}{2}}} \Big|\sum_{v\in U_{\mu}(g)} g_v(t) g_v(t-s) e^{i \frac{\bar{c}_a (v-u)^{a-1}}{2^{\frac{a(a-1)m}{2}}} \underline{\l}(u+v) s}\Big|^2 \one_{I^0}(t-s)\,dt\,ds\bigg)^{\frac{1}{2}} \\
& \lesssim 2^{(\mu-1)\frac{am}{4}} \bigg(\iint\limits_{J^0\times I^0} \sum_{v,v_1\in U_{\mu}(g)} g_v(t) g_v(t-s) g_{v_1}(t) g_{v_1}(t-s) \\
& \hspace{5em} \cdot \bigg(\sum_{u\sim 2^{\frac{am}{2}}}
e^{i \bar{c}_a \Big[\frac{(v-u)^{a-1}}{2^{\frac{a(a-1)m}{2}}}
\underline{\l}(u+v) - \frac{(v_1-u)^{a-1}}{2^{\frac{a(a-1)m}{2}}}
\underline{\l}(u+v_1)\Big] s}\bigg) \one_{I^0}(t-s)\,dt\,ds\bigg)^{\frac{1}{2}} \\
& \lesssim 2^{(\mu-1)\frac{am}{4}}\bigg(\iint\limits_{J^0\times I^0} \Big(\sum_{v,v_1\in U_{\mu}(g)} |g_v(t) g_v(t-s)g_{v_1}(t) g_{v_1}(t-s)|^2\Big)^{\frac{1}{2}} \\
& \cdot \bigg(\sum_{\substack{v,v_1 \\ \in U_{\mu}(g)}} \bigg|\sum_{u\sim 2^{\frac{am}{2}}}
e^{i\bar{c}_a \Big[\frac{(v-u)^{a-1}}{2^{\frac{a(a-1)m}{2}}}
\underline{\l}(u+v)-\frac{(v_1-u)^{a-1}}{2^{\frac{a(a-1)m}{2}}}
\underline{\l}(u+v_1)\Big]s}\bigg|^2\bigg)^{\frac{1}{2}}\one_{I^0}(t-s)\,dt\,ds\bigg)^{\frac{1}{2}}.
\end{align*}
\endgroup
Denote now by
\beq\label{keytr}
\J(s):= \sum_{v,v_1\sim 2^{\frac{am}{2}}} \bigg|\sum_{u\sim 2^{\frac{am}{2}}}
e^{i \bar{c}_a \Big[\frac{(v-u)^{a-1}}{2^{\frac{a(a-1)m}{2}}}
\underline{\l}(u+v)-\frac{(v_1-u)^{a-1}}{2^{\frac{a(a-1)m}{2}}}
\underline{\l}(u+v_1)\Big]s}\bigg|^2.
\eeq
Then, based on the above reasonings, we deduce
\beq\label{keytr1}
\I(0)^2\leq 2^{(\mu-1)\frac{am}{2}}\int_{J^0\times I^0} \sum_{v\in U_{\mu}(g)}|g_v(t)g_v(t-s)|^2 |\J(s)|^{\frac{1}{2}} \one_{I^0}(t-s)\,dt\,ds.
\eeq

Our aim now is to show that we can do better than the trivial estimate for the term $\J(s)$, that is, to prove that there exists an absolute constant
$\ep'>0$ such that
\beq\label{keytr2}
\J(s) \lesssim_a 2^{(2-\ep') a m}.
\eeq

We start by noticing that
\begin{equation}
\begin{aligned}
& \J(s) = \\
& \sum_{u,v,u_1,v_1\sim 2^{\frac{am}{2}}}
\exp\Big( i \bar{c}_a \Big[\frac{(v-u)^{a-1}}{2^{\frac{a(a-1)m}{2}}}
\underline{\l}(u+v) - \frac{(v_1-u)^{a-1}}{2^{\frac{a(a-1)m}{2}}}
\underline{\l}(u+v_1)\\
& \hspace{8em} - \frac{(v-u_1)^{a-1}}{2^{\frac{a(a-1)m}{2}}}
\underline{\l}(u_1+v)+\frac{(v_1-u_1)^{a-1}}{2^{\frac{a(a-1)m}{2}}}
\underline{\l}(u_1+v_1)\Big] s \Big).
\end{aligned}\label{j1}
\end{equation}

Next, we are applying a change of variables as follows: $u+v=p$, $u+v_1=q$ and $u_1+v=r$. Define now
\begin{equation}
\begin{aligned}
P_{p,q,r}(v)&:=\frac{(2v-p)^{a-1}}{2^{\frac{a(a-1)m}{2}}}
\tilde{\l}(p)-\frac{(2v+q-2p)^{a-1}}{2^{\frac{a(a-1)m}{2}}}
\tilde{\l}(q) \\
& \hspace{2em}-\frac{(2v-r)^{a-1}}{2^{\frac{a(a-1)m}{2}}}
\tilde{\l}(r)+\frac{(2v+q-p-r)^{a-1}}{2^{\frac{a(a-1)m}{2}}}
\tilde{\l}(q+r-p),
\end{aligned}\label{polyn0}
\end{equation}
where here we set $\tilde{\l}:=\bar{c}_a\,\underline{\l}\,2^{-\frac{am}{2}}$. Notice that in particular
\beq\label{lam}
\tilde{\l}(p)\sim 2^{\frac{am}{2}}\qquad \text{ for any } p\sim 2^{\frac{am}{2}}.
\eeq
Our approach will be conducted by regarding $P_{p,q,r}(v)$ as a piecewise smooth function in $v$ with coefficients depending on $p,q$ and $r$.\footnote{Notice that if $a\in\N$ then $P_{p,q,r}(v)$ is a polynomial in $v$.}

Once at this point we rewrite the expression in \eqref{j1} as
\beq\label{j3}
\J(s)= \sum_{p,q,r\sim 2^{\frac{a m}{2}}}
\sum_{v\sim_{p,q,r} 2^{\frac{a m}{2}}} e^{i 2^{\frac{am}{2}} s P_{p,q,r}(v)},
\eeq
where in the above we used the notation $v\sim_{p,q,r} 2^{\frac{a m}{2}}$ to state that simultaneously $v\sim 2^{\frac{a m}{2}}$ and $a_1(p,q,r)\leq v\leq a_2(p,q,r)$.\footnote{ Here $a_2(p,q,r)$ is of the form $\min\{p-2^{\frac{a m}{2}},\,r-2^{\frac{a m}{2}},\,2^{\frac{a m}{2}+1}+p-q\}$ while
$a_1(p,q,r)$ is of the form $\max\{p-2^{\frac{a m}{2}+1},\,r-2^{\frac{a m}{2}+1},\,2^{\frac{a m}{2}}+p-q\}$. If the two relations in the definition of $v\sim_{p,q,r} 2^{\frac{a m}{2}}$ cannot be satisfied simultaneously, then by convention the $v$ summation in \eqref{j3} is taken to be zero.}

Fix now in what follows $p,q$ and $r$. We rewrite \eqref{polyn0} in the form
\beq\label{polyn0f}
P_{p,q,r}(v) = F_4^{a-1}\Big(\frac{2v}{2^{\frac{a m}{2}}}\Big),
\eeq
where here we set the function
\beq\label{FF}
F_4^{a-1} := x\mapsto \sum_{k=1}^4 a_k (x+\a_k)^{a-1},
\eeq
with
\begin{alignat*}{7}
\a_1 & =-\frac{p}{2^{\frac{a m}{2}}},\quad &&\a_2 &&=\frac{q-2p}{2^{\frac{a m}{2}}},\quad &&\a_3&& =-\frac{r}{2^{\frac{a m}{2}}},\quad &&\a_4 &&=\frac{q-p-r}{2^{\frac{a m}{2}}}, \\
a_1 &=\tilde{\l}(p), && a_2 &&=-\tilde{\l}(q), && a_3&&=-\tilde{\l}(r), &&a_4 &&=\tilde{\l}(q+r-p).
\end{alignat*}

\subsubsection{Analysis of the phase level sets}\label{levset}

Our aim in this section is to obtain good upper bounds on the size of the level sets of the phase $P_{p,q,r}(\cdot)$.\footnote{The level set analysis (this time applied to the phase of the multiplier) had a similar importance in the problem treated in \cite{lvUnif} -- see the crucial estimate on the number of heavy pairs provided by Proposition 49 via Lemma 51 therein.} The main result here is stated as Lemma \ref{phasecont1} and is a consequence of the non-zero curvature of the original phase in \eqref{Top} -- hence the restriction of the parameter $a$ to the non-resonant case.

\begin{lemma} \label{zeroc}  Let $n\in\N$, $d\in\R^{*}$ and $\a\in\R$ be fixed. Assume we are given $\{\a_k\}_{k=1}^{n}\subset(\a,\infty)$ pairwise distinct and $\{a_k\}_{k=1}^n \subset\R$.

Define the function $F_n^d: (-\a,\infty) \to \R$ by
\beq\label{F}
F_n^d(x):=\sum_{k=1}^n a_k (x+\a_k)^{d}.
\eeq
Then either $F_n^d$ is identically zero or it has at most $n$ real distinct zeros.
\end{lemma}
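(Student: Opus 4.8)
The plan is to prove Lemma \ref{zeroc} by induction on $n$, reducing the number of terms at each step via differentiation after an appropriate change of variable that turns each summand into a pure power. First I would dispose of trivial cases: if some $a_k=0$ we may drop that term and apply the result with fewer summands, so we may assume all $a_k\neq 0$; if $n=1$ then $F_1^d(x)=a_1(x+\alpha_1)^d$ with $a_1\neq 0$ has no zero on $(-\alpha,\infty)$ since $x+\alpha_1>0$ there, so the claim holds vacuously (zero zeros $\leq 1$). For the inductive step, assume the statement holds for $n-1$ and suppose $F_n^d$ is not identically zero. The key observation is that $F_n^d(x)=(x+\alpha_n)^d G(x)$ where
\[
G(x):=\sum_{k=1}^n a_k\Big(\frac{x+\alpha_k}{x+\alpha_n}\Big)^{d}
=a_n+\sum_{k=1}^{n-1}a_k\Big(1+\frac{\alpha_k-\alpha_n}{x+\alpha_n}\Big)^{d},
\]
and since $(x+\alpha_n)^d>0$ on $(-\alpha,\infty)$, the zeros of $F_n^d$ coincide with the zeros of $G$. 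Now differentiate: $G'(x)=\sum_{k=1}^{n-1}a_k\,d\,(\alpha_k-\alpha_n)\big(1+\tfrac{\alpha_k-\alpha_n}{x+\alpha_n}\big)^{d-1}\cdot\big(-(x+\alpha_n)^{-2}\big)$, so that
\[
-(x+\alpha_n)^{2-d}\,G'(x)\;=\;\sum_{k=1}^{n-1} a_k\,d\,(\alpha_k-\alpha_n)\,(x+\alpha_k)^{d-1}(x+\alpha_n)^{-(d-1)}\cdot(x+\alpha_n)^{?}
\]
— here I would instead apply the substitution more carefully: multiply $F_n^d$ by $(x+\alpha_n)^{-d}$, differentiate, and multiply back by a positive power of $(x+\alpha_n)$ to clear denominators, arriving at an expression of the form $\widetilde F_{n-1}^{d-1}(x)=\sum_{k=1}^{n-1}\tilde a_k(x+\alpha_k)^{d-1}\cdot(\text{positive factor})$ with $\tilde a_k=a_k d(\alpha_k-\alpha_n)\neq 0$ (using $d\neq 0$ and the $\alpha_k$ pairwise distinct). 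By Rolle's theorem, between any two consecutive zeros of $G$ on the open interval there is a zero of $G'$, hence a zero of $\widetilde F_{n-1}^{d-1}$; if $\widetilde F_{n-1}^{d-1}\equiv 0$ then $G$ is constant, so $G$ has at most one zero (in fact, if constant and not identically zero it has none, giving $0\leq n$), and otherwise the inductive hypothesis applied to $\widetilde F_{n-1}^{d-1}$ (which has $n-1$ terms, exponent $d-1\in\mathbb R^*$ after checking $d-1\neq 0$) gives at most $n-1$ zeros of $G'$, whence at most $n$ zeros of $G$, hence of $F_n^d$.

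The one genuine subtlety is the degenerate exponent $d=1$: then $d-1=0$, the differentiated expression degenerates (the powers $(x+\alpha_k)^{0}\equiv 1$ collapse), and the inductive scheme as stated does not directly apply. I would handle this base-type case separately and first: if $d=1$ then $F_n^1(x)=\sum_k a_k(x+\alpha_k)=\big(\sum_k a_k\big)x+\sum_k a_k\alpha_k$ is affine, so it is either identically zero or has at most one zero, which is $\leq n$; this even gives the stronger bound $1$. More generally, should the induction ever reach exponent $1$ after several differentiations (i.e.\ the original $d$ a positive integer), one simply stops the induction there and invokes the affine case, which only improves the count. I would organize the proof so that the induction runs on $n$ with $d$ a fixed parameter, treating $d=1$ as a terminal case and $d\in\mathbb R^*\setminus\{1\}$ via the Rolle step, noting that $d-1$ may still be negative or non-integer but remains in $\mathbb R^*$ unless it equals $1$, in which case we again terminate.

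The main obstacle I anticipate is purely bookkeeping rather than conceptual: keeping track of the positive prefactors $(x+\alpha_n)^{\pm\text{power}}$ that appear when clearing denominators, so as to verify that after each differentiation the resulting function is \emph{exactly} of the prescribed form $\sum_{k} (\text{nonzero coeff})\,(x+\alpha_k)^{d'}$ times a strictly positive function on $(-\alpha,\infty)$ — this positivity is what allows one to pass zeros back and forth between $F_n^d$, $G$, and the differentiated object without losing or gaining roots. A careful but routine computation shows that if $\widehat F(x):=(x+\alpha_n)^{1-d}\frac{d}{dx}\big[(x+\alpha_n)^{-1}\,(x+\alpha_n)^{d}F_n^d\big]$ is chosen appropriately — more cleanly, set $H(x):=(x+\alpha_n)^{-d}F_n^d(x)$, then $H'(x)=\sum_{k=1}^{n-1}a_k\,d\,(\alpha_k-\alpha_n)(x+\alpha_k)^{d-1}(x+\alpha_n)^{-d-1}\cdot(x+\alpha_n)$, and multiplying $H'$ by $(x+\alpha_n)^{d+1}>0$ yields precisely $\sum_{k=1}^{n-1}\big(a_k d(\alpha_k-\alpha_n)\big)(x+\alpha_k)^{d-1}$, a function of the required shape with $n-1$ terms and all coefficients nonzero — the induction then closes. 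I would present this computation compactly and leave the elementary verification that $H'\equiv 0$ forces $H$ constant (hence $F_n^d$ with at most one zero) to the reader.
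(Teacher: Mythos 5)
Your proof is correct and follows essentially the same route as the paper's: induction on $n$, dividing out one power $(x+\alpha_k)^{d}$ (you use $k=n$, the paper uses $k=1$), applying Rolle to the quotient, and clearing the positive factor $(x+\alpha_n)^{d+1}$ to land on an $(n-1)$-term function with exponent $d-1$, with the degenerate exponents $d\in\{0,1\}$ dismissed by the trivial affine/constant case exactly as in the paper (which argues by contradiction from $n+1$ roots rather than your direct count, an immaterial difference). The only blemishes are the abandoned, garbled intermediate display in your first paragraph and the harmless sign $(\alpha_k-\alpha_n)$ versus $(\alpha_n-\alpha_k)$ in the final coefficients, neither of which affects the argument.
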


\begin{proof}
Without loss of generality we assume that
\beq\label{Fnotz}
F_n^d \text{ is not identically } 0 \text{ and } d\in\R\setminus\{0,1\},
\eeq
as otherwise there is nothing to prove.

We now perform an induction on $n$. Notice that the base case $n=1$ is trivially true.
Assume now that the statement holds for $n-1$ and assume by contradiction that there exists $F_{n}^d$ as in \eqref{F} having (at least) $n+1$ real distinct roots, say $\{x_j\}_{j=1}^{n+1}$ with $-\a<x_1<x_2<\ldots<x_{n+1}<\infty$.

Deduce then that the function
\beq\label{G}
G_n^d(x):=\frac{F_n^d(x)}{(x+\a_1)^d}=a_1+\sum_{k=2}^n a_k \frac{(x+\a_k)^{d}}{(x+\a_1)^{d}},
\eeq
has the same zeros as $F_n^d$ within $(-\a,\infty)$.

Applying now Rolle's theorem on each interval $(x_j, x_{j+1})$ we deduce the existence of $y_j\in (x_j, x_{j+1})$ such that
$({G^d_n})'(y_j):=\frac{d}{dx} G_n^d(y_j)=0$ for every $j\in\{1,\ldots,n\}$. At this point it is worth noticing that $(G^d_n)'$ can not be identically zero as otherwise this would contradict our initial assumption \eqref{Fnotz}.

Next, we notice that
\[ (G^d_n)'(x)=-\sum_{k=2}^n a_k d \cdot \frac{\a_k-\a_1}{(x+\a_1)^2} \cdot \frac{(x+\a_k)^{d-1}}{(x+\a_1)^{d-1}}, \]
hence we must have that the function
\beq \label{F2}
 F_{n-1}^{d-1}(x):=\sum_{k=2}^n a_k d (\a_k-\a_1) (x+\a_k)^{d-1} ,
\eeq
obeys: 1) $d-1\in \R^{*}$, 2) it is not identically zero, and 3) has the above $\{y_j\}_{j=1}^{n}$ among its zeros. However, this contradicts the induction hypothesis at step $n-1$, thus concluding our proof.
\end{proof}

\begin{lemma} \label{levelset} Let $F_n^d$ be a non-zero function whose form is prescribed by \eqref{F} with $n\geq 2$. With the notations from above, assume for simplicity that
\beq\label{assumesim}
\{a_k\}_{k=1}^n \subset \R\setminus\{0\}, \text{ and }\{|\a_k|\}_{k=1}^n \subset \Big[0,\frac{1}{2}\Big] \text{ with } \{\a_k\}_{k=1}^{n}\text{  pairwise distinct}.
\eeq
Then, given a parameter $\eta>0$, we have
\begin{itemize}
\item if $d\in\N$ with $d< n-1$ then
\beq\label{LFF}
\big|\{x\in [1,2] :  |F_n^d(x)|<\eta\}\big|\leq 100 d^2 \Big(\frac{\eta}{\|F_n^d\|_{L^{\infty}([1,2])}}\Big)^{\frac{1}{d}};
\eeq
\item otherwise,
\beq\label{LF}
\big|\{x\in [1,2] :  |F_n^d(x)| < \eta\}\big|\leq K(F_n^d)
\eta^{\frac{1}{n-1}},
\eeq
with $K(F_n^d):=\frac{3^{2|d|+2n+10}}{\left(|a_n|
\left(\prod_{j=1}^{n-1}|d-j+1|\,|\a_n-\a_j|\right)\,\right)^{\frac{1}{n-1}}}$.
\end{itemize}
\end{lemma}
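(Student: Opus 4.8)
The plan is to prove the two sublevel set estimates by reducing everything to the zero-counting result of Lemma \ref{zeroc} together with elementary calculus. Throughout, $F := F_n^d$ is the given nonzero function of the form \eqref{F}, and we work on the compact interval $[1,2]$. The basic mechanism is: if $F$ vanishes at few points (or if a suitable derivative of $F$ vanishes at few points), then any sublevel set $\{|F|<\eta\}$ is contained in a small number of intervals, each of which must be short because $F$ cannot grow too slowly away from its zeros.

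The plan for the first bullet \eqref{LFF} (the case $d\in\N$ with $d<n-1$, so that $F$ is literally a real polynomial of degree at most $d$): first I would note that since $F$ is a nonzero polynomial of degree $\le d$, it has at most $d$ real roots, hence $\{x\in[1,2]: F(x)=0\}$ has at most $d$ points and the sublevel set $\{x\in[1,2]:|F(x)|<\eta\}$ is a union of at most $d$ (actually $d+1$) subintervals of $[1,2]$. On each such subinterval $J$, I would invoke the standard sublevel set lemma for polynomials (a van der Corput-type bound): a degree-$d$ polynomial $P$ with leading coefficient normalized appropriately satisfies $|\{x: |P(x)|<\eta\}| \lesssim_d (\eta/A)^{1/d}$ where $A$ measures the size of $P$ — concretely one can use that some $k$-th derivative with $k\le d$ is bounded below, or simply Remez/Chebyshev-type comparison that $\|F\|_{L^\infty([1,2])}$ controls all coefficients of $F$ up to $d$-dependent constants on an interval of length $1$. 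Combining the bound on the number of intervals with the bound on the length of each gives the stated constant $100 d^2 (\eta/\|F\|_{L^\infty([1,2])})^{1/d}$, the $d^2$ coming from ($d$ intervals)$\times$($d$-dependent length bound).

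The plan for the second bullet \eqref{LF} (the generic case: either $d\notin\N$, or $d\ge n-1$) is more delicate and is where the non-integer/high-degree structure forces us to differentiate. The idea is to iterate the differentiation step inside the proof of Lemma \ref{zeroc}: starting from $F=F_n^d$, dividing by $(x+\a_1)^d$ and differentiating produces (up to a nonvanishing factor $(x+\a_1)^{-(d-1)}$ times a power) a function of the form $F_{n-1}^{d-1}$ with explicit coefficients $a_k d(\a_k-\a_1)$. Iterating $n-1$ times we arrive at $F_1^{d-n+1}$, i.e. a single term $c(x+\a_n)^{d-n+1}$ with $c = a_n \prod_{j=1}^{n-1} (d-j+1)(\a_n-\a_j)$ (this is the quantity appearing in $K(F_n^d)$); since all $\a_k$ are distinct and $d\notin\{1,\dots,n-1\}$ we have $c\ne 0$, and this function has no zeros in $[1,2]$ and in fact $|F_1^{d-n+1}(x)|\gtrsim |c|$ there. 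Now I would run this in reverse: a sublevel set bound for $|F_j^{d-j+1}|$ is obtained from a derivative bound for $|F_{j-1}^{d-j+2}|$ via the elementary fact that if $|G'|\ge \kappa$ on an interval then $|\{|G|<\eta\}|\le 2\eta/\kappa$, combined with the observation that $F_j^{d-j+1}$ has at most $n$ zeros by Lemma \ref{zeroc}, so $\{|F_j|<\eta\}$ lives in $O(n)$ intervals on each of which the derivative relation applies after dividing out the harmless powers $(x+\a_{j})^{\pm}$ and $(x+\a_j)^{-d+j-1}$, all of which are comparable to $1$ on $[1,2]$ since $|\a_k|\le 1/2$. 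Propagating the lower bound $|c|$ up through $n-1$ stages, each stage costing an exponent $1/k$ in $\eta$ and absolute constants, and collapsing via $\prod_{k=1}^{n-1}(\cdot)^{1/k}$-type arithmetic into a single $\eta^{1/(n-1)}$, yields the claimed $K(F_n^d)\,\eta^{1/(n-1)}$. The main obstacle — and the step requiring genuine care rather than routine computation — is the bookkeeping in this iteration: keeping the derivative lower bounds uniform over $[1,2]$ while dividing and multiplying by the factors $(x+\a_k)^{\text{(non-integer power)}}$, tracking how the constants $|d-j+1|$ and $|\a_n-\a_j|$ accumulate, and checking that $d\notin\{1,\dots,n-1\}$ (guaranteed precisely because we are in the ``otherwise'' branch) is exactly what prevents any intermediate coefficient from vanishing. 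Once the arithmetic of exponents and constants is organized, the estimate follows by multiplying the per-stage bounds and absorbing everything into the explicit constant $K(F_n^d)=\dfrac{3^{2|d|+2n+10}}{\left(|a_n|\left(\prod_{j=1}^{n-1}|d-j+1|\,|\a_n-\a_j|\right)\right)^{1/(n-1)}}$.
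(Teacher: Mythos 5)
Your proposal is correct and takes essentially the same route as the paper: \eqref{LFF} is the classical polynomial sublevel-set bound (the paper quotes it via Lagrange interpolation), and for \eqref{LF} you iterate the division-and-differentiation step of Lemma \ref{zeroc} down to the single term $a_n\big(\prod_{j=1}^{n-1}(d-j+1)(\alpha_n-\alpha_j)\big)(x+\alpha_n)^{d-n+1}$, nonvanishing exactly because of the ``otherwise'' hypothesis, and then propagate sublevel-set bounds back up through the chain using a first-derivative estimate on boundedly many intervals with optimized intermediate thresholds --- which is precisely the paper's induction $|J_n|\le C_n\,\eta_n/\eta_{n-1}+|J_{n-1}|$ with $|J_1|=0$ and the $\eta_j$ chosen to equalize the terms, yielding the exponent $1/(n-1)$. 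The one bookkeeping point to make explicit is that the $O(n)$-interval count for $\{|F_j|<\eta\}$ is obtained not from the zeros of $F_j$ itself but by applying Lemma \ref{zeroc} to the level sets of the normalized quotient $G_j=F_j/(x+\alpha_1)^{d-j+1}$ (equivalently to $F_j\mp\eta\,(x+\alpha_1)^{d-j+1}$, which is again of the form \eqref{F}), and that the derivative lower bound holds only off the set where the next function down is small --- the two facts your splitting and threshold optimization are implicitly relied upon to handle.
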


\begin{proof}
First we notice that \eqref{LFF} is a simple application of Lagrange's interpolation formula for polynomials (see e.g.  Lemma B in the Appendix of \cite{lv3}).

Turning our attention towards \eqref{LF} we will prove this via induction. Set $\eta_n:=\eta$ and define
\beq\label{LF0}
J_n:=\{x\in [1,2] :  |F_n^d(x)|<\eta_n\}.
\eeq
Notice now that $J_n$ is a subset of
\beq\label{LF1}
\tilde{J}_n:=\{x\in [1,2] :  |G_n^d(x)|<\tilde{\eta}_n\},
\eeq
where $G_n^d(x)$ is given by \eqref{G} and $\tilde{\eta}_n:= 3^{|d|}\eta_n$.

From Lemma \ref{zeroc} we deduce that $\tilde{J}_n$ can be written as a union of at most $2(n+1)$ intervals and, moreover
\beq\label{LF2}
\int_{\tilde{J}_n}|(G^d_n)'(x)|\,dx < 4(n+1) \tilde{\eta}_n.
\eeq
Let now $0<\tilde{\eta}_{n-1}$ (to be properly chosen later) and write $J_n^1:=\{x\in \tilde{J}_n : |(G^d_n)'(x)|<\tilde{\eta}_{n-1}\}$ and $J_n^2:=\tilde{J}_n\setminus J_n^1$.
From \eqref{LF2} we deduce that
\beq\label{LF3}
|J_n^2|< 4(n+1) \frac{\tilde{\eta}_n}{\tilde{\eta}_{n-1}},
\eeq
while $J_n^1\subseteq J_{n-1}$ where, recalling \eqref{F2}, we let
\beq\label{LF4}
J_{n-1}:=\{x\in [1,2] : |F_{n-1}^{d-1}(x)|< \eta_{n-1}\},
\eeq
with $\eta_{n-1}:=3^{|d|+2}\tilde{\eta}_{n-1}$.

Putting together \eqref{LF0}--\eqref{LF4} we get
\beq\label{LF5}
|J_n|\leq 4(n+1) 3^{2|d|+2} \frac{\eta_n}{\eta_{n-1}} + |J_{n-1}| .
\eeq
Iterating \eqref{LF5} we get
\begin{equation}
\begin{aligned}
 |J_n| \leq 4(n+1) 3^{2|d|+2}\frac{\eta_n}{\eta_{n-1}} &+ 4n 3^{2|d-1|+2}\frac{\eta_{n-1}}{\eta_{n-2}} + \ldots \\
& + 4\cdot3\cdot 3^{2|d-(n-2)|+2} \frac{\eta_{2}}{\eta_{1}} + |J_{1}|,
\end{aligned}\label{LF6}
\end{equation}
where
\[ J_{1}:=\{x\in [1,2] : |F_{1}^{d-n+1}(x)|< \eta_{1}\}, \]
with
\[ F_{1}^{d-n+1}(x) := a_n\Big(\prod_{j=1}^{n-1}(d-j+1)(\a_n-\a_j)\Big)(x+\a_n)^{d-n+1}. \]
Taking now $\eta_1:=|a_n| \big(\prod_{j=1}^{n-1}|d-j+1||\a_n-\a_j|\big)(\frac{1}{2})^{|d-n|+1}$ we see that $|J_1|=0$.

Choosing now $\eta_2, \ldots, \eta_{n-1}$ such that the terms in the right-hand side of \eqref{LF6} are pairwise equal, we conclude that
\eqref{LF} holds.
\end{proof}

Applying now Lemma \ref{zeroc} and Lemma \ref{levelset} we deduce

\begin{lemma} \label{phasecont} Let $F_4^{a-1}$ be a generic function as given by \eqref{F} and \eqref{assumesim} with $n=4$. Fix now $\eta>0$. We then have that the set
\beq\label{levsetp}
\Big\{x\in [1,2] : \Big|\frac{d}{dx} F_4^{a-1}\Big|<\eta\Big\}
\eeq
can be covered by at most ten intervals of length $L_{\eta}^{a-1}$ such that
\begin{itemize}
\item \textbf{Case 1:} if $a=3$ then
\beq\label{L1-a3}
L_{\eta}^2\lesssim \frac{\eta}{|A|+|B|},
\eeq
with $A=\sum_{k=1}^4 a_k$ and $B=\sum_{k=1}^4 a_k \a_k$.

\item \textbf{Case 2:} if $a=4$ then
\beq\label{L1}
L_{\eta}^3 \lesssim \Big(\frac{\eta}{|A|+|B|+|C|}\Big)^{\frac{1}{2}},
\eeq
with $A=\sum_{k=1}^4 a_k$, $B=\sum_{k=1}^4 a_k \a_k$ and $C=\sum_{k=1}^4 a_k \a_k^2$.

\item \textbf{Case 3:} if $a\in(0,\infty)\setminus\{1,2,3,4\}$ then

\beq\label{L2}
L_{\eta}^{a-1}\lesssim 20^a \Big(\frac{\eta}{|a-1| |a-2| |a-3| |a-4|\sup_{1\leq j\leq 4} \big(|a_j| \prod_{k\not=j} |\a_j-\a_k|\big)}\Big)^{\frac{1}{3}}.
\eeq
\end{itemize}
\end{lemma}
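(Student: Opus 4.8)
\textbf{Proof strategy for Lemma \ref{phasecont}.}

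The plan is to reduce the statement about the level sets of $\frac{d}{dx}F_4^{a-1}$ to the level-set estimates already established in Lemma \ref{zeroc} and Lemma \ref{levelset}, after identifying $\frac{d}{dx}F_4^{a-1}$ with a function of the form $F_4^{a-2}$. First I would compute the derivative explicitly: by \eqref{FF},
\[
\frac{d}{dx}F_4^{a-1}(x) = (a-1)\sum_{k=1}^4 a_k (x+\a_k)^{a-2},
\]
so $\frac{d}{dx}F_4^{a-1}$ is, up to the harmless constant factor $(a-1)$ (nonzero since $a\notin\{1\}$), exactly a function of the shape $F_4^{d}$ with exponent $d=a-2$. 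I would then invoke Lemma \ref{zeroc} with $n=4$, $d=a-2$: the set in \eqref{levsetp} is contained in the sublevel set $\{x\in[1,2] : |F_4^{a-2}(x)| < \eta/|a-1|\}$, and since $F_4^{a-2}$ has at most $4$ distinct real zeros (assuming it is not identically zero, a case which gives nothing to prove once one checks the coefficient conditions \eqref{assumesim}), this sublevel set is a union of at most $\sim 2\cdot 4 + 2 = 10$ intervals — this accounts for the claimed ``at most ten intervals'' in all three cases. It remains only to bound the length $L_\eta^{a-1}$ of each such interval, and here the three cases diverge according to whether $a-2$ is a positive integer strictly less than $n-1=3$ (namely $a=3$, $a=4$), or not (the generic case $a\in(0,\infty)\setminus\{1,2,3,4\}$).

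For Case 3, $a\in(0,\infty)\setminus\{1,2,3,4\}$, the exponent $d=a-2$ is not a natural number $<3$, so I would apply the estimate \eqref{LF} of Lemma \ref{levelset} directly to $F_4^{a-2}$ with $n=4$, yielding a bound of the form $K(F_4^{a-2})\,(\eta/|a-1|)^{1/3}$; expanding the constant $K$ from Lemma \ref{levelset} with the substitutions $d\rightsquigarrow a-2$ and relabeling the index running over $\{1,2,3,4\}$ gives precisely the factor $|a-1||a-2||a-3||a-4|\sup_j(|a_j|\prod_{k\ne j}|\a_j-\a_k|)$ in the denominator — one of the four factors $|a-1|$ being the constant from differentiating, the remaining three being the product $\prod_{j=1}^{3}|(a-2)-j+1| = |a-2||a-3||a-4|$. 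The crude $20^a$ comes from absorbing the $3^{2|d|+2n+10}$-type numerical constant in $K$. For Cases 1 and 2, $a=3$ gives $d=1$ and $a=4$ gives $d=2$, both of which are natural numbers $\le n-1$; here one cannot use \eqref{LF} (the denominator would degenerate, since $a-2\in\{1,2\}$ makes one of the factors $|a-2|$ or $|a-3|$ vanish), so instead I would use the polynomial interpolation estimate \eqref{LFF}. For $a=3$, $F_4^{1}(x)=\sum a_k(x+\a_k)$ is affine, equal to $A x + B$ with $A=\sum a_k$, $B=\sum a_k\a_k$; its sublevel set $\{|Ax+B|<\eta\}$ trivially has length $\lesssim \eta/|A|$, and a small argument comparing $|A|$ with $\|F_4^1\|_{L^\infty([1,2])}\sim |A|+|B|$ (on $[1,2]$, $|Ax+B|\le 2|A|+|B|$ and is $\gtrsim$ a fixed fraction of $|A|+|B|$ somewhere) gives \eqref{L1-a3}. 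For $a=4$, $F_4^2$ is quadratic with $\|F_4^2\|_{L^\infty([1,2])}\sim |A|+|B|+|C|$, and \eqref{LFF} with $d=2$ yields length $\lesssim (\eta/\|F_4^2\|_{L^\infty})^{1/2}$, hence \eqref{L1}.

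The step I expect to require the most care is \emph{not} the level-set bound per se — those are immediate corollaries of Lemmas \ref{zeroc} and \ref{levelset} — but rather the bookkeeping of constants and the verification that the hypotheses \eqref{assumesim} genuinely hold for $F_4^{a-2}$ in our setting: one must confirm that the $\a_k$'s (the shifts $-p/2^{am/2}$, $(q-2p)/2^{am/2}$, etc., from Section \ref{subsec:ttstar}) are pairwise distinct and lie in a bounded interval after the normalization, that the coefficients $a_k = \tilde\l(p), -\tilde\l(q),\dots$ are nonzero (which follows from \eqref{lam}), and — crucially — that $F_4^{a-2}$ is \emph{not} identically zero, i.e. that no algebraic coincidence among $p,q,r$ forces the phase flat. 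This last point is where the non-resonance hypothesis $a\notin\{1,2\}$ is used: for $a\in\{1,2\}$ the map $x\mapsto(x+\a)^{a-1}$ is affine and the four-term combination can collapse, whereas for $a\notin\{1,2\}$ (so $a-2\notin\{-1,0\}$, and more to the point $a-1\notin\{0,1\}$ so the second derivative of the original monomial phase never degenerates) the functions $(x+\a_k)^{a-1}$ are ``genuinely curved'' and Lemma \ref{zeroc} applies with full force. I would therefore begin the write-up by isolating the trivial sub-case where the relevant $F$ vanishes identically, dispatch it, and then run the three-case analysis above on the complement.
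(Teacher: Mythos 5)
Your argument is correct and coincides with the paper's (implicit) proof: Lemma \ref{phasecont} is presented there as a direct consequence of Lemmas \ref{zeroc} and \ref{levelset}, obtained exactly along your lines — write $\frac{d}{dx}F_4^{a-1}=(a-1)F_4^{a-2}$, use the Lagrange-interpolation bound \eqref{LFF} together with the comparability $\|F_4^{a-2}\|_{L^\infty([1,2])}\sim |A|+|B|$ (resp.\ $|A|+|B|+|C|$) when $a\in\{3,4\}$, and \eqref{LF} otherwise, with the constants ($|a-1|$ from the chain rule, $|a-2||a-3||a-4|=\prod_{j=1}^{3}|(a-2)-j+1|$, the supremum over $j$ by relabeling which term plays the role of the last index, and $20^a$ absorbing the $3^{2|a-2|+18}$-type factor) accounted for as you indicate. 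The one spot to tighten is the interval count: the fact that $F_4^{a-2}$ has at most four zeros does not by itself bound the number of components of the set \eqref{levsetp}, because its boundary points solve $F_4^{a-2}=\pm\eta/|a-1|$, and $F_4^{a-2}\mp c$ is not of the form \eqref{F}. This is repaired by one more Rolle step — between consecutive solutions of either equation lies a zero of the derivative, which is an $F_4^{a-3}$ with at most four zeros by Lemma \ref{zeroc}, so each level has at most five solutions and the sublevel set at most six components — or, equivalently, by the division-by-$(x+\a_1)^{d}$ normalization used inside the proof of Lemma \ref{levelset}; since the components are disjoint, each has length at most the total-measure bound, which is what defines $L_\eta^{a-1}$. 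Finally, the identically-vanishing degeneracy you worry about can, for this generic lemma, only occur when $a\in\{3,4\}$ with $A=B(=C)=0$, in which case the stated bounds are vacuous, so no appeal to $a\notin\{1,2\}$ is needed beyond keeping the factors $|a-1|,|a-2|$ nonzero in Case 3; the verification of \eqref{assumesim} for the specific data of Section \ref{subsec:ttstar} belongs to Lemma \ref{phasecont1}, not here.
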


From Lemma \ref{phasecont} we further have that

\begin{lemma} \label{phasecont1} In the specific setting provided by the choice of the $F_4^{a-1}$'s parameters as indicated by \eqref{FF}, we have that
\beq\label{levsetp1}
\Big\{x\in [1,2] : \Big|\frac{d}{dx}F_4^{a-1}\Big|<\eta\Big\}
\eeq
can be covered by at most ten intervals of length $L_{\eta}^{a-1}$ such that
\begin{itemize}

\item \textbf{Case 1:} if $a=3$ then
\beq\label{L011}
L_{\eta}^2(p,q,r)\lesssim \frac{\eta 2^{\frac{a m}{2}}}{|(r-q)\tilde{\l}(p)+ (p-r)\tilde{\l}(q)+(q-p)\tilde{\l}(r)|}.
\eeq

\item \textbf{Case 2:} if $a=4$ then
\beq\label{L11}
L_{\eta}^3(p,q,r)\lesssim \Big(\frac{\eta 2^{\frac{a m}{2}}}{|p-r| |p-q|}\Big)^{\frac{1}{2}}.
\eeq

\item \textbf{Case 3:}  if $a\in(0,\infty)\setminus\{1,2,3,4\}$ then

\beq\label{L21}
L_{\eta}^{a-1}(p,q,r)\lesssim 20^a \Big(\frac{\eta 2^{a m}}{|a-1| |a-2| |a-3| |a-4| |p-r| |q-p| |r-q|}\Big)^{\frac{1}{3}}.
\eeq
\end{itemize}
\end{lemma}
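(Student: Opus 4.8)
The statement is a direct specialization of Lemma \ref{phasecont} to the choice of parameters $\a_k, a_k$ given in \eqref{FF}, so the plan is simply to substitute those values into \eqref{L1-a3}, \eqref{L1}, \eqref{L2} and to carry out the elementary algebraic simplifications. First I would recall from \eqref{FF} that
\[
\a_1=-\tfrac{p}{2^{am/2}},\quad \a_2=\tfrac{q-2p}{2^{am/2}},\quad \a_3=-\tfrac{r}{2^{am/2}},\quad \a_4=\tfrac{q-p-r}{2^{am/2}},
\]
\[
a_1=\tilde\l(p),\quad a_2=-\tilde\l(q),\quad a_3=-\tilde\l(r),\quad a_4=\tilde\l(q+r-p),
\]
and note that by \eqref{lam} each $\tilde\l(\cdot)\sim 2^{am/2}$, which makes all four coefficients $a_k$ comparable to $2^{am/2}$ in absolute value; this is what will produce the powers of $2$ appearing in \eqref{L011}, \eqref{L11}, \eqref{L21}. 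I also need to check the normalization hypothesis \eqref{assumesim}, namely $|\a_k|\lesssim 1/2$ with the $\a_k$ pairwise distinct: the bound follows since $p,q,r\sim 2^{am/2}$ (after rescaling by a harmless absolute constant, as was already done tacitly in \eqref{polyn0f}), and pairwise distinctness of the $\a_k$'s is generic and can be arranged by restricting the summation ranges, exactly as in the convention following \eqref{j3}; the degenerate cases where two $\a_k$ coincide contribute trivial (zero) terms.

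For \textbf{Case 3} ($a\notin\{1,2,3,4\}$) I would plug the $\a_k$ into the quantity $\sup_{1\le j\le 4}\big(|a_j|\prod_{k\ne j}|\a_j-\a_k|\big)$ of \eqref{L2}. Each factor $\a_j-\a_k$ is, up to the $2^{-am/2}$ scaling, a $\mathbb Z$-linear combination of $p,q,r$; for instance $\a_1-\a_3=\frac{r-p}{2^{am/2}}$, $\a_2-\a_4=\frac{r-p}{2^{am/2}}$, $\a_1-\a_2=\frac{p-q}{2^{am/2}}$, $\a_3-\a_4=\frac{p-q}{2^{am/2}}$, and $\a_1-\a_4=\frac{q-r}{2^{am/2}}$, $\a_2-\a_3=\frac{q-r}{2^{am/2}}$ — so the six pairwise differences are, in absolute value, $\{|p-r|,|p-r|,|p-q|,|p-q|,|q-r|,|q-r|\}/2^{am/2}$. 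Hence for any fixed $j$ the triple product $\prod_{k\ne j}|\a_j-\a_k|$ equals $2^{-3am/2}$ times a product of three of these six quantities; combining with $|a_j|\sim 2^{am/2}$ one gets $|a_j|\prod_{k\ne j}|\a_j-\a_k|\gtrsim 2^{-am}|p-r||q-p||r-q|$ for the maximizing $j$ (one checks the maximizing index picks up exactly $|p-r|,|p-q|,|q-r|$ once each), which upon substitution into \eqref{L2} yields \eqref{L21}. For \textbf{Case 2} ($a=4$), the relevant quantity from \eqref{L1} is $|A|+|B|+|C|$ with $A=\sum a_k$, $B=\sum a_k\a_k$, $C=\sum a_k\a_k^2$; here $F_4^{3}$ is a genuine cubic polynomial and a short computation — expanding $\sum_k a_k(x+\a_k)^3$ and matching the coefficient of $x$, which is $3\sum_k a_k\a_k^2$, together with the lower-order data — shows that the leading nonvanishing symmetric combination is proportional to $\tilde\l(p)\tilde\l(q)\tilde\l(r)\tilde\l(q+r-p)$ times the discrete Wronskian-type factor $(p-q)(p-r)$ (all the fully symmetric cancellations in lower order being forced by the specific arithmetic structure of the $\a_k$), giving $|A|+|B|+|C|\gtrsim 2^{-am/2}|p-r||p-q|$ after accounting for the $2^{am/2}$-scalings, whence \eqref{L11}. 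For \textbf{Case 1} ($a=3$), $F_4^{2}$ is quadratic, so \eqref{L1-a3} applies with $A=\sum a_k$, $B=\sum a_k\a_k$; substituting gives $A=\tilde\l(p)-\tilde\l(q)-\tilde\l(r)+\tilde\l(q+r-p)$ and $2^{am/2}B=-p\,\tilde\l(p)-(q-2p)\tilde\l(q)+r\,\tilde\l(r)-(q-p-r)\tilde\l(q+r-p)$; rather than estimate $|A|$ and $|B|$ separately I would observe that the derivative $\frac{d}{dx}F_4^2$ is affine, so its level set is a single interval whose length is $\eta/\big|\tfrac{d^2}{dx^2}F_4^2\big|=\eta/|2\sum_k a_k|=\eta/(2|A|)$ — but to get the cleaner bound in \eqref{L011} I would instead use that we may drop the artificial fourth term (its coefficient $\tilde\l(q+r-p)$ can be absorbed or the index $q+r-p$ identified, reducing to a three-term expression) and directly compute $A$ for the three-term version: $A = (r-q)\tilde\l(p)+(p-r)\tilde\l(q)+(q-p)\tilde\l(r)$ up to the $2^{-am/2}$ normalization, which is precisely the denominator in \eqref{L011}.

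The \emph{main obstacle}, and the only non-mechanical point, is the Case $2$ and Case $3$ bookkeeping: one must verify that after substituting the highly structured $\a_k$'s into $F_4^{a-1}$, all the ``spurious'' symmetric combinations of the coefficients cancel and the surviving quantity is genuinely of size $\gtrsim |p-r||p-q||r-q|$ (times the appropriate power of $2^{am/2}$) rather than something smaller. This amounts to showing a discrete nondegeneracy: the $4\times 4$ ``confluent Vandermonde''-type matrix built from $1,\a_k,\a_k^2,\a_k^3$ (or its $a$-dependent analogue controlled via Lemma \ref{zeroc}) has determinant comparable to $\prod_{j<k}(\a_j-\a_k)$, and one must track that this generalized Vandermonde factor is exactly $2^{-6am/2}$ times $|p-r|^2|p-q|^2|q-r|^2$. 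Once this arithmetic identity is in hand, the lemma follows by inserting the resulting lower bounds on the relevant coefficient combinations into \eqref{L1-a3}, \eqref{L1}, \eqref{L2} from Lemma \ref{phasecont}, and the ``at most ten intervals of length $L_\eta^{a-1}$'' conclusion is inherited verbatim from that lemma.
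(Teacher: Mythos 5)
Your Case 3 is essentially the paper's argument and is fine: all four indices give the same product of differences, $\prod_{k\neq j}|\a_j-\a_k|=|p-q|\,|p-r|\,|q-r|\,2^{-3am/2}$, and together with $|a_j|=\tilde{\l}(\cdot)\sim 2^{am/2}$ from \eqref{lam} this yields \eqref{L21} directly from \eqref{L2} (the paper simply uses the index $j=1$).

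For Cases 1 and 2, however, there is a genuine gap, and the mechanism you propose does not repair it. In Case 1 you cannot ``drop'' or ``absorb'' the fourth term: $\tilde{\l}$ is an arbitrary measurable stopping-time function, so $\tilde{\l}(q+r-p)$ is an independent unknown, and $A=\sum_k a_k=\tilde{\l}(p)-\tilde{\l}(q)-\tilde{\l}(r)+\tilde{\l}(q+r-p)$ is \emph{not} the weighted three-term expression appearing in \eqref{L011}. The paper's point is an exact elimination: since $\tilde{\l}(q+r-p)$ enters $A$ with coefficient $1$ and $2^{am/2}B$ with coefficient $q-p-r$, the combination $\frac{p+r-q}{2^{am/2}}A+B$ kills it identically and equals $2^{-am/2}\big[(r-q)\tilde{\l}(p)+(p-r)\tilde{\l}(q)+(q-p)\tilde{\l}(r)\big]$; as $|p+r-q|\lesssim 2^{am/2}$, this gives $|A|+|B|\gtrsim 2^{-am/2}\big|(r-q)\tilde{\l}(p)+(p-r)\tilde{\l}(q)+(q-p)\tilde{\l}(r)\big|$, and \eqref{L011} then follows from \eqref{L1-a3}.

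In Case 2 your ``confluent Vandermonde'' nondegeneracy is the wrong tool. The quantities $A,B,C$ are \emph{linear}, not quartic, in the four unknown values $\tilde{\l}(p),\tilde{\l}(q),\tilde{\l}(r),\tilde{\l}(q+r-p)$, so no combination of them can be ``proportional to $\tilde{\l}(p)\tilde{\l}(q)\tilde{\l}(r)\tilde{\l}(q+r-p)$''; moreover you only have three functionals of four free values, i.e.\ a $3\times 4$ system whose kernel is always nontrivial, so no determinant estimate by itself can bound $|A|+|B|+|C|$ from below. What actually works is a two-step elimination plus positivity: applying the combination above both to $(A,B)$ and to $(B,C)$ removes $\tilde{\l}(q+r-p)$ from each, and then adding $\frac{p}{2^{am/2}}$ times the first resulting expression to the second removes $\tilde{\l}(p)$ as well, leaving exactly $2^{-am}(p-r)(q-p)\big(\tilde{\l}(q)+\tilde{\l}(r)\big)$. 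Because $\tilde{\l}(q)$ and $\tilde{\l}(r)$ enter with the \emph{same} sign and $\tilde{\l}\sim 2^{am/2}>0$ by \eqref{lam}, no cancellation can occur, so $|A|+|B|+|C|\gtrsim 2^{-am/2}|p-r|\,|q-p|$ and \eqref{L11} follows from \eqref{L1}. The positivity is essential: ignoring it, the system $A=B=C=0$ admits nontrivial real $\tilde{\l}$-profiles (the elimination only forces $\tilde{\l}(q)+\tilde{\l}(r)=0$ when $p\neq r$, $q\neq p$), so the unconstrained nondegeneracy you hope to extract from a Vandermonde factor is false at that level of generality.
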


\begin{observation}\label{nonresonantresctriction} The form of our lemma above is suggestive for the restriction $a\in (0,\infty)\setminus\{1,2\}$ in our main result -- see Theorem \ref{main}. The fact that for the resonant values $a\in\{1,2\}$  no decay is possible in \eqref{sgscdef} is a consequence of the generalized modulation invariance of our operator $BC^a$ -- see for this the discussion in Section \ref{openquest}. In particular, the failure of \eqref{keytr2} can be directly verified by inspecting \eqref{j1}.
\end{observation}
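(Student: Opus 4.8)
The plan is to treat the Observation in two independent parts. First, the ``in particular'' clause — that \eqref{keytr2} fails when $a\in\{1,2\}$ — is a direct computation from \eqref{j1}, which I would carry out by specializing the linearizing function. Second, the stronger assertion that ``no decay is possible in \eqref{sgscdef}'' for $a\in\{1,2\}$ I would establish by producing an actual counterexample to the single-scale estimate \eqref{sgsc}, built from the generalized modulation symmetries \eqref{quadm}, \eqref{quadm0}, \eqref{linm0}; this shows the failure is intrinsic and not merely a defect of the $TT^\ast$ scheme of Section \ref{subsec:ttstar}. The degeneracy of Lemma \ref{phasecont1} at $a\in\{1,2\}$ — the content of the first sentence — will fall out of the same computation.

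For the \eqref{j1} part, the first move is to fix the linearizing function to be a constant, $\lambda\equiv\lambda_0\in[2^{am-10},2^{am+10}]$, and to choose the scale $k$ so that $\rho_{am-ak}(\lambda_0)=1$; then by \eqref{lambda_x} the function $\underline{\lambda}_x$ is the constant $\lambda_0$ and the weights $w^e_{k,n,v}$ drop out of the relevant sums. With this, the bracketed expression in the exponent of \eqref{j1} becomes
\[ \frac{\bar c_a\lambda_0}{2^{a(a-1)m/2}}\Big[(v-u)^{a-1}-(v_1-u)^{a-1}-(v-u_1)^{a-1}+(v_1-u_1)^{a-1}\Big], \]
i.e. the second mixed difference $\Delta_{v,v_1}\Delta_{u,u_1}$ of the map $(v,u)\mapsto \bar c_a\lambda_0\,2^{-a(a-1)m/2}(v-u)^{a-1}$. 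For $a=1$ this map is constant ($(v-u)^{0}\equiv 1$), and for $a=2$ it is affine ($\propto v-u$); in either case a second mixed difference annihilates it, so the phase in \eqref{j1} vanishes identically and $\J(s)=\#\{(u,u_1,v,v_1):u,u_1,v,v_1\sim 2^{am/2}\}\sim 2^{2am}$ for \emph{every} $s$. This is exactly the trivial bound, so \eqref{keytr2} ($\J(s)\lesssim_a 2^{(2-\epsilon')am}$) cannot hold for any $\epsilon'>0$, and correspondingly the chain \eqref{keytr1}--\eqref{uniff} yields no $m$-decay for the uniform term. The same substitution shows that in \eqref{polyn0}--\eqref{j3} the function $P_{p,q,r}(\cdot)$ is identically zero when $\tilde\lambda$ is constant, and that $\tfrac{d}{dx}F_4^{a-1}$ in Lemma \ref{phasecont1} collapses, for $a\in\{1,2\}$, to the \emph{constant} $\sum_k a_k$ — which vanishes precisely under the ``box'' relation on the $a_k$ — so no nontrivial level-set bound of the form \eqref{L011}--\eqref{L21} can be written; this is the sense in which the shape of that lemma ``is suggestive'' of excluding $a\in\{1,2\}$.

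For the genuine obstruction, I would produce $f,g,h$ (together with a constant $\lambda$) realizing the full size of $\underline{\L}^a_{m,k,\ell,r}$. Take $\lambda\equiv\lambda_0$; for $a=2$ put $f=M_{2,-\lambda_0/2}f_1$, $g=M_{2,-\lambda_0/2}g_1$ with $f_1,g_1$ smooth bumps adapted to $[0,1]$. By the identity $(x-t)^2+(x+t)^2=2x^2+2t^2$ the modulations cancel the factor $e^{i\lambda_0 t^2}$ up to $e^{-i\lambda_0 x^2}$, which depends on $x$ alone and is absorbed into $h$; consequently the \emph{original} single-scale operator $T_{m,0}$ evaluated on this pair equals $e^{-i\lambda_0 x^2}$ times a fixed-scale truncation of the bilinear Hilbert transform $B(f_1,g_1)$, whose $L^2$ norm is comparable to $\|f_1\|_2\|g_1\|_2=\|f\|_2\|g\|_2$ with no dependence on $m$. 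Reading the heuristic chain \eqref{spacetransl}--\eqref{eq:model02} backwards to resum the $w^e_{k,n,v}$-weighted blocks $S^{n,v,p}_{k,\ell,r}$ into $T_{m,0}$ (and then dualizing against a bump $h$), this forces $|\underline{\L}^2_{m,0,\ell,r}(f,g,h)|\gtrsim 2^{-k/2}\|f\|_2\|g\|_2\|h\|_2$, so \eqref{sgsc} fails for every $\delta_0>0$. For $a=1$ I would use the linear symmetry \eqref{linm0} in the same way ($f=M_{\lambda_0/2}f_1$, $g=M_{-\lambda_0/2}g_1$ to cancel $e^{i\lambda_0 t}$), and remark that \eqref{quadm0} in addition shows $BC^1$ dominates the classical Carleson operator. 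A cleaner but less self-contained alternative: if \eqref{sgsc} held with $\delta_0>0$ for $a\in\{1,2\}$, summing in $k$ would give \eqref{mainrm}, and the contradiction recorded in the footnote to item \ref{OQ:3} (constant $\lambda$, adapted generalized modulations of $f,g$) applies for $a=2$ and, with the obvious modifications, for $a=1$.

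The only nontrivial step is the third one, and specifically the bookkeeping needed to turn ``$\underline{\L}^a\approx$ a localized copy of $B$'' into a genuine \emph{lower} bound: the discretized form \eqref{sgscdef} carries the stationary-phase weight $w^e_{k,n,v}(\lambda)$ and the Fourier cutoff $\psi$, so a generalized modulation does not literally cancel $e^{i\lambda(x)t^a}$ inside the model, and one must either descend back to $T_{m,k}$ via Section \ref{passage}/Section \ref{BEV} before invoking the symmetry, or argue at the level of $BC^a$ itself as above. This is routine — which is exactly why the paper records the point as an Observation — and the actual mechanism behind the non-resonant restriction, namely the vanishing of the \eqref{j1} phase because $(v-u)^{a-1}$ is affine in $(v,u)$ for $a\in\{1,2\}$, is the one-line computation of the second paragraph.
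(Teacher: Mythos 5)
Your proposal is correct and follows exactly the route the paper intends for this Observation: the failure of \eqref{keytr2} is seen by noting that in \eqref{j1} (with $\lambda$ constant) the phase is the second mixed difference of $(v,u)\mapsto(v-u)^{a-1}$, which vanishes identically for $a\in\{1,2\}$ so that $\J(s)\sim 2^{2am}$, and the absence of any decay is the generalized-modulation counterexample already recorded in Section \ref{openquest} (footnote to item \ref{OQ:3}), which you reproduce. Your only slips are cosmetic: for $a=1$ the derivative of $F_4^{0}$ is $0$ rather than $\sum_k a_k$, and at $a=1$ the stationary-phase model underlying \eqref{j1} (with $a'=a/(a-1)$) is itself degenerate, so that part of the computation should be read formally — neither affects the conclusion.
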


\begin{proof}
We first address the cases $a\in \{3, 4\}$. In this situation, with the notations from \eqref{FF} and \eqref{L1}, we have

\begin{align}
A& =\tilde{\l}(p)- \tilde{\l}(q)-\tilde{\l}(r)+\tilde{\l}(q+r-p), \label{coef01} \\
B&=\frac{1}{2^{\frac{am}{2}}}\big(-p \tilde{\l}(p)- (q-2p)\tilde{\l}(q)+r \tilde{\l}(r)+(q-p-r)\tilde{\l}(q+r-p)\big), \label{coef02} \\
C&=\frac{1}{2^{am}}\big(p^2\tilde{\l}(p)- (q-2p)^2\tilde{\l}(q)-r^2\tilde{\l}(r)+(q-p-r)^2\tilde{\l}(q+r-p)\big). \label{coef03}
\end{align}
From \eqref{coef01} and \eqref{coef02} we have
\beq\label{coef021}
\Big|\frac{p+r-q}{2^{\frac{am}{2}}} A\Big| + |B|\geq \frac{1}{2^{\frac{am}{2}}}|(r-q)\tilde{\l}(p)+ (p-r)\tilde{\l}(q)+(q-p)\tilde{\l}(r)|,
\eeq
which immediately implies \eqref{L011}.

Next, from \eqref{coef02} and \eqref{coef03}, we get
\beq\label{coef022}
\Big|\frac{p+r-q}{2^{\frac{am}{2}}} B\Big| + |C|\geq \frac{1}{2^{am}} |p(q-r)\tilde{\l}(p)+ (q-2p)(p-r)\tilde{\l}(q)+r(p-q)\tilde{\l}(r)|.
\eeq
Finally, combining \eqref{coef021} and \eqref{coef022} we deduce that
\begin{equation}
\begin{aligned}
& \frac{|p|}{2^{\frac{am}{2}}}\Big(\Big|\frac{p+r-q}{2^{\frac{am}{2}}}A \Big| + |B|\Big) + \Big|\frac{p+r-q}{2^{\frac{am}{2}}} B \Big| + |C| \\
& \hspace{8em} \geq \frac{1}{2^{am}}|p-r||q-p||\tilde{\l}(q)+\tilde{\l}(r)|,
\end{aligned}\label{coef031}
\end{equation}
from which we deduce\footnote{Notice that relation \eqref{coef031a} may be regarded as another manifestation of the time-frequency correlations induced by the non-zero curvature, this times relating the phase parameter $\l$ to the parameters $p,\,q,\,r$ arising from the spatial localization of the input functions (after a double $TT^{*}$ argument).}
\beq\label{coef031a}
|A|+|B|+|C|\gtrsim 2^{\frac{am}{2}} \frac{|p-r|}{2^{\frac{am}{2}}}  \frac{|q-p|}{2^{\frac{am}{2}}}.
\eeq
Conclude now that \eqref{L11} follows from \eqref{L1} and \eqref{coef031a}.

The case $a\in(0,\infty)\setminus\{1,2,3,4\}$ follows trivially from  \eqref{lam}, \eqref{FF} and \eqref{L2} once we exploit the fact that $\sup_{1\leq j\leq 4} \prod_{k\not=j} |\a_j-\a_k|\geq \prod_{k\not=1} |\a_1-\a_k|$.
\end{proof}

\subsubsection{The Cases 2 and 3: $a\in (0,\infty)\setminus\{1,2,3\}$. A discrete Van der Corput argument}

We are now ready to evaluate expression \eqref{j3}. Our analysis will depend on the amount of cancellation encoded by the rate of change of the phase associated with our exponential sum. Consequently, for a suitable small parameter $\bar{\eta}\in (0,1)$ (to be chosen later) we will split our exponential sum into two components:
\beq\label{jsplit}
\J(s)=\J_L(s) + \J_H(s),
\eeq
with
\begin{itemize}
\item the \emph{low oscillatory} component defined by
\beq\label{js}
\J_L(s):=\sum_{p,q,r\sim 2^{\frac{am}{2}}}\Big|
\sum_{\substack{v\sim_{p,q,r} 2^{\frac{am}{2}} \\ |P'_{p,q,r}(v)|< \bar{\eta}}} e^{i 2^{\frac{am}{2}} s P_{p,q,r}(v)}\Big|,
\eeq
\item the \emph{high oscillatory} component defined by
\beq\label{jl}
\J_H(s):=\sum_{p,q,r\sim 2^{\frac{am}{2}}}\Big|
\sum_{\substack{v\sim_{p,q,r} 2^{\frac{am}{2}} \\ |P'_{p,q,r}(v)|\geq  \bar{\eta}}} e^{i 2^{\frac{am}{2}} s P_{p,q,r}(v)}\Big|.
\eeq
\end{itemize}

We now analyze these two terms separately.

$\newline$
\noindent \textsf{The low oscillatory component.}
$\newline$

This is the easy situation in which we can simply ignore the cancellations coming from the exponential sum. Indeed, from \eqref{polyn0f} we have
\[ P'_{p,q,r}(v)=\frac{1}{2^{\frac{a m}{2}-1}} \Big(\frac{d}{dx}\Big) F_4^{a-1}(\frac{2v}{2^{\frac{a m}{2}}}) \]
and hence, applying\footnote{Here trivial considerations allow us to extend the applicability of our  Lemma \ref{phasecont1} -- with the obvious adaptations -- for an extended range of $F_4^{a-1}$, say $\Big[\frac{1}{10},10\Big]$.} Lemma \ref{phasecont1} with $\eta=2^{\frac{a m}{2}-1}\bar{\eta}$, we further have
\beq\label{js1}
\J_L(s) \leq \sum_{p,q,r\sim 2^{\frac{am}{2}}}
\sum_{\substack{v\sim 2^{\frac{am}{2}} \\ |P'_{p,q,r}(v)|< \bar{\eta}}} 1\lesssim 2^{\frac{am}{2}} \sum_{p,q,r\sim 2^{\frac{am}{2}}} \min\{1, L_{\eta}^{a-1}(p,q,r)\}.
\eeq
Deduce
\begin{itemize}
\item In Case 2: from \eqref{L11} we have
 \beq\label{js110}
\J_L(s)\lesssim_{a} 2^{2 am} \bar{\eta}^{\frac{1}{2}}.
\eeq
\item In Case 3: from \eqref{L21} we have
 \beq\label{js111}
\J_L(s)\lesssim_{a} 2^{2 am} \bar{\eta}^{\frac{1}{3}}.
\eeq
\end{itemize}

$\newline$
\noindent  \textsf{The high oscillatory component.}
$\newline$

We start by recalling the following classical number theoretical result which may be regarded as particular case of the discrete Van der Corput lemma (see \cite[Theorem 2.1]{Graham_Kolesnik}):

\begin{lemma} \label{VdC} [\textsf{Discrete Van der Corput}] Let $J\subset\R$ be a compact interval and $\a\in (0,1)$. Assume we are given a smooth function $F:\,J\,\mapsto\,\R$ such that $F$ is a continuously differentiable with\footnote{Here $\|x\|$ stands for the distance from $x$ to the nearest integer.} $\|F'(x)\|\geq \a$ for all $x\in J$ and $F'$ monotonic. Then\footnote{Recall that throughout the paper the display $e^{i x}$ stands for $e^{2\pi i x}$.}
\beq\label{VDC1}
\Big|\sum_{n\in J\cap \Z} e^{i F(n)}\Big|\lesssim \a^{-1}.
\eeq
\end{lemma}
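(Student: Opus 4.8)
The statement is classical and one may in fact simply invoke \cite[Theorem 2.1]{Graham_Kolesnik}; for completeness the plan is to give the self-contained argument, namely the Kusmin--Landau inequality. The first step is a normalization. Since replacing $F$ by $-F$ conjugates the exponential sum and reverses the monotonicity of $F'$, we may assume $F'$ is non-decreasing. If $F'$ is constant the sum is a geometric progression with ratio $e^{iF'}$, and $|1-e^{iF'}| = 2|\sin(\pi F'(x))| \geq 2\sin(\pi\alpha)$ by the hypothesis $\|F'\|\geq\alpha$, so it is $\lesssim (\sin\pi\alpha)^{-1}\lesssim\alpha^{-1}$ and we are done. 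Otherwise, since $F'$ is continuous and $\|F'(x)\|\geq\alpha>0$ on the connected set $J$, the integer $\lfloor F'(x)\rfloor$ is constant, say equal to $k$; replacing $F(x)$ by $F(x)-kx$ (which alters none of the summands, as $e^{ikn}=1$ with the standing convention) we may assume $\alpha\leq F'(x)\leq 1-\alpha$ for all $x\in J$.

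The second step is to exploit the elementary identity
\[
e^{iF(n)}=\frac{e^{iF(n+1)}-e^{iF(n)}}{e^{i(F(n+1)-F(n))}-1},
\]
whose denominator is nonzero: by the mean value theorem $F(n+1)-F(n)=F'(\xi_n)\in[\alpha,1-\alpha]$ for some $\xi_n\in(n,n+1)$, so $|e^{i(F(n+1)-F(n))}-1|=2|\sin(\pi(F(n+1)-F(n)))|\geq 2\sin(\pi\alpha)$. Writing $b_n:=e^{iF(n)}$, $c_n:=e^{i(F(n+1)-F(n))}-1$, and $\theta_n:=F(n+1)-F(n)$, and denoting by $a\leq n\leq b$ the range of $n\in J\cap\mathbb{Z}$, the sum becomes $\sum_{n=a}^{b-1}\frac{b_{n+1}-b_n}{c_n}$ up to a single boundary term $b_b$ of size $O(1)$. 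The third step applies Abel summation: with $B_N:=\sum_{n=a}^{N}(b_{n+1}-b_n)=b_{N+1}-b_a$ (so $|B_N|\leq 2$) one gets $\sum_{n=a}^{b-1}\frac{b_{n+1}-b_n}{c_n}=\frac{B_{b-1}}{c_{b-1}}+\sum_{n=a}^{b-2}B_n\big(\frac1{c_n}-\frac1{c_{n+1}}\big)$, hence the sum is $\lesssim \frac{1}{|c_{b-1}|}+\sum_{n}\big|\frac1{c_n}-\frac1{c_{n+1}}\big|$. Here the decisive computation is the exact formula $\frac{1}{e^{i\theta}-1}=-\frac12-\frac{i}{2}\cot(\pi\theta)$: since $\theta_n$ is non-decreasing in $n$ (because $F'$ is non-decreasing) and lies in $[\alpha,1-\alpha]$, the weights $1/c_n$ have constant real part $-\tfrac12$ and monotone imaginary part $-\tfrac12\cot(\pi\theta_n)$, so that $\sum_{n}\big|\frac1{c_n}-\frac1{c_{n+1}}\big|=\tfrac12\big|\cot(\pi\theta_a)-\cot(\pi\theta_{b-1})\big|\leq\cot(\pi\alpha)\lesssim\alpha^{-1}$, while $\frac{1}{|c_{b-1}|}\leq(2\sin\pi\alpha)^{-1}\lesssim\alpha^{-1}$. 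Collecting these bounds together with the $O(1)$ boundary contributions yields $\big|\sum_{n\in J\cap\mathbb{Z}}e^{iF(n)}\big|\lesssim\alpha^{-1}$.

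The argument is essentially routine; I expect the only delicate point to be the bookkeeping in the Abel summation, and in particular the observation that $1/(e^{i\theta}-1)$ has constant real part — it is precisely this fact that allows the monotonicity of $\theta_n$, inherited from the monotonicity of $F'$, to collapse the total variation of the weights into a single difference of cotangents controlled by $\alpha^{-1}$.
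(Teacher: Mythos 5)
Your proof is correct: it is the standard Kusmin--Landau argument (reduction to $\alpha\le F'\le 1-\alpha$, the identity $e^{iF(n)}=\big(e^{iF(n+1)}-e^{iF(n)}\big)/\big(e^{i(F(n+1)-F(n))}-1\big)$, Abel summation, and the key observation that $1/(e^{i\theta}-1)=-\tfrac12-\tfrac{i}{2}\cot(\pi\theta)$ has constant real part, so the monotonicity of $\theta_n$ collapses the total variation into a single cotangent difference bounded by $\cot(\pi\alpha)\lesssim\alpha^{-1}$). The paper does not prove this lemma at all — it simply cites \cite[Theorem 2.1]{Graham_Kolesnik} — and your self-contained argument is precisely the classical proof underlying that citation, so it matches the intended source.
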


In treating our high oscillatory component $\J_H(s)$ we are going to apply the above lemma to the function $F(x):=2^{\frac{am}{2}} s P_{p,q,r}(x)$, given that we are now working in the regime $\{v : |P'_{p,q,r}(v)|\geq \bar{\eta}\}$.

Next, taking  \eqref{mathf}, we notice that by subdividing\footnote{Say for example in $2^{100(a^2+1)}$ equidistant intervals.} the region of integration in $t$ followed by a Cauchy-Schwarz argument, one may reduce the range of $s$ so that it obeys the condition $|2^{\frac{am}{2}}s|<2^{-50(a^2+1)}$. Since $F'(v)=2 s \Big(\frac{d}{dx}F_4^{a-1}\Big)\Big(\frac{2v}{2^{\frac{a m}{2}}}\Big)$, we can now ensure that
\[ \frac{1}{2}>\|F'(v)\| = \|P'_{p,q,r}(v)  2^{\frac{am}{2}} s\| \gtrsim \bar{\eta} 2^{\frac{am}{2}}|s|. \]

Finally, based on \eqref{polyn0f} and Lemma \ref{zeroc}, the function $F$ can be decomposed in at most ten intervals such that $F'$ is monotonic within each such interval.

Thus, as desired, we are now in the setting prescribed by Lemma \ref{VdC}. Consequently, from \eqref{VDC1} we deduce that

\[ \J_H(s)\lesssim 2^{\frac{3a m}{2}}(\bar{\eta} 2^{\frac{am}{2}}|s|)^{-1}\lesssim 2^{\frac{3a m}{2}} 2^{\nu a m}(\bar{\eta})^{-1}. \]
where in the last inequality we used the fact that $s\in J_0$.
\vspace{1cm}

\noindent\textsf{For Case 2:} From \eqref{js110}, for $a=4$, we have

\[ \J(s)\lesssim 2^{\frac{3a m}{2}} 2^{\nu a m} (\bar{\eta})^{-1} + 2^{2 am}\bar{\eta}^{\frac{1}{2}}, \]
and hence after a variational argument, taking $\bar{\eta}= 2^{\frac{2}{3}(\nu-\frac{1}{2}) a m}$
\[ \J(s)\lesssim 2^{(\frac{11}{6} + \frac{\nu}{3}) a m}. \]

Recalling now \eqref{keytr1} we deduce
\begin{align*}
\I(0)^2 & \lesssim 2^{(\mu-1)\frac{am}{2}} 2^{(\frac{11}{12} + \frac{\nu}{6})a m} \int_{J^0\times I^0} \sum_{v\in U_{\mu}(g)} | g_v(t) g_v(t-s) |^2 \one_{I^0}(t-s)\,dt\,ds \\
 & \lesssim 2^{(\mu-1)\frac{am}{2}} 2^{(\frac{11}{12} + \frac{\nu}{6})am}2^{(\mu-1)\frac{am}{2}}\leq 2^{-(\frac{1}{12}-\mu -\frac{\nu}{6}) am}.
\end{align*}
From \eqref{mathf2}, we further have
\beq\label{mathf2s}
\I^{>}\lesssim 2^{-(\frac{1}{24}-\frac{\mu }{2}-\frac{\nu}{12}) am}.
\eeq
Finally, using \eqref{decsq1} and applying another variational argument between \eqref{max} and \eqref{mathf2s}, we conclude
\beq\label{mathf2ss}
\I\lesssim 2^{-\frac{6}{13}(\frac{1}{12}-\mu)am},
\eeq
where here we picked $\nu=\frac{6}{13}(\frac{1}{12}-\mu)$.
$\newline$

\noindent\textsf{For Case 3:} From \eqref{js111} we have
\[ \J(s)\lesssim_a 2^{\frac{3a m}{2}} 2^{\nu a m} (\bar{\eta})^{-1} + 2^{2 am}\bar{\eta}^{\frac{1}{3}}, \]
and hence  taking $\bar{\eta}= 2^{\frac{3}{4}(\nu-\frac{1}{2}) am}$ we get
\[ \J(s)\lesssim_a 2^{(\frac{15}{8} + \frac{\nu}{4})am}. \]
As before, from \eqref{mathf2}, we have
\beq\label{mathf2s2}
\I^{>}\lesssim_a 2^{-(\frac{1}{32}-\frac{\mu }{2}-\frac{\nu}{16})am}.
\eeq
Further on, from a variational argument between \eqref{max} and \eqref{mathf2s2} via \eqref{decsq1} we have
\beq\label{mathf2ss2}
\I\lesssim_a 2^{-\frac{1}{34}(1-16\mu)am},
\eeq
where here $\nu=\frac{1}{34}(1-16\mu)$.

Conclude from \eqref{mathf2ss} and \eqref{mathf2ss2} that
\beq\label{finaunifconclude}
|\underline{\Lambda}_{m}^{a,U_{\mu}}(f,g)|\lesssim_a 2^{-\frac{1}{68}(1-16 \mu) a m} \|f\|_{L^2}\|g\|_{L^2},
\eeq
and hence that \eqref{finaunif} holds when $a\in(0,\infty)\setminus\{1,2,3\}$ for $\d_1=\frac{1}{68}(1-16 \mu)$.

\subsubsection{The Case 1: $a=3$}

The analysis in this case turns out to be more delicate since the decay offered by \eqref{L011} depends on the structure of the linearizing function $\tilde{\l}$. Indeed, a closer inspection of \eqref{L011} reveals the key underlying philosophical difficulty: as $\tilde{\l}$ gets ``closer'' to a straight line the strength of the decay weakens. Thus it becomes natural to perform a quantitative analysis of ``the amount of linearity'' encoded into the graph of $\tilde{\l}$.

%
%
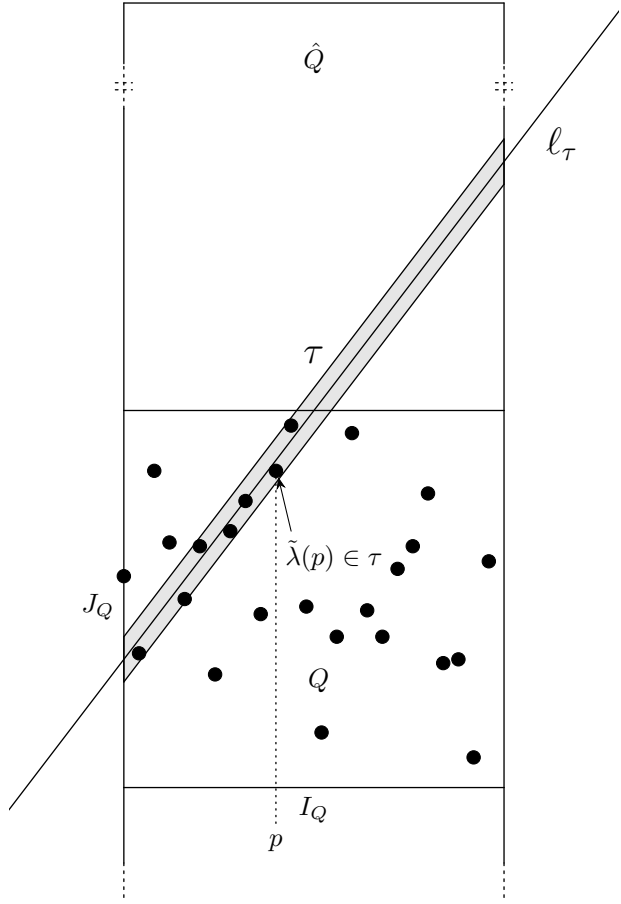
\begin{figure}[h]
\centering
\begin{tikzpicture}[line cap=round,line join=round,>=Stealth,x=1cm,y=1cm]
\clip(-1.5,-1.5) rectangle (6.5,10.5);

\fill[line width=0.5pt,fill opacity=0] (0,0) -- (5,0) -- (5,5) -- (0,5) -- cycle;
\draw [line width=0.5pt] (0,0)-- (5,0);
\draw [line width=0.5pt] (5,0)-- (5,5);
\draw [line width=0.5pt] (5,5)-- (0,5);
\draw [line width=0.5pt] (0,5)-- (0,0);
\draw (2.3,1.7) node[anchor=north west] {$Q$};
\draw (2.5,0) node[anchor=north] {$I_Q$};
\draw (0,2.4) node[anchor=east] {$J_Q$};
\fill[line width=0.5pt,color=black,fill=black,fill opacity=0.1] (0,2) -- (0,1.4) -- (5,8) -- (5,8.6) -- cycle;
\draw [line width=0.5pt,color=black] (0,2)-- (0,1.4);
\draw [line width=0.5pt,color=black] (0,1.4)-- (5,8);
\draw [line width=0.5pt,color=black] (5,8)-- (5,8.6);
\draw [line width=0.5pt,color=black] (5,8.6)-- (0,2);
\draw (2.5,5.5) node[anchor=south] {\LARGE $\tau$};
\draw (2,3.4) node[anchor=north west] {$\tilde{\lambda}(p) \in \tau$};
\draw [->] (2.2,3.4)-- (2.03,4.12);

\draw [line width=0.5pt, dotted] (2,4.2)--(2,-0.5);
\draw (2,-0.5) node[anchor=north] {$p$};

\draw [line width=0.5pt,domain=-5.444349942765321:15.557738432698038] plot(\x,{(--8.5--6.6*\x)/5});
\draw (5.42907169843202,8.87523323912829) node[anchor=north west] {\LARGE $\ell_{\tau}$};

\draw [line width=0.5pt] (0,0)-- (0,-1);
\draw [line width=0.5pt] (5,0)-- (5,-1);
\draw [line width=0.5pt,dash pattern=on 1pt off 2pt] (0,-1)-- (0,-1.5);
\draw [line width=0.5pt,dash pattern=on 1pt off 2pt] (5,-1)-- (5,-1.5);

\draw (2.5,10) node[anchor=north] {$\hat{Q}$};

\draw [line width=0.5pt] (0,5)-- (0,9);
\draw [line width=0.5pt,dash pattern=on 1pt off 2pt] (0,9)-- (0,9.598606253100192);
\draw [line width=0.5pt,dash pattern=on 1pt off 2pt] (-0.11171323603969871,9.252755944866891) -- (0.11171323603969871,9.252755944866891);
\draw [line width=0.5pt,dash pattern=on 1pt off 2pt] (-0.11171323603969871,9.345850308233304) -- (0.11171323603969871,9.345850308233304);
\draw [line width=0.5pt,dash pattern=on 1pt off 2pt] (5,9)-- (5,9.598606253100192);
\draw [line width=0.5pt,dash pattern=on 1pt off 2pt] (4.888286763960302,9.252755944866891) -- (5.111713236039699,9.252755944866891);
\draw [line width=0.5pt,dash pattern=on 1pt off 2pt] (4.888286763960302,9.345850308233304) -- (5.111713236039699,9.345850308233304);
\draw [line width=0.5pt] (5,9.598606253100192)-- (5,10.403385899706668);
\draw [line width=0.5pt] (5,10.403385899706668)-- (0,10.403385899706668);
\draw [line width=0.5pt] (0,10.403385899706668)-- (0,9.598606253100192);
\draw [line width=0.5pt] (5,9)-- (5,5);
\begin{scriptsize}

\draw [fill=black] (0,2.8030905719740478) circle (2.5pt);
\draw [fill=black] (0.2,1.78) circle (2.5pt);
\draw [fill=black] (0.8,2.5) circle (2.5pt);
\draw [fill=black] (1,3.2) circle (2.5pt);
\draw [fill=black] (1.4,3.4) circle (2.5pt);
\draw [fill=black] (1.6,3.8) circle (2.5pt);
\draw [fill=black] (1.8,2.3) circle (2.5pt);
\draw [fill=black] (2,4.2) circle (2.5pt);
\draw [fill=black] (2.2,4.8) circle (2.5pt);
\draw [fill=black] (2.4,2.4) circle (2.5pt);
\draw [fill=black] (2.8,2) circle (2.5pt);
\draw [fill=black] (3.2,2.35) circle (2.5pt);
\draw [fill=black] (4.2,1.65) circle (2.5pt);
\draw [fill=black] (4.4,1.7) circle (2.5pt);
\draw [fill=black] (3.4,2) circle (2.5pt);


\draw [fill=black] (0.4,4.2) circle (2.5pt);

\draw [fill=black] (0.6,3.25) circle (2.5pt);

\draw [fill=black] (1.2,1.5) circle (2.5pt);

\draw [fill=black] (2.6,0.73) circle (2.5pt);

\draw [fill=black] (3,4.7) circle (2.5pt);

\draw [fill=black] (3.6,2.9) circle (2.5pt);

\draw [fill=black] (3.8,3.2) circle (2.5pt);

\draw [fill=black] (4,3.9) circle (2.5pt);

\draw [fill=black] (4.6,0.4) circle (2.5pt);

\draw [fill=black] (4.8,3) circle (2.5pt);
\end{scriptsize}
\end{tikzpicture}
\caption{\footnotesize Graph of $\tilde{\lambda}_x$ and tube $\tau$} \label{figure:tubes_1}
\end{figure}


We start on our way by considering the (spatial) cube $Q=I_{Q}\times J_Q$ where $I_Q=J_Q:=[2^{\frac{3m}{2}-1}, 2^{\frac{3m}{2}+1}]$ and notice that without loss of generality we may assume that $p,q,r$ and $v$ take values inside $I_Q$.

Recalling notation \eqref{lam} we let for $x\in I^0$ and $p\in I_Q\cap \Z$
\beq\label{lam1}
\tilde{\l}_x(p)= c\underline{\l}_x(p) 2^{-\frac{3m}{2}},
\eeq
where here $c>0$ some suitable absolute constant. As above, we remark that we may assume wlog that $\{\tilde{\l}_x(p)\}_{p\in I_Q\cap \Z}\subseteq J_Q$ for all $x\in I^0$.

Let $\epsilon>0$ be a small parameter to be chosen later. Define now $\mathcal{T}$ the family of all tubes (parallelograms) $\t=(\a_{\t},\o_{\t}, I_{\t})$ inside $\hat{Q}:=I_{Q}\times 2^{2\ep m+2}\,J_Q$ such that the vertical edges $\a_{\t}, \o_{\t}$ are dyadic intervals of unit length, $I_{\t}= I_Q$, and their central line\footnote{The
central line $\ell_{\t}$ of $\t$ is obtained by joining $c(\a_{\t})$ with $c(\o_{\t})$ -- the centers of the intervals $\a_{\t}$ and $\o_{\t}$, respectively.} $\ell_{\t}$ has a slope $sl_{\t}$ that obeys $|sl_{\t}|:=\frac{|c(\a_{\t})-c(\o_{\t})|}{|I_{\t}|}\leq 2^{2 \ep m}$. \footnote{The reason for introducing the above elements in the definition for the family  $\mathcal{T}$  will become transparent towards the end of this section when approaching relation \eqref{js1LN}.}

In what follows we will use the following notation: we say that
\beq\label{paral}
\tilde{\l}_x(p)\in\t \qquad \text{ if and only if }\qquad -\frac{1}{2}\leq\tilde{\l}_x(p)-\ell_{\t}(p)<\frac{1}{2}.
\eeq

Also, if $\J\subseteq I_Q\cap \Z$, we introduce
\beq\label{paral1}
E_x^{\J}(\t):=\{p\in \J :  \tilde{\l}_x(p)\in\t \},
\eeq
and define the $(x,\J)$-density of a tube $\t$ by\footnote{Notice the close parallelism with the dictionary involved in the time-frequency discretization of the Quadratic Carleson operator in \cite{lv1}.}
\beq\label{dens}
\Delta_x^{\J}(\t):=\frac{\# E_x^{\J}(\t)}{\# (I_{\t}\cap \Z)}.
\eeq

Notice here that we always have
\[ 0\leq \Delta_x^{\J}(\t)\leq 1. \]

From now on we fix $x\in I^0$. Also fix $\ep>0$ to be chosen later. In what follows we will split the domain of our linearizing function into two components
\beq\label{lindec}
I_Q\cap \Z=\L_{x}\cup \n_{x},
\eeq
where
\begin{itemize}
\item $\L_{x}$ stands for the set of points where $ \tilde{\l}_x(\cdot)$ behaves \emph{linearly} so that the graph of
$\tilde{\l}_x\,\big|_{\L_x}$ can be well approximated by big pieces of Lipschitz graphs;

\item $\n_x$ represents the set of points where $ \tilde{\l}_x(\cdot)$ is \emph{non-flat} such that the graph of
$\tilde{\l}_x\,\big|_{\n_x}$ \emph{can not} be covered by a (suitably) \emph{small} number of tubes $\t \in \mathcal{T}$.
\end{itemize}

We remark here that the decomposition in \eqref{lindec} is not unique and also that it is possible that one of the two sets therein is empty. With these done, we pass to the explicit, quantitative realization of \eqref{lindec}:

$\newline$
\noindent\textsf{Construction of $\L_{x}$ and $\n_{x}$}
$\newline$

This construction is realized via the following greedy algorithm:
\begin{enumerate}
\item set $\L_x,\,\mathcal{T}_x^{H}=\emptyset$;

\item set $\n_x=(I_Q\cap \Z)\setminus \L_x$;

\item select $\bar{\t}\in \mathcal{T}\setminus\mathcal{T}_x^{H}$ such that $\Delta_x^{\n_x}(\bar{\t})\geq 2^{-2 \ep m}$ maximal with this property. If no such $\bar{\t}$ exists then Stop; otherwise move to the next item.

\item update $\L_x=\L_x^{old}\cup E_x^{\n_x}(\bar{\t})$;

\item update $\mathcal{T}_x^{H}=\mathcal{T}_x^{H,old}\cup \{\bar{\t}\}$;

\item return to item 2.
\end{enumerate}

Notice that this algorithm must stop in at most $2^{2 \ep m}$ steps.

$\newline$
\noindent\textsf{Properties of $\L_{x}$ and $\n_{x}$}
$\newline$

There exists $N_x\in\N$ with
\beq\label{Nx-def}
N_x\leq 2^{2\ep m}\,,
\eeq
such that $\mathcal{T}_x^{H}=\{\t_{j}\}_{j=1}^{N_x}$ where here the index $j$ reflects the position in the selection of $\t_j$ in the above algorithm. For $j\in\{1,\ldots,\, N_x\}$, we set\footnote{Here, for a generic $\t\in \mathcal{T}$, we denote with $E_x(\t):=E_x^{I_Q\cap \Z}(\t)$ and by convention let $ E_x(\t_{0})=\emptyset$.}
\[ \bar{E}_x(\t_j):=E_x(\t_j)\setminus \bigcup_{k=1}^{j-1}E_x(\t_{k}) \quad \text{ and } \quad \bar{\Delta}_x(\t_j):=\frac{\#\bar{E}_x(\t_j)}{\# (I_{\t_j}\cap \Z)}. \]

Now from the algorithm above we deduce that the set $\L_x$ can be covered disjointly by a small collection of ``heavy'' tubes, \textit{i.e.} each such tube has a high density:
\beq\label{Lx}
\L_{x}=\bigcup_{j=1}^{N_x} \bar{E}_x(\t_j)\quad \text{ with }\quad \bar{\Delta}_x(\t_j)\geq 2^{2\ep m}.
\eeq

In contrast with the above, the complement set $\n_x$ can not concentrate in a suitably small neighbourhood of any given tube $\t\in\mathcal{T}$, that is

\beq\label{Nx}
\forall \t\in\mathcal{T}, \hspace{1cm} \frac{\# \{p\in \n_x :  |\tilde{\l}_x(p)-\ell_{\t}(p)|\leq 2^{\ep m-10}\}}{\# (I_{\t}\cap \Z)}\leq 2^{-\ep m}.
\eeq

Indeed, if \eqref{Nx} would fail then via a pigeonhole principle argument we would be able to identify at least one $\t\in\mathcal{T}$ such that
\beq\label{Nx1}
\#E_x^{\n_x}(\t) \geq 2^{-2\ep m+5}\,\# (I_{\t}\cap \Z).
\eeq
However \eqref{Nx1} contradicts the construction algorithm of $\L_{x}$ and $\n_{x}$.

$\newline$
\noindent\textsf{Construction of a family of auxiliary linearizing functions}
$\newline$

In this subsection, for a fixed $x\in I^0$,  we introduce two families of linearizing functions:
\begin{enumerate}
\item The first one corresponds to the set of points $\L_x$ and consists of functions $\{\tilde{\l}_x^{\L,j}\}_{j=1}^{2^{2\ep m}}$ so that for each such $j$ the graph of $\tilde{\l}_x^{\L,j}$ approximates the central line of $\t_j$.

\item The second one consists of a single element, $\tilde{\l}_x^{\n}$, that incorporates the behavior of the initial linearizing function $\tilde{\l}_x$ on the set $\n_{x}$ thus isolating the ``curved'' (quadratic) behavior of $\tilde{\l}_x$.
\end{enumerate}
We emphasize here that our later number theoretical reasonings depend in a key fashion on the fact that each of the above linearizing functions is extended (and hence well defined) on the full domain $I_Q\cap \Z$ while preserving its key flat/nonflat features. This extension can be operated due to the positivity of the expression $\I(\tilde{\l})$ in \eqref{mathf3} below which facilitates the fundamental relation \eqref{mathf4} thus allowing an independent treatment of the linear and curved cases.

Concretely, with the previous notations, we define the following functions:

For $1\leq j\leq 2^{2\ep m}$ we define $\tilde{\l}_x^{\L,j}:\,I_{Q}\cap \Z\,\mapsto\,J_{Q}$ by
\begin{equation}
\tilde{\l}_x^{\L,j}(p):= \begin{cases}
\tilde{\l}_x(p) & \text{if } 1\leq j\leq N_x \text{ and } p\in \bar{E}_x(\t_j), \\
\ell_{\t_j}(p) &\text{if } 1\leq j\leq N_x \text{ and } p\in (I_{Q}\cap \Z)\setminus \bar{E}_x(\t_j), \\
p  & \text{if } N_x < j\leq 2^{2\ep m}\text{ and } p\in (I_{Q}\cap \Z).
\end{cases}\label{lamextL}
\end{equation}
Also we define  $\tilde{\l}_x^{\n}:\,I_{Q}\cap \Z\,\mapsto\,J_{Q}$ by
\begin{equation}
\tilde{\l}_x^{\n}(p):= \begin{cases}
\tilde{\l}_x(p) & \text{if } p\in \n_x, \\
2^{\frac{3m}{2}-1} + 2^{-\frac{3m}{2}}(p-2^{\frac{3m}{2}})^2   &\text{if } p\in (I_{Q}\cap \Z)\setminus \n_x.
\end{cases} \label{lamextN}
\end{equation}

%
%
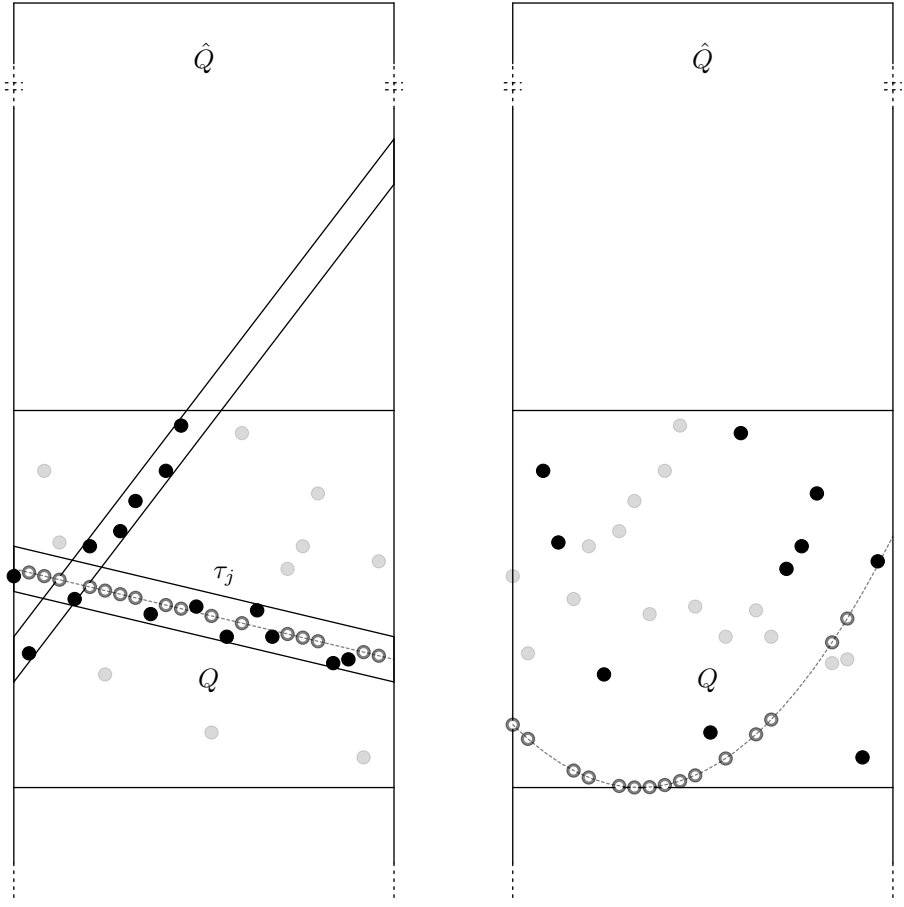
\begin{figure}[h]
\centering
\begin{minipage}{0.47\textwidth}
\centering
\begin{tikzpicture}[line cap=round,line join=round,>=triangle 45,x=1cm,y=1cm]
\clip(-0.5,-1.5) rectangle (6.5,10.5);

\fill[line width=0.5pt,fill opacity=0] (0,0) -- (5,0) -- (5,5) -- (0,5) -- cycle;
\draw [line width=0.5pt] (0,0)-- (5,0);
\draw [line width=0.5pt] (5,0)-- (5,5);
\draw [line width=0.5pt] (5,5)-- (0,5);
\draw [line width=0.5pt] (0,5)-- (0,0);
\draw (2.3,1.7) node[anchor=north west] {$Q$};


\draw [line width=0.5pt,color=black] (0,2)-- (0,1.4);
\draw [line width=0.5pt,color=black] (0,1.4)-- (5,8);
\draw [line width=0.5pt,color=black] (5,8)-- (5,8.6);
\draw [line width=0.5pt,color=black] (5,8.6)-- (0,2);

\draw [line width=0.5pt,color=black] (0,3.2)-- (0,2.6);
\draw [line width=0.5pt,color=black] (0,2.6)-- (5,1.4);
\draw [line width=0.5pt,color=black] (5,1.4)-- (5,2);
\draw [line width=0.5pt,color=black] (5,2)-- (0,3.2);
\draw (2.5,2.55) node[anchor=south west] {$\tau_j$};


\draw [line width=0.5pt] (0,0)-- (0,-1);
\draw [line width=0.5pt] (5,0)-- (5,-1);
\draw [line width=0.5pt,dash pattern=on 1pt off 2pt] (0,-1)-- (0,-1.5);
\draw [line width=0.5pt,dash pattern=on 1pt off 2pt] (5,-1)-- (5,-1.5);

\draw (2.5,10) node[anchor=north] {$\hat{Q}$};

\draw [line width=0.5pt] (0,5)-- (0,9);
\draw [line width=0.5pt,dash pattern=on 1pt off 2pt] (0,9)-- (0,9.598606253100192);
\draw [line width=0.5pt,dash pattern=on 1pt off 2pt] (-0.11171323603969871,9.252755944866891) -- (0.11171323603969871,9.252755944866891);
\draw [line width=0.5pt,dash pattern=on 1pt off 2pt] (-0.11171323603969871,9.345850308233304) -- (0.11171323603969871,9.345850308233304);
\draw [line width=0.5pt,dash pattern=on 1pt off 2pt] (5,9)-- (5,9.598606253100192);
\draw [line width=0.5pt,dash pattern=on 1pt off 2pt] (4.888286763960302,9.252755944866891) -- (5.111713236039699,9.252755944866891);
\draw [line width=0.5pt,dash pattern=on 1pt off 2pt] (4.888286763960302,9.345850308233304) -- (5.111713236039699,9.345850308233304);
\draw [line width=0.5pt] (5,9.598606253100192)-- (5,10.403385899706668);
\draw [line width=0.5pt] (5,10.403385899706668)-- (0,10.403385899706668);
\draw [line width=0.5pt] (0,10.403385899706668)-- (0,9.598606253100192);
\draw [line width=0.5pt] (5,9)-- (5,5);
\begin{scriptsize}

\draw [fill=black] (0,2.8030905719740478) circle (2.5pt);
\draw [fill=black] (0.2,1.78) circle (2.5pt);
\draw [fill=black] (0.8,2.5) circle (2.5pt);
\draw [fill=black] (1,3.2) circle (2.5pt);
\draw [fill=black] (1.4,3.4) circle (2.5pt);
\draw [fill=black] (1.6,3.8) circle (2.5pt);
\draw [fill=black] (1.8,2.3) circle (2.5pt);
\draw [fill=black] (2,4.2) circle (2.5pt);
\draw [fill=black] (2.2,4.8) circle (2.5pt);
\draw [fill=black] (2.4,2.4) circle (2.5pt);
\draw [fill=black] (2.8,2) circle (2.5pt);
\draw [fill=black] (3.2,2.35) circle (2.5pt);
\draw [fill=black] (4.2,1.65) circle (2.5pt);
\draw [fill=black] (4.4,1.7) circle (2.5pt);
\draw [fill=black] (3.4,2) circle (2.5pt);

\draw [fill=black, opacity=0.15] (0.4,4.2) circle (2.5pt);

\draw [fill=black, opacity=0.15] (0.6,3.25) circle (2.5pt);

\draw [fill=black, opacity=0.15] (1.2,1.5) circle (2.5pt);

\draw [fill=black, opacity=0.15] (2.6,0.73) circle (2.5pt);

\draw [fill=black, opacity=0.15] (3,4.7) circle (2.5pt);

\draw [fill=black, opacity=0.15] (3.6,2.9) circle (2.5pt);

\draw [fill=black, opacity=0.15] (3.8,3.2) circle (2.5pt);

\draw [fill=black, opacity=0.15] (4,3.9) circle (2.5pt);

\draw [fill=black, opacity=0.15] (4.6,0.4) circle (2.5pt);

\draw [fill=black, opacity=0.15] (4.8,3) circle (2.5pt);
%
\draw [fill=black,opacity=0.5] (0.2,2.852) circle (2.5pt);
\fill [white] (0.2,2.852) circle (1.5pt);
\draw [fill=black,opacity=0.5] (0.4,2.804) circle (2.5pt);
\fill [white] (0.4,2.804) circle (1.5pt);
\draw [fill=black,opacity=0.5] (0.6,2.756) circle (2.5pt);
\fill [white] (0.6,2.756) circle (1.5pt);
\draw [fill=black,opacity=0.5] (1,2.66) circle (2.5pt);
\fill [white] (1,2.66) circle (1.5pt);
\draw [fill=black,opacity=0.5] (1.2,2.612) circle (2.5pt);
\fill [white] (1.2,2.612) circle (1.5pt);
\draw [fill=black,opacity=0.5] (1.4,2.564) circle (2.5pt);
\fill [white] (1.4,2.564) circle (1.5pt);
\draw [fill=black,opacity=0.5] (1.6,2.516) circle (2.5pt);
\fill [white] (1.6,2.516) circle (1.5pt);
\draw [fill=black,opacity=0.5] (2,2.42) circle (2.5pt);
\fill [white] (2,2.42) circle (1.5pt);
\draw [fill=black,opacity=0.5] (2.2,2.372) circle (2.5pt);
\fill [white] (2.2,2.372) circle (1.5pt);
\draw [fill=black,opacity=0.5] (2.6,2.276) circle (2.5pt);
\fill [white] (2.6,2.276) circle (1.5pt);
\draw [fill=black,opacity=0.5] (3,2.18) circle (2.5pt);
\fill [white] (3,2.18) circle (1.5pt);
\draw [fill=black,opacity=0.5] (3.6,2.036) circle (2.5pt);
\fill [white] (3.6,2.036) circle (1.5pt);
\draw [fill=black,opacity=0.5] (3.8,1.988) circle (2.5pt);
\fill [white] (3.8,1.988) circle (1.5pt);
\draw [fill=black,opacity=0.5] (4,1.94) circle (2.5pt);
\fill [white] (4,1.94) circle (1.5pt);
\draw [fill=black,opacity=0.5] (4.6,1.796) circle (2.5pt);
\fill [white] (4.6,1.796) circle (1.5pt);
\draw [fill=black,opacity=0.5] (4.8,1.748) circle (2.5pt);
\fill [white] (4.8,1.748) circle (1.5pt);
\draw [line width=0.5pt,dash pattern=on 1pt off 1pt, opacity=0.5] (0,2.9)-- (5,1.7);
\end{scriptsize}
\end{tikzpicture}
\end{minipage}\hspace{0.5cm}
\begin{minipage}{0.47\textwidth}
\centering
\begin{tikzpicture}[line cap=round,line join=round,>=triangle 45,x=1cm,y=1cm]
\clip(-0.5,-1.5) rectangle (6.5,10.5);

\fill[line width=0.5pt,fill opacity=0] (0,0) -- (5,0) -- (5,5) -- (0,5) -- cycle;
\draw [line width=0.5pt] (0,0)-- (5,0);
\draw [line width=0.5pt] (5,0)-- (5,5);
\draw [line width=0.5pt] (5,5)-- (0,5);
\draw [line width=0.5pt] (0,5)-- (0,0);
\draw (2.3,1.7) node[anchor=north west] {$Q$};
\draw [line width=0.5pt] (0,0)-- (0,-1);
\draw [line width=0.5pt] (5,0)-- (5,-1);
\draw [line width=0.5pt,dash pattern=on 1pt off 2pt] (0,-1)-- (0,-1.5);
\draw [line width=0.5pt,dash pattern=on 1pt off 2pt] (5,-1)-- (5,-1.5);

\draw (2.5,10) node[anchor=north] {$\hat{Q}$};

\draw [line width=0.5pt] (0,5)-- (0,9);
\draw [line width=0.5pt,dash pattern=on 1pt off 2pt] (0,9)-- (0,9.598606253100192);
\draw [line width=0.5pt,dash pattern=on 1pt off 2pt] (-0.11171323603969871,9.252755944866891) -- (0.11171323603969871,9.252755944866891);
\draw [line width=0.5pt,dash pattern=on 1pt off 2pt] (-0.11171323603969871,9.345850308233304) -- (0.11171323603969871,9.345850308233304);
\draw [line width=0.5pt,dash pattern=on 1pt off 2pt] (5,9)-- (5,9.598606253100192);
\draw [line width=0.5pt,dash pattern=on 1pt off 2pt] (4.888286763960302,9.252755944866891) -- (5.111713236039699,9.252755944866891);
\draw [line width=0.5pt,dash pattern=on 1pt off 2pt] (4.888286763960302,9.345850308233304) -- (5.111713236039699,9.345850308233304);
\draw [line width=0.5pt] (5,9.598606253100192)-- (5,10.403385899706668);
\draw [line width=0.5pt] (5,10.403385899706668)-- (0,10.403385899706668);
\draw [line width=0.5pt] (0,10.403385899706668)-- (0,9.598606253100192);
\draw [line width=0.5pt] (5,9)-- (5,5);

\begin{scriptsize}

\draw [fill=black, opacity=0.15] (0,2.8030905719740478) circle (2.5pt);
\draw [fill=black, opacity=0.15] (0.2,1.78) circle (2.5pt);
\draw [fill=black, opacity=0.15] (0.8,2.5) circle (2.5pt);
\draw [fill=black, opacity=0.15] (1,3.2) circle (2.5pt);
\draw [fill=black, opacity=0.15] (1.4,3.4) circle (2.5pt);
\draw [fill=black, opacity=0.15] (1.6,3.8) circle (2.5pt);
\draw [fill=black, opacity=0.15] (1.8,2.3) circle (2.5pt);
\draw [fill=black, opacity=0.15] (2,4.2) circle (2.5pt);
\draw [fill=black, opacity=0.15] (2.2,4.8) circle (2.5pt);
\draw [fill=black, opacity=0.15] (2.4,2.4) circle (2.5pt);
\draw [fill=black, opacity=0.15] (2.8,2) circle (2.5pt);
\draw [fill=black, opacity=0.15] (3.2,2.35) circle (2.5pt);
\draw [fill=black, opacity=0.15] (4.2,1.65) circle (2.5pt);
\draw [fill=black, opacity=0.15] (4.4,1.7) circle (2.5pt);
\draw [fill=black, opacity=0.15] (3.4,2) circle (2.5pt);

\draw [fill=black] (0.4,4.2) circle (2.5pt);

\draw [fill=black] (0.6,3.25) circle (2.5pt);

\draw [fill=black] (1.2,1.5) circle (2.5pt);

\draw [fill=black] (2.6,0.73) circle (2.5pt);

\draw [fill=black] (3,4.7) circle (2.5pt);

\draw [fill=black] (3.6,2.9) circle (2.5pt);

\draw [fill=black] (3.8,3.2) circle (2.5pt);

\draw [fill=black] (4,3.9) circle (2.5pt);

\draw [fill=black] (4.6,0.4) circle (2.5pt);

\draw [fill=black] (4.8,3) circle (2.5pt);
%
%

\draw [fill=black, opacity=0.5] (0,0.8333333333333334) circle (2.5pt);
\fill [white] (0,0.8333333333333334) circle (1.5pt);

\draw [fill=black, opacity=0.5] (0.2,0.6453333333333334) circle (2.5pt);
\fill [white] (0.2,0.6453333333333334) circle (1.5pt);

\draw [fill=black, opacity=0.5] (0.8,0.22533333333333333) circle (2.5pt);
\fill [white] (0.8,0.22533333333333333) circle (1.5pt);

\draw [fill=black, opacity=0.5] (1,0.13333333333333336) circle (2.5pt);
\fill [white] (1,0.13333333333333336) circle (1.5pt);

\draw [fill=black, opacity=0.5] (1.4,0.021333333333333357) circle (2.5pt);
\fill [white] (1.4,0.021333333333333357) circle (1.5pt);

\draw [fill=black, opacity=0.5] (1.6,0.0013333333333333329) circle (2.5pt);
\fill [white] (1.6,0.0013333333333333329) circle (1.5pt);

\draw [fill=black, opacity=0.5] (1.8,0.005333333333333331) circle (2.5pt);
\fill [white] (1.8,0.005333333333333331) circle (1.5pt);

\draw [fill=black, opacity=0.5] (2,0.03333333333333332) circle (2.5pt);
\fill [white] (2,0.03333333333333332) circle (1.5pt);

\draw [fill=black, opacity=0.5] (2.2,0.08533333333333336) circle (2.5pt);
\fill [white] (2.2,0.08533333333333336) circle (1.5pt);

\draw [fill=black, opacity=0.5] (2.4,0.16133333333333327) circle (2.5pt);
\fill [white] (2.4,0.16133333333333327) circle (1.5pt);

\draw [fill=black, opacity=0.5] (2.8,0.38533333333333314) circle (2.5pt);
\fill [white] (2.8,0.38533333333333314) circle (1.5pt);

\draw [fill=black, opacity=0.5] (3.2,0.7053333333333334) circle (2.5pt);
\fill [white] (3.2,0.7053333333333334) circle (1.5pt);

\draw [fill=black, opacity=0.5] (3.4,0.9013333333333331) circle (2.5pt);
\fill [white] (3.4,0.9013333333333331) circle (1.5pt);

\draw [fill=black, opacity=0.5] (4.2,1.9253333333333331) circle (2.5pt);
\fill [white] (4.2,1.9253333333333331) circle (1.5pt);

\draw [fill=black, opacity=0.5] (4.4,2.2413333333333334) circle (2.5pt);
\fill [white] (4.4,2.2413333333333334) circle (1.5pt);


\draw [black, line width=0.5pt,dash pattern=on 1pt off 1pt, opacity=0.5] plot[domain=0:5] (\x,{(3/10)*(\x-(5/3))*(\x-(5/3))});

\end{scriptsize}
\end{tikzpicture}
\end{minipage}
\caption{\footnotesize On the left, the graph of $\tilde{\lambda}_{x}^{\mathcal{L},j}$ and associated tube $\tau_j$. The black points are the points in $\mathcal{L}$, the shaded points are the points in $\mathcal{N}$ and the points with empty interior are the auxiliary points on the line $\ell_{\tau_j}$ introduced by the definition of $\tilde{\lambda}_{x}^{\mathcal{L},j}$. On the right, the graph of $\tilde{\lambda}_{x}^{\mathcal{N}}$, where this time the black points are the ones in $\mathcal{N}$, the shaded points are those in $\mathcal{L}$ and the points with empty interior are the auxiliary points on the parabola introduced by the definition of $\tilde{\lambda}_{x}^{\mathcal{N}}$.}
\end{figure}


With this, returning to \eqref{mathf}, we have that
\begin{equation}
\begin{aligned}
& (\underline{\Lambda}_{m}^{3,U_{\mu}}(f,g))^2= \I(\tilde{\l}) \\
&:=\sum_{\substack{u\in U_{\mu}(f) \\ v\in U_{\mu}(g)}} \frac{1}{|I^0|}\int_{I^0}
\Big|\int_{I^0} f_u(x-t) g_v(x+t) e^{i \frac{(v-u)^{2}}{2^{3 m}}\tilde{\l}_x(u+v) 2^{\frac{3m}{2}} t}\,dt\Big|^2 \,dx.
\end{aligned}\label{mathf3}
\end{equation}
Based on \eqref{lamextL}--\eqref{mathf3}  we immediately deduce that
\beq\label{mathf4}
\I(\tilde{\l})\lesssim \I(\tilde{\l}^{\n}) + \sum_{j=1}^{2^{2\ep m}} \I(\tilde{\l}^{\L,j})=:\I(\tilde{\l}^{\n}) + \I(\tilde{\l}^{\L}).
\eeq
Once at this point we follow the strategy described in Section \ref{ttstar}:
\begin{itemize}
\item with the obvious correspondences we perform the analogue decomposition to \eqref{decsq1}, that is
\beq\label{dec1}
\I(\tilde{\l}^{*})=:\I^{<}(\tilde{\l}^{*}) + \I^{>}(\tilde{\l}^{*}),
\eeq
where here $\{*\}\in\{\L,\n\}$. We emphasize here that we will use two distinct cut-off parameters $\nu_{\L},\,\nu_{\n}>0$ corresponding to the parameter $\nu$ considered in the decomposition \eqref{decsq1}--\eqref{decsq2}.

\item since the argument in \eqref{max} is independent on the structure of the linearizing function $\tilde{\l}$ we can maintain the same reasonings in order to deduce that
\beq\label{dec2}
\I^{<}(\tilde{\l}^{\L})\lesssim 2^{2\ep m} 2^{- 3\nu_{\L} m}\qquad\text{ and }\qquad
\I^{<}(\tilde{\l}^{\n})\lesssim 2^{- 3\nu_{\n} m}.
\eeq

\item for the second component, $\I^{>}(\tilde{\l}^{*})$, we apply the analogue of \eqref{mathf2}, and, with the obvious correspondences we write
\beq\label{dec3}
\I^{>}(\tilde{\l}^{*}):=\frac{1}{|I^0|} \int_{I^0} \I(\tilde{\l}^{*})(x)\,dx,
\eeq
with
\beq\label{decc3}
\I(\tilde{\l}^{\L})(x):= \sum_{j=1}^{2^{2\ep m}} \I(\tilde{\l}^{\L,j})(x).
\eeq

\item as in the case $a\in(0,\infty)\setminus\{1,2,3\}$, one can prove a pointwise decay for $\I(\tilde{\l}^{*})(\cdot)$ (thus in effect estimating $\|\I(\tilde{\l}^{*})(x)\|_{L_x^{\infty}(I^0)}$) and hence without loss of generality we will set $x=0$.

\item taking for simplicity $j=1$, $\tilde{\l}_0^{\L,1}=\tilde{\l}^{\L,1}$ and $\tilde{\l}_0^{\n}=\tilde{\l}^{\n}$  it thus remains to estimate the expressions
\begin{equation}
\begin{aligned}
&\I(\tilde{\l}^{\L,1})(0)= \sum_{\substack{u\in U_{\mu}(f) \\ v\in U_{\mu}(g)}}
\int_{J^0}\Big(\int_{I^0}  f_u(t) g_v(t) f_u(t-s) g_v(t-s) \\
& \hspace{12em} \cdot \one_{I^0}(t-s) e^{i \frac{(v-u)^{2}}{2^{3m}}\tilde{\l}^{\L,1}(u+v)2^{\frac{3m}{2}} s}\,dt\Big)\,ds,
\end{aligned}\label{dec4}
\end{equation}
and
\begin{equation}
\begin{aligned}
& \I(\tilde{\l}^{\n})(0)= \sum_{\substack{u\in U_{\mu}(f) \\ v\in U_{\mu}(g)}}
\int_{J^0} \Big(\int_{I^0}  f_u(t) g_v(t) f_u(t-s) g_v(t-s)\\
& \hspace{12em} \cdot \one_{I^0}(t-s) e^{i \frac{(v-u)^{2}}{2^{3m}} \tilde{\l}^{\n}(u+v) 2^{\frac{3m}{2}} s} \,dt\Big)\,ds.
\end{aligned}\label{dec5}
\end{equation}

\item Following similar reasonings to those in \eqref{mathf4-I0}--\eqref{j3} and letting $\{*\}\in\{(\L,1),\n\}$, we reduce our task to that of estimating the exponential sums

\beq\label{j3lin}
\J^{*}(s)= \sum_{p,q,r\sim 2^{\frac{3 m}{2}}}
\sum_{v\sim_{p,q,r} 2^{\frac{3 m}{2}}} e^{i 2^{\frac{3m}{2}} s P_{p,q,r}^{*}(v)},
\eeq
with
\begin{equation}
\begin{aligned}
P_{p,q,r}^{*}(v)&:=\frac{(2v-p)^{2}}{2^{3 m}} \tilde{\l}^{*}(p) - \frac{(2v+q-2p)^{2}}{2^{3 m}} \tilde{\l}^{*}(q)\\
&\hspace{1em} - \frac{(2v-r)^{2}}{2^{3 m}} \tilde{\l}^{*}(r) + \frac{(2v+q-p-r)^{2}}{2^{3 m}} \tilde{\l}^{*}(q+r-p),
\end{aligned}\label{polyn0lin}
\end{equation}

As before, regarding now $P_{p,q,r}^{*}(v)$ as a polynomial in $v$ and writing
\beq\label{Pstar}
P_{p,q,r}^{*}(v)= 4 A v^2 + 4 B v + C ,
\eeq
we have that $A,\,B,\,C$ obey \eqref{coef01}--\eqref{coef03} with the obvious adaptations to our above setting.

\end{itemize}

$\newline$
\noindent\textsf{Treatment of the term $\I(\tilde{\l}^{\L,1})(0)$}
$\newline$

In this specific situation, relying on the fact that the graph of $\tilde{\l}^{\L,1}$ is by construction ``close'' to a line, the key insight is to notice that, by rewriting $P_{p,q,r}^{\L,0}(v)$ as a function of $p$ as opposed to $v$ and hence regarding \eqref{j3lin} as a discrete oscillatory sum in $p$, one can reduce the matters to the previous analysis.

More precisely, based on \eqref{lamextL}, we may assume without loss of generality that
\begin{equation}
\tilde{\l}^{\L,1}(p):= \begin{cases}
\tilde{\l}_0(p) &\text{if }  p\in \bar{E}_0(\t_1), \\
\ell_{\t_1}(p) &\text{if } p\in (I_{Q}\cap \Z)\setminus \bar{E}_0(\t_1).
\end{cases}\label{lamextL0}
\end{equation}

Moreover, since from the above definitions we know that
\beq\label{dif}
|\tilde{\l}_0(p)-\ell_{\t_1}(p)|\leq \frac{1}{2}\quad \text{ for all } p\in \bar{E}_0(\t_1),
\eeq
a standard Taylor argument in the same spirit as the one discussed in Section \ref{Tr} allows us to reduce our discussion to the case in which
\beq\label{lamextL1}
\tilde{\l}^{\L,1}(p)=\ell_{\t_1}(p)\quad\text{ for any }p\in (I_{Q}\cap \Z).
\eeq

Now we notice that due to the linearity of $\tilde{\l}^{\L,1}$ one has in \eqref{Pstar} that
\beq\label{coef01lin}
A=B=0,
\eeq
while from \eqref{coef031} and \eqref{coef01lin} one has
\beq\label{coef02lin}
C=Q_{q,r}(p):=\frac{1}{2^{3m}}(p-r)(q-p)(\tilde{\l}^{\L,1}(q)+\tilde{\l}^{\L,1}(r))=:F_3^2\Big(\frac{p}{2^{\frac{3m}{2}}}\Big),
\eeq
where, as before -- recall \eqref{FF}, one has
\beq\label{FFa3}
F_3^{2}(x)=\sum_{k=1}^3 a_k (x+\a_k)^{2}\:,
\eeq
with
\begin{alignat*}{3}
\a_1&=-\frac{r}{2^{\frac{3 m}{2}}}, \quad && a_1&& =-(\tilde{\l}^{\L,1}(q)+\tilde{\l}^{\L,1}(r)),\\
\a_2 &=-\frac{q}{2^{\frac{3 m}{2}}}, \quad && a_2&&=-\frac{\tilde{\l}^{\L,1}(q)+\tilde{\l}^{\L,1}(r)}{4}, \\
\a_3&=\frac{q-2r}{2^{\frac{3 m}{2}}}, \quad && a_3 &&=\frac{\tilde{\l}^{\L,1}(q) + \tilde{\l}^{\L,1}(r)}{4}.
\end{alignat*}

At this point we rewrite \eqref{j3lin} as
\beq\label{a3lin}
\J^{\L,1}(s)= \sum_{q,r,v\sim 2^{\frac{3 m}{2}}}
\sum_{p\sim_{q,r,v} 2^{\frac{3 m}{2}}}e^{i 2^{\frac{3m}{2}} s Q_{q,r}(p)},
\eeq
and follow the analogue of \eqref{jsplit}, that is
\beq\label{jsplitL}
|\J^{\L,1}(s)| \leq \J_L^{\L,1}(s) + \J_H^{\L,1}(s) ,
\eeq
where here, as expected
\beq\label{jsL}
 \J_L^{\L,1}(s):=\sum_{q,r,v\sim 2^{\frac{3m}{2}}}\bigg|
\sum_{\substack{p\sim_{q,r,v} 2^{\frac{3m}{2}} \\ |Q'_{q,r}(p)|< \bar{\eta}}} e^{i 2^{\frac{3m}{2}} s Q_{q,r}(p)}\bigg|,
\eeq
and
\beq\label{jlL}
\J_H^{\L,1}(s):=\sum_{q,r,v\sim 2^{\frac{3m}{2}}}\bigg|
\sum_{\substack{p\sim_{q,r,v} 2^{\frac{3m}{2}} \\ |Q'_{q,r}(p)|\geq \bar{\eta}}} e^{i 2^{\frac{3m}{2}} s Q_{q,r}(p)}\bigg|.
\eeq

For the treatment of the low oscillatory component $\J_L^{\L,1}$ we are precisely in the setting of Lemma \ref{phasecont}, Case 1, with $\eta=2^{\frac{3 m}{2}}\bar{\eta}$ and\footnote{The parameters $A$ and $B$ are now adapted to the new context, that is, they are defined relative to $Q_{q,r}$.} $|A|+|B|\sim |\tilde{\l}^{\L,1}(q) + \tilde{\l}^{\L,1}(r)|\sim 2^{\frac{3m}{2}}$.

Thus, we deduce that
\beq\label{js1L}
\J_L^{\L,1}(s) \lesssim \sum_{q,r,v\sim 2^{\frac{3m}{2}}} 2^{\frac{3m}{2}}\min\{1,L_{\eta}^{2}(q,r)\}\lesssim 2^{6 m}\bar{\eta}.
\eeq

For the high oscillatory component $\J_H^{\L,1}$ we apply the same Lemma \ref{VdC} in order to deduce that
\beq\label{js1H}
\J_H^{\L,1}(s)\lesssim 2^{\frac{9 m}{2}}(\bar{\eta}2^{\frac{3 m}{2}}|s|)^{-1} \lesssim 2^{\frac{9 m}{2}} 2^{3 \nu_{\L} m} (\bar{\eta})^{-1}.
\eeq

Putting now together \eqref{jsplitL}, \eqref{js1L} and \eqref{js1H} and applying a variational argument in $\bar{\eta}$ -- whose optimal choice turns out to be $2^{\frac{3m}{2}(\nu_{\L}-\frac{1}{2})}$ -- we have that
\beq\label{conclL}
|\J^{\L,1}(s)|\lesssim 2^{6 m} 2^{\frac{3m}{2}(\nu_{\L}-\frac{1}{2})}.
\eeq
Conclude now that
\beq\label{concLm}
\I^{>}(\tilde{\l}^{\L,1})\lesssim  2^{\frac{3m}{8}(4\mu+\nu_{\L}-\frac{1}{2})}.
\eeq

$\newline$
\noindent\textsf{Treatment of the term $\I(\tilde{\l}^{\n})(0)$}
$\newline$

We start by recalling \eqref{dec5} and notice that, due to \eqref{Nx} and \eqref{lamextN} (for $x=0$) and for $\ep>0$ small enough, we have the following key property:
\beq\label{KeyP}
\forall \t\in\mathcal{T}, \qquad \frac{\# \{p\in I_{Q}\cap \Z : |\tilde{\l}^{\n}(p)-\ell_{\t}(p)|\leq 2^{\ep m-10}\}}{\# (I_{Q}\cap \Z)}\leq 2^{-\ep m}.
\eeq

Now, with $\J^{\n}(s)$ and $P_{p,q,r}^{\n}(v)$ defined by \eqref{j3lin}--\eqref{Pstar}, we transpose \eqref{jsplit} into
\beq\label{jsplitN}
|\J^{\n}(s)|\leq\J_L^{\n}(s) + \J_H^{\n}(s),
\eeq
where
\beq\label{jsN}
\J_L^{\n}(s):=\sum_{p,q,r\sim 2^{\frac{3m}{2}}}\bigg|
\sum_{\substack{v\sim_{p,q,r} 2^{\frac{3m}{2}} \\ |\frac{d}{dx}P_{p,q,r}^{\n}(v)|< \bar{\eta}}} e^{i 2^{\frac{3m}{2}} s P_{p,q,r}^{\n}(v)}\bigg|,
\eeq
and
\beq\label{jlN}
\J_H^{\n}(s):=\sum_{p,q,r\sim 2^{\frac{3m}{2}}}\bigg|
\sum_{\substack{v\sim_{p,q,r} 2^{\frac{3m}{2}} \\ |\frac{d}{dx}P_{p,q,r}^{\n}(v)|\geq \bar{\eta}}} e^{i 2^{\frac{3m}{2}} s P_{p,q,r}^{\n}(v)}\bigg|.
\eeq

For the low oscillatory component $\J_L^{\n}$ we apply Lemma \ref{phasecont1}, Case 1, for $\eta=2^{\frac{3 m}{2}-1}\bar{\eta}$.

Thus, using \eqref{KeyP}, we deduce that
\begin{equation}
\begin{aligned}
\J_L^{\n}(s) & \lesssim
\sum_{p,q,r\sim 2^{\frac{3m}{2}}} 2^{\frac{3m}{2}} \min\{1, L_{\eta}^{2}(p,q,r)\} \\
& \lesssim 2^{(6-2\ep) m} +
2^{\frac{3m}{2}} \sum_{\substack{q,r\sim 2^{\frac{3m}{2}} \\ |q-r|\gtrsim 2^{(\frac{3}{2}-2\ep)m}}}  \sum_{p\sim 2^{\frac{3m}{2}}}\min\bigg\{1, \\
& \hspace{7em} \frac{2^{3m}\bar{\eta}}{|r-q|\Big|\tilde{\l}^{\n}(p)- \Big(p\frac{\tilde{\l}^{\n}(r)-\tilde{\l}^{\n}(q)}{r-q} + \frac{r \tilde{\l}^{\n}(q) - q \tilde{\l}^{\n}(r)}{r-q}\Big)\Big|}\bigg\} \\
& \lesssim 2^{(6-\ep) m} + 2^{3m} \sum_{\substack{q,r\sim 2^{\frac{3m}{2}} \\ |q-r|\gtrsim 2^{(\frac{3}{2}-2\ep)m}}} \frac{2^{3m} \bar{\eta}}{|r-q|2^{\ep m}} \\
& \lesssim  2^{(6-\ep) m}(1 + m 2^{\frac{3m}{2}} \bar{\eta}).
\end{aligned}\label{js1LN}
\end{equation}

%
%
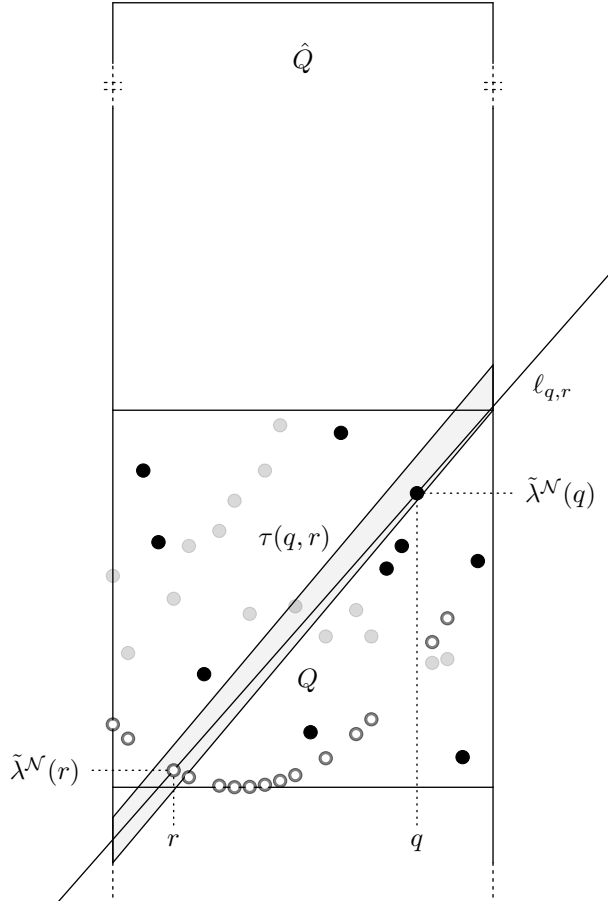
\begin{figure}[h]
\centering
\begin{tikzpicture}[line cap=round,line join=round,>=triangle 45,x=1cm,y=1cm]
\clip(-1.5,-1.5) rectangle (6.5,10.5);

\fill[line width=0.5pt,fill opacity=0] (0,0) -- (5,0) -- (5,5) -- (0,5) -- cycle;
\draw [line width=0.5pt] (0,0)-- (5,0);
\draw [line width=0.5pt] (5,0)-- (5,5);
\draw [line width=0.5pt] (5,5)-- (0,5);
\draw [line width=0.5pt] (0,5)-- (0,0);
\draw (2.3,1.7) node[anchor=north west] {$Q$};

\fill[line width=0.5pt,color=black,fill=black,fill opacity=0.05] (0,-0.4) -- (0,-1) -- (5,5) -- (5,5.6) -- cycle;
\draw [line width=0.5pt,color=black] (0,-0.4)-- (0,-1);
\draw [line width=0.5pt,color=black] (0,-1)-- (5,5);
\draw [line width=0.5pt,color=black] (5,5)-- (5,5.6);
\draw [line width=0.5pt,color=black] (5,5.6)-- (0,-0.4);
\draw (3,3) node[anchor=south east] {$\tau(q,r)$};



\draw [line width=0.5pt,domain=-5.444349942765321:15.557738432698038] plot(\x,{(--2.2186666666666666-3.6746666666666665*\x)/-3.2});
\draw (5.4,5.6) node[anchor=north west] {$\ell_{q,r}$};

\draw [line width=0.5pt] (0,0)-- (0,-1);
\draw [line width=0.5pt] (5,0)-- (5,-1);
\draw [line width=0.5pt,dash pattern=on 1pt off 2pt] (0,-1)-- (0,-1.5);
\draw [line width=0.5pt,dash pattern=on 1pt off 2pt] (5,-1)-- (5,-1.5);

\draw (2.5,10) node[anchor=north] {$\hat{Q}$};

\draw [line width=0.5pt] (0,5)-- (0,9);
\draw [line width=0.5pt,dash pattern=on 1pt off 2pt] (0,9)-- (0,9.598606253100192);
\draw [line width=0.5pt,dash pattern=on 1pt off 2pt] (-0.11171323603969871,9.252755944866891) -- (0.11171323603969871,9.252755944866891);
\draw [line width=0.5pt,dash pattern=on 1pt off 2pt] (-0.11171323603969871,9.345850308233304) -- (0.11171323603969871,9.345850308233304);
\draw [line width=0.5pt,dash pattern=on 1pt off 2pt] (5,9)-- (5,9.598606253100192);
\draw [line width=0.5pt,dash pattern=on 1pt off 2pt] (4.888286763960302,9.252755944866891) -- (5.111713236039699,9.252755944866891);
\draw [line width=0.5pt,dash pattern=on 1pt off 2pt] (4.888286763960302,9.345850308233304) -- (5.111713236039699,9.345850308233304);
\draw [line width=0.5pt] (5,9.598606253100192)-- (5,10.403385899706668);
\draw [line width=0.5pt] (5,10.403385899706668)-- (0,10.403385899706668);
\draw [line width=0.5pt] (0,10.403385899706668)-- (0,9.598606253100192);
\draw [line width=0.5pt] (5,9)-- (5,5);

\draw [line width=0.5pt,dotted] (4,-0.5)-- (4,3.9);
\draw [line width=0.5pt,dotted] (4,3.9)-- (5.3,3.9);
\draw [line width=0.5pt,dotted] (0.8,-0.5)-- (0.8,0.22533333333333333);
\draw [line width=0.5pt,dotted] (0.8,0.22533333333333333)-- (-0.3,0.22533333333333333);
\draw (0.8,-0.5) node[anchor=north] {$r$};
\draw (4,-0.5) node[anchor=north] {$q$};
\draw (5.3,3.9) node[anchor=west] {$\tilde{\lambda}^{\mathcal{N}}(q)$};
\draw (-0.3,0.22533333333333333) node[anchor=east] {$\tilde{\lambda}^{\mathcal{N}}(r)$};
\begin{scriptsize}

\draw [fill=black, opacity=0.15] (0,2.8030905719740478) circle (2.5pt);
\draw [fill=black, opacity=0.15] (0.2,1.78) circle (2.5pt);
\draw [fill=black, opacity=0.15] (0.8,2.5) circle (2.5pt);
\draw [fill=black, opacity=0.15] (1,3.2) circle (2.5pt);
\draw [fill=black, opacity=0.15] (1.4,3.4) circle (2.5pt);
\draw [fill=black, opacity=0.15] (1.6,3.8) circle (2.5pt);
\draw [fill=black, opacity=0.15] (1.8,2.3) circle (2.5pt);
\draw [fill=black, opacity=0.15] (2,4.2) circle (2.5pt);
\draw [fill=black, opacity=0.15] (2.2,4.8) circle (2.5pt);
\draw [fill=black, opacity=0.15] (2.4,2.4) circle (2.5pt);
\draw [fill=black, opacity=0.15] (2.8,2) circle (2.5pt);
\draw [fill=black, opacity=0.15] (3.2,2.35) circle (2.5pt);
\draw [fill=black, opacity=0.15] (4.2,1.65) circle (2.5pt);
\draw [fill=black, opacity=0.15] (4.4,1.7) circle (2.5pt);
\draw [fill=black, opacity=0.15] (3.4,2) circle (2.5pt);

\draw [fill=black] (4,3.9) circle (2.5pt);

\draw [fill=black] (0.4,4.2) circle (2.5pt);

\draw [fill=black] (0.6,3.25) circle (2.5pt);

\draw [fill=black] (1.2,1.5) circle (2.5pt);

\draw [fill=black] (2.6,0.73) circle (2.5pt);

\draw [fill=black] (3,4.7) circle (2.5pt);

\draw [fill=black] (3.6,2.9) circle (2.5pt);

\draw [fill=black] (3.8,3.2) circle (2.5pt);

\draw [fill=black] (4.6,0.4) circle (2.5pt);

\draw [fill=black] (4.8,3) circle (2.5pt);
%
%

\draw [fill=black, opacity=0.5] (0,0.8333333333333334) circle (2.5pt);
\fill [white] (0,0.8333333333333334) circle (1.5pt);

\draw [fill=black, opacity=0.5] (0.2,0.6453333333333334) circle (2.5pt);
\fill [white] (0.2,0.6453333333333334) circle (1.5pt);

\draw [fill=black, opacity=0.5] (0.8,0.22533333333333333) circle (2.5pt);
\fill [white] (0.8,0.22533333333333333) circle (1.5pt);

\draw [fill=black, opacity=0.5] (1,0.13333333333333336) circle (2.5pt);
\fill [white] (1,0.13333333333333336) circle (1.5pt);

\draw [fill=black, opacity=0.5] (1.4,0.021333333333333357) circle (2.5pt);
\fill [white] (1.4,0.021333333333333357) circle (1.5pt);

\draw [fill=black, opacity=0.5] (1.6,0.0013333333333333329) circle (2.5pt);
\fill [white] (1.6,0.0013333333333333329) circle (1.5pt);

\draw [fill=black, opacity=0.5] (1.8,0.005333333333333331) circle (2.5pt);
\fill [white] (1.8,0.005333333333333331) circle (1.5pt);

\draw [fill=black, opacity=0.5] (2,0.03333333333333332) circle (2.5pt);
\fill [white] (2,0.03333333333333332) circle (1.5pt);

\draw [fill=black, opacity=0.5] (2.2,0.08533333333333336) circle (2.5pt);
\fill [white] (2.2,0.08533333333333336) circle (1.5pt);

\draw [fill=black, opacity=0.5] (2.4,0.16133333333333327) circle (2.5pt);
\fill [white] (2.4,0.16133333333333327) circle (1.5pt);

\draw [fill=black, opacity=0.5] (2.8,0.38533333333333314) circle (2.5pt);
\fill [white] (2.8,0.38533333333333314) circle (1.5pt);

\draw [fill=black, opacity=0.5] (3.2,0.7053333333333334) circle (2.5pt);
\fill [white] (3.2,0.7053333333333334) circle (1.5pt);

\draw [fill=black, opacity=0.5] (3.4,0.9013333333333331) circle (2.5pt);
\fill [white] (3.4,0.9013333333333331) circle (1.5pt);

\draw [fill=black, opacity=0.5] (4.2,1.9253333333333331) circle (2.5pt);
\fill [white] (4.2,1.9253333333333331) circle (1.5pt);

\draw [fill=black, opacity=0.5] (4.4,2.2413333333333334) circle (2.5pt);
\fill [white] (4.4,2.2413333333333334) circle (1.5pt);


\end{scriptsize}
\end{tikzpicture}
\caption{\footnotesize Graph of $\tilde{\lambda}^{\mathcal{N}}$ with line $\ell_{q,r}$ and tube $\tau(q,r)$.} \label{figure:tubes_3}
\end{figure}


We mention here that for the third inequality, we used the following: for $q,r$ fixed, there exists a tube $\tau=\tau(q,r)\in \mathcal{T}$ such that\footnote{Here we use the fact that, due to the restriction $|q-r|\gtrsim 2^{(\frac{3}{2}-2\ep)m}$, we can ensure that the slope of $\ell_{q,r}(p)$ is in absolute value bounded by $2^{2\ep m}$.}
\begin{equation}
|\ell_\tau(p)- \ell_{q,r}(p)|\leq \frac{1}{2}\qquad \forall p\in I_{\tau}\cap\Z
 \label{eq:slo}
 \end{equation}
where
\[ \ell_{q,r}(p):=p \frac{\tilde{\l}^{\n}(r) - \tilde{\l}^{\n}(q)}{r-q} + \frac{r \tilde{\l}^{\n}(q) - q \tilde{\l}^{\n}(r)}{r-q} \]
is the line joining the points $(r,\tilde{\l}^{\n}(r))$ and $(q,\tilde{\l}^{\n}(q))$.

Now, as suggested by \eqref{KeyP} and \eqref{eq:slo}, we split the sum over $p$ according to the two subsets
\[  \{p\in I_{Q}\cap \Z : |\tilde{\l}^{\n}(p)-\ell_{\t}(p)|\leq 2^{\ep m-10}\} \quad \text{ and } \quad \{p\in I_{Q}\cap \Z : |\tilde{\l}^{\n}(p)-\ell_{\t}(p)|> 2^{\ep m-10}\}. \]
The summation in $q,r$ over the contribution of the first subset is controlled by $2^{(6-\ep)m}$ thanks to \eqref{KeyP}. For the second subset, using the restriction\footnote{On the complement set $|q-r|\lesssim 2^{(\frac{3}{2}-2\ep)m}$ the summation in $q,r$ and $p$ is bounded by the same expression $2^{(6-\ep)m}$.} $|q-r|\gtrsim 2^{(\frac{3}{2}-2\ep)m}$ and \eqref{eq:slo} one has
\[ |\tilde{\l}^{\n}(p)- \ell_{q,r}(p)| \gtrsim  2^{\ep m}, \]
which allows us to conclude the estimate in \eqref{js1LN}.

For the high oscillatory component $\J_H^{\n}$, via Lemma \ref{VdC}, we have the same estimate as the one given by \eqref{js1H}:

\beq\label{js1HN}
\J_H^{\n}(s)\lesssim 2^{\frac{9 m}{2}}(\bar{\eta} 2^{\frac{3 m}{2}} |s|)^{-1} \lesssim 2^{\frac{9 m}{2}} 2^{3 \nu_{\n} m}(\bar{\eta})^{-1}.
\eeq

Now, from \eqref{jsplitN}, \eqref{js1LN} and \eqref{js1HN}, we obtain for $\bar{\eta}=\frac{1}{\sqrt{m}} 2^{\frac{m}{2}(3\nu_{\n}+\ep-3)}$ that
\beq\label{conclN}
|\J^{\n}(s)|\lesssim 2^{6 m} \sqrt{m} 2^{\frac{m}{2}(3\nu_{\n}-\ep)}.
\eeq

Conclude now that
\beq\label{concNm}
\I^{>}(\tilde{\l}^{\n}) \lesssim m^{\frac{1}{8}} 2^{\frac{m}{8}(12\mu+3\nu_{\n}-\ep)}.
\eeq
\\
\noindent\textsf{The final estimates}\par
\vspace{0.5cm}
From \eqref{mathf4}--\eqref{decc3}, \eqref{concLm} and \eqref{concNm} we deduce that
\beq\label{IL}
\I(\tilde{\l}^{\L})\lesssim 2^{2\ep m}(2^{- 3\nu_{\L} m} + 2^{\frac{3m}{8}(4\mu+\nu_{\L}-\frac{1}{2})}),
\eeq
which for $\nu_{\L}:=\frac{1}{9}(\frac{1}{2}-4\mu)$ becomes
\beq\label{IL1}
\I(\tilde{\l}^{\L})\lesssim 2^{2\ep m} 2^{-\frac{1}{3}(\frac{1}{2}-4\mu) m}.
\eeq
while
\beq\label{IN}
\I(\tilde{\l}^{\n})\lesssim 2^{- 3\nu_{\n} m} + m^{\frac{1}{8}} 2^{\frac{m}{8}(12\mu+3\nu_{\n}-\ep)}.
\eeq
which for $\nu_{\n}:=\frac{1}{27}(\ep-12\mu)$ becomes
\beq\label{IN1}
\I(\tilde{\l}^{\n})\lesssim  m^{\frac{1}{8}} 2^{- \frac{\ep-12\mu}{9} m}.
\eeq
Combining now \eqref{IL1} with \eqref{IN1} we conclude
\beq\label{Itot}
\I(\tilde{\l})\lesssim \I(\tilde{\l}^{\n}) + \I(\tilde{\l}^{\L})\lesssim  m^{\frac{1}{8}} 2^{- \frac{\ep-12\mu}{9} m} + 2^{2\ep m} 2^{-\frac{1}{3}(\frac{1}{2}-4\mu) m}\lesssim m^{\frac{1}{8}} 2^{-(\frac{1}{114}-\frac{4}{3}\mu) m} ,
\eeq
where in the last inequality we choose $\ep=\frac{3}{38}$.

Finally, deduce that, up to a polynomial term in $m$, \eqref{finaunif} holds when $a=3$ for
\beq\label{d1case3}
 \d_1=\frac{1}{6} (\frac{1}{114}-\frac{4}{3}\mu).
 \eeq
\end{proof} 
\section{The high resolution, single-scale analysis (II): Time-frequency localization via weight mass and input size distribution}
\label{sec:HR2}

As already noted before -- recall the discussion in Section \ref{Inthr} with a special stress on the formulation \eqref{TTstargum} -- Proposition \ref{sgscale} captures the cancellation hidden in the non-zero curvature of the $BC^a$ kernel's phase thus providing the single scale $m$-exponential decay of  $\|\underline{\L}_{m,P}\|_{L^2\times L^2\times L^2\to {\mathbb C}}$ for every given $P = P(k,\ell,r)\in \BHT$. The difficulty now lies in  putting the various scales together while preserving a similar type decay in $m$, thus aiming for a \emph{global} estimate of the form
\begin{equation}\label{glob}
\|\underline{\L}_{m}\|_{L^p\times L^q\times L^r\to {\mathbb C}}\lesssim 2^{- \delta(p,q,r)\, a m}\,,
\end{equation}
where, of course, this time $(p,q,r)$ needs to obey the natural H\"older homogeneity condition and $\delta(p,q,r)>0$. As hinted by the modulation invariance property of $BC^a$ in \ref{key:m}, it is natural to expect that time-frequency analysis methods inspired by the bilinear Hilbert transform approach in \cite{lt2} would be required for achieving \eqref{glob}. These type of methods are designed for capturing the almost-orthogonality among suitable families of \emph{modulated Calder\'on-Zygmund (sub)operators}\footnote{Represented as \emph{trees} in the time-frequency plane, these objects constitute the elementary building blocks within any modulation invariant analysis problem.} as opposed to the standard  Calder\'on-Zygmund theory that exhibits a simpler, almost orthogonality relation among scales.

In the context given by our problem we need to combine the almost orthogonality methods employed in Section \ref{sec:onescale} and relying on a suitable bilinear $TT^{*}$ argument involving exponential sums with the almost orthogonality methods developed for controlling tree-like structures that are part of the modulation invariance time-frequency analysis and that will be the focus of Section \ref{sec:bilinear:analysis}. However, in order to be able to perform this latter analysis we will need a very precise (Heisenberg) time-frequency localization for all the three functions $f,g$ and $h$ that are involved within a generic trilinear form $\underline{\L}_{m,P}(f,g,h)$, $P\in\BHT$. This latter fact will be required when quantifying the interactions among the tree structures associated with each of the input functions.

It is this last aspect that becomes the central theme of our present section: in what follows we will focus on the refinement of the single scale estimate proved in Section \ref{sec:onescale} thus addressing the item (a) -- see also (II) -- in Section \ref{Inthr}.

\subsection{Intuition -- a heuristic}\label{heurhighres}

In this section we briefly review with few extra accents the outline provided in Section \ref{HR20}. We first remind the concrete setting for our problem:

Fix  $P = P(k,\ell,r)\in \BHT$; then, from \eqref{def:trilinear:form} we have that
\begin{equation}\label{lmp}
\begin{aligned}
& \underline{\L}_{m,P}(f, g, h)=\underline{\L}_{m,k,\ell, r}(f, g, h) \\
&= \sum_{n, p \sim 2^{am \over 2}} \sum_{ v  \sim 2^{ am \over 2}} \int_{\R} S_{k, \ell,r}^{n, v, p}(f, g)(x) h(x) \rho_{am-ak}( \lambda(x)) w_{k, n, v}^e(\l)(x) dx,
\end{aligned}
\end{equation}
where $S_{k, \ell,r}^{n, v, p}$ is given by
\begin{equation}
\begin{aligned}\label{eq:def:S_n,v,p-bis}
& S_{k, \ell,r}^{n, v, p}(f, g)(x) \\
& = \sum_{u \sim 2^{ am \over 2}} \frac{1}{2^{am +2 k \over 4}} \langle f_{\ell+1}, \check{\varphi}_{{am \over 2}- k, \ell}^{u-v, p_r-n}  \rangle   \langle g_{\ell-1}, \check{\varphi}_{{am \over 2}- k, \ell}^{u+v, p_r+n}  \rangle \check{\varphi}_{{am \over 2}- k, 2\ell-1}^{2u, p_r}(x).
\end{aligned}
\end{equation}

The challenge is to find a consensus between the following two competing factors:
\begin{itemize}
\item \textsf{(i)} the extraction of accurate time-frequency information for the triple input function;

\item \textsf{(ii)} the exploitation of the non-zero curvature in the oscillatory phase, now subsumed in the expression of $w_{k,n,v}^e(\lambda)(x)$ -- see \eqref{lmp} above.
\end{itemize}

Each of the above items comes with its own set of difficulties:

\begin{itemize}
\item \textsf{for (i)}: while the time-frequency localization for $f$ and $g$ is readily available via the wave-packet coefficients resulted from their Gabor decomposition -- see \eqref{eq:def:S_n,v,pFinalprep} and \eqref{TTstargum}, no similar approach is available for $h$ \emph{per se} due to the implicit presence of the stopping time function $\lambda(x)$ via the ``oscillatory weight'' $w_{k, n, v}^e(\l)$;

\item \textsf{for (ii)}:  in order to preserve the decay provided by the single scale estimate in Proposition \ref{sgscale} the oscillatory weight $w_{k, n, v}^e(\l)$ has to be paired with the output of the bilinear coefficient $\tilde S_k^{n, v}$ as revealed by Corollary \ref{cor:uniform}.
\end{itemize}

The solution to \textsf{(i)} coincides with ``the path of least resistance'' following the most natural resolution: to simply switch the focus from the time-frequency localization of $h$ to the time-frequency localization of the compound expression $h\,\rho_{am-ak}( \lambda)\, w_{k, n, v}^e(\l)$.

The solution to \textsf{(ii)} relies on the remark provided in \eqref{constancy} and also displayed for reader's convenience below:
$\newline$

\noindent\textbf{Claim:} [\textsf{informal}] \emph{With the previous notations, set}
\begin{equation}
\begin{aligned}\label{eq:def:underline:S}
& \underline{S}_{k,\ell,r}^{n, v, p}(f, g)(x) \\
&:=  2^{-k} \sum_{u \sim 2^{am \over 2}} \langle f_{\ell+1}, \check{\varphi}_{{am \over 2}- k, \ell}^{u-v, p_r-n}  \rangle   \langle g_{\ell-1}, \check{\varphi}_{{am \over 2}- k, \ell}^{u+v, p_r+n} \rangle \, e^{i (2^{{am \over 2}-k}x-p_r) 2u}.
\end{aligned}
\end{equation}
\emph{Then, the expression $\underline{S}_{k,\ell,r}^{n, v, p}(f, g)(x)$ is morally constant on any spatial interval of length $\lesssim 2^{-(am-k)}$.}
$\newline$

The heuristic motivation of this is based on the following observation: inspecting \eqref{eq:def:underline:S} we notice that $\underline{S}_{k,\ell,r}^{n, v, p}(f, g)(x)$ is a trigonometric polynomial that can be expressed as a linear combination of elements belonging to the family of complex exponentials $\{e^{i (2^{{am \over 2}-k}x-p_r) 2u}\}_{u\sim 2^{\frac{am}{2}}}$. Thus, the frequency support of such a trigonometric polynomial lies within an interval of the form $[c_1\,2^{am-k},\, c_2\,2^{am-k}]$ for some suitable absolute constants $c_1$ and $c_2$, which, based on a reproducing kernel argument, implies that the spatial information of $\underline{S}_{k,\ell,r}^{n, v, p}(f, g)$ varies no more than $O(1)$ within any given interval of length  $\lesssim 2^{-(am-k)}$.

%
%
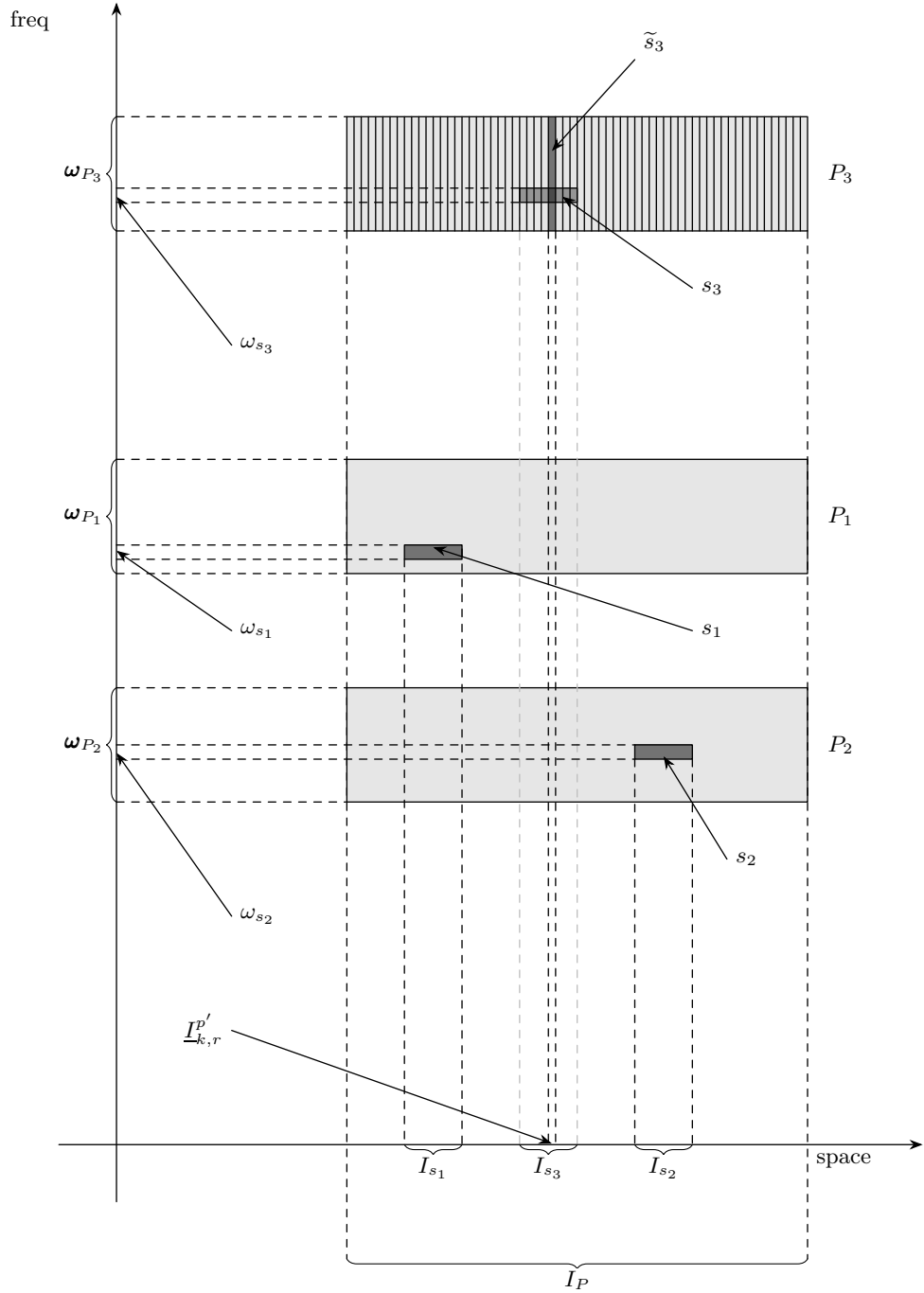
\begin{figure}[t]
\centering
\begin{tikzpicture}[line cap=round,line join=round,>=Stealth,x=1cm,y=1cm, decoration={brace,amplitude=4pt}, scale=1.6]

\clip(-1,-1.5) rectangle (7.1,10);
\filldraw[line width=0.5pt,color=black,fill=black,fill opacity=0.1] (2,3) -- (6,3) -- (6,4) -- (2,4) -- cycle;
\filldraw[line width=0.5pt,color=black,fill=black,fill opacity=0.1] (2,5) -- (6,5) -- (6,6) -- (2,6) -- cycle;
\filldraw[line width=0.5pt,color=black,fill=black,fill opacity=0.1] (2,8) -- (6,8) -- (6,9) -- (2,9) -- cycle;
%
%
\filldraw[line width=0.5pt,color=black,fill=black,fill opacity=0.5] (4.5,3.375) -- (5,3.375) -- (5,3.5) -- (4.5,3.5) -- cycle;
\filldraw[line width=0.5pt,color=black,fill=black,fill opacity=0.5] (2.5,5.125) -- (3,5.125) -- (3,5.25) -- (2.5,5.25) -- cycle;
\filldraw[line width=0.5pt,color=black,fill=black,fill opacity=0.35] (3.5,8.25) -- (4,8.25) -- (4,8.375) -- (3.5,8.375) -- cycle;

\draw [line width=0.5pt,dash pattern=on 3pt off 3pt] (2,8)-- (2,-1);
\draw [line width=0.5pt,dash pattern=on 3pt off 3pt] (6,8)-- (6,-1);
%
\draw [line width=0.5pt,dash pattern=on 3pt off 3pt] (0,3)-- (2,3);
\draw [line width=0.5pt,dash pattern=on 3pt off 3pt] (0,4)-- (2,4);
%
\draw [line width=0.5pt,dash pattern=on 3pt off 3pt] (0,5)-- (2,5);
\draw [line width=0.5pt,dash pattern=on 3pt off 3pt] (0,6)-- (2,6);
%
\draw [line width=0.5pt,dash pattern=on 3pt off 3pt] (0,8)-- (2,8);
\draw [line width=0.5pt,dash pattern=on 3pt off 3pt] (0,9)-- (2,9);
%
%
\draw [line width=0.5pt,dash pattern=on 3pt off 3pt] (4.5,3.375)-- (4.5,0);
\draw [line width=0.5pt,dash pattern=on 3pt off 3pt] (5,3.375)-- (5,0);
%
\draw [line width=0.5pt,dash pattern=on 3pt off 3pt] (2.5,5.125)-- (2.5,0);
\draw [line width=0.5pt,dash pattern=on 3pt off 3pt] (3,5.215)-- (3,0);
%
\draw [line width=0.5pt, color=lightgray, dash pattern=on 3pt off 3pt] (3.5,8.25)-- (3.5,0);
\draw [line width=0.5pt, color=lightgray, dash pattern=on 3pt off 3pt] (4,8.25)-- (4,0);
%
%
%
\draw [->,line width=0.5pt] (-0.5,0) -- (7,0);
\draw [->,line width=0.5pt] (0,-0.5) -- (0,10);
%
%
\draw [line width=0.5pt,dash pattern=on 3pt off 3pt] (0,3.375) -- (4.5,3.375);
\draw [line width=0.5pt,dash pattern=on 3pt off 3pt] (0,3.5) -- (4.5,3.5);
%
%
\draw [line width=0.5pt,dash pattern=on 3pt off 3pt] (0,5.125) -- (2.5,5.125);
\draw [line width=0.5pt,dash pattern=on 3pt off 3pt] (0,5.25) -- (2.5,5.25);
\foreach \i in {1,...,63}
{
\draw [line width=0.5pt,color=black] (2 + \i*0.0625,8) -- (2 + \i*0.0625,9);
}
%
%
\fill[color=black,fill=black,fill opacity=0.5] (3.75,8) -- (3.8125,8) -- (3.8125,9) -- (3.75,9) -- cycle;
%
%
\draw [->,line width=0.5pt] (4.5,9.5) -- (3.78,8.7);
\draw (4.5,9.5) node[anchor=south west] {\small $\widetilde{s}_3$};
%
%
\draw [line width=0.5pt,dash pattern=on 3pt off 3pt] (3.75,8) -- (3.75,0);
\draw [line width=0.5pt,dash pattern=on 3pt off 3pt] (3.8125,8) -- (3.8125,0);
\draw [->,line width=0.5pt] (1,1) -- (3.78,0.01);
\draw (1,1) node[anchor=east] {\small $\underline{I}^{p'}_{k,r}$};
\draw [decorate, color=black] (0,5) -- (0,6)
	node [midway, anchor=east, fill=white, inner sep=1pt, outer sep=4pt]{\small $\pmb{\omega}_{P_1}$};
\draw [decorate, color=black] (0,3) -- (0,4)
	node [midway, anchor=east, fill=white, inner sep=1pt, outer sep=4pt]{\small $\pmb{\omega}_{P_2}$};
	
\draw [decorate, color=black] (0,8) -- (0,9)
	node [midway, anchor=east, fill=white, inner sep=1pt, outer sep=4pt]{\small $\pmb{\omega}_{P_3}$};

%
\draw [->,line width=0.5pt]  (1,2) -- (0,3.43);
\draw (1,2) node[anchor=west] {\small $\omega_{s_2}$};
\draw [->,line width=0.5pt]  (1,4.5) -- (0,5.2);
\draw (1,4.5) node[anchor=west] {\small $\omega_{s_1}$};
\draw [->,line width=0.5pt]  (1,7) -- (0,8.3);
\draw (1,7) node[anchor=west] {\small $\omega_{s_3}$};
%
%
\draw [line width=0.5pt, dash pattern=on 3pt off 3pt] (0,8.25)-- (3.5,8.25);
\draw [line width=0.5pt, dash pattern=on 3pt off 3pt] (0,8.375)-- (3.5,8.375);

\draw (6.1,8.5) node[anchor=west] {\small $P_3$};
\draw (6.1,5.5) node[anchor=west] {\small $P_1$};
\draw (6.1,3.5) node[anchor=west] {\small $P_2$};
\draw [->,line width=0.5pt]  (5,7.5) -- (3.87,8.3);
\draw (5,7.5) node[anchor=west] {\small $s_3$};
\draw [->,line width=0.5pt]  (5,4.5) -- (2.75,5.23);
\draw (5,4.5) node[anchor=west] {\small $s_1$};
\draw [->,line width=0.5pt]  (5.3,2.5) -- (4.75,3.4);
\draw (5.3,2.5) node[anchor=west] {\small $s_2$};

\draw [decorate, color=black] (6,-1) -- (2,-1)
 node[midway, anchor=north, fill=white, inner sep=1pt, outer sep=4pt]{\small $I_{P}$};
 \draw [decorate, color=black] (3,0) -- (2.5,0)
 node[midway, anchor=north, fill=white, inner sep=1pt, outer sep=4pt]{\small $I_{s_1}$};
\draw [decorate, color=black] (4,0) -- (3.5,0)
 node[midway, anchor=north, fill=white, inner sep=1pt, outer sep=4pt]{\small $I_{s_3}$};
\draw [decorate, color=black] (5,0) -- (4.5,0)
 node[midway, anchor=north, fill=white, inner sep=1pt, outer sep=4pt]{\small $I_{s_2}$};

\draw (6,0) node[anchor=north west] {\small $\text{space}$};
\draw (-1,10) node[anchor=north west] {\small $\text{freq}$};

\end{tikzpicture}
\caption{\footnotesize Time-frequency localization on the input/output variables in $\underline{S}_{k\ell,r}^{n,v,p}$.} \label{figure:5}
\end{figure}
%

Now since the moral spatial support of $\underline{S}_{k,\ell,r}^{n, v, p}(f, g)(x)$ is given by\footnote{Recall \eqref{eq:Ip}, \eqref{eq:not:p_r}. Also, deduce from \eqref{eq:def:S_n,v,p-bis} that the moral spatial support of $S_{k, \ell,r}^{n, v, p}(f, g)(x)$ is the same with that of  $\underline{S}_{k,\ell,r}^{n, v, p}(f, g)(x)$.}  $I_k^{p_r}$  which has length $2^{-({am \over 2}-k)}$, based on the above heuristic it becomes natural to partition $I_k^{p_r}$ into $2^{am \over 2}$ consecutive subintervals $\{\underline{I}_{k,r}^{p'}\}$, each of length $ 2^{-(am-k)}$. That is,
\begin{equation}
I_k^{p_r}=\bigsqcup_{p' \sim 2^{am} : \lfloor p' 2^{-\frac{am}{2}}\rfloor=p} \underline{I}_{k,r}^{p'}.
\label{part}
\end{equation}
with
\begin{equation} \underline{I}_{k,r}^{p'}:=\Big[  2^k(r-1) + \frac{p'}{2^{am-k}},  2^k(r-1) + \frac{p'+1}{2^{am -k}} \Big], \label{eq:Ip'}
\end{equation}
where here $p'$ is an integer parameter in $\big[ 2^{am}, 2^{am+1} \big]$.

With these, as a consequence of the above informal claim, the expression in \eqref{lmp} is morally the same as
\begingroup
\allowdisplaybreaks
\begin{equation}
\begin{aligned}
\underline{\L}_{m,k,\ell, r}(f, g, h) \approx \\
\sum_{n,v \sim 2^{am \over 2}}\,\sum_{p' \sim 2^{am}} & 2^{-\frac{am-k}{2}}
 \langle h \, \rho_{am-ak}(\lambda) w_{k, n, v}^e(\l), \check{\varphi}_{am-k,2\ell-1}^{0,2^{am}(r-1)+p'}  \rangle \\
& \cdot  \frac{\int_{\underline{I}_{k,r}^{p'}} \underline{S}_{k,\ell,r}^{n, v, \big \lfloor \frac{p'}{2^{am \over 2}} \big\rfloor}(f_{\ell +1}, g_{\ell -1})(x)  w_{k,n,v}(x)dx}{ \int_{\underline{I}_{k,r}^{p'}}  w_{k,n,v}(x)dx} 
\end{aligned}\label{eq:time-freq-localizationheuris}
\end{equation}
\endgroup
where here, for later use, we let $w_{k,n,v}\in L^1_{loc}(\R)$ be an arbitrary positive weight.

Assume for the sake of the argument in what follows that the masses of $f$ and $g$ are uniformly distributed within $\{I_{k}^{p_r}\}_{p \sim 2^{am \over 2}}$ and
that we restrict our summation over the set of indices $\mathfrak{u}$ that verify\footnote{Incidentally, the above definition of $\mathfrak{u}$ is a loose form of the one in \eqref{eq:def:unif} that rigorously introduces the so-called set of heavy mass uniform $(f,g)$-distribution indices.}
\begin{equation}
\begin{aligned}
\mathfrak{u}& :=\bigg\{ {{(p', n, v)}\atop{p'\sim 2^{am},\;n,v \sim 2^{am \over 2}}} \; : \; \frac{1}{|\underline{I}_{k, r}^{p'}|} \int_{\underline{I}_{k, r}^{p'}} w_{k, n, v}(\lambda)(x) \,  dx\approx 1\bigg\}.
\end{aligned}
\end{equation}

For now, we may pretend that
$$\underline{\L}_{m,k,\ell, r}(f, g, h) \approx\underline{\L}_{m,k,\ell, r}^{\mathfrak{u}}(f, g, h):=\sum_{(p', n, v) \in \mathfrak{u}} \ldots \;.$$

With these, taking $w_{k,n,v}(x):=w_{k,n,v}(\l)(x)$ in \eqref{eq:time-freq-localizationheuris} and applying a Cauchy-Schwarz argument we have that
\begin{equation}
\begin{aligned}
& \underline{\L}_{m, k,\ell,r}^{\mathfrak{u}}(f, g, h) \\
& \lesssim \Big( \sum_{(p', n, v) \in \mathfrak{u}}  \int_{\underline{I}_{k, r}^{p'}} \big|\underline{S}_{k,\ell,r}^{n, v, p'}(f, g)(x)\big|^2\, w_{k, n, v}(x)\,dx  \Big)^\frac{1}{2} \\
&\hspace{1em} \cdot \Big( \sum_{(p', n, v) \in \mathfrak{u}}  \big| \langle h \, \rho_{am-ak}(\lambda(\cdot)) w_{k, n, v}^e(\l), \varphi_{am-k,2\ell-1}^{2^{am}(r-1)+p'}  \rangle \big|^2 \Big)^\frac{1}{2}.
\end{aligned}\label{eq:one:tile:unifheur}
\end{equation}
Finally, an application of \eqref{eq:def:S_n,v,pFinal} and Corollary \ref{cor:uniform} provides us with the exponential $m$-decay under the desired time-frequency localization of all of the three input functions:
\begin{equation}
\begin{aligned}
& \underline{\L}_{m, k,\ell,r}^{\mathfrak{u}}(f, g, h) \lesssim 2^{-\frac{ \delta_1 a m}{2}}  2^{- \frac{am}{4}} 2^{-\frac{k}{2}}  \|f_{\ell+1,r}\|_{L^2} \, \|g_{\ell-1,r}\|_{L^2} \\
& \hspace{4em} \cdot \Big( \sum_{(p', n, v) \in \mathfrak{u}}  \big| \langle h \, \rho_{am-ak}(\lambda(\cdot)) w_{k, n, v}^e(\l), \varphi_{am-k,2\ell-1}^{ 2^{am}(r-1)+p'}  \rangle \big|^2 \Big)^{1/2}.
\end{aligned}\label{eq:one:tile:unif:bisheur}
\end{equation}

\subsection{Sharp time-frequency localization for the input $h$: a rigorous approach to \eqref{eq:time-freq-localizationheuris}.}

In this section we provide a detailed justification for the heuristic presented in the previous section.

We first notice based on \eqref{eq:def:S_n,v,p-bis} and \eqref{eq:def:underline:S} that
\begin{equation}\label{eq:def:underline:SrelS}
S_{k, \ell,r}^{n, v, p}(f, g)(x) = \underline{S}_{k,\ell,r}^{n, v, p}(f, g)(x)\,e^{i 2^{\frac{am}{2}}(2^{\frac{am}{2}-k}x-p_r)(2\ell-2)}\,  \check{\varphi}(2^{{am \over 2}- k}x-p_r)\:.
\end{equation}

With this \eqref{lmp} becomes
\begin{equation}
\begin{aligned}
 & \underline{\L}_{m,k, \ell, r}(f, g, h) = \sum_{n, v, p \sim 2^{am \over 2}} \int \underline{S}_{k,\ell,r}^{n, v, p}(f, g)(x) \, \check{\varphi}(2^{{am \over 2}- k}x-p_r) \\
 & \hspace{6em} \cdot e^{i 2^{am \over 2} (2\ell-2) (2^{{am\over 2}-k} x-p_r)}\,\rho_{am-ak}( \lambda(x))\, w_{k, n, v}^e(\l)(x) h(x) \,dx.
\end{aligned}\label{eq:trilinear:form:withS-bis}
\end{equation}
Notice now that $\underline{S}_{k,\ell,r}^{n, v, p}(f, g)(x)$ is a trigonometric polynomial of period  $2^{k-{am\over 2}}$.

Defining
$$H_{k,\ell,r}^{n, v, p}(x):=\check{\varphi}(2^{{am \over 2}- k}x-p_r)\,
 e^{i 2^{am \over 2} (2\ell-2) (2^{{am\over 2}-k} x-p_r)}\,\rho_{am-ak}( \lambda(x))\, w_{k, n, v}^e(\l)(x) h(x)\,,$$
we use the same periodization argument as in Section \ref{HR2} in order to deduce that
\begin{equation}
\begin{aligned}
 & \int_{\R} \underline{S}_{k,\ell,r}^{n, v, p}(f, g)(x) \, H_{k,\ell,r}^{n, v, p}(x)\,dx \\
 &  \qquad =\int_{I_k^{p_r}} \underline{S}_{k,\ell,r}^{n, v, p}(f, g)(x)\,\Big(\sum_{j\in\Z} H_{k,\ell,r}^{n, v, p}(x+j2^{k-{am\over 2}})\Big)\,dx \,.
\end{aligned}\label{periodarg}
\end{equation}

The main reason for the formulation of \eqref{periodarg} is to make transparent and later separate the contribution of the dominant and error terms. Indeed, since $H_{k,\ell,r}^{n,v,p}$ is morally localized around $I_k^{p_r}$, we immediately see that
\begin{itemize}
\item the main term  corresponds to the $j=0$ case;

\item the error term represents the sum over $|j|>0$ and accounts for the tail behavior of $\check{\varphi}$ in the expression that defines $H_{k,\ell,r}^{n, v, p}$.
\end{itemize}

Take now a smooth, compactly supported partition of unity adapted to \eqref{part}--\eqref{eq:Ip'} such that\footnote{In this section we say that $p' \sim_{\lfloor\rfloor_p} 2^{am}$ iff $p' \sim 2^{am}$ and $\lfloor p' 2^{-\frac{am}{2}}\rfloor=p$.}
$$ \int_{\R} \sum_{ p' \sim_{\lfloor\rfloor_p} 2^{am}} \check \psi (2^{am-k}(x- \underline{t}_{k, r}^{p'}))\,a(x)\,dx=\int_{I_k^{p_r}} a(x)\,dx$$
for any $2^{-({am \over 2}-k)}$-periodic $L^1$ integrable function $a$ where here $\underline{t}_{k, r}^{p'}:=2^k(r-1) + \frac{p'}{2^{am-k}}$ is the left end-point of the interval $\underline{I}_{k,r}^{p'}$. By considering $O(1)$ similar terms, we simply assume that 
\[
x \mapsto \check \psi (2^{am-k}(x- \underline{t}_{k, r}^{p'}))
\]
is supported on $\underline{I}_{k,r}^{p'}$, for any $p' \sim_{\lfloor\rfloor_p} 2^{am}$.

Then the expression in \eqref{periodarg} equals
\begin{equation}
\begin{aligned}
 & \sum_{ p' \sim_{\lfloor\rfloor_p} 2^{am}} \int_{\R}\check \psi (2^{am-k}(x- \underline{t}_{k, r}^{p'}))\,\underline{S}_{k,\ell,r}^{n, v, p}(f, g)(x) \\
 & \hspace{8em} \cdot \Big(\sum_{j\in\Z} H_{k,\ell,r}^{n, v, p}(x+j2^{k-{am\over 2}-k})\Big)\,dx .
\end{aligned}\label{periodarg1}
\end{equation}

Let
\begin{equation}
\underline{H}_{k,r}^{n, v, p}(x):=\check{\varphi}(2^{{am \over 2}- k}x-p_r)
\,\rho_{am-ak}( \lambda(x))\, w_{k, n, v}^e(\l)(x) h(x)\,.
\label{eq:H_k}
\end{equation}

From \eqref{periodarg} and \eqref{periodarg1} we deduce that
\begin{equation}
\begin{aligned}
 &\underline{\L}_{m,k, \ell, r}(f, g, h) = \sum_{n, v, p \sim 2^{am \over 2}} \sum_{ p' \sim_{\lfloor\rfloor_p} 2^{am}} \sum_{j\in\Z} \int \underline{S}_{k,\ell,r}^{n, v, p}(f, g)(x) \\
& \hspace{2em} \cdot \check \psi (2^{am-k}(x- \underline{t}_{k, r}^{p'}))\, e^{i 2^{\frac{am}{2}}(2^{\frac{am}{2}-k}x-p_r)(2\ell-2)} \,\underline{H}_{k,r}^{n, v, p}\Big(x+\frac{j}{2^{{am\over 2}-k}} \Big) \,dx.
\end{aligned}\label{concl}
\end{equation}

Let now $\tilde{\varphi}_{am-k,2 \ell-1}^{2^{am}(r-1)+p'}$ denote the $L^2$-normalized wave-packet
\[\tilde{\varphi}_{am-k,2 \ell-1}^{2^{am}(r-1)+p'}(x) := 2^{\frac{am-k}{2}} e^{i 2^{\frac{am}{2}}(2^{\frac{am}{2}-k}x-p_r)(2\ell-2)} \check \psi (2^{am-k}(x- \underline{t}_{k, r}^{p'})) \]
adapted to scale $2^{-(am-k)}$ in space and hence $2^{am-k}$ in frequency. Let us just emphasize that in contrast with the localization of $\varphi_{am-k,2 \ell-1}^{2^{am}(r-1)+p'}$ which is adapted to $\underline{I}_{k,r}^{p'}$ and compactly supported in frequency, $\tilde{\varphi}_{am-k,2 \ell-1}^{2^{am}(r-1)+p'}$ is compactly supported in space inside $\underline{I}_{k,r}^{p'}$ and so has no compact support in frequency, but is adapted to the frequency support of $\varphi_{am-k,2 \ell-1}^{2^{am}(r-1)+p'}$.

Based on the informal claim made in Section \ref{heurhighres} we further claim that, essentially,\footnote{Strictly speaking, the left-hand side in \eqref{eq:time-freq-localization} is in fact a ($\ell^1$-summable) superposition of terms having a similar shape with that of the right-hand side in \eqref{eq:time-freq-localization}; this comment will be made transparent during the current proof of our above claim.} the following equality holds:
\begingroup
\allowdisplaybreaks
\begin{equation}
\begin{aligned}
 &\int \underline{S}_{k,\ell,r}^{n, v, p}(f, g)(x) e^{i 2^{\frac{am}{2}}(2^{\frac{am}{2}-k}x-p_r)(2\ell-2)} \\
 & \hspace{2em}\cdot \check \psi (2^{am-k}(x- \underline{t}_{k, r}^{p'}))\,\underline{H}_{k,r}^{n, v, p}\Big(x+\frac{j}{2^{{am\over 2}-k}} \Big) \,dx = \\
 &2^{-\frac{am-k}{2}} \Big\langle \underline{H}_{k,r}^{n, v, p}\Big(\cdot+\frac{j}{2^{{am\over 2}-k}} \Big), \tilde{\varphi}_{am-k,2 \ell-1}^{2^{am}(r-1)+p'}  \Big\rangle  \frac{\displaystyle \int_{\underline{I}_{k, r}^{p'}} \underline{S}_{k,\ell,r}^{n, v, p}(f,g)(x)  w(x)\,dx}{ \int_{\underline{I}_{k, r}^{p'}}  w(x)\,dx},
\end{aligned}\label{eq:time-freq-localization}
\end{equation}
\endgroup
where $w$ is any suitable locally integrable function and $n, v, p'$ are assumed fixed.

This claim is a consequence of the following lemma whose easy proof -- that is omitted -- relies on Taylor's formula applied to the real analytic function $e^{i a x}$, $a\in\R$ restricted to a compact interval.
\begin{lemma}
\label{lemma:small-variation}
Consider the interval $\ds I = \Big[ {s \over 2^{am-k}}, {s+1 \over 2^{am-k}}    \Big]$, $s\in\Z$, and let $t_s:=  {s \over 2^{am-k}}$ be its left endpoint. Let $\check \psi$ be a compactly supported bump function adapted to $[0, 1]$. Then for any sequence of complex numbers $\{c_u\}_{u \sim 2^{am/2}}$ and any locally integrable function $w_1$, $w_2$ with $\int_{I}  w_2(x)\, dx\not=0$ we have
\begin{equation*}
\begin{aligned}
&\int \Big(  \sum_{u \sim 2^{am \over 2}} c_u \check \psi (2^{am-k}(x- t_s)) \, e^{i 2^{{am \over 2}-k} 2u (x-t_s)}  \Big) w_1(x)dx \\
&= \sum_{\nu \geq 0}  \frac{1}{\nu !} \Big( \sum_{ u \sim 2^{am \over 2}}  \Big({ 2u \over {2^{am \over 2}}}  \Big)^\nu c_u \Big) \int (i 2^{am-k}(x- t_s))^\nu \check \psi (2^{am-k}(x- t_s)) w_1(x)dx,
 \end{aligned} \label{eq:almost-constant}
 \end{equation*}
and
\begin{align*}
 \sum_{ u \sim 2^{am \over 2}} c_u
 = \sum_{\nu \geq 0} \frac{1}{\nu !} \frac{\displaystyle \int_{I} \Big( \sum\limits_{ u \sim 2^{am \over 2}} \Big({ 2u \over {2^{am \over 2}}}\Big)^\nu   c_u e^{i 2^{{am \over 2}-k} 2u (x-t_s)}  \Big) (-i2^{am-k}(x-t_s))^\nu  w_2(x)\,dx}{ \int_{I}  w_2(x)\, dx}.
\end{align*}
 \end{lemma}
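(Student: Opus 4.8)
\textbf{Plan of proof for Lemma~\ref{lemma:small-variation}.}
The plan is to reduce both identities to a single mechanism: the Taylor expansion of the complex exponential $e^{iax}$, treated as a real-analytic function on a compact interval, applied scale-by-scale. First I would normalize the geometry by changing variables $y = 2^{am-k}(x - t_s)$, so that $I$ becomes $[0,1]$ and the modulation $e^{i2^{\frac{am}{2}-k}\,2u\,(x-t_s)}$ becomes $e^{i\,(2u/2^{am/2})\,y}$ with frequency parameter $\theta_u := 2u/2^{am/2}$ of size $O(1)$ (since $u \sim 2^{am/2}$). This is the key point that makes the Taylor series converge rapidly and uniformly on the (now bounded) domain: the modulation exponent is $O(1)$ on the support of $\check\psi$, so expanding $e^{i\theta_u y} = \sum_{\nu\ge 0} (i\theta_u y)^\nu/\nu!$ is legitimate, and the tail after $N$ terms is $O(C^N/N!)$ uniformly in $u$, which justifies interchanging the sum over $\nu$ with both the sum over $u$ and the integral against $w_1$ (resp. $w_2$).

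For the first identity, I would substitute this expansion directly into $\int \big(\sum_u c_u \check\psi(2^{am-k}(x-t_s))\,e^{i2^{\frac{am}{2}-k}2u(x-t_s)}\big)w_1(x)\,dx$. After the change of variables, the integrand becomes $\sum_{\nu\ge 0}\frac{1}{\nu!}\big(\sum_u \theta_u^\nu c_u\big)\,(iy)^\nu \check\psi(y)\,w_1$; undoing the change of variables recovers exactly the claimed formula with the factor $(i\,2^{am-k}(x-t_s))^\nu$. The dominated-convergence justification for swapping $\sum_\nu$ past the integral uses that $\check\psi$ is compactly supported, so $|y|\lesssim 1$ on its support and the $\nu$-series is bounded termwise by $\frac{C^\nu}{\nu!}\big|\sum_u\theta_u^\nu c_u\big|\,\|w_1\|_{L^1(\mathrm{supp})}$, an absolutely summable majorant (one may absorb $\sum_u|c_u|$ into a constant since the number of relevant $u$ is finite). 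For the second identity, I would run the same expansion on the integral in the numerator of the right-hand side: expanding $e^{i2^{\frac{am}{2}-k}2u(x-t_s)}$ inside $\int_I \big(\sum_u\theta_u^\nu c_u\,e^{\cdots}\big)(-i2^{am-k}(x-t_s))^\nu w_2\,dx$, one sees that the $\nu$-th term contributes $\sum_{u}\sum_{\mu\ge0}\frac{1}{\mu!}\theta_u^{\nu+\mu}c_u\int_I (iy)^\mu(-iy)^\nu\check{?}\,w_2\,dy / \int_I w_2$; after summing over $\nu$ and reindexing, the alternating signs telescope (this is just the statement that $\sum_\nu\sum_\mu \frac{(iy)^\mu(-iy)^\nu}{\mu!\nu!}$ with the appropriate coefficient bookkeeping collapses, since $e^{i\theta_u y}e^{-i\theta_u y}=1$ pointwise) leaving exactly $\sum_u c_u$. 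Equivalently and more cleanly: the map $F(x):=\sum_u c_u e^{i2^{\frac{am}{2}-k}2u(x-t_s)}$ is a trigonometric polynomial whose spectrum lies in a single frequency band of width $\sim 2^{am-k}$, hence $F$ varies by $O(1)$ across any interval of length $2^{-(am-k)}$; thus for any probability weight $w_2/\int_I w_2$ the weighted average $\int_I F(x)\,\frac{w_2(x)\,dx}{\int_I w_2}$ equals $F(t_s)+\text{error}$, and unwinding the error as a convergent $\nu$-series in the derivatives of $F$ (each derivative costing one factor of $2^{am-k}(x-t_s)=O(1)$) gives precisely the stated expansion with $F(t_s)=\sum_u c_u$.

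The main obstacle is purely bookkeeping rather than conceptual: one must keep careful track of the normalization constants ($2^{am-k}$ versus $2^{\frac{am}{2}-k}$, and the factor $2u/2^{am/2}$), verify that after the change of variables the modulation frequency is genuinely $O(1)$ on the support of $\check\psi$ so that the Taylor tails are controlled uniformly, and confirm that the ``superposition of $\ell^1$-summable terms'' caveat flagged in footnote~\ref{??} — i.e. that the left-hand side of \eqref{eq:time-freq-localization} is not literally a single term but a rapidly convergent sum of such terms, indexed by the Taylor degree $\nu$ and by the period shift $j$ — is harmless because the coefficients decay like $C^\nu/\nu!$ in $\nu$ and like $\langle j\rangle^{-M}$ in $j$ (from the Schwartz decay of $\check\varphi$). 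Since the lemma as stated asks only for the two exact identities (with convergent $\nu$-series), and these follow immediately from the Taylor expansion of $e^{iax}$ on a compact set together with Fubini/dominated convergence, no further input is needed; the technical estimates justifying the interchange of sums and integrals are exactly the routine computations the excerpt says may be omitted.
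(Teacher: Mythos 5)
Your proof is correct and follows exactly the route the paper indicates for this lemma: Taylor-expanding the modulation $e^{i\,2^{\frac{am}{2}-k}2u(x-t_s)}$ (equivalently $e^{i\theta_u y}$ with $\theta_u=2u/2^{am/2}=O(1)$ and $y=2^{am-k}(x-t_s)\in[0,1]$) as a real-analytic function on a compact interval, then interchanging the absolutely convergent $\nu$-series with the finite $u$-sum and the integral against $w_1$ (resp.\ $w_2$), the second identity amounting to inserting $1=e^{i\theta_u y}e^{-i\theta_u y}$ and averaging against $w_2/\int_I w_2$. The bookkeeping of the constants and the convergence justifications you give are exactly the routine steps the paper omits.
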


In what follows, based on the above, we show how to make sense of \eqref{eq:time-freq-localization}.

\begin{proof}[Proof of \eqref{eq:time-freq-localization}]
To start with, we observe that $\underline{S}_{k,\ell,r}^{n, v, p}(f, g)$ is given by
\[ 2^{-k} \sum_{u \sim 2^{am \over 2}} e^{i(2^{{am \over 2}-k}\underline{t}_{k,r}^{p'}-p_r)2u} \langle f_{\ell+1} , \check{\varphi}_{{am \over 2}- k, \ell}^{u-v, p_r-n}  \rangle   \langle g_{\ell-1} , \check{\varphi}_{{am \over 2}- k, \ell}^{u+v, p_r+n} \rangle \, e^{i 2^{{am \over 2}-k}(x-\underline{t}_{k,r}^{p'})2u}, \]
so that we can apply the first part of Lemma \ref{lemma:small-variation} to the left-hand side of \eqref{eq:time-freq-localization}
for $I=\underline{I}_{k, r}^{p'}$ and $w_1(x):=e^{i 2^{\frac{am}{2}}(2^{\frac{am}{2}-k}x-p_r)(2\ell-2)} \underline{H}_{k,r}^{n, v, p}(x+\frac{j}{2^{{am\over 2}-k}})$ in order to get
\begingroup
\allowdisplaybreaks
\begin{equation}
\begin{aligned}
&\int \underline{S}_{k,\ell,r}^{n, v, p}(f, g)(x) e^{i 2^{\frac{am}{2}}(2^{\frac{am}{2}-k}x-p_r)(2\ell-2)} \\
& \hspace{5em} \cdot \check \psi (2^{am-k}(x- \underline{t}_{k, r}^{p'})) \underline{H}_{k,r}^{n, v, p} \Big(x+\frac{j}{2^{{am\over 2}-k}}\Big)\,dx \\
&= \sum_{\nu \geq 0}  \frac{1}{\nu !}  \underline{S}_{k,\ell,r}^{n, v, p,\nu}(f, g)(\underline{t}_{k,r}^{p'})   \int  \check{\psi}_\nu (2^{am-k}(x- \underline{t}_{k,r}^{p'})) \\
& \hspace{5em} \cdot e^{i 2^{\frac{am}{2}}(2^{\frac{am}{2}-k}x-p_r)(2\ell-2)}\,
\underline{H}_{k,r}^{n, v, p}\Big(x+\frac{j}{2^{{am\over 2}-k}} \Big) \,dx,
\end{aligned}\label{eq:almost-constant:intermediate-step}
\end{equation}
\endgroup
where $\psi_\nu $ is a slight modification\footnote{Indeed it is simply $\check{\psi}_{\nu}(x) = (ix)^{\nu} \check{\psi}(x)$.} of $\psi$ with the same support properties and $\underline{S}_{k,\ell,r}^{n, v, p,\nu}$ is the minor modification of $\underline{S}_{k,\ell,r}^{n, v, p}$ given by
\begin{equation*}
\begin{aligned}
&\underline{S}_{k,\ell,r}^{n, v, p,\nu}(f, g)(x) \\
&:=  2^{-k} \sum_{u \sim 2^{am \over 2}} \Big(\frac{2u}{2^{\frac{am}{2}}}\Big)^{\nu}\langle f_{\ell+1}, \check{\varphi}_{{am \over 2}- k, \ell}^{u-v, p_r-n}  \rangle   \langle g_{\ell-1}, \check{\varphi}_{{am \over 2}- k, \ell}^{u+v, p_r+n} \rangle \, e^{i (2^{{am \over 2}-k}x-p_r) 2u}.
\end{aligned}
\end{equation*}

Next, we use the second part of Lemma \ref{lemma:small-variation} for $ \sum_{ u \sim 2^{am \over 2}} c_u=\underline{S}_{k,\ell,r}^{n, v, p,\nu}(f, g)(\underline{t}_{k,r}^{p'})$, $I=\underline{I}_{k, r}^{p'}$ and $w_2= w$ in order to deduce

\begin{align*}
\underline{S}_{k,\ell,r}^{n, v, p,\nu}(f, g)(\underline{t}_{k,r}^{p'})=  \sum\limits_{\nu ' \geq 0} \frac{1}{\nu' !} \; \frac{ \int_{\underline{I}_{k, r}^{p'}}  \underline{S}_{k,\ell,r}^{n, v, p,\nu,\nu'}(f, g)(x)  w(x)dx}{ \int_{\underline{I}_{k, r}^{p'}}  w(x)dx},
\end{align*}
where $\underline{S}_{k,\ell,r}^{n, v, p,\nu,\nu'}$ is a further slight modification of $\underline{S}_{k,\ell,r}^{n, v, p,\nu}$ given by\footnote{Notice that $\underline{S}_{k,\ell,r}^{n, v, p,\nu,0} = \underline{S}_{k,\ell,r}^{n, v, p,\nu}$.}
\begin{equation}
\begin{aligned}
& \underline{S}_{k,\ell,r}^{n, v, p,\nu,\nu'}(f,g)(x):= \frac{1}{2^k} \sum_{u \sim 2^{am \over 2}}  \Big(  \frac{2u}{2^{am \over 2}} \Big)^{\nu+\nu'} \langle f_{\ell+1}, \check{\varphi}_{{am \over 2}- k, \ell}^{u-v, p_r-n}  \rangle \\
& \hspace{4em} \cdot   \langle g_{\ell-1}, \check{\varphi}_{{am \over 2}- k, \ell}^{u+v, p_r+n}  \rangle  (-i2^{am-k}(x- \underline{t}_{k, r}^{p'}))^{\nu'} e^{i (2^{{am \over 2}-k}x-p_r) 2u}\,.
\end{aligned}\label{eq:pick:up:powers}
\end{equation}

Inserting the above in \eqref{eq:almost-constant:intermediate-step} we obtain a representation of the left-hand side of \eqref{eq:time-freq-localization} as an $\ell^1$-summable superposition of terms having a similar shape as the right-hand side term in \eqref{eq:time-freq-localization}. Notice that if we only keep the main term in the above superposition -- corresponding to $\nu = \nu' = 0$, then, we obtain the precise form of \eqref{eq:time-freq-localization} thus clarifying our claim.
\end{proof}

\begin{observation}\label{observation:extra:terms:reassurance}
Before proceeding to analyze the consequences of \eqref{eq:time-freq-localization}, we digress briefly to reassure the reader that it will indeed be sufficient to consider only the $\nu=\nu'=0$ case of the operators above.
Observe that since $2u=(u-v)+(u+v)$, the expression $\underline{S}_{k,\ell,r}^{n, v, p',\nu, \nu'}$ in the proof of Lemma \ref{lemma:small-variation} becomes a linear combination of expressions similar to the initial $\underline{S}_{k,\ell,r}^{n, v, p}$: indeed, this can be expressed as
\begin{equation*}
\begin{aligned}
&\underline{S}_{k,\ell,r}^{n, v, p,\nu,\nu'}(f,g)(x)= \sum_{\vartheta =0} ^{\nu + \nu'}  {\nu + \nu' \choose \vartheta} \frac{1}{2^k} \sum_{u \sim 2^{am \over 2}}  \big\langle f_{\ell+1}, \Big(  \frac{u-v}{2^{am \over 2}} \Big)^{\vartheta} \check{\varphi}_{{am \over 2}- k, \ell}^{u-v, p_r-n}  \big\rangle \\
& \hspace{2em}\cdot \big\langle g_{\ell-1}, \Big(  \frac{u+v}{2^{am \over 2}} \Big)^{\nu+\nu' - \vartheta}\check{\varphi}_{{am \over 2}- k, \ell}^{u+v, p_r+n}  \big\rangle\, (i2^{am-k}(x- \underline{t}_{k, r}^{p'}))^{\nu'} e^{i (2^{{am \over 2}-k}x-p_r) 2u }.
\end{aligned}\label{eq:pick:up:powers:trick}
\end{equation*}

In this context it is important to stress that we are only going to need $L^2\times L^2 \to L^2$ bounds for these quantities which therefore are completely determined by the $\ell^2(\Z)$-sum of the Gabor coefficients of the input functions. As in Subsection \ref{passage} then, we can replace each expression
\begin{align*}
 \sum_{u \sim 2^{am \over 2}} \big\langle f_{\ell+1}, \Big(  \frac{u-v}{2^{am \over 2}} \Big)^{\vartheta} \check{\varphi}_{{am \over 2}- k, \ell}^{u-v, p_r-n}  \big\rangle\, \big\langle g_{\ell-1}, \Big(  \frac{u+v}{2^{am \over 2}} \Big)^{\nu+\nu' - \vartheta}\check{\varphi}_{{am \over 2}- k, \ell}^{u+v, p_r+n}  \big\rangle  e^{i (2^{{am \over 2}-k}x-p_r) 2u }
\end{align*}
with the corresponding continuous one as in \eqref{eq:def:S_n,v,pFinal}, where now the frequency projected functions $f_{\ell+1,r}, g_{\ell-1,r}$ are replaced by the slight variants
\begin{align}
\label{eq:f{r, ell}:nu}
f_{\ell+1,r}^{\vartheta}(x) &:= \sum_{\substack{2\cdot 2^{am\over 2} < u_1 \leq 3\cdot 2^{am \over 2}, \\ n_1 - 2^{am \over 2}r\sim 2^{am \over 2}}} \big\langle f_{\ell+1}, \Big(  \frac{u_1}{2^{am \over 2}} \Big)^{\vartheta} \check \varphi_{{am \over 2}-k, \ell}^{u_1, n_1}   \big\rangle  \check \varphi_{{am \over 2}-k, \ell}^{u_1, n_1}(x), \\
\label{eq:g:r, ell:nu}
g_{\ell-1,r}^{\nu+\nu'-\vartheta}(x) &:= \sum_{\substack{1\leq u_2 \leq 2^{am \over 2}, \\ n_2 - 2^{am \over 2}r \sim 2^{am \over 2}}}  \big\langle g_{\ell-1}, \Big(  \frac{u_2}{2^{am \over 2}} \Big)^{\nu+\nu'-\vartheta} \check \varphi_{{am \over 2}-k, \ell}^{u_2, n_2} \big\rangle  \check \varphi_{{am \over 2}-k, \ell}^{u_2, n_2}(x).
\end{align}

Clearly these are minor modifications of the projections. We point out that the expressions above contain another abuse of notation: the wave-packets outside the inner products in the last display are themselves slight modifications of the standard ones, obtained by multiplication on the spatial side by $(-i2^{am-k}(x- \underline{t}_{k, r}^{p'}))^{\nu'}$. This factor is always bounded by $2^{\nu'}$ on $\underline{I}_{k, r}^{p'}$. As $|u_1|$ and $|u_2|$ are both bounded by $3 \cdot 2^{am/2}$, neither the frequency properties of the wave-packets nor their fast decay away from $I_k^{n_1}$ are going to be affected by the modification: the whole analysis -- and in particular the energy estimate that will be proven in Proposition \ref{prop:energies} -- remains valid for these modified operators.\\
As
\[
\sum_{\nu, \nu' \geq 0} \frac{1}{\nu!} \frac{1}{\nu'!} \sum_{\vartheta =0} ^{\nu + \nu'}  {\nu + \nu' \choose \vartheta} 2^{\nu'} 3^{\nu+\nu'}
\]
is absolutely summable, it will indeed be enough to consider the case $\nu=\nu'=\vartheta=0$, that is, \eqref{eq:time-freq-localization}.
\end{observation}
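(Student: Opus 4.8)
The statement to establish is that the superposition over $\nu,\nu'\geq 0$ generated while proving \eqref{eq:time-freq-localization} may be truncated to its leading $(\nu,\nu')=(0,0)$ term at the price of an absolute constant, so that all the subsequent $L^2$-based estimates for $\underline{\L}_{m,P}$ need only be carried out for that leading term. The plan is to reduce everything to a triangle-inequality-plus-counting argument, with three ingredients: an algebraic identity that re-expresses $\underline{S}_{k,\ell,r}^{n,v,p,\nu,\nu'}$ in the structural form of the $\nu=\nu'=0$ operator acting on modified inputs; a reduction — already present in Subsection \ref{passage} — to purely $L^2\times L^2\to L^2$ bounds; and an absolutely convergent estimate for the accumulated coefficients.

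First I would apply the binomial theorem to the factor $(2u/2^{am/2})^{\nu+\nu'}$ appearing in $\underline{S}_{k,\ell,r}^{n,v,p,\nu,\nu'}$ via $2u=(u-v)+(u+v)$, obtaining a decomposition into a $\vartheta$-sum with binomial weights $\binom{\nu+\nu'}{\vartheta}$ of expressions in which $f_{\ell+1}$ is paired with $(u-v)^{\vartheta}2^{-\vartheta am/2}\,\check\varphi_{\frac{am}{2}-k,\ell}^{u-v,p_r-n}$, $g_{\ell-1}$ with $(u+v)^{\nu+\nu'-\vartheta}2^{-(\nu+\nu'-\vartheta)am/2}\,\check\varphi_{\frac{am}{2}-k,\ell}^{u+v,p_r+n}$, and the output wave-packet $\check\varphi_{\frac{am}{2}-k,2\ell-1}^{2u,p_r}$ is multiplied by the polynomial prefactor $(-i2^{am-k}(x-\underline t_{k,r}^{p'}))^{\nu'}$. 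Modulo these modifications, each summand is literally an operator of the form $\underline{S}_{k,\ell,r}^{n,v,p}$, now acting on the minor variants $f^{\vartheta}_{\ell+1,r}$ and $g^{\nu+\nu'-\vartheta}_{\ell-1,r}$ of the frequency projections introduced just above.

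The second step is to observe that, exactly as in Subsection \ref{passage}, only $L^2\times L^2\to L^2$ estimates for these operators are ever needed downstream, so one may pass to the continuous bilinear model \eqref{eq:def:S_n,v,pFinal}--\eqref{eq:tildeS} with $f_{\ell+1,r},g_{\ell-1,r}$ replaced by the modified projections. I would then check that this replacement costs nothing: on the summation range one has $|u|\leq 3\cdot 2^{am/2}$, so the weight powers $(u/2^{am/2})^{\vartheta}$ and $(u/2^{am/2})^{\nu+\nu'-\vartheta}$ contribute at most a factor $3^{\nu+\nu'}$ in total, and the modified projections are pointwise and in $L^2$ controlled by the plain ones, with the wave-packets retaining their frequency support and their fast spatial decay; the prefactor $(-i2^{am-k}(x-\underline t_{k,r}^{p'}))^{\nu'}$ is bounded by $2^{\nu'}$ on $\underline I_{k,r}^{p'}$, where these operators are spatially localized, so it too leaves the wave-packet properties intact. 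In particular the energy estimate of Proposition \ref{prop:energies} — the genuinely new tool the later analysis will rely on — applies verbatim to the modified wave-packets, since its proof uses only $L^2$-normalization, frequency localization and fast decay.

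Collecting constants, the $(\nu,\nu')$-term in the expansion of \eqref{eq:time-freq-localization} is dominated by $\frac{1}{\nu!\,\nu'!}\sum_{\vartheta=0}^{\nu+\nu'}\binom{\nu+\nu'}{\vartheta}2^{\nu'}3^{\nu+\nu'}$ times the bound for the $\nu=\nu'=\vartheta=0$ operator, and since
\[
\sum_{\nu,\nu'\geq 0}\frac{1}{\nu!\,\nu'!}\sum_{\vartheta=0}^{\nu+\nu'}\binom{\nu+\nu'}{\vartheta}2^{\nu'}3^{\nu+\nu'}=\sum_{\nu,\nu'\geq 0}\frac{6^{\nu}\,12^{\nu'}}{\nu!\,\nu'!}=e^{18}<\infty,
\]
the series converges absolutely and the reduction to $\nu=\nu'=\vartheta=0$ is justified. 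The part I expect to demand the most care is not any single inequality but the bookkeeping: one must ensure that Proposition \ref{prop:energies} and the size estimates of Section \ref{sec:bilinear:analysis} are stated with enough uniformity in the harmless polynomial prefactors and weight powers that they literally cover the modified operators, so that no estimate has to be re-proven. Granting that uniformity, the Observation reduces to the elementary counting displayed above.
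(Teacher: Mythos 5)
Your proposal matches the paper's own argument essentially step for step: the binomial expansion of $(2u/2^{am/2})^{\nu+\nu'}$ via $2u=(u-v)+(u+v)$, the reduction to $L^2\times L^2\to L^2$ bounds and the continuous model with the modified projections $f^{\vartheta}_{\ell+1,r}$, $g^{\nu+\nu'-\vartheta}_{\ell-1,r}$, the bounds $2^{\nu'}$ for the spatial prefactor and $3^{\nu+\nu'}$ for the weight powers, and the absolute summability of $\sum_{\nu,\nu'}\frac{1}{\nu!\nu'!}\sum_{\vartheta}\binom{\nu+\nu'}{\vartheta}2^{\nu'}3^{\nu+\nu'}$. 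The argument is correct and no different in substance from the paper's, so nothing further is needed.
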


Once at this point it is worth noticing the following feature that later will be used extensively: in \eqref{eq:time-freq-localization} (and thus also later in \eqref{eq:est:w:aver}) we have the flexibility of allowing $w$ to depend on the interval $\underline{I}_{k, r}^{p'}$ and on any of the parameters $p',v,n$, as well as on the shift parameter $j$.

Using this and putting now together \eqref{eq:trilinear:form:withS-bis}--\eqref{eq:time-freq-localization}, we deduce that for any sequence of positive $L^{\infty}$-weights\footnote{Here for notational simplicity we only keep the $j$ dependence although based on the earlier comment one may allow for each $w_j=w_j(m,k,\ell,r,p')$.} $\{w_j\}_j$  not identically zero on any $\underline{I}_{k, r}^{p'}$,  we have that -- up to superposing in the right-hand side similar terms with fast decaying amplitudes -- the following holds:
\begin{equation}
\begin{aligned}
 & \underline{\L}_{m,k, \ell, r}(f, g, h)=\sum_{j\in\Z} \underline{\L}^{j}_{m,k, \ell, r}(f, g, h) \approx  \sum_{j\in\Z} \sum_{n, v, p \sim 2^{am \over 2}}  \sum_{ p' \sim_{\lfloor\rfloor_p} 2^{am}} |\underline{I}_{k, r}^{p'}|^{\frac{1}{2}}\\
 &  \hspace{1em} \cdot\langle \underline{H}_{k,r}^{n, v, p}\big(x+\frac{j}{2^{{am\over 2}-k}} \big), \tilde{\varphi}_{am-k,2 \ell-1}^{2^{am}(r-1)+p'}  \rangle \, \frac{\displaystyle \int_{\underline{I}_{k, r}^{p'}} \underline{S}_{k,\ell,r}^{n, v, p}(f,g)(x)  w_j(x)\,dx}{ \int_{\underline{I}_{k, r}^{p'}}  w_j(x)\,dx}\:.
 \end{aligned}\label{puttogeth}
\end{equation}

It now remains to notice that by applying Cauchy-Schwarz we obtain via \eqref{eq:H_k} that for fixed parameters $k,\ell,r$,
\begingroup
\allowdisplaybreaks
\begin{equation}
\begin{aligned}
& \big |\underline{\L}_{m,k, \ell, r}(f, g, h) \big| \leq  \\
& \sum_{j\in{\mathbb Z}} \sum_{n, v \sim 2^{am \over 2}} \sum_{p' \sim 2^{am}}  \frac{ \big| \langle h \, \rho_{am-ak}(\lambda(\cdot)) w_{k, n, v}^e(\l), \Phi_{am-k,2\ell-1,j}^{2^{am}(r-1)+p'}  \rangle \big|}{\big( \aver{\underline{I}_{k, r}^{p'}}  w_j(x)  \,dx \big)^\frac{1}{2} } \\
& \hspace{7em} \cdot \Big( \int_{\underline{I}_{k, r}^{p'}} \big|\underline{S}_{k,\ell,r}^{n, v, \big \lfloor \frac{p'}{2^{am \over 2}} \big\rfloor}(f, g)(x)\big|^2  w_j(x)\,dx \Big)^\frac{1}{2}\,.
\end{aligned}\label{eq:est:w:aver}
\end{equation}
\endgroup
Above $\Phi_{am-k,2 \ell-1,j}^{2^{am}(r-1)+p'}$ is a modified version of $\tilde{\varphi}_{am-k,2 \ell-1}^{2^{am}(r-1)+p'}(\cdot-j2^{k-{am \over 2}})$ as given by
\begin{equation}\label{suppvstail}
\begin{aligned}
\Phi_{am-k,2 \ell-1,j}^{2^{am}(r-1)+p'}(x) &:= 2^{\frac{am-k}{2}} e^{i 2^{\frac{am}{2}}(2^{\frac{am}{2}-k}x-p_r)(2\ell-2)}  \\
& \hspace{1em} \cdot \check \psi \big(2^{am-k}(x- \underline{t}_{k, r}^{p'} - \frac{j}{2^{{am \over 2}-k}}) \big)\,\check{\varphi}(2^{{am \over 2}- k}x-p_r)\,,
\end{aligned}
\end{equation}
where we recall that $p= \lfloor p'2^{-{am \over 2}}\rfloor$.

\smallskip

Due to the well-localized spatial properties of $\check \psi \big(2^{am-k}(\cdot- \underline{t}_{k, r}^{p'} - \frac{j}{2^{{am \over 2}-k}}) \big)$ (supported in $\underline{I}_{k, r}^{p'}+j 2^{am \over 2} |\underline{I}_{k, r}^{p'}|$) and $ \check{\varphi}(2^{{am \over 2}- k}\cdot-p_r)$ (adapted to the interval $I_k^{p_r}$), the functions $(\Phi_{am-k,2 \ell-1,j}^{2^{am}(r-1)+p'})_j$ satisfy the same properties (in both space and frequency) with an overall amplitude decaying faster than any negative power of $1+|j|$. Hence it is enough to focus on the term $j=0$. We invite the interested reader to consult Remarks \ref{remark:shift} and \ref{remark:shifts} for a more detailed account on the modifications required in order to treat the terms corresponding to the $j \neq 0$ case. In this context, for notational simplicity, we will continue to refer to $\underline{\L}^{0}_{m,k, \ell, r}(f, g, h)$ as simply $\underline{\L}_{m,k, \ell, r}(f, g, h)$.

We conclude this section by noticing that via \eqref{eq:est:w:aver} we achieved our declared aim -- that of providing a good time-frequency localization for $h$ (relative to a proper weight) while bringing $\underline{S}_{k,\ell,r}^{n, v, p}(f, g)$ to a form that is amenable to the one-scale analysis of Section \ref{sec:onescale} with special stress on Corollary \ref{cor:uniform}.
\subsection{Splitting the trilinear form according to mass and input distribution}
We start this section by recalling that for the proof of the single scale estimate in Section \ref{sec:onescale} it was important to split the analysis into cases, according to whether the size\footnote{This was quantified in terms of a parameter $\mu\in (0,1)$ -- see the proofs of Propositions \ref{prop:uniform} and \ref{sgscalelambda}.} distribution of the inputs was uniform or clustered. It should thus come as no surprise that these concepts will continue to be relevant here. We will further refine that analysis by introducing a new concept, that of \emph{mass} -- see Remark \ref{remark:boundedness:cluster:number} below.


\begin{definition}
\label{def:indices}
Let $\delta_1,\mu\in(0,1)$ be as in Section \ref{sec:onescale} and assume $k,\ell,r\in\Z$ fixed. Let $f$ and $g$ be Schwartz functions, with $f_{\ell+1,r}, g_{\ell-1,r}$ as given in \eqref{eq:def:S_n,v,pFinalprep}. We then partition the set of all possible $(p',n,v)$ indices into:
\begin{itemize}
\item the set of \underline{\emph{light mass}} indices
\begin{equation}
\begin{aligned}
\mathfrak{l}:=\bigg\{ {{(p', n, v)}\atop{p'\sim 2^{am},\;n,v \sim 2^{am \over 2}}} \; : \; \frac{1}{|\underline{I}_{k, r}^{p'}|} \int_{\underline{I}_{k, r}^{p'}} w_{k, n, v}(\lambda)(x) \,  dx < 2^{-\delta_1 a m}  \bigg\};
\end{aligned}\label{eq:def:light}
\end{equation}
\item the set of \underline{\emph{heavy mass - uniform $(f,g)$-distribution}} indices
\begin{equation}
\begin{aligned}
&\mathfrak{u} :=\bigg\{   {{(p', n, v)}\atop{p'\sim 2^{am},\;n,v \sim 2^{am \over 2}}} \; : \; \frac{1}{|\underline{I}_{k, r}^{p'}|} \int_{\underline{I}_{k, r}^{p'}} w_{k, n, v}(\lambda)(x) \, dx \geq 2^{- \delta_1 a m}, \\
& \hspace{9em}\aver{I_k^{\lfloor p' 2^{-\frac{am}{2}}\rfloor_r-n}}|f_{\ell+1,r}|^2 \leq 2^{\mu \frac{am}{2}-k}\|f_{\ell+1,r}\|_{L^2}^2
\\
& \hspace{9em} \text{ and  } \; \aver{I_k^{\lfloor p' 2^{-\frac{am}{2}}\rfloor_r+n}}|g_{\ell-1,r}|^2 \leq 2^{\mu \frac{am}{2}-k}\|g_{\ell-1,r}\|_{L^2}^2 \bigg\};
\end{aligned}\label{eq:def:unif}
\end{equation}
\item the set of \underline{\emph{heavy mass - clustered $(f,g)$-distribution}} indices
\begin{equation}
\begin{aligned}
& \mathfrak{c}:= \bigg\{   {{(p', n, v)}\atop{p'\sim 2^{am},\; n,v \sim 2^{am \over 2}}} \; : \; \frac{1}{|\underline{I}_{k, r}^{p'}|} \int_{\underline{I}_{k, r}^{p'}} w_{k, n, v}(\lambda)(x) \, dx \geq 2^{- \delta_1 a m}, \\
& \hspace{9em} \aver{I_k^{\lfloor p' 2^{-\frac{am}{2}}\rfloor_r-n}}|f_{\ell+1,r}|^2 > 2^{\mu \frac{am}{2}-k}\|f_{\ell+1,r}\|_{L^2}^2 \\
& \hspace{9em} \text{ or  } \; \aver{I_k^{\lfloor p' 2^{-\frac{am}{2}}\rfloor_r+n}}|g_{\ell-1,r}|^2 > 2^{\mu \frac{am}{2}-k}\|g_{\ell-1,r}\|_{L^2}^2  \bigg\}.
\end{aligned}\label{eq:def:cluster}
\end{equation}
\end{itemize}

Of course, the sets of indices $\mathfrak{l}, \mathfrak{u},\mathfrak{c}$ depend implicitly on $f,g,k, \ell,r$ but, for simplicity, we leave this dependency out of the notation.
\end{definition}

\begin{remark}\label{remark:boundedness:cluster:number}

i) A careful inspection of the above definition reveals that, even if not introduced formally, the concept of mass associated with the indices $(p',n,v)$ and defined by
\begin{equation}\label{massintr}
\frac{1}{|\underline{I}_{k, r}^{p'}|} \int_{\underline{I}_{k, r}^{p'}} w_{k, n, v}(\lambda)(x)\, dx,
\end{equation}
plays an important role in our analysis. This should not be surprising since the concept of mass captures the amount of information relative to $\underline{I}_{k, r}^{p'}$ carried by the linearizing stopping time function $\l(x)$ that appears in the maximally modulated kernel of $BC^a$. This can be seen as an artifact inherited from the Carleson type behavior of our operator that ressembles the mass analysis of the classical Carleson operator introduced by Fefferman in \cite{f}.

ii) It will be useful later on to use a larger set $\bar{\mathfrak{c}}$ with $\mathfrak{c} \subset \bar{\mathfrak{c}}:=\bar{\mathfrak{c}}_1 \times \{ v \sim 2^{am \over 2} \}$, which does not contain joint information in $n$ and $v$:
\begin{equation}
\begin{aligned}
& \bar{\mathfrak{c}}_1:= \bigg\{   {{(p', n)}\atop{p'\sim 2^{am}, n \sim 2^{am \over 2}}} \; : \; \aver{I_k^{\lfloor p' 2^{-\frac{am}{2}}\rfloor_r-n}}|f_{\ell+1,r}|^2 > 2^{\mu \frac{am}{2}-k}\|f_{\ell+1,r}\|_{L^2}^2 \\
& \hspace{10em} \text{ or  } \; \aver{I_k^{\lfloor p' 2^{-\frac{am}{2}}\rfloor_r+n}}|g_{\ell-1,r}|^2 > 2^{\mu \frac{am}{2}-k}\|g_{\ell-1,r}\|_{L^2}^2  \bigg\}.
\end{aligned}\label{eq:def:cluster:enlarged}
\end{equation}
Now notice that for any $p' \sim 2^{am}$ fixed, there are at most  $O(2^{(1-\mu) \frac{am}{2}})$ values of $n$ such that $(p', n) \in \bar{\mathfrak{c}}_1$.
\end{remark}

With these steps completed, the trilinear form \eqref{eq:trilinear:form:withS-bis} splits into three terms according to the partition introduced in Definition \ref{def:indices}:
\begin{equation}
\begin{aligned}
\big |\underline{\L}_{m,k, \ell, r}(f, g, h) \big| &\lesssim \sum_{(p', n, v) \in \mathfrak{l}} \ldots+ \sum_{(p', n, v) \in \mathfrak{u}} \ldots + \sum_{(p', n, v) \in \mathfrak{c}} \ldots \\
&=:\underline{\L}_{m,k, \ell, r}^{\mathfrak{l}}(f, g, h) + \underline{\L}_{m,k,\ell, r}^{\mathfrak{u}}(f, g, h) + \underline{\L}_{m,k, \ell,r}^{\mathfrak{c}}(f, g, h).
\end{aligned}\label{eq:splitL}
\end{equation}
\subsection{Estimates for the localized trilinear forms $\underline{\L}_{m,k,\ell,r}^{\mathfrak{l}}, \underline{\L}_{m,k,\ell,r}^{\mathfrak{u}},\underline{\L}_{m,k,\ell,r}^{\mathfrak{c}}$} In this section we will further reshape the information carried within each of the three forms in decomposition \eqref{eq:splitL}. This will streamline the modulation invariant multiscale analysis performed in the next section.\footnote{Recall that we restrict here our attention to the main term $j=0$ in the trilinear form and that for notational simplicity we refer to $\underline{\L}^{0}_{m,k, \ell, r}$ as simply $\underline{\L}_{m,k, \ell, r}$. For more on the general case, \emph{i.e.} arbitrary values for $j$, please see Remarks \ref{remark:shift} and \ref{remark:shifts}.}

\subsubsection{Preliminary bounds} We begin by applying \eqref{eq:est:w:aver} to each of the terms in \eqref{eq:splitL}. For each case we will make use of the above mentioned freedom in choosing our weight $w$.

\begin{enumerate}[label=\arabic*., leftmargin=8pt]

\item \underline{The $\mathfrak{l}$-component}
\smallskip

In order to estimate $\underline{\L}_{m,k,\ell,r}^{\mathfrak{l}}(f, g, h)$, we apply \eqref{eq:est:w:aver} with the constant weights $w = 1$; this choice is hinted by the smallness of the mass parameter paired with the trivial bound guaranteed by Proposition \ref{cor:unweighted:est} below. With these, an application of Cauchy-Schwarz yields
\begin{equation}
\begin{aligned}
& \underline{\L}_{m, k,\ell,r}^{\mathfrak{l}}(f, g, h) \lesssim \Big( \sum_{(p', n, v) \in \mathfrak{l}}  \int_{\underline{I}_{k, r}^{p'}} \big|\underline{S}_{k,\ell,r}^{n, v, \big \lfloor \frac{p'}{2^{am \over 2}} \big\rfloor}(f, g)(x)\big|^2 dx  \Big)^\frac{1}{2} \\
& \hspace{2em} \cdot \Big( \sum_{(p', n, v) \in \mathfrak{l}}  \big| \langle h \, \rho_{am-ak}(\lambda(\cdot)) w_{k, n, v}^e(\l), \Phi_{am-k,2\ell-1}^{2^{am}(r-1)+p'}  \rangle \big|^2 \Big)^\frac{1}{2}.
\end{aligned}\label{eq:one:tile:light}
\end{equation}

\item \underline{The $\mathfrak{u}$-component}
\smallskip

For $\underline{\L}_{m, k,\ell,r}^{\mathfrak{u}}(f, g, h)$, we use the weights $w = w_{k, n, v}$. This choice is dictated by the setting provided by  Corollary \ref{cor:uniform}, that will be required in order to bound with exponential $m$-decay the expression $\underline{\L}_{m, k,\ell,r}^{\mathfrak{u}}$.

Now, using definition \eqref{eq:def:unif} of $\mathfrak{u}$, we have
\[ \aver{\underline{I}_{k, r}^{p'}}  w(x) \, dx = \frac{1}{|\underline{I}_{k, r}^{p'}|}\int_{\underline{I}_{k, r}^{p'}}  w_{k,n,v}(\lambda)(x) \, dx \gtrsim 2^{-\delta_1 a m}, \]
and hence, applying again Cauchy-Schwarz, we deduce
\begin{equation}
\begin{aligned}
& \underline{\L}_{m, k,\ell,r}^{\mathfrak{u}}(f, g, h) \\
& \lesssim 2^{\frac{\delta_1 a m}{2}} \Big( \sum_{(p', n, v) \in \mathfrak{u}}  \int_{\underline{I}_{k, r}^{p'}} \big|\underline{S}_{k,\ell,r}^{n, v, \big \lfloor \frac{p'}{2^{am \over 2}} \big\rfloor}(f, g)(x)\big|^2 w_{k, n, v}(x)dx  \Big)^\frac{1}{2} \\
&\hspace{5em} \cdot \Big( \sum_{(p', n, v) \in \mathfrak{u}}  \big| \langle h \, \rho_{am-ak}(\lambda(\cdot)) w_{k, n, v}^e(\l), \Phi_{am-k,2\ell-1}^{2^{am}(r-1)+p'}  \rangle \big|^2 \Big)^\frac{1}{2}.
\end{aligned}\label{eq:one:tile:unif}
\end{equation}

\item \underline{The $\mathfrak{c}$-component}
\smallskip

Lastly, motivated by the clustered component's low cardinality -- see ii) in Remark \ref{remark:boundedness:cluster:number}, we estimate $\underline{\L}_{m, k,\ell,r}^{\mathfrak{c}}(f, g, h)$ in the same manner as $\underline{\L}_{m, k,\ell,r}^{\mathfrak{l}}$, that is by taking $w = 1$ and applying Cauchy-Schwarz:
\begin{equation}
 \begin{aligned}
& \underline{\L}_{m, k,\ell,r}^{\mathfrak{c}}(f, g, h) \lesssim \Big( \sum_{(p', n, v) \in \mathfrak{c}}  \int_{\underline{I}_{k, r}^{p'}} \big|\underline{S}_{k,\ell,r}^{n, v, \big \lfloor \frac{p'}{2^{am \over 2}} \big\rfloor}(f, g)(x)\big|^2 dx  \Big)^\frac{1}{2} \\
& \hspace{2em} \cdot \Big( \sum_{(p', n, v) \in \mathfrak{c}}  \big| \langle h \, \rho_{am-ak}(\lambda(\cdot)) w_{k, n, v}^e(\l), \Phi_{am-k,2\ell-1}^{2^{am}(r-1)+p'}  \rangle \big|^2 \Big)^\frac{1}{2}.
\end{aligned}\label{eq:one:tile:cluster}
\end{equation}
\end{enumerate}

\subsubsection{Bounds of $L^2 \times L^2 \to L^2$ type for $\underline{S}_{k,\ell,r}^{n, v, p'}(f, g)$}
It is now necessary to estimate the $\ell^2$ quantities involving $\underline{S}_{k,\ell,r}^{n, v, p'}(f, g)$. For the $\mathfrak{l},\mathfrak{c}$ cases we use the trivial bound provided by Plancherel that is nevertheless sufficient for our purposes.
\begin{proposition}
\label{cor:unweighted:est}
For fixed parameters $k, \ell, r$, one has
\begin{equation}
\begin{aligned}
\Big( & \sum_{p' \sim 2^{am}} \sum_{n, v \sim 2^{am \over 2}}  \int_{\underline{I}_{k, r}^{p'}}   |\underline{S}_{k,\ell,r}^{n, v, \big \lfloor \frac{p'}{2^{am \over 2}} \big\rfloor}(f, g)(x)|^2 \,dx \Big)^{\frac{1}{2}} \\
& \lesssim 2^{- \frac{am}{4}} 2^{-\frac{k}{2}} \Big(  \sum_{u_1 \sim 2^{am \over 2}} \sum_{n_1 - 2^{am \over 2} r \sim 2^{am \over 2}} | \langle f_{\ell+1}, \check \varphi_{{am \over 2}-k, \ell}^{u_1, n_1}   \rangle|^2 \Big)^{\frac{1}{2}} \\
& \hspace{6em} \cdot  \Big(  \sum_{u_2 \sim 2^{am \over 2}} \sum_{n_2 - 2^{am \over 2} r \sim 2^{am \over 2}} |\langle g_{\ell-1}, \check \varphi_{{am \over 2}-k, \ell}^{u_2, n_2}   \rangle|^2 \Big)^{\frac{1}{2}} \\
& \lesssim 2^{- \frac{am}{4}} 2^{-\frac{k}{2}} \|f_{\ell+1,r}\|_{L^2} \|g_{\ell-1,r}\|_{L^2}.
\end{aligned}\label{eq:estimate:we:need:no:weight}
\end{equation}
\end{proposition}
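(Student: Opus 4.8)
The estimate \eqref{eq:estimate:we:need:no:weight} is, at its core, an application of Parseval in the $x$-variable combined with the frame property of the Gabor system. The key structural observation is that, for fixed $(n,v,p)$, the function $\underline{S}_{k,\ell,r}^{n,v,p}(f,g)(\cdot)$ is a trigonometric polynomial whose frequencies are precisely spaced so as to be \emph{orthogonal} over the interval $I_k^{p_r}$. First I would reorganize the outer sum: since the summand on the left of \eqref{eq:estimate:we:need:no:weight} depends on $p'$ only through $p=\lfloor p'2^{-am/2}\rfloor$, and since by \eqref{part} the intervals $\underline{I}_{k,r}^{p'}$ with $\lfloor p'2^{-am/2}\rfloor=p$ partition $I_k^{p_r}$, the double sum over $(p',n,v)$ collapses to
\[
\sum_{p,n,v\sim 2^{am/2}}\ \int_{I_k^{p_r}}\big|\underline{S}_{k,\ell,r}^{n,v,p}(f,g)(x)\big|^2\,dx.
\]

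Next, recalling \eqref{eq:def:underline:S}, I would write $\underline{S}_{k,\ell,r}^{n,v,p}(f,g)(x)=2^{-k}\sum_{u\sim 2^{am/2}}c_u\,e^{i\,2^{(am/2)-k}\,2u\,x}$, absorbing the $p_r$-dependent unimodular factor into $c_u:=e^{-i\,2p_r u}\langle f_{\ell+1},\check{\varphi}_{\frac{am}{2}-k,\ell}^{u-v,p_r-n}\rangle\,\langle g_{\ell-1},\check{\varphi}_{\frac{am}{2}-k,\ell}^{u+v,p_r+n}\rangle$. With the $2\pi$-convention of Footnote~\ref{footnote:2pi} and $u$ an integer, the total phase variation of $e^{i\,2^{(am/2)-k}\,2(u-u')\,x}$ over $I_k^{p_r}$ equals $2\pi\cdot 2(u-u')\in 2\pi\Z$ (because $|I_k^{p_r}|=2^{-(am/2)+k}$), so these exponentials are orthogonal over $I_k^{p_r}$. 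Parseval then gives $\int_{I_k^{p_r}}|\underline{S}_{k,\ell,r}^{n,v,p}(f,g)|^2=2^{-2k}|I_k^{p_r}|\sum_u|c_u|^2=2^{-k-am/2}\sum_u|\langle f_{\ell+1},\check{\varphi}_{\frac{am}{2}-k,\ell}^{u-v,p_r-n}\rangle|^2|\langle g_{\ell-1},\check{\varphi}_{\frac{am}{2}-k,\ell}^{u+v,p_r+n}\rangle|^2$. Summing over $(p,n,v,u)$ and performing the change of variables $(u_1,u_2,n_1,n_2):=(u-v,\,u+v,\,p_r-n,\,p_r+n)$ — which is injective since, $r$ being fixed, one recovers $u=\tfrac{u_1+u_2}{2}$, $v=\tfrac{u_2-u_1}{2}$, $p_r=\tfrac{n_1+n_2}{2}$, $n=\tfrac{n_2-n_1}{2}$ — decouples the $f$- and $g$-factors and gives
\[
\sum_{p,n,v}\int_{I_k^{p_r}}\big|\underline{S}_{k,\ell,r}^{n,v,p}(f,g)\big|^2\ \le\ 2^{-k-am/2}\Big(\sum_{u_1,n_1}|\langle f_{\ell+1},\check{\varphi}_{\frac{am}{2}-k,\ell}^{u_1,n_1}\rangle|^2\Big)\Big(\sum_{u_2,n_2}|\langle g_{\ell-1},\check{\varphi}_{\frac{am}{2}-k,\ell}^{u_2,n_2}\rangle|^2\Big),
\]
where the index ranges on the right are exactly those in \eqref{eq:estimate:we:need:no:weight} (the constraints $u\sim 2^{am/2}$, $v\sim 2^{am/2}$, $p\sim 2^{am/2}$, $n\sim 2^{am/2}$ translate into $u_i\sim 2^{am/2}$ and $n_i-2^{am/2}r\sim 2^{am/2}$). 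Taking square roots yields the first inequality of \eqref{eq:estimate:we:need:no:weight}.

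Finally, for the second inequality I would use that $\{\check{\varphi}_{\frac{am}{2}-k,\ell}^{u,n}\}$ is an adapted Gabor frame, so Bessel's inequality bounds $\sum_{u_1,n_1}|\langle f_{\ell+1},\check{\varphi}_{\frac{am}{2}-k,\ell}^{u_1,n_1}\rangle|^2$; here one observes that, on the restricted spatial index set $n_1-2^{am/2}r\sim 2^{am/2}$, the Gabor coefficients of $f_{\ell+1}$ coincide with those of its localized version $f_{\ell+1,r}$ from \eqref{eq:def:S_n,v,pFinalprep}, up to the fast-decaying tail terms that are absorbed exactly as in the reductions of Section~\ref{passage}, hence the sum is $\lesssim\|f_{\ell+1,r}\|_{L^2}^2$; the same applies to $g$. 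I do not expect a genuine obstacle here: the only points requiring care are the exact-period orthogonality of the exponentials over $I_k^{p_r}$ and the (routine) tail absorption in the last identification; the rest is bookkeeping with Parseval and Bessel.
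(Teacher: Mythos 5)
Your proposal is correct and follows essentially the same route as the paper's proof: collapsing the $p'$-sum to the integral over $I_k^{p_r}$, exploiting the periodicity of $\underline{S}_{k,\ell,r}^{n,v,p}$ (Plancherel, which is your exact-orthogonality observation), and then the change of variables $(u\pm v,\,p_r\pm n)$ to decouple the Gabor coefficients of $f$ and $g$, with the final identification $\approx\|f_{\ell+1,r}\|_{L^2}\|g_{\ell-1,r}\|_{L^2}$ by orthogonality of the wave-packets. Your extra care with the unimodular factor and the tail absorption is fine but not a different argument.
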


\begin{proof}
The estimate above is a direct consequence of the orthogonality of the wave-packets involved in the definition of $\underline{S}_{k,\ell,r}^{n,v,\big \lfloor \frac{p'}{2^{am \over 2}} \big\rfloor}$ -- see \eqref{eq:def:underline:S}. Indeed, squaring the left-hand side in the first inequality of \eqref{eq:estimate:we:need:no:weight} and rearranging the summation, we have
\begin{align*}
&\sum_{n, v \sim 2^{am \over 2}}\sum_{p' \sim 2^{am}} \int_{\underline{I}_{k, r}^{p'}} |\underline{S}_{k,\ell,r}^{n,v,\big \lfloor \frac{p'}{2^{am \over 2}} \big\rfloor}(f,g)(x)|^2\,dx\\
&=\sum_{n, v \sim 2^{am \over 2}} \sum_{p \sim 2^{am \over 2}} \int_{I_k^{p_r}} |\underline{S}_{k,\ell,r}^{n,v,p}(f,g)(x)|^2\,dx.
\end{align*}
Since $\underline{S}_{k,\ell,r}^{n,v,p}(f,g)$ is a periodic function of period $|I_k^{p_r}|$, Plancherel equality implies that the quantity above is equal to
\begin{align*}
 & 2^{-2k}\, 2^{-({am \over 2}-k)}\,\sum_{n, v \sim 2^{am \over 2}} \sum_{p \sim 2^{am \over 2}} \sum_{u \sim 2^{am \over 2}} |\langle f_{\ell+1}, \check{\varphi}_{{am \over 2}- k, \ell}^{u-v, p_r-n}  \rangle|^2\,   |\langle g_{\ell-1}, \check{\varphi}_{{am \over 2}- k, \ell}^{u+v, p_r+n} \rangle|^2\\
&= 2^{- \frac{am}{2}} 2^{-k} \Big(
\sum_{u_1 \sim 2^{am \over 2}} \sum_{n_1 - 2^{am \over 2} r \sim 2^{am \over 2}} \Big \vert \langle f_{\ell+1}, \check \varphi_{{am \over 2}-k, \ell}^{u_1, n_1}   \rangle \Big \vert^2 \Big) \\
 & \hspace{5em} \cdot  \Big(  \sum_{u_2 \sim 2^{am \over 2}} \sum_{n_2 - 2^{am \over 2} r \sim 2^{am \over 2}} \Big \vert \langle g_{\ell-1}, \check \varphi_{{am \over 2}-k, \ell}^{u_2, n_2}   \rangle \Big \vert^2 \Big)\\
&\approx 2^{- \frac{am}{2}} 2^{-k}\, \|f_{\ell+1,r}\|_{L^2}^2 \|g_{\ell-1,r}\|_{L^2}^2.
\end{align*}
\end{proof}

For the $\mathfrak{u}$ case we argue that the treatment of the corresponding uniform component in Proposition \ref{prop:uniform} -- and more precisely in Corollary \ref{cor:uniform} -- carries forward to our situation in \eqref{eq:one:tile:unif}. It yields the following:

\begin{proposition}\label{prop:before:cor:Prop4} For $k,\ell,r$ fixed and $\tilde S_k^{n, v}$ as defined in \eqref{eq:tildeS}, we have
\begin{align*}
 \Big(\sum_{(p', n, v) \in \mathfrak{u}} \int_{\underline{I}_{k, r}^{p'}}   |\tilde S_k^{n, v}(F, G)(x)|^2 & w_{k, n, v}(\lambda)(x) dx \Big)^{1/2}  \\
& \lesssim 2^{-\delta_1\, a\, m} 2^{- \frac{am}{4}} 2^{-\frac{k}{2}} \|F\|_{L^2} \|G\|_{L^2}.
\end{align*}
\end{proposition}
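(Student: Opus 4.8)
The plan is to deduce Proposition~\ref{prop:before:cor:Prop4} directly from Corollary~\ref{cor:uniform} by a rescaling/translation normalization together with a careful bookkeeping of the index set $\mathfrak{u}$. First I would observe that, as noted in Section~\ref{Tr}, the estimate we are after is invariant under translation, modulation and dilation symmetries (the weight $w_{k,n,v}(\lambda)$ transforms appropriately since it only depends on $\lambda$ through $\rho_{am-ak}(\lambda)$ and the Gaussian-type factor, and the whole expression is dimensionally homogeneous). Hence it suffices to treat the case $k=0$, $r=1$ (and then $p_r=p$, $\underline{I}_{0,1}^{p'}\subset I_0^p$), exactly as in the reduction preceding Proposition~\ref{sgscalelambda}. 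Under this normalization the quantity on the left becomes
\[
\Big(2^{\frac{am}{2}}\sum_{(p',n,v)\in\mathfrak{u}} \frac{1}{|\underline{I}_{0,1}^{p'}|}\int_{\underline{I}_{0,1}^{p'}} |\tilde S_0^{n,v}(F,G)(x)|^2\, w_{0,n,v}(\lambda)(x)\, dx\Big)^{1/2},
\]
up to the harmless factor $2^{-am/4-k/2}$ being absorbed into $|\underline{I}_{0,1}^{p'}|^{-1}=2^{am}$ versus the $2^{am/2}$ normalization — I would track these powers carefully but they match those in Corollary~\ref{cor:uniform}.

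The key step is then to pass from the sum over the refined index set $\{(p',n,v)\in\mathfrak{u}\}$, which involves the fine intervals $\underline{I}_{0,1}^{p'}$ of length $2^{-am}$, back to the coarser sum over $p\sim 2^{am/2}$ and intervals $I_0^p$ appearing in Corollary~\ref{cor:uniform}. For this I would use that membership of $(p',n,v)$ in $\mathfrak{u}$ forces, via \eqref{eq:def:unif}, both $p-n\in U_\mu(F)$ and $p+n\in U_\mu(G)$ with $p=\lfloor p'2^{-am/2}\rfloor$ (after matching the uniformity thresholds — the definition \eqref{unifset} with $\frac{1}{3I^n}\int|f|^2 < 2^{(\mu-1)am/2}\|f\|_{L^2}^2$ is exactly the rescaled version of the averaged condition in \eqref{eq:def:unif}, since the relevant intervals differ only by bounded dilation). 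I would then bound the restricted sum over $\mathfrak{u}$ by the full sum over $\{p\sim 2^{am/2},\ n\sim 2^{am/2}\ \text{with}\ p-n\in U_\mu(F),\ p+n\in U_\mu(G),\ v\sim 2^{am/2}\}$, using positivity to drop the constraint that the mass be heavy. Summing the $p'$ over each fixed $p$ (there are $2^{am/2}$ of them, contributing $\sum_{p'}\int_{\underline{I}_{0,1}^{p'}}=\int_{I_0^p}$) collapses the fine-interval sum back to an integral over $I_0^p$, which is precisely the object controlled by the second display of Corollary~\ref{cor:uniform}. That corollary then gives the bound $2^{-\delta_1 am}\|F\|_{L^2}\|G\|_{L^2}$ (with the $2^{-am/4}2^{-k/2}$ factor reinstated by undoing the normalization), completing the proof.

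The main obstacle I anticipate is the precise reconciliation of the two discretization scales and their associated ``uniformity'' parameters: Corollary~\ref{cor:uniform} is phrased with the uniform sets $U_\mu(F), U_\mu(G)$ defined at scale $2^{-am/2}$ relative to intervals $I^n$, whereas $\mathfrak{u}$ in \eqref{eq:def:unif} is defined via averages of $f_{\ell+1,r},g_{\ell-1,r}$ over intervals $I_k^{\lfloor p'2^{-am/2}\rfloor_r\mp n}$ at scale $2^{-am/2+k}$ with threshold $2^{\mu am/2-k}\|\cdot\|_{L^2}^2$. One must check that, after the dilation normalizing $k=0$, these two conditions coincide up to absolute constants (the factor $3$ dilation of $I^n$ in \eqref{unifset} versus the exact interval in \eqref{eq:def:unif} only changes the implied constant in the Cauchy--Schwarz step, not the exponent $\mu$), and that the projected functions $f_{\ell+1,r}, g_{\ell-1,r}$ may be replaced by $F,G$ without loss — this is legitimate because $\tilde S_0^{n,v}$ depends on its inputs only through their restriction to $3I^n$-type neighborhoods and the $L^2$ norms are comparable. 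Once this dictionary between the two setups is established, the argument is a direct invocation of Corollary~\ref{cor:uniform}; no new oscillatory or number-theoretic input is needed, all of that having been discharged in Section~\ref{sec:onescale}.
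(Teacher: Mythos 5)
Your overall route---normalize to $k=0,\ell=1,r=1$ by the translation/modulation/dilation symmetries (which accounts for the $2^{-k/2}$), collapse the $p'$-sum via $\sum_{p'}\int_{\underline{I}_{k,r}^{p'}}=\int_{I_k^{p_r}}$, drop the heavy-mass constraint by positivity, and quote Corollary \ref{cor:uniform}---is exactly the route the paper takes (the paper offers nothing beyond the assertion that the treatment of the uniform component in Proposition \ref{prop:uniform}/Corollary \ref{cor:uniform} ``carries forward''). However, the step on which your reduction pivots is stated backwards, and this is a genuine gap: you claim that $(p',n,v)\in\mathfrak{u}$ forces $p-n\in U_\mu(F)$ and $p+n\in U_\mu(G)$. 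Comparing \eqref{eq:def:unif} with \eqref{unifset}, membership in $\mathfrak{u}$ only says $\int_{I_k^{p_r-n}}|F|^2\lesssim 2^{(\mu-1)\frac{am}{2}}\|F\|_{L^2}^2$ over the \emph{un-dilated} interval, whereas $p-n\in U_\mu(F)$ requires the same smallness over the enlarged interval $3I^{p-n}$. Since $\int_{I^{n}}\le\int_{3I^{n}}$, the implication goes the other way: $U_\mu$-membership implies the $\mathfrak{u}$-condition, not conversely. Hence the $\mathfrak{u}$-restricted sum is in general \emph{larger} than the index set appearing in Corollary \ref{cor:uniform}, and positivity cannot be used to majorize the former by the latter.

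This is not a matter of implied constants, as you suggest. The triples admitted by $\mathfrak{u}$ but not by the $U_\mu$-conditions are precisely those where $F$ (or $G$) may carry essentially all of its mass on an interval \emph{adjacent} to $I_k^{p_r-n}$; since for $x\in I_k^{p_r}$ and $t\in I_k^{n}$ the argument $x-t$ sweeps two consecutive intervals of length $2^{k-\frac{am}{2}}$, such a concentration is still seen by $\tilde S_k^{n,v}$, and after choosing a modulation of the spike that neutralizes the factor $e^{-2iv(2^{\frac{am}{2}-k}t-n)}$ for the resonant $v$ (and a constant stopping function $\lambda$ making the mass heavy), a single such pair already contributes an amount comparable to $2^{-\frac{am}{4}}2^{-\frac{k}{2}}\|F\|_{L^2}\|G\|_{L^2}$, with no $2^{-\delta_1 am}$ gain---so this contribution is simply not controlled by Corollary \ref{cor:uniform}. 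To close the argument you must either (i) read the uniformity condition in Definition \ref{def:indices} with the enlarged intervals $3I_k^{\cdot}$ (the natural fix, and harmless elsewhere: the clustered set only grows by a bounded factor, so the counting in Remark \ref{remark:boundedness:cluster:number} and the size estimate \eqref{eq:est:size-h:cluster} survive with the same exponent), or (ii) split $F,G$ over the two adjacent intervals met by $x\mp t$ and dispose of the pieces violating the enlarged uniformity condition by the clustered-type Cauchy--Schwarz/cardinality argument of Case 1 in Proposition \ref{sgscalelambda}. As written, the containment step fails and the proof does not close without one of these additions.
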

%
%
%
From the above Proposition \ref{prop:before:cor:Prop4} and \eqref{eq:def:S_n,v,pFinal} we immediately deduce

\begin{corollary}\label{cor:Prop4}
For $k,\ell,r$ fixed, we have
\begin{equation}
\begin{aligned}
& \Big( \sum_{(p', n, v)\in \mathfrak{u}}  \int_{\underline{I}_{k, r}^{p'}}   |\underline{S}_{k,\ell,r}^{n, v, \big \lfloor \frac{p'}{2^{am \over 2}} \big\rfloor} (f, g)(x)|^2 w_{k, n, v}(\lambda)(x) dx \Big)^{1/2} \\
& \hspace{6em} \lesssim 2^{-\delta_1\,a\, m} 2^{- \frac{am}{4}} 2^{-\frac{k}{2}}  \|f_{\ell+1,r}\|_{L^2} \|g_{\ell-1,r}\|_{L^2}.
\end{aligned}\label{eq:estimate:we:need}
\end{equation}
\end{corollary}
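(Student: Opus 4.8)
The plan is to deduce Corollary \ref{cor:Prop4} directly from Proposition \ref{prop:before:cor:Prop4} via the identification \eqref{eq:def:S_n,v,pFinal}, which for $x\in \underline{I}_{k,r}^{p'}\subset I_k^{p_r}$ reads
\[
\underline{S}_{k,\ell,r}^{n, v, \lfloor p'2^{-\frac{am}{2}}\rfloor}(f,g)(x) = \tilde S_k^{n, v}(f_{\ell+1,r}, g_{\ell-1,r})(x)
\]
up to the inessential phase and cutoff factors appearing in \eqref{eq:def:underline:SrelS} (which are of size $O(1)$ on $I_k^{p_r}$ and hence harmless after taking absolute values), and up to the fast-decaying superposition discussed in Observation \ref{convred} (the terms $\tilde S_{k,p_r}^{n,v}[j_1,j_2]$ with $(j_1,j_2)\neq(0,0)$ and the shift parameter $r'$ from the footnote following \eqref{eq:def:S_n,v,pFinal}), whose coefficients are $O(|j_1|^{-10}|j_2|^{-10})$ and $O(|r'|^{-2})$ respectively. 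First I would fix $k,\ell,r$ and set $F:=f_{\ell+1,r}$, $G:=g_{\ell-1,r}$; then the left-hand side of \eqref{eq:estimate:we:need} is, up to the said harmless factors and a triangle-inequality over the fast-decaying tails, bounded by $\big(\sum_{(p',n,v)\in\mathfrak u}\int_{\underline I_{k,r}^{p'}}|\tilde S_k^{n,v}(F,G)(x)|^2 w_{k,n,v}(\lambda)(x)\,dx\big)^{1/2}$, which is exactly the quantity estimated in Proposition \ref{prop:before:cor:Prop4}. Applying that proposition gives the bound $2^{-\delta_1 am}2^{-\frac{am}{4}}2^{-\frac{k}{2}}\|F\|_{L^2}\|G\|_{L^2} = 2^{-\delta_1 am}2^{-\frac{am}{4}}2^{-\frac{k}{2}}\|f_{\ell+1,r}\|_{L^2}\|g_{\ell-1,r}\|_{L^2}$, which is precisely \eqref{eq:estimate:we:need}.

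The only genuine work is Proposition \ref{prop:before:cor:Prop4} itself, and I would obtain it from Corollary \ref{cor:uniform} by a change of variables/unfolding argument that reorganizes the constraint $(p',n,v)\in\mathfrak u$. Recall that Corollary \ref{cor:uniform} (in its second, $\tilde S$-form, specialized to scale $k=0$ and then rescaled) states
\[
\Big(2^{\frac{am}{2}}\sum_{v\sim 2^{\frac{am}{2}}}\sum_{\substack{n,p\sim 2^{\frac{am}{2}}\\ p-n\in U_\mu(F),\ p+n\in U_\mu(G)}}\int_{I^p}|\tilde S_0^{n,v}(F,G)(x)|^2 w_{0,n,v}(\lambda)(x)\,dx\Big)^{1/2}\lesssim_a 2^{-\delta_1 am}\|F\|_{L^2}\|G\|_{L^2},
\]
where $U_\mu$ is the uniform set from \eqref{unifset}. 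The plan is to match the $\mathfrak u$-indexing against this: the partition $I_k^{p_r}=\bigsqcup_{p':\lfloor p'2^{-\frac{am}{2}}\rfloor=p}\underline I_{k,r}^{p'}$ from \eqref{part} turns $\sum_{p'}\int_{\underline I_{k,r}^{p'}}$ into $\sum_{p}\int_{I_k^{p_r}}$, so that the $p'$-sum simply refines the $p$-sum and $\int_{\underline I_{k,r}^{p'}}\le \int_{I_k^{p_r}}$; meanwhile the heavy-mass constraint in \eqref{eq:def:unif} only shrinks the index set relative to Corollary \ref{cor:uniform} (whose sum is over \emph{all} $n,v\sim 2^{am/2}$), and the two size conditions in \eqref{eq:def:unif} are exactly the defining conditions of membership $\lfloor p'2^{-\frac{am}{2}}\rfloor_r - n\in U_\mu(f_{\ell+1,r})$ and $\lfloor p'2^{-\frac{am}{2}}\rfloor_r + n\in U_\mu(g_{\ell-1,r})$ after the translation-invariance normalization that reduces $(k,\ell,r)$ to $(0,1,1)$. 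Thus summing over $\mathfrak u$ is dominated by summing over the index set in Corollary \ref{cor:uniform}, and the $2^{\frac{am}{2}}$ normalization factor there is absorbed by the fact that for fixed $p$ there are $\sim 2^{\frac{am}{2}}$ refining indices $p'$ — more precisely, one should be slightly careful and note that $\sum_{p'}\int_{\underline I_{k,r}^{p'}}|\cdots|^2 = \sum_{p}\int_{I_k^{p_r}}|\cdots|^2$ exactly (no overcounting, since the $\underline I_{k,r}^{p'}$ tile $I_k^{p_r}$), so no extra factor appears and the $2^{am/2}$ in Corollary \ref{cor:uniform} is simply the (removed) cardinality bookkeeping.

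The main obstacle — or rather the point requiring the most care — is verifying that the symmetry reduction from $(k,\ell,r)$ to $(0,1,1)$ is genuinely legitimate here, i.e.\ that all the objects entering Proposition \ref{prop:before:cor:Prop4} (the weight $w_{k,n,v}(\lambda)$ from \eqref{eq:decay-weight}, the kernel $\tilde S_k^{n,v}$ from \eqref{eq:tildeS}, the intervals $\underline I_{k,r}^{p'}$, and the mass/size conditions defining $\mathfrak u$) transform consistently under the joint translation-dilation-modulation symmetries, exactly as was invoked at the start of Section \ref{Tr} for the single-scale estimate \eqref{sgsc}. Once that bookkeeping is in place — scaling $x\mapsto 2^{-k}x$, relabeling $p_r$, and using that $\lambda$ ranges over an arbitrary measurable function so the rescaled $\lambda$ is again arbitrary in the appropriate range — Proposition \ref{prop:before:cor:Prop4} is literally Corollary \ref{cor:uniform} restricted to a sub-collection of indices, and Corollary \ref{cor:Prop4} follows by feeding it through \eqref{eq:def:S_n,v,pFinal} together with the tail estimates of Section \ref{passage}. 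No new analytic input (no further $TT^*$, no new exponential-sum bound) is needed beyond what Section \ref{sec:onescale} already supplies; the content is entirely in correctly transplanting the one-scale statement into the present indexing.
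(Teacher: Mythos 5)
Your argument follows the paper's own route exactly: Corollary \ref{cor:Prop4} is deduced from Proposition \ref{prop:before:cor:Prop4} combined with the identification \eqref{eq:def:S_n,v,pFinal} (legitimate because the extra phase in \eqref{eq:def:underline:SrelS} is unimodular and $\check{\varphi}(2^{\frac{am}{2}-k}\,\cdot-p_r)\approx 1$ on $I_k^{p_r}$, as arranged in Section \ref{passage}, with the $(j_1,j_2)$- and $r'$-tails summed via their fast decay), and Proposition \ref{prop:before:cor:Prop4} is in turn obtained by rescaling Corollary \ref{cor:uniform} to scale $k$ and restricting to the smaller index set $\mathfrak{u}$, which is precisely how the paper presents it. The one bookkeeping slip is your remark that the prefactor $2^{\frac{am}{2}}$ in Corollary \ref{cor:uniform} is ``absorbed by cardinality'': the refinement $\sum_{p'}\int_{\underline{I}_{k,r}^{p'}}=\sum_{p}\int_{I_k^{p_r}}$ is an exact tiling with no factor to absorb, and that $2^{\frac{am}{2}}$ simply converts, after taking square roots, into the $2^{-\frac{am}{4}}$ on the right-hand side of Proposition \ref{prop:before:cor:Prop4} — which is the bound you in fact use, so the conclusion stands.
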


As a final remark, notice that the estimate \eqref{eq:estimate:we:need} -- corresponding to the heavy mass uniform case $\mathfrak{u}$ -- is an improvement over the rougher bound in \eqref{eq:estimate:we:need:no:weight} due to the presence of the extra decaying factor $2^{-\delta_1\, a m}$.

\subsubsection{Final form of the estimates} \label{paragraph:6.4.3}

Putting together all the information collected so far (using \eqref{eq:estimate:we:need:no:weight} for $\mathfrak{l},\mathfrak{c}$ and \eqref{eq:estimate:we:need} for $\mathfrak{u}$), we obtain that:
\begingroup
\allowdisplaybreaks
\begin{align}
& \begin{aligned} \label{eq:one:tile:light:bis0}
&\underline{\L}_{m, k,\ell,r}^{\mathfrak{l}}(f, g, h) \lesssim 2^{- \frac{am}{4}} 2^{-\frac{k}{2}}  \|f_{\ell+1,r}\|_{L^2} \, \|g_{\ell-1,r}\|_{L^2} \\
& \hspace{5em} \cdot \Big( \sum_{(p', n, v) \in \mathfrak{l}}  \big| \langle h \, \rho_{am-ak}(\lambda(\cdot)) w_{k, n, v}^e(\l), \Phi_{am-k,2 \ell-1}^{2^{am}(r-1)+p'}  \rangle \big|^2 \Big)^{1/2},
\end{aligned} \\
&{} \nonumber \\
&\begin{aligned} \label{eq:one:tile:unif:bis0}
&\underline{\L}_{m, k,\ell,r}^{\mathfrak{u}}(f, g, h) \lesssim 2^{-\frac{ \delta_1 a m}{2}}  2^{- \frac{am}{4}} 2^{-\frac{k}{2}}  \|f_{\ell+1,r}\|_{L^2} \, \|g_{\ell-1,r}\|_{L^2} \\
& \hspace{5em} \cdot \Big( \sum_{(p', n, v) \in \mathfrak{u}}  \big| \langle h \, \rho_{am-ak}(\lambda(\cdot)) w_{k, n, v}^e(\l), \Phi_{am-k,2\ell-1}^{ 2^{am}(r-1)+p'}  \rangle \big|^2 \Big)^{1/2},
\end{aligned}\\
&{} \nonumber \\
& \begin{aligned} \label{eq:one:tile:cluster:bis0}
&\underline{\L}_{m, k,\ell,r}^{\mathfrak{c}}(f, g, h) \lesssim 2^{- \frac{am}{2}} 2^{-\frac{k}{2}}  \|f_{ \ell+1,r}\|_{L^2} \, \|g_{\ell-1,r}\|_{L^2} \\
& \hspace{5em} \cdot \Big( \sum_{(p', n, v) \in \mathfrak{c}}  \big| \langle h \, \rho_{am-ak}(\lambda(\cdot)) w_{k, n, v}^e(\l), \Phi_{am-k,2 \ell-1}^{ 2^{am}(r-1)+p'}  \rangle \big|^2 \Big)^{1/2}.
\end{aligned}
\end{align}
\endgroup

Estimates \eqref{eq:one:tile:light:bis0}, \eqref{eq:one:tile:unif:bis0} and \eqref{eq:one:tile:cluster:bis0} are single-scale estimates with a sharp time-frequency localization for all three functions $f,g,h$, as desired. It will remain to incorporate them into the BHT time-frequency analysis (the low resolution model), which will be the subject of the next section. With that in mind, it will be convenient to switch to the tri-tiles notation introduced in Section \ref{LR}. In particular, we refer the reader to \eqref{eq:def:time:frequency:regions} for a dictionary that relates the indices $(k,\ell,r)$ to the time-frequency regions $P\in \BHT$.

We end this section with the following remarks. 

\begin{remark}
\label{remark:shift}
As we pointed out previously, in the two previous sections we have focused only on the main term corresponding to $j=0$. Let us briefly explain the modifications that need to be performed in order to deal with an arbitrary value of the shift $j\in\mathbb Z$. This adaptation essentially requires a ``translation'' of the previous analysis from the interval ${\underline{I}}_{k,r}^{p'}$ to the shifted interval
\[ {\underline{I}}_{k,r}^{p',(j)}:= {\underline{I}}_{k,r}^{p'} + j 2^{am \over 2}|{\underline{I}}_{k,r}^{p'}|. \]

\begin{itemize}
\item The mass is then defined by
$$  \frac{1}{|\underline{I}_{k, r}^{p'}|} \int_{\underline{I}_{k, r}^{p',(j)}} w_{k, n, v}(\lambda)(x) \, dx.$$
\item Proposition \ref{prop:before:cor:Prop4} and Corollary \ref{cor:Prop4} are still valid with the only modification of replacing the weight $w_{k,n,v}(\lambda)$ by its shifted version $w_{k,n,v}(\lambda)(\cdot+j2^{k-{am \over 2}})$. Such an operation has no effect on the different proofs. In particular, shifting the weight is equivalent to shifting the phase function $\lambda$ and since the estimates in the proof of Corollary \ref{cor:uniform} do not depend on the stopping time $\lambda$, we can safely apply them.\footnote{Indeed, we stress that the parameter $\delta_1$ and the definition of ``uniform'' distribution for $f,g$ involving the parameter $\mu$ do not depend on the stopping time $\lambda$.}
\item Everything will bring us to similar estimates as \eqref{eq:one:tile:light:bis0}-\eqref{eq:one:tile:cluster:bis0}, with the only modification of replacing the wave-packet function $\Phi_{am-k,2 \ell-1}^{ 2^{am}(r-1)+p'}$ by its shifted version $\Phi_{am-k,2 \ell-1,j}^{ 2^{am}(r-1)+p'}$, as defined in \eqref{suppvstail}.
\end{itemize}
\end{remark}

As a conclusion of the previous remark, we emphasize that for every $j$, there is a notion of mass and so of subsets $\mathfrak{l},\mathfrak{u},\mathfrak{c}$ and putting together all these contributions lead us to the following estimates\footnote{Of course in the statement below $\underline{\L}_{m, k,\ell,r}$ refers to the original form in \eqref{puttogeth} while $\underline{\L}_{m, k,\ell,r}^{*}$, $\{*\}\in\{\mathfrak{l}, \mathfrak{u}, \mathfrak{c}\}$ stand for the obvious global, modified quantities.}:

\begingroup
\allowdisplaybreaks
\begin{align}
& \begin{aligned} \label{eq:one:tile:light:bis}
&\underline{\L}_{m, k,\ell,r}^{\mathfrak{l}}(f, g, h) \lesssim 2^{- \frac{am}{4}} 2^{-\frac{k}{2}}  \|f_{\ell+1,r}\|_{L^2} \, \|g_{\ell-1,r}\|_{L^2} \\
& \hspace{3em} \cdot \sum_{j \in \Z} \Big( \sum_{(p', n, v) \in \mathfrak{l}}  \big| \langle h \, \rho_{am-ak}(\lambda(\cdot)) w_{k, n, v}^e(\l), \Phi_{am-k,2 \ell-1,j}^{2^{am}(r-1)+p'}  \rangle \big|^2 \Big)^{1/2},
\end{aligned}\\
&{} \nonumber \\
& \begin{aligned} \label{eq:one:tile:unif:bis}
& \underline{\L}_{m, k,\ell,r}^{\mathfrak{u}}(f, g, h) \lesssim 2^{-\frac{ \delta_1 a m}{2}}  2^{- \frac{am}{4}} 2^{-\frac{k}{2}}  \|f_{\ell+1,r}\|_{L^2} \, \|g_{\ell-1,r}\|_{L^2} \\
& \hspace{3em} \cdot \sum_{j \in \Z} \Big( \sum_{(p', n, v) \in \mathfrak{u}}  \big| \langle h \, \rho_{am-ak}(\lambda(\cdot)) w_{k, n, v}^e(\l), \Phi_{am-k,2\ell-1,j}^{ 2^{am}(r-1)+p'}  \rangle \big|^2 \Big)^{1/2},
\end{aligned} \\
&{} \nonumber \\
& \begin{aligned} \label{eq:one:tile:cluster:bis}
& \underline{\L}_{m, k,\ell,r}^{\mathfrak{c}}(f, g, h) \lesssim 2^{- \frac{am}{2}} 2^{-\frac{k}{2}}  \|f_{ \ell+1,r}\|_{L^2} \, \|g_{\ell-1,r}\|_{L^2} \\
& \hspace{3em} \cdot \sum_{j \in \Z} \Big( \sum_{(p', n, v) \in \mathfrak{c}}  \big| \langle h \, \rho_{am-ak}(\lambda(\cdot)) w_{k, n, v}^e(\l), \Phi_{am-k,2 \ell-1,j}^{ 2^{am}(r-1)+p'}  \rangle \big|^2 \Big)^{1/2}.
\end{aligned} 
\end{align}
\endgroup

\begin{remark}
\label{remark:shifts}
Due to the extra decay in $1+|j|$ enjoyed by the wave-packets $ \Phi_{am-k,2 \ell-1,j}^{ 2^{am}(r-1)+p'}$, the right hand side of \eqref{eq:one:tile:light:bis}-\eqref{eq:one:tile:cluster:bis} should be regarded as a superposition of similar terms with fast decaying amplitudes in $j$, thus making it enough to carefully consider the case $j=0$. Indeed, here are the main properties of the functions $\Phi_{am-k,2 \ell-1,j}^{ 2^{am}(r-1)+p'}$ given by \eqref{suppvstail}: for an arbitrarily large exponent $M>0$,
\begin{enumerate}[label=(W\arabic*), leftmargin=*]
\item \label{W1}  the function $\Phi_{am-k,2 \ell-1,j}^{ 2^{am}(r-1)+p'}$ is $L^2$-normalized, supported in space on $\underline{I}_{k, r}^{p',(j)}$, adapted in frequency at the scale $2^{am-k}$ around $[2^{am-k}(2\ell), 2^{am-k}(2\ell+1)]$ and it satisfies the pointwise inequality
\[
\big| \Phi_{am-k,2 \ell-1,j}^{ 2^{am}(r-1)+p'}(x) \big|  \lesssim_M (1+|j|)^{-M} |\underline{I}_{k, r}^{p',(j)}|^{-{1 \over 2}} \one_{\underline{I}_{k, r}^{p',(j)}}(x);
\]
\item \label{W2} for every $j,k,\ell,r$, the functions $\Phi_{am-k,2 \ell-1,j}^{ 2^{am}(r-1)+p'}$ are compactly supported\footnote{Notice that, thanks to the decay in $j$, we trade the characteristic function of $I_{k}^{p_r}+j|I_{k}^{p_r}|$ for a bump function adapted to $I_{k}^{p_r}$.} in $I_{k}^{p_r}+j|I_{k}^{p_r}|$ for $p'\sim_{\lfloor\rfloor_p} 2^{am}$ and uniformly in $j$, we have
$$ \sum_{p'\sim_{\lfloor\rfloor_p} 2^{am}} |\Phi_{am-k,2 \ell-1,j}^{ 2^{am}(r-1)+p'}(x)| \lesssim 2^{\frac{am-k}{2}} (1+|j|)^{-{M\over 2}} \ci_{I_{k}^{p_r}}(x);$$
\item \label{W3} the light mass case is given by the condition
$$ \frac{1}{|\underline{I}_{k, r}^{p'}|} \int_{\underline{I}_{k, r}^{p',(j)}} w_{k, n, v}(\lambda)(x) \,  dx < 2^{-\delta_1 a m};$$
in particular, one deduces that for such indices
$$ \frac{1}{|\underline{I}_{k, r}^{p'}|} \| w_{k,n,v} \Phi_{am-k,2 \ell-1,j}^{ 2^{am}(r-1)+p'}\|_{L^1} \lesssim_M 2^{\frac{am-k}{2}} (1+|j|)^{-M} 2^{-\delta_1 am};$$
\item \label{W4}
for $k,r$ fixed, the functions $(\Phi_{am-k,2 \ell-1,j}^{ 2^{am}(r-1)+p'})_{\ell,p'}$ are almost orthogonal and compactly supported within $j I_{k,r}$ with $I_{k,r}:=[r2^k,(r+1)2^k]$: for every function $F \in L^2$ then
$$ \sum_{\ell\in{\mathbb Z}} \sum_{p'\sim 2^{am}} |\langle F, \Phi_{am-k,2 \ell-1,j}^{ 2^{am}(r-1)+p'}\rangle|^2 \lesssim_M (1+|j|)^{-M} \|F \ci_{I_{k,r}}\|_{L^2}^2.$$
\end{enumerate}

Besides the pointwise decay illustrated in \ref{W1}, the light mass distribution condition \ref{W3} and almost orthogonality of the wave-packets $(\Phi_{am-k,2 \ell-1,j}^{ 2^{am}(r-1)+p'})_{\ell,p'}$ (for $k$ and $r$ fixed) play an important role in the time-frequency analysis that will be performed in the next section. Since we cannot use the spatial information before making use of the frequency information, \footnote{After all, this section's purpose is to obtain a time-frequency localization for the dualizing function $h$ as well.} the analysis described in the next section should be implemented for any fixed value of $j$. Once the phase of estimating the sizes and energies reached, we can finally take advantage of the extra spatial decay $(1+|j|)^{-{M \over 2}}$ -- see Remark \ref{remark:decay}. A careful and rather standard inspection will reveal that indeed it is enough to consider the case $j=0$, as the other cases follow similarly (with the exact same localizations, since they are encoded into bump functions adapted to the right scale, allowing to incorporate the $j$-shift -- see Remark \ref{remark:decay}).
\end{remark}
\section{The low resolution, multi-scale analysis: modulation invariance analysis}
\label{sec:bilinear:analysis}
With the single scale estimate simultaneously featuring bounds with exponential decay in $m$ and securing well-localized time-frequency information for all the three functions $f, g$, and $h$ of the associated trilinear form, we are finally set to perform the multi-scale time-frequency analysis characteristic to multilinear operators having the frequency portrait of BHT. This will allow us to prove Theorem \ref{mdec}, from which Theorems \ref{main_theorem_pxq_to_r} and \ref{main} follow.

With $a\in (0,\infty)\setminus\{1,2\}$ and $m\in\N$ fixed for the reminder of the section, we will proceed as follows:
\begin{itemize}
\item the single-scale estimates of Section \ref{paragraph:6.4.3} will be reformulated in terms of tri-tiles, leading to the study of certain trilinear forms $\Lambda_{\BHT}^*(f, g, h)$ associated to the rank-1 family $\BHT$;
\item to treat these trilinear forms, we adapt classical time-frequency methods to our context. In particular, in our situation both the rank-1 collection $\BHT$ of expanded tri-tiles of area $2^{am}$ and the finer collection of tri-tiles $\{ s: \hat s \in \BHT \}$ of area $1$ will be involved in this analysis;
\item as a consequence of the above we will deduce the boundedness of $T_{m}$ in the strict local-$L^2$ range ($2<p, q, r'<\infty$);
\item finally, using a suitable interpolation procedure -- which consists in localizing the trilinear form information in space and interpolating between maximal sizes -- we extend the boundedness of $T_{m}$ to the range of H\"older exponents with $2<p, q \leq \infty$ and $1<r<\infty$ -- thus proving Theorem \ref{mdec}.
\end{itemize}

\subsection{Reformulation of the estimates in Section \ref{paragraph:6.4.3} in terms of $\BHT$ tri-tiles }
As we move towards the time-frequency analysis part of our study, we need to restate the estimates \eqref{eq:one:tile:light:bis}-\eqref{eq:one:tile:cluster:bis} in the alternative tri-tile notation. This is enabled by the unique correspondence between the triple of parameters $(k,\ell,r)$ and the triple of time-frequency regions $P=(P_1, P_2,P_3)$ introduced in Section \ref{subsubsec:lowmodel}. However, we keep the parameters $p' \sim 2^{am}, n, v \sim 2^{am \over 2}$, as they allow to define the light, uniform and clustered components associated to each $\BHT$ tri-tile. The reader is invited to revisit the parametric and tile formulation of $\underline{\L}_{m}$, and the other definitions in Section \ref{LR}.

Let $(k, \ell, r)$ be a triple of integers and let $P=(P_1, P_2,P_3)$ be the corresponding tri-tile, as defined in \eqref{eq:tri:tiles:def:S4}. Then we write $\rho_{|P|}(\lambda(x))$ for $\rho_{am-ak}(\lambda(x))$, where $|P|$ is used here in order to stress that $\rho$ only depends on the scale of the tri-tile $P$ and not on its time-frequency location; similarly, we set
\begin{equation}
\label{def:eq:weight-oscill:P}
w_{k, n, v}^{e}(x)=w_{|P|, n, v}^e (\lambda)(x):=  \int_{\R} \psi( \xi + 2v ) e^{i \, n \, \xi} e^{i c_a 2^{ ({am  \over 2}) a'}{2} \xi^{a'} |I_P|^{-a'} \lambda(x)^{- {1 \over {a-1}}} } \psi \big( \frac{\xi}{2^{am \over 2}} \big)d \xi
\end{equation}
and 
\[
w_{k, n, v}(\lambda)(x):=w_{|P|, n, v}(\lambda)(x)
\]
defined in \eqref{eq:decay-weight} is the positive weight measuring the decay in $\rho_{|P|}(\lambda(x)) \cdot w_{|P|, n, v}^e (\lambda)(x)$. Finally, we let $\underline{I}_{P}^{p'}$ stand for $\underline{I}_{k,r}^{p'}$, one of the $2^{am}$ intervals of length $2^{-am}|I_P|$ partitioning $I_P$.

The single scale trilinear form $\underline{\L}_{m,k,\ell,r}$ associated to the scale $k$, frequency parameter $\ell$ and spatial parameter $r$ is identified with $\underline{\L}_{m,P}$ and further expressed as
\[
 \underline{\L}_{m,P} =  \underline{\L}_{m,P}^{\mathfrak{l}} + \underline{\L}_{m,P}^{\mathfrak{u}} + \underline{\L}_{m,P}^{\mathfrak{c}},
\]
with the right-hand side defined in the obvious way through Definition \ref{def:indices}.

The function $f_{\ell+1, r}$ becomes
\begin{equation}
\label{eq:def:f_P}
f_{P}:=\sum_{s : \hat s=P} \langle f, \phi_{s_1}  \rangle  \phi_{s_1}
\end{equation}
while $g_{\ell-1, r}$ is expressed as
\begin{equation}
\label{eq:def:g_P}
g_{P}:=\sum_{s : \hat s=P} \langle g, \phi_{s_2}  \rangle  \phi_{s_2}.
\end{equation}

We have
\begin{equation}
\label{eq:def:f(P):tiles}
\|f_{P}\|_{L^2} \lesssim f(P_1):= \Big(  \sum_{s_1: \hat s_1 =P_1} |\langle f, \phi_{s_1}  \rangle|^2 \Big)^\frac{1}{2},
\end{equation}
and
\begin{equation}
\label{eq:def:g(P):tiles}
\|g_{P}\|_{L^2} \lesssim g(P_2):= \Big(  \sum_{s_2: \hat s_2 =P_2} |\langle g, \phi_{s_2}  \rangle|^2 \Big)^\frac{1}{2}.
\end{equation}

If we set $\Psi_{P_3}^{p'}:= \Psi_{am-k,2 \ell-1}^{2^{am}(r-1)+p'}$, the quantity
\[
\Big(  \frac{1}{2^{\frac{am}{2}}} \sum_{(p', n, v) \in *}  \big| \langle h \, \rho_{am-ak}(\lambda(\cdot)) w_{k, n, v}^e(\l), \Psi_{am-k,2 \ell-1}^{2^{am}(r-1)+p'}  \rangle \big|^2  \Big)^{1/2}
\]
appearing in \eqref{eq:one:tile:light:bis}-\eqref{eq:one:tile:cluster:bis}, reads as
\begin{equation}
\label{eq:def:h*(P):tile}
h_\ast(P_3):= \Big( \frac{1}{2^{am \over 2}} \sum_{(n, v, p') \in \ast}  \big| \langle h \, \rho_{|P|}(\lambda(\cdot)) w_{|P|, n, v}^e(\l), \Psi_{P_3}^{p'}  \rangle \big|^2  \Big)^\frac{1}{2},
\end{equation}
where $\ast \in \{  \mathfrak{l}, \mathfrak{u}, \mathfrak{c}\}$.

Using the above dictionary we can now rephrase the estimates obtained in Section \ref{paragraph:6.4.3} as
\smallskip
\begin{proposition} \label{refinedsgscale}
For any $P \in \BHT$ we have
\begin{alignat}{3}
\label{eq:esti:L^P:light}
\underline{\L}_{m,P}^{\mathfrak{l}}(f, g, h) &\lesssim &&|I_P|^{-1/2} f(P_1)g(P_2)h_{\mathfrak{l}}(P_3),\\
\label{eq:esti:L^P:uniform}
\underline{\L}_{m,P}^{\mathfrak{u}}(f, g, h) &\lesssim 2^{- \frac{\delta_1 a m}{2}} &&|I_P|^{-1/2} f(P_1)g(P_2)h_{\mathfrak{u}}(P_3),\\
\label{eq:esti:L^P:cluster}
\underline{\L}_{m,P}^{\mathfrak{c}}(f, g, h) &\lesssim && |I_P|^{-1/2} f(P_1)g(P_2)h_{\mathfrak{c}}(P_3).
\end{alignat}
\end{proposition}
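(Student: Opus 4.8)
\textbf{Proof plan for Proposition \ref{refinedsgscale}.} The three estimates \eqref{eq:esti:L^P:light}, \eqref{eq:esti:L^P:uniform} and \eqref{eq:esti:L^P:cluster} are precisely the tri-tile reformulations of the single-scale bounds \eqref{eq:one:tile:light:bis}, \eqref{eq:one:tile:unif:bis} and \eqref{eq:one:tile:cluster:bis} that were already established in Section \ref{paragraph:6.4.3}. Thus the proof amounts to translating those estimates into the language of Section \ref{LR} and verifying that the definitions \eqref{eq:def:f(P):tiles}, \eqref{eq:def:g(P):tiles}, \eqref{eq:def:h*(P):tile} of $f(P_1)$, $g(P_2)$ and $h_\ast(P_3)$ exactly match the $\ell^2$-quantities appearing on the right-hand sides of \eqref{eq:one:tile:light:bis}--\eqref{eq:one:tile:cluster:bis}. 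Concretely: first I would fix $P=P(k,\ell,r)\in\BHT$, recall from \eqref{eq:def:time:frequency:regions} and \eqref{sfat} the dictionary relating the parameters $(k,\ell,r)$ to the tri-tile $P$ and the finer parameters $(u,v,n,p)$ to the subordinate tri-tiles $s\in\S(P)$, and recall from \eqref{def:wave:p:f:g} that $\phi_{s_1}=\check\varphi_{\frac{am}{2}-k,\ell}^{u-v,p_r-n}$, $\phi_{s_2}=\check\varphi_{\frac{am}{2}-k,\ell}^{u+v,p_r+n}$. Then $f_{\ell+1,r}$ from \eqref{eq:def:S_n,v,pFinalprep} is by construction the projection $f_P$ of \eqref{eq:def:f_P}, and orthogonality of the Gabor wave-packets gives $\|f_{\ell+1,r}\|_{L^2}=\|f_P\|_{L^2}\lesssim f(P_1)$, which is \eqref{eq:def:f(P):tiles}; similarly for $g$.

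The second step is to identify the $h$-factor. On the right-hand side of each of \eqref{eq:one:tile:light:bis}--\eqref{eq:one:tile:cluster:bis} we have a $j$-sum over quantities $\big(\sum_{(p',n,v)\in\ast}|\langle h\,\rho_{am-ak}(\lambda)w_{k,n,v}^e(\lambda),\Phi_{am-k,2\ell-1,j}^{2^{am}(r-1)+p'}\rangle|^2\big)^{1/2}$; by Remark \ref{remark:shifts} and the pointwise/almost-orthogonality properties \ref{W1}--\ref{W4} of the shifted wave-packets $\Phi_{\cdots,j}^{\cdots}$, the $j\ne0$ contributions form a $(1+|j|)^{-M/2}$-summable superposition of terms of the same shape as the $j=0$ term, so without loss of generality one keeps only $j=0$, writes $\Psi_{P_3}^{p'}$ for $\Phi_{am-k,2\ell-1,0}^{2^{am}(r-1)+p'}$, and recognizes the surviving quantity — after inserting the normalizing factor $2^{-am/2}$ — as exactly $h_\ast(P_3)$ in \eqref{eq:def:h*(P):tile}, with $\ast=\mathfrak{l},\mathfrak{u},\mathfrak{c}$ in the three cases respectively. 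The scale bookkeeping is then routine: in \eqref{eq:one:tile:light:bis}--\eqref{eq:one:tile:cluster:bis} the prefactors are $2^{-am/4}2^{-k/2}$, $2^{-\delta_1 am/2}2^{-am/4}2^{-k/2}$ and $2^{-am/2}2^{-k/2}$ respectively; combining the two $2^{-am/4}$ (resp.\ $2^{-am/2}$) factors coming from the one-scale $\underline S$-bound and from the $2^{-am/2}$ inside $h_\ast(P_3)$ with the geometric identity $|I_P|=2^k$ (so that $2^{-k/2}=|I_P|^{-1/2}$), one obtains precisely the stated $|I_P|^{-1/2}$ homogeneity. For the clustered case the extra $2^{-am/4}$ relative to $2^{-am/4}$ present in \eqref{eq:one:tile:cluster:bis} is not needed in \eqref{eq:esti:L^P:cluster} and is simply discarded (it gets absorbed into the structure of $h_\mathfrak{c}(P_3)$, which already encodes $m$-decay via the low cardinality of $\bar{\mathfrak c}_1$, as will be exploited in Section \ref{sec:bilinear:analysis}); similarly for the light case.

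The only genuinely substantive points — and hence the place I would be most careful — are (i) making rigorous the passage from the heuristic identity \eqref{eq:time-freq-localization} to the inequality \eqref{eq:est:w:aver}, together with the reduction via Observation \ref{observation:extra:terms:reassurance} that the correction operators $\underline S^{n,v,p',\nu,\nu'}_{k,\ell,r}$ satisfy the same $L^2\times L^2\to L^2$ bounds (Proposition \ref{cor:unweighted:est} and Corollary \ref{cor:Prop4}) as the main term, so that the whole $(\nu,\nu')$-superposition is absolutely summable; and (ii) invoking Corollary \ref{cor:uniform} (through Proposition \ref{prop:before:cor:Prop4} and Corollary \ref{cor:Prop4}) in the heavy-mass uniform case, which is where the $2^{-\delta_1 am/2}$ decay in \eqref{eq:esti:L^P:uniform} originates; here one must check that the mass lower bound $\aver{\underline I_{k,r}^{p'}}w_{k,n,v}(\lambda)\gtrsim 2^{-\delta_1 am}$ defining $\mathfrak u$ is what allows one to trade the weight $w_{k,n,v}$ in the Cauchy–Schwarz step \eqref{eq:one:tile:unif} for a loss of only $2^{\delta_1 am/2}$, which is then more than compensated by the $2^{-\delta_1 am}$ gain from Corollary \ref{cor:Prop4}. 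Everything else — the orthogonality computation in Proposition \ref{cor:unweighted:est}, the tri-tile dictionary, the $j$-shift summation — is bookkeeping already carried out in Sections \ref{sec:onescale}–\ref{sec:HR2}, so no new idea is required beyond assembling these pieces.
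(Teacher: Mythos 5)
Your proposal is correct and matches the paper's own treatment: the paper obtains Proposition \ref{refinedsgscale} precisely as the tri-tile reformulation of the localized single-scale estimates \eqref{eq:one:tile:light:bis}--\eqref{eq:one:tile:cluster:bis}, using the dictionary $f(P_1)$, $g(P_2)$, $h_\ast(P_3)$ together with the normalization $2^{-am/2}$ inside $h_\ast(P_3)$ and $|I_P|=2^k$, exactly as in your bookkeeping (including discarding the surplus decay in the clustered case and sourcing the $2^{-\delta_1 am/2}$ from Corollary \ref{cor:Prop4} against the $2^{\delta_1 am/2}$ loss in the weighted Cauchy--Schwarz step). No further comment is needed.
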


Before moving on to formulate the multi-scale problem, we recall the properties of the wave-packets capturing the information associated to the tri-tile $P=(P_1, P_2, P_3)$:
\begin{enumerate}[leftmargin=*]
\item[$\bullet$] for $j=1, 2$ the wave-packets $\phi_{s_j}$, defined also in \eqref{def:wave:p:f:g}, are $L^2$-normalized, compactly supported in frequency on $\omega_{s_j}$, and adapted in space to $I_{s_j}$; consequently
\[
|\phi_{s_j}(x)| \lesssim \frac{1}{|I_{s_j}|^{1 \over 2}} \ci_{I_{s_j}(x)},
\]
and whenever $s_j, s'_j$ are so that  $s_j \cap s'_j \neq \emptyset$ with $|I_{s'_j}|\leq |I_{s_j}|$,
\[
 \big|\langle \phi_{s_j}, \phi_{s'_j}  \rangle \big| \lesssim \frac{|I_{s'_j}|^{1 \over 2}}{|I_{s_j}|^{1 \over 2}}  \Big( 1+ \frac{\dist(I_{s_j}, I_{s'_j})}{|I_{s_j}|}    \Big)^{-50}.
\]

\item[$\bullet$] the information for $h$ is captured through the wave-packets $\Psi_{P_3}^{p'}$; for convenience, we reformulate in terms of tiles the properties of these wave-packets which were initially\footnote{Due to Remark \ref{remark:shifts}, we only focus on the case $j=0$.} stated via \ref{W1}--\ref{W4} in Section \ref{paragraph:6.4.3}:
\begin{enumerate}[label=(WP\arabic*)]
\item \label{WP1}
 the function $\Psi_{P_3}^{p'}$ is $L^2$-normalized, adapted in frequency to $\pmb{\omega}_{P_3}$ and supported in space on $\underline{I}_{P}^{p'}$;

\item \label{WP2} for every $P$, the functions $\big\{ \Psi_{P_3}^{p'} \big\}_{p'}$ obey the pointwise estimates
\[
|\Psi_{P_3}^{p'}(x)| \lesssim |\underline{I}_{P}^{p'}|^{-{1 \over 2}} \one_{\underline{I}_{P}^{p'}}(x) \quad \text{and} \quad  \sum_{p'\sim 2^{am}}|\Psi_{P_3}^{p'}(x)| \lesssim  |I_{P}|^{-\frac{1}{2}} 2^{am \over 2} \one_{I_P}(x);
\]
\item \label{WP3} for a tri-tile $P$ with $|I_P|=2^k$ and $(p',n,v)$ satisfying the light mass condition, we have
$$ \frac{1}{|\underline{I}_{P}^{p'}|} \| w_{k,n,v} \Psi_{P_3}^{p'}\|_{L^1} < |\underline{I}_{P}^{p'}|^{-\frac{1}{2}}  2^{-\delta_1 am}.$$
\item \label{WP4} for any fixed interval $I$, the functions $(\Psi_{I \times \pmb{\omega}}^{p'})_{\pmb{\omega},p'}$ with $|I||\pmb{\omega}|=2^{am}$ are almost orthogonal and adapted to $I$: for every function $F \in L^2$,
$$ \sum_{\pmb{\omega}: |\pmb{\omega}|=2^{am}|I|^{-1}} \ \sum_{p'\sim 2^{am}} |\langle F, \Psi_{I\times \pmb{\omega}}^{p'}\rangle|^2 \lesssim \|F \cdot \one_{I}\|_{L^2}^2.$$
\end{enumerate}
\end{enumerate}

We refer the reader to Figure \ref{figure:5} in Section \ref{sec:HR2} (as well as to the figures in Section \ref{sec:discretization}), for an illustration of the phase portraits associated to these wave-packets, in relation to the expanded tri-tiles $P=(P_1, P_2, P_3)$; in particular, notice that $\Psi_{P_3}^{p'}$ is not adapted to $s_3$ -- as defined in \eqref{eq:def:time:frequency:regions} -- but to $\underline{I}_{P}^{p'} \times \pmb{\omega}_{P_3}$.

As a consequence of Proposition \ref{refinedsgscale}, for $\ast \in \{  \mathfrak{l}, \mathfrak{u}, \mathfrak{c}\}$, we define\footnote{Of course, the trilinear forms $\Lambda_{\BHT}^{\ast}(f,g,h)$ depend implicitly on $m$ since in particular the very construction of the collection $\BHT$ depends on $m$.} the positive trilinear forms
\begin{equation}
\label{eq:def:Lambda+}
\begin{aligned}
\Lambda_{\BHT}^{\ast}(f,g,h):= \sum_{P \in \BHT} |I_P|^{-\frac{1}{2}} f(P_1) g(P_2) h_{\ast}(P_3).
\end{aligned}
\end{equation}
Since \eqref{eq:esti:L^P:light}-\eqref{eq:esti:L^P:cluster} imply

\begin{equation}
\label{eq:decomposition:Lm}
|\underline{\L}_{m}(f, g, h)| \lesssim 2^{-\frac{\delta_1}{2}am}\Lambda_{\BHT}^{\mathfrak{u}}(f,g,h) + \Lambda_{\BHT}^{\mathfrak{l}}(f,g,h)+\Lambda_{\BHT}^{\mathfrak{c}}(f,g,h),
\end{equation}
the boundedness of $T_m$ in the local-$L^2$ range follows once we prove

\begin{proposition} \label{prop:aim} For every $(p,q,r')$ such that $2<p,q , r' <\infty$ and $\frac{1}{r}=\frac{1}{p}+\frac{1}{q}$, there exists some $\delta_2:=\delta_2(p,q)>0$ such that
\begin{alignat*}{3}
\Lambda_{\BHT}^{\mathfrak{l}}(f, g, h) & \lesssim_{a,p,q} 2^{-\delta_2 a m} &&\|f\|_{L^p} \|g\|_{L^q} \|h\|_{L^{r'}}, \\
\Lambda_{\BHT}^{\mathfrak{u}}(f, g, h) & \lesssim_{a,p,q} &&\|f\|_{L^p} \|g\|_{L^q} \|h\|_{L^{r'}}, \\
\Lambda_{\BHT}^{\mathfrak{c}}(f, g, h) &\lesssim_{a,p,q} 2^{-\delta_2 a m} &&\|f\|_{L^p} \|g\|_{L^q} \|h\|_{L^{r'}}.
\end{alignat*}
\end{proposition}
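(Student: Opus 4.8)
The plan is to estimate each of the three trilinear forms $\Lambda_{\BHT}^{\ast}(f,g,h)$ by invoking the abstract size/energy bound of Proposition \ref{prop:size/energy0} with the sequences $a_{P_1}=f(P_1)$, $a_{P_2}=g(P_2)$, $a_{P_3}=h_{\ast}(P_3)$, and then showing that the relevant sizes and energies are under control, with the $m$-decay appearing either from the extra prefactor already present in \eqref{eq:esti:L^P:uniform} (for $\ast=\mathfrak{u}$), or from an exponential gain inside the third-slot \emph{size} (for $\ast\in\{\mathfrak{l},\mathfrak{c}\}$, via \eqref{eq:est:size-h-0} and \eqref{eq:est:size-h:cluster0}). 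The first step is to record the standard size/energy estimates for the first two slots: since $f(P_1)$ and $g(P_2)$ are built from honest inner products $\langle f,\phi_{s_1}\rangle$, $\langle g,\phi_{s_2}\rangle$ with area-one wave-packets $\phi_{s_j}$ obeying the usual adaptedness and almost-orthogonality properties listed after Proposition \ref{refinedsgscale}, the classical John--Nirenberg-type size bounds give $\size_1^{\P}(\{f(P_1)\})\lesssim \sup_{P\in\P}|I_P|^{-1/2}\|f\cdot\ci_{I_P}\|_{L^2}$ and, together with the $L^2\to L^2$ boundedness of the maximal/square-function operators attached to these wave-packets, the energy bound $\energy_1^{\P}(\{f(P_1)\})\lesssim \|f\|_{L^2}$, and likewise for $g$ in the second slot. (One has to be mildly careful that the wave-packets here carry the extra $2^{am/2}$-dilation factor built into $\BHT$, but the rank-1 structure recorded in Section \ref{LR} is exactly what makes the standard tree-counting argument go through verbatim; this is the point where I would cite \cite{lt1,lt2,MTTBiest2} and adapt rather than reprove.)

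The second step is the third slot. Here I would prove the three size estimates \eqref{eq:est:size-h+0}, \eqref{eq:est:size-h-0}, \eqref{eq:est:size-h:cluster0} together with an energy bound $\energy_3^{\P}(\{h_{\ast}(P_3)\})\lesssim \|h\|_{L^2}$ valid for all three choices of $\ast$. For a tree $\mathcal{T}\subset\P$ with top $P_{\mathcal{T}}$, the quantity $h_{\ast}(P_3)$ is an $\ell^2$-average (in the parameters $(p',n,v)\in\ast$) of inner products $\langle h\,\rho_{|P|}(\lambda)\,w^e_{|P|,n,v}(\lambda),\Psi_{P_3}^{p'}\rangle$; the crucial observation — already flagged after \eqref{eq:est:size-h+0} — is that the presence of the cutoff $\rho_{|P|}(\lambda(x))$ forces disjointness of the scales $|I_P|$ that can actually contribute at a given point $x$ (for fixed $x$, at most $O(1)$ values of $k$ have $\lambda(x)\sim 2^{a(m-k)}$), so the scales in a tree are essentially orthogonalized ``for free'' by $\lambda$. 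Combined with the almost-orthogonality property \ref{WP4} of the $\Psi^{p'}$'s at a fixed scale, this reduces the tree-size computation to a single-scale $L^2$ estimate, giving the $L^2$-control by $\sup_P|I_P|^{-1/2}\|h\cdot\ci_{I_P}\|_{L^2}$. The $m$-decay in the light case is then immediate from \ref{WP3}: each $L^1$-normalized weight $w_{k,n,v}$ tested on $\underline{I}_P^{p'}$ is $\lesssim 2^{-\delta_1 am}$ on that subinterval, which after Cauchy--Schwarz produces the factor $2^{-\delta_1 am/4}$ of \eqref{eq:est:size-h-0}. The $m$-decay in the clustered case comes instead from cardinality: by part ii) of Remark \ref{remark:boundedness:cluster:number}, for each $p'$ there are only $O(2^{(1-\mu)am/2})$ admissible $n$, so the $\ell^2$-sum defining $h_{\mathfrak{c}}(P_3)$ loses a factor $2^{-\mu am/4}$ relative to the full sum, yielding \eqref{eq:est:size-h:cluster0} with $\mu$ in place of $\delta_1$.

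The third step is the assembly. With sizes and energies in hand, Proposition \ref{prop:size/energy0} applied with exponents $(\theta_1,\theta_2,\theta_3)$ chosen so that $1-\theta_j$ matches the Lebesgue exponents $(1/p,1/q,1/r')$ — which is possible precisely because $2<p,q,r'<\infty$ forces each $1/p,1/q,1/r'\in(0,1/2)$, so $\theta_1+\theta_2+\theta_3=1$ with $\theta_j\in[0,1)$ — gives
\[
\Lambda_{\BHT}^{\ast}(f,g,h)\lesssim \prod_{j=1}^{3}\big(\size_j^{\BHT}\big)^{\theta_j}\big(\energy_j^{\BHT}\big)^{1-\theta_j}.
\]
Bounding $\size_j$ by the relevant $\sup_P|I_P|^{-1/2}\|\cdot\ci_{I_P}\|_{L^2}$ and using the trivial pointwise bound $\sup_P|I_P|^{-1/2}\|f\cdot\ci_{I_P}\|_{L^2}\lesssim\|Mf^2\|_{L^\infty}^{1/2}$ — which after a standard localization/summation over a Whitney decomposition of the support translates size control into $L^p$-control via the $\theta_j$-powers — together with $\energy_j\lesssim\|\cdot\|_{L^2}$, interpolates to $\|f\|_{L^p}\|g\|_{L^q}\|h\|_{L^{r'}}$ for $\Lambda_{\BHT}^{\mathfrak{u}}$, and gains the extra $2^{-\delta_1 am/4}$ (resp. $2^{-\mu am/4}$) raised to the positive power $\theta_3$ for $\Lambda_{\BHT}^{\mathfrak{l}}$ (resp. $\Lambda_{\BHT}^{\mathfrak{c}}$), giving $\delta_2=\delta_2(p,q)>0$. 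Finally, combining with \eqref{eq:decomposition:Lm} and the $2^{-\delta_1 am/2}$ in front of $\Lambda_{\BHT}^{\mathfrak{u}}$ there, one obtains $|\underline{\L}_m(f,g,h)|\lesssim 2^{-\delta am}\|f\|_{L^p}\|g\|_{L^q}\|h\|_{L^{r'}}$ in the strict local-$L^2$ range.

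\textbf{Main obstacle.} I expect the hard part to be the third-slot size estimates — not the light and clustered cases, which are genuinely elementary given \ref{WP3} and the cardinality bound, but the \emph{uniform} case bound \eqref{eq:est:size-h+0} and, more subtly, the proof that the collections $\{h_{\mathfrak{l}}(P_3)\},\{h_{\mathfrak{u}}(P_3)\},\{h_{\mathfrak{c}}(P_3)\}$ behave like legitimate ``$\ell^2$-coefficient'' families with respect to the expanded tri-tiles $P$ while the underlying wave-packets $\Psi_{P_3}^{p'}$ live at the much finer scale $2^{-am}|I_P|$ and are attached to $\underline{I}_P^{p'}\times\pmb{\omega}_{P_3}$ rather than to a genuine area-one tile $s_3\subset P_3$. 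The resolution is the scale-disjointness induced by $\rho_{|P|}(\lambda)$ noted above, which lets one run the tree estimate scale-by-scale and then sum trivially; but making this rigorous requires carefully tracking how the $j$-shifts of Remark \ref{remark:shifts}, the weight factor $w^e_{|P|,n,v}(\lambda)$, and the $2^{am/2}$-normalization interact, and checking that the energy for $h_{\ast}$ in the third slot really is controlled by $\|h\|_{L^2}$ uniformly in $\ast$ (the weight $w^e$ being $O(1)$-bounded pointwise makes this plausible but it must be verified). A secondary technical point is the passage from size control to $L^p$-control at the endpoints of the local-$L^2$ range and the bookkeeping of the exceptional-set-free summation over a Whitney decomposition, which is routine but must be done with enough care that the constant $\delta_2$ comes out strictly positive and depends only on $(p,q)$.
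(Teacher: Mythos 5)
Your proposal follows essentially the same route as the paper's proof: insert the coefficients $f(P_1)$, $g(P_2)$, $h_{\ast}(P_3)$ into the abstract size/energy inequality, prove $L^2$-based size and energy bounds (with the $m$-decay in the light and clustered third-slot sizes coming from \ref{WP3} and the cardinality bound of Remark \ref{remark:boundedness:cluster:number}, the scale-disjointness forced by $\rho_{|P|}(\lambda)$ and \ref{WP4} handling the tree estimate, and the uniform case inheriting its decay from the prefactor in \eqref{eq:esti:L^P:uniform}), and then interpolate so that the third-slot size gain is raised to a positive power $\theta_3$. Two small repairs: the exponents must satisfy $1-\theta_1=2/p$, $1-\theta_2=2/q$, $1-\theta_3=2/r'$ (your choice $1-\theta_j=1/p,1/q,1/r'$ gives $\theta_1+\theta_2+\theta_3=2$, violating the hypothesis of Proposition \ref{prop:size/energy}), and the paper reaches the strong-type bounds by first proving restricted-type estimates for $|f|\leq\one_{E_1}$ etc.\ and then invoking multilinear restricted-type (Marcinkiewicz) interpolation — moreover the $f,g$-energy bound is not quite the verbatim classical one, as it requires a $TT^{*}$ argument on the area-one tiles together with the strengthened strong-disjointness condition \eqref{def:strong:disj} — rather than the Whitney-decomposition localization you sketch.
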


This will produce the desired decaying estimate for the full trilinear form $\underline{\L}_m$, in the local-$L^2$ range (according to \eqref{eq:decomposition:Lm}) with $\delta:=\min\{\frac{\delta_1}{2},\delta_2\}$:
\begin{corollary}
\label{cor:conclusion:trilinear:form:L}
For every $(p,q,r')$ in the local-$L^2$ range ($2< p, q, r' <\infty$), there exists some $\delta:=\delta(p,q)>0$ such that
\[
\underline{\L}_{m}(f, g, h) \lesssim_{a,p,q} 2^{-\delta a m} \|f\|_{L^p} \|g\|_{L^q} \|h\|_{L^{r'}}.
\]
\end{corollary}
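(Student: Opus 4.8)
The Corollary reduces immediately to Proposition~\ref{prop:aim} via the decomposition \eqref{eq:decomposition:Lm}: applying the three estimates of Proposition~\ref{prop:aim} to the positive trilinear forms $\Lambda_{\BHT}^{\mathfrak{l}}, \Lambda_{\BHT}^{\mathfrak{u}}, \Lambda_{\BHT}^{\mathfrak{c}}$, one bounds $|\underline{\L}_m(f,g,h)|$ by $\big(2^{-\frac{\delta_1}{2}am}+2\cdot 2^{-\delta_2 am}\big)\|f\|_{L^p}\|g\|_{L^q}\|h\|_{L^{r'}}$, where the first summand comes from the prefactor $2^{-\frac{\delta_1}{2}am}$ in \eqref{eq:decomposition:Lm} multiplying the $m$-independent bound for $\Lambda_{\BHT}^{\mathfrak{u}}$, and the other two from the $m$-gains already present in the estimates for $\Lambda_{\BHT}^{\mathfrak{l}}$ and $\Lambda_{\BHT}^{\mathfrak{c}}$. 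Choosing $\delta:=\min\{\delta_1/2,\delta_2\}$ finishes the deduction. Thus the whole content sits in Proposition~\ref{prop:aim}, whose proof I now sketch.

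The plan for Proposition~\ref{prop:aim} is to feed the single-tile estimates \eqref{eq:esti:L^P:light}--\eqref{eq:esti:L^P:cluster} of Proposition~\ref{refinedsgscale} into the abstract size--energy bound of Proposition~\ref{prop:size/energy0}. For each $\ast\in\{\mathfrak{l},\mathfrak{u},\mathfrak{c}\}$ one applies Proposition~\ref{prop:size/energy0} to $\Lambda_{\BHT}^{\ast}(f,g,h)=\sum_{P\in\BHT}|I_P|^{-1/2}f(P_1)g(P_2)h_{\ast}(P_3)$ with $a_{P_1}:=f(P_1)$, $a_{P_2}:=g(P_2)$, $a_{P_3}:=h_{\ast}(P_3)$, thereby reducing matters to controlling the three sizes and three energies of these coefficient sequences over an arbitrary finite subcollection of $\BHT$. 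The energies and sizes attached to the first two slots are handled by the classical BHT estimates adapted to our context: since $f(P_1)$ is an $\ell^2$-aggregate of the Gabor coefficients $\langle f,\phi_{s_1}\rangle$ over the area-one tiles $s_1$ with $\hat s_1=P_1$, the rank-1 structure of $\BHT$ together with the wave-packet bounds recalled before Proposition~\ref{refinedsgscale} yields a Bessel-type energy bound $\energy^{\BHT}_1(\{f(P_1)\})\lesssim\|f\|_{L^2}$ and, after spatial localization, $\size^{\BHT}_1(\{f(P_1)\})\lesssim \sup_P |I_P|^{-1/2}\|f\cdot\ci_{I_P}\|_{L^2}$, with the obvious analogues for $g$; both are then dominated by the appropriate $L^p$, $L^q$ norms via maximal-function estimates.

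The genuinely new ingredient is the control of $\{h_{\ast}(P_3)\}$. Its energy is again an $L^2$-based quantity, obtained by invoking the almost-orthogonality property \ref{WP4} of the wave-packets $\Psi_{P_3}^{p'}$ applied to $F:=h\,\rho_{|P|}(\lambda)\,w^e_{|P|,n,v}(\lambda)$ summed over $n,v\sim 2^{am/2}$, together with the disjointness of the frequency intervals $\pmb{\omega}_{P_3}$ at a fixed scale; the relevant subtlety is that, because of the implicit presence of the stopping-time $\lambda$ inside the oscillatory weight, orthogonality between tri-tiles of distinct scales comes for free. For the sizes one proves exactly the three bounds \eqref{eq:est:size-h+0}, \eqref{eq:est:size-h-0}, \eqref{eq:est:size-h:cluster0}: the uniform size carries no gain and collapses to $\sup_P|I_P|^{-1/2}\|h\cdot\ci_{I_P}\|_{L^2}$; the light size acquires $2^{-\delta_1 am/4}$ directly from the light-mass bound \ref{WP3} on $\|w_{k,n,v}\Psi_{P_3}^{p'}\|_{L^1}$; and the clustered size acquires $2^{-\mu am/4}$ because, as recorded in Remark~\ref{remark:boundedness:cluster:number}(ii), for each $p'$ at most $O(2^{(1-\mu)\frac{am}{2}})$ values of $n$ can belong to $\bar{\mathfrak{c}}_1$. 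With all six quantities in hand, Proposition~\ref{prop:size/energy0} with exponents $\theta_1,\theta_2,\theta_3\in[0,1)$, $\theta_1+\theta_2+\theta_3=1$ and a positive $\theta_3$, interpolated in the usual way between maximal sizes and energies, delivers the bounds of Proposition~\ref{prop:aim} in the strict local-$L^2$ range; the $2^{-\delta_2 am}$ decay for the light and clustered forms is precisely $\big(2^{-c am}\big)^{\theta_3}$ coming from the size gains, while for the uniform form the decay is supplied externally through \eqref{eq:esti:L^P:uniform}. The extension to the full range with $p$ or $q$ equal to $\infty$ is then handled by the localized-size interpolation of Section~\ref{sec:size:interpolation}, used as a black box.

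The main obstacle I expect is twofold. First, establishing the size and energy estimates for $\{h_{\ast}(P_3)\}$: the wave-packets $\Psi_{P_3}^{p'}$ are adapted to the sub-intervals $\underline{I}_P^{p'}$ rather than to the tiles $s_3$ and are paired with the rough weights $w^e_{|P|,n,v}(\lambda)$, so one must verify that the standard tree-counting and John--Nirenberg arguments underlying the size/energy bounds survive in the presence of $\lambda$ --- which is exactly where one exploits the scale-orthogonality forced by $\lambda$. Second, the bookkeeping of the $m$-decay through the interpolation: one has to keep $\theta_3$ bounded away from $0$ so the size gain is not annihilated, while keeping all exponents below $1$ and making the light/clustered gains uniform over finite subcollections. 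Everything else --- the $f,g$ energy and size bounds, the passage from tile sums to $L^p$ norms, and the endpoint interpolation --- is routine modulation-invariant time-frequency analysis.
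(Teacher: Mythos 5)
Your proposal is correct and follows essentially the paper's own route: the Corollary is indeed immediate from Proposition \ref{prop:aim} via the decomposition \eqref{eq:decomposition:Lm} with $\delta=\min\{\delta_1/2,\delta_2\}$, and your sketch of Proposition \ref{prop:aim} (feeding the single-tile bounds of Proposition \ref{refinedsgscale} into the size--energy inequality, with the $m$-gains coming from the light and clustered sizes of $h$ and the uniform gain supplied externally) matches the paper's argument. The only cosmetic difference is that the paper passes from the size--energy bound to strong-type estimates through restricted-type functions and restricted-type interpolation, whereas you phrase this as interpolation against maximal averages; both are standard and equivalent here.
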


In the next subsections we will focus on proving Proposition \ref{prop:aim}. Before this though, we continue with some notions of bilinear time-frequency analysis.

\smallskip

\subsection{Preliminaries on time-frequency tools}
\label{sec:timeFreq:prelim}
In our approach we will use as a black box the results in \cite{MTTBiest2}. This will require nevertheless the introduction of several adjusted notions as well as some non-trivial adaptations to the present context.

The trilinear forms $\Lambda_{\BHT}^{\ast}(f,g,h)$ are defined with respect to the rank-1 family of tri-tiles $\BHT$. The key geometric observation in this context is that -- loosely speaking -- (pairwise) overlapping of a collection of tiles $\{ P_1 \}_{P \in \P}$ implies (pairwise) disjointness for each of the associated collections $\{ P_2 \}_{P \in \P}$ and $\{ P_3 \}_{P \in \P}$. This insight naturally leads to the notion of tree -- an object of lower complexity (visualized as a multiplier singular at a point) that can be easily controlled. The strategy introduced in \cite{lt1}, \cite{lt2} consists in a greedy algorithm that selects ``orthogonal'' trees according to how much information they store (the size of the tree) and to certain geometric conditions. Next, one needs to put together this information, for which estimating the spatial supports of the trees associated to a fixed size becomes important (this leads to the notion of energy associated to a collection of tri-tiles).

In order to take advantage of the orthogonality among the tree structures, it becomes convenient to assume a suitable sparseness within the collection $\BHT$: a separation of the scales (whenever $P, P' \in \BHT$ are so that  $|I_{P'}|< |I_P|$, we should have $|I_{P'}|\ll|I_P|$) and a separation of the Whitney cubes from \eqref{eq:Whitney} at a given scale. This sparseness, which will be implicit in the present section, is achieved by splitting $\BHT$ into $O(1)$ similar collections. Moreover, standard limiting arguments allow us to assume that $\BHT$ is finite, and hence also that the collection of trees is finite.

\begin{definition} \label{def:tree}
For $1 \leq j \leq 3$, we call a collection of tri-tiles $\mb T \subset \BHT$ a $j$-lacunary \emph{tree} with top data $(I_{\mb T}, (\xi_{\mb T_1}, \xi_{\mb T_2}, \xi_{\mb T_3}))$ if
\[
\forall \,  P \in \mb T, \qquad \xi_{\mb T_j} \in 3\,
 \pmb{\omega}_{P_j} \setminus \pmb{\omega}_{P_j} \quad \text{and} \quad I_{P} \subseteq I_{\mb T_j}.
\]
In contrast, a collection of tri-tiles $\mb T \subset \BHT$ is called a $j$-overlapping \emph{tree} with top data $(I_{\mb T}, (\xi_{\mb T_1}, \xi_{\mb T_2}, \xi_{\mb T_3}))$ if
\[
\forall \,  P \in \mb T, \qquad \xi_{\mb T_j} \in  \pmb{\omega}_{P_j} \quad \text{and} \quad I_{P} \subseteq I_{\mb T_j}.
\]

We call \emph{tree} with top data $(I_{\mb T}, (\xi_{\mb T_1}, \xi_{\mb T_2}, \xi_{\mb T_3}))$ any collection $\mb T \subset \BHT$ which is a $j$-lacunary or $j$-overlapping tree for some $1 \leq j \leq 3$.
\end{definition}

The Whitney property of the collection $\{ \pmb{\omega}_{P_1} \times  \pmb{\omega}_{P_2} \}_{P \in \BHT}$ (see \eqref{eq:Whitney}) ensures that a $j$-overlapping tree is lacunary in the remaining two directions. So we can assume that the tree top information $(\xi_{\mb T_1}, \xi_{\mb T_2}, \xi_{\mb T_3})$ is located in the set $\{ (\xi, \xi, 2 \xi) : \xi \in \R \}$.

\begin{definition}
Two $j$-lacunary trees $\mb T$ and $\mb T'$ are said to be \emph{strongly disjoint} if either $\ds 3 \pmb{\omega}_{P_j} \cap 3 \pmb{\omega}_{P'_j} =\emptyset$ for all $P \in \mb T, P' \in \mb T'$ or
\begin{equation}
\label{def:strong:disj}
 \Big[ 3 \pmb{\omega}_{P_j} \cap 3 \pmb{\omega}_{P'_j} \neq \emptyset \textrm{  and  } |\pmb{\omega}_P|\ll |\pmb{\omega}_{P'}| \Big] \text{      implies      }  I_{P'_j} \cap 3\, I_{\mb T}=\emptyset.
\end{equation}
\end{definition}

\begin{remark}
We emphasize that our notion of strong disjointness is not the standard one from \cite{MTTBiest2} that reads as
\begin{equation}
\label{def:strong:disj:standard}
\Big[ 3 \pmb{\omega}_{P_j} \cap 3 \pmb{\omega}_{P'_j} \neq \emptyset \textrm{  and  } |\pmb{\omega}_P|\ll |\pmb{\omega}_{P'}| \Big] \text{      implies      }  I_{P'_j} \cap \, I_{\mb T}=\emptyset.
\end{equation}
However, \eqref{def:strong:disj} and \eqref{def:strong:disj:standard} are achieved in a similar fashion -- see also Remark \ref{remark:strong:disjointness}.
\end{remark}

The extra separation of the trees required in our definition will be helpful in the proof of the energy estimate \eqref{eq:est:energy-f-g}.

Now, following \cite{MTTBiest2}, we can introduce the concepts of \emph{size} and \emph{energy}:

\begin{definition}[Size] For $\P \subseteq \BHT$ and $(a_{P_j})_{P \in \P}$ a sequence of complex numbers, we define for $j=1,2,3$
\begin{equation}
\label{def:size}
 \size^{\P}_j((a_{P_j})_{P \in \P}):= \sup_{\substack{\mb T \subset \P \\ \mb T \, j-\text{lacunary tree}}} \Big(\frac{1}{|I_{T}|} \sum_{P\in T} |a_{P_j}|^2\Big)^{\frac{1}{2}}.
\end{equation}
\end{definition}

\begin{definition}[Energy] For $\P$ a subcollection of $\BHT$ and $(a_{P_j})_{P \in \P}$ a sequence of complex numbers, we define for $j=1,2,3$
\begin{equation}
\label{def:energy}
 \energy^{\P}_j((a_{P_j})_{P \in \P}):= \sup_{d\in \mathbb{Z}} \ \sup_{\ii F} 2^{d} \Big( \sum_{\bf T \in \ii F} |I_{\mb T}| \Big)^{1/2}
\end{equation}
where $\ii F$ ranges over all collections of strongly disjoint $j$-lacunary trees  $\mb T \subset \P$ with the property that
$$ \Big( \frac{1}{|I_{\mb T}|} \sum_{P\in\mb T} |a_{P_j}|^2 \Big)^{1/2} > 2^{d}$$
and for any subtree $\mb T'\subset \mb T$
$$ \Big( \frac{1}{|I_{\mb T'}|} \sum_{P\in\mb T'} |a_{P_j}|^2 \Big)^{1/2} \leq 2^{d+1}.$$
\end{definition}

With the above notations and definitions we have the following general estimate:

\begin{proposition}[Similar to Proposition 6.5 of \cite{MTTBiest2}] \label{prop:size/energy}
Let $\P \subset \BHT$  be a finite collection of tri-tiles and $(a_{P_j})_{P \in \P}$ a sequence of complex numbers. Then for any $\theta_1,\theta_2,\theta_3\in[0,1)$ with $\theta_1+\theta_2+\theta_3=1$
\begin{align}
\label{eq:size:en:org}
\Big|\sum_{P \in \P} \frac{1}{|I_P|^{\frac{1}{2}}} a_{P_1} a_{P_2} a_{P_3}  \Big| \lesssim  \prod_{j=1}^3 \big(   \size^{\P}_j((a_{P_j})_{P \in \P}) \big)^{\theta_j} \big(   \energy^{\P}_j((a_{P_j})_{P \in \P}) \big)^{1-\theta_j}.
\end{align}
\end{proposition}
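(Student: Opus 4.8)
The statement is essentially Proposition 6.5 of \cite{MTTBiest2}, so the plan is to explain how the classical argument carries over to the present setting and to isolate the (mild) adaptations forced by our nonstandard notions. The overall strategy is the standard \emph{tree selection / orthogonality} scheme: decompose the collection $\P$ into trees organized by dyadic size level, bound the contribution of each individual tree by a single-tree estimate, and sum over trees using the energy bound to control the total measure of the tree tops at each level. First I would reduce, by a limiting argument, to the case where $\P$ is finite, and split $\P$ into $O(1)$ subcollections so that the implicit sparseness hypotheses (separation of scales and of Whitney cubes at a fixed scale) hold; this is legitimate since the estimate \eqref{eq:size:en:org} is stable under such finite decompositions. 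It then suffices to prove the bound for each of these sparse subcollections.

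\textbf{Single-tree estimate.} The first key step is the single-tree estimate: for any tree $\mb T\subset\P$ and any sequences $(a_{P_j})_{P\in\mb T}$,
\[
\Big|\sum_{P\in\mb T}\frac{1}{|I_P|^{\frac12}}a_{P_1}a_{P_2}a_{P_3}\Big|
\lesssim |I_{\mb T}|\prod_{j=1}^3\Big(\frac{1}{|I_{\mb T}|}\sum_{P\in\mb T}|a_{P_j}|^2\Big)^{1/2}.
\]
This is proven exactly as in \cite{lt2,MTTBiest2}: a tree is $j$-overlapping in at most one direction and $j$-lacunary in the other two; along the overlapping direction one uses that the frequency intervals $\pmb\omega_{P_j}$ contain a common point and hence (after summing with the appropriate Fourier projection structure) one is reduced, via Cauchy--Schwarz in the lacunary directions and a square-function/Bessel argument in the overlapping direction, to a product of $\ell^2$ sums over the tree. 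Nothing here is sensitive to the fact that our tri-tiles have area $2^{am}$ rather than area one, nor to the precise shape of the wave-packets: the estimate is purely combinatorial and uses only that $\BHT$ is rank $1$ with the Whitney property \eqref{eq:Whitney}. I would state this lemma and indicate that its proof is identical to the cited references, pointing out only that the roles of $\size_j$ and $\energy_j$ in the final bound come from, respectively, the definition of size (which takes the supremum of exactly the quantity appearing on the right over lacunary trees) and the Bessel-type control of overlapping-tree contributions.

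\textbf{Tree selection and summation.} The second key step is the iterative selection of trees. Fix $j$ and let $\sigma_j:=\size^{\P}_j((a_{P_j})_{P\in\P})$; for each dyadic $2^d\le\sigma_j$ one runs the greedy algorithm of \cite{lt2} that repeatedly extracts, from the tri-tiles of $\P$ whose size level exceeds $2^d$, a maximal tree (maximal with respect to an appropriate ordering of tops), records it, removes it, and iterates until no tri-tile of size $>2^d$ remains; the selected trees at level $2^d$ are pairwise strongly disjoint in the sense of our Definition (the separation \eqref{def:strong:disj} with the factor $3I_{\mb T}$ is arranged by the same maximality argument as the standard \eqref{def:strong:disj:standard}, only with a slightly larger safety margin, exactly as noted in Remark \ref{remark:strong:disjointness} elsewhere in the paper). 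Denote by $\ii F_d$ the resulting collection; by the definition of energy, $\sum_{\mb T\in\ii F_d}|I_{\mb T}|\lesssim 2^{-2d}\big(\energy^{\P}_j((a_{P_j})_{P\in\P})\big)^2=:2^{-2d}E_j^2$. Combining the single-tree estimate with this measure bound, the contribution of all trees with size level $\sim 2^d$ in the $j$-th direction is $\lesssim 2^d\,\sigma_{j'}\sigma_{j''}\sum_{\mb T}|I_{\mb T}|\lesssim 2^{-d}E_j^2\sigma_{j'}\sigma_{j''}$ (here $\{j',j''\}$ are the other two indices, and one uses $\size\le\size$ trivially along those directions), while the number of genuinely contributing levels $d$ is $O(1)$ once one also invokes the complementary bound $2^d E_j^{-1}\lesssim$ (number of tops)$^{1/2}$; summing the geometric series in $d$ down from $\sigma_j$ gives $\lesssim \sigma_j\sigma_{j'}\sigma_{j''}$ with the sizes traded against energies in the proportion $\theta_j$, i.e.\ exactly \eqref{eq:size:en:org}. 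The only place where our modified strong-disjointness definition intervenes is in guaranteeing that the selected trees at a fixed level have boundedly overlapping spatial neighborhoods, which is precisely what the energy estimate feeds on.

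\textbf{Main obstacle.} I expect the only non-routine point to be verifying that the tree-selection algorithm, run with \emph{our} strong-disjointness condition \eqref{def:strong:disj} (with the $3I_{\mb T}$ enlargement) rather than the classical one \eqref{def:strong:disj:standard}, still terminates and still produces a collection whose tops have bounded overlap; this is needed so that the energy quantity as we have defined it genuinely controls $\big(\sum_{\mb T\in\ii F_d}|I_{\mb T}|\big)^{1/2}$. The resolution is that enlarging the forbidden region from $I_{\mb T}$ to $3I_{\mb T}$ only strengthens the separation and is absorbed by choosing the initial $O(1)$-splitting of $\BHT$ slightly more generously; concretely, one argues as in the proof of the selection lemma in \cite{MTTBiest2} that if a tri-tile $P'$ fails \eqref{def:strong:disj} against an already-selected tree $\mb T$ then its spatial interval $I_{P'_j}$ meets $3I_{\mb T}$, and by maximality of the tops this can happen for at most boundedly many selected trees per point, whence the claimed measure control. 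All other steps — the single-tree estimate, the geometric summation in $d$, and the interpolation in the exponents $\theta_1,\theta_2,\theta_3$ — are verbatim the arguments of \cite{lt2} and \cite[\S6]{MTTBiest2}, so I would present them briefly and refer to those sources for the routine details.
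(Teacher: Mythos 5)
Your overall scheme (single-tree estimate, greedy selection of trees at dyadic size levels, energy bound on the selected tops, geometric summation and interpolation in the exponents $\theta_j$) is exactly the route the paper takes: Proposition \ref{prop:size/energy} is invoked essentially as \cite[Proposition 6.5]{MTTBiest2}, and the only nonroutine point is the more rigid strong disjointness \eqref{def:strong:disj}. You correctly locate the difficulty there, but your resolution of it does not work as stated, and this is a genuine gap. Since $\energy^{\P}_j$ is a supremum over families of trees satisfying \eqref{def:strong:disj}, the measure bound $\sum_{\mb T\in\ii F_d}|I_{\mb T}|\lesssim 2^{-2d}\big(\energy^{\P}_j\big)^2$ is available only if the family produced by the selection algorithm \emph{itself} satisfies \eqref{def:strong:disj}. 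Neither of your two mechanisms delivers this: a "more generous" initial $O(1)$-splitting of $\BHT$ only sparsifies scales and Whitney cubes and is data-independent, so it cannot prevent a later-selected tri-tile $P'$ with $|I_{P'}|\ll|I_{\mb T}|$ and frequency interval meeting $3\pmb{\omega}_{P_j}$ from sitting inside $3I_{\mb T}\setminus I_{\mb T}$; and the claim that a point can meet $3I_{\mb T}$ for "at most boundedly many selected trees" is both unjustified (tops of trees selected at a given level need not have boundedly overlapping triples) and, even if true, would not turn a family violating \eqref{def:strong:disj} into one to which the energy definition applies — at best it hints at an $O(1)$ recolouring, which you neither formulate nor prove.

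The fix the paper actually uses (Remark \ref{remark:strong:disjointness}) is different and direct: modify the greedy algorithm so that when a $j$-lacunary tree $\mb T$ with top data $(I_{\mb T},\xi_{\mb T})$ is selected, one discards not only the $j$-overlapping tree ${\mb T}^{\circ}$ with the same top data but also the $j$-overlapping trees ${\mb T}^{+}$ and ${\mb T}^{-}$ with top data $(I_{\mb T}\pm|I_{\mb T}|,\xi_{\mb T})$. Then (up to the standard dilation-of-constants bookkeeping) any tri-tile $P'$ selected later whose frequency interval overlaps, in the $3\pmb{\omega}$ sense and with $|\pmb{\omega}_{P}|\ll|\pmb{\omega}_{P'}|$, that of some $P\in\mb T$ cannot have $I_{P'_j}$ meeting $3I_{\mb T}$, since such a $P'$ would have belonged to one of the discarded trees; this enforces \eqref{def:strong:disj} for the selected family. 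The extra discarded trees are harmless for the rest of the argument: they carry the same (or neighbouring) tops as the selected trees, so the count of tops at each level at most triples, and the remainder of the \cite{MTTBiest2} proof of \eqref{eq:size:en:org} goes through verbatim. Replacing your "main obstacle" paragraph by this modification repairs the proof; the rest of your outline is the standard argument and coincides with the paper's.
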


\begin{remark}
\label{remark:strong:disjointness}
Only small modifications are needed in order for the proof of \cite[Proposition 6.5]{MTTBiest2} to extend to the present context which requires the more rigid strong disjointness property \eqref{def:strong:disj}.

The reader familiar with \cite{MTTBiest2} or \cite{multilinear_harmonicII} will recognize that the strong disjointness property \eqref{def:strong:disj:standard} is intimately related to the greedy algorithm mentioned in the beginning of the section. Indeed, given $\P \subset \BHT$ a collection of tri-tiles such that  $\size^{\P}_j((a_{P_j})_{P \in \P}) \leq 2^d$, one starts by selecting maximal trees $\mb T \subset \P$ with $\size^{\mb T}_j((a_{P_j})_{P \in \P}) > 2^{d-1}$ satisfying also a certain geometric condition; in order to ensure that \eqref{def:strong:disj:standard} holds for $j$-lacunary trees (interacting with $\mb T$) that will be selected later, one discards also ${\mb T}^\circ$, the $j$-overlapping tree having the same top data $(I_{\mb T}, \xi_{\mb T})$ as $\mb T$.

In our situation, once the $j$-lacunary tree $\mb T$ is selected by the algorithm, we discard not only ${\mb T}^\circ$ as above, but also the $j$-overlapping trees ${\mb T}^+$ having top data $(I_{\mb T}+|I_{\mb T}|, \xi_{\mb T})$ and ${\mb T}^-$ having top data $(I_{\mb T}-|I_{\mb T}|, \xi_{\mb T})$ (i.e. the spatial intervals of ${\mb T}^+$ and ${\mb T}^-$ consist in the right/left neighbour of $I_{\mb T}$). This, in short, guarantees that \eqref{def:strong:disj} is satisfied within the greedy algorithm used for proving \eqref{eq:size:en:org}.
\end{remark}

Proposition \ref{prop:size/energy} above will be applied directly to the study of $\Lambda^{*}_{\BHT}(f, g, h)$ for $*\in\{\mathfrak{l},\mathfrak{c},\mathfrak{u}\}$ with
\[
a_{P_1}:= f(P_1), \qquad a_{P_2}:=g(P_2) \qquad \textrm{and} \qquad a_{P_3}:= h_{*}(P_3),
\]
thus providing the control of these trilinear forms in terms of sizes and energies of $f,g,h$.

In the next subsections, we will obtain bounds for such sizes and energies, which will then allow us to conclude the desired estimates in Section \ref{subsec:proof:localL^2}.

\subsection{Estimates for sizes}
\label{subsec:energy:size}

First we address the size estimates, which will be naturally\footnote{For standard tiles of area 1, the sizes emerge as $L^1$ quantities, i.e. the maximal $L^2$ averages in the right-hand side of \eqref{eq:est:size-f-g} can be replaced by maximal $L^1$ averages.} bounded by $L^2$ maximal averages, associated to trees. Since the tree top data $I_{\mb T}$ is not intrinsically associated to tiles in $\BHT$ whilst the information associated to $\mb T$ is spatially adapted to $I_{\mb T}$, we define for any $\P$ finite subcollection of $\BHT$ the collection $\mathcal{I}_{\bar{\P}}$ of spatial intervals
\begin{equation}
\label{eq:def:spatial:tiles}
\mathcal{I}_{\bar{\P}}:=\{ I \text{  dyadic  }: I=I_P \text{  for some  }P \in \P \text{  or  } I=I_{\mb T} \text{ for some tree  } \mb T \subset \P  \}.
\end{equation}

\begin{proposition}[Size estimates]
\label{prop:size:est}
Let $\delta_1, \mu>0$ be as obtained in Section \ref{sec:onescale} and as used in Definition \ref{def:indices} when introducing the indices sets $\mathfrak{l},\mathfrak{u},\mathfrak{c}$ corresponding to each tile $P \in \BHT$. \\
For any locally $L^2$-integrable functions $f, g,h$ and any subcollection $\P\subset \BHT$, we have
\begin{equation}
\begin{aligned}
 \size^{\P}_1((f(P_1))_{P\in\P}) &\lesssim \sup_{I  \in \mathcal{I}_{\bar{\P}}} \frac{1}{|I|^{\frac{1}{2}}} \|f \cdot \ci_{I}\|_{L^2}, \\
\size^{\P}_2((g(P_2))_{P\in\P}) &\lesssim \sup_{I \in \mathcal{I}_{\bar{\P}}} \frac{1}{|I|^{\frac{1}{2}}} \|g \cdot \ci_{I}\|_{L^2},
\end{aligned} \label{eq:est:size-f-g}
\end{equation}
and
\begingroup
\allowdisplaybreaks
\begin{alignat}{3}
\size^{\P}_3((h_\mathfrak{l}(P_3))_{P\in\P}) &\lesssim 2^{- \frac{\delta_1 a m}{2}} && \sup_{I \in \mathcal{I}_{\bar{\P}}} \frac{1}{|I|^{\frac{1}{2}}} \|h \cdot \ci_{I}\|_{L^2}, \label{eq:est:size-h-} \\
\size^{\P}_3((h_\mathfrak{u}(P_3))_{P\in\P}) &\lesssim &&\sup_{P \in \mathcal{I}_{\bar{\P}}} \frac{1}{|I|^{\frac{1}{2}}} \|h \cdot \ci_{I}\|_{L^2}, \label{eq:est:size-h+} \\
\size^{\P}_3((h_\mathfrak{c}(P_3))_{P\in\P}) &\lesssim 2^{- \frac{\mu a m}{4}} &&\sup_{P \in \mathcal{I}_{\bar{\P}}} \frac{1}{|I|^{\frac{1}{2}}} \|h \cdot \ci_{I}\|_{L^2}. \label{eq:est:size-h:cluster}
\end{alignat}
\endgroup
\end{proposition}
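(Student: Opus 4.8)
\textbf{Proof strategy for Proposition \ref{prop:size:est}.}

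The plan is to establish each size bound by fixing an arbitrary $j$-lacunary tree $\mb T\subset\P$ and producing the desired bound for the normalized square-sum $\bigl(|I_{\mb T}|^{-1}\sum_{P\in\mb T}|a_{P_j}|^2\bigr)^{1/2}$; taking the supremum over trees then yields the size estimate. For \eqref{eq:est:size-f-g}, I would unfold $f(P_1)=\bigl(\sum_{s_1:\hat s_1=P_1}|\langle f,\phi_{s_1}\rangle|^2\bigr)^{1/2}$, so that $\sum_{P\in\mb T}f(P_1)^2=\sum_{P\in\mb T}\sum_{s_1:\hat s_1=P_1}|\langle f,\phi_{s_1}\rangle|^2$. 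Since $\mb T$ is $1$-lacunary, the frequency intervals $\pmb\omega_{P_1}$ (hence the finer $\omega_{s_1}$) associated to distinct $P\in\mb T$ are pairwise disjoint once one fixes a scale, and across scales they form a lacunary family around $\xi_{\mb T_1}$; combined with the spatial containment $I_P\subseteq I_{\mb T}$, this is exactly the setting of the classical BHT size estimate. A standard argument — decompose $f$ via a single-tree square function, use the almost-orthogonality of the wave-packets $\phi_{s_1}$ recalled above (the $\langle\phi_{s_j},\phi_{s'_j}\rangle$ decay) and the disjointness of frequency supports — bounds the above sum by $\|f\cdot\ci_{I_{\mb T}}\|_{L^2}^2$ up to a constant. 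Dividing by $|I_{\mb T}|$ and taking the supremum over $\mb T$ (noting $I_{\mb T}\in\mathcal I_{\bar\P}$) gives \eqref{eq:est:size-f-g}; the argument for $g$ is identical.

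The three estimates \eqref{eq:est:size-h-}--\eqref{eq:est:size-h:cluster} are the heart of the matter, and here the ``black-box'' orthogonality of the wave-packets $\Psi_{P_3}^{p'}$ in property \ref{WP4} does the work of disjointness. Fix a $3$-lacunary tree $\mb T$. Since $h_\ast(P_3)^2=2^{-am/2}\sum_{(n,v,p')\in\ast}|\langle h\,\rho_{|P|}(\lambda)w^e_{|P|,n,v}(\lambda),\Psi_{P_3}^{p'}\rangle|^2$, summing over $P\in\mb T$ and invoking \ref{WP4} (applied with $F=h\,\rho_{|P|}(\lambda)w^e_{|P|,n,v}(\lambda)$ at each scale, and then summing the lacunary scales around $\xi_{\mb T_3}$ via a square-function/Fefferman--Stein maximal argument as in the BHT case) will yield
\[
\frac{1}{|I_{\mb T}|}\sum_{P\in\mb T}h_\ast(P_3)^2\lesssim \frac{1}{2^{am/2}}\sup_{P\in\mb T}\ \sup_{(n,v,p')\in\ast}\ \bigl\|\,\rho_{|P|}(\lambda)w^e_{|P|,n,v}(\lambda)\cdot\mathbf 1_{\underline I_P^{p'}}\,\bigr\|_{L^\infty}^2\ \cdot\ (\#\text{ of relevant }(n,v,p'))\cdot\frac{1}{|I_{\mb T}|}\|h\cdot\ci_{I_{\mb T}}\|_{L^2}^2 ,
\]
and the point is then to extract from each of the three index sets the advertised gain:
\begin{itemize}
\item For $\mathfrak u$ (estimate \eqref{eq:est:size-h+}): here we simply use $|\rho_{|P|}(\lambda)w^e_{|P|,n,v}(\lambda)|\lesssim w_{|P|,n,v}(\lambda)\lesssim 1$ pointwise together with the fact that for each $p'$ and $n$ there are only $O(1)$ relevant values of $v$ (from the time-frequency correlation \eqref{eq:decay-weight}), so the number of indices is $\lesssim 2^{am/2}$ and the $2^{-am/2}$ normalization exactly cancels it, giving no decay but no loss.
\item For $\mathfrak l$ (estimate \eqref{eq:est:size-h-}): the light-mass condition \ref{WP3}, i.e. $|\underline I_P^{p'}|^{-1}\|w_{k,n,v}\Psi_{P_3}^{p'}\|_{L^1}<|\underline I_P^{p'}|^{-1/2}2^{-\delta_1 am}$, together with Cauchy--Schwarz on the inner product (splitting $w^e=(w^e)^{1/2}\cdot(w^e)^{1/2}$, using $|w^e|\lesssim w$ and $\|w\|_{L^1(\underline I_P^{p'})}\lesssim 2^{-\delta_1 am}|\underline I_P^{p'}|$), upgrades the trivial bound by a factor $2^{-\delta_1 am/2}$. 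Again the $2^{-am/2}$ cancels the $O(2^{am/2})$ count, leaving the net $2^{-\delta_1 am/2}$.
\item For $\mathfrak c$ (estimate \eqref{eq:est:size-h:cluster}): here one uses the low-cardinality observation ii) of Remark \ref{remark:boundedness:cluster:number} — for fixed $p'$ there are at most $O(2^{(1-\mu)am/2})$ values of $n$ admitting some $v$ with $(p',n,v)\in\mathfrak c$ (and $O(1)$ such $v$) — so the number of relevant indices drops to $\lesssim 2^{am}\cdot 2^{(1-\mu)am/2}\cdot\ldots$, wait, more precisely summing over $p'\sim 2^{am}$ is absorbed by the $\ci$-localization and for the tree-normalized count the relevant saving is $2^{-\mu am/2}$; combined with a Cauchy--Schwarz at the $v$-level this gives the stated $2^{-\mu am/4}$.
\end{itemize}

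\textbf{Main obstacle.} I expect the main difficulty to be the \emph{bookkeeping of the weight $w^e_{|P|,n,v}(\lambda)$ inside the orthogonality estimate}: unlike the clean BHT situation, the ``input'' paired against $\Psi_{P_3}^{p'}$ is $h$ multiplied by the oscillatory factor $\rho_{|P|}(\lambda)w^e_{|P|,n,v}(\lambda)$, which depends on the indices $n,v$ and on the measurable stopping-time $\lambda$. One must be careful that property \ref{WP4} can still be applied with this modified input while keeping the estimate uniform in $\lambda$, and that the square-function argument summing the lacunary scales of a $3$-lacunary tree is not disturbed by this multiplicative weight (this is fine because $|\rho_{|P|}(\lambda)w^e_{|P|,n,v}(\lambda)|\lesssim w_{|P|,n,v}(\lambda)\lesssim 1$ pointwise, so multiplying by it is a harmless contraction, but the details of commuting it past the Littlewood--Paley decomposition require care). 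The second, more quantitative obstacle is making the counting arguments for $\mathfrak l$ and $\mathfrak c$ produce the precise exponents $\delta_1 a m/2$ and $\mu a m/4$ claimed — in particular verifying that the extra $p'$-summation over $\sim 2^{am}$ intervals is genuinely compensated by the spatial localization $\ci_I$ (via \ref{WP2}) rather than costing a power of $2^{am}$, and that the Cauchy--Schwarz losses in the $v$-variable are no worse than a harmless $2^{O(1)}$. Everything else is a routine transcription of the Lacey--Thiele size machinery.
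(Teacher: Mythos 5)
Your treatment of \eqref{eq:est:size-f-g} and your identification of where the gains come from in each index set (the light-mass condition \ref{WP3} for $\mathfrak{l}$, the $O(2^{(1-\mu)am/2})$ bound on admissible $n$ per $p'$ for $\mathfrak{c}$, no gain for $\mathfrak{u}$) agree with the paper. The genuine gap is in the mechanism you propose for closing the sum over the tiles of a $3$-lacunary tree. You plan to apply \ref{WP4} scale by scale and then ``sum the lacunary scales around $\xi_{\mb T_3}$ via a square-function/Fefferman--Stein maximal argument as in the BHT case.'' That argument is not available here: the function paired against $\Psi_{P_3}^{p'}$ is $h\,\rho_{|P|}(\lambda)\,w^e_{|P|,n,v}(\lambda)$, which changes with the scale $|P|$ and with $(n,v)$, so there is no single input to which Bessel/orthogonality across the lacunary frequencies of the tree can be applied, and the rough, $\lambda$-dependent multiplier cannot be commuted past any Littlewood--Paley decomposition merely because it is bounded (your ``harmless contraction'' remark does not address this). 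What actually controls the sum over scales in the paper is the stopping time itself: after Cauchy--Schwarz (using \ref{WP1}--\ref{WP2}, or \ref{WP3} in the light case) each pairing is dominated by a weighted local $L^2$ mass of $h$ over $\underline{I}_{P}^{p'}$ carrying the factor $\rho_{|P|}(\lambda(x))$, and for each fixed $x$ only $O(1)$ scales $k$ have $\rho_{am-ak}(\lambda(x))\neq 0$; this pointwise disjointness of scales, recorded in \eqref{eq:summation:h} and \eqref{eq:sum:rho:k}, is what replaces frequency lacunarity and makes the tree sum close with a bound by $\|h\cdot\ci_{I_{\mb T}}\|_{L^2}^2$. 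This observation is absent from your proposal, and without it your scale summation does not close.

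A second, quantitative defect is your intermediate display bounding the tree sum by $\sup\|\rho_{|P|}(\lambda)w^e_{|P|,n,v}(\lambda)\one_{\underline I_P^{p'}}\|_{L^\infty}^2$ times a raw count of indices: the count of triples per tile is of order $2^{3am/2}$ (you state $2^{am/2}$, which is only the $n$-count), so a sup-times-cardinality bound loses a full power $2^{am}$ even after the $2^{-am/2}$ normalization. The correct accounting is pointwise, not by counting: the sum over $v$ is absorbed by the decay of $w_{|P|,n,v}(\lambda)(x)$ (it sums to $O(1)$ for each $x,n$), the sum over $p'$ is absorbed because the intervals $\underline{I}_{P}^{p'}$ partition $I_P$, and only the sum over $n$ is cancelled by the $2^{-am/2}$ prefactor; the gains $2^{-\delta_1 am/2}$ and $2^{-\mu am/4}$ then come, respectively, from \ref{WP3} inside the Cauchy--Schwarz step and from performing the $v$-summation before invoking the cardinality bound in $n$ (via the enlarged set of Remark \ref{remark:boundedness:cluster:number}). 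You gesture at these compensations in your ``main obstacle'' paragraph, but the proposal as written does not supply the pointwise summation argument or the stopping-time step that make them rigorous.
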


\bigskip
In the above proposition, only estimates \eqref{eq:est:size-h-}-\eqref{eq:est:size-h:cluster} require some work; the $L^2$-based \eqref{eq:est:size-f-g} is standard and follows from the almost orthogonality of the wave-packets. For completeness, we treat all of them.

\begin{proof}[Proof of estimate \eqref{eq:est:size-f-g}]
We let $\mb T$ be a $1$-lacunary tree and we aim to prove that
\begin{align*}
\sum_{P \in \mb T} \sum_{\hat s =P} \big| \langle f, \phi_{s_1}  \rangle   \big|^2 \lesssim \|f \cdot \ci_{I_{\mb T}}\|_{L^2}^2.
\end{align*}
When $f$ is supported close to $I_{\mb T}$, this is a simple consequence of orthogonality; on the contrary, for $f$ supported away from $I_{\mb T}$ we rely on the fast decay of the wave-packets.

Assume that $f$ is supported on $3I_{\mb T}$; then it suffices to prove
\begin{align*}
\sum_{P \in \mb T} \sum_{\hat s=P} \big| \langle f, \phi_{s_1}  \rangle   \big|^2 \lesssim \|f\|_{L^2}^2.
\end{align*}

This follows from the disjointness of the collection of tiles $\ds \{ s_1 \}_{\hat s \in \mb T}$ (a weak consequence of the fact that $\mb T$ is $1$-lacunary) and from the almost orthogonality of the associated collection of wave-packets $\ds \{ \phi_{s_1} \}_{\hat s \in \mb T}$.

Next we assume that $f$ is supported in $I_{\mb T}^\nu:=I_{\mb T}+\nu|I_{\mb T}|$, for $|\nu| \geq 2$ an integer, and we want to prove
\begin{align}
\label{eq:decay:size}
\sum_{P \in \mb T} \sum_{\hat s=P} \big| \langle f, \phi_{s_1}  \rangle   \big|^2 \lesssim  |\nu|^{-10} \|f\|_{L^2}^2.
\end{align}

First, we notice that the left-hand side of the estimate above writes as
\begin{align*}
\sum_{\kappa \geq 0} \sum_{ \substack{\hat s \in \mb T \\ |I_{s_1}|= 2^{-{am \over 2}} 2^{-\kappa} |I_{\mb T}|}} \big| \langle f, \phi_{s_1}  \rangle   \big|^2,
\end{align*}
and also
\begin{align*}
\big| \langle f, \phi_{s_1}  \rangle   \big|^2 & \lesssim \|f\|_{L^2}^2 \, \int_{I_{\mb T}^\nu} \frac{1}{|I_{s_1}|}  \Big( 1+ \frac{\dist(I_{s_1}, x)}{|I_{s_1}|} \Big)^{-20} dx \\
&\lesssim  \|f\|_{L^2}^2  \int_{I_{\mb T}^\nu} \frac{1}{|I_{s_1}|}  \Big( \frac{\dist(I_{s_1}, x)}{ 2^{-{am \over 2}} 2^{-\kappa} |I_{\mb T}|} \Big)^{-20} dx \\
&\lesssim \|f\|_{L^2}^2 \:  \frac{2^{am \over 2} 2^{\kappa}}{|I_{\mb T}|}  \int_{I_{\mb T}^\nu}  2^{-10 am } 2^{- 20 \kappa}   \big( \frac{\dist(I_{\mb T}, x)}{|I_{\mb T}|} \big)^{-20} dx\\
& \lesssim \|f\|_{L^2}^2 \, 2^{-9 am } 2^{- 10 \kappa} |\nu|^{-10} .
\end{align*}

For a fixed scale $\kappa \geq 0$, there are at most $2^{\kappa} \cdot 2^{am}$ tiles $s$ so that $\hat s \in \mb T$, with $|I_{s_1}|= 2^{-{am \over 2}} 2^{-\kappa} |I_{\mb T}|$. This rough estimate is enough for concluding \eqref{eq:decay:size}; by standard arguments, \eqref{eq:est:size-f-g} follows.
\end{proof}

Notice that the estimates for the light-mass and the heavy-mass/clustered sizes $\size^{\P}_3(h_\mathfrak{l}(P_3))$ and $\size^{\P}_3(h_\mathfrak{c}(P_3))$ feature additional decay in the parameter $m$. First we will prove estimate \eqref{eq:est:size-h+}, for which we will not make use of the fact that we are summing over triples $(p', n, v) \in \mathfrak{u}$. Next, we prove \eqref{eq:est:size-h:cluster}, in which the additional decay is a result of the fact that for any fixed $p' \sim 2^{am}$, only a small fraction of the allowed values of $n$ is such that $(p', n, v) \in \mathfrak{c}$ (see Remark \ref{remark:boundedness:cluster:number}). Lastly, for \eqref{eq:est:size-h-}, the decay comes from the assumption that the indices $(p', n, v) \in \mathfrak{l}$ are associated to light averages of the weight $w_{|P|, n, v}(\lambda)$.

\bigskip

\begin{proof}[Proof of estimate \eqref{eq:est:size-h+}]
Let $\mb T$ be a $3$-lacunary tree; it will be enough to prove that
\begin{align*}
\sum_{P \in \mb T} h_\mathfrak{u}(P_3)^2 \lesssim \| h \cdot \ci_{I _{\mb T}}   \|_{L^2}^2.
\end{align*}
By definition \eqref{eq:def:h*(P):tile}, this amounts to proving
\begin{align*}
\frac{1}{2^{am \over 2}} \sum_{P \in \mb T} \sum_{(p', n, v) \in \mathfrak{u}}  \big| \langle h \, \rho_{|P|}(\lambda(\cdot)) w_{|P|, n, v}^e(\l) , \Psi_{P_3}^{p'}  \rangle \big|^2  \lesssim \| h \cdot \ci_{I _{\mb T}}   \|_{L^2}^2.
\end{align*}
We will see that orthogonality presents itself in disjointness across different scales, because of the presence of the stopping time $\rho_{|P|}(\lambda(\cdot))$. First, by using Cauchy-Schwarz and properties \ref{WP1}-\ref{WP2}, one has
\begin{equation}
\begin{aligned}
& \big| \langle h \, \rho_{|P|}(\lambda(\cdot)) w_{|P|, n, v}^e(\l), \Psi_{P_3}^{p'}  \rangle \big|^2 \\
& \lesssim \Big(\int |h(x)|^2 \, \rho_{|P|}(\lambda(x))^2 |w_{|P|, n, v}(\l)(x)|^2 |\Psi_{P_3}^{p'}(x)| \, dx\Big) \cdot \Big( \int |\Psi_{P_3}^{p'}(x)|\, dx \Big) \\
& \lesssim \Big(\int |h(x)|^2 \, \rho_{|P|}(\lambda(x))^2 |w_{|P|, n, v}(\l)(x)|^2 |\underline{I}_{P}^{p'}|^{-{1 \over 2}} \ci_{\underline{I}_{P}^{p'}}(x) \, dx\Big) |\underline{I}_{P}^{p'}|^{1 \over 2}
\end{aligned}\label{eq:est:wave:h:simple}
\end{equation}
and our desired estimate reduces to
\begin{equation}
\label{eq:h:size:square}
\sum_{P \in \mb T} \sum_{(p', n, v) \in \mathfrak{u}} \frac{1}{2^{am \over 2}} \|h \, \rho_{|P|}(\lambda) w_{|P|, n, v}(\l) \|_{L^2( \ci_{\underline{I}_{P}^{p'}})}^2  \lesssim \| h \cdot \ci_{I _{\mb T}}   \|_{L^2}^2.
\end{equation}
Using \eqref{eq:decay-weight} to bound $w_{|P|, n, v}(\l)$ and extending the summation to all admissible triples $(p',n,v)$ we have
\begin{align*}
& \qquad \sum_{P \in \mb T} h_\mathfrak{u}(P_3)^2 \\
& \lesssim \sum_{k \in \Z} \sum_{\substack{P \in \mb T : \\ |I_P|=2^k}} \sum_{(n, v, p') \in \mathfrak{u}} \frac{1}{2^{am \over 2}}  \|  h \, \rho_{am-ak}(\lambda(\cdot)) w_{k, n, v}(\l) \|_{L^2(\ci_{\underline{I}_{P}^{p'}})}^2 \\
&\lesssim \sum_{k \in \Z} \sum_{n \sim 2^{am \over 2}} \frac{1}{2^{am \over 2}} \int_{\R} |h(x)|^2 \Big( \sum_{\substack{ v \sim 2^{am \over 2} \\ p' \sim 2^{am}}}   \sum_{\substack{P \in \mb T \\ |I_P|=2^k}}   \frac{\rho_{am-ak}(\lambda(x))}{ 1+  \Big|\bar c_a\frac{n^{a-1}  \l(x)}{2^{a (\frac{am}{2}-k)}} +2v \Big|^4} \cdot \ci_{\underline{I}_{P}^{p'}}(x)   \Big) dx.
\end{align*}
We claim that the following pointwise bound holds:
\begin{equation}
\label{eq:summation:h}
 \sum_{k \in \Z}  \sum_{\substack{P \in \mb T : \\ |I_P|=2^k}} \sum_{\substack{ v \sim 2^{am \over 2} \\ p' \sim 2^{am}}}  \frac{\rho_{am-ak}(\lambda(x))}{ 1+  \Big|\bar c_a\frac{n^{a-1}  \l(x)}{2^{a (\frac{am}{2}-k)}} +2v \Big|^4} \cdot \ci_{\underline{I}_{P}^{p'}}(x)   \lesssim \ci_{I_{\mb T}}(x).
\end{equation}
If the above is true then this will be enough to conclude \eqref{eq:h:size:square}, since the left/right-hand side in \eqref{eq:summation:h} is independent of $n$ (notice that the sum over $n \sim 2^{am/2}$ is compensated by the factor of $2^{-am/2}$).

To see the claim, first notice that after summing in $v$ the left-hand side of \eqref{eq:summation:h} is bounded above by
\begin{align*}
 & \sum_{k \in \Z}  \sum_{\substack{P \in \mb T : \\ |I_P|=2^k}}  \sum_{ p' \sim 2^{am}}  \rho_{am-ak}(\lambda(x)) \ci_{\underline{I}_{P}^{p'}}(x) =  \sum_{k \in \Z} \rho_{am-ak}(\lambda(x))\sum_{\substack{P \in \mb T \\ |I_P|=2^k}}  \ci_{I_P}(x).
\end{align*}

For every $x \in \R$, there are only $O(1)$ many $k \in \Z$ for which $ \rho_{am-ak}(\lambda(x)) \neq 0$, and for every such $k$
\[
\sum_{\substack{P \in \mb T : \\ |I_P|=2^k}}  \ci_{I_P}(x) \lesssim \ci_{I_{ \mb T}}(x),
\]
which allows us to conclude \eqref{eq:summation:h}.
\end{proof}

\bigskip

\begin{proof}[Proof of estimate \eqref{eq:est:size-h:cluster}]
The overall argument will be similar to the previous one. We start again with $\mb T$, a $3$-lacunary tree, and observe that it will be enough to prove that
\begin{align*}
\sum_{P \in \mb T} h_\mathfrak{c}(P_3)^2 \lesssim 2^{- \mu {am \over 2}} \| h \cdot \ci_{I _{\mb T}}   \|_{L^2}^2.
\end{align*}
As before, we have
\begin{align*}
& \qquad \sum_{P \in \mb T} h_\mathfrak{c}(P_3)^2 \\
& \lesssim  \sum_{k \in \Z} \sum_{\substack{P \in \mb T : \\ |I_P|=2^k}} \frac{1}{2^{am \over 2}}  \int_{\R} |h(x)|^2 \Big( \sum_{(p', n, v) \in \mathfrak{c}}    \frac{\rho_{am-ak}(\lambda(x))}{ 1+  \Big|\bar c_a\frac{n^{a-1}  \l(x)}{2^{a (\frac{am}{2}-k)}} +2v \Big|^4} \cdot \ci_{\underline{I}_{P}^{p'}}(x)  \Big) dx.
\end{align*}

The desired estimate will be a consequence of the pointwise bound
\begin{equation}
\label{eq:aim:cluster:size}
\begin{aligned}
 \sum_{k \in \Z} \sum_{\substack{P \in \mb T : \\ |I_P|=2^k}} \frac{1}{2^{am \over 2}} \sum_{(p', n, v) \in \mathfrak{c}}    \frac{\rho_{am-ak}(\lambda(x))}{ 1+  \Big|\bar c_a\frac{n^{a-1}  \l(x)}{2^{a (\frac{am}{2}-k)}} +2v \Big|^4} & \ci_{\underline{I}_{P}^{p'}}(x)  \\
 &\lesssim 2^{- \mu \frac{am }{2}} \cdot \ci_{I _{\mb T}}(x),
\end{aligned}
\end{equation}
which in turn follows from
\begin{equation}
\label{eq:summation:cluster}
\frac{1}{2^{am \over 2}}  \sum_{(p', n, v) \in \mathfrak{c}} \ci_{\underline{I}_{P}^{p'}}(x)  \frac{1}{ 1+  \Big|\bar c_a \frac{n^{a-1}  \l(x)}{2^{a (\frac{am}{2}-k)}} +2v \Big|^4}  \lesssim 2^{- \mu \frac{am }{2}}  \ci_{I_P}(x),
\end{equation}
for any fixed $k \in \Z$ and $P \in \mb T$.

In Remark \ref{remark:boundedness:cluster:number}, we introduced a larger set $\bar{\mathfrak{c}}=\bar{\mathfrak{c}}_1 \times \{ v \sim 2^{am \over 2} \}$ containing  $\mathfrak{c}$ which allowed us to separate the $n$ and $v$ parameters; thus the left-hand side in the expression above is bounded by
\begin{equation}
\label{eq:summation:cluster3}
\frac{1}{2^{am \over 2}}  \sum_{(p', n) \in \bar{\mathfrak{c}}_1} \sum_{v \sim 2^{am \over 2}} \ci_{\underline{I}_{P}^{p'}}(x)  \frac{1}{ 1+  \Big|\bar c_a \frac{n^{a-1}  \l(x)}{2^{a (\frac{am}{2}-k)}} +2v \Big|^4}.
\end{equation}
With $p'$ and $n$ fixed, we first sum in $v$ to get 
\begin{equation}
\label{eq:summ:v}
\sum_{v \sim 2^{am \over 2}} \frac{1}{ 1+  \Big|\bar c_a \frac{n^{a-1}  \l(x)}{2^{a (\frac{am}{2}-k)}} +2v \Big|^4} \lesssim 1.
\end{equation}
Now we can bound \eqref{eq:summation:cluster3} by
\[
\frac{1}{2^{am \over 2}}  \sum_{(p', n) \in \bar{\mathfrak{c}}_1} \ci_{\underline{I}_{P}^{p'}}(x) \leq  \frac{1}{2^{am \over 2}}  \sum_{p' \sim 2^{am}} \sum_{n: (p', n) \in \bar{\mathfrak{c}}_1}  \ci_{\underline{I}_{P}^{p'}}(x).
\]
We recall the observation in Remark \ref{remark:boundedness:cluster:number}: once $p' \sim 2^{am}$ is fixed, there are at most $O(2^{(1-\mu) \frac{am}{2}})$ values of $n$ so that $(p', n) \in \bar{\mathfrak{c}}_1$. That produces as upper bound for \eqref{eq:summation:cluster3}
\[
 \frac{1}{2^{am \over 2}}  \sum_{p' \sim 2^{am}}  2^{(1-\mu) \frac{am}{2}} \ci_{\underline{I}_{P}^{p'}}(x),
\]
which is of course bounded by the right-hand side of \eqref{eq:summation:cluster}.

Estimate \eqref{eq:aim:cluster:size} follows immediately from \eqref{eq:summation:cluster} and the straightforward pointwise estimate
\begin{equation}
\label{eq:sum:rho:k}
\sum_{k \in \Z} \sum_{\substack{P \in \mb T : \\ |I_P|=2^k}} \rho_{am-ak}(\lambda(x)) \ci_{I_P}(x) \lesssim \ci_{I _{\mb T}}(x).
\end{equation}
\end{proof}

\bigskip

\begin{proof}[Proof of estimate \eqref{eq:est:size-h-}] In this situation, taking $\mb T$ a $3$-lacunary tree, our task is to show that
\begin{equation}
\label{eq:aim:light:size}
\sum_{P \in \mb T} h_\mathfrak{l}(P_3)^2 \lesssim 2^{- \delta_1 a m} \| h \cdot \ci_{I _{\mb T}}   \|_{L^2}^2.
\end{equation}
We will use the fact that for a fixed tri-tile $P$, we are summing over the indices $(p', n, v) \in \mathfrak{l}$ in the ``light-mass regime''. Indeed, by Cauchy-Schwarz and \ref{WP3} we can bound
 \begin{align*}
\big| \langle h \, \rho_{|P|}(\lambda) w_{|P|, n, v}^e(\l), \Psi_{P_3}^{p'}  \rangle \big|^2 &\lesssim \big\|h^2 \rho_{|P|}(\lambda)^2 w_{|P|, n, v} |\Psi_{P_3}^{p'}| \big\|_{L^1} \; \big\|w_{k,n,v} |\Psi_{P_3}^{p'}| \big\|_{L^1} \\
&  \lesssim   2^{- \delta_1 a m} \big\|h^2 \rho_{|P|}(\lambda)^2 w_{|P|, n, v} \ci_{\underline{I}_{P}^{p'}} \big\|_{L^1}
 \end{align*}
(this is the natural ``weighted'' replacement of \eqref{eq:est:wave:h:simple}). For any $P \in \mb T$ we then have
\begin{align*}
 & \qquad  h_{\mathfrak{l}}(P_3)^2 \\
 & \lesssim 2^{-\delta_1 a m} \frac{1}{2^{\frac{a}{2}m}} \sum_{(p',n,v)\in \mathfrak{l}}   \int_{\R} |h(x)|^2 \rho_{|P|}(\lambda(x))^2 w_{|P|,n,v} (\lambda) (x) \ci_{\underline{I}_{P}^{p'}}(x) dx\\
 & \lesssim 2^{-\delta_1 a m}  \int_{\R} |h(x)|^2 \rho_{|P|}(\lambda(x))^2  \ci_{I_P}(x) dx .
\end{align*}
Estimate \eqref{eq:aim:light:size} follows now by essentially the same reasoning used for \eqref{eq:h:size:square} since the information $(p',n,v)\in \mathfrak{l}$ is no longer needed.
\end{proof}

\subsection{Estimates for energies}
\label{subsec:energy:only}

Now we turn towards the energies, which are naturally $L^2$-based quantities as they capture the almost orthogonality of strongly disjoint trees associated to a fixed size level.

\begin{proposition}[Energy estimates]
\label{prop:energies} For any $L^2$ functions $f,g,h$ and any subcollection $\P\subset \BHT$, we have
\begin{equation}
\label{eq:est:energy-f-g}
\begin{aligned} \energy^{\P}_1((f(P_1))_{P\in\P}) &\lesssim \|f\|_{L^2}, \\
 \energy^{\P}_2((g(P_2))_{P\in\P}) &\lesssim \|g\|_{L^2}
 \end{aligned}
\end{equation}
and for $\ast \in \{ \mathfrak{l}, \mathfrak{u}, \mathfrak{c} \}$
\begin{equation}
\label{eq:est:energy-h:pm}
\energy^{\P}_3((h_{\ast}(P_3))_{P\in\P}) \lesssim \|h\|_{L^2}.
\end{equation}
\end{proposition}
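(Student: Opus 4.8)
The plan is to establish all three estimates through the standard reduction underlying every energy estimate in time--frequency analysis. By the definition of energy it suffices to fix $d\in\mathbb Z$ and a collection $\ii F$ of strongly disjoint $j$-lacunary trees $\mb T\subset\P$ with $\frac{1}{|I_{\mb T}|}\sum_{P\in\mb T}|a_{P_j}|^2>2^{2d}$ (and $\le 2^{2d+2}$ on every subtree), and to prove $2^{2d}\sum_{\mb T\in\ii F}|I_{\mb T}|\lesssim\|f_j\|_{L^2}^2$, where $(f_1,f_2,f_3):=(f,g,h)$ and $a_{P_j}$ is respectively $f(P_1)$, $g(P_2)$, $h_\ast(P_3)$. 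Since $2^{2d}\sum_{\mb T}|I_{\mb T}|<\sum_{\mb T\in\ii F}\sum_{P\in\mb T}|a_{P_j}|^2$, everything reduces to a Bessel-type bound $\sum_{\mb T\in\ii F}\sum_{P\in\mb T}|a_{P_j}|^2\lesssim\|f_j\|_{L^2}^2$ when $j=1,2$, and to a weighted variant of it when $j=3$.

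For $j=1$ (and symmetrically $j=2$) one expands $|f(P_1)|^2=\sum_{s_1:\hat s_1=P_1}|\langle f,\phi_{s_1}\rangle|^2$ and notes that, after the $O(1)$ splitting of $\BHT$ implementing the sparseness assumption, the unit tiles $\{s_1:\hat s_1=P_1,\,P\in\mb T,\,\mb T\in\ii F\}$ have bounded overlap in phase space: pairwise disjoint within each fixed scale (by construction of the $s_1$ inside $P_1$ and by $1$-lacunarity of the trees), Littlewood--Paley lacunary across scales inside a tree, and well separated across distinct trees precisely by the strong-disjointness condition \eqref{def:strong:disj} (this is where the extra $3I_{\mb T}$-separation is used). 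The associated $L^2$-normalized, frequency-localized wave-packets $\{\phi_{s_1}\}$ thus form a Bessel system of uniform constant, giving $\sum_{\mb T}\sum_{P\in\mb T}|f(P_1)|^2\lesssim\|f\|_{L^2}^2$. This is the classical energy estimate of \cite{lt1,lt2,MTTBiest2}; only the cosmetic bookkeeping of aggregating the Gabor coefficients into $f(P_1)$ is new.

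The substantive case is $j=3$. Here $\sum_{\mb T\in\ii F}\sum_{P\in\mb T}h_\ast(P_3)^2=\frac{1}{2^{am/2}}\sum_{\mb T}\sum_{P\in\mb T}\sum_{(p',n,v)}\big|\langle h\,\rho_{|P|}(\lambda)\,w^{e}_{|P|,n,v}(\lambda),\Psi_{P_3}^{p'}\rangle\big|^2$, where --- since no $m$-decay is needed for the energy --- we enlarge $\ast$ to the full index set. I would run the $TT^{*}$ argument of \cite[Section 6]{MTTBiest2}, now with the wave-packets $\{\Psi_{P_3}^{p'}\}$ in place of the tile wave-packets, invoking \ref{WP1} for their spatial support $\underline{I}_{P}^{p'}$ and frequency adaptation to $\pmb{\omega}_{P_3}$, \ref{WP4} as the single-scale Bessel inequality that (at a fixed scale $|I_P|=2^k$) allows free summation over $p'$ and over the $O(1)$ admissible $\pmb{\omega}_{P_3}$ of each tree, and the strong disjointness \eqref{def:strong:disj} to furnish the almost-diagonality of the $\{\Psi_{P_3}^{p'}\}$ across distinct trees (the subtree upper bound entering through the diagonal term, exactly as in the classical case). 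Into this scheme I would feed the auxiliary-parameter bookkeeping: the pointwise domination $|w^{e}_{|P|,n,v}(\lambda)(x)|\lesssim w_{|P|,n,v}(\lambda)(x)$ from \eqref{eq:decay-weight} together with $\sum_{v\sim 2^{am/2}}w_{|P|,n,v}(\lambda)(x)^2\lesssim\rho_{|P|}(\lambda(x))^2\lesssim 1$ (a rapidly convergent arithmetic-progression sum) absorbs the $v$-summation; the bounded overlap $\sum_{k}\rho_{am-ak}(\lambda(x))\lesssim 1$ of the dyadic stopping-time cutoffs absorbs the $k$-summation; and the $\sim 2^{am/2}$ admissible values of $n$ are exactly balanced by the prefactor $2^{-am/2}$. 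The upshot is $\sum_{\mb T}\sum_{P\in\mb T}h_\ast(P_3)^2\lesssim\|h\|_{L^2}^2$, hence \eqref{eq:est:energy-h:pm} for each $\ast\in\{\mathfrak l,\mathfrak u,\mathfrak c\}$; the shifted terms of Remarks \ref{remark:shift}--\ref{remark:shifts} are treated identically using their extra $(1+|j|)^{-M}$ decay.

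I expect the main obstacle to be the interaction of the \emph{rough} stopping-time weight $\rho_{|P|}(\lambda)w^{e}_{|P|,n,v}(\lambda)$ with the orthogonality argument. One cannot simply estimate $|\langle h\,\rho_{|P|}(\lambda)w^{e}_{|P|,n,v}(\lambda),\Psi_{P_3}^{p'}\rangle|\le\|h\,\rho_{|P|}(\lambda)w^{e}_{|P|,n,v}(\lambda)\|_{L^2(\underline{I}_{P}^{p'})}$ and argue purely spatially, because distinct trees in $\ii F$ need only be strongly disjoint and may well share their spatial top $I_{\mb T}$, so $\sum_{\mb T\in\ii F}\one_{I_{\mb T}}$ is not $O(1)$ and any spatial-only count overshoots. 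The inner products must therefore be kept intact and orthogonality recovered from the frequency localization of the $\Psi_{P_3}^{p'}$, while the weight --- which carries no frequency regularity --- is carried through the $TT^{*}$ computation and controlled solely by its pointwise size $|w^{e}_{|P|,n,v}(\lambda)|\lesssim w_{|P|,n,v}(\lambda)\lesssim\rho_{|P|}(\lambda)$ and the $v$- and $k$-summability above. Verifying that this can be done so that the multi-parameter sum over $(p',n,v)$ and over scales $k$ produces no residual power of $2^{am}$ --- equivalently, that the single orthogonality input \ref{WP4} together with the prefactor $2^{-am/2}$ precisely cancels the $n$-summation --- is the crux.
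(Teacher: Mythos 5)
Your overall skeleton (reduce each energy to a Bessel-type bound; treat $f,g$ through the unit-tile wave-packets and strong disjointness; treat $h$ through \ref{WP4} plus the weight and stopping-time summability) is the right one, but both decisive verifications are left undone. For $j=1,2$ the step you dismiss as ``classical, only cosmetic bookkeeping'' is in fact the substantive point: the trees live on expanded tiles of area $2^{am}$ while the wave-packets are adapted to the area-one tiles $s_1$, so the correlation between two expanded tiles $P\in\mb T$, $P'\in\mb T'$ with $\pmb{\omega}_{P_1}\subsetneq\pmb{\omega}_{P'_1}$ is a double sum over $\sim 2^{am}\times 2^{am}$ unit wave-packets, and mere phase-space disjointness of the $s_1$'s does not give a Bessel constant uniform in $m$. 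One must prove a quantitative almost-orthogonality estimate of the type \eqref{eq:en:alm:orth}, in which the counting factor $2^{am}$ is beaten by the spatial decay of the tails; this is exactly where the strengthened separation $I_{P'}\cap 3I_{\mb T}=\emptyset$ of \eqref{def:strong:disj} (i.e.\ \eqref{eq:capit:disj}) enters, since it guarantees that the interacting unit tiles are at distance $\gtrsim 2^{am/2}|I_{s_1}|$, after which one still has to run the geometric summation \eqref{eq:en:suff:2} over the (pairwise disjoint) intervals $I_{P'}$. Asserting ``the $\{\phi_{s_1}\}$ thus form a Bessel system of uniform constant'' is precisely what needs proof here, and nothing in your sketch supplies the uniformity in $m$.

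For $j=3$ your proposed route is both heavier than necessary and based on a misdiagnosis. No $TT^{*}$ across trees, no strong disjointness, and no subtree upper bound are needed: since $h\,\rho_{|P|}(\lambda)w^{e}_{|P|,n,v}(\lambda)$ and $\underline{I}_P^{p'}$ depend on the spatial data $(|I_P|,n,v,p')$ but not on the frequency interval $\pmb{\omega}_{P_3}$, one fixes a spatial interval $I$, collects \emph{all} frequency intervals occurring over all trees for that $I$ (they are distinct, hence disjoint, intervals of one fixed length), and applies \ref{WP4} once to the function $h\,\rho_{I}(\lambda)w^{e}_{I,n,v}(\lambda)$; the remaining sums are closed by \eqref{eq:summ:v} in $v$, bounded overlap of $\{\ci_I\}_{|I|=2^k}$ at a fixed scale, and the stopping-time factor $\rho_{am-ak}(\lambda(x))$ which restricts to $O(1)$ scales $k$ (an unlocalized \eqref{eq:sum:rho:k}); the $n$-sum is exactly balanced by $2^{-am/2}$. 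In particular the obstruction you identify — that $\sum_{\mb T}\one_{I_{\mb T}}$ is unbounded — is not the relevant one: tree tops never enter this argument, the orthogonality is carried entirely by \ref{WP4} at fixed $I$ together with disjointness across scales coming from $\lambda$, and what would overshoot in a purely spatial count is the repetition of the same $I_P$ with different frequencies, which \ref{WP4} absorbs. Since you explicitly leave ``the crux'' of your own scheme unverified (how the rough weight survives a cross-tree $TT^{*}$), and the $j=1,2$ crux is likewise asserted rather than proved, the proposal as written does not yet constitute a proof.
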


\begin{remark}
We point out that in establishing the energy estimates \eqref{eq:est:energy-h:pm} we will not make use of the partitioning of the triples $(p', n, v)$ into subsets $\mathfrak{l}, \mathfrak{u}, \mathfrak{c}$. This partition was only important for the size estimates of Proposition \ref{prop:size:est}.
\end{remark}

\begin{proof}[Proof of estimate \eqref{eq:est:energy-h:pm}]  Let $\ast$ stand for any of the $\mathfrak{l}, \mathfrak{u}$ or $\mathfrak{c}$ and let $\ii F$ be a collection of mutually disjoint $3$-lacunary trees satisfying for some integer $d$
\[
2^{2d} < \frac{1}{|I_{\mb T}|} \sum_{P \in \mb T} h_{*}(P_3)^2 \qquad \text{for all    } \mb T \in \ii F.
\]
It is then enough to show that
\begin{equation}
\label{eq:Bessel}
\sum_{\mb T \in \ii F} \sum_{P \in \mb T} h_{*}(P_3)^2 \lesssim \| h \|_{L^2}^2,
\end{equation}
which will in turn be a direct consequence\footnote{Recall that the factor $2^{-am/2}$ in the definition of $h_{\ast}(P_3)$ allows one to safely sum in $n \sim 2^{am/2}$.} of the estimate, uniform in the parameter $n \sim 2^{am \over 2}$,
\begin{align*}
\sum_{\mb T \in \ii F} \sum_{P \in \mb T}  \sum_{\substack{ v \sim 2^{am \over 2} \\ p' \sim 2^{am}}} \big| \langle h \, \rho_{|P|}(\lambda(\cdot)) w_{|P|, n, v}^e(\l), \Psi_{P_3}^{p'}  \rangle \big|^2  \lesssim \|h\|_{L^2}^2.
\end{align*}

There are two key observations to make:
\begin{enumerate}[label=(\roman*)]
\item  Given $I$ a dyadic interval, we denote by $\pmb {\omega}_{\ii F} (I)$ the collection of frequency intervals $\pmb{\omega}$ for which $I \times \pmb{\omega}$ corresponds to a $P_3$ tile:
\[
\pmb {\omega}_{\ii F} (I):= \{  \pmb{\omega} : |I||\pmb{\omega}|=2^{am},  I \times \pmb{\omega} \in \bigcup_{\mb T \in \ii F} \bigcup_{P \in \mb T} \{P_3\} \}.
\]
Then notice that $\pmb {\omega}_{\ii F} (I)$ consists of mutually disjoint intervals of the same length.
\item $ w_{|P|,n,v}^e$ and $\underline{I}_{P}^{p'}$ do not depend on the frequency interval of the tile $P$ but only depend on the spatial interval $I_P$ and on the triple of parameters $(p', n, v)$.
\end{enumerate}

Hence, for any fixed interval $I$ with $|I|=2^k$ we can use property \ref{WP4} for the collection $\big \{\Psi_{I \times \pmb{\omega}}^{p'} \big\}_{\pmb{\omega}\in \pmb \omega_{\ii F}(I),p'\sim 2^{am}}$ and function $F:=h \, \rho_{I}(\lambda(\cdot)) w_{I, n, v}^e(\l)$  to deduce
\begin{align*}
    \sum_{\pmb{\omega} \in \pmb \omega_{\ii F}(I)} \ \sum_{\substack{ v \sim 2^{\frac{a}{2}m}, \\ p' \sim 2^{am}}} \big| \langle h \, \rho_{I}(\lambda(\cdot)) w_{I, n, v}^e(\l), \Psi_{I \times \pmb{\omega}}^{p'}  \rangle \big|^2  & \lesssim  \sum_{v \sim 2^{\frac{a}{2}m}} \| h \rho_{am-ak}(\lambda) w_{k,n,v} \ci_{I} \|_{L^2}^2.
\end{align*}
Estimate \eqref{eq:Bessel} follows once we notice that for every fixed $n$ (and uniformly so)
\[
\sum_k \ \sum_{\substack{I \in \ii D : \\ |I|=2^k}} \ \sum_{v \sim 2^{\frac{a}{2}m}}  \rho_{am-ak}(\lambda(x)) w_{k,n,v}(x)^2 \ci_{I}(x) \lesssim 1.
\]
This follows from \eqref{eq:summ:v} and an unlocalized version of \eqref{eq:sum:rho:k}.
\end{proof}

It remains to prove energy estimate \eqref{eq:est:energy-f-g}, which relies on more subtle orthogonality properties. Although similar to the classical argument for BHT (see \cite[Lemma 6.6]{MTTBiest2}), we will
\begin{itemize}
\item rely on the extra separation condition in \eqref{def:strong:disj};
\item use the smaller tri-tiles $s=(s_1, s_2, s_3)$ of area 1 (to which wave-packets are naturally adapted), in order to capture the orthogonality between the strongly disjoint trees through a $TT^*$ argument.
\end{itemize}

\begin{proof}[Proof of estimate \eqref{eq:est:energy-f-g}]
By symmetry, we consider only the energy for $f$. Fix an integer $d\in\mathbb{Z}$ and let $\ii F$ be a collection of strongly disjoint $1$-lacunary trees having the property that for any $\mb T \in \ii F$
\begin{equation}
\label{eq:Bess-lowBd}
2^d< \Big(\frac{1}{|I_{\mb T}|} \sum_{ \hat s \in \mb T} |\langle f, \phi_{s_1} \rangle|^2    \Big)^\frac{1}{2}
\end{equation}
and
\begin{equation}
\label{eq:Bess-upBd}
\Big(\frac{1}{|I_{\mb T'}|} \sum_{\hat s \in \mb T'} |\langle f, \phi_{s_1} \rangle|^2    \Big)^\frac{1}{2} \leq 2^{d+1} \quad \text{for any subtree   } \mb T'\subseteq \mb T.
\end{equation}
It will suffice to prove
\begin{equation}
\label{eq:Bess-conc}
 2^{2d} \sum_{\mb T \in \ii F} |I_{\mb T}| \lesssim \|f\|_{L^2}^2,
\end{equation}
which, based on \eqref{eq:Bess-lowBd} and a standard $TT^*$ argument, follows from
\begin{equation}
\label{eq:Bessel-orth}
\Big\|   \sum_{\mb T \in \ii F} \sum_{ \hat s \in \mb T} \langle f, \phi_{s_1}  \rangle   \phi_{s_1}   \Big\|_{L^2}^2 \lesssim 2^{2d}  \sum_{\mb T \in \ii F} |I_{\mb T}|.
\end{equation}
Expanding the square, it suffices to prove
\begin{equation}
\label{eq:en:suff}
\sum_{\mb T, \mb T' \in \ii F} \sum_{\hat s \in \mb T} \sum_{\hat s' \in \mb T'} \big| \langle f, \phi_{s_1}  \rangle \langle f, \phi_{s'_1}  \rangle \langle \phi_{s_1}, \phi_{s'_1}  \rangle \big| \lesssim 2^{2d}  \sum_{\mb T \in \ii F} |I_{\mb T}|.
\end{equation}

Now notice that in order for $\ds \langle \phi_{s_1}, \phi_{s'_1} \rangle $ to be non-zero it must be necessarily that $\omega_{s_1} \cap \omega_{s'_1} \neq \emptyset$ and hence one of them is included in the other.\footnote{This property is inherited from the parent collection $\{ \pmb{\omega}_{P_1}\}_{P \in \BHT}$, which forms a lattice.}

It is thus enough to prove the following two claims:
\begin{itemize}
\item Claim $1$:
\begin{equation}
\label{eq:claim1}
\sum_{\mb T \in \ii F} \sum_{s : \hat s \in \mb T} \sum_{\mb T' \in \ii F} \sum_{\substack{ \hat s' \in \mb T',  \\ \omega_{s_1}=\omega_{s'_1}  }} \big| \langle f, \phi_{s_1}  \rangle \langle f, \phi_{s'_1}  \rangle \langle \phi_{s_1}, \phi_{s'_1}  \rangle \big| \lesssim 2^{2d}  \sum_{\mb T \in \ii F} |I_{\mb T}|;
\end{equation}
\item Claim $2$:
\begin{equation}
\label{eq:claim2}
\sum_{\mb T \in \ii F} \sum_{\hat s \in \mb T} \sum_{\mb T' \in \ii F} \sum_{\substack{ \hat s' \in \mb T',  \\ \omega_{s_1} \subset \omega_{s'_1} \\  |\omega_{s_1}| \ll |\omega_{s'_1}|}} \big| \langle f, \phi_{s_1}  \rangle \langle f, \phi_{s'_1}  \rangle \langle \phi_{s_1}, \phi_{s'_1}  \rangle \big| \lesssim 2^{2d}  \sum_{\mb T \in \ii F} |I_{\mb T}|.
\end{equation}
\end{itemize}
We start with the proof of Claim 1. First, by symmetry,
\begin{align*}
&\sum_{\mb T \in \ii F} \sum_{\hat s \in \mb T} \sum_{\mb T' \in \ii F} \sum_{\substack{ \hat s' \in \mb T',  \\ \omega_{s_1}=\omega_{s'_1}  }} \big| \langle f, \phi_{s_1}  \rangle \langle f, \phi_{s'_1}  \rangle \langle \phi_{s_1}, \phi_{s'_1}  \rangle \big| \\
& \lesssim \sum_{\mb T \in \ii F} \sum_{\hat s \in \mb T} \sum_{\mb T' \in \ii F} \sum_{\substack{\hat  s' \in \mb T',  \\ \omega_{s_1}=\omega_{s'_1}  }} \big| \langle f, \phi_{s_1} \big \rangle|^2 \big|\langle \phi_{s_1}, \phi_{s'_1}  \rangle \big| \\
&\lesssim \sum_{\mb T \in \ii F} \sum_{\hat s \in \mb T}  \big| \langle f, \phi_{s_1} \big \rangle|^2  \sum_{\mb T' \in \ii F} \sum_{\substack{ \hat s' \in \mb T',  \\ \omega_{s_1}=\omega_{s'_1}  }} \Big( 1+ \frac{\dist(I_{s_1}, I_{s'_1})}{|I_{s_1}|}    \Big)^{-10}.
\end{align*}

Now, for $\omega_{s_1}$ fixed, the tiles $s'$ such that $\hat{s}' \in \cup_{\mb T'} \mb T'$ having the property that $\omega_{s_1}=\omega_{s'_1}$ are mutually disjoint and satisfy $|I_{s'_1}|=|I_{s_1}|$; hence
\begin{align*}
\sum_{\mb T' \in \ii F} \sum_{\substack{ \hat s' \in \mb T',  \\ \omega_{s_1}=\omega_{s'_1}  }} \Big( 1+ \frac{\dist(I_{s_1}, I_{s'_1})}{|I_{s_1}|}    \Big)^{-10} &\lesssim \sum_{\mb T' \in \ii F} \sum_{\substack{\hat s' \in \mb T',  \\ \omega_{s_1}=\omega_{s'_1}  }} \frac{1}{|I_{s_1}|} \int_{I_{s'_1}}  \Big( 1+ \frac{\dist(I_{s_1}, x)}{|I_{s_1}|}\Big)^{-10} dx \\
& \lesssim  \frac{1}{|I_{s_1}|} \int_{\rr R}  \Big( 1+ \frac{\dist(I_{s_1}, x)}{|I_{s_1}|}\Big)^{-10} dx \lesssim 1.
\end{align*}
In combination with \eqref{eq:Bess-upBd} this implies Claim 1, since we have
\begin{align*}
\sum_{\mb T \in \ii F} \sum_{\hat s \in \mb T} \sum_{\mb T' \in \ii F} \sum_{\substack{ \hat s' \in \mb T'  \\ \omega_{s_1}=\omega_{s'_1}  }} \big| \langle f, \phi_{s_1}  \rangle \langle f, \phi_{s'_1}  \rangle \langle \phi_{s_1}, \phi_{s'_1}  \rangle \big| & \lesssim \sum_{\mb T \in \ii F} \sum_{\hat s \in \mb T}  \big| \langle f, \phi_{s_1} \big \rangle|^2  \\
 &\lesssim 2^{2d}  \sum_{\mb T \in \ii F} |I_{\mb T}|.
\end{align*}

Let us then focus on the second claim, which will be more technical as interactions between tiles of different frequency scales will need to be counted in. First, note that the left-hand side of \eqref{eq:claim2} can be written as
\begin{align}
\label{eq:en:1}
\sum_{\mb T \in \ii F} \sum_{P \in \mb T} \sum_{\mb T' \in \ii F} \sum_{P' \in \mb T'} \sum_{\substack{\hat s=P}}  \sum_{\substack{ \hat s'= P' ,\\ \omega_{s_1} \subsetneq \omega_{s'_1}}}  \, \big| \langle f, \phi_{s_1}  \rangle \langle f, \phi_{s'_1}  \rangle \langle \phi_{s_1}, \phi_{s'_1}  \rangle \big|
\end{align}
and by Cauchy-Schwarz, we have
\begin{align}
\label{eq:en:01}
\sum_{\hat s =P}  \, \sum_{\sub{ \hat s'=P'}{\omega_{s_1} \subsetneq \omega_{s'_1}}} \big| \langle f, \phi_{s_1}  \rangle \langle f, \phi_{s'_1}  \rangle \langle \phi_{s_1}, \phi_{s'_1} \rangle|  \leq & \big( \sum_{\hat s =P}  | \langle f, \phi_{s_1}  \rangle |^2   \big)^\frac{1}{2} \, \big( \sum_{ \hat s' =P'}  | \langle f, \phi_{s'_1}  \rangle |^2   \big)^\frac{1}{2} \\
& \cdot \big(\sum_{\hat s =P}  \, \sum_{\sub{ \hat s'=P'}{\omega_{s_1} \subsetneq \omega_{s'_1}}}  |\langle \phi_{s_1}, \phi_{s'_1}  \rangle|^2   \big)^\frac{1}{2}.\nonumber
\end{align}

The subtree condition \eqref{eq:Bess-upBd} applied to the singleton subtrees $\{ P \} \subset \mb T$ and $\{ P' \} \subset \mb T'$ provides the following upper bound for  \eqref{eq:en:1}:
\begin{align*}
\sum_{\mb T \in \ii F} \sum_{P \in \mb T} \sum_{\mb T' \in \ii F} \sum_{P' \in \mb T'} 2^{2d+2} |I_P|^\frac{1}{2} |I_{P'}|^{\frac{1}{2}}  \Big(\sum_{\hat s =P}  \, \sum_{\sub{ \hat s'=P'}{\omega_{s_1} \subsetneq \omega_{s'_1}}}  |\langle \phi_{s_1}, \phi_{s'_1}  \rangle|^2   \Big)^\frac{1}{2}.
\end{align*}
The anticipated estimate \eqref{eq:en:suff} follows if we prove, for any tree $\mb T \in \ii F$, the inequality
\begin{equation}
\label{eq:en:suff:2}
\sum_{P \in \mb T} \sum_{\mb T' \in \ii F} \sum_{P' \in \mb T'} |I_P|^\frac{1}{2} |I_{P'}|^{\frac{1}{2}}  \Big(\sum_{\hat s =P}  \, \sum_{\sub{ \hat s'=P'}{\omega_{s_1} \subsetneq \omega_{s'_1}}}  |\langle \phi_{s_1}, \phi_{s'_1}  \rangle|^2   \Big)^\frac{1}{2} \lesssim |I_{\mb T}|.
\end{equation}

Before moving on, we take a closer look at the geometry of the tiles in $\BHT$.
\begin{enumerate}[label=(\roman*), leftmargin=*]
\item \label{geom1} if $P \in \mb T$ and $P' \in \mb T'$ with $\mb T, \mb T' \in \ii F$, and $s, s'$ are so that $\hat s =P, \hat s'=P'$ with $\omega_{s_1} \subsetneq \omega_{s'_1}$ then
\begin{itemize}
\item $\pmb{\omega}_{P_1} \subsetneq \pmb{\omega}_{P'_1}$ and  the sparseness of the collection $\BHT$ implies that  $|\pmb{\omega}_{P_1}| \ll  |\pmb{\omega}_{P'_1}|$ and $|I_P| \gg |I_{P'}|$;
\item the trees $\mb T$ and $\mb T'$ cannot coincide: due to their $1$-lacunarity, tiles of different scales belonging to the same tree cannot overlap in frequency;
\item since $\ii F$ consists of strongly disjoint $1$-lacunary trees, we must have
\begin{equation}
\label{eq:capit:disj}
I_{P'} \cap 3 I_{\mb T} = \emptyset.
\end{equation}
\end{itemize}
\item \label{geom2} notice that once $\mb T \in \ii F$ and $P \in \P$ are fixed, the spatial supports of the tiles $P'$ (contained in any of the trees $\mb T' \in \ii F$) having the property that $\pmb{\omega}_{P_1} \subsetneq \pmb{\omega}_{P'_1}$ and $|\pmb{\omega}_{P_1}| \ll |\pmb{\omega}_{P'_1}|$, are mutually disjoint -- otherwise such tiles would intersect both in space and in frequency and the strong disjointness property would be disobeyed.
\end{enumerate}

We return to understanding
\begin{equation}
\label{eq:error:corr}
 \Big(\sum_{\hat s =P}  \, \sum_{\sub{ \hat s'=P'}{\omega_{s_1} \subsetneq \omega_{s'_1}}}  |\langle \phi_{s_1}, \phi_{s'_1}  \rangle|^2   \Big)^\frac{1}{2},
\end{equation}
where $P \in \mb T, P' \in \mb T'$ are such that $\pmb{\omega}_{P_1} \subsetneq \pmb{\omega}_{P'_1}, |I_P| \gg |I_{P'}|$ and $I_{P'} \cap 3 I_P \neq \emptyset$. This quantity measures the correlation between the tiles $P$ and $P'$, as perceived\footnote{If the wave-packets $\phi_{s_1}$ were supported both in space and frequency on the tile $s_1$, then \eqref{eq:error:corr} would amount to $0$.} through the wave-packets $\ds \{ \phi_{s_1} \}_{\hat s=P}, \{ \phi_{s'_1}\}_{\hat s'=P'}$.

Our claim is that
\begin{equation}
\label{eq:en:alm:orth}
\Big(\sum_{\hat s =P}  \, \sum_{\sub{\hat s'=P'}{\omega_{s_1} \subsetneq \omega_{s'_1}}}  |\langle \phi_{s_1}, \phi_{s'_1}  \rangle|^2   \Big)^\frac{1}{2} \lesssim  \frac{|I_{P'}|^\frac{1}{2}}{|I_P|^{\frac{1}{2}}} \Big( 1+\frac{\dist(I_{P}, I_{P'})}{|I_{P}|}   \Big)^{-5}.
\end{equation}

Before proving this estimate, we show how it allows to conclude \eqref{eq:en:suff:2} and consequently \eqref{eq:claim2}. Using \eqref{eq:en:alm:orth}, the left-hand side of \eqref{eq:en:suff:2} is bounded by
\[
\sum_{P \in \mb T} \sum_{\mb T' \in \ii F} \sum_{\substack{P' \in \mb T' \\ \pmb{\omega}_{P_1} \subsetneq \pmb{\omega}_{P'_1} }} |I_{P'}| \Big( 1+\frac{\dist(I_{P}, I_{P'})}{|I_{P}|}   \Big)^{-5}.
\]
For $P \in \mb T$ fixed, we invoke \ref{geom2} above, to deduce
\begin{align*}
\sum_{P \in \mb T} \sum_{\mb T' \in \ii F} \sum_{\substack{P' \in \mb T' \\ \pmb{\omega}_{P_1} \subsetneq \pmb{\omega}_{P'_1} }} |I_{P'}| \Big( 1+\frac{\dist(I_{P}, I_{P'})}{|I_{P}|}   \Big)^{-5} &\lesssim \sum_{P \in \mb T} \int_{(3I_{\mb T})^C} \Big( 1+ \frac{\dist(I_P,x)}{|I_P|}  \Big)^{-5} dx  \\
&\lesssim  \sum_{P \in \mb T} |I_P|  \Big( 1+ \frac{\dist(I_P,(3I_{\mb T})^C)}{|I_P|} \Big)^{ -3} \\
&\lesssim |I_{\mb T}|.
\end{align*}

Now we return to proving \eqref{eq:en:alm:orth} for $P$ and $P'$ fixed, as above. This inequality will be itself a consequence of
\begin{align*}
\sum_{\hat I=I_P} \sum_{ \hat {I'}=I_{P'}} \sum_{\hat \omega=\pmb{\omega}_{P_1}} \sum_{\hat \omega'=\pmb{\omega}_{P'_1}} |\langle \phi_{I \times \omega}, \phi_{I'\times \omega'}  \rangle|^2  \lesssim  \frac{|I_{P'}|}{|I_P|} \Big( 1+\frac{\dist(I_{P}, I_{P'})}{|I_{P}|}   \Big)^{-10}.
\end{align*}
Here we will mostly rely on the spatial decay: given that $|I_P| \gg |I_{P'}|$, $|I|=2^{- {am \over 2}}|I_P|$ and $|I'|=2^{- {am \over 2}}|I_{P'}|$, we have
\[
|\langle \phi_{I \times \omega}, \phi_{I' \times \omega'}  \rangle|^2 \lesssim \frac{|I'|}{|I|} \Big( 1+\frac{\dist(I, I')}{|I|}   \Big)^{-50}= \frac{|I_{P'}|}{|I_P|} \Big( 1+\frac{\dist(I, I')}{|I|}   \Big)^{-50}.
\]
For fixed intervals $I$ and $I'$ with $\hat I=I_P$ and $\hat {I'}=I_{P'}$, there are precisely $2^{am \over 2}$ intervals $\omega$ and $2^{am \over 2}$ intervals $\omega'$ so that $\hat I \times \hat \omega=P$ and $\hat{I'} \times \hat{\omega'}=P'$ respectively. This implies
\begin{align*}
\sum_{\hat I=I_P} \sum_{ \hat {I'}=I_{P'}} \sum_{\hat \omega=\pmb{\omega}_{P_1}} \sum_{\hat \omega'=\pmb{\omega}_{P'_1}} |\langle \phi_{I \times \omega}, \phi_{I'\times \omega'}  \rangle|^2  \lesssim 2^{am} \frac{|I_{P'}|}{|I_P|} \sum_{\hat I=I_P} \sum_{ \hat {I'}=I_{P'}}  \Big( 1+\frac{\dist(I, I')}{|I|}   \Big)^{-50}.
\end{align*}

We focus on the last part. When $I$ and $I'$ are so that $\hat I=I_P$ and $\hat {I'}=I_{P'}$, with $\pmb {\omega}_{P_1} \subsetneq \pmb {\omega}_{P'_1}$, the separation condition \eqref{eq:capit:disj} has as a consequence
\begin{align*}
\sum_{\hat I=I_P} \Big( 1+\frac{\dist(I, I')}{|I|} \Big)^{-50} & \lesssim \Big( \frac{\dist(I_P, I')}{ 2^{- {am \over 2}}|I_P|} \Big)^{-40} \\
&\lesssim 2^{-20 am} \frac{1}{|I'|} \int_{I'} \Big( \frac{\dist(I_P, x)}{ |I_P|} \Big)^{-40} dx.
\end{align*}

Now we sum in $I'$ to get
\begin{align*}
&\sum_{\hat I=I_P} \sum_{ \hat {I'}=I_{P'}} \sum_{\hat \omega=\pmb{\omega}_{P_1}} \sum_{\hat \omega'=\pmb{\omega}_{P'_1}} |\langle \phi_{I \times \omega}, \phi_{I'\times \omega'}  \rangle|^2\\   &\lesssim 2^{-10 am} \frac{|I_{P'}|}{|I_P|} \frac{1}{|I_{P'}|} \int_{I_{P'}} \Big( \frac{\dist(I_P, x)}{ |I_P|} \Big)^{-40} dx\\
&\lesssim 2^{-10 am} \frac{|I_{P'}|}{|I_P|}  \Big( \frac{\dist(I_P, I_{P'})}{ |I_P|} \Big)^{-40} .
\end{align*}
This proves \eqref{eq:en:alm:orth}, finishing thus our proof.
\end{proof}

\begin{remark}\label{remark:decay}
The size estimates \eqref{eq:est:size-h-}--\eqref{eq:est:size-h:cluster} continue to hold, with an extra $(1+|j|)^{-{M \over 2}}$ decay, when we factor in the $j$-shifts discussed in Remark \ref{remark:shifts}. Indeed, although the wave-packets $\Phi_{am-k,2 \ell-1,j}^{ 2^{am}(r-1)+p'}$ take into account the information in $\underline{I}_{P}^{p'}+ j 2^{am \over 2}|\underline{I}_{P}^{p'}|$ -- thus supported within $I_P+ j|I_P|$ -- thanks to the decaying factor $(1+|j|)^{-{M}}$ we can regard it as adapted to $I_P$ through the bump function $\ci_{I_P}$.

The same reasoning applies to the energy estimate \eqref{eq:est:energy-h:pm}, and further, allows to deduce the same localized estimate \eqref{eq:local:bht:osl} when taking the shifts into account.
\end{remark}

The size and energy estimates together with Proposition \ref{prop:size/energy} allow us to prove Proposition \ref{prop:aim}.

\subsection{Establishing the boundedness range for $T$}
\label{sec:recovering:range}

Thanks to the estimates worked in subsections \ref{sec:timeFreq:prelim}-\ref{subsec:energy:only} we first establish the $m$-decay boundedness of $T_m$ within the strictly local-$L^2$ case ($2<p,q,r'<\infty$). This is the content of Corollary \ref{cor:conclusion:trilinear:form:L} which follows from Proposition \ref{prop:aim} via decomposition \eqref{eq:decomposition:Lm}. Notice that, due to the summability in $m$, the same boundedness range is implied for $T$.

Next, with Proposition \ref{prop:aim} settled, we present a refinement of the above time-frequency analysis, which, based on an intermediate interpolation step executed at a localized level, implies ($m$-exponential decaying) estimates for $T_m$ in the wider range $\{2<p,q \leq \infty\ ; \ 1<r<\infty \}$ thus proving Theorem \ref{mdec}. Finally, as it was explained in Section \ref{sec:reduction:local:L2}, the full range of Theorem \ref{main_theorem_pxq_to_r} follows subsequently from standard interpolation techniques.

\subsubsection{Boundedness of $T$ in the local-$L^2$ range}
\label{subsec:proof:localL^2}

As mentioned above, this is an immediate consequence of Corollary \ref{cor:conclusion:trilinear:form:L}, which, in turn, follows from Proposition \ref{prop:aim}. Thus our focus will be on proving the latter.
\begin{proof}[Proof of Proposition \ref{prop:aim}] We first establish restricted-type estimates: for any finite measure sets $E_1, E_2, E_3 \subset \rr R$ and any triple of functions $f, g, h$ satisfying
\[
|f(x)| \leq \one_{E_1}(x) \text{    a.e.}, \quad |g(x)| \leq \one_{E_2}(x) \text{    a.e.}, \quad |h(x)| \leq \one_{E_3}(x) \text{    a.e.}, \quad
\]
we claim to have
\begin{alignat}{3}
|\Lambda_{\BHT}^{\mathfrak{u}}(f, g, h)| &\lesssim  && |E_1|^\frac{1}{p} \, |E_2|^\frac{1}{q} \, |E_3|^\frac{1}{r'}, \label{eq:aimBHT+} \\
|\Lambda_{\BHT}^{\mathfrak{l}}(f, g, h)| &\lesssim 2^{-\delta_2' a m} && |E_1|^\frac{1}{p} \, |E_2|^\frac{1}{q} \, |E_3|^\frac{1}{r'}, \label{eq:aimBHT-} \\
|\Lambda_{\BHT}^{\mathfrak{c}}(f, g, h)| &\lesssim 2^{-\delta_2' am} && |E_1|^\frac{1}{p} \, |E_2|^\frac{1}{q} \, |E_3|^\frac{1}{r'}, \label{eq:aimBHT:cluster}
\end{alignat}
for any $p, q, r' \in (2, \infty)$ satisfying $\frac{1}{r}=\frac{1}{p}+\frac{1}{q}$ and some exponent $\delta_2':=\delta_2'(p,q)>0$.

We start with \eqref{eq:aimBHT+}. Estimates \eqref{eq:est:size-f-g} and \eqref{eq:est:size-h+} imply for $\P=\BHT$
\[
 \size^{\P}_1(f(P_1)) \lesssim 1, \qquad  \size^{\P}_2(g(P_2)) \lesssim 1, \qquad  \size^{\P}_3(h_\mathfrak{u}(P_3)) \lesssim 1;
\]
similarly, due to \eqref{eq:est:energy-f-g} and \eqref{eq:est:energy-h:pm},
\[
\energy^{\P}_1(f(P_1)) \lesssim |E_1|^\frac{1}{2}, \quad  \energy^{\P}_2(g(P_2)) \lesssim |E_2|^\frac{1}{2}, \quad  \energy^{\P}_3(h_\mathfrak{u}(P_3)) \lesssim |E_3|^\frac{1}{2}.
\]

Applying Proposition \ref{prop:size/energy} with the size/energy estimates and the choice $\theta_1= 1- \frac{2}{p} \in (0,1),  \theta_2=1-\frac{2}{q} \in (0, 1)$ and $\theta_3=-1+\frac{2}{r} \in (0,1)$, one has
\begin{align*}
\Lambda_{\BHT}^{\mathfrak{u}}(f,g,h) & \lesssim |E_1|^{{1-\theta_1} \over 2} \cdot |E_2|^{{1-\theta_2} \over 2} \cdot |E_3|^{{1-\theta_3} \over 2} \\
& \lesssim  |E_1|^{1 \over p} \cdot |E_2|^{1 \over q} \cdot |E_3|^{1 \over {r'}},
\end{align*}
which is \eqref{eq:aimBHT+}.

For \eqref{eq:aimBHT-}, the corresponding size and energy estimates are
\[
 \size^{\P}_1(f(P_1)) \lesssim 1, \qquad  \size^{\P}_2(g(P_2)) \lesssim 1, \qquad  \size^{\P}_3(h_\mathfrak{l}(P_3)) \lesssim 2^{- \frac{\delta_1 a m}{2}};
\]
and
\[
\energy^{\P}_1(f(P_1)) \lesssim |E_1|^\frac{1}{2}, \quad  \energy^{\P}_2(g(P_2)) \lesssim |E_2|^\frac{1}{2}, \quad  \energy^{\P}_3(h_\mathfrak{l}(P_3)) \lesssim |E_3|^\frac{1}{2}.
\]
We apply again Proposition \ref{prop:size/energy} with the same choice of exponents $\theta_1, \theta_2, \theta_3$, obtaining
\begin{align*}
\Lambda_{\BHT}^\mathfrak{l}(f,g,h) & \lesssim |E_1|^{{1-\theta_1} \over 2} \cdot |E_2|^{{1-\theta_2} \over 2} \cdot 2^{-\frac{\delta_1}{2} am \theta_3} |E_3|^{{1-\theta_3} \over 2}  \\
& \lesssim  2^{-\delta_1 am(-\frac{1}{2}+\frac{1}{r})} |E_1|^{1 \over p} \cdot |E_2|^{1 \over q} \cdot |E_3|^{1 \over {r'}};
\end{align*}
this is estimate \eqref{eq:aimBHT-}, where the decaying factor is $2^{-\delta_1 am(-\frac{1}{2}+\frac{1}{r})}$. The proof of \eqref{eq:aimBHT:cluster} follows analogously (with the only difference consisting in the estimate $\size^{\P}_3(h_\mathfrak{c}(P_3)) \lesssim 2^{- \frac{\mu a m}{4}}$), which concludes the proof of the three restricted-type estimates with $\delta_2':= \min\{\delta_1, \frac{\mu}{2}\}(-\frac{1}{2}+\frac{1}{r})>0$.

Finally, by invoking restricted-type interpolation (see for example \cite{Mlinop} or \cite{multilinear-Marcink-constant} for a reference), we obtain the strong-type estimates of Proposition \ref{prop:aim} with, say,
$\delta_2(p,q):=\frac{1}{2} \min\{\delta_1,\frac{\mu}{2}\}(-\frac{1}{2}+\frac{1}{r})>0$.
\end{proof}

\subsubsection{Boundedness of $T$ in the wider range $\{(p, q, r): 2<p,q \leq \infty\ ; \ 1<r<\infty \}$}
\label{sec:size:interpolation}

To deal with the extended range and, in particular, to be able to prove some $L^p \times L^\infty \to L^p$ endpoint estimates, we need to move outside of the local-$L^2$ range. As it turns out, it will be sufficient to remove the local-$L^2$ constraints on the third function $h$. To this aim, we will use the helicoidal method of \cite{BM-sparse}, as follows:
\begin{enumerate}[label=(\alph*)]
\item \label{s:hel:1} first we transform the time-frequency information obtained in Sections  \ref{sec:timeFreq:prelim}-\ref{subsec:energy:only} into exclusively spatial information (in the form of maximal averages over arbitrary intervals);
\item \label{s:hel:2} next, we interpolate between this localized information and the localized, exclusively spatial information for an operator resembling a linearized version of the bilinear maximal function from \eqref{eq:bilinearmaxfunction};
\item\label{s:hel:3} lastly, we re-organise this information in the form of a Fefferman-Stein inequality from which the extended range of boundedness will follow.
\end{enumerate}

\begin{proof}[Proof of Theorem \ref{mdec}]
To implement the strategy outlined above, we rely on the initial discretization performed in Section \ref{subsec:final:form:disc} which allows us to represent $\underline{\L}_{m}(f, g, h)$ as $\sum_{P \in \BHT} \underline{\L}_{m,P}(f, g, h)$ -- see \eqref{eq:model}.

For any $\P \subset \BHT$ and $I_0$ a dyadic interval
\[
\P(I_0):= \{ P \in \P : I_P \subseteq I_0 \}
\]
denotes the subcollection of $\P$ with spatial intervals contained in $I_0$. Recalling the definition \eqref{eq:def:spatial:tiles}, we also define
\begin{equation}
\mathcal{I}_{\bar{\P}(I_0)}:=\{ I \subseteq I_0 \text{  dyadic  }: I \in \mathcal{I}_{\bar{\P}} \text{  or  } I=I_0  \}.
\end{equation}
In this way, we only keep track of the spatial intervals included in $I_0$ corresponding to tiles or trees in $\P$, together with the spatial interval $I_0$ itself.


Item \ref{s:hel:1} above indicates that $|\sum_{P \in \mathbb{P}(I_0)} \underline{\L}_{m,P}(f, g, h)|$ will be estimated using the strategy highlighted in Sections \ref{sec:timeFreq:prelim}-\ref{subsec:energy:only}, which also produces an exponential decay in $m$; an upper bound for the same expression $|\sum_{P \in \mathbb{P}(I_0)} \underline{\L}_{m,P}(f, g, h)|$ can be obtained directly from \eqref{eq:model} and its equivalent \eqref{eq:model:with:tiles}, although this time the exponential decay in $m$ is not available. Interpolating between the two -- as item \ref{s:hel:2} suggests, will produce estimates for $|\sum_{P \in \mathbb{P}(I_0)} \underline{\L}_{m,P}(f, g, h)|$ that fall outside the local-$L^2$ range and have exponential decay in $m$. It will remain to reconstruct the global information for $|\sum_{P \in \mathbb{P}} \underline{\L}_{m,P}(f, g, h)|$ from the local one, and this is achieved via Fefferman-Stein inequalities -- this is the last step \ref{s:hel:3}.

Propositions \ref{prop:size/energy}, \ref{prop:size:est}, and \ref{prop:energies}, together with the observation that the energies localize well,\footnote{Using the decay of the wave-packets it is not difficult to see that\[
 \energy^{\P(I_0)}_1((f(P_1))_{P\in\P(I_0)}) \lesssim \|f \cdot \ci_{I_0} \|_{L^2},
\]
and similarly for $g,h$. This resembles somehow the proof of the size estimate \eqref{eq:est:size-f-g}; see also \cite[Lemma 39]{vv_BHT}.
} imply that for some $\tilde \delta > 0$
\begin{equation}
\begin{aligned}
& \Big|\sum_{P \in \mathbb{P}(I_0)} \underline{\L}_{m,P}(f, g, h)\Big| \\
 & \lesssim 2^{- \tilde \delta a m} \Big( \sup_{I \in \PI} \frac{1}{|I|^{\frac{1}{2}}} \|f \cdot \ci_{I}\|_{L^2} \Big) \, \Big( \sup_{I \in \PI} \frac{1}{|I|^{\frac{1}{2}}} \|g \cdot \ci_{I}\|_{L^2}\Big) \\
& \hspace{12em} \cdot  \Big(\sup_{I \in \PI} \frac{1}{|I_P|^{\frac{1}{2}}} \|h \cdot \ci_{I}\|_{L^2}\Big) \, \cdot  |I_0|.
\end{aligned}\label{eq:local:bht:osl}
\end{equation}

Notice that the above estimate represents a localized multi-scale version of the $L^2 \times L^2 \times L^2 \to \C$ estimate \eqref{TTstargum} presented in Section \ref{sec:onescale}. In order to pass from the latter to the former, all the time-frequency discussion performed in Subsections \ref{sec:timeFreq:prelim} and \ref{subsec:energy:size}, as well as the refinement in Section \ref{sec:HR2} are necessary.

If we were now to invoke Theorem 12 of \cite{BM-sparse}, we could deduce from \eqref{eq:local:bht:osl} that
\begin{equation}
\label{eq:FS:2:2:2}
|\underline{\L}_{m}(f, g, h)|\lesssim 2^{- \tilde \delta a m } \int_{\R} M_{2}f(x) \cdot M_{2}g(x)  \cdot M_{2}h(x)  \, dx
\end{equation}
(where we recall that $M_2 f= (M |f|^2)^{1/2}$). This inequality is clearly sufficient to deduce the summability of the operator norms of $\underline{\L}_m$ (and hence of $T_m$) in the strict local-$L^2$ range -- in this way we would recover the result in Corollary \ref{cor:conclusion:trilinear:form:L}. However, $L^p \times L^\infty \to L^p$ bounds cannot be concluded from the estimate above.

Identity \eqref{eq:local:bht:osl} illustrates step \ref{s:hel:1} of the strategy described at the beginning of the present subsection. We continue now with step \ref{s:hel:2}, which consists of obtaining a different type of local estimate for $\sum_{P \in \mathbb P (I_0)} \underline{\L}_{m,P}(f, g, h)$.

For this, we rely on the initial definition \eqref{def:trilinear:formtr} of $\underline{\L}_{m,P}(f, g, h)$, but will momentarily return to the parametric formulation based on indices $k, \ell, r, n, p, u, v$. With this, $|\underline{\L}_{m,P}(f, g, h)|$ becomes
\begin{align}
\label{eq:return:1}
\frac{1}{2^{\frac{am+2k}{4}}} \Big| \sum_{n,p \sim 2^\frac{am}{2}} & \sum_{u, v \sim 2^\frac{am}{2}} \langle f_{\ell+1}, \check{\varphi}_{\frac{am}{2}- k, \ell}^{u-v,p_r-n}  \rangle   \langle g_{\ell-1}, \check{\varphi}_{\frac{am}{2}- k, \ell}^{u+v,p_r+n} \rangle \\
& \cdot \int_{\R} \rho_{am-ak}(\l(x))\, \,  \check{\varphi}_{\frac{am}{2}- k, 2\ell-1}^{2u, p_r}(x) w_{k, n,v}^e(\l)(x)h(x) dx \Big|. \nonumber
\end{align}

Recalling the definition of $w_{k, n,v}^e(\l)$, the decaying estimate \eqref{eq:decay-weight}, and the pointwise inequality
\[
 \big|\check{\varphi}_{\frac{am}{2}- k, 2\ell-1}^{2u, p_r}(x)\big| \lesssim 2^{\frac{am-2k}{4}} \ci_{I_k^{p_r}}(x),
\]
we bound \eqref{eq:return:1} by
\begin{align}
\label{eq:return:2}
\frac{1}{2^{k}}\Big| \sum_{n,p \sim 2^\frac{am}{2}} & \sum_{u, v \sim 2^\frac{am}{2}} \langle f_{\ell+1}, \check{\varphi}_{\frac{am}{2}- k, \ell}^{u-v,p_r-n}  \rangle   \langle g_{\ell-1}, \check{\varphi}_{\frac{am}{2}- k, \ell}^{u+v,p_r+n} \rangle \Big| \\
& \cdot \int_{\R}  \rho_{am-ak}(\l(x))\, \,  \ci_{I_k^{p_r}}(x)  \frac{1}{ 1+  \Big|\bar c_a \frac{n^{a-1}  \l(x)}{2^{a(\frac{am}{2}-k)}} +2v \Big|^2}  |h(x)| dx. \nonumber
\end{align}
Notice that the second line does not depend on the $u$ parameter anymore since we disregarded the frequency information contained in the wave-packet $\check{\varphi}_{\frac{am}{2}- k, 2\ell-1}^{2u, p_r}$. We can use Cauchy-Schwarz in $u$ to obtain, for fixed parameters $v, n, p$,
\begin{align*}
& \sum_{u \sim 2^{am \over 2}} \big| \langle f_{\ell+1}, \check{\varphi}_{\frac{am}{2}- k, \ell}^{u-v,p_r-n}  \rangle   \langle g_{\ell-1}, \check{\varphi}_{\frac{am}{2}- k, \ell}^{u+v,p_r+n} \rangle \big| \\
& \lesssim  \big(  \sum_{u_1 \sim 2^\frac{am}{2}} \big| \langle f, \check{\varphi}_{\frac{am}{2}- k, \ell}^{u_1,p_r-n}  \rangle \big|^2  \big)^\frac{1}{2}  \big(  \sum_{u_2 \sim 2^\frac{am}{2}} \big|  \langle g, \check{\varphi}_{\frac{am}{2}- k, \ell}^{u_2,p_r+n} \rangle \big|^2  \big)^\frac{1}{2}.
\end{align*}
So that $|\underline{\L}_{m,P}(f, g, h)|$ is further bounded by
\begin{align}
\label{eq:return:3}
\frac{1}{2^{k}}\sum_{n,p \sim 2^\frac{am}{2}} & \sum_{ v \sim 2^\frac{am}{2}}  \big(  \sum_{u_1 \sim 2^\frac{am}{2}} \big| \langle f, \check{\varphi}_{\frac{am}{2}- k, \ell}^{u_1,p_r-n}  \rangle \big|^2  \big)^\frac{1}{2}  \big(  \sum_{u_2 \sim 2^\frac{am}{2}} \big|  \langle g, \check{\varphi}_{\frac{am}{2}- k, \ell}^{u_2,p_r+n} \rangle \big|^2  \big)^\frac{1}{2} \\
& \cdot \int_{\R}  \rho_{am-ak}(\l(x))\, \,  \ci_{I_k^{p_r}}(x)  \frac{1}{ 1+  \Big|\bar c_a \frac{n^{a-1}  \l(x)}{2^{a(\frac{am}{2}-k)}} +2v \Big|^2}  |h(x)| dx. \nonumber
\end{align}

Now the expression in the first line does not depend on $v$ anymore, and thus, summing in $v$ (see \eqref{eq:summ:v}), we get
\begin{align}
\label{eq:return:4}
|\underline{\L}_{m,P}(f, g, h)| \lesssim \frac{1}{2^{k}}\sum_{n,p \sim 2^\frac{am}{2}} &  \big(  \sum_{u_1 \sim 2^\frac{am}{2}} \big| \langle f, \check{\varphi}_{\frac{am}{2}- k, \ell}^{u_1,p_r-n}  \rangle \big|^2  \big)^\frac{1}{2}  \big(  \sum_{u_2 \sim 2^\frac{am}{2}} \big|  \langle g, \check{\varphi}_{\frac{am}{2}- k, \ell}^{u_2,p_r+n} \rangle \big|^2  \big)^\frac{1}{2} \\
& \cdot \int_{\R}  \rho_{am-ak}(\l(x))\, \,  \ci_{I_k^{p_r}}(x) |h(x)| dx. \nonumber
\end{align}
We apply Cauchy-Schwarz with respect to $n$ -- which only affects the first line -- and thus, modulo considering $O(1)$ similar terms, we have
\begin{align}
\label{eq:return:5}
|\underline{\L}_{m,P}(f, g, h)| \lesssim \frac{1}{2^{k}} &  \big(  \sum_{n_1, u_1 \sim 2^\frac{am}{2}} \big| \langle f, \check{\varphi}_{\frac{am}{2}- k, \ell}^{u_1,n_1}  \rangle \big|^2  \big)^\frac{1}{2}  \big(  \sum_{u_2, n_2 \sim 2^\frac{am}{2}} \big|  \langle g, \check{\varphi}_{\frac{am}{2}- k, \ell}^{u_2,n_2} \rangle \big|^2  \big)^\frac{1}{2} \\
& \cdot \int_{\R}  \rho_{am-ak}(\l(x))\, \,  \big( \sum_{p \sim 2^\frac{am}{2}} \ci_{I_k^{p_r}}(x) \big) |h(x)| dx. \nonumber
\end{align}

At this point, we can return to the notation involving only the tile formulation: for any $P \in \BHT$, we have proved that
\begin{align*}
|\underline{\L}_{m,P}(f, g, h)| \lesssim \frac{1}{|I_P|} f(P_1) g(P_2)  \int_{\R}  \rho_{|P|}(\l(x))\, \ci_{I_P}(x) |h(x)| dx,
\end{align*}
where we recall the notations \eqref{eq:def:f(P):tiles} and \eqref{eq:def:g(P):tiles}.

Putting these together we obtain the local estimate:
\begin{align*}
\big|\sum_{P \in \mathbb P (I_0)} \underline{\L}_{m,P}(f, g, h)\big|& \lesssim \sum_{\kappa : 2^\kappa \leq |I_0|} \sum_{\substack{I \subseteq I_0 \\ |I|=2^\kappa}} \frac{1}{|I|} \sum_{\substack{P \in \mathbb{P} \\ I_P=I}}  \|f_P\|_{L^2} \|g_P\|_{L^2}  \int_{\R}  \rho_{am- a \kappa}(\l(x))\, \ci_{I}(x) |h(x)| dx
\end{align*}

The tiles in $\mathbb{P}(I_0)$ sharing the same spatial interval correspond to mutually disjoint frequency intervals and due to orthogonality and Cauchy-Schwartz we have
\[
\sum_{\substack{P \in \mathbb{P}(I_0) \\ I_P=I}}  \|f_P\|_{L^2} \|g_P\|_{L^2}  \lesssim \|f \cdot \ci_I\|_{L^2}  \|g \cdot \ci_I\|_{L^2}.
\]
This observation produces
\begin{align*}
\big|\sum_{P \in \mathbb P (I_0)} \underline{\L}_{m,P}(f, g, h)\big| & \lesssim  \Big( \sup_{I \in \PI} \frac{1}{|I|^{\frac{1}{2}}} \|f \cdot \ci_{I}\|_{L^2} \Big) \, \Big( \sup_{I \in \PI} \frac{1}{|I|^{\frac{1}{2}}} \|g \cdot \ci_{I}\|_{L^2}\Big)   \\
& \sum_{\kappa : 2^\kappa \leq |I_0|} \sum_{\substack{J \subseteq I_0 \\ |J|=2^\kappa}} \frac{1}{|J|}\int_{\R}  \rho_{am- a \kappa}(\l(x))\, \ci_{J}(x) |h(x)| dx.
\end{align*}
Since the intervals have the same scale,
\[
\sum_{\substack{J \subseteq I_0 \\ |J|=2^\kappa}}  \ci_{J}(x) \lesssim \ci_{I_0}(x);
\] in addition, thanks to the presence of the stopping-time $\lambda(x)$, for every $x$ there are only $O(1)$ scales $\kappa$ for which $\rho_{am-a\kappa}(\lambda(x)) \neq 0$; and therefore
\[
 \sum_{\kappa : 2^\kappa \leq |I_0|} \sum_{\substack{J \subseteq I_0 \\ |J|=2^\kappa}} \int_{\R}  \rho_{am- a \kappa}(\l(x))\, \ci_{J}(x) |h(x)| dx \lesssim  \int_{\R} \ci_{I_0}(x) |h(x)| dx.
\]

We have therefore proven the following bound:
\begin{equation}
\begin{aligned}
\big|\sum_{P \in \mathbb{P} (I_0)} \underline{\L}_{m,P}(f, g, h)\big|& \lesssim \big( \sup_{I \in \PI} \frac{1}{|I|^{\frac{1}{2}}} \|f \cdot \ci_{I}\|_{L^2} \big) \cdot \big( \sup_{I \in \PI} \frac{1}{|I|^{\frac{1}{2}}} \|g \cdot \ci_{I}\|_{L^2}\big) \\
& \hspace{1em} \cdot \big( \sup_{I \in \PI} \frac{1}{|I|} \int_{\R}|h(x)|  \ci_{I}(x)  dx \big) \, |I_0|.
\end{aligned}\label{eq:localization:from:sparse}
\end{equation}
Although the expression involving $h$ is a genuine average, for symmetry reasons we express it as a maximal average. Similarly to \eqref{eq:FS:2:2:2}, \cite[Theorem 12]{BM-sparse} implies the global Fefferman-Stein-type inequality
\begin{equation}
\label{eq:FS:2:2:1}
|\underline{\L}_{m}(f, g, h)|\lesssim \int_{\R} M_{2}f(x) \cdot M_{2}g(x)  \cdot M h(x)  \, dx.
\end{equation}

At this point, we would like to apply a suitable interpolation procedure between \eqref{eq:FS:2:2:2} and \eqref{eq:FS:2:2:1}, in a way that avoids canonical interpolation for the operator $T_m$.\footnote{Estimates of the type $L^p \times L^\infty \to L^p$ cannot be obtained through classical interpolation since they correspond to boundary points.}

This is achieved by interpolating between the two local estimates \eqref{eq:local:bht:osl} and \eqref{eq:localization:from:sparse}, in the particular situation when $h$ is a restricted-type function: i.e. $h$ is such that $|h(x)| \leq \one_H(x)$ for some measurable set $H \subset \R$ of finite measure. The maximal averages of $h$ in \eqref{eq:local:bht:osl} are then bounded by
\[
 \Big( \sup_{I \in \PI} \frac{1}{|I|} \int_{\R}\one_H(x)  \ci_{I}(x) dx \Big)^{{1\over 2}}
\]
and in \eqref{eq:localization:from:sparse} by
\[
 \Big( \sup_{I \in \PI} \frac{1}{|I|} \int_{\R} \one_H(x)  \ci_{I}(x)dx \Big).
\]
Using trivial interpolation (geometric average), we deduce that
\begin{equation}
\begin{aligned}
& \Big|\sum_{P \in \mathbb{P}(I_0)} \underline{\L}_{m,P}(f, g, h)\Big| \\
 \lesssim  & 2^{- \tilde \delta a m \theta} \Big( \sup_{I \in \PI} \frac{1}{|I_P|^{\frac{1}{2}}} \|f \cdot \ci_{I}\|_{L^2} \Big) \Big( \sup_{I \in \PI} \frac{1}{|I|^{\frac{1}{2}}} \|g \cdot \ci_{I}\|_{L^2}\Big) \\
& \hspace{1em} \cdot \Big( \sup_{I \in \PI} \frac{1}{|I|} \int_{\R} \ci_{I}(x) \one_H(x) dx \Big)^{{\theta\over 2}+(1-\theta)} \, |I_0|,
\end{aligned} \label{eq:localization:from:sparse:restricted-type}
\end{equation}
for any $0<\theta<1$. Then a ``mock interpolation'' result -- see Proposition 14 of \cite{BM-sparse} -- implies that
\begin{equation}
\begin{aligned}
& \Big|\sum_{P \in \BHT(I_0)} \underline{\L}_{m,P}(f, g, h)\Big| \\
 \lesssim  &  2^{- \tilde \delta a m \theta} \Big( \sup_{I \in \PI} \frac{1}{|I|^{\frac{1}{2}}} \|f \cdot \ci_{I}\|_{L^2} \Big) \Big( \sup_{I \in \PI} \frac{1}{|I|^{\frac{1}{2}}} \|g \cdot \ci_{I}\|_{L^2}\Big)
\\ & \hspace{1em} \cdot \Big( \sup_{I \in \PI} \frac{1}{|I|} \int_{\R} \ci_{I}(x) |h(x)|^\tau dx \Big)^{1 \over \tau} \, |I_0|,
\end{aligned}\label{eq:localization:from:sparse:rinterp}
\end{equation}
for any $\tau>0$ such that $\frac{1}{\tau}<1-\frac{\theta}{2}$. We note that the implicit constant blows up as $\tau$ nears $\frac{2}{2-\theta}$.

For the last part \ref{s:hel:3} of the strategy, we appeal again to \cite[Theorem 12]{BM-sparse}, which allows to put back together the information obtained at the local level and deduce -- via a sparse domination argument -- that
\begin{equation}
\label{eq:FS:L}
|\underline{\L}_{m}(f, g, h)|\lesssim 2^{- \tilde \delta a m \theta} \int_{\R} M_{2}f(x) \cdot M_{2}g(x)  \cdot M_{\tau}h(x)  \, dx.
\end{equation}
This is indeed an interpolation of \eqref{eq:FS:2:2:2} and \eqref{eq:FS:2:2:1}, but it is achieved through local control of $\underline{\L}_{m}$.

Finally, this implies by duality that with $\bar \delta=\tilde \delta \theta$
\[
\| \underline{\L}_{m}\|_{L^p \times L^q \times L^{r'} \to {\mathbb C}} = \| T_m\|_{L^p \times L^q \to L^r} \lesssim 2^{-\bar \delta a m}
\]
for any $p, q, r$ satisfying
\[
\frac{1}{p}<\frac{1}{2}, \quad \frac{1}{q} <\frac{1}{2}, \quad \frac{1}{r'} < 1-{\theta \over 2},
\]
additionally to the usual H\"older scaling condition $1/p  +1/q = 1/r$. Since $\theta \in (0, 1)$ was arbitrary, these conditions reduce to $2<p, q\leq \infty$ and $r'>1$.
\end{proof}

\begin{remark}
\begin{enumerate}
\item The inequality
\begin{align*}
|\underline{\L}_{m,P}(f, g, h)| \lesssim \frac{1}{|I_P|} f(P_1) g(P_2) \int_{\R}  \rho_{|P|}(\l(x))\, \ci_{I_P}(x) |h(x)| dx,
\end{align*}
is a discretized, dualized version of the estimate
\begin{align*}
&\Big|\rho_a(2^{-a(m-k)}\lambda(x)) \int f(x-t)g(x+t) e^{i\lambda(x) t^a} \rho(2^{-k}t) \frac{dt}{t} \Big| \\
 & \lesssim \rho_a(2^{-a(m-k)}\lambda(x)) M(f,g)(x)
\end{align*}
used in Section \ref{sec:initial:reductions} to bound pointwise $T_m$ by the bilinear maximal function $M$ of \eqref{eq:bilinearmaxfunction}. The Gabor frame decomposition used for the higher resolution model naturally induces $L^2$ norms for the functions $f$ and $g$.

\item If in \eqref{eq:est:size-h-}-\eqref{eq:est:size-h:cluster} the size of $h$ was an $L^\tau$ quantity for some $1<\tau<2$, some $L^p \times L^\infty \to L^p$ estimates could have been obtained by applying \ref{s:hel:1} directly.
The difference between \eqref{eq:local:bht:osl} and \eqref{eq:localization:from:sparse} consists in the fact that \eqref{eq:local:bht:osl} presents a $2^{- \tilde \delta am}$ decay, but we have to use $L^2$ maximal averages for $h$, while \eqref{eq:localization:from:sparse} presents no decay in $m$, but the (maximal) averages of $h$ are measured relative to the better (smaller) $L^1$ norm. This dichotomy is explained as follows: The compatibility step in Section \ref{sec:HR2}, which allows to extract spatial and frequency information for the third function $h$ as well, requires a careful distribution of the information on the level set of the oscillatory factor carried by $ \rho_{|P|}(\lambda(x)) w_{|P|, n, v}^e(\lambda)(x)$. Using $\ell^2$ estimates in the parameters $n, v$ and $p'$, this information is split between the bilinear term corresponding to $f$ and $g$, and the term corresponding to the dualizing function $h$, respectively. Because of this, the sizes of $f, g$ and $h$ used above in the previous Subsection \ref{subsec:energy:size} are naturally $L^2$-adapted. In obtaining \eqref{eq:localization:from:sparse} however, one may disregard $w_{|P|, n, v}^e(\lambda)(x)$ altogether exchanging the lack of $m$-decay with the use of $L^1$ averages for $h$.
\end{enumerate}
\end{remark}

\appendix

\section{Lack of $m$-decaying absolute summability for the discrete phase-linearized wave-packet model}
\label{sec:counterexample}

As stated in the Introduction, in this first appendix we prove the lack of the absolute single scale $m$-decay for the discrete phase-linearized wave-packet model defined by \eqref{def:trilinear:form0}.

We start by recalling the formulations in  \eqref{eq:model:with:tiles} and   \eqref{def:trilinear:formtr} and focus our attention on the expression of this model localized within a fixed tri-tile $P=P(k,\ell,r)\in\BHT$ given by some prescribed parameters $k,\ell,r\in \N$ and a fixed $m\in\N$:
\begin{align*}
\underline{\L}_{m,P}(f,g,h) & = \underline{\L}_{m,k,  \ell,r}(f, g, h) \\
 & = \sum_{n, p \sim 2^{am \over 2}} \sum_{u,v \sim 2^{ am \over 2}} \frac{1}{2^{am +2 k \over 4}} \langle f_{\ell+1}, \check{\varphi}_{{am \over 2}- k, \ell}^{u-v, p_r-n}  \rangle   \langle g_{\ell-1}, \check{\varphi}_{{am \over 2}- k, \ell}^{u+v, p_r+n}  \rangle  \\
& \int_{\R}  \check{\varphi}_{{am \over 2}- k, 2\ell-1}^{2u, p_r}(x)  h(x) \rho_{am-ak}( \lambda(x)) w_{k, n, v}^e(\l)(x) dx.
\end{align*}

For convenience we take $P=P_0:=P(0,1,1)$ so that $k=0$, $\ell=1$, $r=1$, $I_{P^0}=[1,2]$, $\pmb{\omega}_{P^0_1}=2^{am}\,[1,2]$ and $\pmb{\omega}_{P^0_2}=2^{am}\,[-1,0]$. In what follows we show that if we take absolute values inside the summation signs appearing in the definition of $\underline{\L}_{m,P}\big|_{P=P_0}$ above then no analogue of Proposition \ref{sgscale} can hold. Moreover, one can not obtain a single scale $m$-decay estimate even within the H\"older scaling $L^2\times L^2\times L^{\infty}$:

\begin{proposition} Fix\footnote{The case $a\in\{1,2\}$ is superfluous.} $a>0$. There are no absolute constants $C,\delta>0$ for which the following holds\footnote{Here we recall that since $r=1$ we must have that $p_r=p$.} uniformly in the choices of $m\in\N$ (large enough), phase function $\lambda$ and input functions $f,g\in L^2$, $h\in L^\infty$:
\begin{equation}
\begin{aligned}
\underline{\L}_{m,P^0}^{abs}(f,g,h)
  & := \sum_{n, p \sim 2^{am \over 2}} \sum_{u,v \sim 2^{ am \over 2}} \frac{1}{2^{am \over 4}} |\langle f_{2}, \check{\varphi}_{{am \over 2},1}^{u-v, p-n}  \rangle| \cdot |\langle g_{0}, \check{\varphi}_{{am \over 2},1}^{u+v, p+n}  \rangle|   \\
& \hspace{5em} \cdot \Big|\int_{\R}  \check{\varphi}_{{am \over 2},1}^{2u,p}(x)  h(x) \rho_{am}( \lambda(x)) w_{0, n, v}^e(\l)(x) dx\Big|\\
& \leq C 2^{-\delta am} \|f\|_{L^2}\|g\|_{L^2} \|h\|_{L^\infty}. \label{eq:disprove}
\end{aligned}
\end{equation}
\end{proposition}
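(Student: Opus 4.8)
The plan is to exhibit, for each large $m\in\N$, an explicit choice of phase function $\lambda$ and input functions $f,g,h$ for which the left-hand side of \eqref{eq:disprove} fails to decay in $m$ — indeed, I expect to produce a lower bound of order $\|f\|_{L^2}\|g\|_{L^2}\|h\|_{L^\infty}$ with no negative power of $2^{am}$. The heart of the matter is the time-frequency correlation relation \eqref{tfc}: for fixed $n\sim 2^{am/2}$ and $x$ with $\lambda(x)\sim 2^{am}$, the weight $w^e_{0,n,v}(\lambda)(x)$ is, up to the decay \eqref{eq:decay-weight}, concentrated on a single value $v=v(n,\lambda(x))$ with $2v\approx\bar c_a\,n^{a-1}\lambda(x)\,2^{-a^2m/2}$. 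If $\lambda$ were \emph{constant}, the triple of parameters $(u,v,n)$ would effectively be reduced to two degrees of freedom and the sums, even with absolute values, could be handled with Cauchy–Schwarz to yield the desired decay — this is precisely the remark in Footnote to \eqref{tilemodelabs}. The obstruction is that $\lambda$ is merely measurable, so the ``resonant'' value $v(n,\lambda(x))$ may jump wildly with $x$; the counterexample must exploit exactly this.

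Concretely, I would partition the spatial interval $I_k^{p_r}=[1,2]$ (more precisely, for the tri-tile $P^0$, the spatial interval $I_{P^0}=[1,2]$) into $\sim 2^{am/2}$ subintervals $\{\underline{I}^{p'}\}$ of length $2^{-am}$ as in \eqref{part}–\eqref{eq:Ip'}, and on each such subinterval choose $\lambda$ to be a distinct constant value $\lambda_{p'}\sim 2^{am}$, selected so that the resonant relation \eqref{tfc} forces, on $\underline{I}^{p'}$, a \emph{different} pairing $n\mapsto v$. Choose $f$ and $g$ so that their Gabor coefficients $\langle f_2,\check\varphi^{u-v,p-n}\rangle$ and $\langle g_0,\check\varphi^{u+v,p+n}\rangle$ are all nonnegative and roughly of the same size across the relevant ranges of $(u,v,n,p)$ — for instance smoothened indicator-type functions frequency-localized to $\pmb\omega_{P^0_1}$ and $\pmb\omega_{P^0_2}$ respectively and spatially localized to $[1,2]$, normalized in $L^2$; and take $h$ to be (a smoothened version of) a unimodular function designed to line up in phase with the wave-packet $\check\varphi^{2u,p}$ and the oscillatory factor $e^{i(\cdots)\lambda(x)}$ coming from $w^e$ — so that the integral $\int \check\varphi^{2u,p}(x)h(x)\rho_{am}(\lambda(x))w^e_{0,n,v}(\lambda)(x)\,dx$ is, on the resonant set of $(u,v,n)$, of the full expected size $\sim 2^{-am/4}$ rather than suffering cancellation. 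Since on each $\underline{I}^{p'}$ there are $\sim 2^{am/2}$ choices of $n$, each with its own resonant $v$, and $\sim 2^{am/2}$ choices of $u$, and $\sim 2^{am/2}$ subintervals $p'$, one counts $\sim 2^{3am/2}$ terms each contributing $\sim 2^{-am/4}\cdot 2^{-am/4}\cdot 2^{-3am/4}$ (the first two factors being the magnitudes of the Gabor coefficients of $L^2$-normalized $f,g$ at scale $2^{am/2}$, the last being the size of the oscillatory integral when $h$ is $L^\infty$-normalized), giving a total $\sim 2^{3am/2}\cdot 2^{-5am/4}= 2^{am/4}$ — or, after normalizing the bookkeeping carefully, at least a quantity bounded below independently of $m$. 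The precise exponents will need to be reconciled with the normalization of the wave-packets in \eqref{def:gdec3} and the $2^{-am/4}$ prefactor in \eqref{eq:disprove}, but the essential point — no $2^{-\delta am}$ gain is available — is robust.

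The step I expect to be the genuine obstacle is verifying that the test function $h$ can be chosen \emph{simultaneously} in phase with all the resonant triples $(u,v(n,\lambda),n)$ across all subintervals $\underline{I}^{p'}$, i.e. that the phases $e^{i 2^{am-k}\eta_1 x}$ from the wave-packet $\check\varphi^{2u,p}$ and the phase of $w^e_{0,n,v}(\lambda)(x)$ — which after the stationary-phase reduction \eqref{stph} is $\sim e^{i(1-1/a')\bar c_a n^a\lambda(x)2^{-a^2m/2}}$ — can be jointly conjugated by a single \emph{unimodular} $h$ with $\|h\|_{L^\infty}=1$. Since $\lambda$ is constant on each $\underline{I}^{p'}$, on that subinterval both of these phases are (affine-)linear in $x$ with slopes that depend on the parameters; the resonance condition $2v\approx \bar c_a n^{a-1}\lambda_{p'}2^{-a^2m/2}$ couples $u$ (hence $2^{am-k}(2u)$, the wave-packet slope) to $n$ in a way that I would arrange, by the specific choice of the $\lambda_{p'}$, to collapse the two competing phases to a single effective linear phase on $\underline{I}^{p'}$; then $h$ on $\underline{I}^{p'}$ is just $e^{-i(\text{that linear phase})}$ times a fixed smooth bump, and these pieces glue across $p'$ to a global $h\in L^\infty$ with $\|h\|_{L^\infty}\lesssim 1$. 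Making this phase-alignment rigorous — tracking the constants $\bar c_a$, $c_a$, $a'$, verifying that the non-resonant contributions (the $v$ away from $v(n,\lambda)$, controlled by \eqref{eq:decay-weight}) do not overwhelm the main term via a triangle-inequality reversal, and checking that the Gabor coefficients of the chosen $f,g$ are indeed uniformly of the claimed size — is the bulk of the work, but none of it requires new ideas beyond the stationary-phase description of $w^e$ already established in Appendix B and the counting heuristic above. Once the lower bound $\underline{\L}_{m,P^0}^{abs}(f,g,h)\gtrsim \|f\|_{L^2}\|g\|_{L^2}\|h\|_{L^\infty}$ is in hand, it directly contradicts \eqref{eq:disprove} for every $\delta>0$ once $m$ is large, proving the proposition.
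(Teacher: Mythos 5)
Your overall skeleton is the right one and matches the paper's: take $f,g$ with all relevant Gabor coefficients of comparable size, and make $\lambda$ piecewise constant at the fine spatial scale $2^{-am}$, with the constants spread out (spacing $\sim 2^{am/2}$) so that the resonant value $v(n,\lambda(x))$ from \eqref{tfc} changes from one tiny interval to the next. This is essentially the paper's structured component $\lambda_s$, which is constant on sets $A_j$ built from length-$2^{-am}$ intervals repeated with period $2^{-am/2}$ (a rank-two generalized arithmetic progression). However, the step you single out as ``the genuine obstacle'' --- constructing a single unimodular $h$ that is simultaneously in phase with the wave-packets $\check{\varphi}^{2u,p}$ and with the oscillation of $w^e_{0,n,v}(\lambda)$ for \emph{all} resonant triples, so that each inner integral has ``the full expected size $\sim 2^{-am/4}$'' --- is not merely hard, it is impossible. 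For fixed $p$ the wave-packets $\{\check{\varphi}^{2u,p}\}_{u\sim 2^{am/2}}$ are almost orthogonal, so Bessel's inequality gives, for any $h$ with $\|h\|_{L^\infty}\le 1$,
\begin{equation}
\sum_{u\sim 2^{am/2}}\big|\langle h\,\rho_{am}(\lambda)\,w^e_{0,n,v}(\lambda),\check{\varphi}^{2u,p}\rangle\big|^2\lesssim \big\|h\,\ci_{I^p}\big\|_{L^2}^2\lesssim 2^{-\frac{am}{2}},
\end{equation}
so at most $O(1)$ of these $2^{am/2}$ coefficients can be of size $2^{-am/4}$; no choice of phases for $h$ can make them all large simultaneously. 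Your closing tally ($\sim 2^{3am/2}$ terms each of size $2^{-am/4}\cdot 2^{-am/4}\cdot 2^{-3am/4}$) inherits this overcount, and the interval bookkeeping is also off ($[1,2]$ splits into $2^{am}$, not $2^{am/2}$, intervals of length $2^{-am}$; only the $2^{am/2}$ of them inside a fixed $I^p$ matter for a fixed $p$).

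The fix is to abandon the phase-alignment of $h$ altogether, which is what the paper does: take $h=\one_{[1,2]}$, use the stationary-phase form \eqref{stph-bis} of $w^e$ together with the support of $\psi$ to see that for each fixed $n$ the resonance gives a one-to-one correspondence between $v$ and the piece of the level set of $\lambda$ (the index $j$), so that inside the absolute value over $(n,v)$ only one piece contributes and the unimodular factor $e^{i\bar c_a n^a j\,2^{-a^2m/(2a')}}$ drops out --- the absolute values already kill all oscillation, and no cancellation can occur. The correct size of each surviving coefficient is then only $\sim 2^{-3am/4}$ (the wave-packet's sup norm $2^{am/4}$ times the measure $2^{-am}$ of the resonant piece inside $I^p$, with non-degeneracy of the short phase integral checked as in \eqref{eq:app} for a positive proportion of $u$), and the lower bound comes from \emph{counting}: $\sim 2^{am/2}$ choices each of $u$, $p$ and $j$ give $\gtrsim 2^{3am/2}\cdot 2^{-3am/4}=2^{3am/4}$, exactly saturating the Cauchy--Schwarz ceiling and contradicting the assumed bound (which after normalizing $\|f\|_{L^2}\simeq\|g\|_{L^2}\simeq 2^{am/2}$ reads $\lesssim 2^{-\delta am}2^{3am/4}$, cf.\ \eqref{eq:disprove4}). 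So the decay-defeating mechanism is the arithmetic structure of the level sets of $\lambda$ making all the local Fourier coefficients of $h\one_{A_j}$ uniformly of maximal permissible size, not any phase engineering of $h$; as written, your proposal's central step cannot be carried out.
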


\begin{remark}
(a) The above proposition remains true if one modifies the norm in which $h$ is measured: one can substitute the $L^{\infty}$ norm with $L^2$ or even with Sobolev type norms. The existence of a counterexample at \eqref{eq:disprove} is not specific to the control/norm that we put on $h$ but rather depends on the interaction between, on the one hand, the information carried by  $f$ and $g$ and, on the other hand, the stricture of the linearizing function $\l$; \\
(b) In fact, our proof reveals the existence of an absolute constant $K >0$ so that for any $m \in \N$ (large enough), there exist nontrivial $f,g \in L^2$, $h \in L^\infty$ and a real measurable phase function $\lambda$ for which
\[
\underline{\L}_{m,P^0}^{abs}(f,g,h) > K  \|f\|_{L^2}\|g\|_{L^2} \|h\|_{L^\infty}.
\]
\end{remark}

\begin{proof}
Our proof will proceed via {\it reductio ad absurdum}. Thus we assume that for any $f,g\in L^2$ and $h\in L^{\infty}$ there exist  $C,\delta>0$ absolute constants such that \eqref{eq:disprove} holds uniformly in $m\in {\mathbb N}$ large enough.

Take now
$$ f:= \sum_{\substack{2\cdot 2^{am\over 2} \leq |n_1|\leq 3\cdot 2^{am \over 2} \\ |p| \leq 3\cdot 2^{am \over 2}}}  \check{\varphi}_{{am \over 2}, 1}^{n_1,p} \qquad \textrm{and}  \qquad g:=\sum_{\substack{1\leq |n_2|\leq 2^{am \over 2} \\ |p| \leq 3\cdot 2^{am \over 2}}}  \check{\varphi}_{{am \over 2}, 1}^{n_2, p}\,.$$
Thus $f=f_{2}$, $g=g_0$ and for $n, p,u,v \sim 2^{am \over 2}$ (suitably understood) we have
$$ \langle f, \check{\varphi}_{{am \over 2}, 1}^{u-v,p-n} \rangle  \sim 1 \sim \langle g,\check{\varphi}_{{am \over 2},1}^{u+v, p+n}\rangle$$
with
$$ \|f\|_{L^2} \simeq 2^{am \over 2} \simeq \|g\|_{L^2}.$$
For this specific choice of $f$ and $g$ we have that \eqref{eq:disprove} is equivalent with
\begin{equation}
 \sum_{n, p \sim 2^{am \over 2}} \sum_{u,v \sim 2^{ am \over 2}}
\Big|\int_{I_{P_0}}  \check{\varphi}_{{am \over 2}, 1}^{2u,p}(x)  h(x) \rho_{am}( \lambda(x)) w_{0, n, v}^e(\l)(x) dx\Big| \leq C 2^{-\delta am} 2^{\frac{5}{4} am }  \|h\|_{L^\infty}.
\label{eq:disprove2}
\end{equation}

Next, we are going to construct the essential item in what will become our counterexample to \eqref{eq:disprove} -- the phase function $\lambda$. For this we decompose
\begin{equation}\label{declam}
\lambda(x)=\lambda_u(x)\,+\lambda_s(x)\,,
\end{equation}
where here
\begin{itemize}
\item the uniform component is defined as $\lambda_u(x):= 2^{am}$ for every $x\in I_{P^0}$;

\item the structured component has both the input and the output correlated with some suitable (generalized) arithmetic progressions.\footnote{The fact that such a generalized arithmetic progressions structure plays the key r\^{o}le in the construction of our present counterexample suggests a natural connection between the (limiting) behavior of $BC^a$ and that of the Carleson operator $C$. Indeed, same type of structures are essential in providing counterexamples for the behavior of the pointwise convergence of Fourier Series near $L^1$ -- for the latter see \cite{lv9} and the bibliography therein.} Indeed, for some $L\in\N$ large enough, we take
 \begin{equation}\label{ls1}
\textrm{supp}\,\lambda_s=A=\bigcup_{{j\simeq 2^{am\over 2}}\atop{j\in L\N}} A_j\,,
\end{equation}
with
 \begin{equation}\label{ls2}
A_j := \bigsqcup_{{q\simeq 2^{am \over 2}}\atop{q\in L\N}} ( 2^{-am}[j,j+1]+ q2^{-\frac{am}{2}})=: \bigsqcup_{{q\simeq 2^{am \over 2}}\atop{q\in L\N}} I_{j,q}\,,
\end{equation}
and let
\begin{equation}\label{ls3}
\lambda_s(x) := \sum_{{j\sim 2^{am \over 2}}\atop{j\in L \N}} (2^{am\over 2}\,(j-2^{am\over 2})) \one_{A_j}(x)\,.
\end{equation}
\end{itemize}

With these, let us record the following facts:

\begin{itemize}
\item by properly adjusting the support of $\rho$ we have from the above that $\rho_{am}(\lambda(x)))=1$ for every $x\in I_{P^0}$;

\item the phase function $\lambda$ is constant on each of the subsets $A_j$ as well as on $B:=I_{P_0}\setminus A$;

\item since $w_{0, n, v}^e(\l_u)$  is constant on the whole interval $I_{P^0}$ (it does not depend on $x$) it is not hard to see that \eqref{eq:disprove} holds for $\l=\l_u$.\footnote{This should become transparent once the reader mimics the reasoning presented in \eqref{stph-bis}--\eqref{absv} for the case in which $x\in B$.}
\end{itemize}

From the above we deduce that it is enough to disprove \eqref{eq:disprove2} for $\rho_{am}(\lambda(x)))=1$ and
$I_{P_0}$ replaced by $A$ where here $\l$ and $A$ are as constructed above.

We now fix $x\in A$ and compute the contribution of the weight defined in \eqref{eq:w_k}
\begin{align*}
\one_{A}(x)\, w_{0, n, v}^e (\lambda)(x) & =  \int_{\R} \psi( \xi + 2v ) e^{i \, n \, \xi} e^{i c_a 2^{ ({am  \over 2}) a'} \xi^{a'} \lambda(x)^{- {a' \over a}}}\, d \xi \\
& =  \sum_{{j\sim 2^{am \over 2}}\atop{j\in L \N}} \one_{A_j}(x) \int_{\R} \psi( \xi + 2v ) e^{i \, n \, \xi} e^{i c_a 2^{ ({am  \over 2}) a'} \xi^{a'} (2^{am\over 2}j)^{- {a' \over a}}}\, d \xi\,.
\end{align*}
Using now the principle of (non)stationary phase (recall that we are in the regime in which the phase function $\nu$ in the integral above satisfies $|\nu''(\xi)| \sim 1$ on the domain of integration), we have the following approximation:
\begin{equation}
\begin{aligned}
 w_{0, n, v}^e (\lambda)(x)  & \approx  \sum_{{j\sim 2^{am \over 2}}\atop{j\in L \N}} \one_{A_j}(x) e^{i \bar c_a n^a  2^{am\over 2} j 2^{- a\left( \frac{am}{2} \right)}} \psi \big( \bar c_a\frac{n^{a \over {a'}}2^{am\over 2}j}{2^{a\left( \frac{am}{2}\right)}} +2v \big) \\
& =  \sum_{{j\sim 2^{am \over 2}}\atop{j\in L \N}} \one_{A_j}(x) e^{i \bar c_a n^a  j 2^{- \frac{a^2m}{2a'} }} \psi \big( \bar c_a\frac{n^{a \over {a'}}j}{2^{\frac{a^2m}{2a'}}} + 2v \big).
\end{aligned} \label{stph-bis}
\end{equation}
Indeed, the above holds up to an error term which involves a decaying factor of the form $2^{-\d am}$ for some $\d>0$ thus not affecting the construction of our counterexample below.

Consequently, in order to disprove \eqref{eq:disprove2} it is enough to show that there are no $C,\,\d>0$ such that the following holds uniformly in $m\in\N$ and $h\in L^{\infty}$:
\begin{align}
 \sum_{n, p \sim 2^{am \over 2}} \sum_{u, v \sim 2^{ am \over 2}}
\Big| \sum_{{j\sim 2^{am \over 2}}\atop{j\in L \N}}\big( \int_{\R}  \check{\varphi}_{{am \over 2}, 1}^{2u,p}(x)  h(x) \one_{A_j}(x) dx \big) e^{i \bar c_a n^a  j 2^{- \frac{a^2m}{2a'} }} \psi \big( \bar c_a \frac{n^{a \over {a'}}j}{2^{\frac{a^2m}{2a'}}} + 2v \big)  \Big| \nonumber \\
\leq C 2^{- \delta a m}\,2^{\frac{5}{4} am }\,\|h\|_{L^\infty}.
\label{eq:disprove3}
\end{align}
Recalling that $\psi\in C_0^{\infty}(\R)$ and choosing appropriately its support, we deduce that in order for the inner summation term in \eqref{eq:disprove3} to be nonzero we must have
\begin{equation}\label{oto}
 \Big| \bar c_a \frac{n^{a \over {a'}}j}{2^{\frac{a^2m}{2a'}}} +2v \Big| \leq 1
\end{equation}
which, since $n\sim 2^{am \over 2}$, is equivalent to
$$ \Big| j + \frac{2^{\frac{a^2m}{2a'}+1}}{\bar c_a n^{a \over {a'}}}v \Big| \leq  \frac{2^{\frac{a^2m}{2a'}}}{\bar c_an^{a \over {a'}}} \sim 1\:.$$

Hence for $n$ fixed, $j$ is determined by $v$ since $j\in L \N$ and $L\in\N$ may be chosen as large as we want (but still independently of $m$). Thus, without loss of generality we may assume that \eqref{oto} describes a $1$-to-$1$ map $v \leftrightarrow j$.

Fixing $n\sim 2^{am \over 2}$, we have from the above that
\begingroup
\allowdisplaybreaks
\begin{align}\label{absv}
 & \sum_{v \sim 2^{ am \over 2}}
\Big| \sum_{{j\sim 2^{am \over 2}}\atop{j\in L \N}} \Big( \int_{\R}  \check{\varphi}_{{am \over 2}, 1}^{2u,p}(x)  h(x) \one_{A_j}(x) dx \Big) e^{i \bar c_a n^a  j 2^{- \frac{a^2m}{2a'} }} \psi \big( \bar c_a\frac{n^{a \over {a'}}j}{2^{\frac{a^2m}{2a'}}} +2v \big)  \Big| \\
& \qquad =  \sum_{v \sim 2^{ am \over 2}}
\sum_{{j\sim 2^{am \over 2}}\atop{j\in L \N}} \Big| \Big( \int_{\R}  \check{\varphi}_{{am \over 2}, 1}^{2u,p}(x)  h(x) \one_{A_j}(x) dx \Big) e^{i \bar c_a n^a  j 2^{- \frac{a^2m}{2a'} }} \psi \big( \bar c_a \frac{n^{a \over {a'}}j}{2^{\frac{a^2m}{2a'}}} +2v \big)  \Big|\nonumber \\
& \qquad =  \sum_{v \sim 2^{ am \over 2}}
\sum_{{j\sim 2^{am \over 2}}\atop{j\in L \N}}  \Big| \Big( \int_{\R}  \check{\varphi}_{{am \over 2}, 1}^{2u,p}(x)  h(x) \one_{A_j}(x) dx \Big) \Big|  \Big| \psi \big( \bar c_a\frac{n^{a \over {a'}}j}{2^{\frac{a^2m}{2a'}}} +2v \big)  \Big| \nonumber\\
& \qquad \gtrsim
\sum_{{j\sim 2^{am \over 2}}\atop{j\in L \N}}  \big| \langle h \one_{A_j}, \check{\varphi}_{{am \over 2}, 1}^{2u,p}\rangle \big|\nonumber.
\end{align}
\endgroup
Notice that the last lower bound does not depend on the parameter $n$.

So in order for  \eqref{eq:disprove3} to be true, we must have
\begin{equation}
 \sum_{u, p\sim 2^{am \over 2}}\sum_{{j\sim 2^{am \over 2}}\atop{j\in L \N}} \big| \langle h \one_{A_j}, \check{\varphi}_{{am \over 2}, 1}^{2u,p}\rangle \big|   \leq C 2^{-\delta a m} 2^{\frac{3}{4} am }  \|h\|_{L^\infty}.
\label{eq:disprove4}
\end{equation}

\begin{remark}
Observe that the trivial case $\delta=0$ in \eqref{eq:disprove4} is an immediate consequence of the Cauchy-Schwarz inequality via the pairwise disjointness of $\{A_j\}_j$ and the almost orthogonality of $\big\{\check{\varphi}_{{am \over 2}, 1}^{2u, p} \big\}_{u,p}$. Thus, in order to maximize the left hand-side of \eqref{eq:disprove4} it is natural to pursue the equality case in the above Cauchy-Schwarz argument thus constructing $\{A_j\}_j$ and $h\in L^{\infty}$ such that the local Fourier coefficients $\big| \langle h \one_{A_j}, \check{\varphi}_{{am \over 2}, 1}^{2u, p}\rangle \big|$ are (essentially) pairwise equal and hence independent of the specific choice of $u,p,j$. In particular, this explains why we design the structure of  $(A_j)_j$ as essentially a $2^{-am}$-neighborhood of a generalized arithmetic progression of rank 2 (hence its ``$2^{-\frac{am}{2}}$--translation invariance behavior'' in \eqref{ls2}).
\end{remark}

Take now\footnote{Or some smooth variant of it which is still equal to $1$ on $[1,2]$.} $h=\one_{[1,2]}$ and recall the definition of  $A_j$ in \eqref{ls2}. By properly massaging the properties of $\check {\varphi}$ we can assume without loss of generality that for every $j\sim 2^{am \over 2}$ with $j\in L \N$ we have that
\begin{equation}\label{ijq}
\check {\varphi}(2^{\frac{a m}{2}} x - q) \sim 1 \qquad \textrm{for any} \qquad x\in I_{j,q}\,,
\end{equation}
 and
\begin{equation}\label{ijqpr}
 |\check {\varphi}(2^{\frac{a m}{2}} x - q)| \leq |q-q'|^{-10}\qquad \textrm{for any}\:\:x\in I_{j,q'}\:\: \textrm{with}\:\:\:q,q'\in L \Z,\:q\not=q'\,.
\end{equation}

In what follows we fix $u, p\sim 2^{am \over 2}$. From the above we then deduce that
\begingroup
\allowdisplaybreaks
\begin{align}\label{dech}
\big|\langle h \one_{A_j}, \check{\varphi}_{{am \over 2}, 1}^{2u, p}\rangle \big| & = 2^{\frac{a m}{4}} \Big| \int_{A_j}   \check {\varphi}(2^{\frac{a m}{2}} x - p) \, e^{i \,2^{\frac{a m}{2}} x \left( 2u  + 2^{\frac{am}{2}}\right)} dx \Big|\nonumber \\
& = 2^{\frac{a m}{4}} \Big| \sum_{{{q\simeq 2^{am \over 2}}\atop{q\in L\N}}} \int_{I_{j,q}}    \check {\varphi}(2^{\frac{a m}{2}} x - p) \, e^{i \,2^{\frac{a m}{2}} x \left( 2u  + 2^{\frac{am}{2}}\right)} dx \Big|\nonumber \\
& \geq 2^{\frac{a m}{4}} \Big| \int_{I_{j,p}} \check {\varphi}(2^{\frac{a m}{2}} x - p) e^{i \,2^{\frac{a m}{2}} x \left( 2u  + 2^{\frac{am}{2}}\right)} dx \Big| \nonumber\\
& \qquad \qquad - 2^{\frac{a m}{4}}\,\sum_{{q\neq p}\atop{q\in L \Z}}  \Big| \int_{I_{j,q}} \check {\varphi}(2^{\frac{a m}{2}} x - p) e^{i \,2^{\frac{a m}{2}} x \left( 2u  + 2^{\frac{am}{2}}\right)} dx \Big|\nonumber\\
& =: (I)-(II).
\end{align}
\endgroup

For the first term, applying \eqref{ijq} and  a change of variable, we claim that\footnote{Recall our earlier convention $e^{iy}=e^{2 \pi i y}$.}
\begin{equation}
\begin{aligned}
(I) & = 2^{-\frac{3a m}{4}} \Big| \int_{[0,1]} \check {\varphi}(2^{-\frac{a m}{2}} (j+y)) e^{i y \left( 2^{-\frac{am}{2}+1}u  + 1\right)} dy \Big|  \\
& \simeq 2^{-\frac{3a m}{4}} \Big| \check {\varphi}(2^{-\frac{a m}{2}} j ) \int_{[0,1]}  e^{i y \left( 2^{-\frac{am}{2}+1}u  + 1\right)} dy \Big| \simeq 2^{-\frac{3a m}{4}}
\end{aligned}\label{eq:app}
\end{equation}
holds for a large fraction of the possible values of $u\sim 2^{am \over 2}$.

Indeed, for this we first notice that while $u$ runs through the set ${\mathbb Z} \cap [2^{am \over 2}, 2\cdot 2^{am\over 2}]$,  the expression $2^{-\frac{am}{2}+1}u  + 1$ covers the set $2^{-{am \over 2}+1}{\mathbb Z}+1 \cap [3,5]$. Thus, for $m\in\N$ large enough, (say $am\geq 10$), there exists a value of $u=u_0$ in the range $u_0\sim 2^{am \over 2}$ such that
$$ \dist(2^{-\frac{am}{2}+1}u_0  + 1, {\mathbb Z}) \geq \frac{1}{10}.$$
Then an easy argument shows that for $u\in {\mathbb Z} \cap [2^{am \over 2}, 2\cdot 2^{am\over 2}] \cap [u_0-\frac{1}{100}2^{am \over 2}, u_0+\frac{1}{100}2^{am \over 2}]$ one still has
$$ \dist(2^{-\frac{am}{2}+1}u  + 1, {\mathbb Z}) \geq \frac{1}{4}$$
which implies the lower bound in \eqref{eq:app}
$$ |\int_{[0,1]}  e^{i y \left( 2^{-\frac{am}{2}+1}u  + 1\right)} dy| \geq \frac{1}{100}.$$

Hence, for $u$ belonging to ${\mathbb Z} \cap [2^{am \over 2}, 2\cdot 2^{am\over 2}] \cap [u_0-\frac{1}{100}2^{am \over 2}, u_0+\frac{1}{100}2^{am \over 2}]$ one can now verify the validity of the statement
\begin{equation}
(I)\geq 10^{-10} \,2^{-\frac{3a m}{4}}.
\label{eq:u} \end{equation}

For the second part $(II)$ we are using \eqref{ijqpr} for, say, a choice of $L\geq 100$. With this we have
\begin{equation}
\begin{aligned}
(II) &\lesssim 2^{-\frac{3a m}{4}} \sum_{{q\neq p}\atop{q\in L \Z}} \int_{[0,1]} |\check {\varphi}(2^{-\frac{a m}{2}}(j+y) +(p-q))|dy \\
& \leq 2^{-\frac{3a m}{4}} \sum_{d\in \Z\setminus\{0\}} (1+|d|\,L)^{-10} \leq 2^{-\frac{3a m}{4}}\,100\,L^{-10}.
\end{aligned}\label{sndtr}
\end{equation}

Putting now together \eqref{dech}--\eqref{sndtr} we obtain a refutation of \eqref{eq:disprove4} thus contradicting the assumption that \eqref{eq:disprove} holds.
\end{proof}

\section{Analysis of the multiplier} \label{appendix:multiplier}
In this appendix we perform the last part of the initial reductions, the result of which was anticipated in Subsection \ref{subsection_stationary_phase_analysis}: namely that the initial symbol \eqref{bigM} is equal to a main term of the form \eqref{smallM} plus an error term which is suitably bounded. This is a rather standard non-stationary phase argument\footnote{A classical reference for this type of stationary phase analysis is provided by \cite{BigStein}.}, applied to our specific setting.

We start by recalling our symbol
\[ \mathfrak{M}_{m,k}(\zeta;\lambda) = \int e^{i \lambda t^a} \frac{\rho(2^{-k}t)}{t} e^{-i \zeta t} \,dt. \]

The analysis of the above expression proceeds as follows: first one reduces the matters to the subset of parameters $(\zeta, \lambda)$ for which the phase contains a stationary point (this produces controlled errors of non-stationary nature); secondly, one specializes the amplitude of the symbol to a sufficiently small neighbourhood of the stationary point (this produces further controlled errors of non-stationary nature); thirdly, one applies the stationary phase method to the remaining part, obtaining the main contribution to the oscillatory integral $\mathfrak{M}_{m,k}$; finally, the resulting symbol is put into a tensorized form by a standard Fourier series decomposition argument. The errors that have arisen along the way are all controlled by the bilinear maximal function -- more precisely, the error contribution to $T_m(f,g)(x)$ will be controlled pointwise by $\lesssim 2^{-\delta am} M(f,g)(x)$ for some absolute $\delta > 0$ -- any such $\d$ is good enough for our purposes.
\subsection{Non-stationary contributions to $\mathfrak{M}_{m,k}$ when there is no critical point}\label{subsec:appendix:B:1}
First of all we tackle the non-stationary parts of the symbol. In what follows we will always assume that $\lambda\equiv\lambda(x) \sim 2^{a(m-k)}$ because of the presence of the factor $\tilde{\rho}(2^{-a(m-k)}\lambda(x))$ in the expression for $T_m(f,g)$.

If $\varphi(t) = \varphi_{\zeta,\lambda}(t) := \lambda t^a - \zeta t$ denotes the phase of $\mathfrak{M}_{m,k}(\zeta;\lambda)$, a simple dimensional analysis shows that in the domain of integration we have $|\lambda t^{a-1}| \sim 2^{am-k}$, and therefore $\varphi'$ will certainly be non-zero if $|\zeta| \not\sim 2^{am-k}$; the non-stationary phase principle then suggests that the symbol will be suitably small for such $\zeta$. To make this intuition rigorous, we begin by observing that by integration by parts
\begin{equation}
\begin{aligned}
\int e^{i \lambda t^a} & \frac{{\rho}(2^{-k}t)}{t} e^{-i \zeta t} \,dt = -\int e^{i \varphi(t)} \frac{d}{dt} \Big( \frac{{\rho}(2^{-k}t)}{i t \varphi'(t)} \Big) \,dt \\
&= -\int e^{i \varphi(t)} \Big[ \frac{2^{-k}{\rho}'(2^{-k}t)}{i t \varphi'(t)} -  \frac{{\rho}(2^{-k}t)}{i t^2 \varphi'(t)} -  \frac{{\rho}(2^{-k}t) \varphi''(t)}{i t (\varphi'(t))^2}\Big]\,dt.
\end{aligned}\label{eq:IBP_nonstationary_phase}
\end{equation}
Recall that $\rho$ as given in \eqref{eq:smooth_dyadic_decomposition} is smooth, compactly supported on $\Big[\frac{1}{2},2\Big]$ and such that $\sum_{j \in \mathbb{Z}} \rho(2^{-j}\zeta) = 1$ for $\zeta \neq 0$. Let us then decompose smoothly $1 = \chi_0 + \chi_1 + \sum_{j \geq 2}\chi_j$, where
\begin{enumerate}[label=\roman*)]
\item $\chi_1(\zeta) := \sum_{\ell=-2\lceil a \rceil}^{2\lceil a \rceil} \rho(a^{-1}2^{-\ell}\zeta)$;
\item $\chi_j(\zeta) := \rho(a^{-1} 2^{-2\lceil a \rceil+1} 2^{-j}\zeta)$ for any $j\geq 2$;
\item $\chi_0 = 1 - \chi_1 - \sum_{j \geq 2} \chi_j$.
\end{enumerate}
Observe that $\chi_0$ is a bump function supported in a neighbourhood of the origin, and more precisely it is supported in $\{\zeta : |\zeta| < a 2^{-2\lceil a \rceil}\}$.

We can then split
\[ \mathfrak{M}_{m,k}(\zeta;\lambda) = \chi_1\Big(\frac{\zeta}{2^{am-k}}\Big)\mathfrak{M}_{m,k}(\zeta;\lambda) + \Big(1 - \chi_1\Big(\frac{\zeta}{2^{am-k}}\Big)\Big)\mathfrak{M}_{m,k}(\zeta;\lambda) \]
and further split the last term (which will be shown to contribute an error term) according to \eqref{eq:IBP_nonstationary_phase} into
\[ \Big(1 - \chi_1\Big(\frac{\zeta}{2^{am-k}}\Big)\Big)\mathfrak{M}_{m,k}(\zeta;\lambda) = -\mathfrak{E}^{1}_{m,k}(\zeta;\lambda) +\mathfrak{E}^{2}_{m,k}(\zeta;\lambda) + \mathfrak{E}^{3}_{m,k}(\zeta;\lambda), \]
where for $i \in \{1,2,3\}$ the term $\mathfrak{E}^{i}_{m,k}$ is given by the sum
\[ \mathfrak{E}^{i}_{m,k}(\zeta;\lambda) := \mathfrak{E}^{i,0}_{m,k}(\zeta;\lambda) + \sum_{j\geq 2} \mathfrak{E}^{i,j}_{m,k}(\zeta;\lambda) \]
and for $j = 0$ or $j\geq 2$ we have set
\begin{align*}
\mathfrak{E}^{1,j}_{m,k}(\zeta;\lambda) &:= \chi_j \Big(\frac{\zeta}{2^{am-k}}\Big) \int e^{i \varphi(t)} \frac{{\rho}'(2^{-k}t)}{i t} \frac{2^{-k}}{\varphi'(t)} \,dt, \\
\mathfrak{E}^{2,j}_{m,k}(\zeta;\lambda) &:= \chi_j \Big(\frac{\zeta}{2^{am-k}}\Big) \int e^{i \varphi(t)} \frac{{\rho}(2^{-k}t)}{i t} \frac{1}{t\varphi'(t)} \,dt, \\
\mathfrak{E}^{3,j}_{m,k}(\zeta;\lambda) &:=  \chi_j \Big(\frac{\zeta}{2^{am-k}}\Big) \int e^{i \varphi(t)} \frac{{\rho}(2^{-k}t)}{i t} \frac{\varphi''(t)}{(\varphi'(t))^2} \,dt.
\end{align*}
Let us begin with showing that
\begin{equation}
\begin{aligned}
\Big|\sum_{k \in \mathbb{Z}} \tilde{\rho}\Big(\frac{\lambda(x)}{2^{a(m-k)}}\Big) \iint \widehat{f}(\xi) \widehat{g}(\eta) & e^{i(\xi+\eta)x} \mathfrak{E}^{1,0}_{m,k}(\xi - \eta;\lambda(x)) \,d\xi\,d\eta \Big|,\\
&\lesssim 2^{-am} M(f,g)(x),
\end{aligned}\label{eq:error_term_nonstationary_phase}
\end{equation}
so that this contribution to $T_m (f,g)$ can be safely ignored (thanks to the fact that the bilinear maximal function is bounded in the range given in Theorem \ref{main_theorem_pxq_to_r}). Considering $\mathfrak{E}^{1,0}_{m,k}$, we see that for the factor $2^{-k}/ \varphi'(t)$ we can write
\[ \frac{2^{-k}}{\varphi'(t)} = 2^{-am} \frac{1}{a \lambda t^{a-1} 2^{-(am-k)} - \zeta 2^{-(am-k)}}, \]
and if we let
\[ \Phi_{\lambda,t} (\zeta') = \Phi_{m,k,\lambda,t}(\zeta'):= \frac{\chi_0(\zeta')}{a \lambda t^{a-1} 2^{-(am-k)} - \zeta'} \]
we can rewrite
\[ \mathfrak{E}^{1,0}_{m,k}(\zeta;\lambda) = 2^{-am} \int \Phi_{\lambda,t}\Big(\frac{\zeta}{2^{am-k}}\Big)e^{-i\zeta t} \frac{{\rho}'(2^{-k}t)}{it} e^{i\lambda t^a} \,dt. \]

Notice that $|a \lambda t^{a-1} 2^{-(am-k)}|\sim_a 1$ and more precisely that
\begin{alignat*}{2}
a 2^{-2a+1} & \leq |a \lambda t^{a-1} 2^{-(am-k)}| \leq a 2^{2a-1}\qquad   &&  \textrm{when $a>1$,} \\
\frac{a}{2} & \leq |a \lambda t^{a-1} 2^{-(am-k)}| \leq 2a  && \textrm{when $a<1$.}
\end{alignat*}
For $\zeta'$ in the support of $\chi_0$, we then have in both cases $a>1$ and $a<1$ that
\[ |a \lambda t^{a-1} 2^{-(am-k)} - \zeta'|\gtrsim_a 1. \]
From this, one can see that $\Phi_{\lambda,t}$ is a smooth bump function adapted to an interval of length $\sim 1$ uniformly in all the parameters $m,k, \lambda \sim 2^{a(m-k)}, |t| \sim 2^k$, and in particular we have uniformly
\begin{equation}\label{eq:uniform_bound_nonstationary_analysis}
|\widehat{\Phi_{\lambda,t}}(s)| \lesssim_{a} (1 + |s|)^{-100}
\end{equation}
for all $|t| \sim 2^{k}$. All implicit constants here are allowed to depend on $a$, so from this point on we do not track the dependence systematically. Fixing $m,k, \lambda(x) \sim 2^{a(m-k)}$ and using Fubini twice, we see by going back to the spatial representation of the operator that we have
\begin{align*}
\Big| & \iint \widehat{f}(\xi) \widehat{g}(\eta) e^{i(\xi+\eta)x} \mathfrak{E}^{1,0}_{m,k}(\xi - \eta;\lambda(x)) \,d\xi\,d\eta \Big| \\
&= 2^{-am} \Big|\int f(x-s) g(x+s) \Big[\int 2^{am-k}\widehat{\Phi_{\lambda,t}}(2^{am-k}(s-t))\frac{{\rho}'(2^{-k}t)}{it} e^{i\lambda t^a} \,dt \Big] \,ds \Big| \\
&\leq 2^{-am} \int |f(x-s)| |g(x+s)| \int 2^{am-k} \big|\widehat{\Phi_{\lambda,t}}(2^{am-k}(s-t))\big|\frac{|{\rho}'(2^{-k}t)|}{|t|}\,dt \,ds.
\end{align*}
Using \eqref{eq:uniform_bound_nonstationary_analysis} we obtain for the resulting kernel
\begin{align*}
\int & 2^{am-k}\big|\widehat{\Phi_{\lambda,t}}(2^{am-k}(s-t))\big|\frac{|{\rho}'(2^{-k}t)|}{|t|}\,dt \\
&\lesssim \int 2^{am-k} \Big(1 + 2^{am-k}|s-t|\Big)^{-100}\frac{|{\rho}'(2^{-k}t)|}{|t|} \,dt \lesssim 2^{-k} (1 + 2^{-k}|s|)^{-50},
\end{align*}
where we have used the fact that ${\rho}'(2^{-k}t)/t = 2^{-k}{\rho}'(2^{-k}t)/(2^{-k}t)$ is an $L^1$-normalised bump function adapted to $|t| \sim 2^k$. Thus we have shown
\begin{align*}
\Big| & \tilde{\rho}\Big(\frac{\lambda(x)}{2^{a(m-k)}}\Big) \iint \widehat{f}(\xi) \widehat{g}(\eta) e^{i(\xi+\eta)x} \mathfrak{E}^{1,0}_{m,k}(\xi - \eta;\lambda(x)) \,d\xi\,d\eta \Big| \\
&\lesssim 2^{-am} \tilde{\rho}\Big(\frac{\lambda(x)}{2^{a(m-k)}}\Big) \int |f(x-s)||g(x+s)| 2^{-k} (1 + 2^{-k}|s|)^{-50} \,ds \\
&\lesssim 2^{-am} \tilde{\rho}\Big(\frac{\lambda(x)}{2^{a(m-k)}}\Big) M(f,g)(x);
\end{align*}
summing over $k \in \mathbb{Z}$ shows the desired pointwise inequality \eqref{eq:error_term_nonstationary_phase}.
%

For $\mathfrak{E}^{1,j}_{m,k}$ and $j\geq 2$ the situation is entirely similar but we have summability in $j$ as well: one can write
\[ \chi_{j}\Big(\frac{\zeta}{2^{am-k}}\Big) \frac{2^{-k}}{\varphi'(t)} = 2^{-am} 2^{-j} \Phi_{\lambda,j,t} \Big(\frac{\zeta}{2^{am-k}}\Big) \]
for $\Phi_{\lambda,j,t}$ a smooth bump function adapted to an interval of length $\sim 2^j$ uniformly in $j$ and $|t| \sim 2^{k}$ (and the rest of the parameters); in particular we have
\[  |\widehat{\Phi_{\lambda,j,t}}(s)| \lesssim 2^j(1+2^j|s|)^{-100} \]
uniformly in $j$ and in $|t| \sim 2^k$, which replaces \eqref{eq:uniform_bound_nonstationary_analysis}. The same argument given above then shows that for any $j \geq 2$
\begin{align*}
\Big| \sum_{k\in \mathbb{Z}} \tilde{\rho}\Big(\frac{\lambda(x)}{2^{a(m-k)}}\Big) \iint \widehat{f}(\xi) \widehat{g}(\eta)e^{i(\xi+\eta)x} & \mathfrak{E}^{1,j}_{m,k}(\xi-\eta;\lambda(x)) \,d\xi\,d\eta\Big|\\
&\lesssim 2^{-am} 2^{-j} M(f,g)(x),
\end{align*}
which is summable in $j$ and therefore we have shown that
\begin{align*}
\Big| \sum_{k\in \mathbb{Z}} \tilde{\rho}\Big(\frac{\lambda(x)}{2^{a(m-k)}}\Big) \iint \widehat{f}(\xi) \widehat{g}(\eta)e^{i(\xi+\eta)x} & \mathfrak{E}^{1}_{m,k}(\xi-\eta;\lambda(x)) \,d\xi\,d\eta\Big| \\
&\lesssim 2^{-am} M(f,g)(x),
\end{align*}
and this whole contribution to $T_{m}(f,g)$ can be safely ignored.

The error terms with symbols $\mathfrak{E}^{2}_{m,k}, \mathfrak{E}^{3}_{m,k}$ can be treated analogously: one just repeats the arguments given above with $2^{-k}/\varphi'(t)$ replaced by $1/(t\varphi'(t))$ and $\varphi''(t) / (\varphi'(t))^2$ -- which can be seen to behave in exactly the same way for the purposes of the above argument (we stress that the presence of the factor of ${\rho}(2^{-k}t)$ forces $|t| \sim 2^k$). Thus we can dispense with these error terms as well and have effectively reduced ourselves to the study of the bilinear operator with multiplier symbol $\chi_1(\zeta 2^{-(am-k)}) \mathfrak{M}_{m,k}(\zeta;\lambda)$.
\subsection{Non-stationary contributions to $\mathfrak{M}_{m,k}$ close to the critical point}
At this point the phase $\varphi(t) = \lambda t^a - \zeta t$ can have one or two critical points\footnote{In the latter case they agree in absolute value.}: we denote either one of these critical points by $t_0$ and observe that
\[ |t_0| = \Big(\frac{|\zeta|}{a |\lambda|}\Big)^{1/(a-1)} \sim 2^{-k}, \]
since in what follows we require $|\zeta| \sim 2^{am-k}$. We will further resolve the symbol by decomposing the domain of integration in $\mathfrak{M}_{m,k}$ according to a Whitney decomposition relative to $t_0$: when we are not too close, $|\varphi'|$ is still large enough to produce decay while when we get too close, we split the phase into its quadratic part and the remainder Taylor tail with the former giving rise to the main term while the latter representing an error term. \par

We proceed now by splitting the function ${\rho}$ into two functions, one supported in the positive reals and one supported in the negative reals (with a slight abuse of notation we will continue to denote either one of them ${\rho}$). Next, we assume without loss of generality that there is a critical point $\varphi'(t_0)=0$ and that $t,t_0, \zeta > 0$. In this case we write
\[ t_0 = t_0(\zeta;\lambda) = \Big(\frac{\zeta}{a \lambda}\Big)^{1/(a-1)} \]
and notice that $t_0$ is smooth as a function of $\zeta$. We have $|t_0| \sim 2^{k}$ as remarked before and therefore $|t - t_0| \lesssim 2^{k}$ for all $t$ in the support of ${\rho}(2^{-k}t)$, so if we use ${\rho}_0$ to denote the same function as in \eqref{eq:smooth_dyadic_decomposition} (a smooth bump function supported in an annulus) we can decompose in $j$ as
\[ \int e^{i\varphi(t)} \frac{{\rho}(2^{-k}t)}{t} \,dt = \sum_{j : 2^j \lesssim 2^k} \int e^{i\varphi(t)} {\rho}_0 \Big( \frac{t - t_0(\zeta)}{2^j} \Big) \frac{{\rho}(2^{-k}t)}{t} \,dt. \]
We first restrict ourselves to those $j$ for which $2^{-am/3 + k} \lesssim 2^j \lesssim 2^k$ (the reason for this particular choice will be clear a posteriori) and claim that all these terms actually give rise to error terms - by the same arguments used in \ref{subsec:appendix:B:1}. Indeed, using integration by parts on
\begin{equation}
\chi_1\Big(\frac{\zeta}{2^{am-k}}\Big) \int e^{i \varphi(t)} {\rho}_0 \Big( \frac{t - t_0(\zeta)}{2^j} \Big)\frac{{\rho}(2^{-k}t)}{t} \,dt \label{eq:term_j_annulus_decomposition_oscillatory_integral}
\end{equation}
we obtain a number of terms of similar nature; we consider here for shortness only the term
\[ \mathcal{E}^{j}_{m,k}(\zeta;\lambda) := \chi_1 \Big(\frac{\zeta}{2^{am-k}}\Big) \int e^{i \varphi(t)} \frac{{\rho}(2^{-k}t)}{i t} {{\rho}_0}' \Big( \frac{t - t_0(\zeta)}{2^j} \Big) \frac{2^{-j}}{\varphi'(t)} \,dt \]
which provides the largest contribution to the error term - the other terms are all treated similarly. Applying the argument used in \ref{subsec:appendix:B:1} together with the fact that
\begin{equation}
\begin{aligned}
|\varphi'(t)| = a \lambda \Big| t^{a-1} - t_0(\zeta)^{a-1}\Big| &= a \lambda \Big| \int_{t_0(\zeta)}^{t} (a-1) s^{a-2} \,ds\Big| \\
&\sim_a 2^{a(m-k)} |t - t_0(\zeta)| 2^{(a-2)k} \sim_a 2^{am-2k+j},
\end{aligned}\label{eq:magnitude_phase_derivative}
\end{equation}
we obtain that
\[ \Big|\iint \widehat{f}(\xi) \widehat{g}(\eta) e^{i(\xi+\eta)x} \mathcal{E}^{j}_{m,k}(\xi-\eta;\lambda(x)) \,d\xi \, d\eta \Big| \lesssim 2^{-am} 2^{2k-2j} M(f,g)(x). \]
Summing over the $2^{-am/3 + k} \lesssim 2^j \lesssim 2^k$ range we obtain then an error contribution bounded by $\lesssim 2^{-am/3} M(f,g)(x)$, which is acceptable. The other contributions arisen from the integration by parts can be shown to be controlled by $\lesssim 2^{-2am/3} M(f,g)(x)$, and thus we have completed this intermediate reduction.

Finally, we deal with the terms of the form \eqref{eq:term_j_annulus_decomposition_oscillatory_integral} for those $j$ such that $2^j \lesssim 2^{-am/3 + k}$. These terms contain the main contribution to $\mathfrak{M}_{m,k}$, but to properly isolate this we need to remove some further error terms. Expanding the phase in Taylor series around the critical point, we let
\[ \varphi_0(t) := \varphi(t_0) + \frac{\varphi''(t_0)}{2}(t-t_0)^2 \]
denote the quadratic part of the phase ($\varphi'(t_0)=0$ by definition) and $\widetilde{\varphi}(t) := \varphi(t) - \varphi_0(t)$ denote the tail. We decompose for any $j$
\begin{align}
\chi_1 \Big(\frac{\zeta}{2^{am-k}}\Big) &\int e^{i \varphi(t)} {\rho}_0 \Big(\frac{t-t_0(\zeta)}{2^j}\Big) \frac{{\rho}(2^{-k}t)}{t} \,dt \nonumber \\
& =  \chi_1 \Big(\frac{\zeta}{2^{am-k}}\Big)\int e^{i \varphi_0(t)} {\rho}_0 \Big(\frac{t-t_0(\zeta)}{2^j}\Big) \frac{{\rho}(2^{-k}t)}{t} \,dt  \label{eq:quadratic_contribution_near_critical_pt} \\
& \hspace{1em} + \underbrace{\chi_1 \Big(\frac{\zeta}{2^{am-k}}\Big)\int e^{i \varphi_0(t)} (e^{i\widetilde{\varphi}(t)} - 1) \tilde{\rho}_0 \Big(\frac{t-t_0(\zeta)}{2^j}\Big) \frac{\tilde{\rho}(2^{-k}t)}{t} \,dt}_{ \textstyle =: \mathscr{E}^{j}_{m,k}(\zeta;\lambda)}\label{eq:tail_contribution_near_critical_pt}
\end{align}
and claim that \eqref{eq:tail_contribution_near_critical_pt} represents an error term. Indeed, for $t$ in the support of ${\rho}_0(2^{-j}(t-t_0))$, we observe that\footnote{Here we are using the real analyticity of $\widetilde{\varphi}$ on any given compact interval that does not contain the origin.}
\begin{align*}
|\widetilde{\varphi}(t)| &\leq \sum_{\ell \geq 3} \frac{|\widetilde{\varphi}^{(\ell)}(t_0)|}{\ell !}|t - t_0|^{\ell} \lesssim \sum_{\ell \geq 3} \frac{2^{a(m-k)} 2^{(a-\ell)k}}{\ell!} 2^{j\ell} \\
&\lesssim 2^{am} \sum_{\ell \geq 3} 2^{-(k-j)\ell} \lesssim 2^{am - 3k + 3j},
\end{align*}
and this is $\ll 1$ when $2^{j} \lesssim 2^{-am/3 + k}$ (this is the reason for our choice of cutoff in the previous argument). A similar estimate shows that we have
\[ |{\widetilde{\varphi}}'(t)| \lesssim  2^{am - 4k + 3j} \]
as well; if we let $\nu(t) := e^{i\widetilde{\varphi}(t)}-1$, the trivial inequality $|e^{is}-1| \leq |s|$ shows that we have
\begin{equation}
|\nu(t)| \lesssim 2^{am - 3k + 3j} \hspace{4em} \text{ and } \hspace{4em} |\nu'(t)| \lesssim 2^{am - 3k + 3j} \cdot 2^{-k}  \label{eq:bounds_for_nu}
\end{equation}
and analogous bounds for higher derivatives. We also observe that a simple computation shows
\begin{equation}
|{\varphi_0}'(t)| \sim 2^{am-2k+j} \label{eq:magnitude_phase_derivative_quadratic}
\end{equation}
(the same bound one has for $\varphi'$; see \eqref{eq:magnitude_phase_derivative}). After integrating \eqref{eq:tail_contribution_near_critical_pt} by parts, one can feed these estimates to the -- by now usual -- argument of \ref{subsec:appendix:B:1} to show that
\begin{align*}
\Big| \sum_{k\in \mathbb{Z}} \tilde{\rho}\Big(\frac{\lambda(x)}{2^{a(m-k)}}\Big) \iint \widehat{f}(\xi) \widehat{g}(\eta)e^{i(\xi+\eta)x}  \Big(\sum_{j : 2^j \lesssim 2^{-am/3 + k}} \mathscr{E}^{j}_{m,k}(\xi-\eta;\lambda(x))\Big) \,d\xi\,d\eta\Big| & \\
\lesssim 2^{-am/3} M(f,g)(x), &
\end{align*}
which is another acceptable error term.

We see that we have effectively reduced our task to that of treating the bilinear operator with multiplier symbol given by terms \eqref{eq:quadratic_contribution_near_critical_pt}, or more precisely
\[ \chi_1\Big(\frac{\zeta}{2^{am-k}}\Big) \int e^{i\varphi_0(t)} \Big[\sum_{j : 2^j \lesssim 2^{-am/3+k}} {\rho}_0\Big(\frac{t-t_0(\zeta)}{2^{j}}\Big)\Big] \frac{{\rho}(2^{-k}t)}{t}\,dt. \]
It is a little inconvenient to have the cut-off factor in square brackets in the expression, and thus we remove it by writing the above as
\begin{align}
& \chi_1\Big(\frac{\zeta}{2^{am-k}}\Big) \int e^{i\varphi_0(t)} \frac{{\rho}(2^{-k}t)}{t}\,dt \label{eq:main_contribution_to_multiplier} \\
& - \sum_{j : 2^{-am/3+k} \lesssim 2^j \lesssim 2^k} \chi_1\Big(\frac{\zeta}{2^{am-k}}\Big) \int e^{i\varphi_0(t)} {\rho}_0\Big(\frac{t-t_0(\zeta)}{2^{j}}\Big) \frac{{\rho}(2^{-k}t)}{t}\,dt; \label{eq:final_error_term_multiplier}
\end{align}
a repetition of the arguments given above\footnote{Notice that phases $\varphi$ and $\varphi_0$ satisfy the same estimates.} shows that the contribution to $T_m(f,g)(x)$ from \eqref{eq:final_error_term_multiplier} amounts to another error term dominated by $\lesssim 2^{-am/3} M(f,g)(x)$. What is left is \eqref{eq:main_contribution_to_multiplier}, which is the main contribution that will be analysed below.

\subsection{Stationary contributions to $\mathfrak{M}_{m,k}$}
We have managed to reduce the multiplier symbol from $\mathfrak{M}_{m,k}$ to \eqref{eq:main_contribution_to_multiplier}, where the difference between the two amounts to the fact that the phase is now purely quadratic and we have localised the multiplier symbol to $|\zeta| \sim 2^{am-k}$. The advantage is that we can now appeal to the identity\footnote{The formula is well-known; it is an easy consequence of the formula for the Fourier transform of a Gaussian and of an analytic continuation argument. Recall also our convention in footnote \ref{footnote:2pi}.}
\[ \int e^{i \mu t^2} \phi(t) \,dt = \Big(\frac{i}{2\mu}\Big)^{1/2} \int e^{-i\alpha^2 / (4\mu)} \, \widehat{\phi}(\alpha)\,d\alpha \]
for (say) Schwartz functions $\phi$ in order to produce explicit formulas for the multiplier symbol. Indeed, if we let $\theta(s):= {\rho}(s)/s$ and apply the above identity to the integral expression in \eqref{eq:main_contribution_to_multiplier} we obtain (with a change of variables) the expression
\begin{equation}
e^{i \varphi(t_0)} \Big(\frac{i}{2^{2k}\varphi''(t_0)}\Big)^{1/2} \int e^{- i \alpha^2 /(2^{2k+1} \varphi''(t_0))} e^{i (t_0 / 2^k) \alpha} \, \widehat{\theta}( \alpha) \,d\alpha.  \label{eq:appendix_B_complex_gaussian_expression}
\end{equation}
Easy computations--with $c_a := a^{-a/(a-1)}-a^{-1/(a-1)}$--yield that
\begin{align*}
\varphi(t_0) & = c_a \zeta^{\frac{a}{a-1}}\lambda^{-\frac{1}{a-1}}, \\
 \frac{t_0}{2^k} &= \Big(\frac{\zeta}{2^{am-k}}\Big)^{\frac{1}{a-1}} \Big(\frac{a \lambda}{2^{a(m-k)}}\Big)^{-\frac{1}{a-1}}, \\
 \frac{1}{2^{2k} \varphi''(t_0)} & = \frac{2^{-am}}{a-1} \Big(\frac{\zeta}{2^{am-k}}\Big)^{-\frac{a-2}{a-1}}\Big(\frac{a\lambda}{2^{a(m-k)}}\Big)^{-\frac{1}{a-1}}.
\end{align*}
The latter in particular shows that, in the integration region $|\alpha| \lesssim 1$ where most of the mass of $\widehat{\theta}(\alpha)$ is concentrated, the quadratic phase $\alpha^2 /(2^{2k+1} \varphi''(t_0))$ is extremely small (in particular it is $\lesssim 2^{-am}$). To take advantage of this we split \eqref{eq:appendix_B_complex_gaussian_expression} as
\begin{align}
& e^{i \varphi(t_0)} \Big(\frac{i}{2^{2k}\varphi''(t_0)}\Big)^{1/2} \int e^{i (t_0 / 2^k) \alpha} \, \widehat{\theta}( \alpha) \,d\alpha \label{eq:appendix_B_removed_complex_gaussian} \\
& + e^{i \varphi(t_0)} \Big(\frac{i}{2^{2k}\varphi''(t_0)}\Big)^{1/2}  \int \big(e^{-i \alpha^2 /(2^{2k+1} \varphi''(t_0))} - 1 \big) e^{i (t_0 / 2^k) \alpha} \, \widehat{\theta}( \alpha) \,d\alpha. \label{eq:appendix_B_difference_complex_gaussian}
\end{align}
Expression \eqref{eq:appendix_B_removed_complex_gaussian} is clearly equal to
\[ e^{i \varphi(t_0)} \Big(\frac{i}{2^{2k}\varphi''(t_0)}\Big)^{1/2} \theta\Big(\frac{t_0}{2^k}\Big), \]
which will give our main term, while \eqref{eq:appendix_B_difference_complex_gaussian} (we claim) contributes another error term. If we define the smooth function
\[ \Theta(\zeta,\lambda) := \frac{\tilde{\rho}(a^{-1} \lambda) \chi_1(\zeta)}{\big(\zeta^{\frac{a-2}{a-1}} \lambda^{\frac{1}{a-1}}\big)^{\frac{1}{2}}} \theta\Big(\Big(\frac{\zeta}{\lambda}\Big)^{\frac{1}{a-1}}\Big) \]
(compactly supported in $|\zeta| \sim |\lambda| \sim 1$), then the main term in $\tilde{\rho}(\lambda 2^{-a(m-k)})\mathfrak{M}_{m,k}(\zeta;\lambda)$ is (a constant multiple of)
\begin{equation}
\mathfrak{m}_{m,k}(\zeta;\lambda) := 2^{-\frac{am}{2}} \; e^{i c_a \zeta^{\frac{a}{a-1}} \lambda^{-\frac{1}{a-1}}} \; \Theta\Big( \frac{\zeta}{2^{am-k}}, \frac{a \lambda}{2^{a(m-k)}} \Big). \label{eq:appendix_B_main_term_non_tensorised}
\end{equation}

It remains to prove our claim about the error term nature of \eqref{eq:appendix_B_difference_complex_gaussian}. Observe that with a little more algebra this remaining contribution to the symbol can be written as
\[ \mathfrak{e}_{m,k}(\zeta; \lambda):= 2^{-\frac{am}{2}} \; e^{i c_a \zeta^{\frac{a}{a-1}} \lambda^{-\frac{1}{a-1}}} \; \tilde{\Theta}\Big( \frac{\zeta}{2^{am-k}}, \frac{a \lambda}{2^{a(m-k)}} \Big) \Psi_m\Big( \frac{\zeta}{2^{am-k}}, \frac{a \lambda}{2^{a(m-k)}} \Big), \]
where we have defined
\begin{align*}
\tilde{\Theta}(\zeta,\lambda) & := \frac{\tilde{\rho}(a^{-1} \lambda) \chi_1(\zeta)}{\big(\zeta^{\frac{a-2}{a-1}} \lambda^{\frac{1}{a-1}}\big)^{\frac{1}{2}}}, \\
\Psi_m(\zeta,\lambda) &:= \int \big(e^{i c'_a  2^{-am} \zeta^{-\frac{a-2}{a-1}} \lambda^{-\frac{1}{a-1}} \alpha^2 } - 1 \big) e^{i \zeta^{\frac{1}{a-1}} \lambda^{-\frac{1}{a-1}} \alpha} \;\widehat{\theta}(\alpha) \,d\alpha
\end{align*}
(and $c'_a$ is another constant that depends on $a$). We have that
\begin{align*}
& \iint \widehat{f}(\xi) \widehat{g}(\eta) e^{i (\xi + \eta) x} \mathfrak{e}_{m,k}(\xi - \eta ; \lambda) \,d\xi \, d\eta \\
& = \int f(x - s) g(x + s) \Big[ \int \mathfrak{e}_{m,k}(\zeta; \lambda) e^{-i \zeta s} \,d\zeta \Big] \, ds;
\end{align*}
the oscillatory integral expression in square brackets has phase $c_a \zeta^{\frac{a}{a-1}} \lambda^{-\frac{1}{a-1}} - \zeta s$ and, thanks to the support properties of $\tilde{\Theta}$, there will be a critical point only if $|s| \sim 2^k$. Splitting the domain of integration into $|s| \sim 2^k$ and its complement, it is easy to show (using the trivial inequality $|e^{i\alpha} - 1| \leq |\alpha|$ and the rapid decay of $|\widehat{\theta}(\alpha)|$) that, say,
\[ |\tilde{\Theta}(\zeta, \lambda)\Psi_{m}(\zeta,\lambda)| \lesssim 2^{-(3/4)am}, \]
and therefore one can bound in the former case
\begin{align*}
&\Big|\int_{|s| \sim 2^k} f(x - s) g(x + s) \Big[ \int \mathfrak{e}_{m,k}(\zeta; \lambda) e^{-i \zeta s} \,d\zeta \Big] \, ds \Big| \\
&\lesssim \int_{|s| \sim 2^k} |f(x - s)| |g(x + s)|  2^{-\frac{am}{2}} 2^{am-k} 2^{-(3/4)am} \,ds \\
&= 2^{-am/4} \frac{1}{2^k} \int_{|s|\sim 2^k} |f(x-s)||g(x+s)| \,ds \lesssim 2^{-\frac{am}{4}} M(f,g)(x),
\end{align*}
an acceptable contribution. In the complementary case ($|s| \not\sim 2^k$) we find ourselves with a non-stationary phase situation, which can be treated by a dyadic decomposition in $|s|$ and by means of integration by parts. The resulting elementary argument is quite analogous to that in \ref{subsec:appendix:B:1}, albeit longer and possibly even more tedious - hence we omit the unnecessary details. Overall we have shown that
\[ \Big| \sum_{k \in \mathbb{Z}} \iint \widehat{f}(\xi) \widehat{g}(\eta) e^{i(\xi + \eta)x} {\mathfrak{e}}_{m,k}(\xi-\eta;\lambda(x))\,d\xi\,d\eta \Big| \lesssim 2^{-\delta am} M(f,g)(x), \]
for some absolute $\d>0$ thus proving our claim that $\mathfrak{m}_{m,k}(\xi - \eta; \lambda)$ is the main contribution to the multiplier symbol.
\subsection{Tensorization of the main contribution}
The function $\Theta$ in \eqref{eq:appendix_B_main_term_non_tensorised} is very well-behaved: it is smooth and compactly supported in $\{(\zeta,\lambda)\,:\,|\zeta|\sim 1\:\:\textrm{and}\:\:|\lambda|\sim 1\}$. However it is not in the most convenient form, since currently it is defined as a function that depends on $\zeta$ and $\lambda$ simultaneously. It will be more convenient to separate the two variables by means of a tensorization argument. Indeed, by a standard double Fourier series decomposition one can rewrite the function $\Theta$ as
\[ \Theta(\zeta, \lambda) = \sum_{j,\ell \in \mathbb{Z}} c_{j, \ell} \rho_j(\lambda) \psi_\ell(\zeta), \]
where $\rho_j(\lambda), \psi_\ell(\zeta)$ are smooth bump functions supported in $|\lambda| \sim 1$ and $|\zeta| \sim 1$ respectively  which satisfy
\[ \|\rho_j^{(n)}\|_{\infty} \lesssim_n |j|^{n}, \quad \|\psi_{\ell}^{(n)}\|_{\infty} \lesssim_n |\ell|^{n} \]
for sufficiently many derivatives and the coefficients $c_{j,\ell}$ are fastly decaying. By triangle/H\"older inequality it is thus sufficient\footnote{Provided that the $L^p \times L^q \to L^r$ bounds thus obtained depend at most polynomially on the $C^1$ norms of $\rho, \psi$.} to prove Theorem \ref{main_theorem_local_L2} for the operators
\[ \sum_{k \in \mathbb{Z}} {\rho}\Big(\frac{\lambda(x)}{2^{-a(m-k)}}\Big)\iint \widehat{f}(\xi) \widehat{g}(\eta) e^{i(\xi + \eta)x}\mathfrak{m}_{m,k}(\xi-\eta;\lambda(x))\,d\xi\,d\eta, \]
where we have redefined $\mathfrak{m}_{m,k}$ to be
\[ \mathfrak{m}_{m,k}(\zeta;\lambda) := 2^{-am/2} \; e^{i c_a \zeta^{\frac{a}{a-1}} \lambda^{-\frac{1}{a-1}}} \; \psi\Big(\frac{\zeta}{2^{am-k}}\Big) \]
and $\rho, \psi$ are now \emph{generic} smooth bump functions supported in $|\lambda|\sim 1, |\zeta|\sim 1$ respectively. This is precisely the form of the multiplier symbol given in \eqref{smallM}, as desired.
\begin{remark}
This last tensorization step is where the smoothness of the function $\tilde{\rho}$ has finally come in handy. All the previous steps in the reduction could have equally been achieved with a characteristic function in place of $\tilde{\rho}$, but the resulting function $\Theta$ would not have been smooth in $\lambda$.
\end{remark} 
%
%

\bibliographystyle{plain}
\bibliography{BCa_bibliography}

\end{document}